\newcommand{\arxiv}[2][]{\ifthenelse{\equal{#1}{}}
{\href{http://arxiv.org/abs/#2}{\tt arXiv:#2}}
{\href{http://arxiv.org/abs/math/#2}{\tt arXiv:math.#1/#2}}}
\theoremstyle{plain}
\newtheorem{theorem}{Theorem}[section]
\newtheorem{lemma}[theorem]{Lemma}
\newtheorem{corollary}[theorem]{Corollary}
\newtheorem{proposition}[theorem]{Proposition}
\newtheorem{problem}[theorem]{Problem}
\newtheorem{addendum}[theorem]{Addendum}
\theoremstyle{definition}
\newtheorem{example}[theorem]{Example}
\newtheoremstyle{remark}
{}{}{}{}{\itshape}{}{ }{\thmname{#1}\thmnumber{ \itshape #2.}}
\theoremstyle{remark}
\newtheorem{remark}[theorem]{Remark}
\newtheoremstyle{concise}
{}{}{}{}{\bfseries}{}{ }{\thmnumber{#2.}\thmnote{ #3.}}
\theoremstyle{concise}
\newtheorem{definition}[theorem]{}
\newenvironment{roster}[1][0]
{

\begin{enumerate}\setcounter{enumi}{#1}}
{\end{enumerate}}
 \def\R{\mathbb{R}} \def\Z{\mathbb{Z}} 
\def\x{\times} \def\but{\setminus} \def\emb{\hookrightarrow} \def\incl{\subset}
 \def\phi{\varphi} \def\emptyset{\varnothing}
\def\xr#1{\xrightarrow{#1}} \def\xl#1{\xleftarrow{#1}} \renewcommand{\:}{\colon}
\DeclareMathOperator{\st}{st} \DeclareMathOperator{\lk}{lk}
\DeclareMathOperator{\Lk}{Lk} \DeclareMathOperator{\id}{id}
\DeclareMathOperator{\Fr}{Fr} \DeclareMathOperator{\Int}{Int}
\DeclareMathOperator{\hocolim}{hocolim}
\def\join{\mathop{\raisebox{-3pt}{\Huge$*$}}} \def\cojoin{\,\bar*\,}
\def\bydef{\mathrel{\mathop:}=}
\def\Cel{\lfloor} \def\Cer{\rfloor}
\def\Fll{\lceil} \def\Flr{\rceil}
\def\downscale#1{\mathchoice{\raisebox{1pt}{$\scriptstyle#1$}}
{\raisebox{1pt}{$\scriptstyle#1$}}{\raisebox{.5pt}{$\scriptscriptstyle#1$}}
{{\scriptscriptstyle#1}}}
\def\cel{\downscale\Cel} \def\cer{\downscale\Cer}
\def\fll{\downscale\Fll} \def\flr{\downscale\Flr}
\def\bigfl{\Fll} \def\bigfr{\Flr}
\begin{document}

\title{Combinatorics of combinatorial topology}
\author{Sergey A. Melikhov}
\address{Steklov Mathematical Institute of the Russian Academy of Sciences,
ul.\ Gubkina 8, Moscow, 119991 Russia}
\email{melikhov@mi.ras.ru}
\address{Delaunay Laboratory of Discrete and
Computational Geometry, Yaroslavl' State University,
14 Sovetskaya st., Yaroslavl', 150000 Russia}
\thanks{Supported by Russian Foundation for Basic Research Grant No.\ 11-01-00822,
Russian Government project 11.G34.31.0053 and Federal Program ``Scientific and
scientific-pedagogical staff of innovative Russia'' 2009--2013}

\begin{abstract} We develop a tighter implementation of basic PL topology,
which keeps track of some combinatorial structure beyond PL homeomorphism type.
With this technique we clarify some aspects of PL transversality and give
combinatorial proofs of a number of known results.

New results include a combinatorial characterization of collapsible polyhedra
in terms of constructible posets (generalizing face posets of constructible
simplicial complexes in the sense of Hochster).
The relevant constructible posets are also characterized in terms of Reading's
zipping of posets, which is a variation of edge contractions in simplicial
complexes.
\end{abstract}

\maketitle

\part{INTRODUCTION}

This work was originally motivated by uniform homotopy theory (see \cite{M2},
which depends heavily on Part \ref{combinatorics} of the present paper) and,
independently, embedding theory (see \cite{M1}).
In short, the applications force a rethink of PL topology, which involves a shift
of focus from simplicial complexes to general posets (also known as van Kampen's
star complexes or Akin's cone complexes) and cell complexes (also known as regular
PL CW-posets), as well as from spaces to maps.

It gradually became clear to the author that the same kind of techniques should
also apply to improve his understanding of PL transversality and collapsing.
This is essentially the subject of Parts \ref{subdivision and collars}
and \ref{chapter-collapsing}.
The results about collapsing may sound more convincing, yet they are largely
due to a progress with transversality --- even if it is quite modest per se.

A fifth motivation is the search of purely combinatorial foundations of
PL topology, like in the early works by Alexander and Newman (see \cite{Li},
\cite[Chapter II]{Gl}), but which would be (at least) as efficient as
the standard foundations by Whitehead and Zeeman \cite{Wh}, \cite{Ze},
\cite{St}, \cite{RS}, based on arbitrary linear subdivisions (and hence on
affine geometry).
A well-known obstacle here is Alexander's problem (see \cite[p.\ 311]{Li}
and \cite[comments to Theorem B]{Mn} for a recent discussion): given two
PL homeomorphic simplicial complexes, do they have isomorphic stellar
subdivisions?
(Equivalently, is having isomorphic stellar subdivisions an equivalence relation?)

Apparently it would be good enough to prove a theorem of this kind about not
necessarily stellar subdivisions of not necessarily simplicial complexes,
provided that all the notions involved are purely combinatorial.
(See \cite{McP} for a slightly different approach, which has led to a rather active area of research.)
This suggests analyzing the combinatorial content of PL transversality theorems,
and getting one's hands dirty with exploring more general subdivisions of
more general complexes.
That sounds pretty close to what the present paper is about.

The discussion of some highlights of the paper just below and in
\S\ref{collapsing-intro} is followed by an actual introduction
in \S\ref{examples of posets}, which is supposed to communicate some geometric
intuition for posets and explain why they can (and perhaps should) replace
simplicial complexes in a combinatorially biased treatment of PL topology.

\section{Collapsible polyhedra and maps} \label{collapsing-intro}

It is well known (cf.\ \cite{Mc}, \cite{Bj}) that a simplicial complex,
and more generally a cell complex (=finite CW-complex whose attaching
maps are PL embeddings) can be reconstructed from the poset of its
{\it nonempty} faces.
Namely, if $P$ is the poset of nonempty cells of a cell complex $K$
(ordered by inclusion), then the cells of $K$ can be identified with
the subcomplexes of the order complex of $P$ that are the order
complexes of the principal order ideals of $P$.
We will therefore identify simplicial and cell complexes with their
posets of nonempty faces.

\subsection{Constructible poset}
We call a poset $P$ {\it constructible} if either $P$ has a greatest element or
$P=Q\cup R$, where $Q$ and $R$ are order ideals (that is, if $p\le q$ where $q\in Q$
then $p\in Q$; and similarly for $R$), each of $Q$, $R$ and $Q\cap R$
is constructible, and every maximal element of $Q\cap R$ is covered by a maximal
element of $Q$ and by a maximal element of $R$.

Constructible simplicial and polytopal complexes originate in the work of M. Hochster
on Cohen--Macaulay rings \cite{Ho} and have been widely studied in Topological
Combinatorics (see references in \cite{Ha0}, \cite{Ben}).
The above definition is supposed to be the topologist's version of the notion,
and has two somewhat different relations with the original combinatorialist's definition:

\begin{itemize}
\item If $P$ is (the poset of nonempty faces of) an {\it acyclic} simplicial or
polytopal complex, then $P$ is constructible in the above sense if and only if it is
constructible in the sense of Hochster (see Lemma \ref{Mayer-Vietoris}(c)).

\item If $P$ is the ``face poset'', in the combinatorialist's sense (that is,
including the empty face as well!) of a simplicial or polytopal complex $K$, then
$P$ is constructible in the above sense if and only if $K$ is constructible in
the sense of Hochster (see Lemma \ref{Hochster-lemma}).
\end{itemize}

Order complexes of constructible posets are clearly contractible
(by Mayer--Vietoris and Seifert--van Kampen).
The dunce hat of Zeeman \cite{Ze2} (also known as the Borsuk tube), that is,
the mapping cone of the map $\phi\:S^1\to S^1$ corresponding to the word $aa^{-1}a$,
is an example of a contractible polyhedron that cannot be triangulated by
a constructible simplicial complex \cite{Ha0}, \cite{Ha1}.

There is no simple relation between collapsible polyhedra and constructible simplicial
complexes.
Not all collapsible polyhedra can be triangulated by constructible simplicial complexes,
since a constructible simplicial complex is easily seen to be pure (i.e.\ its maximal
simplices all have the same dimension).
Conversely, Hachimori constructed a constructible simplicial complex triangulating
a non-collapsible polyhedron $P$ \cite{Ha0}, \cite{Ha1}.
Namely, $P$ the mapping cone of the map $\tilde\phi$ defined by the pullback diagram
$$\begin{CD}
S^1@>\tilde\phi>>S^1\\
@VVV@VVpV\\
S^1@>\phi>>S^1,
\end{CD}$$
where $p$ is the double covering and $\phi$ is as above.

\begin{theorem}\label{constructible-main}
Let $X$ be a compact polyhedron. The following are equivalent:

(i) $X$ is collapsible;

(ii) $X$ can be triangulated by a simplicial complex whose dual poset is constructible;

(iii) $X$ can be cellulated by a cell complex whose dual poset is constructible.
\end{theorem}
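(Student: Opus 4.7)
The plan is to prove the cycle (ii) $\Rightarrow$ (iii) $\Rightarrow$ (i) $\Rightarrow$ (ii). The first link is immediate: every simplicial complex is a cell complex in the paper's sense with the same face poset, hence the same dual poset.

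For (iii) $\Rightarrow$ (i), I would induct on the recursive structure witnessing constructibility of the dual poset $P^*$ of a cellulation $L$ of $X$. In the base case $P^*$ has a greatest element; equivalently $L$'s face poset has a unique minimum, and since $L$ is regular and its face-poset minima are $0$-cells, $L$ must be a single vertex, so $X$ is a point. For the inductive step $P^* = Q \cup R$, the complements $L_Q \bydef L \but Q$ and $L_R \bydef L \but R$ are disjoint subcomplexes of $L$, and $Q \cap R$ is the filter of cells meeting the open region of $X$ between them. Using the constructibility of $Q \cap R$ together with the covering condition, I would match each maximal cell of $Q \cap R$ with a covering cell in $L_Q \cup L_R$, and by recursing through the constructibility tree assemble these pairings into a discrete-Morse-type matching on $P^*$ with a single unmatched vertex; the corresponding elementary collapses then reduce $L$ to that vertex.

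For (i) $\Rightarrow$ (ii), I would proceed backward along a PL collapsing sequence $X = X_0 \searrow X_1 \searrow \cdots \searrow X_n$ to a point, building a triangulation one anti-collapse at a time. Each inverse elementary collapse presents $X_{i-1} = X_i \cup B$ for a PL ball $B$ meeting $X_i$ in a PL-ball face $F$; subdivide so that $B$ becomes a simplicial cone $v * F'$ over a triangulation $F'$ of $F$ compatible with the triangulation $K_i$ of $X_i$ given by the inductive hypothesis (so $K_i$ has constructible dual), and append this cone. The new dual poset $P^*$ should decompose as $Q \cup R$, with $R$ the filter ``open star of $v$'' (constructible, $v$ being its $P^*$-greatest element), $Q \cap R$ the dual of $F'$ (constructible by induction on $\dim F$), and $Q = P \but \{v\}$ constructible by a further decomposition combining the hypothesis on $K_i$ with the cone structure at $v$. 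The covering condition at each maximal cell of $Q \cap R$ is then witnessed on one side by $v$ and on the other by the already-established constructible structure of $K_i$.

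The main obstacle I anticipate is the inductive step of (iii) $\Rightarrow$ (i): translating the combinatorial decomposition $P^* = Q \cup R$ into a genuine PL collapse of $X$. The sub-posets $Q$, $R$, and $Q \cap R$ are filters in $L$'s face poset, not themselves face posets of cell complexes, so one cannot naively recurse by applying (iii) $\Rightarrow$ (i) to them; instead one must extract the matching directly from the constructibility tree, using the covering condition to guarantee at each step that the claimed free faces actually exist. I expect this will require the subdivision and transversality machinery of Chapters \ref{subdivision and collars} and \ref{chapter-collapsing} to bridge between the purely combinatorial decomposition and the geometric collapse on $X$.
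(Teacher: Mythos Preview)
Your cycle (ii) $\Rightarrow$ (iii) is fine, but both of the substantive implications have real gaps, and the paper's route is quite different in each case.

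For (iii) $\Rightarrow$ (i): you correctly flag that $Q$, $R$, $Q\cap R$ are filters rather than cell complexes, so one cannot recurse on the statement itself. But the proposed remedy --- extracting a discrete Morse matching directly from the constructibility tree --- is not justified, and there is no reason to expect it to work without further input. The paper does \emph{not} argue this way. Instead it passes through zipping: from $K^*$ constructible with $K$ a cell complex, Theorem~\ref{zips if constructible}(b) produces a zipping of $K$ to a point; Theorem~\ref{transverse zipping vs zipping} upgrades this to a \emph{transverse} zipping (using that $K^\#$ is a cell complex); Theorem~\ref{transversely constructible if zips2}(a) then shows $K^*$ is \emph{transversely} constructible; and only with this collaring upgrade does Lemma~\ref{constructible-collapsible} deliver a PL collapse of $|K|$. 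The point is that mere codimension-one intersections are too weak to splice collapses of the pieces --- one needs the collars --- and the whole zipping detour is what buys the upgrade from ``constructible'' to ``transversely constructible''.

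For (i) $\Rightarrow$ (ii): your inductive construction along anti-collapses has a subdivision problem. To realise $B$ as a simplicial cone $v*F'$ compatible with the already-built $K_i$, you will generically have to subdivide $K_i$; but there is no reason a subdivision of $K_i$ should still have constructible dual, so the inductive hypothesis is lost. Even granting that, the claim that $Q^*=(K_{i-1}\setminus\{v\})^*$ is constructible ``by a further decomposition'' is not substantiated --- you would need a construction step for $Q^*$ with the correct codimension-one condition, and this is exactly where the argument is doing work. The paper avoids all of this: by Whitehead's theorem one passes to a simplicially collapsible subdivision $K$, and then Corollary~\ref{collapsible-shellable2} shows that $(K^\flat)^*$ is \emph{shellable}, hence constructible. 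The shelling is built in one shot from the simplicial collapse via the barycentric handle decomposition (Theorem~\ref{collapsible-shellable}), rather than inductively along anti-collapses.
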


The characterization of collapsible polyhedra by constructible posets
that are dual to cell complexes may be seen as surprising.
In a collapse, one bites off a single simplex, or ball, at each step;
in the destruction of a constructible poset, one cuts into two parts at each step,
where each part (as well as their common boundary) may be arbitrarily complex in
the sense of the number of further cuts.

A PL map is called {\it collapsible} if its point-inverses are collapsible
(so in particular nonempty).
The proof of Theorem \ref{constructible-main} also yields a version for maps
(see Corollary \ref{collapsible-constructible map} and Theorem
\ref{constructible point-inverses}):

\begin{theorem} A PL map between compact polyhedra is collapsible if and only if
it can be triangulated by a simplicial map whose point-inverses have constructible
dual posets.
\end{theorem}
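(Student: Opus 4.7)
My plan is to derive this from the proof of Theorem \ref{constructible-main}, viewing the map case as its natural parametric generalization.

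The ``if'' direction is fiberwise. Given a simplicial $f\: K \to L$ with the stated property, fix $y \in L$ and let $\tau$ be the carrier simplex of $y$. The point-inverse $f^{-1}(y)$ carries a canonical cell decomposition whose cells are the nonempty intersections $f^{-1}(y) \cap \sigma$ for simplices $\sigma$ of $K$ with $f(\sigma) \supseteq \tau$; these are convex polytopes, and the face poset of the cell complex they form is naturally isomorphic to the subposet of $K$ in question. By the hypothesis and the $(iii)\Rightarrow(i)$ implication of Theorem \ref{constructible-main}, each fiber is collapsible, hence $f$ is.

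The ``only if'' direction is the substantial one. I would first triangulate $f$ by a simplicial map $K \to L$ and then refine $K$ until the induced cellulation of each fiber has constructible dual poset. The strategy is to run the $(i) \Rightarrow (iii)$ argument of Theorem \ref{constructible-main} in a form relative to the base: each pointwise collapse of a fiber is to be recorded as a sequence of ``cuts'' of its dual poset that can be propagated simultaneously across nearby fibers by a corresponding refinement of $K$. The forward references in the excerpt to Corollary \ref{collapsible-constructible map} and Theorem \ref{constructible point-inverses} suggest that the relative machinery needed is already present in the proof of Theorem \ref{constructible-main}; what remains is to globalize it over $L$.

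The main obstacle I anticipate is coherence across the strata of $L$. Collapsibility of $f$ gives only pointwise collapses, with no a priori uniformity, so the constructible cell decompositions chosen for fibers over different simplices of $L$ must be forced to agree on common boundary fibers. My approach is an induction along the face poset of (a subdivision of) $L$, starting from top-dimensional simplices --- over whose interiors $f$ is a trivial bundle and a single collapse of a representative fiber can be propagated --- and then descending; at each step the boundary of the already-cellulated fibers serves as the input that the next collapse must respect. Realising this inductive step requires a genuinely relative form of the $(i) \Rightarrow (iii)$ implication of Theorem \ref{constructible-main}, and is the place where I expect the work to be concentrated. A final subdivision step then converts the resulting cell structure on $K$ into a simplicial one without destroying fiberwise constructibility.
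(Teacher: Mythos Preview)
Your ``if'' direction is essentially correct and is the paper's approach: the fiber $|f|^{-1}(y)$ over $y$ in the open simplex of $\tau$ is cellulated with face poset $f^{-1}(\tau)$ (a small correction: only those $\sigma$ with $f(\sigma)=\tau$ give \emph{distinct} cells, since for $f(\sigma)\supsetneq\tau$ one has $|f|^{-1}(y)\cap|\sigma|=|f|^{-1}(y)\cap|\mu|$ where $\mu$ is the face of $\sigma$ with $f(\mu)=\tau$), which is a cell complex by Lemma~\ref{cubosimplicial}, so Theorem~\ref{constructible-main}(iii)$\Rightarrow$(i) applies.

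Your ``only if'' plan has a genuine gap, and the induction runs the wrong way. The Hatcher maps $f_{\sigma\tau}\:f^{-1}(\sigma)\to f^{-1}(\tau)$ for $\sigma>\tau$ go from higher strata to lower ones, so a face $\tau$ common to several top simplices $\sigma_1,\sigma_2,\dots$ receives several such maps from fibers you have cellulated independently. There is no ``boundary'' of those top fibers sitting inside $f^{-1}(\tau)$ to constrain your next step; rather, any cellulation over $\tau$ would have to be refined to be compatible with all of them at once, and that refinement would then have to be propagated back up into the $f^{-1}(\sigma_i)$, undoing your earlier work.

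The paper's route is quite different and does not attempt a direct fiber-by-fiber construction. It works bottom-up: for each $F_i=f^{-1}(\sigma_i)$, in order of increasing $\dim\sigma_i$, it applies Whitehead's theorem (Lemma~\ref{Wh-collapse}) to produce a simplicially collapsible subdivision $F_i^\beta$ in which the subdivision of every \emph{simplex} of $F_i$ remains collapsible, and then extends this subdivision to all of $K$ by joining with links. The key organizational idea is to show that the resulting $g^*$ is a transversely \emph{constructible map} in the sense of Definition~\ref{constructible map-def}: via the Hatcher/Homma factorization (Corollary~\ref{factorization}) the subdivided map becomes a composite of simplicial maps each with a single nondegenerate point-inverse, each factor is dual to a constructible map by Corollary~\ref{collapsible-constructible}, and these compose by Theorem~\ref{constructible composition}. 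Finally Theorem~\ref{constructible point-inverses} converts ``$g^*$ is a constructible map'' into ``each $(g^{-1}(\sigma))^*$ is constructible''. So the two results you cite are not auxiliary lemmas inside the proof of Theorem~\ref{constructible-main} waiting to be relativized---together they \emph{are} the proof of the map version.
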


Note that the point-inverses of a simplicial map (other than the preimages
of the vertices) are usually not simplicial complexes.
It is not hard to see, however, that they are cubosimplicial complexes
(Lemma \ref{cubosimplicial}).

\subsection{Edge-zipping}\label{edge-zipping}
An {\it edge contraction} is a surjective simplicial map $f\:K\to L$ between
simplicial complexes such that every vertex of $L$ has only one preimage, apart
from one vertex, whose preimage is an edge, $v*w$ (written here as the join of
its vertices).
We call $f$ an {\it elementary edge-zipping} if any of the following equivalent
conditions hold:

\begin{itemize}
\item $\lk(v)\cap\lk(w)=\lk(v*w)$;
\item $v*w$ is not contained in any ``missing face'' of $K$, i.e.\ in
an isomorphic copy of $\partial\Delta^n$ in $K$ that does not extend to
an isomorphic copy of $\Delta^n$ in $K$;
\item the PL map triangulated by $f$ is collapsible.
\end{itemize}

We say that a simplicial complex $K$ {\it edge-zips} onto a simplicial complex $L$
if there exists a sequence of elementary edge-zippings $K=K_0\to\dots\to K_n=L$.

Every triangulation of $S^2$ edge-zips onto $\partial\Delta^3$ (Steinitz, 1934;
see \cite{Zo}, \cite{Su}).
In fact, if $K$ is a $2$-dimensional simplicial complex, then every simplicial
subdivision of $K$ edge-zips onto $K$ \cite{M1}.
On the other hand, there exists a simplicial subdivision of $\partial\Delta^4$ that
does not edge-zip onto $\partial\Delta^4$ (see \cite[Example 6.1]{Ne}, where
a stronger assertion is proved).

\begin{proposition}\label{edge-zipping-main}
A compact polyhedron is collapsible if and only if it can be triangulated by
a simplicial complex that edge-zips onto a point.
\end{proposition}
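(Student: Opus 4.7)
The plan is to treat the two implications separately, using Theorem \ref{constructible-main} as the main tool for the nontrivial direction.

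For the direction ``$K$ edge-zips onto a point implies $|K|$ is collapsible'', I would argue by induction along the sequence $K = K_0 \to K_1 \to \cdots \to K_n = \{\mathrm{pt}\}$. The inductive step is: if $f\:K'\to L'$ is an elementary edge-zipping (contracting an edge $e=v*w$ to a vertex $u$) and $|L'|$ is collapsible, then $|K'|$ is collapsible. I would lift a collapse $L' \searrow \{\mathrm{pt}\}$ to a collapse of $K'$ one elementary step at a time. A simplex of $L'$ containing $u$, say $u*\rho$, has $f$-preimages among $\{v*\rho,\, w*\rho,\, e*\rho\}$, with presence governed by the link condition $\lk_{K'}(v)\cap\lk_{K'}(w) = \lk_{K'}(e)$; simplices not containing $u$ have unique preimages isomorphic to themselves. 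A case analysis on whether $u$ appears in an elementary collapse pair $(\tau,\sigma)$ of $L'$ shows that each such pair lifts to one or two elementary collapses of $K'$, the link condition guaranteeing the required freeness at each moment.

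For the direction ``$|X|$ collapsible implies some triangulation of $X$ edge-zips to a point'', I would first apply Theorem \ref{constructible-main} to obtain a triangulation $K$ of $X$ whose dual poset $K^*$ is constructible. The paper's broader framework, announced in the abstract, characterizes constructibility via Reading's zipping of posets, which is framed as a ``variation of edge contractions in simplicial complexes''. The recursive decomposition of $K^*$ therefore unfolds as a sequence of poset zippings that reduce $K^*$ to a singleton, and I would realize each of these as an elementary edge-zipping of $K$ or, if necessary, of a simplicial refinement of $K$. Concatenating these edge-zippings yields the desired reduction to a point.

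The main obstacle is this translation from poset zipping to simplicial edge-zipping. At each recursion step $P = Q\cup R$ in the definition of constructibility, one must exhibit an actual edge of the simplicial complex whose contraction satisfies the link condition and realizes the passage from $P$ to the constructible common boundary $Q\cap R$. Verifying that such edges are available, possibly after a controlled subdivision of $K$, is where the analogy between poset zipping and simplicial edge contraction developed elsewhere in the paper has to be made precise.
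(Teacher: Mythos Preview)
Your Direction~1 argument (lifting a simplicial collapse of $L'$ through an elementary edge-zipping $f\:K'\to L'$) is essentially correct and is a legitimate alternative to the paper's route, but two details need fixing. First, the inductive hypothesis must be that $L'$ is \emph{simplicially} collapsible, not merely that $|L'|$ is PL collapsible; otherwise there is no simplicial collapse to lift. Second, your count is off: when both $\tau=u*\alpha$ and $\sigma=u*\beta$ have triple preimages, one needs three elementary collapses in $K'$ (e.g.\ $(e*\beta,e*\alpha)$, then $(v*\beta,v*\alpha)$, then $(w*\beta,w*\alpha)$), not one or two. With these corrections the lifting goes through. The paper instead routes this implication through Lemma~\ref{zips if edge-zips} and Theorems~\ref{zipping-main} and~\ref{constructible-main}; that is less elementary but places the statement inside the constructibility--zipping equivalence.

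Direction~2 has a genuine gap. There is first a confusion of $K$ with $K^*$: Theorem~\ref{zipping-main} says that $K$ (not $K^*$) zips onto a point when $K^*$ is constructible. More importantly, the elementary poset zippings produced there are along elements $p$ with $\fll p\flr\simeq Q+I$ where $Q$ need not be empty, and after the very first such step the intermediate poset is typically no longer a simplicial complex. So your plan to ``realize each of these as an elementary edge-zipping of $K$'' cannot work as stated, and the closing paragraph does not explain how a ``controlled subdivision'' would repair this. The paper's resolution is the observation cited from~\cite{MN} just before Lemma~\ref{zips if edge-zips}: whenever a poset $P$ elementarily zips onto $Q$, the order complex $P^\flat$ edge-zips onto $Q^\flat$ in two steps. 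Hence once $K$ zips to a point, its barycentric subdivision $K^\flat$ edge-zips to a point, and $K^\flat$ is itself a simplicial triangulation of the same polyhedron. No ad hoc refinement is required; passing once to $K^\flat$ does the job uniformly.
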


It should not be hard to prove Proposition \ref{edge-zipping-main} directly
(see Remark \ref{trivial-remark} for a sketch of proof of an assertion
that contains the ``if'' implication).
We will see in a moment that it follows from a more interesting result.

\subsection{Zipping}
Let $P$ be a poset.
Suppose that a $p\in P$ covers two incomparable elements $q,r\in P$
so that:

$\bullet$ if $s<p$ and $s\ne q,r$, then $s<q$ and $s<r$, and

$\bullet$ if $s>q$ and $s>r$, then $s\ge p$.

\noindent
In this situation $P$ is said to {\it elementarily zip} onto the quotient
of $P$ by the subposet $\{p,q,r\}$.
(See \S\ref{quotient poset} concerning quotient posets.)
A {\it zipping} of posets is a sequence of elementary zippings.

This definition was introduced by N. Reading \cite{Re}.

As observed in \cite[2.5(2)]{MN}, if a poset $P$ elementarily zips onto
a poset $Q$, then the order complex of $P$ edge-zips onto that of $Q$ in two steps.
The two edge contractions of course correspond to the chains $\{q<p\}$ and $\{r<p\}$.

Conversely:

\begin{lemma}\label{zips if edge-zips} If a simplicial complex $K$
edge-zips onto a simplicial complex $L$, then $K$ zips onto $L$.
\end{lemma}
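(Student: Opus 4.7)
The plan is to reduce, by induction on the number of elementary edge-zippings in a given sequence, to the case of a single elementary edge-zipping $f\:K\to L$, and then to realize the corresponding quotient of face posets as an explicit sequence of elementary zippings. Say $f$ contracts the edge $v*w$ to a vertex; by Definition \ref{edge-zipping}, the hypothesis gives $\lk(v)\cap\lk(w)=\lk(v*w)$, so every face of $K$ that is collapsed by $f$ has the form $v*\tau$, $w*\tau$, or $v*w*\tau$ for some $\tau\in\lk(v*w)\cup\{\emptyset\}$, with $\tau=\emptyset$ corresponding to the triple $\{v,w,v*w\}$.

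The sequence of elementary zippings I would perform processes the triples $(p,q,r)=(v*w*\tau,\,v*\tau,\,w*\tau)$ in weakly increasing order of $\dim\tau$, starting from $\tau=\emptyset$. At each stage, $q$ and $r$ are incomparable, and $p$ still covers both of them in the current poset: no previously merged element can lie between $q$ and $p$, since none of its preimages in $K$ properly contains $v*\tau$ while being strictly contained in $v*w*\tau$. The upper zipping condition is similarly direct --- any unmerged $s$ with $s>v*\tau$ and $s>w*\tau$ must contain $\{v,w\}\cup\tau$, while merged elements are easily seen not to lie above $v*\tau$ at all.

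The main step is the lower zipping condition, that every proper subface $s$ of $v*w*\tau$ other than $v*\tau$ and $w*\tau$ must lie below both of them. The only troublesome subfaces are those of the shape $v*w*\tau'$ with $\emptyset\ne\tau'\subsetneq\tau$, since they contain $w$ and so cannot be below $v*\tau$ when viewed as faces of $K$. This is precisely where the edge-zipping hypothesis enters: since $\tau'\subsetneq\tau\in\lk(v*w)$ forces $\tau'\in\lk(v*w)$, the triple around $\tau'$ has already been zipped at an earlier stage of the induction on $\dim\tau$, so that $v*\tau'$, $w*\tau'$, and $v*w*\tau'$ have coalesced into a single element, which does lie below both $v*\tau$ and $w*\tau$. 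After all relevant $\tau$ have been processed, the nontrivial fibers of the iterated quotient are precisely the fibers of $f$, so the resulting poset is canonically the face poset of $L$.
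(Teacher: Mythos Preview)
Your argument is correct and is essentially the same as the paper's: both reduce to a single elementary edge-zipping contracting $v*w$ and then perform the elementary zippings on the triples $(v*w*\tau,\,v*\tau,\,w*\tau)$ for $\tau\in\{\emptyset\}\cup\lk(v*w)$ in order of increasing dimension. The paper packages the verification of the lower condition by observing that the image of $\fll v*w*\tau\flr$ in the intermediate quotient is the prejoin $\partial(\tau*c)+\Delta^1$, but the substance is identical to your direct check.
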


The author learned from E. Nevo that he had independently been aware of this fact.

\begin{proof} It suffices to consider the case where $L$ is obtained from $K$
by an elementary edge-zipping; let $a*b$ be the edge being contracted.
Let $A_1,\dots,A_n$ be the simplices of the link of $a*b$ in $K$ arranged in
an order of increasing dimension, and let $L_k$ be the subcomplex
$A_1\cup\dots\cup A_{k-1}$ of $K$.
Let $f_k\:L_k*a*b\to L_k*c$ be the simplicial quotient map identifying
$a$ with $b$, and let $Q_k$ be the adjunction poset $K\cup_{f_k}L_k*c$
(see Lemma \ref{adjunction poset}).
Then $Q_1=K/(a*b)$ (the quotient poset), so $K$ zips onto $Q_1$.
Furthermore, for each $k=1,\dots,n$ the image of the simplex $A_k*a*b$ of $K$ in $Q_k$
is isomorphic to $(\partial(A_k*c))+(a*b)$, where the prejoin $X+Y$ of the posets $X$,
$Y$ consists of all the elements of $Y$ placed above all the elements of $X$,
retaining the original orders within $X$ and $Y$ (see \S\ref{operations}).
It is easy to check by induction that the image of $A_k*a*b$ is the only cell of $Q_k$
that contains both the images of $A_k*a$ and $A_k*b$, and it follows that $Q_k$
elementarily zips onto $Q_{k+1}$.
Thus $K$ zips onto $Q_n=L$.
\end{proof}

As promised, Proposition \ref{edge-zipping-main} is now a consequence of
Lemma \ref{zips if edge-zips}, Theorem \ref{constructible-main} and the following

\begin{theorem}\label{zipping-main}
A cell complex $K$ zips onto a point if and only if the dual poset $K^*$
is constructible.
\end{theorem}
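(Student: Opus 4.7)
The plan is to prove both implications by induction: the forward direction ($\Rightarrow$) on the length of the zipping sequence, and the backward direction ($\Leftarrow$) on $|K|$.

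For the forward direction, the base case ($K$ a point) gives $K^*$ a one-element poset, which is constructible. For the inductive step, let $K\to K_1$ be the first elementary zipping via a triple $\{p,q,r\}$ with $p$ covering $q,r$ in $K$; by the induction hypothesis, $K_1^*$ is constructible. The dual quotient map $\pi\:K^*\to K_1^*$ identifies $\{p^*,q^*,r^*\}$ to a single element $e^*\in K_1^*$. I would lift a constructible decomposition $K_1^*=Q''\cup R''$ to $K^*$ by setting $Q=\pi^{-1}(Q'')$ and $R=\pi^{-1}(R'')$: these are order ideals, their union is $K^*$, and their intersection is $\pi^{-1}(Q''\cap R'')$. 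If instead $K_1^*$ has a greatest element $t\ne e^*$, then $t$ is also the greatest element of $K^*$; and if $t=e^*$, the cell-complex structure forces $K$ to be a single edge with its two endpoints, so $K^*$ consists of $p^*,q^*,r^*$ with $p^*$ below $q^*$ and $r^*$, and admits the decomposition $\{p^*,q^*\}\cup\{p^*,r^*\}$.

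For the backward direction, if $K^*$ has a greatest element then $K$ has a unique minimum, so --- using the cell-complex structure --- $K$ is a single vertex, which zips to a point vacuously. Otherwise, from a constructible decomposition $K^*=Q\cup R$, I would extract an elementary zipping of $K$. A natural candidate is the triple $(m,\tilde q,\tilde r)$, where $m$ is a maximal element of $Q\cap R$, $\tilde q$ is a maximal element of $Q$ covering $m$, and $\tilde r$ is a maximal element of $R$ covering $m$: dually in $K$, the cell $m$ covers $\tilde q$ and $\tilde r$. After verifying the zipping conditions and performing the zipping, one obtains a poset $K_1$ with $|K_1|<|K|$ whose dual $K_1^*$ is constructible (by pushing the decomposition of $K^*$ forward through the quotient), so that induction applies.

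In both directions, the main obstacle is the interplay between the zipping-induced quotient on the dual side and constructibility. The inverse operation --- ``un-zipping'' a poset at a chosen element --- does not preserve constructibility for general posets: for example, un-zipping $\{m,e,x\}$ with $m<e$, $m<x$, and $e,x$ incomparable produces a five-element poset that is neither constructible nor zippable. In the cell-complex setting, the condition that the order complex of $\{\tau\in K : \tau<\sigma\}$ is a sphere for each cell $\sigma$ is used to exclude such pathologies. The most delicate step is verifying zipping condition~(1) in the backward direction: one must pick $m$ (and possibly refine the decomposition) so that $m$ has no ``extraneous'' upper covers in $Q$ beyond $\tilde q$, and symmetrically in $R$; otherwise one runs into elements $s\in Q$ with $s>m$ and $s\not>\tilde r$, which would violate the axiom.
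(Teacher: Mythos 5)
Your high-level structure is in the right spirit (it resembles what the paper does internally), but both directions contain genuine gaps that your own remarks about ``the interplay between the zipping-induced quotient and constructibility'' name without resolving.

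\textbf{Forward direction.} Pulling a construction step $K_1^*=Q''\cup R''$ back along the quotient $\pi\:K^*\to K_1^*$ as $Q=\pi^{-1}(Q'')$, $R=\pi^{-1}(R'')$ does give order ideals covering $K^*$ with $Q\cap R=\pi^{-1}(Q''\cap R'')$, but you have not shown (i) that the covering condition on maximal elements of $Q\cap R$ holds for the preimages, nor (ii) that $Q$, $R$, $Q\cap R$ are themselves constructible. Neither holds for an arbitrary monotone quotient. The paper makes this work by first proving that the quotient $\pi\:K^*\to K_1^*$ dual to an elementary zipping of a nonsingular poset is a \emph{filtration map with constructible point-inverses} (Lemma~\ref{constructible if zips}), i.e.\ a constructible map in the sense of Definition~\ref{constructible map-def}, and separately that constructibility pulls back along constructible maps (Lemma~\ref{constructible map}). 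The filtration condition is exactly what yields the covering condition on preimages, and the constructibility of the fiber $\{p^*,q^*,r^*\}=\{p^*,q^*\}\cup\{p^*,r^*\}$ is what makes the inner recursion close. Your induction is nominally on zipping length, but point (ii) forces a second recursion into the constructible decomposition, and you supply no argument that the required conditions survive at those deeper levels.

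\textbf{Backward direction.} This is where the proposal is most seriously underdeveloped. The triple $(m,\tilde q,\tilde r)$, with $m$ an arbitrary maximal element of $Q\cap R$ covered by maximal $\tilde q\in Q$ and $\tilde r\in R$, will not in general be a zipping triple: $m$ may cover, in $K$, several elements of $Q$ besides $\tilde q$, so $\fll m\flr_K$ need not be of the form $T+I$. You acknowledge this, but ``pick $m$ so that $m$ has no extraneous upper covers'' is not obviously possible, and ``possibly refine the decomposition'' is not an argument. The paper's Theorem~\ref{zips if constructible} resolves it by drilling down the recursion tree of the construction (recorded by a scheme poset $S_P$ with a monotone map $s_P\:P^*\to S_P$) until reaching a step $P_n=Q_n\cup R_n$ with both $Q_n$ and $R_n$ single cones and $X_{n+1}=Q_n\cap R_n=\fll p^*\flr$ itself a cone; the nested intersections $X_1\supset\dots\supset X_{n+1}$ carve out a subposet $\Pi\simeq S^0+\dots+S^0+pt$ of $K$, and \emph{only then} does the cell-complex hypothesis (that $\partial\fll p\flr$ is a sphere, which must therefore have dimension $n$ and coincide with $\partial\Pi$) force $\fll p\flr\simeq Q+I$, with fullness coming from $\fll q_n^*\flr\cap\fll r_n^*\flr=\fll p^*\flr$. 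One must then still verify that the quotient $L$ again satisfies the hypotheses. None of this machinery is visible in your sketch, and I do not see how a naive choice of $m$ in a top-level construction step could be made to satisfy the first zipping axiom.
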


One can interpret Theorems \ref{constructible-main} and \ref{zipping-main} as
providing two quite different combinatorial characterizations of collapsibility,
which only magically turn out to be equivalent combinatorially and not just
topologically.

Given the high flexibility of constructible posets and the very concrete
character of the zipping operation, one is tempted to think that it is
constructible posets and zipping that are the most adequate combinatorial
representation of collapsible polyhedra --- rather than, say, simplicially
collapsible or edge-contractible simplicial complexes.

\subsection{Scheme of the proof}
The proof of Theorems \ref{constructible-main} and \ref{zipping-main} in Part
\ref{chapter-collapsing} involves the following steps.

$\bullet$ If $K$ is a triangulation of a collapsible polyhedron, then some simplicial
subdivision of $K$ is simplicially collapsible (Whitehead \cite{Wh}; see also
\cite{Ze}, \cite{RS}, \cite{AB}).

$\bullet$ If a simplicial complex $K$ is simplicially collapsible, then trivially
it is collapsible as a poset (see definition in \S\ref{collapsing of posets}).

$\bullet$ If a poset $P$ is collapsible, then $(P^\flat)^*$ is shellable
(Corollary \ref{collapsible-shellable2}), where $P^\flat$ denotes the barycentric
subdivision (=the poset of nonempty faces of the order complex) and
$Q^*$ denotes the dual poset. (This step is a combinatorial abstraction of
a rather well-known construction in PL topology.)

$\bullet$ If a poset $P$ is shellable (see definition in \S\ref{shelling}), then
trivially it is constructible.

$\bullet$ If $K$ is a simplicial complex such that $K^*$ is constructible, then
$K$ zips onto a point (Theorem \ref{zips if constructible}(b)).

$\bullet$ If $K$ or $K^*$ is a cell complex, and $K$ zips onto a point, then
$K$ transversely zips onto a point (Theorem \ref{transverse zipping vs zipping}).

$\bullet$ If $K$ or $K^*$ is a cell complex, and $K$ transversely zips onto a point
(see definition in \S\ref{zipping-definition}), then $K^*$ is transversely constructible
(Theorem \ref{transversely constructible if zips2}(a)).

$\bullet$ If $K$ is transversely constructible (see definition in
\S\ref{constructible-definition}), then the polyhedron triangulated by $K$ is collapsible
(Lemma \ref{constructible-collapsible}).

\section{PL basics and PL transversality revisited}

Having discussed the results of Part \ref{chapter-collapsing}, let us now review the results
of Parts \ref{combinatorics} and \ref{subdivision and collars}.

Part \ref{combinatorics} develops a purely combinatorial and
self-contained theory,
without any homeomorphisms hidden in the background.
Most results of this chapter are valid for posets of any cardinality, and
serve as lemmas either for further chapters or for further papers by
the author (or both).

We start by developing a combinatorial star/link/join/prejoin technique
(\S\ref{operations}) which enables one to reduce some basic PL homeomorphisms
such as $X*Y=CX\x Y\cup_{X\x Y}X\x CY$ and $\lk((x,y),X\x Y)\cong\lk(x,X)*\lk(y,Y)$
to combinatorial isomorphisms (which is not really possible within the usual
simplicial theory).

This is complemented by a technique of canonical subdivision
(\S\ref{intervals and cubes}), which can be seen as a nearly universal alternative
to the familiar barycentric subdivision --- so that, for instance, it provides
an alternative combinatorial handle decomposition of a triangulated manifold
(\S\ref{handles}).
We find the canonical subdivision to be useful in PL transversality
(\S\ref{transversality}) and absolutely indispensable in the theory of uniform
polyhedra (\cite{M2}), where the barycentric subdivision is totally useless.

We also show equivalence of a local and a global definition of (possibly locally
infinite) simplicial complexes (Theorem \ref{2.10}) and compare two kinds of
combinatorial weakly infinite-dimensional cubes using a version of the
Davis--Januszkiewicz/Babson--Billera--Chan mirroring construction
(\S\ref{weak join} and Theorem \ref{3.5}).

Finally, we include a combinatorial treatment of Hatcher's homotopy colimit
decomposition of simplicial maps (Theorem \ref{hocolim}), which incidentally
turns out to have a cubical character (Lemma \ref{Hatcher is cubical}),
and observe that it implies Homma's theorem on factorization of simplicial
maps (Corollary \ref{factorization}).

\begin{definition}[Convention/disclaimer]
Starting with Part \ref{subdivision and collars}, we confine our attention to finite posets.
Additional related conventions are stated at the beginning of Part \ref{subdivision and collars}.
\end{definition}

Starting with Part \ref{subdivision and collars}, we also occasionally use known results from
PL topology.
Thus the remaining chapters are not entirely self-contained.
We do try to keep external references to a minimum.

Two key notions are introduced in Part \ref{subdivision and collars}: a subdivision map
and a stratification map.

A special case of subdivision maps where the domain and the range are cell complexes
was studied by R. Stanley \cite{Stan} (see also \cite{Ath}) and also by N. Mn\"ev, who calls 
them assembly or aggregation maps \cite{Mn}.
The opposite case, where the domain and the range are dual posets of simplicial
complexes was considered by E. Akin \cite{Ak2}.
The equivalence of a local and a global definition of a subdivision map (Theorem
\ref{subdivision map}) yields a simple proof of some of Akin's results.

In the case where the range is a cell complex, stratification maps are
a combinatorial abstraction of the notion of a polyhedral mock bundle of
Buocristiano, Rourke and Sanderson \cite{BRS}.
A characterization of stratification maps in terms of subdivision maps
is given in Theorem \ref{stratification map}.

We give a combinatorial proof of Whitehead's collaring theorem (Theorem
\ref{collaring theorem}) and extend it to a ``multi-collaring'' theorem somewhat
reminiscent of Grothendieck's reflections on ``tame topology''
(Theorem \ref{grothendieck}).
The multi-collaring theorem implies that a stratification map gives rise to
a PL variety filtration in the sense of D. Stone \cite{Sto}, and in particular
to a locally conelike TOP stratified set in the sense of Siebenmann \cite{Sie}.

We give combinatorial proofs of McCrory's characterization of
PL manifolds (Theorem \ref{manifolds}), M. M. Cohen's partition theorem
for simplicial maps (Theorem \ref{comanifolds}) and
the Buoncristiano--Rourke--Sanderson amalgamation theorem
(Theorem \ref{amalgamation2}).
The partition and amalgamation theorems --- along with a subdivision theorem
which we don't consider in this paper --- form the basis of the geometric
treatment of generalized homology and cohomology theories
(Buoncristiano--Rourke--Sanderson, \cite[Chapter II]{BRS})
and are the main ingredients of the PL transversality theorem (see
\cite[Chapter II]{BRS}).

The Buoncristiano--Rourke--Sanderson definition of PL transversality makes sense
for a pair of PL maps from polyhedra to a polyhedron \cite[pp.\ 23-24, 35]{BRS}.
McCrory has shown that PL transversality of a pair of embedded polyhedra in
a manifold is a symmetric relation \cite{Mc}.
We show that McCrory's symmetric definition of transversality for embeddings
does not extend to the case of maps (Example \ref{pullback fails}); however,
it does extend to a symmetric notion of ``semi-transversality'' of maps such that
semi-transverse maps can be made transverse by a canonical procedure, involving
no arbitrary choices (Corollary \ref{transversality1} and Theorem \ref{stretching}).

\section{Geometry of posets and preposets}\label{examples of posets}

\subsection{Face poset of a convex polytope}\label{polytopes}
Let $P$ be a convex polytope, that is, the convex hull of a finite subset of some
Euclidean space $\R^d$.
The relation of inclusion on the set $\mathcal{F}_P$ of all nonempty faces of $P$
is a partial order, so we have the {\it face poset} $F_P=(\mathcal F_P,\incl)$.
If two convex polytopes are affinely equivalent, clearly their face posets
are isomorphic.

If $\sigma$ is an affine simplex, that is, the convex hull of
a nonempty finite affinely independent set of points $S\subset\R^d$, then $F_\sigma$
is isomorphic to the poset $\Delta^S$ of all nonempty subsets of $S$ ordered
by inclusion.

If $\pi$ is a parallelepiped, that is, the convex hull of the sums of vectors in
all subsets of a nonempty finite linearly independent set of vectors $S\subset\R^d$,
then $F_\pi$ is isomorphic to the poset $I^S$ of all intervals
$[S_-,S_+]=\{T\subset S\mid S_-\subset T\subset S_+\}$, where
$S_+\subset S_-\subset S$, ordered by inclusion.

\subsection{Duality in polytopes}\label{polytope-duality}
(See \cite{Gr}, \cite{Zi} for proofs of all assertions.)
If $P\subset\R^d$ is a convex polytope whose interior contains the origin of $\R^d$,
then its {\it polar} $P^\star=\{x\in\R^d\mid (x,y)\le 1\text{ for all } y\in P\}$
is a convex polytope containing the origin,%
\footnote{A convex polytope containing the origin is determined either 
by its vertices (as their convex hull) or by the hyperplanes through 
its facets (as the intersection of the half-spaces bounded by 
these hyperplanes and containing the origin).
The correspondence between the vertices of $P$ and the hyperplanes through
the facets of $P^\star$ comes from the duality of projective geometry.
In more detail, the canonical isomorphism $\R^d\to(\R^d)^{**}$, 
$v\mapsto(f\mapsto f(v))$, sends each vertex $v$ of $P$ to a nondegenerate 
linear function $\phi_v$ on $(\R^d)^*$.
The hyperplanes $\phi_v^{-1}(1)$ determine the canonically dual 
polytope $P^*$ in $(\R^d)^*$.
The polar $P^\star$ is the preimage of $P^*$ under the identification 
$\R^d\to(\R^d)^*$, $v\mapsto (v,\cdot)$.
The preimages $H_v$ of the hyperplanes $\phi_v^{-1}(1)$ in $\R^d$ 
can be constructed geometrically.
If $v$ lies outside the unit ball $B$, then $H_v\cap\partial B$
is the boundary of the region of $\partial B$ that visible from $v$.
In general, if $\R^d$ is identified with a hyperplane $H$ in $\R^{d+1}$ 
at distance $1$ from the origin, then the line through the origin
of $\R^{d+1}$ and a vertex $v\in H$ is orthogonal to a hyperplane
$\Pi_v$ through the origin of $\R^{d+1}$ such that the centrally
symmetric hyperplane $-\Pi_v$ meets $H$ along $H_v$.}
and $(P^\star)^\star=P$.
The polar of an affine $n$-simplex is an affine $n$-simplex, and that of an
$n$-parallelepiped is an $n$-cross-polytope; the other polar pairs of
regular polytopes are: icosahedron/dodecahedron; $24$-cell/$24$-cell;
and $120$-cell/$600$-cell.

The {\it boundary face poset} $\partial F_P=(\mathcal{F_P}\but\{P\},\incl)$
of a convex polytope $P$ whose interior contains the origin is dual to
the boundary face poset $\partial F_{P^\star}$ of the polar $P^\star$,
in the sense that there is a bijection
$\mathcal{F_P}\but\{P\}\to\mathcal{F_{P^\star}}\but\{P^\star\}$ that reverses
the order by inclusion.
In particular, the poset $\partial\Delta^S$ of all nonempty proper subsets of $S$
ordered by inclusion is self-dual.

A convex polytope is called {\it simplicial} if every its proper face is
an affine simplex.
Clearly, this is equivalent to saying that the face poset of every proper
face is isomorphic to a combinatorial simplex $\Delta^S$.
A convex $d$-polytope $P$ is called {\it simple} if every its vertex is incident
to precisely $d$ edges (which is the minimal possible number).
This is equivalent to saying that the poset of faces properly containing
any given vertex of $P$ is isomorphic to a combinatorial simplex.
Thus a convex polytope $P$ is simple if and only if $P^\star$ is simplicial.

\subsection{Affine simplicial complex}\label{2.3}
An {\it affine simplicial complex} is a finite set $\mathcal{K}$ of non-empty
affine simplices in some $\R^d$ such that
\begin{roster}
\item if $\sigma\in\mathcal{K}$ and $\tau$ is a non-empty face of $\sigma$, then
$\tau\in\mathcal{K}$;
\item if $\sigma,\tau\in\mathcal{K}$, then $\sigma\cap\tau\in\mathcal{K}$.
\end{roster}
The simplices of $\mathcal{K}$ form the {\it face poset} $K=(\mathcal{K},\incl)$
with respect to inclusion.

An affine simplicial complex is called {\it flag} if every subcomplex of $K$
whose face poset is isomorphic to $\partial\Delta^S$, $|S|>2$, is the boundary
of some simplex $\sigma\in\mathcal K$.

\subsection{Affine cone complex}\label{2.4}
An {\it affine cone complex} is a family $\mathcal{P}$ of
subcomplexes of an affine simplicial complex $\mathcal{K}$ such that
\begin{roster}
\item for each $\sigma\in\mathcal{P}$, all maximal simplices of $\sigma$
(i.e.\ those simplices of $\sigma$ that are not faces of other simplices
of $\sigma$) share a common vertex, denoted $\hat\sigma$;
\item for each $\sigma\in\mathcal{P}$, the set $\partial \sigma$ of all
simplices of $\sigma$ disjoint from $\hat\sigma$ is a union of elements
of $\mathcal{P}$;
\item if $\sigma,\tau\in\mathcal{P}$ and $\hat\sigma=\hat\tau$, then
$\sigma=\tau$.
\end{roster}
The cones (i.e.\ the elements) of $\mathcal{P}$ form its {\it face poset}
$P=(\mathcal{P},\incl)$ with respect to inclusion.

Affine cone complexes arose in van Kampen's dissertation \cite{vK} and also
in the work of M. M. Cohen and E. Akin (see \cite[p.\ 456]{Ak}) and were further
studied by McCrory \cite{Mc}.
The following proposition is found in \cite{Mc}:

\begin{proposition} \label{acc=posets}
Every finite poset is isomorphic to the poset of cones
of some affine cone complex.
\end{proposition}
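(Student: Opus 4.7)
The plan is to realize $P$ as the face poset of the order complex of $P$, sliced into subcomplexes indexed by elements of $P$. Form the order complex $\Delta(P)$ with vertex set $P$ and one $k$-simplex for each chain $p_0 < p_1 < \dots < p_k$ in $P$. Realize this affinely by placing each $p \in P$ at the standard basis vector $v_p := e_p \in \R^P$ and taking convex hulls of simplices; this produces an affine simplicial complex $\mathcal{K}$ satisfying \ref{2.3}. For each $p \in P$ let
\[
\sigma_p := \{v_{p_0} \dots v_{p_k} \mid p_0 < \dots < p_k \le p\},
\]
i.e., the affine realization of the order complex of the principal order ideal $P_{\le p} = \{q \in P \mid q \le p\}$, and set $\hat\sigma_p := v_p$. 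Take $\mathcal{P} := \{\sigma_p : p \in P\}$.

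I would then verify the three axioms of \ref{2.4} in sequence. For (i): every maximal simplex of $\sigma_p$ corresponds to a maximal chain in $P_{\le p}$, each of which terminates with $p$ and hence contains $v_p$. For (ii): a simplex of $\sigma_p$ disjoint from $v_p$ is a chain in $P_{<p}$, and if $q$ is its top element then the whole chain lies in $\sigma_q$ with $q < p$; conversely each $\sigma_q$ with $q < p$ lies in $\sigma_p$ and avoids $v_p$; so $\partial\sigma_p = \bigcup_{q < p}\sigma_q$ is a union of members of $\mathcal{P}$. For (iii): $\hat\sigma_p = \hat\sigma_{p'}$ gives $v_p = v_{p'}$, hence $p = p'$ by our choice of vertices.

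Finally I would check that $p \mapsto \sigma_p$ is an isomorphism of posets $P \xrightarrow{\cong} (\mathcal{P},\subset)$. If $p \le p'$, then $P_{\le p} \subset P_{\le p'}$ and so $\sigma_p \subset \sigma_{p'}$. Conversely, if $\sigma_p \subset \sigma_{p'}$, then $v_p \in \sigma_{p'}$, so $p \in P_{\le p'}$, i.e.\ $p \le p'$; in particular the map is injective. This completes the construction.

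There is no genuine obstacle here: the whole argument is a direct unpacking of the definitions applied to the order complex of $P$. The only point requiring any care is confirming that $\partial\sigma_p$ is exhausted by the $\sigma_q$ for $q<p$ (equivalently, by those with $q\lessdot p$), and this is immediate from the observation that any chain in $P_{<p}$ is contained in $P_{\le q}$ for $q$ its top element.
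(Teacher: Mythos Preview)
Your proof is correct and follows essentially the same approach as the paper: both realize $P$ via the order complex $\Delta(P)$ inside an affine simplex with vertex set $P$, take the cones to be the subcomplexes $\Delta(P_{\le p})$ with apex $v_p$, and observe that $p\mapsto\sigma_p$ is a poset isomorphism. You spell out the verification of axioms (i)--(iii) and the isomorphism more explicitly than the paper (which says ``it is easy to see''), but the construction is identical.
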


\begin{proof}
Given a finite poset $P=(\mathcal P,\le)$, the combinatorial simplex
$\Delta^{\mathcal P}$ is the face poset of an affine simplex $\sigma$
whose vertices may be identified with the elements of $\mathcal P$.
Every nonempty finite chain $p_1<\dots<p_n$ in $P$ corresponds to the
affine subsimplex $p_1*\dots*p_n$ of $\sigma$ with vertices $p_1,\dots,p_n$.
The set of all such subsimplices forms an affine simplicial subcomplex
$\Delta(P)$ of $\sigma$.
For each $p\in P$ let $\fll p\flr$ be the subposet of $P$ consisting of
all $q\le p$.
Then $\Delta(\fll p\flr)$ is an affine simplicial subcomplex of
$\Delta(P)$.
It is easy to see that $\{\Delta(\fll p\flr)\mid p\in P\}$ is an affine
cone complex whose face poset is isomorphic to $P$.
\end{proof}

We find it instructive to review the proof of a special case in geometric terms.

\begin{corollary} \label{acc=posets2}
The face poset of every convex polytope is isomorphic to
the face poset of some affine cone complex.
\end{corollary}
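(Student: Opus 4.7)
The plan is to carry out the construction of Proposition \ref{acc=posets} geometrically inside $\R^d$, using actual barycenters of the faces of $P$ in place of the auxiliary combinatorially independent vertices used in the proof of that proposition.

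First, I would form the barycentric subdivision of $P$. For each nonempty face $\tau\le P$, pick an interior point $\hat\tau$ (say the barycenter). I claim that the affine simplices $\hat{\tau_1}*\dots*\hat{\tau_k}$, taken over all chains $\tau_1<\dots<\tau_k$ in $F_P$, form an affine simplicial complex $\mathcal{K}$ that geometrically subdivides $P$. Affine independence of the vertices of such a simplex is easy, since $\hat{\tau_i}$ lies in the relative interior of $\tau_i$ while $\hat{\tau_1},\dots,\hat{\tau_{i-1}}\in\tau_{i-1}\subsetneq\tau_i$. The condition that two of these simplices meet in a common face (possibly empty) is the classical fact that the barycentric subdivision of a polytopal complex is simplicial, which I would prove by induction on $\dim P$: $\partial P$ is a polytopal complex of strictly lower dimension, so by the induction hypothesis the simplices supported on chains not ending in $P$ form an affine simplicial complex triangulating $\partial P$, and coning from $\hat P$ extends this to $\mathcal{K}$.

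Next, for each nonempty face $\sigma\le P$, let $\Delta(\sigma)$ be the subcomplex of $\mathcal{K}$ consisting of the simplices $\hat{\tau_1}*\dots*\hat{\tau_k}$ with all $\tau_i\le\sigma$; geometrically, $\Delta(\sigma)$ is the barycentric subdivision of $\sigma$. Each maximal simplex of $\Delta(\sigma)$ corresponds to a maximal chain ending in $\sigma$ and hence has $\hat\sigma$ as a vertex, verifying axiom (i) of Definition \ref{2.4} with apex $\hat\sigma$. Axiom (ii) holds because the simplices of $\Delta(\sigma)$ not containing $\hat\sigma$ are precisely those lying in $\partial\sigma=\bigcup_{\tau<\sigma}\tau$, and this set equals $\bigcup_{\tau<\sigma}\Delta(\tau)$. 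Axiom (iii) is immediate because distinct faces of $P$ have distinct barycenters.

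Finally, $\sigma\mapsto\Delta(\sigma)$ is clearly an order-preserving bijection from $F_P$ onto the face poset of the affine cone complex $\{\Delta(\sigma)\mid\sigma\in F_P\}$; its inverse is order-preserving too, since $\Delta(\sigma)\subseteq\Delta(\sigma')$ forces $\hat\sigma$ to be a vertex of $\Delta(\sigma')$ and hence $\sigma\le\sigma'$. So the two posets are isomorphic. The main technical point in the whole argument is the verification that the barycenters of the faces of $P$ give rise to an honest affine simplicial complex; everything after that amounts to unpacking definitions.
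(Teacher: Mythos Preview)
Your proposal is correct and follows essentially the same approach as the paper: build the barycentric subdivision of $P$ inside $\R^d$ and take, for each face $\sigma$, the subcomplex triangulating $\sigma$ as the cone $\Delta(\sigma)$. You supply more detail (affine independence, the inductive check that the subdivision is simplicial, and explicit verification of axioms (i)--(iii)), but the construction and the concluding isomorphism $\sigma\mapsto\Delta(\sigma)$ are exactly those of the paper.
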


\begin{proof}
If $P$ is a convex polytope in $\R^d$, then there exists an affine simplicial
complex $P^\flat$ in $\R^d$ such that each face of $P$ is a union of simplices
of $P^\flat$.
Namely, $P^\flat$ has one vertex $\hat\sigma$ in the interior of each nonempty
face $\sigma$ of $P$, and one simplex for every nonempty chain
$\sigma_1\incl\dots\incl\sigma_n$ of faces of $P$, namely the affine simplex
$\hat\sigma_1*\dots*\hat\sigma_n$ with vertices $\hat\sigma_1,\dots,\hat\sigma_n$.
Now the family of all subcomplexes of $P^\flat$ triangulating the nonempty
faces of $P$ is an affine cone complex whose face poset is isomorphic to $F_P$.
\end{proof}

\subsection{Dual cone complex of a simplicial complex}
Given an affine simplicial complex $\mathcal K$, by Proposition \ref{acc=posets}
its face poset $K=(\mathcal K,\le)$ is isomorphic to the face poset of an affine
cone complex $\mathcal P$, consisting of subcomplexes of an affine simplicial
complex $\mathcal K^\flat$.
The latter can be constructed similarly to the proof of Corollary
\ref{acc=posets2}, so that simplices of $K$ are unions of simplices of $K^\flat$.
The dual poset $K^*=(\mathcal K,\ge)$ has the same chains as $K$, and it
follows that it is isomorphic to the face poset of the affine cone complex
$\mathcal Q$, consisting of subcomplexes of $\mathcal K^\flat$ of the form
$\sigma^*=\{\hat\sigma_1*\dots*\hat\sigma_n\mid
\sigma_1>\dots>\sigma_n\ge\sigma\}$, where $\sigma\in K$.

\subsection{Coboundary of a simplicial cochain}\label{cochain coboundary}
Let $\mathcal K$ be an affine simplicial complex, and let $\mathcal Q$ be
its dual cone complex, as above.
The simplicial cochain $c_\sigma\in C^k(\mathcal K;\,\Z/2)$ given by
the indicator function of a $k$-simplex $\sigma\in\mathcal K$ has coboundary
$\delta(c_\sigma)=\sum_i c_{\tau_i}\in C^{k+1}(\mathcal K;\,\Z/2)$, where
$\partial(\sigma^*)=\bigcup_i\tau_i^*$ in $\mathcal Q$; see
\cite[figure on p.\ 26]{Fe}.

\begin{proposition}\label{2.4'} Let $\mathcal{P}$ be an affine cone complex.

(a) If $\sigma\in\mathcal{P}$, then every vertex of $\sigma$ is $\hat\rho$ for some
$\rho\in\mathcal{P}$ with $\rho=\sigma$ or $\rho\incl\partial\sigma$.

(b) For all $\sigma,\tau\in\mathcal{P}$, either $\sigma=\tau$, or
$\sigma\incl\partial \tau$, or $\tau\incl\partial \sigma$, or else
$\sigma\cap\tau\incl\partial \sigma\cap\partial \tau$.

(c) For all $\sigma,\tau\in\mathcal{P}$, the intersection $\sigma\cap\tau$ is
a union of cones of $\mathcal{P}$.
\end{proposition}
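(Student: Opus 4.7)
The plan is to prove the three parts in order, using induction on dimension for (a) and (c). For part (a), I induct on $\dim\sigma$. The base case is trivial, since a zero-dimensional cone is just $\{\hat\sigma\}$. For the inductive step, given a vertex $v\ne\hat\sigma$ of $\sigma$, the $0$-simplex $\{v\}$ is disjoint from $\hat\sigma$ and so lies in $\partial\sigma$; axiom (ii) then furnishes some $\rho'\in\mathcal{P}$ with $\{v\}\subset\rho'\subset\partial\sigma$. Because all maximal simplices of $\sigma$ contain $\hat\sigma$, the subcomplex $\partial\sigma$ has strictly smaller dimension than $\sigma$, so the inductive hypothesis applied to $\rho'$ produces the desired $\rho\in\mathcal{P}$ with $v=\hat\rho$ and $\rho\subset\rho'\subset\partial\sigma$.

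For part (b), the argument is a case analysis on which apex lies in which cone. If $\hat\sigma=\hat\tau$ then axiom (iii) forces $\sigma=\tau$. Otherwise, if $\hat\sigma$ is a vertex of some simplex of $\tau$, part (a) applied to $\tau$ yields a $\rho\in\mathcal{P}$ with $\hat\rho=\hat\sigma$ and either $\rho=\tau$ or $\rho\subset\partial\tau$, and axiom (iii) forces $\rho=\sigma$, so $\sigma\subset\partial\tau$ (the alternative $\sigma=\tau$ is excluded by $\hat\sigma\ne\hat\tau$). The symmetric case gives $\tau\subset\partial\sigma$. The combined case $\hat\sigma\in\tau$ and $\hat\tau\in\sigma$ is self-contradictory, since $\sigma\subset\partial\tau$ implies $\hat\tau\notin\sigma$. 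Finally, if neither apex lies in the opposite cone, then no simplex $\mu\in\sigma\cap\tau$ can contain $\hat\sigma$ (else $\{\hat\sigma\}$ would be a face of $\mu$ hence a simplex of $\tau$) or $\hat\tau$, so $\sigma\cap\tau\subset\partial\sigma\cap\partial\tau$.

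For part (c), I induct on $\dim\sigma+\dim\tau$. By part (b) we may assume $\sigma\cap\tau\subset\partial\sigma\cap\partial\tau$, since in the other three alternatives $\sigma\cap\tau$ is already a cone of $\mathcal{P}$ (namely $\sigma$, $\tau$, or one of them). Writing $\partial\sigma=\bigcup_i\alpha_i$ and $\partial\tau=\bigcup_j\beta_j$ with $\alpha_i,\beta_j\in\mathcal{P}$ by axiom (ii), the identity $\sigma\cap\tau=\bigcup_{i,j}(\alpha_i\cap\beta_j)$ holds as subcomplexes; since $\dim\alpha_i<\dim\sigma$ and $\dim\beta_j<\dim\tau$, the inductive hypothesis presents each $\alpha_i\cap\beta_j$ as a union of cones, completing the argument.

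The main (though fairly minor) obstacle is the bookkeeping in the inductive step of (a): one must verify that the cone furnished by the inductive hypothesis still lies inside $\partial\sigma$, which follows from the closure of $\partial\sigma$ under faces together with $\partial\rho'\subset\rho'$. Parts (b) and (c) then fall out essentially formally from (a) and axiom (iii), with the only subtlety in (b) being the symmetric-case contradiction that rules out ``both apexes contained'' and leaves a clean four-way dichotomy.
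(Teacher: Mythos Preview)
Your proof is correct. For (b) you give essentially the paper's argument, with the symmetry and the impossible ``both apexes contained'' case made explicit rather than left to the reader. For (a) and (c) your method differs from the paper's: you induct on dimension, whereas the paper argues by minimality, choosing a cone minimal among those containing a given vertex or simplex (invoking finiteness). These are two sides of the same well-foundedness coin. The more noticeable divergence is in (c): after reducing to $\sigma\cap\tau\subset\partial\sigma\cap\partial\tau$, the paper takes an arbitrary simplex $A\in\sigma\cap\tau$, finds cones $\lambda\subset\partial\sigma$ and $\mu\subset\partial\tau$ with $A\in\lambda\setminus\partial\lambda$ and $A\in\mu\setminus\partial\mu$, and then applies (b) to force $\lambda=\mu$, thereby exhibiting one cone of $\mathcal P$ through $A$ lying inside $\sigma\cap\tau$. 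Your recursive decomposition $\sigma\cap\tau=\bigcup_{i,j}(\alpha_i\cap\beta_j)$ is more mechanical and has the minor structural advantage of not re-invoking the minimality trick from (a); the paper's route is shorter and pinpoints the covering cone directly.
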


This can be easily deduced from Proposition \ref{acc=posets}, but a direct proof
contributes to our understanding of affine cone complexes.

\begin{proof}[Proof. (a)] If $v\ne \hat\sigma$, then $v\in\partial \sigma$.
By (ii), there exists a cone $\rho$ of $\mathcal{P}$ contained in
$\partial \sigma$ and containing $v$.
By finiteness we may assume that $\rho$ contains no other cone with this
property.
Then $v\notin\partial \rho$ by (ii).
Hence $v=\hat\rho$.
\end{proof}

\begin{proof}[(b)] We will show that either $\tau=\sigma$, or
$\tau\incl\partial \sigma$, or else $\sigma\cap\tau\incl\partial \tau$.
If $\hat\tau=\hat\sigma$, then $\tau=\sigma$ by (iii).
If  $\hat\tau\ne \hat\sigma$ and $\hat\tau\in\sigma$, then by (a), $\hat\tau=\hat\rho$
for some $\rho\in\mathcal{P}$, $\rho\incl\partial \sigma$.
Then $\tau=\rho$ by (iii), hence $\tau\incl\partial \sigma$.
Finally, if $\hat\tau\notin\sigma$, then $\sigma\cap\tau\incl\partial \tau$.
\end{proof}

\begin{proof}[(c)] By (b) we may assume that
$\sigma\cap\tau\incl\partial \sigma\cap\partial \tau$.
Let $A$ be a simplex of $\sigma\cap\tau$.
Similarly to the proof of (a), $A$ is contained in some cone $\lambda$ of
$\mathcal{P}$ contained in $\partial \sigma$ such that $A\notin\partial \lambda$,
and in some cone $\mu$ of $\mathcal{P}$ contained in $\partial \tau$ such that
$A\notin\partial \mu$.
Then by (b), $\lambda=\mu$.
\end{proof}

\subsection{Flag affine $\Delta$-set}
An {\it affine $\Delta$-set} is an affine simplicial complex $\mathcal K$ where
every simplex is endowed with a total ordering of vertices so that every
inclusion between simplices of $\mathcal K$ is monotone on the sets of vertices.
(See \cite{Fr}, \cite{RS2} concerning general $\Delta$-sets.)

Since the total orderings of vertex sets of simplices agree, they extend
to a partial order on the set of all vertices of $\mathcal K$.
Thus an affine $\Delta$-set is simply an affine simplicial complex endowed
with a partial ordering of vertices such that every two vertices connected
by an edge are comparable.

We call an affine $\Delta$-set $\mathcal L=(\mathcal K,\le)$ {\it flag} if
$\mathcal K$ is a flag simplicial complex.

If $\mathcal P$ is an affine cone complex consisting of subcomplexes of
an affine simplicial complex $\mathcal K$, then a partial ordering of the
vertices of $\mathcal K$ is given by the relation $\hat\sigma\le\hat\tau$
if and only if $\sigma\subset\tau$.
Clearly, two vertices of $\mathcal K$ are comparable if and {\it only if}
they are connected by an edge.
Moreover, it follows from Proposition \ref{2.4'}(b) that $\mathcal K$
is a flag complex.
Thus affine cone complexes can be viewed as a special case of flag affine
$\Delta$-sets.

\subsection{Strictly acyclic relation}
If $\mathcal L=(\mathcal K,\le)$ is a flag affine $\Delta$-set, then the
$1$-skeleton of $\mathcal K$ carries the structure of an acyclic digraph
(=a directed graph with no directed cycles).
An acyclic digraph $G=(V,E)$ amounts to a binary relation $E\subset V\x V$
that is {\it strictly acyclic} in the sense that it admits no cycles of
the form $v_1Ev_2E\dots Ev_nEv_1$.
Conversely, an acyclic digraph embedded in general position in $\R^d$ spans
a flag affine simplicial complex in $\R^d$, whose vertices are partially
ordered by the relation: $v\le w$ if and only if there exists a directed path
from $v$ to $w$ in the $1$-skeleton.

\begin{proposition} \label{order extension}
Every partial order on a finite set extends to a total order.
\end{proposition}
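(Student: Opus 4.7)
The plan is to proceed by induction on the cardinality $n=|S|$ of the underlying set, using the existence of a minimal element as the key ingredient.

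For the base case $n\le 1$ there is nothing to prove, since the unique partial order on a set of at most one element is already total. For the inductive step, assume the result for all sets of size less than $n$, and let $(S,\le)$ be a partially ordered set with $|S|=n$. First I would verify that $(S,\le)$ has a minimal element: pick any $x_0\in S$; if $x_0$ is not minimal, choose some $x_1<x_0$; iterating, we obtain a strictly decreasing sequence $x_0>x_1>x_2>\dots$ in $S$, and by antisymmetry all the $x_i$ are distinct, so by finiteness the process must terminate at a minimal element $m\in S$.

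Next, consider the restriction of $\le$ to $S':=S\setminus\{m\}$, which is again a partial order on a set of size $n-1$. By the induction hypothesis it extends to a total order $\preceq'$ on $S'$. Define a relation $\preceq$ on $S$ by declaring $m\preceq x$ for every $x\in S$, and $x\preceq y$ iff $x\preceq'y$ whenever $x,y\in S'$. One checks directly that $\preceq$ is reflexive, antisymmetric, transitive (the only non-trivial case of transitivity involves $m$, which is handled by its being smaller than everything), and total (since $\preceq'$ is total on $S'$ and $m$ is comparable with all elements by construction). Finally, $\preceq$ extends $\le$: if $x\le y$ and both lie in $S'$, then $x\preceq'y$ and hence $x\preceq y$; if $x=m$, there is nothing to check; the remaining case $y=m$, $x\neq m$ cannot occur because $m$ was chosen to be minimal, so $x\le m$ would force $x=m$.

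There is no real obstacle here; the only point requiring mild care is the verification that a finite non-empty poset has a minimal element, which is where antisymmetry (as opposed to a mere preorder) is used to guarantee that the descending chain terminates. Everything else is a routine case analysis.
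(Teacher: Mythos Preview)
Your proof is correct and complete. The inductive argument via removal of a minimal element is a standard and fully rigorous route to the result.

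The paper takes a different approach: it observes that the assignment $p\mapsto\fll p\flr$ gives a monotone embedding of $P$ into the simplex $\Delta^{\mathcal P}$, and then composes with the cardinality map $S\mapsto|S|$ to the chain $\{1,\dots,|\mathcal P|\}$. Since $p<q$ implies $\fll p\flr\subsetneq\fll q\flr$ and hence $|\fll p\flr|<|\fll q\flr|$, this yields a strictly monotone ``rank'' function; any total order refining the rank (ordering elements of equal rank arbitrarily) then extends $\le$. The paper's motivation is to highlight this particular canonical extension, which fits its recurring theme of representing posets inside simplices. Your argument, by contrast, is more self-contained and does not rely on any auxiliary embedding; it also makes explicit the role of antisymmetry in guaranteeing termination of the descending chain, whereas the paper's argument uses antisymmetry implicitly (to ensure $\fll p\flr\ne\fll q\flr$ when $p\ne q$ are comparable). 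Both approaches are equally short; the paper's has the advantage of producing a specific, structurally meaningful extension, while yours has the advantage of being entirely elementary.
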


This is well-known, but we would like to emphasize a particular canonical
extension.

\begin{proof} A finite poset $P=(\mathcal P,\le)$ admits a monotone embedding
in the combinatorial simplex $\Delta^{\mathcal P}$ via $p\mapsto\fll p\flr$,
where $\fll p\flr=\{q\in\mathcal P\mid q\le p\}$.
It remains to compose this embedding with the monotone surjection
$\Delta^{\mathcal P}\to\{1,2,\dots,|\mathcal P|\}$, $S\mapsto |S|$.
\end{proof}

Proposition \ref{order extension} relates flag affine
$\Delta$-sets to PL Morse theory, cf.\ \cite{BB} and \cite[Example on p.\ 275]{Mc}.
See also Kearton--Lickorish \cite{KL} (along with references to Kosi\'nski and
Kuiper therein), Forman \cite{Fo} (along with elaborations in \cite{OW} and
\cite{Ko}) and Bestvina \cite{Be}.

\section{Acknowledgements}

The author would like to thank A. Aizenberg, V. M. Buchstaber, 
A. Gaifullin, D. Gugnin,
I. Izmestiev, L. Montejano, A. Skopenkov, E. Nevo, M. Skopenkov, D. Stepanov and
R. \v Zivaljevi\'c for useful discussions.

\part{FINITE AND INFINITE POSETS}\label{combinatorics}

\section{Basic notions}\label{cone complexes}

\subsection{Posets and anti-reflexive relations}
If $\prec$ is an {\it anti-reflexive} relation on a set $\mathcal P$
(i.e.\ $x\nprec x$ for all $x\in\mathcal{P}$), its {\it inclusive} counterpart
$\preceq$ is defined by $x\preceq y$ iff $x\prec y$ or $x=y$; it is reflexive
(i.e.\ $x\preceq x$ for all $x\in\mathcal{P}$).
For a reflexive relation $\preceq$, its {\it exclusive} counterpart $\prec$ is
defined by $x\prec y$ iff $x\preceq y$ and $x\ne y$; it is anti-reflexive.
The operations of inclusive/exclusive counterpart constitute mutually inverse
bijections between the set of all reflexive relations on a set $\mathcal{P}$ and
the set of all anti-reflexive relations on $\mathcal{P}$.

A binary relation $<$ on a set $\mathcal{P}$ is called a {\it strict partial
order} if it is anti-reflexive and {\it transitive} (i.e.\ $x<y$ and $y<z$ imply
$x<z$ for all $x,y,z\in\mathcal P$).
It is easy to see that $<$ is a strict partial order iff its inclusive
counterpart $\le$ is a {\it partial order}, that is, is reflexive,
{\it anti-symmetric} (i.e.\ $x\le y$ and $y\le x$ imply $x=y$ for all
$x,y\in\mathcal{P}$) and transitive.
A set endowed with a strict or non-strict partial order is called a {\it poset}.

\subsection{Preposets and acyclic relations}
By a {\it preposet} we mean a set $\mathcal{P}$ endowed with a {\it strictly acyclic}
relation, in the sense that there exists no sequence $x_0,\dots,x_n\in\mathcal{P}$,
with $n$ being a nonnegative integer, such that
$x_0\prec x_1\prec\dots\prec x_n\prec x_0$.
In particular, a strictly acyclic relation is anti-reflexive.
We also write $x\prec y$ as $y\succ x$.

The inclusive counterpart $\preceq$ of a strictly acyclic relation $\prec$ is
characterized by being reflexive and {\it acyclic} in the sense that if
$x_0,\dots,x_n\in\mathcal{P}$, with $n$ being a nonnegative integer, satisfy
$x_0\preceq x_1\preceq\dots\preceq x_n\preceq x_0$, then $x_0=\dots=x_n$.
In particular, $\preceq$ is anti-symmetric.
By the above, a preposet may be equivalently viewed as a set $\mathcal{P}$
endowed with a binary relation $\preceq$ that is reflexive and acyclic.

\subsection{Transitive closure and covering relation}
The {\it transitive closure} of a binary relation $\prec$ on a set $\mathcal{P}$
is the relation $\prec\!\prec$ on $\mathcal{P}$ defined by $x\prec\!\prec y$ iff
there exist $z_1,\dots,z_n\in\mathcal{P}$ for some nonnegative integer $n$ such
that $x\prec z_1\prec\dots\prec z_n\prec y$; it is clearly transitive.

It is easy to see that a binary relation is (strictly) acyclic iff its transitive
closure is a (strict) partial order.
Thus every poset is a preposet, and for every preposet $P=(\mathcal{P},\prec)$,
its transitive closure $\left<P\right>\bydef(\mathcal{P},\prec\!\prec)$ is a poset.
So one may view preposets not just as a generalization of posets, capturing the notion of possibly 
non-transitive subordination (``the vassal of my vassal is not necessarily my vassal''), 
but more specifically as posets endowed with an additional structure.

Given a poset $P=(\mathcal P,\le)$, one defines the {\it covering} relation $\prec$
on $P$ by $p\prec q$ if $p<q$ and there exists no $r\in\mathcal P$ with $p<r<q$.
Clearly, $\prec$ is the minimal strictly acyclic relation whose transitive closure
is $<$.

\subsection{Monotone maps}
Let $P=(\mathcal{P},\le)$ and $Q=(\mathcal{Q},\preceq)$ be [pre]posets.
An order preserving or {\it monotone} map between them is a map
$f\:\mathcal{P}\to\mathcal{Q}$ such that $v\le w$ implies $f(v)\preceq f(w)$
for all $v,w\in\mathcal{P}$.
It is called a {\it monotone embedding} if the converse implication holds as well.
Every monotone embedding is obviously injective, but not every injective
monotone map is a monotone embedding.
An {\it isomorphism} of [pre]posets is a monotone bijection whose inverse is
monotone, or equivalently a surjective monotone embedding.

We say that $Q$ is a {\it sub[pre]poset} of $P$, and write $Q\incl P$, if $\mathcal{Q}$
is a subset of $\mathcal{P}$ and the inclusion $\mathcal{Q}\emb\mathcal{P}$ is
an embedding of $Q$ into $P$.

The {\it dual} of a [pre]poset $P=(\mathcal{P},\le)$ is the [pre]poset
$P^*\bydef(\mathcal{P},\ge)$.

From now on we often do not distinguish between a [pre]poset
$P=(\mathcal{P},\preceq)$ and its underlying set $\mathcal{P}$ (by an abuse
of notation).

\subsection{Unary operations: $C$, $C^*$, $\partial$, $\partial^*$}
Let $P$ be a [pre]poset.
The {\it cone} $CP$ over $P$ is obtained by adjoining to $P$ an additional element,
denoted $\hat1$ (or $+\infty$), which is set to be greater than every element of $P$.
The {\it dual cone} $C^*P\bydef(C(P^*))^*$ is obtained by adjoining to $P$ an
additional element, denoted $\hat0$ (or $-\infty$), which is set to be less than
every element of $P$.

If $P$ is a [pre]poset, its {\it boundary} $\partial P$ is the sub[pre]poset
of $P$ consisting of all $p\in P$ such that $p\le q$ for some $q\in P$ such that
there exists precisely one element $r\in P$ satisfying $r>q$.
The {\it coboundary} $\partial^*P=(\partial (P^*))^*$.
We will later see that $\partial$ is related to the boundary of a manifold;
on the other hand, $\partial^*$ can be seen to be related to the coboundary of
a cochain (see \S\ref{cochain coboundary}).
Note that $\partial (CP)=P=\partial^*(C^*P)$.

\begin{example}[$2^S$ and $\Delta^S$]\label{2.1} Let $S$ be a set (possibly
infinite).
The relation of inclusion on the set $\text{\it 2}^S$ of all subsets of $S$ is
a partial order.
The resulting {\it subset poset} $2^S=(\text{\it 2}^S,\incl)$ is isomorphic to its
own dual (by taking the complement).
The poset $\Delta^S$ of all nonempty subsets of $S$ will be called a
{\it (combinatorial) simplex} or the {\it $S$-simplex}, or the
{\it $n$-simplex} (notation: $\Delta^n$) in the case where
$S$ is $[n+1]\bydef\{0,1,\dots,n\}$.
If $T\incl S$ is non-empty, $\Delta^T$ is called a {\it face} of $\Delta^S$.
Faces that are $0$-simplices (i.e.\ singletons) are also called {\it vertices},
and faces that are $1$-simplices are also called {\it edges}.
Note that $\partial\Delta^S=\partial(\partial^*2^S)$ is isomorphic to its own
dual (compare Example \ref{polytope-duality}).
\end{example}

\subsection{Cones}
Let $P$ be a [pre]poset.
The {\it cone} $\fll p\flr$ (resp.\ the {\it dual cone} $\cel p\cer$) of
a $p\in P$ is the sub[pre]poset of $P$ consisting of all $q\in P$ satisfying
$q\le p$ (resp.\ $q\ge p$).
We may also write $\fll p\flr_P$ and $\cel p\cer^P$ to emphasize the
[pre]poset $P$.

The definitions of $\fll p\flr$ and $\cel p\cer$ are in agreement with
the previously defined cone and dual cone over a poset; namely, $\fll p\flr$ is
the cone over $\partial\fll p\flr$, and $\cel p\cer$ is
the dual cone over $\partial^*\cel p\cer$.
The duality is expressed by $\cel p\cer=\fll p^*\flr^*$.
The notation%
\footnote{These are typeset in \TeX\ with the usual {\tt \symbol{92}lfloor},
{\tt \symbol{92}rfloor}, {\tt \symbol{92}lceil} and {\tt \symbol{92}rceil}
delimiters, downscaled using {\tt \symbol{92}scriptstyle} (in regular text)
and {\tt \symbol{92}scriptscriptstyle} (in sub- and superscripts), or their
context-sensitive combination formed with the aid of {\tt \symbol{92}mathchoice}.
(The {\tt \symbol{92}mathsmaller} command of the {\tt relsize} package produces
similar results except in displayed equations.)}
$\cel q\cer$ and $\fll q\flr$ can be thought of as just a concise form of
the interval notation $[p,+\infty)$ and $(-\infty,p]$, where the square brackets
undergo a counterclockwise $90^\circ$ rotation.

\begin{remark}
If $\mathcal{P}$ is an affine simplicial complex and $\preceq$ is
the inclusion relation (see Example \ref{2.3}), the cone of an affine simplex
$\sigma\in\mathcal{P}$
is the face poset $F_\sigma$, viewed as the poset of all simplices of
the subcomplex of $\mathcal{P}$ triangulating $\sigma$ (in fact, this subcomplex
does happen to be a ``cone'' in the terminology of Rourke--Sanderson
\cite[2.8(7)]{RS}); whereas the dual cone of $\sigma$ is isomorphic to what
is known as the ``dual cone'' of $\sigma$ in PL topology (see \cite[2.27(6)]{RS}).
\end{remark}

\subsection{Cone complexes} By a {\it cone complex}
we mean a countable poset where every cone is finite.
A {\it cone precomplex} is a preposet whose transitive closure is a cone
complex.
A cone [pre]complex $P$ such that the dual [pre]poset $P^*$ is also a cone
[pre]complex is called {\it locally finite}.

\begin{remark}
Cone precomplexes other than cone complexes arise in practice as triple deleted
prejoins of cone complexes and as mapping cylinders of non-closed monotone maps
between cone complexes; the non-closed monotone maps
in turn arise in practice as diagonal maps $P\to P\x P$ and more importantly as
bonding maps between nerves of coverings.
\end{remark}

\begin{lemma}\label{2.6} (a) Every preposet admits a monotonous injection
into a simplex.

(b) A preposet is a poset iff it is isomorphic to a subposet of a simplex.
\end{lemma}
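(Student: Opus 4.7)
The plan is to use the natural map $f\:\mathcal P\to\Delta^{\mathcal P}$ sending each $p\in\mathcal P$ to its down-cone $\fll p\flr_{\langle P\rangle}$ in the transitive closure $\langle P\rangle$, i.e.\ the set of those $q\in\mathcal P$ with $q\prec\!\prec p$ or $q=p$. This is essentially the same construction already used in the proof of Proposition \ref{order extension}, and it lands in $\Delta^{\mathcal P}$ because $p\in f(p)$.

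For (a), I verify monotonicity and injectivity separately. Monotonicity reduces to the transitivity of $\prec\!\prec$: chaining $r\prec\!\prec p$ (or $r=p$) with $p\prec q$ gives $r\prec\!\prec q$, so $f(p)\subseteq f(q)$. Injectivity reduces to the acyclicity of $\prec\!\prec$: if $f(p)=f(q)$ with $p\ne q$, then $p\in f(q)$ and $q\in f(p)$ yield $p\prec\!\prec q\prec\!\prec p$, which is forbidden.

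For (b), the direction \when\ is immediate: inclusion on $\Delta^S$ is transitive, and transitivity is inherited by any sub[pre]poset because the defining embedding preserves and reflects the relation. For \imp, suppose $P$ is already a poset; then $\prec\!\prec$ coincides with $\prec$, so $f(p)=\fll p\flr$. The key upgrade is that $f$ is now not merely a monotone injection but a monotone embedding: if $\fll p\flr\subseteq\fll q\flr$, then $p\in\fll p\flr$ forces $p\in\fll q\flr$, i.e.\ $p\le q$. Hence $P$ is isomorphic to the subposet $f(\mathcal P)\subset\Delta^{\mathcal P}$.

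No step should be difficult; the one subtlety worth flagging is that in the preposet case of (a) one has to pass to the transitive closure to make $f$ well-defined as a monotone map, whereas the poset hypothesis in (b) is precisely what upgrades the monotone injection to a monotone embedding.
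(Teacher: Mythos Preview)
Your proof is correct and is essentially the paper's argument spelled out in more detail: both use the cone map $p\mapsto\fll p\flr$ into $\Delta^{\mathcal P}$, with the preposet case handled via the transitive closure. The only organizational difference is that the paper proves (b) first and deduces (a) by precomposing with the monotone inclusion $P\hookrightarrow\langle P\rangle$, whereas you build the single map $f$ through $\langle P\rangle$ from the outset and treat (a) and (b) in parallel.
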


\begin{proof}[Proof. (b)] Since every simplex is itself a poset, every preposet
embedded into a simplex is a poset.
Conversely, every poset $P=(\mathcal{P},\ge)$ is isomorphic to the poset of cones
of $P$ ordered by inclusion of their underlying sets, which is a subposet
of $\Delta^\mathcal{P}$.
\end{proof}

\begin{proof}[(a)] This follows from (b) since the inclusion of every preposet
in its transitive closure is monotonous.
\end{proof}

\subsection{Closed and open subposets}
Let $P$ be a [pre]poset.
A sub[pre]poset $Q$ of $P$ is called {\it closed} (resp.\ {\it open}) if
$p\le q$ (resp.\ $p\ge q$), where $q\in Q$, implies $p\in Q$.
Note that if $Q$ is a closed subpreposet of $P$, then $\left<Q\right>$ is a
closed subposet of $\left<P\right>$.

The {\it closure} (resp.\ {\it open hull}) of a sub[pre]poset $Q$ of $P$ is
the sub[pre]poset $\fll Q\flr$ (resp.\ $\cel Q\cer$) of $P$ consisting of
all $p\in P$ such that $p\le q$ (resp.\ $p\ge q$) for some $q\in Q$.
If $P$ is a poset, $\fll Q\flr$ and $\cel Q\cer$ coincide with the smallest
closed subposet and the smallest open subposet containing $Q$.

The collection of all open subposets of a poset $Q=(\mathcal Q,\le)$ determines
a topology on $\mathcal Q$, known as the Alexandroff or the right order topology.
It goes back to P. S. Alexandroff \cite{Al} that the following are equivalent
for a $T_0$ topological space $X$:

(i) Arbitrary intersections of open sets are open in $X$;

(ii) Every point has a smallest neighborhood in $X$;

(iii) $X$ is homeomorphic to a poset endowed with the Alexandroff topology.

\subsection{Closed and open maps} \label{closed-open-maps}
It is easy to see that the following are equivalent for a map $f\:P\to Q$
between posets:
\begin{roster}
\item $f$ is monotone;
\item $f$ continuous with respect to the Alexandroff topologies;
\item $f(\fll p\flr)\subset\fll f(p)\flr$ for each $p\in P$;
\item $f(\cel p\cer)\subset\cel f(p)\cer$ for each $p\in P$.
\end{roster}
For instance, to see that (ii) implies (i), we note that if $p'>p$, then $p'$
lies in every open subposet of $P$ containing $p$; whereas if $f(p)\nless f(p')$,
then $f(p')$ does not lie in the open subposet $\cel f(p)\cer$ of $Q$.

In general topology, a continuous map is called {\it open} [resp.\ {\it closed}] if
it sends every open [closed] set to an open [closed] one.
It is now obvious that a monotone map $f\:P\to Q$ between posets is open [closed]
(with respect to the Alexandroff topologies) if and only if $f(\cel p\cer)=\cel f(p)\cer$
[resp.\ $f(\fll p\flr)=\fll f(p)\flr$] for every $p\in P$.

\begin{example}\label{diagonal}
An example of a non-closed monotone map is given by the diagonal embedding of
the $1$-simplex $\Delta^1$ into $\Delta^1\x\Delta^1$.
\end{example}

\section{Suprema, atoms and simplices}

\subsection{Conditionally complete poset}
A poset $P$ is {\it conditionally complete} if every non-empty
$Q\incl P$ that has an upper bound in $P$ (i.e.\ a $p\in P$ such that
$Q\incl\fll p\flr$) also has a least upper bound in $P$ (i.e.\ an upper bound
$p\in P$ such that $\cel p\cer$ contains all upper bounds of $Q$ in $P$).

\begin{remark}
Conditionally complete posets should not be confused with bounded complete
posets (also known as consistently or coherently complete posets), where
every (not necessarily non-empty!) subset that has an upper bound has a
least upper bound.
It is easy to see that bounded complete posets are precisely those
conditionally complete posets that have a least element; and complete
posets are precisely those conditionally complete posets that have
a least element and a greatest element.
\end{remark}

The following lemma is well-known, cf.\ \cite{Bi}:

\begin{lemma}\label{2.7}
A poset $P$ is conditionally complete if and only if every non-empty subset of $P$
that has a lower bound in $P$ also has a greatest lower bound in $P$.
\end{lemma}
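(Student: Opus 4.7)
The plan is to establish the forward direction by a standard duality argument that swaps the roles of $Q$ and its set of lower bounds; the reverse direction will follow by the symmetric argument (or equivalently by passing to the dual poset $P^*$ and applying the forward direction).

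First I would fix a non-empty $Q \subset P$ that has a lower bound in $P$, and consider the set $L = \{p \in P \mid p \le q \text{ for all } q \in Q\}$ of all lower bounds of $Q$. The hypothesis gives that $L$ is non-empty. The key observation is that $L$ has an upper bound in $P$: since $Q$ is non-empty, pick any $q_0 \in Q$; then by definition of $L$, every $\ell \in L$ satisfies $\ell \le q_0$, so $q_0$ is an upper bound of $L$. Now conditional completeness applied to $L$ yields a least upper bound $m \in P$ of $L$.

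Next I would verify that $m$ is the greatest lower bound of $Q$. For the lower-bound property, note that every $q \in Q$ is (by the argument of the previous paragraph, with $q$ in place of $q_0$) an upper bound of $L$, and since $m$ is the \emph{least} upper bound we get $m \le q$; as this holds for every $q \in Q$, indeed $m \in L$. For the ``greatest'' property, any lower bound $\ell'$ of $Q$ lies in $L$ by definition, and since $m$ is an upper bound of $L$ we have $\ell' \le m$. Thus $m$ is the greatest lower bound of $Q$.

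The reverse implication is the same argument with the roles of upper and lower bounds interchanged, which is most cleanly phrased by noting that the hypothesis of the ``if'' direction applied to $P^*$ gives exactly conditional completeness of $P$ (recalling that $P$ is conditionally complete iff $P^*$ satisfies the condition about lower bounds, by the forward direction applied to $P^*$). I do not anticipate a serious obstacle: the only points that require a little care are (i) using non-emptiness of $Q$ to produce an upper bound for $L$, and (ii) keeping the direction of the inequalities straight when verifying that the least upper bound of $L$ is simultaneously a member of $L$.
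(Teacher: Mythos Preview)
Your proof is correct and follows essentially the same approach as the paper: both take $L$ to be the set of lower bounds of $Q$, observe that any element of $Q$ is an upper bound of $L$, apply conditional completeness to obtain the least upper bound $m$ of $L$, and then verify that $m$ is the greatest lower bound of $Q$; the converse is handled by symmetry in both cases. Your write-up is slightly more explicit in checking that $m\in L$, but the arguments are the same.
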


\begin{proof}[Proof]
By symmetry, it suffices to prove the `only if' assertion.
If $P$ is conditionally complete and a subset $Q\incl P$ has a lower bound in $P$, then the set $L$
of all lower bounds of $Q$ in $P$ is nonempty and has an upper bound in $P$
(specifically, any element of $Q$ will do).
Then there exists the greatest lower bound $u$ of $L$ in $P$, that is,
$\fll u\flr$ contains $L$ and $\cel u\cer$ contains all upper bounds of $L$,
in particular, all of $Q$.
By definition, $u$ is the greatest lower bound of $Q$.
\end{proof}

\begin{corollary}\label{2.8} (a) Every simplex is a conditionally complete poset.

(b) Every closed subposet of a conditionally complete poset is conditionally complete.
\end{corollary}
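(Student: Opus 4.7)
For part (a), let $S$ be a set and let $\mathcal{F}\subset\Delta^S$ be a nonempty family of nonempty subsets of $S$ that admits an upper bound $U\in\Delta^S$, i.e., $T\incl U$ for every $T\in\mathcal{F}$. The natural candidate for a least upper bound is the union $V=\bigcup_{T\in\mathcal{F}}T$. Three things need to be checked, each essentially immediate: first, $V$ lies in $\Delta^S$ because $V$ is nonempty (as $\mathcal{F}$ is nonempty and its members are nonempty) and contained in $U$; second, $V$ is an upper bound for $\mathcal{F}$ by construction; third, any upper bound $W\in\Delta^S$ of $\mathcal{F}$ must contain each $T\in\mathcal{F}$, hence contains $V$, so $V\incl W$. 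The only place where the hypothesis that $\mathcal{F}$ has an upper bound enters is in ensuring $V\in\Delta^S$ (which, for a simplex defined as the poset of nonempty subsets, is automatic from $\mathcal{F}\ne\emptyset$; the real role of the hypothesis is to mirror the general definition). The subtlety of avoiding the empty set, which is excluded from $\Delta^S$, is thus handled by the nonemptiness of $\mathcal{F}$ itself.

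For part (b), let $P$ be conditionally complete and let $Q\incl P$ be a closed subposet. Suppose $R\incl Q$ is nonempty and has an upper bound $q\in Q$. Since $Q$ sits inside $P$ as a subposet, $q$ is also an upper bound for $R$ in $P$, so by conditional completeness of $P$ there is a least upper bound $p\in P$ of $R$. The key point is that $p\le q$ (since $p$ is the \emph{least} upper bound and $q$ is some upper bound) and $q\in Q$; because $Q$ is closed, this forces $p\in Q$. Finally, $p$ is the least upper bound of $R$ within $Q$: it is an upper bound in $Q$, and any other upper bound $q'\in Q$ is also an upper bound in $P$, so $p\le q'$ by the defining property of $p$ in $P$.

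Neither part presents any real obstacle; the proof is essentially unwinding definitions. The only mildly substantive observation is the interaction in (b) between the closedness condition on $Q$ and the direction of the bound $p\le q$: it is exactly closedness in the sense of Definition~\ref{closed and open subposets} (downward closure under $\le$) that is needed, and it matches the direction in which the supremum lies below any given upper bound. This makes part (b) essentially dual-insensitive --- by Lemma~\ref{2.7}, one could equally well argue via greatest lower bounds on an open subposet, but the closed-subposet version is the one needed here.
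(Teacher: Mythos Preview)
Your proof is correct. The approach differs from the paper's only in a minor dual way: the paper invokes Lemma~\ref{2.7} and verifies conditional completeness via \emph{infima} (intersections for (a), greatest lower bounds for (b)), whereas you work directly with \emph{suprema} (unions for (a), least upper bounds for (b)). Your route is slightly more self-contained since it does not appeal to Lemma~\ref{2.7}; the paper's route makes the role of closedness marginally more transparent (any lower bound of a nonempty $S\subset Q$ already lies in $Q$, so the infimum is automatically in $Q$). Either way the argument is a one-line unwinding of definitions. One small remark: your cross-reference \verb|\ref{closed and open subposets}| will not resolve, as that definition carries no label in the paper.
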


\begin{proof}[Proof. (a)] The greatest lower bound of a set of $A$ of nonempty
subsets $S_\alpha\incl S$ is their intersection $\bigcap_\alpha S_\alpha$, if it is
nonempty; else $A$ has no lower bounds.
\end{proof}

\begin{proof}[(b)] Let $P$ be a conditionally complete poset and $Q$ its closed
subposet.
If the greatest lower bound of a subset $S\incl Q$ exists in $P$, then it belongs
to $Q$, since $Q$ contains all its lower bounds.
\end{proof}

\subsection{Full subposet}
A subposet $Q$ of a poset $P$ is called {\it full} in $P$, if every cone
of $P$ meets $Q$ in a cone of $Q$ or in the empty set.
When $Q$ is a closed subposet, this is McCrory's definition
(see also \cite[Lemma 2.6]{Mc}).
When $P$ is a simplicial complex, this is the usual definition of a full
subcomplex, cf.\ \cite{RS}.

It is easy to see that open subposets are full.

\begin{lemma} \label{full in CCP}
A full subposet of a conditionally complete poset is conditionally complete.
\end{lemma}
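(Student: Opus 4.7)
The plan is to invoke Lemma \ref{2.7} to reduce to producing greatest lower bounds: it suffices to show that every nonempty $S\subset Q$ having a lower bound in $Q$ has a greatest lower bound in $Q$. So I would fix such an $S$, together with a lower bound $q_0\in Q$. Then $q_0$ is also a lower bound of $S$ in $P$, so the conditional completeness of $P$ (via Lemma \ref{2.7} again) produces a greatest lower bound $\ell\in P$.

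The naive hope is $\ell\in Q$, but that need not hold, and this is the only real conceptual point of the argument: the role of fullness is precisely to manufacture a substitute inside $Q$. Since $q_0\le\ell$ in $P$, the cone $\fll\ell\flr_P$ meets $Q$ nontrivially, so by fullness the intersection is of the form $\fll\ell^*\flr_Q$ for a unique $\ell^*\in Q$. My claim will be that this $\ell^*$ is the greatest lower bound of $S$ in $Q$.

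To verify this I would argue as follows. First, $\ell^*\le\ell$ in $P$ (because $\ell^*\in\fll\ell\flr_P$) and $\ell\le s$ for every $s\in S$, so $\ell^*$ is a lower bound of $S$ in $Q$. Conversely, any $q\in Q$ with $q\le s$ for all $s\in S$ is also a lower bound of $S$ in $P$, hence $q\le\ell$; this places $q$ in $\fll\ell\flr_P\cap Q=\fll\ell^*\flr_Q$, so $q\le\ell^*$ in $Q$, as required.

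I do not expect any substantial obstacle. The one trap to avoid is trying to force $\ell$ itself to lie in $Q$; once one reinterprets fullness as the recipe that furnishes the largest element of $Q$ sitting below any prescribed element of $P$, the proof is essentially mechanical.
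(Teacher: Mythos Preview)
Your proof is correct and follows essentially the same approach as the paper: both arguments pass to greatest lower bounds via Lemma~\ref{2.7}, observe that the set of lower bounds of $S$ in $P$ is a cone $\fll\ell\flr_P$, and then invoke fullness to conclude that $\fll\ell\flr_P\cap Q$ is a cone of $Q$ whose top element serves as the greatest lower bound in $Q$. The paper compresses this into two sentences, while you spell out the verification that $\ell^*$ is indeed the greatest lower bound; the underlying idea is identical. (Incidentally, the paper's proof begins with ``full closed subposet,'' but closedness is never used---your reading of the statement is the right one.)
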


\begin{proof}
Let $F$ be a full closed subposet of a conditionally complete poset $P$.
Given a subset $S\subset F$, the set $T$ of all lower bounds of
$S$ in $P$ is a cone since $P$ is conditionally complete.
Since $F$ is a subposet of $P$, the set of all lower bounds of $S$
in $F$ equals $T\cap F$, which is a cone since $F$ is full in $P$.
\end{proof}

\begin{lemma} \label{CCP amalgam}
Let $P$ be a poset that is a union of closed subposets $Q$ and $R$ such that
$Q\cap R$ is full in $Q$ and in $R$.
If $Q$ and $R$ are conditionally complete, then so is $P$.
\end{lemma}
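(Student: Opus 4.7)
The plan is to fix an arbitrary nonempty $S\subset P$ with some upper bound $p\in P$ and produce a least upper bound of $S$ in $P$. By the symmetry between $Q$ and $R$, I may assume $p\in Q$; since $Q$ is closed in $P$, this forces $S\subset\fll p\flr_P\subset Q$. Conditional completeness of $Q$ then yields a least upper bound $u\in Q$ for $S$, and the task reduces to showing that $u$ remains a least upper bound of $S$ in all of $P$.

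Let $p'\in P$ be any competing upper bound. If $p'\in Q$, then $u\le p'$ holds by definition of $u$. The interesting case is $p'\in R\but Q$: by closedness of $R$ we have $S\subset\fll p'\flr_P\subset R$, and combining with the previous inclusion, $S\subset Q\cap R$. The main obstacle of the proof is to leverage the fullness hypotheses to push $u$ itself into $Q\cap R$, so that it can be compared with the $R$-side bound $p'$. Applying fullness of $Q\cap R$ in $Q$ to the cone $\fll u\flr_Q$, which contains the nonempty set $S\subset Q\cap R$, we find that $\fll u\flr_Q\cap(Q\cap R)$ is a cone $\fll u_0\flr_{Q\cap R}$ of $Q\cap R$. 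Then $u_0\le u$ in $Q$, while $u_0$ is an upper bound of $S$ in $Q$; by minimality of $u$, $u=u_0\in Q\cap R$.

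To finish, apply fullness of $Q\cap R$ in $R$ to the cone $\fll p'\flr_R$: the intersection $\fll p'\flr_R\cap(Q\cap R)$ contains $S$, so it equals some cone $\fll v\flr_{Q\cap R}$ with $v\in Q\cap R$ and $v\le p'$. Since $v\in Q$ is an upper bound of $S$ and $u$ is the least upper bound of $S$ in $Q$, we obtain $u\le v\le p'$, completing the verification that $u$ is the least upper bound of $S$ in $P$. The only delicate point throughout is the fullness-based argument producing $u_0$; everything else is book-keeping with closedness and the universal property of $u$ in $Q$.
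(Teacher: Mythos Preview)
Your proof is correct and follows the same approach as the paper: pass to $Q$ to obtain the least upper bound $u$, and for an upper bound $p'\in R\setminus Q$ use fullness of $Q\cap R$ in $R$ to produce an intermediate element $v\in Q\cap R$ with $u\le v\le p'$. Your extra step showing $u\in Q\cap R$ via fullness in $Q$ is correct but unnecessary, since the final inequality $u\le v$ only requires that $v\in Q$ be an upper bound of $S$.
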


\begin{proof}
Suppose that $S$ is a nonempty subset of $P$, and $p\in P$ is
an upper bound of $S$.
By symmetry we may assume that $p\in Q$.
Since $Q$ is closed, $S\subset Q$.
Since $Q$ is conditionally complete, $S$ has a least upper bound $q$ in $Q$.
If $r\notin Q$ is an upper bound of $S$ in $P$, then $r\in R$.
In this case $S\subset Q\cap R$.
Since $Q\cap R$ is full in $Q$, we get that $S$ has an upper bound $s\le r$
in $Q\cap R$.
Since $q$ is the least upper bound of $S$ in $Q$, we have $q\le s$.
Hence $q\le r$, which shows that $q$ is a least upper bound of $S$ in $P$.
\end{proof}

\begin{lemma} \label{full in open} Let $Q$ be a full subposet of a poset $P$.

(a) A subposet $R$ of $Q$ is full in $Q$ if and only if it is full in $P$.

(b) If $U$ is an open subposet of $P$, then $Q\cap U$ is full in $U$.
\end{lemma}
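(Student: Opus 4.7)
All of the assertions unpack, via the identity $\fll x \flr_S = \fll x \flr_P \cap S$ (valid for any subposet $S$ of a poset $P$ and any $x\in S$, since the order on $S$ is the restriction of that on $P$), into short set-theoretic manipulations. I would state this identity once at the start and then handle the two parts.

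For~(a), I would first observe that the ``if'' direction is immediate from the identity and does not even need fullness of $Q$ in $P$: for $q\in Q$ one has $\fll q \flr_Q \cap R = \fll q \flr_P \cap Q \cap R = \fll q \flr_P \cap R$ (using $R\subset Q$), which is empty or a cone in $R$ by fullness of $R$ in $P$. For the ``only if'' direction, fix $p\in P$ and split $\fll p \flr_P \cap R = (\fll p \flr_P \cap Q) \cap R$. If $\fll p \flr_P \cap Q$ is empty, so is $\fll p \flr_P \cap R$; otherwise fullness of $Q$ in $P$ supplies $q\in Q$ with $\fll p \flr_P \cap Q = \fll q \flr_Q$, and fullness of $R$ in $Q$ then makes $\fll q \flr_Q \cap R$ empty or a cone in $R$.

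For~(b), given $u\in U$ I would rewrite $\fll u \flr_U \cap (Q\cap U) = (\fll u \flr_P \cap Q) \cap U$ and use fullness of $Q$ in $P$ to replace $\fll u \flr_P \cap Q$ (if nonempty) by $\fll q \flr_Q$ for some $q\in Q$. The only place where the hypothesis that $U$ is \emph{open} enters, and thus the only step requiring any care, is showing that the apex $q$ itself belongs to $U$ whenever $\fll q \flr_Q \cap U$ is nonempty: any $q'\le q$ with $q'\in U$ forces $q\in U$ by upward closure of $U$. Once $q\in Q\cap U$ is known, the basic identity yields $\fll q \flr_Q \cap U = \fll q \flr_{Q\cap U}$, a cone in $Q\cap U$, and the argument is complete.
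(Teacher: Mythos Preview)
Your proof is correct. For part~(a) your argument matches the paper's (the paper is simply terser, calling the ``only if'' direction obvious where you spell out how fullness of $Q$ in $P$ enters). For part~(b) you take a different route: you unpack the definitions directly and use openness of $U$ to force the apex $q$ into $U$. The paper instead observes that open subposets are automatically full (so $U\cap Q$ is full in $Q$, and $U$ is full in $P$) and then applies part~(a) twice---once in each direction---to pass from fullness in $Q$ to fullness in $P$ to fullness in $U$. The paper's approach is more structural and exhibits (b) as a formal consequence of (a); yours is more self-contained and makes the single use of openness explicit.
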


\begin{proof}[Proof. (a)]
The ``only if'' direction is obvious.
Conversely, let $q\in Q$.
Then $\fll q\flr_Q=\fll q\flr_P\cap Q$, so
$\fll q\flr_Q\cap R=\fll q\flr_P\cap R$.
\end{proof}

\begin{proof}[(b)]
Since $U$ is open, $U\cap Q$ is open in $Q$.
Then $U\cap Q$ is full in $Q$.
Since $Q$ is full in $P$, by the ``only if'' in (a), $U\cap Q$ is full in $P$.
On the other hand, since $U$ is open in $P$, it is full in $P$.
Then by the ``if'' in (a), $U\cap Q$ is full in $U$.
\end{proof}

\begin{lemma} If $Q$ is a full subposet of a poset $P$, then $\cel Q\cer$
strong deformation retracts onto $Q$ via
a monotone map $r\:\cel Q\cer\to Q$, homotopic to the identity by
a monotone homotopy $h\:\cel Q\cer\x[2]\to\cel Q\cer$ that extends the projection
$Q\x[2]\to Q$.
\end{lemma}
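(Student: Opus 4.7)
The plan is to define $r(p)$ pointwise as the greatest element of $\fll p\flr_P\cap Q$, using fullness to make this element well-defined, and then glue $r$ and the identity of $\cel Q\cer$ together along the two-element chain $[2]=\{0<1\}$ to obtain $h$.

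First I would note that for each $p\in\cel Q\cer$ the set $\fll p\flr_P\cap Q$ is nonempty, by definition of the open hull. Since $\fll p\flr_P$ is a cone of $P$ and $Q$ is full in $P$, the intersection must be a cone of $Q$, hence equals $\fll r(p)\flr_Q$ for a unique greatest element $r(p)\in Q$. The resulting map $r\:\cel Q\cer\to Q$ satisfies $r(p)\le p$ by construction, and $r|_Q=\id_Q$: for $p\in Q$, the element $p$ lies in $\fll p\flr_P\cap Q=\fll r(p)\flr_Q$, giving $p\le r(p)$, which combined with $r(p)\le p$ forces equality. Monotonicity of $r$ is immediate from monotonicity of $p\mapsto\fll p\flr_P$: if $p\le p'$, then $\fll p\flr_P\cap Q\subset\fll p'\flr_P\cap Q$, i.e.\ $\fll r(p)\flr_Q\subset\fll r(p')\flr_Q$, so $r(p)\le r(p')$.

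Next I would define $h\:\cel Q\cer\x[2]\to\cel Q\cer$ by $h(p,0)=r(p)$ and $h(p,1)=p$. Monotonicity on the product poset reduces to the relation $(p,0)\le(p',1)$ when $p\le p'$, which gives $h(p,0)=r(p)\le p\le p'=h(p',1)$; monotonicity at each fixed time slice follows from monotonicity of $r$ and of the identity, respectively. Since $r|_Q=\id_Q$, the restriction of $h$ to $Q\x[2]$ sends $(q,t)\mapsto q$, i.e.\ it agrees with the projection $Q\x[2]\to Q$, as required.

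The content of the argument is really concentrated in the single step of extracting the greatest element $r(p)$ of $\fll p\flr_P\cap Q$, and the fullness hypothesis is precisely what makes this step legitimate: without it the intersection could fail to be a cone of $Q$ and would not admit a canonical largest element, so no canonical monotone retraction could be defined this way. Thus there is no real obstacle beyond unpacking the definition of fullness.
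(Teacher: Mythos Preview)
Your proof is correct and follows exactly the paper's approach: define $r(p)$ as the greatest element of $\fll p\flr\cap Q$ (which exists by fullness), and combine $r$ with the identity to form the monotone homotopy. Your version is simply a more detailed spelling-out of the paper's two-line argument.
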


Here $[2]=\{1,2\}$ with the usual order.

\begin{proof} Define $r(p)=q$, where $\fll p\flr\cap Q=\fll q\flr$.
Then $r(q)=q$ for each $q\in Q$, and $r(p)\le p$ for each $p\in\cel Q\cer$,
so $r\cup\id_{\cel Q\cer}$ is the desired homotopy $h$.
\end{proof}

\subsection{Atoms}
An element $\sigma$ of a poset $P$ is called an {\it atom} of $P$, if
$\fll\sigma\flr=\{\sigma\}$.
The set of all atoms of $P$ will be denoted $A(P)$.
A poset $P$ is called {\it atomic}, if every its element is the least upper bound
of some subset of $A(P)$.%
\footnote{In the literature on lattice theory such posets are called ``atomistic'',
whereas ``atomic'' has a different meaning.
Our usage of the term can be found e.g.\ in \cite{Ai}.}
It is easy to see that $A(\fll\sigma\flr)=A(P)\cap\fll\sigma\flr$.
Hence every element $\sigma$ of an atomic poset is the least upper bound of
$A(\fll\sigma\flr)$.

In the case of atomic posets Lemma \ref{2.6} admits an amplification:

\begin{lemma}\label{2.9} If $P$ is an atomic poset, then the formula
$\sigma\mapsto A(\fll\sigma\flr)$ defines an embedding of $P$ into
$\Delta^{A(P)}$.
\end{lemma}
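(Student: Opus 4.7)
The plan is to verify directly that the map $f\:P\to\Delta^{A(P)}$ given by $f(\sigma)=A(\fll\sigma\flr)$ is (i) well-defined, (ii) monotone, and (iii) a monotone embedding. Injectivity will then be automatic: a monotone embedding is always injective by antisymmetry of the partial order on its target.

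First I would check well-definedness, i.e.\ that $A(\fll\sigma\flr)$ is a nonempty subset of $A(P)$. Nonemptiness follows from the atomic hypothesis together with the parenthetical remark in the definition of ``atomic'': $\sigma$ is the least upper bound of $A(\fll\sigma\flr)$, so if this set were empty, $\sigma$ would be a least element of $P$; but any minimal element is an atom, so $\sigma\in A(\fll\sigma\flr)$, a contradiction. The fact that $A(\fll\sigma\flr)\subset A(P)$ is also recorded just above the statement.

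Monotonicity is transparent: if $\sigma\le\tau$ in $P$, then $\fll\sigma\flr\incl\fll\tau\flr$ as sub[pre]posets, hence $A(\fll\sigma\flr)\incl A(\fll\tau\flr)$, which is exactly $f(\sigma)\le f(\tau)$ in $\Delta^{A(P)}$. The essential content of the lemma is the converse implication: assume $f(\sigma)\incl f(\tau)$, i.e.\ $A(\fll\sigma\flr)\incl A(\fll\tau\flr)$. Every element of $A(\fll\tau\flr)$ is, by definition, $\le\tau$; so $\tau$ is an upper bound in $P$ of the set $A(\fll\sigma\flr)$. But $\sigma$ is the \emph{least} upper bound of $A(\fll\sigma\flr)$ by the atomic hypothesis, and therefore $\sigma\le\tau$. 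This is the only step that uses atomicity in an essential way.

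I do not anticipate a genuine obstacle: the proof is essentially an unwinding of the definitions of ``atomic'' and of the two implications required for a monotone embedding. The one subtlety worth flagging is that the crucial input from atomicity is not merely that $A(\fll\sigma\flr)$ generates $\sigma$ as \emph{some} upper bound but that it generates $\sigma$ as the \emph{least} upper bound, since only the latter lets us promote ``$\tau$ dominates all atoms below $\sigma$'' to ``$\tau$ dominates $\sigma$''.
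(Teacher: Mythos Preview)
Your proof is correct and follows essentially the same route as the paper's: monotonicity is immediate from $\fll\sigma\flr\incl\fll\tau\flr$, and the embedding step is exactly the observation that if $\tau$ dominates all atoms below $\sigma$, then $\tau$ dominates the \emph{least} upper bound $\sigma$ of those atoms. Your treatment of nonemptiness is slightly more detailed than the paper's one-line remark, but the substance is the same.
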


\begin{proof}[Proof] It is easy to see that the formula defines a monotone map
$f\:P\to 2^{A(P)}$.
Since each $\sigma\in P$ is the least upper bound of $A(\fll\sigma\flr)$, the
latter is nonempty (whence the image of $f$ is in $\Delta^{A(P)}$).
If $A(\fll\tau\flr)\incl A(\fll\sigma\flr)$, then the least upper bound $\sigma$ of
$A(\fll\sigma\flr)$ is an upper bound of $A(\fll\tau\flr)$.
Hence its least upper bound $\tau$ satisfies $\tau\le\sigma$ (whence $f$ is
an embedding).
\end{proof}

\begin{lemma}\label{atomic CCP}
An atomic poset $P$ is conditionally complete if and only if every non-empty
$R\subset A(P)$ that has an upper bound in $P$ has a least upper bound in $P$.
\end{lemma}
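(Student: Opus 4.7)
The forward direction is tautological: conditional completeness of $P$ says that every non-empty subset of $P$ with an upper bound has a least upper bound, and this applies in particular to $R\subset A(P)\subset P$. The content of the lemma is the converse, where the hypothesis gives suprema only for subsets of atoms, and we must upgrade it to arbitrary non-empty subsets.

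The plan is a direct reduction. Given a non-empty $S\subset P$ with an upper bound $p$, I would define
\[
R=\bigcup_{\sigma\in S}A(\fll\sigma\flr)\subset A(P),
\]
and argue that the least upper bound of $R$ (provided by the hypothesis) is also the least upper bound of $S$. The key observation is that by atomicity, each $\sigma\in S$ is itself the least upper bound of $A(\fll\sigma\flr)$, so $\sigma$ is an upper bound of $A(\fll\sigma\flr)$ and hence $p\ge\sigma$ is an upper bound of $A(\fll\sigma\flr)$; taking the union over $\sigma\in S$ shows $p$ is an upper bound of $R$. One should also check that $R$ is non-empty: if $A(\fll\sigma\flr)=\emptyset$ for some $\sigma\in S$, then by atomicity $\sigma$ is the least upper bound of the empty set, i.e.\ the least element of $P$, in which case $\fll\sigma\flr=\{\sigma\}$ forces $\sigma\in A(P)$, a contradiction. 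So $R$ is a non-empty subset of $A(P)$ with an upper bound in $P$, and by hypothesis it has a least upper bound $q$.

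It remains to verify that $q=\sup S$. That $q$ is an upper bound of $S$ follows again from atomicity: for each $\sigma\in S$, $A(\fll\sigma\flr)\subset R$, so $q$ is an upper bound of $A(\fll\sigma\flr)$, whence $\sigma\le q$. That $q$ is the \emph{least} upper bound follows because any upper bound $p'$ of $S$ automatically bounds $R$ from above (each $a\in R$ lies below some $\sigma\in S$, hence below $p'$), so $q\le p'$ by the defining property of $q$.

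I do not expect any serious obstacle: the proof is almost entirely formal manipulation of the atomicity definition, and the only subtle point is the non-emptiness of $A(\fll\sigma\flr)$, which is handled by the footnote-level observation that the least element of $P$, if it exists, is itself an atom.
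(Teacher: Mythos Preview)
Your proof is correct and follows essentially the same route as the paper's: your set $R=\bigcup_{\sigma\in S}A(\fll\sigma\flr)$ is precisely the paper's $A(\fll S\flr)$, and the verification that $\sup R$ is $\sup S$ proceeds identically via the fact that each $\sigma$ is the least upper bound of $A(\fll\sigma\flr)$. Your explicit check that $R\ne\emptyset$ is a nice addition; the paper relies on the observation made just before Lemma~\ref{2.9} that $A(\fll\sigma\flr)$ is always nonempty in an atomic poset.
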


\begin{proof}
Only the ``if'' direction needs a proof.
Suppose we are given an $S\subset P$ that has an upper bound in $P$.
Then so does $R\bydef A(\fll S\flr)$.
Hence by our assumption $R$ has a least upper bound $\rho$.
Since $P$ is atomic, every $p\in S$ is the least upper bound of
$A(\fll p\flr)=A(\fll S\flr)\cap\fll p\flr$.
Since $\rho$ is an upper bound of $R$, it is an upper bound of its subset
$A(\fll p\flr)$ for each $p\in S$.
However $p$ is the least upper bound of the same set, so $\rho$ is an
upper bound $p$.
Thus $\rho$ is an upper bound of $S$.
If $\rho'$ is another upper bound of $S$, then $\rho'$ is an upper bound
of $A(\fll S\flr)$.
But $\rho$ is the least upper bound of the same set, so $\rho'\ge\rho$.
Thus $\rho$ is the least upper bound of $S$.
\end{proof}

\subsection{Simplicial complex}
A {\it simplicial} poset is a conditionally complete poset where every cone is
isomorphic to a simplex.
(Compare \cite{BBC}.)
A simplicial cone complex is abbreviated to a {\it simplicial complex}.
The cones of a simplicial complex $K$ are thus called its {\it simplices}.
Clearly, every simplicial complex is atomic.

\begin{theorem}\label{2.10} A poset is simplicial iff it is isomorphic to
a subcomplex of a simplex.
\end{theorem}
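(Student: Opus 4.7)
The plan is to prove both directions by short reductions to earlier results. For the ``if'' direction (a subcomplex of a simplex is simplicial), I would let $K$ be a closed subposet of $\Delta^S$ and observe that closedness forces $\fll\sigma\flr_K=\fll\sigma\flr_{\Delta^S}=\Delta^\sigma$ for every $\sigma\in K$, so each cone of $K$ is a simplex. Conditional completeness of $K$ then comes for free from Corollary~\ref{2.8}(a) (which supplies it for $\Delta^S$) combined with Corollary~\ref{2.8}(b) (which passes it down to closed subposets). Hence $K$, and therefore $P$, is simplicial.

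For the ``only if'' direction, I would first note that a simplicial poset $P$ is automatically atomic: each cone $\fll\sigma\flr$ is isomorphic to a simplex, every simplex is atomic, and under any such isomorphism $\sigma$ corresponds to the top element, which is the least upper bound of the singletons; so $\sigma$ is the lub of $A(\fll\sigma\flr)$ in $P$. Lemma~\ref{2.9} then furnishes a monotone embedding $f\:P\emb\Delta^{A(P)}$ sending $\sigma$ to $A(\fll\sigma\flr)$, and what remains is to verify that $f(P)$ is a closed subposet of $\Delta^{A(P)}$ --- this is where I expect the real work to lie.

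To establish this closedness, I would take any $\sigma\in P$ and any nonempty $\tau'\incl A(\fll\sigma\flr)$, and produce $\sigma'\in P$ with $f(\sigma')=\tau'$. The set $\tau'$ consists of atoms of $P$ bounded above by $\sigma$, so conditional completeness yields a lub $\sigma'\le\sigma$ in $P$. Since $\fll\sigma\flr$ is a closed subposet of $P$ containing $\tau'\cup\{\sigma'\}$, the lub of $\tau'$ computed inside $\fll\sigma\flr$ is again $\sigma'$; transporting this along the isomorphism $\fll\sigma\flr\cong\Delta^{A(\fll\sigma\flr)}$ shows that $\sigma'$ corresponds to the smallest subset of $A(\fll\sigma\flr)$ containing $\tau'$, namely $\tau'$ itself. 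Hence $f(\sigma')=\tau'$, which closes $f(P)$ in $\Delta^{A(P)}$ and completes the argument.
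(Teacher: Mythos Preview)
Your proposal is correct and follows essentially the same route as the paper: the ``if'' direction is handled via Corollary~\ref{2.8}, and the ``only if'' direction uses the embedding of Lemma~\ref{2.9} and checks that its image is closed by producing, for each nonempty $T\subset A(\fll\sigma\flr)$, a preimage via the least upper bound. The only stylistic difference is that you transport explicitly along an isomorphism $\fll\sigma\flr\cong\Delta^{A(\fll\sigma\flr)}$, whereas the paper argues directly that in a simplex two sets of atoms with the same supremum must coincide; your version is slightly more explicit (in particular, you spell out why a simplicial poset is atomic, which the paper leaves implicit).
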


In particular, it follows that a cone complex is a simplicial complex iff
it is isomorphic to a subcomplex of a simplex.

\begin{proof}[Proof] The `if' assertion is straightforward.
Every cone of a simplex is a simplex.
A subcomplex of a simplex is a conditionally complete poset by Corollary \ref{2.8}.

Conversely, let $K$ be a simplicial complex.
Let us consider the embedding $f\:K\to\Delta^{A(K)}$ constructed in Lemma
\ref{2.9}.
If $T\incl A(\fll\sigma\flr)$, then the least upper bound $\sigma$ of
$A(\fll\sigma\flr)$ is an upper bound of $T$, hence its least upper bound $\tau$
exists and satisfies $\tau\le\sigma$.
Therefore $A(\fll\tau\flr)=A(K)\cap\fll\tau\flr$ contains $T$, moreover $\tau$ is
the least upper bound of $A(\fll\tau\flr)$, as well as of $T$.
Since $\fll\tau\flr$ is isomorphic to a simplex, this implies $T=A(\fll\tau\flr)$.
So $f(\tau)=T$, whence the image of $f$ is a subcomplex of $\Delta^{A(K)}$.
\end{proof}

\subsection{Barycentric subdivision}
Let $P=(\mathcal{P},\preceq)$ be a preposet.
A {\it chain} in $P$ is a $\mathcal{Q}\incl\mathcal{P}$ that is a totally ordered
by $\prec$ (that is, for each $p,q\in\mathcal{Q}$ either $p\prec q$ or
$p\succeq q$; note that this already implies that $\preceq$ is transitive on
$\mathcal{Q}$).
The poset $P^\flat$ of all nonempty finite chains of $P$ ordered by inclusion is
a subcomplex of $\Delta^{\mathcal{P}}$ and so a simplicial poset (a simplicial
complex if $P$ is countable); it is called the {\it barycentric subdivision}
of $P$.

Clearly, if $Q$ is a closed subposet of a poset $P$, then $Q^\flat$ is a full subcomplex
of $P^\flat$.

\subsection{Flag complex}
A {\it flag complex} is a simplicial complex $K$ such that every subcomplex of
$K$ that is isomorphic to the boundary of a simplex of dimension $>1$ is
the boundary of some simplex of $K$.
Obviously, every full subcomplex of a flag complex is a flag complex.

It is easy to see that the barycentric subdivision of every cone precomplex
is a flag complex; and that if $Q$ is embedded in $P$, then $Q^\flat$ is a full
subcomplex of $P^\flat$.
Using these facts, it is easy to prove

\begin{proposition}\label{2.13} Let $K$ be a simplicial complex and $L$
a subcomplex of $K^\flat$.

(a) $L$ is a flag complex iff it is the barycentric subdivision of the
image of some cone precomplex under a monotone injective map into $K$.

(b) $L$ is a full subcomplex of $K^\flat$ iff it is the barycentric subdivision
of some cone complex embedded in $K$.
\end{proposition}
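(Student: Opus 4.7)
The plan is to prove both parts by shuttling between subcomplexes $L$ of $K^\flat$ and the structure induced on the vertex set of $L$, viewed as a subset of $K$. Two observations stated just before the proposition do all the heavy lifting: (i) the barycentric subdivision of any cone precomplex is a flag complex; and (ii) for any (mono-)embedding $Q\hookrightarrow P$, $Q^\flat$ is a full subcomplex of $P^\flat$. In both directions of both parts, the ``if'' implication will follow essentially at once from these two facts, so the real content lies in the ``only if'' constructions.

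For (b), the ``if'' direction is immediate from (ii) applied to $Q\hookrightarrow K$. For ``only if'', I would let $L$ be a full subcomplex of $K^\flat$ and let $Q\subset K$ be the subposet whose underlying set is the vertex set of $L$. Since cones of $Q$ sit inside cones of $K$ they are finite, so $Q$ is a cone complex embedded in $K$. The identification $L=Q^\flat$ is then routine: a chain in $Q$ is a simplex of $K^\flat$ whose vertices all lie in $L$, hence by fullness a simplex of $L$; conversely every simplex of $L$ is a chain in $K$ whose vertices lie in $Q$, hence a chain in $Q$.

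For (a), the ``if'' direction uses (i): given a monotone injection $f\:Q\to K$ from a cone precomplex $Q$, the image $f(Q)\subset K$ inherits from $Q$ a preposet structure isomorphic to $Q$, so its barycentric subdivision equals $Q^\flat$ up to relabelling, is flag by (i), and sits inside $K^\flat$ because monotonicity together with injectivity of $f$ send $\prec$-chains in $Q$ to $<$-chains in $K$. For ``only if'', I would let $L$ be a flag subcomplex of $K^\flat$ with vertex set $V\subset K$, and define a relation $\prec$ on $V$ by $v\prec w$ iff $v<w$ in $K$ and $\{v,w\}$ is an edge of $L$. Then $\prec$ is strictly acyclic because it refines $<$, and its transitive closure is a poset on $V$ whose cones sit inside cones of $K$ and so are finite, making $(V,\prec)$ a cone precomplex. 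With $f$ the inclusion $V\hookrightarrow K$, the identity $L=(V,\prec)^\flat$ comes directly from flagness: a $\prec$-chain in $V$ is a set of vertices of $L$ pairwise joined by edges of $L$, which in a flag complex spans a simplex of $L$; conversely every simplex of $L$ is a chain in $K$ whose $1$-faces are all edges of $L$, hence $\prec$-totally-ordered.

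The main point to watch is why (a) genuinely requires a precomplex rather than a complex as in (b): the edge relation $\prec$ extracted from $L$ is typically not transitive (edges $\{u,v\}$ and $\{v,w\}$ in $L$ do not force $\{u,w\}$ into $L$), so the induced structure on $V$ is only a preposet. Passing to the transitive closure would in general create new chains unrealised by simplices of $L$, which is precisely why the preposet language is essential in (a). Geometrically this reflects the fact that a flag subcomplex of $K^\flat$ is determined by its $1$-skeleton, while a full subcomplex is determined merely by its vertex set.
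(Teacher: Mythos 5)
Your proof is correct and takes the paper's intended approach: the paper merely records the two observations you cite (barycentric subdivisions of cone precomplexes are flag; embeddings induce full subcomplexes) and then declares Proposition \ref{2.13} ``easy to prove'' from them, giving no further details. You have supplied exactly those details---restricting the order of $K$ to the vertex set of $L$ in part (b), and using the edge-induced strict relation $\prec$ (which refines $<_K$ and is therefore acyclic with finite cones in its transitive closure) in part (a)---and your closing remark correctly isolates why (a) genuinely requires preposets rather than posets: passing to the transitive closure of $\prec$ would manufacture chains that $L$ does not realize as simplices.
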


\subsection{Simplicial maps}
A closed map between simplicial posets is called {\it simplicial}.
Every map of sets $f\:S\to T$ induces a simplicial map
$\Delta^f\:\Delta^S\to\Delta^T$.
If $K$ is a subcomplex of $\Delta^S$ and $L$ is a subcomplex of $\Delta^T$,
it is easy to see that every simplicial map $K\to L$ is a restriction of
$\Delta^f$ for some $f\:S\to T$.

If $f\:P\to Q$ is a monotone map between preposets, it sends every nonempty
finite chain of $P$ into a nonempty finite chain of $Q$.
The resulting map $f^\flat:P^\flat\to Q^\flat$ is the restriction to
a subcomplex of the simplicial map $\Delta^f\:\Delta^P\to\Delta^Q$, so
it is simplicial.

\subsection{Full map}
Let us call a monotone map of posets $f\:P\to Q$ {\it full}, if
$f^{-1}(q)$ is full in $P$ for each $q\in Q$.

\begin{lemma}\label{point-inverses full}
Simplicial maps are full.
\end{lemma}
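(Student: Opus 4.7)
My plan is to fix $\sigma\in K$ and $q\in L$ and show that $C\bydef\fll\sigma\flr_K\cap f^{-1}(q)$ is either empty or a cone of $f^{-1}(q)$. Assume $C$ is non-empty; then there is some $\tau\in C$ with $\tau\le\sigma$ and $f(\tau)=q$, so in particular $q\le f(\sigma)$. The key reduction is to pass to the restriction of $f$ to the cone $\fll\sigma\flr_K$. This cone is closed in $K$, and closedness of a map is preserved under restriction to a closed subset, so $f|_{\fll\sigma\flr_K}\:\fll\sigma\flr_K\to L$ is again closed; its image equals $\fll f(\sigma)\flr_L$ precisely because $f$ itself is closed. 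Both $\fll\sigma\flr_K$ and $\fll f(\sigma)\flr_L$ are simplices since $K$ and $L$ are simplicial posets, so one is left with a surjective simplicial map between two simplices.

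Next I would invoke the structural fact recorded just above that every simplicial map between subcomplexes of simplices is the restriction of some $\Delta^g$. Writing $A=A(\fll\sigma\flr_K)$ and $B=A(\fll f(\sigma)\flr_L)$ and using the canonical identifications of Lemma~\ref{2.9}, this produces a map $g\:A\to B$ with $f|_{\fll\sigma\flr_K}=\Delta^g$, and surjectivity of the original map forces $g$ to surject onto $B$. Under these identifications an element $\rho\le\sigma$ corresponds to a non-empty subset $\rho\incl A$, and the condition $f(\rho)=q$ translates verbatim into $g(\rho)=A(q)$.

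The proof concludes by exhibiting a greatest element of $C$. Take $\mu$ to be the element of $\fll\sigma\flr_K$ corresponding to $g^{-1}(A(q))\incl A$, which is non-empty because $g$ is surjective and $A(q)\incl B$. Then $f(\mu)=q$, so $\mu\in C$; every $\rho\in C$ corresponds to a subset of $A$ mapping onto $A(q)$ under $g$, hence sitting inside $g^{-1}(A(q))=\mu$; conversely every $\rho\le\mu$ with $f(\rho)=q$ lies in $C$. Thus $C=\fll\mu\flr_{f^{-1}(q)}$, a cone of $f^{-1}(q)$, as required. I do not foresee a real obstacle here: the whole argument pivots on the single observation that closedness of $f$ survives restriction to the closed subposet $\fll\sigma\flr_K$, after which the vertex-map description of simplicial maps between simplices handles the rest with a one-line combinatorial computation about subsets of $A$.
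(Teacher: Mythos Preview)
Your proof is correct and follows essentially the same approach as the paper's. Both arguments reduce to a single cone $\fll\sigma\flr_K$ and identify the greatest element $\mu$ of $\fll\sigma\flr_K\cap f^{-1}(q)$ as the face spanned by those atoms of $\fll\sigma\flr_K$ that map into $q$; you phrase this via the vertex-map description $\mu=g^{-1}(A(q))$, while the paper phrases it via the join decomposition $\fll\sigma\flr=\fll\mu\flr*\fll\nu\flr$ with $f(\mu)=q$ and $f(\nu)$ disjoint from $q$, but these are the same observation in different notation.
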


\begin{proof}
Let $f\:K\to L$ be a simplicial map between simplicial complexes.
Pick some $\sigma\in L$, and let $F=f^{-1}(\sigma)$.
If $\tau\in K$ is such that $\fll\tau\flr\cap F\ne\emptyset$,
then $f(\tau)\ge\sigma$.
If $\tau\in F$, then $\fll\tau\flr\cap F$ is the cone $\fll\tau\flr_F$ since
$F$ is a subposet of $K$.
So without loss of generality we may assume that $f(\tau)>\sigma$.
Then $\fll\tau\flr=\fll\mu\flr*\fll\nu\flr$, where $f(\mu)=\sigma$ and
$f(\nu)\cap\sigma=\emptyset$.
Hence $\fll\tau\flr_K\cap F=\fll\mu\flr_F$.
\end{proof}

\section{Basic operations}\label{operations}

Let $P=(\mathcal{P},\le)$ and $Q=(\mathcal{Q},\le)$ be preposets.

The {\it prejoin} $P+Q$ is the preposet $(\mathcal{P}\sqcup\mathcal{Q},\preceq)$,
where $p\preceq q$ iff either $p\le q$ and both $p,q\in P$; or $p\le q$ and both
$p,q\in Q$; or $p\in P$ and $q\in Q$.
Clearly, the prejoin of two posets is a poset.
Note that $CP\simeq P+pt$ and $C^*P\simeq pt+P$.

The {\it product} $P\x Q$ is the preposet $(\mathcal{P}\x\mathcal{Q},\preceq)$,
where $(p,q)\preceq (p',q')$ iff $p\le p'$ and $q\le q'$.
It is easy to see that $2^S\x 2^T\simeq 2^{S\sqcup T}$ naturally in $S$ and $T$.

The {\it join} $P*Q\bydef\partial^*(C^*P\x C^*Q)$ is obtained from $(C^*P)\x (C^*Q)$
by removing the bottom element $(\hat0,\hat0)$.
Thus $C^*(P*Q)\simeq C^*P\x C^*Q$, whereas $P*Q$ itself is the union
$C^*P\x Q\cup P\x C^*Q$ along their common part $P\x Q$.

From the above, $\Delta^S*\Delta^T\simeq\Delta^{S\sqcup T}$
naturally in $S$ and $T$.
It follows that the join of simplicial complexes
$K\incl\Delta^S$ and $L\incl\Delta^T$ is isomorphic to the simplicial complex
$\{\sigma\cup\tau\incl S\sqcup T\mid\sigma\in K\cup\{\emptyset\},
\tau\in L\cup\{\emptyset\},\,\sigma\cup\tau\ne\emptyset\}\incl\Delta^{S\sqcup T}$.

The join and the prejoin are related via barycentric subdivision:
$(P+Q)^\flat\simeq P^\flat*Q^\flat$.
Indeed, a nonempty finite chain in $P+Q$ consists of a finite chain in $P$
and a finite chain in $Q$, at least one of which is nonempty.
Note that in contrast to prejoin, join is commutative: $P*Q\simeq Q*P$.
Prejoin is associative; in particular, $C(C^*P)\simeq C^*(CP)$.

\begin{remark} In the case where $P$ and $Q$ are finite simplicial complexes,
the above mentioned isomorphism
$$P*Q\simeq C^*P\x Q\underset{P\x Q}{\cup} P\x C^*Q$$
can be regarded as a combinatorial form of the well-known (cf.\ \cite[4.3.20]{St})
homeomorphism $$X*Y\cong (pt*X)\x Y\underset{X\x Y}{\cup} X\x (pt*Y),$$
where $X=|P|$ and $Y=|Q|$.
However it does not quite fit in the familiar simplicial realm even in this case,
for $C^*P$ and $P\x Q$ are no longer simplicial complexes.
\end{remark}

If $P$ is a conditionally complete poset, then obviously $CP$ and $C^*P$
are conditionally complete.
If $P$ and $Q$ are conditionally complete, then obviously so is $P\x Q$.
In addition, $P*Q$ is conditionally complete since it is an open subposet of
$C^*P\x C^*Q$.

\subsection{Van Kampen duality}
If $P$ and $Q$ are preposets, their {\it cojoin} $P\cojoin Q=(P^**Q^*)^*$.
By dualizing $C^*X\x C^*Y\simeq C^*(X*Y)$, where $X=P^*$ and $Y=Q^*$, one obtains
$$CP\x CQ\simeq C(P\cojoin Q).$$
In the case where $P$ and $Q$ are posets, this formula was known already to
E. R. van Kampen \cite{vK}, cf.\ \cite[Proposition 1.2]{Mc}.
It implies, for instance, that the boundary of the $n$-cube is dual (as a poset)
to the boundary of the $n$-cross-polytope (compare Example \ref{polytope-duality}):
$$\join_{i=1}^n \partial I^1\simeq(\partial I^n)^*.$$

\begin{lemma}\label{cojoin formula}
$CP*CQ\simeq C\left(C^*P\cojoin Q\underset{P\cojoin Q}\cup P\cojoin C^*Q\right)$.
\end{lemma}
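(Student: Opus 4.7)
The plan is to reduce the left-hand side to the right-hand side by a sequence of identifications already established earlier in this section.

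First, apply the decomposition $X*Y \simeq C^*X \x Y \cup_{X\x Y} X \x C^*Y$ (stated immediately after the definition of $*$) with $X = CP$ and $Y = CQ$. This gives
$$ CP*CQ \;\simeq\; C^*(CP) \x CQ \;\cup_{CP \x CQ}\; CP \x C^*(CQ). $$
By associativity of prejoin, $C^*(CP) = pt + (P + pt) = (pt + P) + pt = C(C^*P)$, and similarly $C^*(CQ) = C(C^*Q)$, so
$$ CP*CQ \;\simeq\; C(C^*P) \x CQ \;\cup_{CP \x CQ}\; CP \x C(C^*Q). $$

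Next, apply the van Kampen identity $CA \x CB \simeq C(A \cojoin B)$ to each of the three pieces: $C(C^*P) \x CQ \simeq C(C^*P \cojoin Q)$, $CP \x C(C^*Q) \simeq C(P \cojoin C^*Q)$, and $CP \x CQ \simeq C(P \cojoin Q)$. Substituting yields
$$ CP*CQ \;\simeq\; C(C^*P \cojoin Q) \;\cup_{C(P \cojoin Q)}\; C(P \cojoin C^*Q). $$

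Finally, pull the outer cone out using the elementary observation that $C(X) \cup_{C(Z)} C(Y) \simeq C(X \cup_Z Y)$ whenever $Z$ is a common subposet of $X$ and $Y$: each $C(-)$ adjoins a single top element, and in the pushout the three adjoined tops $\hat 1_X$, $\hat 1_Y$, $\hat 1_Z$ are identified. The one point that requires genuine checking --- and likely the only place where any care is needed --- is that the identifications made by van Kampen's isomorphism are compatible with the gluing. Concretely, one must verify that the inclusion $P \cojoin Q \hookrightarrow C^*P \cojoin Q$ corresponds, under the van Kampen identification, to the inclusion $CP \x CQ \hookrightarrow C(C^*P) \x CQ$ induced by the open embedding $P \hookrightarrow C^*P$, and symmetrically on the other side. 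This is just naturality of the formula $CA\x CB \simeq C(A\cojoin B)$ in each argument, so it should not present a real obstacle.
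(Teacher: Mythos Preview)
Your proof is correct and follows essentially the same route as the paper: decompose the join as the pushout $C^*(CP)\x CQ\cup_{CP\x CQ}CP\x C^*(CQ)$, rewrite $C^*C$ as $CC^*$ by associativity of prejoin, and apply van Kampen duality $CA\x CB\simeq C(A\cojoin B)$ to each piece. The paper's proof stops at the expression $C(C^*P\cojoin Q)\cup_{C(P\cojoin Q)}C(P\cojoin C^*Q)$ and leaves the extraction of the common cone vertex implicit; your explicit remark about naturality and the identification $C(X)\cup_{C(Z)}C(Y)\simeq C(X\cup_Z Y)$ simply fills in what the paper takes for granted.
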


\begin{proof} Using the van Kampen duality, we get
$$CP*CQ\simeq C^*CP\x CQ\underset{CP\x CQ}\cup CP\x C^*CQ\simeq
C(C^*P\cojoin Q)\underset{C(P\cojoin Q)}\cup C(P\cojoin C^*Q).$$
\end{proof}

\subsection{Infinite product and join}
Similarly to the case of two factors one defines the {\it product}
$\prod\limits_{\lambda\in\Lambda} P_\lambda$ of an arbitrary family of
preposets $P_\lambda$.
Similarly, their {\it join} is defined by
$\join\limits_{\lambda\in\Lambda} P_\lambda=
\partial^*\prod\limits_{\lambda\in\Lambda} C^*P_\lambda$.
Note the {\it van Kampen duality}
$\join\limits_{\lambda\in\Lambda} P_\lambda\simeq
(\partial\prod\limits_{\lambda\in\Lambda} CP_\lambda^*)^*$.

\subsection{Star and link}
If $P$ is a preposet and $\sigma\in P$, we define the {\it star}
$\st(\sigma,P)=\bigfl\cel\sigma\cer\bigfr$ and the {\it link}
$\lk(\sigma,P)=\partial^*\cel\sigma\cer$.
Thus if $P$ is a poset, $\st(\sigma,P)$ is a closed subposet of $P$ and
$\lk(\sigma,P)$ is an open subposet of $P$.

If $K$ is a simplicial complex, and $\sigma\in K$, then
$\lk(\sigma,K)=\partial^*\cel\sigma\cer$ is isomorphic to the classical link
$\Lk(\sigma,K):=\Fll\cel\sigma\cer\Flr\but\Cel\fll\sigma\flr\Cer$, which is
a subcomplex of $K$; an isomorphism is given by $\sigma\sqcup\tau\mapsto\tau$.
It follows that $\st(\sigma,K)\simeq\fll\sigma\flr*\lk(\sigma,K)$ for every
{\it simplicial} complex $K$.
See \cite[p.\ 12]{Ha0} for a discussion of $\lk$ versus $\Lk$.

For an element $p$ of a poset $P$, it is easy to see that
$\lk(p,P)^\flat=\Lk(p_-,P^\flat)$ and $\lk(p^*,P^*)^\flat=\Lk(p_+,P^\flat)$
as subcomplexes of $P^\flat$ (not just up to isomorphism), where
$p_-$ is a maximal chain in $P$ with greatest element $p$ (i.e.\ a maximal simplex
of $\fll p\flr^\flat$), and $p_+$ is a maximal chain in $P$ with least element $p$
(i.e.\ a maximal simplex of $\cel p\cer^\flat$).

Given $\sigma\in P$ and $\tau\in Q$, clearly $\cel(\sigma,\tau)\cer^{P\x Q}\simeq
\cel\sigma_1\cer\x\cel\sigma_2\cer$; applying the coboundary, we obtain
$$\lk((\sigma,\tau),P\x Q)\simeq\lk(\sigma,P)*\lk(\tau,Q).$$

\begin{remark}
In the case where $P$ and $Q$ are finite simplicial complexes, the latter
isomorphism can be regarded as a combinatorial form of the well-known
(cf.\ \cite[4.3.21]{St}) homeomorphism $$\lk((x,y),X\x Y)\cong\lk(x,X)*\lk(y,Y),$$
where $|P|=X$ and $|Q|=Y$ are compact polyhedra.
However it does not quite fit in the familiar simplicial realm even in this case,
for $P\x Q$ is no longer a simplicial complex.
\end{remark}

\subsection{Cubosimplicial complex}
By a {\it cubosimplicial complex} we mean a conditionally complete cone complex
where every cone is isomorphic to a product of simplices.

\begin{lemma}\label{cubosimplicial}
Let $f\:K\to L$ be a simplicial map of simplicial complexes.
Then $f^{-1}(\sigma)$ is a cubosimplicial complex for each $\sigma\in L$.
\end{lemma}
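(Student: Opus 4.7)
The plan is to verify, for $F \bydef f^{-1}(\sigma)$, the two clauses in the definition of cubosimplicial complex: (a) that $F$ is a conditionally complete cone complex, and (b) that every cone $\fll\tau\flr_F$ of $F$ is isomorphic to a product of simplices.

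For (a), $F$ is a subposet of the cone complex $K$ and so inherits countability and finiteness of cones; hence $F$ is itself a cone complex. To obtain conditional completeness, I would combine Lemma \ref{point-inverses full}, which says that $F$ is full in $K$, with Lemma \ref{full in CCP}, which says that a full subposet of a conditionally complete poset is again conditionally complete; the hypothesis of the latter is met since the simplicial complex $K$ is by definition conditionally complete.

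For (b), fix $\tau \in F$ and set $V = A(\fll\tau\flr_K)$ and $W = A(\fll\sigma\flr_L)$, so that $\fll\tau\flr_K \simeq \Delta^V$ and $\fll\sigma\flr_L \simeq \Delta^W$. Since $f$ is monotone, its restriction to $\fll\tau\flr_K$ lands in $\fll\sigma\flr_L$, and as a simplicial map between two simplices it is induced (as observed just before Lemma \ref{point-inverses full}) by a unique vertex map $\phi\: V \to W$. Closedness of $f$ forces $f(\fll\tau\flr_K) = \fll\sigma\flr_L$, whence $\phi$ is surjective. Under the identification $\fll\tau\flr_K \simeq \Delta^V$, the cone $\fll\tau\flr_F = \fll\tau\flr_K \cap F$ corresponds precisely to the subposet of non-empty $V' \subseteq V$ satisfying $\phi(V') = W$, that is, to those $V'$ meeting every fiber $\phi^{-1}(w)$. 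The assignment $V' \mapsto \bigl(V' \cap \phi^{-1}(w)\bigr)_{w \in W}$ is then an order isomorphism $\fll\tau\flr_F \simeq \prod_{w \in W} \Delta^{\phi^{-1}(w)}$, with inverse given by disjoint union; bijectivity is immediate because the fibers of $\phi$ partition $V$, and monotonicity both ways is clear from the definition.

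There is no serious obstacle: once one passes to the vertex-set picture, the product-of-simplices structure on $\fll\tau\flr_F$ appears almost by inspection as the decomposition of the atom set $V$ into the (nonempty) fibers of the surjection $\phi$. The only subtlety worth flagging is the implicit use of closedness of $f$ to guarantee that $\phi$ is surjective (and so that all factors of the product are nonempty), without which $\tau$ would not even lie in $F$.
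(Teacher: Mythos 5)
Your proof is correct and follows essentially the same line as the paper's: both establish conditional completeness via Lemma \ref{point-inverses full} and Lemma \ref{full in CCP}, and both identify $\fll\tau\flr_F$ with the product $\prod_{w\in W}\Delta^{\phi^{-1}(w)}$ over the (nonempty, by closedness of $f$) fibers of the induced vertex map. The only difference is presentational: the paper expresses the product decomposition through the join/$C^*$ formalism developed in \S\ref{operations} (writing $\fll\tau\flr$ as a join of the $\Delta_\lambda=\Delta^{\phi^{-1}(\lambda)}$ and applying $C^*(\join_\lambda P_\lambda)=\prod_\lambda C^*P_\lambda$), whereas you compute directly with subsets of the atom set $V$, which amounts to the same isomorphism in raw combinatorial terms.
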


\begin{proof}
By Lemma \ref{point-inverses full}, $F:=f^{-1}(\sigma)$ is full in $K$.
Hence by Lemma \ref{full in CCP}, $F$ is conditionally complete.
Let $\tau\in F$, and let $g$ be the restriction of
$f$ to the simplex $\fll\tau\flr_K$.
Since $\fll\tau\flr_F=\fll\tau\flr_K\cap F=g^{-1}(\sigma)$, it suffices to show that
$g^{-1}(\sigma)$ is isomorphic to a product of simplices.
Viewing $\sigma$ as the set of vertices of the simplex $\fll\sigma\flr$, let
$\Delta_\lambda=g^{-1}(\lambda)$ for each $\lambda\in\sigma$.
Then $g\:\fll\tau\flr=\underset{\lambda\in\sigma}\join\Delta_\lambda\to
\underset{\lambda\in\sigma}\join\{\lambda\}=\fll\sigma\flr$
is the join of the constant maps
$g|_{\Delta_\lambda}\:\Delta_\lambda\to\{\lambda\}$.
Each $g|_{\Delta_\lambda}$ is the restriction of
$G_\lambda\:C^*\Delta_\lambda\to C^*\{\lambda\}$.
Then $g$ is the restriction of their product
$$G\:C^*\fll\tau\flr=\prod_{\lambda\in\sigma}C^*\Delta_\lambda\to
\prod_{\lambda\in\sigma}C^*\{\lambda\}=C^*\fll\sigma\flr,$$
where the identifications come from the definition of join:
$C^*\underset\lambda\join P_\lambda=\prod\limits_\lambda C^*P_\lambda$.
Hence $g^{-1}(\sigma)=G^{-1}(\sigma)=\prod\limits_{\lambda\in\sigma}
(G_\lambda)^{-1}(\lambda)=\prod\limits_{\lambda\in\sigma}\Delta_\lambda$.
Thus $g^{-1}(\sigma)$ is a product of simplices.
\end{proof}

\subsection{Cubical map}
A monotone map $f\:P\to Q$ of posets is called {\it cubical} if the restriction of
$f$ to every cone of $P$ is isomorphic to the projection $CX\x CY\to CX$ for some
posets $X$ and $Y$.

The following lemma ensures that a monotone map of cubosimplicial complexes is
cubical if and only if its restriction to every cone is isomorphic to
the projection of a product of simplices onto its subproduct.

\begin{lemma}
If a product of simplices is isomorphic to $P\x Q$, then $P$ and $Q$
are products of simplices.
\end{lemma}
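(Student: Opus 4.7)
The plan is to recover $P$ (and symmetrically $Q$) as a single cone inside $P\x Q$, and then invoke the description of cones in a product of simplices. The whole argument rests on two almost tautological ingredients: the product-of-cones formula $\fll(p,q)\flr_{P\x Q}=\fll p\flr_P\x\fll q\flr_Q$ in any direct product of posets, and the fact that $\fll T\flr_{\Delta^S}=\Delta^T$ inside a simplex.

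First I would observe that $P$ has a greatest element and $Q$ has an atom (and symmetrically). Indeed, any nonempty product of simplices $\prod_i\Delta^{S_i}$ visibly has a greatest element $(S_1,\dots,S_n)$ and an atom of the form $(\{s_1\},\dots,\{s_n\})$. Transferring these through the hypothesised isomorphism gives a greatest element $\hat m$ and an atom $\hat a$ in $P\x Q$. From the product-of-cones formula, a pair $(p,q)$ is a greatest element (respectively an atom) of $P\x Q$ iff each coordinate is such in its own factor, so $\hat m$ yields a greatest element $\hat 1_P\in P$ and $\hat a$ yields an atom $a_Q\in Q$; the same construction gives a greatest element $\hat 1_Q\in Q$ and an atom $a_P\in P$.

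The key step is then to inspect
\[
\fll(\hat 1_P,\,a_Q)\flr_{P\x Q}
=\fll\hat 1_P\flr_P\x\fll a_Q\flr_Q
=P\x\{a_Q\}\cong P.
\]
On the other side of the isomorphism, this cone is a cone in $\prod_i\Delta^{S_i}$, and by the same product formula such a cone has the shape $\prod_i\Delta^{T_i}$ for suitable nonempty $T_i\incl S_i$. Hence $P$ is a product of simplices, and the symmetric argument applied to $(a_P,\hat 1_Q)$ handles $Q$. The trivial edge case when $P\x Q$ is empty forces some $\Delta^{S_i}$ to be empty, in which case both $P$ and $Q$ are vacuously products of simplices.

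There is no substantial obstacle: the statement is really just an extraction of one coordinate from the product structure, and the only thing one has to take care with is checking that a product of simplices always supplies both an atom and a greatest element, which is immediate.
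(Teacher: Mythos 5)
Your proof is correct, and it takes a genuinely different route from the paper's. The paper dualizes: granting (tacitly) that $P$ and $Q$ acquire greatest elements from the product of simplices, it computes $\lk(\hat 0,P^*\x Q^*)\simeq\lk(\hat 0,P^*)*\lk(\hat 0,Q^*)$, identifies the left side with the join $\partial\Delta^{S_1}*\dots*\partial\Delta^{S_n}$ via self-duality of $\partial\Delta^S$, and then exploits that each $\partial\Delta^{S_i}$, as a full subcomplex of a join, would decompose as a nontrivial join of simplicial complexes if its vertices straddled both factors --- impossible, since $\partial\Delta^n$ is not a nontrivial join. Sorting the $\partial\Delta^{S_i}$ by which side they land on then recovers $P$ and $Q$ as products of simplices. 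Your route bypasses the join-irreducibility step altogether: you transfer a greatest element $\hat 1_P\in P$ and an atom $a_Q\in Q$ from the product of simplices, and read off $P\simeq\fll(\hat 1_P,a_Q)\flr_{P\x Q}$ directly as a cone of the product, which is automatically a product of simplices by the product-of-cones formula. This is shorter, more elementary, and makes explicit the existence of greatest elements that the paper's computation relies on silently.

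One slip in the final parenthetical: from $P\x Q=\emptyset$ it does not follow that both $P$ and $Q$ are products of simplices --- taking $Q=\emptyset$ and $P$ arbitrary gives $P\x Q=\emptyset\simeq\Delta^\emptyset$, a product of simplices, while $P$ need not be one. So the lemma, read literally, fails in the empty case. This is harmless in context (the lemma is applied to cones of cubosimplicial complexes, which are nonempty, and the paper's proof also presupposes nonemptiness when it invokes $\hat 0$), but the remark as written is false and should simply say the empty case is excluded.
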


\begin{proof} We have $\lk(\hat 0,P^*\x Q^*)\simeq\lk(\hat 0,P^*)*\lk(\hat 0,Q^*)$.
On the other hand this is a join of boundaries of simplices $\partial\Delta^i$
(using that the boundary of a simplex is isomorphic to its dual), in particular
a simplicial complex.
If not all vertices of a $\partial\Delta^i$ are in the same factor of the join
$\lk(\hat 0,P^*)*\lk(\hat 0,Q^*)$, then $\partial\Delta^i$ is itself a nontrivial
join of simplicial subcomplexes.
This is a contradiction.
\end{proof}

\subsection{Hatcher maps} \label{hatcher-maps}
This is a combinatorial version of Hatcher's construction \cite[p.\ 105]{Hat},
\cite{Ste}.
Let $f\:P\to Q$ be a closed full map of posets.
If $q\in Q$, let us write $F_q=f^{-1}(q)$.
Given a $p\in P$ and a $q\in Q$ such that $q<f(p)$, we have
$\fll p\flr_P\cap F_q\ne\emptyset$ since $f$ is closed, and consequently
$\fll p\flr_P\cap F_q=\fll p_q\flr_{F_q}$ for some $p_q\in F_q$ since $f$ is full.
Given a pair of elements $r,q\in Q$ such that $r>q$, define a monotone map
$f_{rq}\:F_r\to F_q$ by $p\mapsto p_q$ if $F_r\ne\emptyset$, and in the only possible way if $F_r=\emptyset$.

Note that if $r>s>t$, then $f_{rt}$ equals the composition
$f^{-1}(r)\xr{f_{rs}}f^{-1}(s)\xr{f_{st}}f^{-1}(t)$.

It follows from the proof of Lemma \ref{cubosimplicial} that

\begin{lemma} \label{Hatcher is cubical}
Hatcher maps of a simplicial map are cubical.
\end{lemma}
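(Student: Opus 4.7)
The plan is to unpack the structure established in the proof of Lemma \ref{cubosimplicial} and check directly that the Hatcher map, restricted to each cone of its domain, becomes a coordinate projection of a product of simplices.

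Fix a simplicial map $f\:K\to L$ and $r>q$ in $L$. By Lemma \ref{point-inverses full}, $f$ is full, so the Hatcher map $f_{rq}\:F_r\to F_q$ is defined. Pick $\tau\in F_r$; by monotonicity the restriction $f_{rq}|_{\fll\tau\flr_{F_r}}$ lands in $\fll\tau_q\flr_{F_q}$. From the proof of Lemma \ref{cubosimplicial}, applied to the restriction $g$ of $f$ to the simplex $\fll\tau\flr_K$, one has the join decomposition $\fll\tau\flr_K = \join_{\lambda\in r}\Delta_\lambda$ with $\Delta_\lambda := g^{-1}(\lambda)$, and therefore $\fll\tau\flr_{F_r} = g^{-1}(r) = \prod_{\lambda\in r}\Delta_\lambda$. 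Applying the same recipe to $q\le r$---thought of as an inclusion of vertex sets $q\subset r$---yields $\fll\tau_q\flr_{F_q} = g^{-1}(q) = \prod_{\lambda\in q}\Delta_\lambda$.

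Because $q\subset r$ as vertex sets, the product splits naturally:
$$\fll\tau\flr_{F_r} \simeq \prod_{\lambda\in q}\Delta_\lambda \x \prod_{\lambda\in r\setminus q}\Delta_\lambda = \fll\tau_q\flr_{F_q} \x CY,$$
where $CY := \prod_{\lambda\in r\setminus q}\Delta_\lambda$ is a product of simplices, hence a cone (its top element is the tuple of top elements of its factors). Writing $CX := \fll\tau_q\flr_{F_q}$, the cone $\fll\tau\flr_{F_r}$ has the required form $CX\x CY$.

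What remains is to verify that $f_{rq}$ restricted to this cone coincides with the projection $CX\x CY\to CX$. An element $\tau'\le\tau$ in $F_r$ corresponds to a tuple $(T_\lambda)_{\lambda\in r}$ of nonempty vertex subsets $T_\lambda$ of $\Delta_\lambda$, namely the simplex $\join_{\lambda\in r}\Delta^{T_\lambda}$ viewed as a face of $\tau$ in $K$. By the definition of the Hatcher map, $f_{rq}(\tau')$ is the top element of $\fll\tau'\flr_K\cap f^{-1}(q) = \join_{\lambda\in q}\Delta^{T_\lambda}$, which is the tuple $(T_\lambda)_{\lambda\in q}$---i.e., the projection onto the first factor. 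There is no real obstacle here; the lemma is essentially a bookkeeping observation, and the main point to keep straight is that ``$\lambda\in q$'' refers to a vertex of $L$, which is what selects the corresponding factor $\Delta_\lambda$ in the products above and makes the inclusion $q\subset r$ of vertex sets drive the cone splitting.
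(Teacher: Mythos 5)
Your proposal is correct and is essentially the intended unpacking of the paper's one-line proof (which simply refers the reader to the proof of Lemma \ref{cubosimplicial}). You correctly extract the product decomposition $g^{-1}(r)=\prod_{\lambda\in r}\Delta_\lambda$ from that proof, observe that the inclusion of vertex sets $q\subset r$ splits it as $\prod_{\lambda\in q}\Delta_\lambda\x\prod_{\lambda\in r\setminus q}\Delta_\lambda$, and check in coordinates that the Hatcher map $\tau'\mapsto\tau'_q$ discards the $r\setminus q$ factors — exactly the projection $CX\x CY\to CX$ with $CX=\fll\tau_q\flr_{F_q}$, as the definition of a cubical map requires.
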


We call a poset $P$ {\it nonsingular} if it contains no interval of
cardinality three.

\begin{lemma} \label{nonsingular Hatcher}
Let $f\:P\to Q$ be a closed full map of finite posets.
If $P$ is nonsingular, then the Hatcher maps of $f$ are closed.
\end{lemma}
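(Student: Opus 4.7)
The plan is to reduce the claim to the case that $r$ covers $q$ in $Q$, and then combine nonsingularity with a minimality argument. Since closed monotone maps compose and since the Hatcher maps satisfy the composition law $f_{rt} = f_{st}\circ f_{rs}$ noted just before the lemma, it will suffice to treat the case $r\succ q$. Closedness of a monotone map between posets similarly reduces to cover relations in the target, so I will fix such a cover $r\succ q$ in $Q$, an element $p\in F_r$, and a cover $p'\prec p_q$ in $F_q$, and aim to produce $p''\in F_r$ with $p''\le p$ and $p''_q = p'$.

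The first step will be a preliminary observation: the cover $p' \prec p_q$ in $F_q$ is automatically a cover $p' \prec p_q$ in $P$. Any $z \in P$ with $p' < z < p_q$ satisfies $q = f(p') \le f(z) \le f(p_q) = q$, so $z \in F_q$, contradicting the cover in $F_q$; and if $p' < p_q$ were not a cover in $P$, nonsingularity of $P$ would force $|[p',p_q]_P| \ge 4$, producing such a $z$.

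The main step is to take $p''$ to be a minimal element of $M := \{x \in F_r : p' \le x \le p\}$, which is nonempty because $p \in M$. Monotonicity of the Hatcher map gives $p' \le p''_q \le p_q$, so $p''_q \in [p', p_q]_{F_q} = \{p', p_q\}$; if $p''_q = p'$ we are done. Otherwise $p''_q = p_q$, so $p_q \le p''$, and since $f(p_q) = q \ne r = f(p'')$ the inequality is strict. The interval $[p', p'']_P$ then contains the three distinct elements $p' \prec p_q < p''$, so by nonsingularity $|[p', p'']| \ge 4$, and I can pick $w \in [p', p''] \setminus \{p', p_q, p''\}$. Since $r$ covers $q$ in $Q$, $f(w) \in [q, r]_Q = \{q, r\}$. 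If $f(w) = q$, then $w \le p''$ forces $w \in \fll p'' \flr \cap F_q = \fll p_q \flr_{F_q}$, so $w \le p_q$; together with $w \ge p'$ and the preliminary observation, this puts $w$ in $[p', p_q]_P = \{p', p_q\}$, contradicting the choice of $w$. If $f(w) = r$, then $w \in F_r$ with $p' \le w \le p'' \le p$, so $w \in M$ and $w < p''$, contradicting minimality of $p''$.

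The main obstacle I anticipate is precisely the intermediate case: without reducing to covers in $Q$ via the composition law, $f(w)$ could lie strictly between $q$ and $r$ and neither horn of the above dichotomy applies, leaving no evident contradiction. The composition reduction, together with nonsingularity lifting covers from $F_q$ to $P$, is what makes the minimality argument close.
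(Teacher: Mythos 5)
Your proof is correct and follows essentially the same line as the paper's: reduce to covers in $Q$ and in $F_q$, exploit nonsingularity to produce a third element in a short interval, and use fullness (via $\fll x\flr\cap F_q=\fll x_q\flr_{F_q}$) to force that element into $F_r$. One small remark: the appeal to nonsingularity in your preliminary observation is superfluous --- your first sentence already shows that any $z$ with $p'<z<p_q$ lands in $F_q$, so $[p',p_q]_P=[p',p_q]_{F_q}=\{p',p_q\}$ without further hypotheses; nonsingularity is only needed later, to extract $w$ from $[p',p'']$. The paper sets up the minimality the other way around (it minimizes $p$ among elements of $F_r$ lying over a fixed $p_q$, so that $p$ covers $p_q$, and then uses nonsingularity to build the witness directly), whereas you minimize the candidate witness $p''$ itself and derive a contradiction; the two organizations are equivalent and rest on the same three ingredients.
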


\begin{proof}
Let us show that each $f_{rq}$ is closed.
Without loss of generality $r$ covers $q$.
Given a $p\in F_r$ and an $s\in\partial\fll p_q\flr_{F_q}$, we have $s<p_q<p$.
We need to show that $s=t_q$ for some $t\in F_q$.
Arguing by induction, we may assume that $p_q$ covers $s$.
If $p$ does not cover $p_q$, there exists a $t<p$, $t\in F_r$ (using that
$r$ covers $q$) such that $t_q=p_q$.
By considering such a minimal $t$ we may assume that $p$ covers $p_q$.
By the hypothesis there exists a $t\in P$, $t\ne p_q$, such that $s<t<p$.
Then $t$ is incomparable with $p_q$, whence $t\notin F_q$.
Since $r$ covers $q$, we conclude that $t\in F_r$.
Since $p>t>s$, we have $p_q\ge t_q\ge s$.
Since $t$ is incomparable with $p_q$, we have $t_q\ne p_q$.
Since $p_q$ covers $s$, this implies that $t_q=s$.
\end{proof}

\section{Intervals and cubes}\label{intervals and cubes}

\subsection{Canonical subdivision} If $P=(\mathcal P,\le)$ is a [pre]poset and
$a,b\in\mathcal P$ are such that $a\le b$, the {\it interval} $[a,b]$ is
the sub[pre]poset $\cel a\cer\cap\fll b\flr=\{c\in P\mid a\le c\le b\}$ of $P$.
If $P$ is a poset, we define the {\it canonical subdivision} $P^\#$ of $P$ to be
the poset of all intervals of $P$ ordered by inclusion.

In the general case some additional care is needed.
We say that an interval $[a,b]$ is {\it pre-included} in an interval $[c,d]$ and
write $[a,b]\Subset[c,d]$ if $a\in [c,d]$ and $b\in [c,d]$.
When $P$ is a poset, this is just the usual inclusion relation.
In general, $\Subset$ is a reflexive acyclic relation on the set of all intervals
of $P$ (the latter set is really just the relation $\le$ which officially is
a subset of $\mathcal P\x\mathcal P$).
We define the {\it canonical subdivision} $P^\#$ of $P$ to be the preposet of
all intervals of $P$ ordered by pre-inclusion.

We note that the canonical subdivision of every poset is an atomic poset.
If $P$ is a conditionally complete poset, then so is $P^\#$.
Clearly, $(P^*)^\#\simeq P^\#$ and $(P\x Q)^\#=P^\#\x Q^\#$ for
all preposets $P$ and $Q$.
It follows that $(C^*(P*Q))^\#=(C^*P)^\#\x (C^*Q)^\#$ and in particular,
$(P*Q)^\#=(C^*P)^\#\x Q^\#\cup P^\#\x (C^*Q)^\#$.

Beware that if $Q$ is a full subposet of $P$, then $Q^\#$ need not be full in $P^\#$.

\begin{remark}
In the case where $P$ is a poset, the operation of canonical
subdivision is known (under different names) in Topological Combinatorics
(see Babson, Billera and Chan \cite{BBC} and references there, and
\v Zivaljevi\'c \cite[Definition 7]{Zhi}), as well as in Order Theory
(see \cite{Lih} and references there).
Geometric versions of this construction, mostly restricted to the case where
$P$ is a simplicial complex, are also known in Algebraic Topology (see
\cite{FRS}), in Combinatorial Geometry (see \cite{SS}, \cite{BuP}) and in Geometric Group
Theory (see \cite{BB}).
\end{remark}

\begin{remark}
It is interesting to compare canonical subdivision with the edgewise
subdivision of simplicial sets \cite{BHM}, which takes posets to posets
(see \cite{EG}).
On the level of order complexes, both subdivisions introduce new
vertices precisely at the midpoints of all original edges.
Moreover, if a simplex of the order complex is identified (keeping
the order of the vertices) with the ``standard skew $n$-simplex''
$\{(x_1,\dots,x_n)\mid 0\le x_1\le\dots\le x_n\le 1\}\subset\R^n$,
then each of the two subdivisions cuts it into $2^n$ mutually congruent
simplices.
But they are not the same subdivision, already in
the $2$-dimensional case.
\end{remark}

\subsection{Cubical complexes} The poset $I^S\bydef(2^S)^\#$ is called
a {\it cube} or the {\it $S$-cube}; or the $n$-cube (notation: $I^n$)
if $S=[n]$.
Note that $$I^S=(C^*\Delta^S)^\#\simeq(C^*(\underset{S}{*}\Delta^0))^\#=
(\prod_S C^*\Delta^0)^\#=\prod_S(C^*\Delta^0)^\#=\prod_S I^1.$$
In particular, $I^{S\sqcup T}\simeq I^S\x I^T$ by an isomorphism natural
in $S$ and $T$.

A {\it cubical} poset is a conditionally complete poset where every cone is
isomorphic to a cube.
(Compare \cite{BBC}.)
A ``cubical cone complex'' is abbreviated to a {\it cubical complex}.
Cones of a cubical complex are thus called its {\it cubes}.
Closed subposets of a cubical complex will be termed its {\it subcomplexes}.
By the above, the product of two cubical complexes is a cubical complex.
Every cubical complex is atomic, since every cube is.

In contrast to Theorem \ref{2.10}, not every cubical complex is isomorphic to
a subcomplex of a cube.
For instance, the simplicial complex $\partial\Delta^2$, which also happens
to be a cubical complex, is not isomorphic to any subcomplex of any cube,
as it contains ``a cycle of odd length''.

\begin{lemma}\label{3.4} (a) If $K$ is a simplicial complex, then $(C^*K)^\#$ is
a cubical complex.

\smallskip
(b) If $Q$ is a cubical complex and $\sigma\in Q$, then
$\lk(\sigma,Q)$ is a simplicial complex and
$\st(\sigma,Q)\simeq \fll\sigma\flr\x (C^*\lk(\sigma,Q))^\#$.
\end{lemma}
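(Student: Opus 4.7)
For (a), I examine the cones of $P := (C^*K)^\#$. Any element is an interval $[a,b]$ of $C^*K$, and by the definition of pre-inclusion its cone in $P$ consists of all intervals $[c,d]$ with $a \le c \le d \le b$, which is naturally isomorphic to the canonical subdivision of $[a,b]$ taken inside $C^*K$. Since $K$ is simplicial, for each $b \in K$ the cone $\fll b\flr_K$ is a simplex $\Delta^{V_b}$ where $V_b$ is its vertex set; therefore the interval $[a,b] \incl C^*K$ is a Boolean poset $2^V$, with $V = V_b$ when $a = \hat 0$ (using $C^*\Delta^{V_b} \simeq 2^{V_b}$) and $V = V_b \but V_a$ when $a \in K$. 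Hence $\fll[a,b]\flr_P \simeq (2^V)^\# = I^V$ is a cube. Conditional completeness of $(C^*K)^\#$ follows from that of $K$ (a simplicial poset is conditionally complete by definition), preserved under prepending $\hat 0$ and under canonical subdivision. Thus $(C^*K)^\#$ is a cubical complex.

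For (b), I first show $L := \lk(\sigma, Q)$ is a simplicial complex. The dual cone $\cel\sigma\cer$ is open in $Q$, hence full, hence conditionally complete by Lemma \ref{full in CCP}. Since $\sigma$ is its unique minimum, $L = \cel\sigma\cer \but \{\sigma\}$ inherits conditional completeness: the supremum in $\cel\sigma\cer$ of elements strictly above $\sigma$ is itself strictly above $\sigma$. For the cone of $p \in L$, the cube $\fll p\flr_Q \simeq I^{S_p}$ (with $p$ as the top $[\emptyset, S_p]$) contains $\sigma$ as some interval $[A,B]$, whence $\cel\sigma\cer \cap \fll p\flr = \{[A',B'] : A' \incl A,\ B \incl B'\} \simeq 2^{A \sqcup (S_p \but B)}$; removing the minimum gives the simplex $\Delta^{A \sqcup (S_p \but B)}$. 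So $L$ is simplicial, and by part (a) the product $\fll\sigma\flr \x (C^*L)^\#$ is cubical.

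For the product decomposition, I build $\phi\colon \st(\sigma,Q) \to \fll\sigma\flr \x (C^*L)^\#$ by gluing local isomorphisms on the cubes $\fll\mu\flr$ with $\mu \ge \sigma$. Writing $\sigma \leftrightarrow [A,B]$ in $\fll\mu\flr \simeq I^{S_\mu}$, the disjoint decomposition $S_\mu = (B \but A) \sqcup (A \sqcup (S_\mu \but B))$ gives
\[
\fll\mu\flr \simeq I^{B \but A} \x I^{A \sqcup (S_\mu \but B)} \simeq \fll\sigma\flr \x (C^*\lk(\sigma, \fll\mu\flr))^\#,
\]
using Step 1 together with part (a) to identify the second factor; it embeds canonically in $(C^*L)^\#$ via $\lk(\sigma, \fll\mu\flr) \incl L$. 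For $\tau \in \st(\sigma,Q)$, let $\mu := \sigma \vee \tau$ (which exists by conditional completeness, since $\tau$ has some upper bound $\ge \sigma$), and define $\phi(\tau)$ as the image of $\tau$ under this local isomorphism. Surjectivity follows from $L = \bigcup_{\mu \ge \sigma} \lk(\sigma, \fll\mu\flr)$, which implies every cube of $(C^*L)^\#$ lies in some local piece; order-preservation is inherited from each local iso being an isomorphism.

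The main obstacle is verifying the compatibility of the local isomorphisms on overlaps $\fll\mu_1\flr \cap \fll\mu_2\flr = \fll\mu_1 \wedge \mu_2\flr$ (the meet exists because $\sigma$ is a common lower bound and $Q$ is conditionally complete). It suffices to show that for $\tau \in \fll\mu'\flr \incl \fll\mu\flr$ with $\sigma \le \mu' \le \mu$, the decomposition computed in the small cube agrees with the restriction of the one computed in the large cube; both coincide with the decomposition performed inside the canonical sub-cube $\fll \sigma \vee \tau\flr$, so consistency is automatic.
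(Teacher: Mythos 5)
Your proof is correct. The two parts warrant slightly different comparisons with the paper's argument.

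For (a), the paper takes a ``global'' route: it invokes Theorem \ref{2.10} to embed $K$ as a subcomplex of a simplex $\Delta^S$, then observes that $(C^*K)^\#$ is a closed subcomplex of $(C^*\Delta^S)^\#\simeq I^S$ and inherits the cubical structure from the ambient cube. You instead compute the cones directly, showing each $\fll[a,b]\flr_{(C^*K)^\#}$ is the canonical subdivision of the Boolean interval $[a,b]$ of $C^*K$, hence a cube, and supply conditional completeness separately. Both are valid; yours is more self-contained for this lemma (no appeal to the embedding theorem), while the paper's is shorter by amortizing the work into Theorem \ref{2.10}.

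For (b), the approach is essentially the paper's: restrict to a cube $\fll\mu\flr$ containing $\sigma$, write it as a product of coordinates ``inside'' and ``outside'' the interval representing $\sigma$, identify the outside factor with $(C^*\lk(\sigma,\fll\mu\flr))^\#$, and globalize. The one cosmetic difference is that the paper implicitly applies an automorphism of the cube to normalize $\sigma$ to $[\emptyset,S]$ (so that $(\fll\tau\flr,\fll\sigma\flr)\simeq(I^T,I^S)$), whereas you work with a general interval $[A,B]$, using the splitting $S_\mu=(B\but A)\sqcup A\sqcup(S_\mu\but B)$; both give the same factorization. You also spell out the gluing over overlaps more explicitly than the paper, which compresses this into the remark that the local isomorphism is ``natural in $\tau$''; the verification you sketch (both local isomorphisms restrict to the one on $\fll\sigma\vee\tau\flr$) is exactly what underlies that naturality claim, so no gap there.
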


\begin{proof}[Proof. (a)] By Theorem \ref{2.10}, $K$ is isomorphic to
a subcomplex of some simplex $\Delta^S$.
Then $(C^*K)^\#$ is isomorphic to a subcomplex of the cube
$(C^*\Delta^S)^\#\simeq I^S$.
\end{proof}

\begin{proof}[(b)] Given a $\tau\in\lk(\sigma,Q)$, we have
$(\fll\tau\flr,\fll\sigma\flr)\simeq(I^T,I^S)$ for
some sets $S\incl T$.
Then $\lk(\sigma,\fll\tau\flr)$ can be identified with
the poset, embedded in $I^T$ and consisting of all intervals strictly
containing $[\emptyset,S]$ --- that is, of all intervals $[\emptyset,S\cup R]$,
where $\emptyset\ne R\incl T\but S$.
Hence it is isomorphic to $\Delta^{T\but S}$.

We have $\st(\sigma,\fll\tau\flr)\simeq\fll\sigma\flr\x I^{T\but S}$,
whereas $I^{T\but S}\simeq(C^*\Delta^{T\but S})^\#\simeq
(C^*\lk(\sigma,\fll\tau\flr))^\#$.
The resulting isomorphism
$\st(\sigma,\fll\tau\flr)\simeq\fll\sigma\flr\x(C^*\lk(\sigma,\fll\tau\flr))^\#$
is natural in $\tau$, which implies the second assertion.

To complete the proof of the first assertion, we note that each cone of
$\lk(\sigma,Q)$ is of the form $\lk(\sigma,\fll\tau\flr)$, which in
turn has been shown to be isomorphic to a simplex.
Now $\lk(\sigma,Q)$ is an open subposet of the conditionally complete poset $Q$,
and hence itself
a conditionally complete poset.
\end{proof}

\begin{theorem}[{\cite{BBC} (see also \cite{D2}, \cite[5.22]{BH}, \cite[Fig.\ 2]{JS};
compare \cite{Ga})}]\label{3.5}
If $K$ is a finite simplicial complex, there exists a finite cubical complex $Q$
such that $\lk(v,Q)$ is isomorphic to $K$ for every vertex $v$ of $Q$.
\end{theorem}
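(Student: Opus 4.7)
My plan is to exhibit $Q$ explicitly as a subcomplex of the $V$-cube $I^V$, where $V$ is the vertex set of $K$. The candidate is
$$Q \bydef \{[A,B] \in I^V : B \setminus A \in K \cup \{\emptyset\}\},$$
i.e., those cubes $[A,B]$ of $I^V = (2^V)^\#$ (with $A \subseteq B \subseteq V$) whose ``direction set'' $B \setminus A$ is a simplex of $K$ or is empty.

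First I would check that $Q$ is a closed subposet of $I^V$: if $[A',B'] \Subset [A,B] \in Q$ then $B' \setminus A' \subseteq B \setminus A$, and since $K$ is closed under nonempty subsets, $B' \setminus A'$ lies in $K \cup \{\emptyset\}$. Being closed in a cubical complex, $Q$ inherits conditional completeness from $I^V$ via Corollary~\ref{2.8}(b), and its cones coincide with the (cubical) cones of $I^V$; so $Q$ is a finite cubical complex.

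Next I would compute $\cel v \cer_Q$ for an arbitrary vertex $v = [T,T]$ of $Q$, $T \subseteq V$. Every $[A,B] \in \cel v \cer_Q$ has the form $[T \setminus S,\, T \cup S']$ for unique $S \subseteq T$ and $S' \subseteq V \setminus T$; the defining condition of $Q$ becomes $S \sqcup S' \in K \cup \{\emptyset\}$, and pre-inclusion of such intervals translates to simultaneous inclusion of the $S$'s and the $S'$'s, i.e., to inclusion of $U \bydef S \sqcup S'$. The assignment $[A,B] \mapsto U$ is then a poset isomorphism $\cel v \cer_Q \to C^*K$, whence $\lk(v,Q) = \partial^* \cel v \cer_Q \simeq \partial^* C^*K = K$.

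The whole argument is formal once the formula for $Q$ is in hand; the real difficulty is finding that formula. A more naive cubical model of $K$, e.g.\ $\bigcup_{\sigma \in K} I^\sigma$ glued along inclusions, only produces the correct link at one particular ``central'' vertex. The key idea is that each simplex $U \in K$ can be attached \emph{symmetrically} at every vertex $T$ of $I^V$ by splitting it as $(U \cap T,\, U \setminus T)$, so that all $2^{|V|}$ vertices of $Q$ share an isomorphic copy of $K$ as link.
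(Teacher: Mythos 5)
Your proof is correct and is essentially the same as the paper's: your $Q$ coincides with the paper's $Q = F^{-1}(C^*K)$, where $F\:I^V\to 2^V$ is the ``folding'' map $[A,B]\mapsto B\but A$, and your splitting $[A,B]=[T\but S,\,T\cup S']$ with $U=S\sqcup S'$ is exactly the paper's section $J_v\:T\mapsto[T\but S,\,T\cup S]$ of $F$ restricted to $\cel v\cer$. The only cosmetic difference is that you unwind $F$ and $J_v$ into explicit interval manipulations rather than invoking the folding-map formalism.
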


The `mirroring' construction of Theorem \ref{3.5} has a complex analogue,
where the $(\Z/2)^n$ symmetry is replaced by an $(S^1)^n$ symmetry; it is known
as the `moment-angle complex' (see \cite{BuP}).
Both constructions are special cases of the `polyhedral product' \cite{Da}.

\begin{proof}[Proof] $K$ can be identified with a subcomplex of the $(n-1)$-simplex
$\Delta^{[n]}$ for some $n$.
Then $C^*K$ is identified with a closed subposet of $2^{[n]}$.
The `folding' monotone map $f\:I\to 2^{[1]}$ can be multiplied by itself to yield
a monotone map $F\:I^n\to 2^{[n]}$.
Let $Q=F^{-1}(C^*K)$.
Then $Q$ is a subcomplex of $I^n$ and contains the set $F^{-1}(\hat 0)$ of all
vertices of $I^n$.
For each vertex $v=[S,S]$ in this set, define $J_v\:2^{[n]}\to I^n$ by
$T\mapsto [T\but S,T\cup S]$.
Then $J_v$ is a monotone map, $J_v(2^{[n]})=\cel v\cer$, and the
composition $2^{[n]}\xr{J_v} I^n\xr{F} 2^{[n]}$ is the identity.
Hence $F|_{\cel v\cer}$ is an isomorphism.
Therefore $\lk(v,Q)\simeq\lk(\hat 0,C^*K)=K$.
\end{proof}

\subsection{Weak product and join}\label{weak join}
If we are given some basepoints $b_\lambda\in P_\lambda$, there is
the {\it pointed weak product} $\prod\limits_{\lambda\in\Lambda}^w P_\lambda$,
which is embedded in $\prod\limits_{\lambda\in\Lambda} P_\lambda$ and is
the union of $(\prod\limits_{\lambda\in\Phi} P_\lambda)\x
(\prod\limits_{\lambda\in\Lambda\but\Phi}\{b_\lambda\})$
over all finite $\Phi\incl\Lambda$.
It also has the basepoint $(b_\lambda)_{\lambda\in\Lambda}$.

The {\it weak join} $\join\limits^w_{\lambda\in\Lambda}P_\lambda$
is by definition $\partial^*(\prod\limits_{\lambda\in\Lambda}^w C^*P_\lambda)$,
where all the basepoints are taken at $\hat0$.
Thus the weak join is the union of all finite subjoins.
It can be identified with
$\bigcup\limits_{\lambda\in\Lambda} (P_\lambda\x
\prod\limits_{\kappa\in\Lambda\but\{\lambda\}}^w C^*P_\kappa)$.
The {\it weak van Kampen duality} reads
$\join\limits^w_{\lambda\in\Lambda}P_\lambda\simeq
(\partial\prod\limits_{\lambda\in\Lambda}^w CP_\lambda^*)^*$,
where all basepoints are taken at $\hat 1$.

The {\it weak $\Lambda$-simplex} $\Delta^\Lambda_w\bydef\join^w_{\Lambda}\Delta^0$
is identified with the poset $\partial^*2^\Lambda_w$ of all nonempty finite subsets
of $\Lambda$; it has no greatest element whenever $\Lambda$ is infinite.
Neither has the {\it weak $\Lambda$-cube}
$I^\Lambda_w\bydef(2^\Lambda_w)^\#\simeq\prod^w_\Lambda I^1$,
where all basepoints are taken at $[\hat 0,\hat 0]$.
In contrast, the {\it co-weak $\Lambda$-cube}
$I^\Lambda_c\bydef\prod^w_\Lambda I^1$,
where all basepoints are taken at $[\hat 0,\hat 1]$, has a greatest
element.
Moreover, by virtue of the weak van Kampen duality,
$\partial I^\Lambda_c\simeq (S^\Lambda_w)^*$, where the
{\it weak $\Lambda$-sphere} $S^\Lambda_w=\join^w_{\Lambda}\partial I^1$.
Somewhat reminiscent of the mirroring construction (in the proof of
Theorem \ref{3.5}), $S^\Lambda_w$ contains copies of $\Delta^\Lambda_w$, and
therefore $(I^\Lambda_c)^*$ contains copies of $2^\Lambda_w$.
In particular, $(I^\Lambda_c)^\#\simeq ((I^\Lambda_c)^*)^\#$ contains copies of
$(2^\Lambda_w)^\#=I^\Lambda_w$.

\subsection{Simple posets}\label{simple}
A {\it simple poset} is a conditionally complete poset $P$ such that for every
$\sigma\in P$ and every
$\tau\in\lk(\sigma,P)$, the poset $\lk(\sigma,\fll\tau\flr)$ is isomorphic to
a simplex.
This includes simplicial and cubical posets as special cases.

\begin{lemma}\label{3.6}
(a) If $K$ is a simplicial poset, then $C^*K$ is a simple poset.

(b) If $P$ is a simple poset, $\sigma\in P$, then $\lk(\sigma,P)$
is a simplicial poset.

(c) If $P$ is a conditionally complete poset, then it is a simple poset iff $P^\#$
is a cubical poset.

(d) If $P$ is a simple poset, then $P^*$ is a simple poset.

(e) The face poset of a simple polytope is a simple poset.
\end{lemma}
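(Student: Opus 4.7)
The plan is to dispatch (a) and (b) by direct verifications, invest the real effort in (c), and then deduce (d) and (e) formally.

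For (a), given $\sigma \in C^*K$ and $\tau \in \lk(\sigma, C^*K)$, we split on whether $\sigma = \hat 0$. If so, then $\fll\tau\flr_{C^*K} \simeq C^*\fll\tau\flr_K$ and $\lk(\hat 0, C^*\fll\tau\flr_K) \simeq \fll\tau\flr_K$, a simplex; otherwise $\sigma, \tau \in K$ and the computation takes place inside the simplex $\fll\tau\flr_K$, where the link of any face is again a simplex. Conditional completeness of $C^*K$ is inherited from $K$. For (b), $\lk(\sigma, P) = \partial^*\cel\sigma\cer$ is upward-closed in $P$ and so inherits conditional completeness (the least upper bound in $P$ of a nonempty subset of $\lk(\sigma, P)$ strictly dominates $\sigma$, hence lies in $\lk(\sigma, P)$). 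Its cones are $\fll\tau\flr_{\lk(\sigma,P)} = \lk(\sigma, \fll\tau\flr_P)$, simplices by hypothesis.

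The essential content is in (c). The preliminary observation is that cones of $P^\#$ are canonical subdivisions of intervals: $\fll[a,b]\flr_{P^\#} \simeq [a,b]^\#$. Next, the simple condition on $P$ can be recast as the assertion that every interval $[\sigma, \tau] \subseteq P$ is a boolean lattice $2^S$ --- because $[\sigma, \tau] \simeq C^*(\sigma, \tau]$, so requiring $(\sigma, \tau] = \lk(\sigma, \fll\tau\flr_P)$ to be a simplex $\Delta^S$ is equivalent to $[\sigma, \tau] \simeq C^*\Delta^S \simeq 2^S$. The forward direction of (c) is then immediate: each cone of $P^\#$ is $(2^S)^\# \simeq I^S$, a cube. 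For the converse, given $[a,b]^\# \simeq I^T$, the plan is to recover $[a,b] \simeq 2^T$ by locating $[a,b]$ as a distinguished subposet of $[a,b]^\#$: namely, $R := \{X \in [a,b]^\# : [a,a] \le X\}$, which consists of the intervals $[a, \mu]$ for $\mu \in [a,b]$ (since $[a,a] \subseteq [c,d]$ forces $c = a$), so that $R \simeq [a,b]$ naturally. Under the given isomorphism, if $[a,a]$ maps to the atom $[A_0, A_0] \in I^T$, then $R$ corresponds to $\{[B_1, B_2] \in I^T : B_1 \subseteq A_0 \subseteq B_2\}$; the change of variables $(B_1, B_2) \mapsto (A_0 \setminus B_1, B_2 \setminus A_0)$ carries this onto $2^{A_0} \x 2^{T \setminus A_0} \simeq 2^T$, yielding $[a,b] \simeq 2^T$.

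Parts (d) and (e) then follow formally. For (d), the excerpt records $(P^*)^\# \simeq P^\#$, and conditional completeness is self-dual by Lemma \ref{2.7}; so by (c), $P$ is simple iff $P^\#$ is cubical iff $(P^*)^\#$ is cubical iff $P^*$ is simple. For (e), let $P^\star$ be the polar of the simple polytope $P$, so that $P^\star$ is simplicial, and $F_P$ is conditionally complete since $F_P \cup \{\emptyset\}$ is a lattice. By (c) it suffices to show each interval $[\sigma, \tau] \subseteq F_P$ is boolean. The order-reversing polar duality carries $[\sigma, \tau]$ to the interval $[\tau^\diamond, \sigma^\diamond]$ in the extended face poset of $P^\star$ (including the empty face), where $\sigma^\diamond \ne P^\star$. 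Since $P^\star$ is simplicial, either $\sigma^\diamond$ is empty (in which case the interval is trivially boolean) or $\sigma^\diamond$ is an affine simplex with some vertex set $V$, so $[\tau^\diamond, \sigma^\diamond] \simeq 2^{V \setminus V_{\tau^\diamond}}$ is boolean; its dual $[\sigma, \tau]$ is therefore also boolean. The main obstacle is the converse direction in (c): an abstract isomorphism $[a,b]^\# \simeq I^T$ provides no direct information about $[a,b]$ as a poset, so we must exhibit an intrinsic description of $[a,b]$ inside $[a,b]^\#$ whose image in $I^T$ can be explicitly identified, and the principal-filter-above-$[a,a]$ trick used above accomplishes exactly this.
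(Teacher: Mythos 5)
Your proof is correct and takes a genuinely different route through parts (c) and (e). The paper's argument for (c) is indirect: it invokes Lemma~\ref{3.6}(b) to write $\cel\sigma\cer\simeq C^*K$ for a simplicial poset $K$, then applies Lemma~\ref{3.4} to convert between simplicial and cubical structures, and handles conditional completeness of $P^\#$ by a separate intersection argument. You instead isolate the reformulation ``\,$P$ is simple iff $P$ is conditionally complete and every interval $[\sigma,\tau]$ is a boolean lattice $2^S$\,'' and work with it directly; the converse hinges on the observation that $[a,b]$ is recoverable inside $[a,b]^\#\simeq I^T$ as the dual cone $\cel[a,a]\cer$, whose image in $I^T$ is explicitly $\{[B_1,B_2]:B_1\subseteq A_0\subseteq B_2\}\simeq 2^{A_0}\times 2^{T\setminus A_0}\simeq 2^T$. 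Amusingly, the boolean-interval reformulation is exactly what the paper uses to prove (d) in one line --- so your proof and the paper's overlap in substance but deploy the key idea at different places; having established it for (c), your proofs of (d) and (e) become formal corollaries (for (e) you verify the interval condition directly via polar duality instead of the paper's route $F_P=(C^*(\partial F_{P^\star}))^*$ and parts (a),(d)). What you gain is a more self-contained and more explicit proof of (c) that bypasses Lemma~\ref{3.4}; what the paper gains is a shorter total argument by reusing Lemma~\ref{3.4}, which it has already proved. One minor remark: in the forward direction of (c) you deduce that cones of $P^\#$ are cubes but do not comment on conditional completeness of $P^\#$; this is covered by the general fact stated in \S\ref{intervals and cubes} that the canonical subdivision of a conditionally complete poset is conditionally complete, so no gap, but it deserves a pointer.
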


A version of the `only if' implication in (c) is found in \cite{BBC}.

\begin{proof}[Proof. (a)] Since $K$ is conditionally complete, then so is $C^*K$.
If $\tau\in C^*K$, then either $\tau\in K$ or $\tau=\hat0$.
If further $\sigma\in\lk(\tau,C^*K)$, then
$\lk(\tau,\fll\sigma\flr_{C^*K})$ is isomorphic to the simplex
$\lk(\tau,\fll\sigma\flr_K)$ in the first case and to the simplex
$\fll\sigma\flr$ in the second case.
\end{proof}

\begin{proof}[(b)] Similar to the proof of the first assertion of Lemma
\ref{3.4}(b).
\end{proof}

\begin{proof}[(c)] Given a $\sigma\in P$, by (b), $\cel\sigma\cer\simeq C^*K$
for some simplicial poset $K$.
Hence $\cel\sigma\cer^\#$ is a cubical poset by Lemma \ref{3.4}(a).
Thus cones of $P^\#$ are cubes, and each
$\cel\sigma\cer^\#$ is conditionally complete.
Now suppose that $P^\#$ contains a lower bound for a collection of intervals
$[\sigma_\lambda,\tau_\lambda]$.
Since $P$ is conditionally complete,
$\bigcap_\lambda\cel\sigma_\lambda\cer=\cel\sigma\cer$ for some
$\sigma\in P$.
Hence
$\bigcap_\lambda[\sigma_\lambda,\tau_\lambda]=\bigcap[\sigma,\tau_\lambda]$,
which is an interval since $\cel\sigma\cer^\#$ is conditionally complete.
Thus $P^\#$ is conditionally complete.

Conversely, suppose that $P^\#$ is a cubical poset.
Since $P$ is a poset, $\cel\sigma\cer^\#$ is a subcomplex of $P^\#$ for each
$\sigma\in P$, and in particular a cubical poset.
Then by Lemma \ref{3.4}(b), each $\lk([\sigma,\sigma],\cel\sigma\cer^\#)$ is
a simplicial poset.
On the other hand, it is isomorphic to $\lk(\sigma,P)$ since $P$ is a poset.
Thus $P$ is simple.
\end{proof}

\begin{proof}[(d)] Both the hypothesis and the conclusion are equivalent to saying
that $P$ is conditionally complete and every interval $[\sigma,\tau]\in P$ considered as
a subposet is isomorphic to the dual cone over a simplex
($\Leftrightarrow$ the cone over the dual of a simplex).
\end{proof}

\begin{proof}[(e)]
If $P$ is a convex polytope, its face poset
$F_P=C(\partial F_P)=(C^*(\partial F_{P^\star}))^*$, where
$\partial F_{P^\star}$ is a simplicial complex if $P$ is simple
(see Example \ref{polytope-duality}).
So the assertion follows from (a) and (d).
\end{proof}

\subsection{Affine polytopal complexes}
An {\it affine polytopal complex} (compare \cite[last remark to
Definition 2.1]{Be}) is a countable conditionally complete poset $K$ where to every
$\sigma\in K$ there is associated an isomorphism of $\fll\sigma\flr$ with
the poset of non-empty faces of a convex polytope $P_\sigma$ (compare
Example \ref{polytopes}) so that every face inclusion $\fll\tau\flr\incl\fll\sigma\flr$
is realized by an affine isomorphism between $P_\tau$ and the face of
$P_\sigma$ corresponding to $\tau$.
This includes cubical and simplicial complexes as special cases.
Every affine polytopal complex is atomic since the poset of nonempty faces of
every convex polytope is.
Affine polytopal complexes are not equivalent to ``cell complexes'' in the sense
of Rourke--Sanderson \cite{RS}, who additionally require linear embeddability
into some Euclidean space.
For instance, the cubulation of the M\"obius band into $3$ squares is an
affine polytopal complex (and a cubical complex) but not a cell complex
in the sense of \cite{RS} (see \cite{BuP}).

We note the following consequence of Lemma \ref{3.6}(c,e), which can be seen as
a combinatorial abstraction of a well-known geometric construction (see \cite{BuP}).

\begin{corollary} If $K$ is an affine polytopal complex whose polytopes
are simple, then $K$ is a simple poset.
In particular, $K^\#$ is a cubical complex.
\end{corollary}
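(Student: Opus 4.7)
The plan is to verify Definition~\ref{simple} directly for $K$, and then apply Lemma~\ref{3.6}(c) to pass to the statement about $K^\#$, modulo a small cone-complex check. Conditional completeness of $K$ is built into the definition of an affine polytopal complex, so that ingredient of Definition~\ref{simple} is free of charge.

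The key observation is that the simplicity condition is local: to verify it at a pair $\sigma<\tau$ in $K$, one need only look inside the principal downset $\fll\tau\flr_K$. So I would fix $\sigma\in K$ and $\tau\in\lk(\sigma,K)$ (so $\sigma<\tau$), set $Q=\fll\tau\flr_K$, and note that by hypothesis $Q$ is isomorphic to the face poset $F_{P_\tau}$ of the simple polytope $P_\tau$. Lemma~\ref{3.6}(e) then tells us that $Q$ is itself a simple poset. Since $\tau$ is the greatest element of $Q$, we have $\fll\tau\flr_Q=Q=\fll\tau\flr_K$, and therefore $\lk(\sigma,\fll\tau\flr_Q)=\lk(\sigma,\fll\tau\flr_K)$. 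Applying the simplicity of $Q$ to the pair $(\sigma,\tau)$ directly yields that this link is a simplex, which is exactly the condition required to conclude that $K$ is simple.

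For the second assertion, Lemma~\ref{3.6}(c) immediately gives that $K^\#$ is a cubical poset. It remains to verify that $K^\#$ is a cone complex, i.e.\ that it is countable and each of its cones is finite. Countability of $K$ (part of the definition of affine polytopal complex) yields countability of $K^\#\subset K\times K$. Given $[a,b]\in K^\#$, the cone $\fll[a,b]\flr_{K^\#}$ consists of intervals $[c,d]$ with $a\le c\le d\le b$, and all such $c,d$ lie in $\fll b\flr_K\cong F_{P_b}$, which is finite since convex polytopes have finitely many faces; hence $\fll[a,b]\flr_{K^\#}$ is finite.

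I do not expect any genuine obstacle here: the proof is an unwrapping of definitions together with two invocations of Lemma~\ref{3.6}. The only point worth highlighting is the reduction of the a~priori non-local simplicity condition on $K$ to the simplicity of each principal downset $\fll\tau\flr_K$, which works because both $\sigma$ and $\tau$ from the test pair already live inside $\fll\tau\flr_K$ and because $\tau$ is maximal there.
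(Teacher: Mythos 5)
Your proof is correct and takes exactly the route the paper indicates by calling the statement a ``consequence of Lemma~\ref{3.6}(c,e)'': you use Lemma~\ref{3.6}(e) to get simplicity of each principal downset and then pass to all of $K$ via the observation that the condition $\lk(\sigma,\fll\tau\flr)\simeq$~simplex lives entirely inside $\fll\tau\flr_K$, and Lemma~\ref{3.6}(c) to get cubicality of $K^\#$. The extra check that $K^\#$ is a cone complex (countable with finite cones) is a correct and reasonable detail to supply, though the paper evidently takes it for granted.
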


\section{Mapping cylinder}

\subsection{Adjunction preposet}
Let $P=(\mathcal P,\le)$ be a poset, $A=(\mathcal A,\le)$ a subposet of $P$
and $f\:A\to Q$ a monotone map, where $Q=(\mathcal Q,\le)$ is a preposet.
We recall that the adjunction set $\mathcal P\cup_f\mathcal Q$ is
the set-theoretic pushout of the diagram
$\mathcal P\supset\mathcal A\xr{f}\mathcal Q$, that is, the quotient of
$\mathcal P\sqcup\mathcal Q$ by the equivalence relation generated by
$a\sim f(a)$ for all $a\in\mathcal A$.

We define the {\it adjunction preposet} $P\cup_f Q$ to be the preposet
$P\cup_f Q=(\mathcal P\cup_f\mathcal Q,\le)$, where $q\le q'$ in $P\cup_f Q$
if there exist $p\in [q]$ and $p'\in [q']$ such that $p\le p'$.
It is easy to see that this relation is reflexive and acyclic
(given a nontrivial cycle in $P\cup_f Q$, it can be shortened to a cycle
in $Q$ using that $P$ is a poset).
It is also clear that $P\cup_f Q$ is the pushout of the diagram
$P\supset A\xr{f} Q$ in the category of preposets.

\begin{lemma}\label{adjunction poset} Let $P\supset A\xr{f} Q$ be a partial
monotone map of posets.

(a) If $f$ is closed and $A$ is closed, then $P\cup_f Q$ is a poset.

(b) If $f$ is open and $A$ is open, then $P\cup_f Q$ is a poset.
\end{lemma}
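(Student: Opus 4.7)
The plan is to observe that $P\cup_f Q$ is already a preposet by the paragraph preceding the lemma (reflexive and acyclic), and that a reflexive, acyclic relation which is also transitive is automatically a partial order (the length-$2$ acyclicity condition $x\le y\le x\Rightarrow x=y$ is just anti-symmetry). So in either (a) or (b) the only thing I need to verify is that $\le$ on $\mathcal P\cup_f\mathcal Q$ is transitive.

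Suppose $[x]\le[y]\le[z]$. Unpacking the definition produces representatives $p_1\in[x]$, $p_1',p_2\in[y]$, $p_2'\in[z]$ and comparisons $p_1\le p_1'$, $p_2\le p_2'$, where each comparison lives entirely inside $\mathcal P$ or entirely inside $\mathcal Q$. I would split into cases according to which side each of the two comparisons lies in. The first useful observation is that $[y]\cap\mathcal Q$ has at most one element, because the equivalence relation only identifies $a\in\mathcal A$ with $f(a)\in\mathcal Q$, so a nontrivial class has exactly one $\mathcal Q$-representative; this immediately disposes of the ``$Q$--$Q$'' case, forcing $p_1'=p_2$ and reducing to transitivity of $\le_Q$. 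In the ``$P$--$P$'' case, if $p_1'=p_2$ we are done; otherwise $p_1'\ne p_2$ are distinct elements of $[y]\cap\mathcal P$, which forces them both to lie in $\mathcal A$ with $f(p_1')=f(p_2)$. The remaining two ``mixed'' cases always involve one of $p_1'$, $p_2$ being an element of $\mathcal A$ identified with its image in $\mathcal Q$.

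For part (a), the two transfer principles I would extract are: $(\alpha)$ if $p\le_P a$ with $a\in\mathcal A$, then $A$ closed forces $p\in\mathcal A$, whence $f(p)\le_Q f(a)$ by monotonicity of $f$; and $(\beta)$ if $a\in\mathcal A$ and $q\le_Q f(a)$, then $f$ closed supplies an $a_0\in\mathcal A$ with $a_0\le_P a$ and $f(a_0)=q$. Principle $(\alpha)$ converts a $P$-comparison with upper end in $\mathcal A$ into a $Q$-comparison; principle $(\beta)$ pulls a $Q$-comparison below $f(a)$ back to a $P$-comparison below $a$. Applied to the three nontrivial cases above, one or both of these principles collapses the two-step chain $[x]\le[y]\le[z]$ into a single witnessing comparison (for example, in the problematic ``$P$--$P$'' subcase one first applies $(\alpha)$ to $p_1\le_P p_1'$ to get $f(p_1)\le_Q f(p_2)$ in $\mathcal Q$, and then applies $(\beta)$ along $p_2\le_P p_2'$ to find $a_0\in[x]$ with $a_0\le_P p_2'$). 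Part (b) is exactly dual: $A$ open gives the upward version of $(\alpha)$ (if $a\le_P p$ with $a\in\mathcal A$ then $p\in\mathcal A$), and $f$ open gives the upward version of $(\beta)$ (a $Q$-comparison $f(a)\le_Q q$ lifts to $a\le_P a_0$ in $\mathcal A$ with $f(a_0)=q$); the same cases go through with all the comparisons turned around.

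The hard part, as far as I can see, is not conceptual but organizational: keeping track, across several parallel cases, of which element lies in $\mathcal P$, which in $\mathcal Q$, which in $\mathcal A$, and which equivalence classes intersect $\mathcal Q$ nontrivially. Once the two transfer principles are isolated, each case is a one-line application, and parts (a) and (b) are mirror images of each other under reversal of the order.
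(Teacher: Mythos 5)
Your proof is correct and takes essentially the same approach as the paper's: the two transfer principles $(\alpha)$ and $(\beta)$ are precisely the mechanisms the paper uses for its two triple types $[p]<[a]=[f(a)]<[q]$ and $[p]>[a]=[f(a)]>[q]$. Your case analysis is slightly more explicit than the paper's terse ``it suffices to check transitivity for triples of the form\dots'' --- in particular you directly handle the ``$P$--$P$'' subcase where both witnesses for the middle class lie in $\mathcal{A}$, which the paper folds in implicitly by chaining both transfer principles.
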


\begin{proof}
Since $P$ and $Q$ are posets, it suffices to check transitivity for triples
of the form $[p]>[a]=[f(a)]>[q]$ and $[p]<[a]=[f(a)]<[q]$, where $p\in P$,
$a\in A$ and $q\in Q$.

Suppose that $p\ge a$ and $f(a)\ge q$.
If $f$ is closed, then $q=f(b)$ for some $b\le a$.
Then $p\ge b$, and consequently $[p]\ge[b]=[q]$ in $P\cup_f Q$.
On the other hand, if $A$ is open, then $p\in A$, and consequently
$f(p)\ge f(a)\ge q$.
Since $Q$ is a poset, $f(p)\ge q$, and thus $[p]=[f(p)]\ge[q]$.

Now suppose that $p\le a$ and $f(a)\le q$.
If $f$ is open, then $q=f(b)$ for some $b\ge a$.
Then $p\ge b$, and consequently $[p]\le[b]=[q]$ in $P\cup_f Q$.
On the other hand, if $A$ is closed, then $p\in A$, and consequently
$f(p)\le f(a)\le q$.
Since $Q$ is a poset, $f(p)\le q$, and thus $[p]=[f(p)]\le[q]$.
\end{proof}

\subsection{Quotient poset}\label{quotient poset}
Given a subposet $Q$ of a poset $P$, it is easy to see that the adjunction
preposet $P\cup_c pt$, where $f\:Q\to pt$ is the constant map, is a poset.
We call it the {\it quotient poset} and denote $P/Q$.

\subsection{Amalgam}
If $A$ and $B$ are subposets of posets $P$ and $Q$, respectively, and
$h\:A\to B$ an isomorphism, then the {\it amalgam} $P\cup_h Q$, also written
$P\cup_{A=B}Q$ when $h$ is clear from context or irrelevant, is the adjunction
preposet $P\cup_{jh} Q$, where $j\:B\to Q$ is the inclusion.

\begin{corollary} \label{amalgam}
Let $A$ and $B$ be subposets of posets $P$ and $Q$, respectively.
If $A$ and $B$ are both closed or both open, then $P\cup_{A=B}Q$ is a poset.
\end{corollary}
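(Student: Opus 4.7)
The plan is to reduce the statement directly to Lemma \ref{adjunction poset} applied to the composition $f := jh\colon A\to Q$, where $h\colon A\to B$ is the given isomorphism and $j\colon B\hookrightarrow Q$ is the inclusion. By the definition of the amalgam, $P\cup_{A=B}Q$ is literally the adjunction preposet $P\cup_f Q$, so the content is to verify the hypotheses of Lemma \ref{adjunction poset} in each of the two cases.

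First I would note that $h$, being an isomorphism of posets, is both closed and open (it takes $\fll a\flr_A$ onto $\fll h(a)\flr_B$ and $\cel a\cer^A$ onto $\cel h(a)\cer^B$). For the inclusion $j$, the set-theoretic identity $j(\fll b\flr_B)=\fll b\flr_B=\fll b\flr_Q\cap B$ shows that $j$ is closed precisely when $B$ is closed in $Q$, because only then does $\fll b\flr_Q\subset B$; dually, $j$ is open precisely when $B$ is open in $Q$. Since the composition of two closed (respectively, open) monotone maps is closed (respectively, open), this gives both cases at once: if $A$ and $B$ are both closed, then $f=jh$ is closed and $A$ is closed in $P$, so Lemma \ref{adjunction poset}(a) yields that $P\cup_f Q$ is a poset; if $A$ and $B$ are both open, then $f$ is open and $A$ is open in $P$, so Lemma \ref{adjunction poset}(b) applies.

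There is essentially no obstacle here beyond unwinding the definition of the amalgam; the only mildly nontrivial observation is that closedness (respectively, openness) of the inclusion $j\colon B\hookrightarrow Q$ is equivalent to $B$ being a closed (respectively, open) subposet of $Q$, which one should state explicitly for the reader to see why the hypothesis on $B$ feeds into the hypothesis on $f$ required by Lemma \ref{adjunction poset}.
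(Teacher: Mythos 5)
Your proof is correct and is plainly the intended one: the paper states this as a corollary of Lemma \ref{adjunction poset} without further argument, and your unwinding of the amalgam as $P\cup_{jh}Q$, together with the observation that $h$ is closed and open (being an isomorphism) and that $j$ is closed [resp.\ open] exactly when $B$ is, is exactly the routine verification the author is implicitly relying on.
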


\subsection{Mapping cylinder}
Given a monotone map of posets $f\:P\to Q$, the {\it mapping cylinder}
$MC(f)=P\x[2]\cup_{f_1}Q$ and the {\it dual mapping cylinder}
$MC^*(f)=P\x[2]\cup_{f_2}Q$, where $f_i$ is the composition
$P\x\{i\}\simeq P\xr{f}Q$.
Note that $MC^*(f)\simeq (MC(f^*))^*$ and that there are natural monotone
bijections (which are not embeddings) $MC(f)\to Q+P$ and $MC^*(f)\to P+Q$.

Note that $$P*Q\simeq MC(P\x Q\to P)\underset{P\x Q}\cup MC(P\x Q\to Q)$$
and
$$P\cojoin Q\simeq MC^*(P\x Q\to P)\underset{P\x Q}\cup MC^*(P\x Q\to Q).$$

\begin{corollary}\label{mapping cylinder}
Let $f\:P\to Q$ be a monotone map of posets.
Then $MC(f)$ is a poset if and only if $f$ is closed; dually, $MC^*(f)$
is a poset if and only if $f$ is open.
\end{corollary}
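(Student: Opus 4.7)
The plan is to deduce both assertions from Lemma \ref{adjunction poset}(a) in one direction and from a direct non-transitivity argument in the other, with duality converting the first assertion into the second.

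First I would observe that $P\x\{1\}$ is always a closed subposet of $P\x[2]$: if $(p,i)\le(p',1)$ in the product order, then $i\le 1$ forces $i=1$. Moreover, under the canonical isomorphism $P\x\{1\}\simeq P$ the map $f_1$ is identified with $f$, so $f_1$ is closed iff $f$ is. Hence Lemma \ref{adjunction poset}(a) applied to $f_1$ gives the ``if'' direction of the first assertion directly.

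For the ``only if'' direction, suppose $f$ is not closed, so one may pick $p\in P$ and $q\in Q$ with $q<f(p)$ but $q\notin f(\fll p\flr)$. I would show that the preposet relation on $MC(f)$ fails to be transitive, which is equivalent to failing to be a poset (since a reflexive acyclic relation is a partial order iff it is transitive). The equivalence class of $(p,2)$ in $P\x[2]\sqcup Q$ is the singleton $\{(p,2)\}$, because $f_1$ only identifies elements of $P\x\{1\}$ with elements of $Q$. One has $[(p,2)]\ge[(p,1)]=[f(p)]$ via the relation $(p,2)\ge(p,1)$ in $P\x[2]$, and $[f(p)]\ge[q]$ via $f(p)\ge q$ in $Q$. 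However, for $[(p,2)]\ge[q]$ to hold in $MC(f)$, one would need a representative of $[q]$ lying in $P\x[2]$ that is $\le(p,2)$; the representatives of $[q]$ in $P\x[2]$ are exactly those $(p',1)$ with $f(p')=q$, and $(p',1)\le(p,2)$ requires $p'\le p$, contradicting $q\notin f(\fll p\flr)$.

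The dual assertion then follows by applying the first assertion to $f^*\:P^*\to Q^*$: $f$ is open iff $f^*$ is closed (a monotone map $g$ is closed iff $g(\fll{\cdot}\flr)=\fll g(\cdot)\flr$, and dualizing turns this into $g^*(\cel{\cdot}\cer)=\cel g^*(\cdot)\cer$), and $MC^*(f)\simeq(MC(f^*))^*$ is a poset iff $MC(f^*)$ is. The only (modest) delicacy in this plan is parsing equivalence classes of the adjunction pushout correctly, and remembering that being a poset here means the already-defined relation is transitive, not that one must pass to its transitive closure.
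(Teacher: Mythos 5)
Your proof is correct and takes essentially the same route as the paper: cite Lemma \ref{adjunction poset}(a) for the ``if'' direction, produce a failure of transitivity at $(p,2)>(p,1)=f(p)>q\not\le(p,2)$ for the ``only if'' direction, and dualize via $MC^*(f)\simeq(MC(f^*))^*$. Your write-up is somewhat more detailed than the paper's (you carefully verify that $P\times\{1\}$ is closed in $P\times[2]$ and spell out the equivalence classes of the pushout), but the underlying argument is the same.
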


\begin{proof} If $f$ is closed, then $MC(f)$ is a poset by Lemma \ref{adjunction
poset}.
If $f$ is not closed, there exist a $p\in P$ and a $q<f(p)$ such that
$q\ne f(p')$ for any $p'<p$.
Then $(p,2)>(p,1)=f(p)>q$ but $(p,2)\ngtr q$ in $MC(f)$.
The dual assertion follows from $MC^*(f)\simeq (MC(f^*))^*$.
\end{proof}

\begin{lemma} \label{pullback of MC} Let $\psi\:Q'\to Q$ be a closed map of posets.
Consider pullback diagrams
$$\begin{CD}
P'@>\phi>>P\\
@Vf'VV@VfVV\\
Q'@>\psi>>Q
\end{CD}
\qquad\text{and}\qquad
\begin{CD}
X@>\Phi>>P\\
@VFVV@VfVV\\
MC(\psi)@>\psi\cup\id_Q>>Q.
\end{CD}$$
Then

(a) $\phi$ is closed;

(b) $X\simeq MC(\phi)$, $\Phi\simeq\phi\cup\id_P$ and $F\simeq f'\cup f$.
\end{lemma}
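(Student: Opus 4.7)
The plan is to realize both pullbacks explicitly and to exhibit an order-preserving bijection between $X$ and $MC(\phi)$. For (a), I would identify $P'$ with the subposet of $P\x Q'$ consisting of pairs $(p,q')$ satisfying $f(p)=\psi(q')$, ordered componentwise, where $\phi$ is the first projection and $f'$ the second. Given $(p,q')\in P'$ and $p_0\le p$, one has $f(p_0)\le f(p)=\psi(q')$, so closedness of $\psi$ supplies a $q'_0\le q'$ with $\psi(q'_0)=f(p_0)$. Then $(p_0,q'_0)\in P'$ satisfies $(p_0,q'_0)\le(p,q')$ and $\phi(p_0,q'_0)=p_0$, proving (a).

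For (b), Corollary \ref{mapping cylinder} will give that $MC(\psi)$ is a poset (since $\psi$ is closed), and, using (a), so is $MC(\phi)$. Recall that $MC(\psi)$ decomposes into ``level 2'' elements $(q',2)$ with $q'\in Q'$ and ``level 0'' elements $q\in Q$, subject to $(q',1)\sim\psi(q')$. The pullback $X$ consists of pairs $(m,p)\in MC(\psi)\x P$ with $(\psi\cup\id_Q)(m)=f(p)$, and I will define a bijection $X\to MC(\phi)$ by $((q',2),p)\mapsto ((p,q'),2)$ (for $\psi(q')=f(p)$) on level 2 and by $(f(p),p)\mapsto p$ on level 0. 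Under this bijection the projections $\Phi$ and $F$ become $\phi\cup\id_P$ and $f'\cup f$ respectively, yielding the claimed formulas once monotonicity is established.

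To verify that the bijection is an isomorphism of posets, I would check each type of comparability. Level-2-to-level-2 and level-0-to-level-0 comparisons reduce to componentwise conditions in $P'$ and $P$ respectively, matching the corresponding relations in $MC(\phi)$. A level-2 element is never $\le$ a level-0 element in either mapping cylinder, so those cases are vacuously matched on both sides. The main obstacle will be the level-0-to-level-2 case: unwinding the adjunction-preposet definition, $f(p_1)\le (q',2)$ in $MC(\psi)$ holds iff some $r'\le q'$ satisfies $\psi(r')=f(p_1)$, equivalently iff $(p_1,r')\in P'$ lies $\le(p_2,q')$, which translates precisely to $p_1\le ((p_2,q'),2)$ in $MC(\phi)$. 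This is the step where closedness of $\psi$ enters essentially, ensuring that the adjunction relation in $MC(\psi)$ is faithfully reflected in $MC(\phi)$; once this case is settled, both the bijection and its inverse are monotone, and (b) follows.
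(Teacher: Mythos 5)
Your proof is correct and follows essentially the same route as the paper's: realize both pullbacks as explicit subposets of products, observe that $X$ splits into an open copy of $P'$ and a closed copy of $P$, and verify that the resulting bijection with $MC(\phi)$ is monotone by checking the cross-level comparison, where closedness of $\psi$ enters. The paper streamlines the cross-level check slightly by noting that once $p_1\le p_2$ is assumed, the condition $\psi(q')\ge f(p_1)$ is automatic (since $\psi(q')=f(p_2)\ge f(p_1)$), but this is a cosmetic difference.
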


\begin{proof}[Proof. (a)]
Let $(p,q')\in P'\subset P\x Q'$.
Then $f(p)=\psi(q')$, and $\phi((p,q'))=p$.
Let $p_0\le p$.
Then $f(p_0)\le\psi(q')$.
Since $\psi$ is closed there exists a $q_0'\le q'$ such that $\psi(q_0')=f(p_0)$.
Thus $(p_0,q_0')\in P'$ and $(p_0,q_0')\le (p,q')$.
\end{proof}

\begin{proof}[(b)]
Clearly, $X$ is a union of an open copy of $P'$ and a closed copy of $P$.
It suffices to check that the resulting bijection between $X$ and $MC(\phi)$
is monotone.
Let $(p',q')\in P'\subset P\x Q'$, so that $f(p')=\psi(q')$, and let
$(p,f(p))\in\Gamma_f\subset P\x Q$, where $\Gamma_f$ is the graph of $f$.
Then $(p',q')>(p,f(p))$ in $P\x MC(\psi)$ if and only if
$p'\ge p$ in $P$ and $q'>f(p)$ in $MC(\psi)$.
We recall that $q'>f(p)$ in $MC(\psi)$ if and only if $\psi(q')\ge f(p)$.
But the latter follows from $p'\ge p$ due to our assumption $\psi(q')=f(p')$.
On the other hand, $(p',q')>p$ in $MC(\phi)$ if and only if
$\phi((p',q'))\ge p$.
But $\phi((p',q'))=p'$, and the assertion follows.
\end{proof}

\subsection{Long mapping cylinder}
Given a monotone map $f\:P\to Q$ between preposets, let
$LMC(f)=MC(f)\cup_{P=P\x\{1\}}P\x[2]$.

\begin{lemma}\label{MC CCP}
Let $f\:P\to Q$ be a monotone map between conditionally complete posets
that preserves infima.
Then

(a) $\left<MC(f)\right>$ is conditionally complete.

(b) $\left<LMC(f)\right>$ is conditionally complete.
\end{lemma}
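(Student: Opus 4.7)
My plan is to write out the order relation on $\langle MC(f)\rangle$ explicitly and then prove conditional completeness by a case analysis on where the elements of the bounded subset $S$ and its upper bounds live.

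The underlying set of $\langle MC(f)\rangle$ is $Q \sqcup (P \times \{2\})$ (with $(p,1)$ identified with $f(p) \in Q$). The only relations between the two strata come from the adjunction: $(p,2) \ge (p',1) = f(p')$ whenever $p' \le p$, and no adjunction relation sends an element of $Q$ above an element of $P \times \{2\}$. Taking the transitive closure one checks that
\[
(p,2) \ge q \text{ in } \langle MC(f)\rangle \iff \exists\, p' \le p \text{ with } f(p') \ge q,
\]
and no new relations are introduced within each stratum. By monotonicity of $f$, whenever $(p,2)$ bounds some $q \in Q$ from above we automatically have $f(p) \ge q$.

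For (a), let $S \subset \langle MC(f)\rangle$ be nonempty and bounded above, and write $S = S_2 \sqcup S_Q$ according to the strata. Any upper bound of $S$ lies in $P \times \{2\}$ whenever $S_2 \ne \emptyset$, and $(p,2)$ is an upper bound of $S$ iff $p$ bounds $\pi(S_2)$ in $P$ (where $\pi(p,2)=p$) and $f(p)$ bounds $S_Q$ in $Q$. Conditional completeness of $P$ and $Q$ produce $p_* \bydef \sup \pi(S_2)$ and $q_* \bydef \sup S_Q$. Set
\[
U \bydef \{p \in P : p \ge p_* \text{ and } f(p) \ge q_*\}.
\]
$U$ is nonempty by hypothesis and bounded below by $p_*$, so Lemma \ref{2.7} gives $p_0 \bydef \inf U$ in $P$. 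Since $f$ preserves infima, $f(p_0) = \inf f(U) \ge q_*$, whence $p_0 \in U$ and is its least element. It follows that $(p_0, 2)$ is the least upper bound of $S$. The edge cases $S_2 = \emptyset$ and $S_Q = \emptyset$ reduce directly to conditional completeness of $Q$ and $P$.

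For (b), the poset $\langle LMC(f)\rangle$ has three strata $Q$, $P \times \{2\}$, $P \times \{3\}$, with $(p,3) \ge (p',2)$ iff $p \ge p'$ and with cross-stratum relation $(p,i) \ge q \iff \exists\, p' \le p$ with $f(p') \ge q$ for $i \in \{2,3\}$. The argument of (a) applies with a third stratum: if $S \cap (P \times \{3\})$ is nonempty the lub sits at level $3$ with $p_* \bydef \sup \pi\!\left(S \cap (P \times \{2,3\})\right)$, otherwise one repeats (a) at level $2$; in each case the same invocation of infimum preservation of $f$ yields $p_0 \in U$, and $(p_0,i)$ for the appropriate $i$ is the lub. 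The main obstacle is the careful accounting of the adjunction relation and its transitive closure, ensuring that no spurious relations cross strata; once that is pinned down, the hypothesis that $f$ preserves infima is used in exactly one step, to show $f(\inf U) \ge q_*$ so that $\inf U$ itself lands in $U$.
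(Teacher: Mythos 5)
Your proof is correct, but it takes a genuinely different (and somewhat harder) route than the paper's. You work with suprema, which is the awkward direction here: since $P$ is the \emph{top} stratum of $MC(f)$, the set of upper bounds of $S$ in $P$ has the form $U=\{p:p\ge p_*,\ f(p)\ge q_*\}$, which need not be a dual cone, so you must introduce $U$ explicitly, bound it below, take $\inf U$ via Lemma~\ref{2.7}, and then invoke infimum-preservation to get $\inf U\in U$. The paper instead works with infima (again via Lemma~\ref{2.7}). That direction exploits the asymmetry of the mapping cylinder more cleanly: if $S$ has a lower bound in $P$, then $S\subset P$ outright (nothing in $P$ sits below anything in $Q$), so $\inf_P S$ is already the candidate and one only needs $f$'s preservation of infima to check that any lower bound $q\in Q$ satisfies $q\le f(\inf_P S)\le\inf_P S$; otherwise all lower bounds lie in $Q$ and coincide with the lower bounds of $(S\cap Q)\cup f(S\cap P)$. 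Both proofs use the hypothesis on $f$ exactly once, but in different places: you use it to close $U$ under its own infimum, the paper uses it to push a $Q$-lower bound up to the $P$-infimum. Your argument is correct and self-contained, but the paper's dichotomy ``lower bound in $P$ forces $S\subset P$'' avoids the auxiliary set $U$ altogether and scales to part (b) with less bookkeeping.
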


Note that $MC(pt\sqcup pt\to pt)\cup_{pt\sqcup pt}MC(pt\sqcup pt\to pt)$ is not
conditionally complete.

Dually to (a), if $f$ preserves suprema, then $\left<MC^*(f)\right>$ is
conditionally complete (using that $MC^*(f)=(MC(f^*))^*$).

\begin{proof}[Proof. (a)] Let $S$ be a nonempty subset of $\left<MC(f)\right>$.
Suppose that $q\in Q$ and $s\in S\cap P$.
If $q\le f(s)$, then $q\le s$.
Conversely, if $q\le s$, then $q\le f(p)$ for some $p\le s$; hence $q\le f(s)$.
Thus lower bounds of $S$ that lie in $Q$ coincide with lower bounds of
$(S\cap Q)\cup f(S\cap P)$.

If every lower bound of $S$ belongs to $Q$, then the set of lower bounds of $S$
coincides with that of $(S\cap Q)\cup f(S\cap P)$.
If this set is nonempty, then it has a greatest element (since $Q$ is conditionally
complete).

It remains to consider the case where $S$ has a lower bound in $P$.
Then $S$ has a greatest lower bound $p$ in $P$ (since $P$ is conditionally complete).
Since $f$ preserves infima, $f(p)$ is a greatest lower bound of $f(S)$.
If $q\in Q$ is a lower bound of $S$, then it is a lower bound of $f(S)$,
and hence $q\le f(p)\le p$.
Thus $p$ is globally a greatest lower bound of $S$.
\end{proof}

\begin{proof}[(b)]
Let $S$ be a nonempty subset of $\left<LMC(f)\right>$.
If $S$ has a lower bound $(p,0)$ in $P\x\{0\}$ and a lower bound in $Q$, then
$S\subset P\x\{1\}$, and consequently $(p,1)$ is also a lower bound of $S$.
Hence either every lower bound of $S$ belongs to $Q$, or every lower bound of $S$
belongs to $R$, or $S$ has a lower bound in $P$.
Each of the three cases is considered similarly to the proof of (a).
\end{proof}

\begin{example} \label{non-CCP MC}
Let $f\:\Delta^2\to\Delta^1$ be a simplicial surjection.
It is easy to see that $f$ preserves suprema.
Being simplicial, $f$ is closed, so $MC(f)$ is a poset;
but not a conditionally complete one.

Indeed, writing $f$ as $\{a\}*\{b,c\}\to\{a\}*\{d\}$,
we see that the edges $\{a,b\}$ and $\{a,c\}$ in the domain have a greatest
lower bound in the domain (namely, their common vertex $\{a\}$) and a greatest
element among lower bounds in the range (namely, their common image $\{a,d\}$),
but no greatest lower bound in the entire mapping cylinder.

The problem persists for the mapping cylinder $MC(f^{\#n})$ of the iterated
canonical subdivision $f^{\#n}\:(\Delta^2)^{\#n}\to(\Delta^1)^{\#n}$.
\end{example}

\subsection{Thick mapping cylinder}
Let $f\:P\to Q$ be a monotone map of posets.
Let us consider $R:=\fll\Gamma(f)\flr_{P\x Q}$, the closure of the graph
$\Gamma(f)=\{(p,f(p))\mid p\in P\}$ in $P\x Q$.
Let $\pi_P$ and $\pi_Q$ be the projections onto the factors of $P\x Q$.
Being cubical, they are closed maps, and since $R$ is closed in $P\x Q$, so
are their restrictions to $R$.
Hence $TMC(f)=MC(\pi_P|_R)\cup_R MC(\pi_Q|_R)$ is a poset.
It follows then that $TMC(f)$ is a closed subposet of $P*Q$.
From this we get that if $P$ and $Q$ are conditionally complete,
then so is $TMC(f)$.

\begin{theorem}\label{tmc}
If $f\:P\to Q$ is a monotone map between posets, where $Q$ is conditionally
complete, then $TMC(f)$ strong deformation retracts onto $LMC(f)$ via
a monotone map $r\:TMC(f)\to LMC(f)$, homotopic to the identity by
a monotone homotopy $H\:TMC(f)\x I\to TMC(f)$ that extends the projection
$LMC(f)\x I\to LMC(f)$.
\end{theorem}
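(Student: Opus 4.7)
The plan is to construct a monotone retraction $r\:TMC(f)\to LMC(f)$ using the conditional completeness of $Q$, and then realize the homotopy $H$ as a one-step monotone path from the identity to $i\circ r$, where $i\:LMC(f)\hookrightarrow TMC(f)$ is the natural inclusion.

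First I would pin down $i$. A copy of $MC(f)$ sits inside $TMC(f)$ as $Q\cup\Gamma(f)$, with the free top copy of $P$ in $MC(f)$ identified with $\Gamma(f)\incl R$ via $p\mapsto(p,f(p))$, while $Q\mapsto Q$. The extra cylinder $P\x[2]$ that turns $MC(f)$ into $LMC(f)$ then embeds using the ``thickness'' $R\supsetneq\Gamma(f)$ above $\Gamma(f)$ together with the copy of $P$ at the bottom of $MC(\pi_P|_R)\incl TMC(f)$. I would check that the resulting $i$ is a closed sub[pre]poset embedding.

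Next, for $r$ I would set $r=\id$ on $i(LMC(f))$ and push each $(p,q)\in R$ lying outside $i(LMC(f))$ upward by a canonical amount defined via Lemma \ref{2.7}. Because $(p,q)\in R=\fll\Gamma(f)\flr$, the set $S_{p,q}\bydef\{f(p')\mid p'\ge p,\ f(p')\ge q\}\incl Q$ is nonempty and bounded below by $q$, and Lemma \ref{2.7} yields a greatest lower bound $\bar q(p,q)\in Q$ with $\bar q\ge q$; one checks that $(p,\bar q(p,q))\in R$ and that $\bar q\ge f(p)$. Using this I would assign $r(p,q)$ to the image of $(p,\bar q(p,q))$ in $LMC(f)$ (in $\Gamma(f)$ when $f(p)\ge q$, otherwise in the extra cylinder). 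The monotonicity of $\bar q$ is the universal property of infima: if $(p_1,q_1)\le(p_2,q_2)$ then $S_{p_2,q_2}\incl S_{p_1,q_1}$, so $\bar q(p_1,q_1)\le\bar q(p_2,q_2)$, and this propagates to monotonicity of $r$.

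Finally, the homotopy $H\:TMC(f)\x[2]\to TMC(f)$ is defined by $H(x,1)=x$ and $H(x,2)=i(r(x))$. Its monotonicity in the $[2]$-direction reduces to the inequality $i(r(x))\ge x$ in $TMC(f)$, which follows from $\bar q(p,q)\ge q$ and the fact that $r$ is the identity on $P$ and $Q$; monotonicity in the $TMC(f)$-direction at level $2$ is exactly the monotonicity of $r$; and the extension property on $LMC(f)\x[2]$ is immediate from $r$ being the identity on $LMC(f)$. The hard part, I expect, is making the embedding $LMC(f)\hookrightarrow TMC(f)$ entirely explicit and verifying that it is closed; once this is in hand, the monotonicity of $r$ is a routine consequence of the universal property of infima in a conditionally complete poset, parallel to the argument in Lemma \ref{MC CCP}(a).
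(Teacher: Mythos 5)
Your proposal has a genuine gap, visible on two levels. First, a one-step homotopy over $[2]$ forces $r(x)\ge x$; since $R=\fll\Gamma(f)\flr$ is an open (up-closed) subposet of $TMC(f)$, any such $r$ would map $R$ into $R$, so it would retract onto the intended $P\cup\Gamma(f)\cup Q$ only if it landed in $\Gamma(f)$ every time. Your $\bar q(p,q)=\inf\{f(p')\mid p'\ge p,\ f(p')\ge q\}$ satisfies $\bar q\ge f(p)$ always (every $f(p')$ in that set dominates $f(p)$ by monotonicity of $f$), with equality iff $f(p)\ge q$; so $(p,\bar q)\in\Gamma(f)$ precisely when $f(p)\ge q$, and otherwise the assignment is neither into the target nor idempotent. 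The fallback ``otherwise in the extra cylinder'' is not a well-defined map $R\to TMC(f)$, and reading it as $r((p,q))=p\in P$ contradicts $r\ge\id$ outright, since in $TMC(f)$ the copy of $P$ sits strictly \emph{below} $R$; monotonicity also fails, e.g.\ for $P=Q=[2]$, $f=\id$, one has $R=[2]\x[2]$ and $(1,1)\le(1,2)$, yet $r(1,1)=(1,1)\in\Gamma(f)$ while $r(1,2)=1\in P$ and $1<(1,1)$.

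The paper's proof is the opposite shape. Its retraction on $R$ is the single formula $(p,q)\mapsto(p,f(p))$, which always lands in $\Gamma(f)$ but can strictly \emph{decrease} the $Q$-coordinate, so it is neither $\ge\id$ nor $\le\id$. This is exactly why the theorem's homotopy is over the full $1$-simplex $I=\Delta^1$ (two atoms $\{0\},\{1\}$ below a top $\{0,1\}$) rather than over $[2]$: the map $h\:R\x I\to R$ sends $(p,q)$ to $(p,q)$ at $\{0\}$, to $(p,f(p))$ at $\{1\}$, and at the peak $\{0,1\}$ to $(p,y_{pq})$ where $y_{pq}$ is the \emph{supremum} of $q$ and $f(p)$ in the conditionally complete $Q$; the peak dominates both ends, so $h$ is monotone, and it glues with the projection on $(P\sqcup Q)\x I$. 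So where you took an infimum, the paper takes a supremum; where you used a one-step increasing homotopy over $[2]$, the paper uses a two-ended peaked homotopy over the three-element $I$; and the uniform formula $(p,f(p))$ replaces your case split.
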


\begin{proof}
Let $R=\fll\Gamma(f)\flr_{P\x Q}$.
Given a $(p,q)\in R$, we have $(p,q)\le (x_{pq},f(x_{pq}))$ for some $x_{pq}\in P$.
Then $q\le f(x_{pq})$ and $f(p)\le f(x_{pq})$, so $f(x_{pq})$ is an upper bound
for $q$ and $f(p)$.
Let $y_{pq}$ be the least upper bound for $q$ and $f(p)$.
Let us define $g\:R\to R$ by $(p,q)\mapsto(p,y_{pq})$.
If $(p,q)\le (p',q')$, then $q\le q'$ and $f(p)\le f(p')$, so $y_{p'q'}$ is
an upper bound for $q$ and $f(p)$.
Hence $y_{pq}\le y_{p'q'}$, and thus $g$ is monotone.

Let $r$ be the composition $P\x Q\xr{\pi_P|}P\xr{\id_P\x f}\Gamma(f)$.
Since $g((p,q))\ge(p,q)$ and $g((p,q))\ge (p,f(p))=r((p,q))$, the maps
$\id_R$, $g$ and $r|_R$ combine into a monotone map $h\:R\x I\to R$.
Since $\pi_P(g(p,q))\ge p$ and $\pi_Q(g(p,q))\ge q$, the map $h$ and
the projection $(P\sqcup Q)\x I\to P\sqcup Q$ combine into a monotone
map $H\:TMC(f)\x I\to TMC(f)$.
\end{proof}

\subsection{Iterated mapping cylinder}
For a monotone map $f\:P\to Q$, let $\pi\:MC(f)\to Q$ and $\pi\:MC^*(f)\to Q$
be defined by $\pi|_P=f$ and $\pi|_Q=\id$.
If $f$ is closed [resp.\ open], clearly $\pi\:MC(f)\to Q$ is closed [resp.\
$\pi\:MC^*(f)\to Q$ is open].
Let us define the preposet $MC(P_n\xr{f_n}\dots\xr{f_1}P_0)=MC(F_n)$, where $F_n$
is the composition $MC(P_n\xr{f_n}\dots\xr{f_2}P_1)\xr{\pi}P_1\xr{f_1}P_0$.
Similarly $MC^*(P_n\xr{f_n}\dots\xr{f_1}P_0)=MC^*(F_n)$, where $F_n$
is the composition $MC^*(P_n\xr{f_n}\dots\xr{f_2}P_1)\xr{\pi}P_1\xr{f_1}P_0$.
If $f_1,\dots,f_n$ are closed [resp.\ open], clearly
$MC(P_n\xr{f_n}\dots\xr{f_1}P_0)$ [resp.\ $MC^*(P_n\xr{f_n}\dots\xr{f_1}P_0)$]
is a poset.

\subsection{Homotopy colimit} \label{homotopy-colimit}
Suppose we have a covariant [resp.\ contravariant] commutative diagram,
indexed by a poset $\Lambda$, consisting of posets $P_\lambda=(\mathcal P_\lambda,\le)$,
$\lambda\in\Lambda$, and monotone maps $f_{\lambda\mu}\:P_\lambda\to P_\mu$ whenever
$\lambda>\mu$ [resp.\ $\lambda<\mu$] in $\Lambda$.
Then the {\it homotopy colimit} $\hocolim_\Lambda (P_\lambda,f_{\lambda\mu})$
is the preposet $Q=(\mathcal Q,\le)$, where
$\mathcal Q=\bigsqcup_{\lambda\in\Lambda}\mathcal P_\lambda$,
and given a $p\in\mathcal P_\lambda$ and a $q\in\mathcal P_\mu$, we have $p\ge q$
in $Q$ if and only if either $\lambda=\mu$ and $p\ge q$ in $P_\lambda$, or
$\lambda>\mu$ and there exists a $p'\in\mathcal P_\lambda$ such that $p'\le p$ in
$P_\lambda$ and $f_{\lambda\mu}(p')=q$ [resp.\ there exists a $q'\in\mathcal P_\mu$
such that $q'\ge q$ in $P_\mu$ and $f_{\mu\lambda}(q')=p$].
If the maps $f_{\lambda\mu}$ are closed [resp.\ open], it is clear that
$\hocolim_\Lambda(P_\lambda,f_{\lambda\mu})$ is a poset.
Clearly, $MC(P_n\xr{f_n}\dots\xr{f_2}P_1)=\hocolim_{[n]}(P_i,f_{ij})$,
where $f_{ij}\:P_i\to P_j$ is the composition $P_i\xr{f_i}\dots\xr{f_{j+1}}P_j$.
Similarly $MC^*(P_n\xr{f_n}\dots\xr{f_2}P_1)=\hocolim_{[n]^*}(P_i,f_{ij})$.
In general, it is easy to see that every homotopy colimit indexed by a poset
is in fact an iterated amalgam of the iterated mapping cylinders.
Since every $f_{\lambda_mu}$ is a single-valued (not multi-valued) map,
every subposet $P_\lambda$ of $\hocolim_\Lambda (P_\lambda,f_{\lambda\mu})$
is full in the homotopy colimit.

The following is straightforward from Lemma \ref{nonsingular Hatcher}.

\begin{theorem} \label{hocolim}
If $f\:P\to Q$ is a closed full map of finite posets and $P$ is
nonsingular (e.g.\ $f$ is simplicial), then
$P=\hocolim_Q(f^{-1}(q),f_{rq})$, where $f_{rq}\:f^{-1}(r)\to f^{-1}(q)$
are the Hatcher maps.
\end{theorem}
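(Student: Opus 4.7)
The plan is to verify directly that the order relation on $P$ coincides with the one on $\hocolim_Q(f^{-1}(q),f_{rq})$, once the underlying sets are identified via the partition $P=\bigsqcup_{q\in Q}f^{-1}(q)$. Since $f$ is full, each $f^{-1}(q)$ is genuinely a subposet of $P$, so the "same-fiber" component of the hocolim relation matches the order of $P$ on the nose. Hence the entire argument reduces to comparing the two orders across different fibers.

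First I would check the easy direction. Suppose $p\ge p'$ in $\hocolim_Q$ with $f(p)=r>q=f(p')$. By the definition of homotopy colimit, there exists $p''\in f^{-1}(r)$ with $p''\le p$ in $f^{-1}(r)$ (equivalently in $P$, since $f^{-1}(r)$ is full) and $f_{rq}(p'')=p'$. Unpacking the definition of the Hatcher map, $f_{rq}(p'')=(p'')_q$ is the top element of $\fll p''\flr_P\cap f^{-1}(q)$, so in particular $p'=(p'')_q\le p''\le p$ in $P$. Incomparable images in $Q$ cannot correspond to comparable elements in $P$ by monotonicity of $f$, so there is nothing more to verify in this direction.

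For the converse, suppose $p\ge p'$ in $P$ with $f(p)=r$ and $f(p')=q$, and $r\ne q$. Monotonicity of $f$ forces $r>q$. Then $p'\in\fll p\flr_P\cap f^{-1}(q)=\fll p_q\flr_{f^{-1}(q)}$, so $p'\le p_q=f_{rq}(p)$ in $f^{-1}(q)$. This is the moment where Lemma \ref{nonsingular Hatcher} intervenes: because $P$ is nonsingular (in particular when $f$ is simplicial, by Lemma \ref{point-inverses full} combined with nonsingularity of $P$), the Hatcher map $f_{rq}\colon f^{-1}(r)\to f^{-1}(q)$ is closed, and therefore the relation $p'\le f_{rq}(p)$ in $f^{-1}(q)$ lifts to some $p''\le p$ in $f^{-1}(r)$ with $f_{rq}(p'')=p'$. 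This is exactly the certificate required by the definition of the homotopy colimit, so $p\ge p'$ in $\hocolim_Q$.

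The only real content of the argument is this lifting step, so the main potential obstacle is the correct invocation of closedness of Hatcher maps; everything else is a matter of unpacking two definitions. One minor point I would be careful about is confirming that the hocolim preposet is in fact a poset in our situation (so that "the order on $\hocolim_Q$" is unambiguous), but this is already promised in the paragraph defining homotopy colimits, since the Hatcher maps here are closed. With that in hand, the equality $P=\hocolim_Q(f^{-1}(q),f_{rq})$ is immediate from the two inclusions established above.
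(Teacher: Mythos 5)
Your proof is correct and is essentially the intended one: the paper simply asserts that the theorem is ``straightforward from Lemma~\ref{nonsingular Hatcher}'' without spelling out the two inclusions, and your unpacking---identifying $\fll p\flr_P\cap f^{-1}(q)$ with $\fll f_{rq}(p)\flr_{f^{-1}(q)}$ for the forward direction, and invoking closedness of $f_{rq}$ to lift $p'\le f_{rq}(p)$ to a witness $p''\le p$ for the converse---is exactly the content being left to the reader. One small cosmetic point: fullness of $f$ is needed to make the Hatcher maps well-defined, not (as you phrase it) to make $f^{-1}(q)$ a subposet of $P$, since each fiber inherits the order from $P$ by convention; the same-fiber comparison in the hocolim definition is then automatic.
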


The following is a combinatorial form of Homma's factorization lemma
(see \cite[proof of Lemma 5.4.1]{Ru} and \cite[Lemma 2.1]{Co3}).

\begin{corollary}\label{factorization}
Let $f\:P\to Q$ be a closed full map of finite posets, where $P$
is nonsingular.
Then $f$ factors as a composition of closed full maps $f_i$ such that
each $f_i$ has only one non-degenerate point inverse.
\end{corollary}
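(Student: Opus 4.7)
My plan is to use Theorem~\ref{hocolim} to present $P$ as an assembly of its fibers $F_q=f^{-1}(q)$ and then to collapse these fibers one at a time in an order dictated by a linear extension of $Q$. Fix a linear extension $q_1,\dots,q_n$ of $Q$, so $q_a\le q_b$ in $Q$ forces $a\le b$. For each $i=0,\dots,n$, let $P_i$ denote the hocolim over $Q$ of the diagram $G_i$ obtained from $(F_q,f_{rq})$ by replacing each $F_{q_j}$ with $j\le i$ by a singleton $\{*_{q_j}\}$. Because the linear extension precludes $q_j>q_k$ in $Q$ whenever $j\le i<k$, the diagram $G_i$ has no connecting edge from a collapsed singleton to a surviving fiber. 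Its remaining edges are original Hatcher maps between surviving fibers (closed by Lemma~\ref{nonsingular Hatcher}), maps between singletons, and constant maps from a surviving fiber to a singleton; all three types are closed, so each $P_i$ is a poset. In particular $P_0=P$, and $P_n$ is the hocolim of the constant diagram of singletons over $Q$, which is visibly $Q$ itself.

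Let $g_i\:P_{i-1}\to P_i$ be the map that sends $F_{q_i}\subset P_{i-1}$ to $*_{q_i}$ and is the identity on every other node. A direct check against the hocolim definition shows $g_i$ is monotone: comparisons not involving $F_{q_i}$ lie in a portion of the diagram left unchanged by $g_i$, while a comparison across $F_{q_i}$ is witnessed by a Hatcher map in $G_{i-1}$ that becomes the corresponding constant map in $G_i$, so the comparison persists. Each $g_i$ has $F_{q_i}$ as its only potentially non-degenerate point-inverse, and the composition $g_n\circ\dots\circ g_1$ takes $p\in F_{q_j}\subset P$ through $*_{q_j}\in P_j$ to $q_j=f(p)\in P_n=Q$, so the composition equals $f$.

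The only remaining verification is that each $g_i$ is full, and since singletons are trivially full, this reduces to showing $F_{q_i}$ is full in $P_{i-1}$. For $p\in P_{i-1}$ I split into two cases. If $p=*_{q_k}$ with $k<i$, then $\fll p\flr_{P_{i-1}}$ contains no uncollapsed element, for such an element in $F_{q_l}$ with $l\ge i$ would require $q_k>q_l$, impossible under the linear extension; hence $\fll p\flr_{P_{i-1}}\cap F_{q_i}=\emptyset$. If $p$ is uncollapsed, then the part of the hocolim joining $p$ to $F_{q_i}$ is identical in $G_{i-1}$ and in the original diagram for $P$, so $\fll p\flr_{P_{i-1}}\cap F_{q_i}=\fll p\flr_P\cap F_{q_i}$, which is a cone of $F_{q_i}$ or empty by the fullness of $f$. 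The main technical obstacle is not any individual step but the bookkeeping: one must carefully confirm that the linear extension rules out exactly the diagrammatic configurations that would otherwise break either the poset axiom on $P_i$ or the fullness of $g_i$.
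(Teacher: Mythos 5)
Your proof is correct and follows essentially the same route as the paper's: fix a linear extension of $Q$, form the hocolim $P_i$ obtained by replacing the first $i$ fibers with singletons, and take the induced maps $P_{i-1}\to P_i$. The paper's own proof stops at ``the obvious maps $P=P_0\to\dots\to P_n$ are as desired''; what you add is the explicit bookkeeping (monotonicity via persistence of hocolim-witnesses, $P_i$ a poset because all edges of $G_i$ are closed by Lemma~\ref{nonsingular Hatcher}, and fullness of $g_i$ via the two-case analysis on whether $p$ is a collapsed singleton or not), which is exactly the verification the paper elides.
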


\begin{proof}
Let us extend the partial order on $Q$ to a total order.
Let $F_q^{(i)}=f^{-1}(q)$ if $q$ is not among the first $i$ elements of $Q$,
and otherwise let $F_q^{(i)}$ be the singleton poset.
Let $f_{rq}^{(i)}\:F_r^{(i)}\to F_q^{(i)}$ be the Hatcher map if
$F_q^{(i)}=f^{-1}(q)$ and the unique map if $F_q^{(i)}$ is a singleton poset.
Let $P_i=\hocolim_Q(F_q^{(i)},f_{rq}^{(i)})$.
Then the obvious maps $P=P_0\xr{f_1}\dots\xr{f_n}P_n$ are as desired.
\end{proof}

The following lemma is straightforward.

\begin{lemma} \label{conical hocolim}
(a) Given a covariant commutative diagram of posets $P_\lambda$ and
monotone maps $f_{\lambda\mu}\:P_\lambda\to P_\mu$ indexed by a cone $C\Lambda$,
we have
$$\hocolim_{C\Lambda}(P_\lambda,f_{\lambda\mu})\simeq
MC\left(P_{\hat 1}\xl{\pi} P_{\hat 1}\x\Lambda\xr{F}
\hocolim_\Lambda(P_\lambda,f_{\lambda\mu})\right),$$ where
$F(p,\lambda)=f_{\hat 1\lambda}(p)$.

(b) Given a contravariant commutative diagram of posets $P_\lambda$ and monotone
maps $f_{\lambda\mu}\:P_\lambda\to P_\mu$ indexed by a dual cone $C^*\Lambda$,
we have $\hocolim_{C^*\Lambda}(P_\lambda,f_{\lambda\mu})\simeq MC(F)$,
where $F\:\hocolim_\Lambda(P_\lambda,f_{\lambda\mu})\to P_{\hat 0}$
is given by $F|_{P_\lambda}=f_{\lambda\hat 0}$.
\end{lemma}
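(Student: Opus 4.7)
The plan is to verify each isomorphism directly from the defining formulas, since the two sides consist of the same pieces assembled in essentially the same way, and the content of the lemma is that the two prescriptions for the order relations coincide.

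For part (a), I would unwind the hocolim structure of $\hocolim_{C\Lambda}$. As $\hat 1$ is the maximum of $C\Lambda$, the added element contributes a copy of $P_{\hat 1}$ sitting above $\hocolim_\Lambda$, and for $p\in P_{\hat 1}$ and $q\in P_\lambda$ ($\lambda\in\Lambda$) one has $p\ge q$ iff there exists $p'\le p$ in $P_{\hat 1}$ with $f_{\hat 1\lambda}(p')=q$. On the right-hand side, the span $P_{\hat 1}\xl{\pi}P_{\hat 1}\x\Lambda\xr{F}\hocolim_\Lambda$ is read as $MC(F)$ with its top copy of $P_{\hat 1}\x\Lambda$ glued along $\pi$ onto $P_{\hat 1}$; unwinding this gluing produces precisely the same relation $p\ge q$ iff $p'\le p$ and $F(p',\lambda)=f_{\hat 1\lambda}(p')=q$ for some $(p',\lambda)\in P_{\hat 1}\x\Lambda$. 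Closedness of $\pi$ is automatic, and closedness of $F$ follows from closedness of the structural maps $f_{\hat 1\lambda}$ (easy two-case check, splitting on whether the target of a downward comparison in $\hocolim_\Lambda$ lies in the same $P_\lambda$ or a lower one), so $MC(F)$ is a genuine poset by Corollary \ref{mapping cylinder}.

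For part (b), I would argue by duality. Since $(C^*\Lambda)^*\simeq C(\Lambda^*)$ and a contravariant diagram over $\Lambda$ is the same datum as a covariant one over $\Lambda^*$, applying (a) to $\Lambda^*$ and then dualizing the resulting isomorphism via $MC^*(g)\simeq(MC(g^*))^*$ yields the claim; the auxiliary factor $P_{\hat 0}\x\Lambda^*$ collapses in the dualized construction to the single mapping cylinder $MC(F)$ featured in the statement. A direct alternative is to rerun the argument of (a) on the opposite side, now with $P_{\hat 0}$ sitting below $\hocolim_\Lambda$ via $F|_{P_\lambda}=f_{\lambda\hat 0}$, so that no auxiliary span is needed.

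The only mildly subtle point is fixing the interpretation of $MC$ applied to a span rather than to a single map: reading it as the iterated mapping cylinder whose top copy of $P_{\hat 1}\x\Lambda$ is collapsed along $\pi$ makes the cardinalities and the relations match the definitions on the nose. Once this convention is in place, both verifications are routine bookkeeping, which is what the author means by calling the lemma straightforward.
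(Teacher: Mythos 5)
Your direct verification of part (a) is sound and is the right way to read the statement: interpreting the span $MC(P_{\hat1}\xl{\pi}P_{\hat1}\x\Lambda\xr{F}\hocolim_\Lambda)$ as the double mapping cylinder (glue $P_{\hat1}\x\Lambda\x[2]$ to $P_{\hat1}$ via $\pi$ on the top end and to $\hocolim_\Lambda$ via $F$ on the bottom), the underlying sets of both sides are $P_{\hat1}\sqcup\bigsqcup_\lambda P_\lambda$ and the induced relations coincide element by element. One point you should not leave to ``routine bookkeeping'': you must also check that the gluing introduces no new relations \emph{within} $P_{\hat1}$ and none within $\hocolim_\Lambda$; the second of these uses the commutativity $f_{\lambda'\lambda}\circ f_{\hat1\lambda'}=f_{\hat1\lambda}$, and this is where the hypothesis that the diagram commutes actually enters.

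The duality argument you propose for (b) has a genuine gap. Dualizing (a) applied to $\Lambda^*$ yields an isomorphism of $\hocolim_{C^*\Lambda}$ with a \emph{double} mapping cylinder in which the auxiliary factor $P_{\hat0}\x\Lambda$ is still present; your assertion that this factor ``collapses'' to the single $MC(F)$ is exactly the content of part (b) and is not automatic (indeed, (b) is not the formal dual of (a): in (a) the cone vertex is maximal and the structure maps \emph{diverge} from $P_{\hat1}$, so no single map $P_{\hat1}\to\hocolim_\Lambda$ exists and the $\Lambda$ factor is unavoidable, while in (b) the cone vertex is minimal and the structure maps \emph{converge} onto $P_{\hat0}$, which is why they amalgamate into a single well-defined $F\:\hocolim_\Lambda\to P_{\hat0}$). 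So you should drop the duality route and run the direct verification for (b) on its own terms. When you do, notice that the $MC(F)$ relation for $p\in P_{\hat0}$ and $q\in P_\lambda$ reads ``there is $q'\le q$ somewhere in $\hocolim_\Lambda$ with $F(q')=p$,'' whereas the hocolim relation asks for such a $q'$ inside $P_\lambda$ with $f_{\lambda\hat0}(q')=p$; matching these again uses the commutativity $f_{\mu\hat0}\circ f_{\lambda\mu}=f_{\lambda\hat0}$ for $\lambda>\mu$.

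A related point you should flag: the formula $F|_{P_\lambda}=f_{\lambda\hat0}$ presupposes maps $P_\lambda\to P_{\hat0}$ for $\lambda\in\Lambda$, i.e.\ the paper's \emph{covariant} convention over $C^*\Lambda$ (maps pointing toward smaller indices, with $\hat0$ minimal). Taken literally, the word ``contravariant'' in the statement would make $f_{\lambda\hat0}$ unavailable and would instead produce the double-mapping-cylinder analogue of (a); so you are implicitly reading (b) the way it must be meant, and it is worth saying so explicitly rather than silently.

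Finally, the aside about closedness of $F$ is off target. The lemma is an isomorphism of preposets; $MC(F)$ is well defined whether or not $F$ is closed, closedness is not part of the hypothesis, and the structure maps $f_{\lambda\mu}$ are not assumed closed here. Quoting Corollary~\ref{mapping cylinder} to conclude that $MC(F)$ is a poset is both unneeded and, in general, unavailable.
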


\section{Canonical and barycentric handles} \label{handles}

The {\it barycentric subdivision map} $\flat\:P^\flat\to P$ is defined by
sending every nonempty chain $\{p_1<\dots<p_n\}$ to its maximal element $p_n$.
The {\it canonical subdivision map} $\#\: P^\#\to P$ is defined by sending
every interval $[p,q]$ to its maximal element $q$.
(These are instances of a general notion of subdivision map to be studied in
\S\S\ref{subdivision map-section} below.)

\begin{lemma} Let $P$ be a conditionally complete poset.
Then $\#\:P^\#\to P$ preserves infima and suprema.
\end{lemma}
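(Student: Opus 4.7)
The plan is to unpack the definition of $P^\#$ and turn the question about suprema and infima into a direct computation on the endpoints of intervals. Since $P$ is a poset (not merely a preposet), the pre-inclusion relation $\Subset$ on $P^\#$ is ordinary inclusion, so the order on $P^\#$ reads $[a,b]\le[a',b']$ iff $a'\le a$ and $b\le b'$. The map $\#$ forgets the left endpoint: $\#([a,b])=b$.

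For suprema, let $\{[a_\lambda,b_\lambda]\}_{\lambda\in\Lambda}$ be a nonempty family with an upper bound $[c,d]$ in $P^\#$. Then $c\le a_\lambda$ and $b_\lambda\le d$ for all $\lambda$, so in $P$ the family $\{a_\lambda\}$ has the lower bound $c$ and $\{b_\lambda\}$ has the upper bound $d$. By conditional completeness of $P$ (using Lemma \ref{2.7} for infima), $a\bydef\inf_\lambda a_\lambda$ and $b\bydef\sup_\lambda b_\lambda$ exist in $P$. From $a\le a_\lambda\le b_\lambda\le b$ for any $\lambda\in\Lambda$ we have $a\le b$, so $[a,b]$ is a genuine interval, and clearly $a\le a_\lambda$, $b_\lambda\le b$, i.e.\ $[a_\lambda,b_\lambda]\le [a,b]$ in $P^\#$. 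Any other upper bound $[a',b']$ forces $a'\le a_\lambda$ and $b_\lambda\le b'$ for all $\lambda$, hence $a'\le a$ and $b\le b'$ by the defining properties of $\inf$ and $\sup$ in $P$, so $[a,b]\le[a',b']$. Thus $[a,b]=\sup_\lambda[a_\lambda,b_\lambda]$ in $P^\#$, and applying $\#$ gives $\#\bigl(\sup_\lambda[a_\lambda,b_\lambda]\bigr)=b=\sup_\lambda b_\lambda=\sup_\lambda\#([a_\lambda,b_\lambda])$.

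For infima the argument is essentially dual. If $[c,d]$ is a lower bound of $\{[a_\lambda,b_\lambda]\}$, then $a_\lambda\le c\le d\le b_\lambda$ for all $\lambda$, so $\{a_\lambda\}$ has upper bound $c$ and $\{b_\lambda\}$ has lower bound $d$ in $P$. Conditional completeness yields $a\bydef\sup_\lambda a_\lambda$ and $b\bydef\inf_\lambda b_\lambda$ in $P$, and the inequalities $a\le c\le d\le b$ show that $[a,b]$ is an interval, manifestly contained in every $[a_\lambda,b_\lambda]$ and containing every $[c,d]$ as above. Hence $[a,b]=\inf_\lambda[a_\lambda,b_\lambda]$, and $\#\bigl(\inf_\lambda[a_\lambda,b_\lambda]\bigr)=b=\inf_\lambda b_\lambda=\inf_\lambda\#([a_\lambda,b_\lambda])$.

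There is no real obstacle here; the only thing to be careful about is checking at each step that the candidate interval $[a,b]$ really satisfies $a\le b$, which is guaranteed by the very existence of an upper (respectively lower) bound in $P^\#$ in the form of an interval. The use of conditional completeness of $P$ is precisely to promote the bounds on $\{a_\lambda\}$ and $\{b_\lambda\}$ supplied by such an interval into actual extrema in $P$.
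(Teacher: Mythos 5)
Your proof is correct, and it takes a genuinely different route from the paper's. The paper's argument is indirect: it assumes the supremum (resp.\ infimum) $[p_0,q_0]$ of $S\subset P^\#$ exists, takes a bound $q_1$ of $\#(S)$, and shows that $[p_0,q_1]$ is a bound of $S$ in $P^\#$, from which $q_1$ and $q_0$ compare as needed. Your argument is constructive: you explicitly exhibit the extremum of $\{[a_\lambda,b_\lambda]\}$ in $P^\#$ as $[\inf a_\lambda,\sup b_\lambda]$ (for suprema) or $[\sup a_\lambda,\inf b_\lambda]$ (for infima), using conditional completeness of $P$ to manufacture the endpoints, and then applying $\#$ is an immediate calculation. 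The constructive route buys you two things. First, it simultaneously proves the assertion, stated without proof earlier in \S\ref{intervals and cubes}, that $P^\#$ is conditionally complete whenever $P$ is. Second, it sidesteps a subtlety in the paper's infimum argument: for an arbitrary lower bound $q_1$ of $\#(S)$, the symbol $[p_0,q_1]$ need not denote a genuine interval, since $p_0\le q_1$ is not automatic (whereas in the supremum case $p_0\le p\le q\le q_1$ holds for free); to make the paper's argument go through one must take $q_1=\inf\#(S)$, whose existence is again supplied by conditional completeness and which dominates $q_0$, hence $p_0$. Your version makes the role of conditional completeness transparent and uniform in the two halves, which is a modest but real improvement in exposition.
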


\begin{proof}
Suppose that $S\subset P^\#$ has a supremum $[p_0,q_0]$.
Thus each $[p,q]\in S$ is included in $[p_0,q_0]$, and each upper bound $[p,q]$
of $S$ includes $[p_0,q_0]$.
Given an upper bound $q_1$ of $\#(S)$ and a $[p,q]\in S$, we have
$[p,q]\subset [p_0,q_0]$ and $q\le q_1$, hence $[p,q]\subset [p_0,q_1]$.
Then $[p_0,q_1]$ is an upper bound of $S$, so $[p_0,q_0]\subset[p_0,q_1]$.
Hence $q_0\le q_1$, which shows that $q_0$ is the supremum of $\#(S)$.

Suppose that $S\subset P^\#$ has an infimum $[p_0,q_0]$.
Thus each $[p,q]\in S$ includes $[p_0,q_0]$, and each lower bound $[p,q]$
of $S$ is included in $[p_0,q_0]$.
Given a lower bound $q_1$ of $\#(S)$ and a $[p,q]\in S$, we have
$[p_0,q_0]\subset [p,q]$ and $q_1\le q$, hence $[p_0,q_1]\subset [p,q]$.
Then $[p_0,q_1]$ is a lower bound of $S$, so $[p_0,q_1]\subset[p_0,q_0]$.
Hence $q_1\le q_0$, which shows that $q_0$ is the infimum of $\#(S)$.
\end{proof}

\begin{corollary}
If $P$ is a conditionally complete poset, then $MC^*(\#\:P^\#\to P)$ is
a conditionally complete poset.
\end{corollary}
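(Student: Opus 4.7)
The plan is to reduce this directly to the dual form of Lemma \ref{MC CCP}(a). The preceding lemma already supplies the key analytic ingredient --- that $\#\:P^\#\to P$ preserves suprema --- so what remains is to ensure that the dual mapping cylinder in question is actually a poset, and then to invoke the dual statement.

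First I would verify that $\#$ is open. Given an interval $[p,q]\in P^\#$, the dual cone $\cel[p,q]\cer^{P^\#}$ consists of those intervals $[p',q']$ that pre-include $[p,q]$, i.e.\ satisfy $p'\le p$ and $q'\ge q$. Its image under $\#$ is exactly $\{q'\in P\mid q'\ge q\}=\cel q\cer^P$, so $\#(\cel[p,q]\cer)=\cel\#([p,q])\cer$ for every interval, which is the criterion for openness. Corollary \ref{mapping cylinder} then yields that $MC^*(\#)$ is a poset (not merely a preposet).

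Next, note that since $P$ is conditionally complete, so is $P^\#$ (as observed in \S\ref{intervals and cubes}). Thus $\#$ is a supremum-preserving monotone map between conditionally complete posets, and the dual of Lemma \ref{MC CCP}(a) --- stated in the remark immediately following its proof --- gives that $\left<MC^*(\#)\right>$ is conditionally complete. Since $MC^*(\#)$ is already a poset by the previous paragraph, it coincides with its own transitive closure, and the conclusion follows.

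I expect no real obstacle here: the substantive content is entirely packaged in the preceding lemma on suprema/infima preservation and in Lemma \ref{MC CCP}. The only step requiring any verification at all is openness of $\#$, which is immediate from unwinding the definition of the pre-inclusion relation.
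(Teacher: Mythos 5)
Your proof is correct and follows essentially the same route the paper intends: the preceding lemma supplies suprema-preservation, and the dual form of Lemma \ref{MC CCP}(a) does the rest. The one point worth noting is that your explicit verification that $\#$ is open could be shortened — the paper has already observed (in \S\ref{subdivision map-section}) that $\#$ is a subdivision map and that every subdivision map is open — but your direct check from the definition of pre-inclusion is perfectly valid and makes the proposal self-contained. You were also right to flag the poset-vs-preposet subtlety (so that $MC^*(\#)$ coincides with its transitive closure); the paper leaves this implicit.
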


\subsection{Canonical handles}
Let $X$ be a poset.
Then $[\sigma,\tau]<[\rho,\upsilon]$ in $X^\#$ iff $\rho\le\sigma$ and
$\tau\le\upsilon$.
The dual cone $\cel[\sigma,\tau]\cer$ is the poset of all such intervals
$[\rho,\upsilon]$; clearly, it is isomorphic to
$(\fll\sigma\flr)^*\x\cel\tau\cer$.
Then every cone $\fll[\sigma,\tau]^*\flr=(\cel[\sigma,\tau]\cer)^*$
of the dual poset $h(X)\bydef(X^\#)^*$ is isomorphic to the product
$\fll\sigma\flr\x(\cel\tau\cer)^*=\fll\sigma\flr\x\fll\tau^*\flr$.

The maximal cones of $h(X)$ (i.e.\ those cones that are not properly contained
in other cones) are of the form $\fll[\sigma,\sigma]^*\flr$, where
$\sigma\in X$, and are called the {\it (canonical) handles} $h_\sigma$ of
the poset $X$.
Each $h_\sigma$ is isomorphic to the product of the cone $\fll\sigma\flr$ of
$X$ and the cone $\fll\sigma^*\flr$ of $X^*$.
These cones are called the {\it core} and {\it cocore} of the handle
$h_\sigma$.

When $\sigma<\tau$, the intersection $h_\sigma\cap h_\tau$ is clearly
the cone $\fll[\sigma,\tau]^*\flr$ of $h(X)$.
However, when $\sigma$ and $\tau$ are incomparable in $X$, it may be
that $h_\sigma\cap h_\tau$ is nonempty (namely, it is nonempty iff both
$\fll\sigma\flr\cap\fll\tau\flr$ and $\cel\sigma\cer\cap\cel\tau\cer$ are
nonempty).
This distinguishes the canonical handles from the barycentric ones, which will
be discussed in a moment.

\subsection{Collapsing handles onto cores}\label{handles onto cores}
Let $X$ be a poset.
Let us consider the composition $X^\#\xr{j_X}(X^*)^\#\xr{\#}X^*$, where
the isomorphism $j_X\:X^\#\to (X^*)^\#$ is given by
$[\sigma,\tau]\mapsto [\tau^*,\sigma^*]$.
Then the dual map $r_X\:h(X)\to X$ to this composition is given by
$[\sigma,\tau]^*\mapsto\sigma$.
On the other hand, the composition
$\bar r_X\:h(X)\xr{(j_X)^*}h(X^*)\xr{r_{X^*}}X^*$ is dual to $\#\:X^\#\to X$
and is given by $[\sigma,\tau]^*\mapsto\tau^*$.

The restrictions of $r_X$ and $\bar r_X$ to the cone $\fll[\sigma,\tau]^*\flr\simeq
\fll\sigma\flr\x\fll\tau^*\flr$ are the projection onto the first and second factor,
respectively.
We also note that $h_\sigma=r_X^{-1}(\fll\sigma\flr)$ and
$h_\tau=\bar r_X^{-1}(\fll\tau^*\flr)$.

\begin{lemma} \label{canonical of prejoin}
Let $P$ and $Q$ be posets.
Then

(a) $(P^*+Q)^\#\simeq MC(P\x Q\xl{\#\x\id}P^\#\x Q\xr{\pi}P^\#)\underset{P\x Q}\cup
MC(P\x Q\xl{\id\x\#}P\x Q^\#\xr{\pi}Q^\#),$\!

(b) $h(CP)\simeq MC(r_P)\cup_P CP$.
\end{lemma}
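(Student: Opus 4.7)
Both identities will be proved by explicit identification: I decompose the intervals on the left by where their endpoints lie, match those classes to the direct summands appearing on the right, and verify that the inherited partial orders agree.

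For part (b), intervals of $CP=P+pt$ fall into Type I, intervals contained in $P$ (giving $P^\#$), and Type II, intervals $[\sigma,\hat 1]$ for $\sigma\in CP$ (which as a subposet of $(CP)^\#$ is isomorphic to $(CP)^*$ via $[\sigma,\hat 1]\leftrightarrow\sigma$). Direct inspection of the inclusion order shows $[a,b]\le[\sigma,\hat 1]$ iff $\sigma\le a$ in $P$, and no reverse relation occurs. Dualizing, $h(CP)$ has $h(P)=(P^\#)^*$ on top and $CP$ on the bottom, with $\mu=[a,b]^*\ge\sigma$ iff $\sigma\in P$ and $\sigma\le r_P(\mu)=a$; in particular $\hat 1$ is incomparable to every element of $h(P)$. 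These are exactly the relations in $MC(r_P)\cup_P CP$, and the amalgam is a poset by Corollary~\ref{amalgam} since $P$ is closed in each piece and $r_P$ is closed (any $\sigma'\le r_P([\sigma,\tau]^*)=\sigma$ yields $[\sigma',\tau]^*\le[\sigma,\tau]^*$).

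For part (a), intervals of the prejoin $P^*+Q$ split into three types: (i) both endpoints in $P^*$, contributing $(P^*)^\#\cong P^\#$; (ii) both in $Q$, contributing $Q^\#$; (iii) bottom in $P^*=P$ and top in $Q$ (which occurs for every $(a,b)\in P\x Q$ since the prejoin puts $P^*$ entirely below $Q$), indexed by $P\x Q$ with the product order inherited from interval inclusion. Types (i) and (ii) are mutually incomparable in $(P^*+Q)^\#$, and $\sigma\in P^\#$ lies below $(a,b)\in P\x Q$ iff $\#(\sigma)\le a$ in $P$; the symmetric statement holds for $\tau\in Q^\#$. I read each $MC(A\xl f B\xr g C)$ on the right as the adjunction $MC(f\:B\to A)\cup_B C$ via $g$ --- the mapping cylinder with its top $B$-layer collapsed by $g$ onto $C$. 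For $MC(P\x Q\xl{\#\x\id}P^\#\x Q\xr\pi P^\#)$ the top $P^\#\x Q$ is open in the cylinder and $\pi$ is open, so Lemma~\ref{adjunction poset} produces a poset on $P^\#\sqcup(P\x Q)$ in which $\sigma\ge(a,b)$ iff some $q\in Q$ satisfies $\#(\sigma)\ge a$ and $q\ge b$, equivalently iff $\#(\sigma)\ge a$ (take $q=b$). This matches the Type (i)--(iii) cross-relation; the symmetric summand handles Type (ii)--(iii); and amalgamating along their common closed $P\x Q$ (Corollary~\ref{amalgam}) reassembles all of $(P^*+Q)^\#$.

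The one subtle verification in both parts is that no spurious relations arise through transitivity during adjunction and amalgamation --- specifically, that $P^\#$ stays incomparable with $Q^\#$ in (a) and that $h(P)$ stays incomparable with the apex $\hat 1$ in (b). Both facts follow because the only common layer ($P\x Q$ or $P$, respectively) carries no directed path connecting the two halves in the relevant direction, matching precisely the interval-endpoint situation on the left.
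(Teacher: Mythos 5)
Your part (b) is correct, and it's argued by a route different from the paper's: you decompose $(CP)^\#$ directly into Type I intervals (both endpoints in $P$, giving $P^\#$) and Type II intervals $[\sigma,\hat1]$ (giving $(CP)^*$), then dualize and verify the order relations against $MC(r_P)\cup_P CP$; the paper instead derives (b) by specializing (a) to $Q=pt$. Both work, and your direct check of closedness of $r_P$ and of the incomparability of $\hat1$ with $h(P)$ is sound.

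Part (a) has a genuine gap, and it sits entirely in your reading of the double mapping cylinder on the right-hand side. Your decomposition of the left side is correct and is exactly the paper's argument: Type~I $\cong P^\#$, Type~II $\cong Q^\#$, Type~III $\cong P\times Q$, with $\sigma\in P^\#$ \emph{below} $(a,b)\in P\times Q$ iff $\#(\sigma)\le a$. But you then read $MC(A\xl{f}B\xr{g}C)$ as ``$MC(f\:B\to A)$ with its open $B$-layer collapsed by $g$ onto $C$,'' which puts $A=P\times Q$ on the closed bottom and $C=P^\#$ on the open top. That is the opposite of what the paper intends: in the convention that makes Lemma~\ref{conical hocolim} and part (b) come out right, the \emph{first} argument $R$ sits on the open end and the \emph{last} argument $Q$ on the closed end (one sees this from $\hocolim_{C\Lambda}\simeq MC(P_{\hat1}\xleftarrow{}\cdots\xrightarrow{}\hocolim_\Lambda)$, where the apex piece $P_{\hat1}$ must be the open part). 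So the correct parse gives $P^\#$ closed, $P\times Q$ open, and then the adjunction relation comes out as $\sigma\le(a,b)$ iff $\exists\,\sigma''\supset\sigma$ in $P^\#$ with $\#(\sigma'')=a$, which simplifies to $\#(\sigma)\le a$ --- matching the left side. With your orientation the relation instead comes out as $(a,b)\le\sigma$ iff $a\in\sigma$ (note: your stated ``$\#(\sigma)\ge a$'' drops the lower-endpoint constraint and is not what your own reading produces), and the resulting poset has $P\times Q$ at the bottom, which is not isomorphic to $(P^*+Q)^\#$. Your sentence ``This matches the Type (i)--(iii) cross-relation'' is therefore not justified, and the subsequent ``amalgamating along their common closed $P\times Q$'' should read ``open $P\times Q$.'' If you re-derive the right-hand side with the correct orientation, the rest of your argument (checking Type~I--Type~II incomparability survives the amalgamation) goes through.
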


\begin{proof}[Proof. (a)]
An interval of $P^*+Q$ is either an interval of $P^*$ (or equivalently of $P$)
or an interval of $Q$, or a dual cone of $P^*$ plus a cone of $Q$ (or equivalently,
a cone $P\x Q$).
\end{proof}

\begin{proof}[(b)] Part (a) specializes to
$(CP)^\#\simeq MC^*((P^*)^\#\xr{\#}P^*)\cup_{P^*} MC(P^*\to pt)$.
Note that $MC(P^*\to pt)\simeq C^*(P^*)$.
Hence $h(CP)\simeq MC(h(P)\xr{r_P}P)\cup_P CP$.
\end{proof}

\subsection{Barycentric handles}
Associated to every poset $P$ is the barycentric handle decomposition
$H(P):=(P^\flat)^*$.

Since $(P+Q)^\flat\simeq P^\flat*Q^\flat$, we have $H(P+Q)\simeq H(P)\cojoin H(Q)$.
In particular, $H(CP)\simeq H(P)\x[2]\cup_{H(P)}CH(P)$.

A barycentric handle corresponding to a $\sigma\in P$ is the cone
$H_\sigma:=\fll(\hat\sigma)^*\flr$ of the maximal element $(\hat\sigma)^*$ of $H(P)$.
We write $H_\sigma=H_\sigma^P$ when $P$ is not clear from the context.

\begin{lemma} $H_\sigma$ is isomorphic to the product of cones
$CH(\partial\fll\sigma\flr)\x CH(\partial\fll\sigma^*\flr)$.
\end{lemma}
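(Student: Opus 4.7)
The plan is a direct unravelling of the definition of $H_\sigma$. By the identification $H(P)=(P^\flat)^*$, we have $H_\sigma=\fll(\hat\sigma)^*\flr_{(P^\flat)^*}=(\cel\hat\sigma\cer_{P^\flat})^*$. So the task reduces to describing the subposet of $P^\flat$ consisting of nonempty finite chains that contain $\sigma$, and then dualizing.

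Next I would describe that subposet explicitly. Every chain $C\subset P$ containing $\sigma$ decomposes uniquely as a disjoint union $C_-\sqcup\{\sigma\}\sqcup C_+$, where $C_-$ is a (possibly empty) chain in $\partial\fll\sigma\flr$ and $C_+$ is a (possibly empty) chain in $\partial^*\cel\sigma\cer$. This correspondence is a bijection onto
\[
\bigl(\{\emptyset\}\sqcup(\partial\fll\sigma\flr)^\flat\bigr)\times\bigl(\{\emptyset\}\sqcup(\partial^*\cel\sigma\cer)^\flat\bigr),
\]
and, since $C\subseteq C'$ if and only if $C_-\subseteq C'_-$ and $C_+\subseteq C'_+$ (with $\emptyset$ contained in every chain), the bijection is an isomorphism of posets once we identify $\{\emptyset\}\sqcup Q^\flat$ with $C^*(Q^\flat)$. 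Thus
\[
\cel\hat\sigma\cer_{P^\flat}\simeq C^*\bigl((\partial\fll\sigma\flr)^\flat\bigr)\times C^*\bigl((\partial^*\cel\sigma\cer)^\flat\bigr).
\]

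Then I would dualize, applying the general identities $(A\times B)^*\simeq A^*\times B^*$ and $(C^*Q)^*\simeq C(Q^*)$ (both immediate from the definitions). This yields
\[
H_\sigma\simeq C\bigl(((\partial\fll\sigma\flr)^\flat)^*\bigr)\times C\bigl(((\partial^*\cel\sigma\cer)^\flat)^*\bigr)\;=\;CH(\partial\fll\sigma\flr)\times CH(\partial^*\cel\sigma\cer).
\]
The last step is to replace $\partial^*\cel\sigma\cer$ by $\partial\fll\sigma^*\flr$. Since $\cel\sigma\cer_P=(\fll\sigma^*\flr_{P^*})^*$, the definition of $\partial^*$ gives $(\partial^*\cel\sigma\cer_P)^*=\partial\fll\sigma^*\flr_{P^*}$. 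Moreover, $H$ is invariant under dualization: the chains of $R$ and of $R^*$ coincide as subsets ordered by inclusion, so $R^\flat=(R^*)^\flat$ and hence $H(R)=H(R^*)$. Therefore $H(\partial^*\cel\sigma\cer)\simeq H(\partial\fll\sigma^*\flr)$, which gives the stated formula.

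The only subtle point is the bookkeeping of $\partial$ versus $\partial^*$ and making sure the two $C^*$-factors really do dualize to $C$-factors rather than swapping out of the product; once the invariance $H(R)\simeq H(R^*)$ is in hand, everything drops out. Nothing in this proof goes beyond manipulation of the operations already introduced in \S\ref{operations}.
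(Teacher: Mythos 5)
Your proof is correct, and it takes a slightly more hands-on route than the paper's. The paper first identifies $\partial H_\sigma$ with $H(\partial\fll\sigma\flr+\partial^*\cel\sigma\cer)$, applies the identity $H(P+Q)\simeq H(P)\cojoin H(Q)$ (which comes from $(P+Q)^\flat\simeq P^\flat*Q^\flat$), then recovers $H_\sigma$ as the cone over this cojoin and converts cone-of-cojoin into product-of-cones via van Kampen duality $C(A\cojoin B)\simeq CA\x CB$. You instead bypass $\partial H_\sigma$ entirely: you describe $\cel\hat\sigma\cer_{P^\flat}$ directly as the poset of chains through $\sigma$, split each such chain into its parts below and above $\sigma$, and read off the product decomposition $C^*((\partial\fll\sigma\flr)^\flat)\x C^*((\partial^*\cel\sigma\cer)^\flat)$ before dualizing. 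In effect you are re-proving, in this specific instance, the combination of $(P+Q)^\flat\simeq P^\flat*Q^\flat$ and $C^*X\x C^*Y\simeq C^*(X*Y)$ by the same explicit chain bookkeeping that underlies those identities in \S\ref{operations}. What the paper's route buys is brevity and explicit reuse of the prejoin/cojoin calculus already set up; what your route buys is a self-contained verification that makes the bijection on chains visible, at the cost of redoing a small amount of that calculus. Both arguments use the same final reduction $H(\partial^*\cel\sigma\cer)\simeq H(\partial\fll\sigma^*\flr)$, and your justification of it is the correct one.
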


It follows, in particular, that
$H_\sigma\simeq H_\sigma^{\fll\sigma\flr}\x H_\sigma^{\cel\sigma\cer}$.

\begin{proof}
$\partial H_\sigma=\partial\fll (\hat\sigma)^*\flr$ can be identified with
$H(\partial\fll\sigma\flr+\partial^*\cel\sigma\cer)$.
By the above the latter is isomorphic to $H(\partial\fll\sigma\flr)\cojoin
H(\partial^*\cel\sigma\cer)$.
Since $H(Q)\simeq H(Q^*)$, we have $H(\partial^*\cel\sigma\cer)\simeq
H(\partial\fll\sigma^*\flr)$.
Thus $H_\sigma$ is isomorphic to the cone over
$H(\partial\fll\sigma\flr)\cojoin H(\partial\fll\sigma^*\flr)$
By the van Kampen duality, this cone is isomorphic to
$CH(\partial\fll\sigma\flr)\x CH(\partial\fll\sigma^*\flr)$.
\end{proof}

\part{SUBDIVISION AND COLLARS}\label{subdivision and collars}

Starting with this chapter, all posets and preposets are assumed to be finite.

If $P$ is a poset or preposet, $|P|$ will denote
the polyhedron triangulated by the simplicial complex $P^\flat$.
If $f\:P\to Q$ is a monotone map, $|f|$ will denote the PL map triangulated
by the simplicial map $f^\flat\:P^\flat\to Q^\flat$.

To be more specific, we can canonically embed $P^\flat$ into the simplex
$\Delta^{\mathcal P}$, where $\mathcal P$ is the underlying set of $P$
(Lemma \ref{2.9}) and consider the corresponding affine simplicial subcomplex
of an affine simplex whose face poset is isomorphic to $\Delta^{\mathcal P}$.

We will also assume all polyhedra to be compact (and endowed with PL structure
apart from topology), and all maps between polyhedra to be PL.

\section{Subdivision maps} \label{subdivision map-section}

\begin{lemma} (a) If $P$ is a closed suposet of a poset $Q$, then $|P|$
is homeomorphic to $|CQ|$ keeping $|Q|$ fixed if and only if $(|P|,|Q|)$
is homeomorphic to $(|CQ|,|Q|)$.

(b) If $P$ is a poset, and $|P|$ is homeomorphic to $|C(\partial P)|$,
then they are homeomorphic keeping $|\partial P|$ fixed.
\end{lemma}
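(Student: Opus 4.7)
The plan is to prove (a) by a direct ``coning'' construction and then reduce (b) to (a) by means of the collaring theorem. I read the hypothesis of (a) so that $|Q|$ sits inside both $|P|$ and $|CQ|$; in particular, the phrase ``keeping $|Q|$ fixed'' will mean a homeomorphism whose restriction to $|Q|$ is the identity.

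For (a), the forward implication is tautological: a homeomorphism $|P|\to|CQ|$ that is the identity on $|Q|$ is already a homeomorphism of pairs. For the converse, let $h\:|P|\to|CQ|$ be a PL homeomorphism with $h(|Q|)=|Q|$, and write $\alpha=h|_{|Q|}$, a PL self-homeomorphism of $|Q|$. I extend $\alpha^{-1}$ radially over the cone, defining $\bar\alpha\:|CQ|\to|CQ|$ by $\bar\alpha(x,t)=(\alpha^{-1}(x),t)$ in the coordinates $|CQ|=(|Q|\times[0,1])/(|Q|\times\{0\})$ and declaring $\bar\alpha$ to fix the apex. Then $\bar\alpha$ is a PL self-homeomorphism of $|CQ|$, and the composition $\bar\alpha\circ h\:|P|\to|CQ|$ is a PL homeomorphism whose restriction to $|Q|$ is $\alpha^{-1}\alpha=\id$.

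For (b), part (a) reduces the problem to producing a pair homeomorphism $(|P|,|\partial P|)\cong(|C(\partial P)|,|\partial P|)$. Starting with an arbitrary PL homeomorphism $h\:|P|\to|C(\partial P)|$, the image $h(|\partial P|)$ need not equal $|\partial P|$. To reconcile them, I invoke the collaring theorem (Theorem \ref{collaring theorem}) to produce a bicollar of $|\partial P|$ in $|P|$; push it forward by $h$ to obtain a bicollar of $h(|\partial P|)$ in $|C(\partial P)|$; and compare it with the canonical radial bicollar of $|\partial P|$ already present in $|C(\partial P)|$. Straightening one bicollar into the other produces a PL self-homeomorphism $g\:|C(\partial P)|\to|C(\partial P)|$ with $g(h(|\partial P|))=|\partial P|$, so that $g\circ h$ is a homeomorphism of pairs; applying (a) upgrades it to one fixing $|\partial P|$ pointwise.

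The main obstacle lies in the straightening step of (b), which requires a PL uniqueness-of-bicollars statement: any two bicollars of the same subpolyhedron inside an ambient polyhedron should be related by an ambient PL isotopy. This is a standard companion to the collaring theorem and I would expect it to be available either as a corollary of Theorem \ref{collaring theorem} or as a case of the multi-collaring theorem (Theorem \ref{grothendieck}) developed later in the chapter. Once in hand, it supplies the self-homeomorphism $g$ and the two parts of the proof slot together.
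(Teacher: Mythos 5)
Part (a) of your argument is essentially the paper's: a pair homeomorphism restricts to a self-homeomorphism $\alpha$ of $|Q|$, and one extends $\alpha^{-1}$ conically to a self-homeomorphism of $|CQ|$ and composes. Fine.

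Part (b) is where the proposal goes wrong. The step that "straightening one bicollar into the other produces a PL self-homeomorphism $g$ with $g(h(|\partial P|))=|\partial P|$" is not available. Uniqueness of collars (or bicollars) produces an ambient isotopy between two collars of the \emph{same} subpolyhedron; it says nothing about matching a collar of $h(|\partial P|)$ with a collar of the a priori unrelated subpolyhedron $|\partial P|$. For your argument to go through you would first need $h(|\partial P|)$ to be ambient isotopic to $|\partial P|$ inside $|C(\partial P)|$, and that is exactly the difficult content of the lemma — there is no general reason a homeomorphism $|P|\to|C(\partial P)|$ should carry the cone base anywhere near the cone base. (Also, as a side remark, the cone base has only a one-sided collar in a cone, so "bicollar" is the wrong word here; but that is a cosmetic issue compared to the main gap.) Finally, Theorem \ref{collaring theorem} and Theorem \ref{grothendieck} occur later in the chapter than the lemma you are proving, so invoking them here also introduces an ordering problem.

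The paper's proof of (b) takes a different route that avoids the gap entirely: after passing to barycentric subdivisions so that both sides are simplicial complexes, the combinatorial invariance of link forces \emph{every} homeomorphism $|P|\to|C(\partial P)|$ to carry $|\partial P|$ into $|\partial(C(\partial P))|=|\partial P|$, because the boundary is intrinsically detected by the link types of points. So $(|P|,|\partial P|)\cong(|C(\partial P)|,|\partial P|)$ holds automatically, and the result then follows from (a). In other words, the issue you set out to repair ("$h(|\partial P|)$ need not equal $|\partial P|$") does not arise in the paper's argument; the key observation is that it cannot arise.
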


\begin{proof}[Proof. (a)] This follows from the fact that every self-homeomorphism
of $|Q|$ extends to a self-homeomorphism of the cone $C|Q|$ (which in turn is
homeomorphic to $|CQ|$ keeping $|Q|$ fixed).
\end{proof}

\begin{proof}[(b)] We recall that $\partial P$ is the closure of the subposet of
all $p\in P$ such that $\lk(p,P)$ is a singleton.
By considering barycentric subdivisions we may assume that $P$ is
a simplicial complex.
Then by the combinatorial invariance of link any homeomorphism
$|P|\to|\partial P*pt|$ sends $|\partial P|$ into itself.
Thus $(|P|,|\partial P|)$ is homeomorphic to $(|C(\partial P)|,|\partial P|)$,
and the assertion follows from (a).
\end{proof}

\subsection{Subdivision map} By a {\it subdivision map} we mean a monotone map
$f\:P'\to P$ between posets such that $|f^{-1}(\fll p\flr)|$ is homeomorphic to
$|Cf^{-1}(\partial\fll p\flr)|$ by a homeomorphism fixed on
$|f^{-1}(\partial\fll p\flr)|$ for each $p\in P$.

Clearly, every subdivision map is an open map.
If $f\:P'\to P$ is a subdivision map, then by the combinatorial invariance of
link $\partial (P')=f^{-1}(\partial P)$.

\subsection{Realizable subdivision map}
If $P$ and $P'$ are affine polytopal [resp.\ simplicial] complexes, we say that
a subdivision map $\alpha\:P'\to P$ is {\it affinely [resp.\ simplicially]
realizable} if there exists an affine subdivision $K'\vartriangleright K$ of
affine polytopal [simplicial] complexes $K$ and $K'$ such that $P$ and $P'$ are
the posets of nonempty faces of $K$ and $K'$, and the smallest polytope [simplex]
of $K$ containing the polytope [simplex] of $K'$ corresponding to a $p\in P'$
corresponds to $\alpha(p)\in P$.

\begin{theorem}\label{subdivision map} A monotone map of posets $f\:P'\to P$ is
a subdivision map if and only if there exists a homeomorphism $h\:|P'|\to |P|$
such that $h^{-1}(|\fll p\flr|)=|f^{-1}(\fll p\flr)|$ for each $p\in P$.
\end{theorem}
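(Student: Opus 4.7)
The plan is to prove both implications. The \emph{if} direction reduces to relating the two cone structures on $|\partial\fll p\flr|$: the one that comes with $\fll p\flr$ itself (as the poset $C\partial\fll p\flr$) and the one from $Cf^{-1}(\partial\fll p\flr)$. The \emph{only if} direction will build $h$ from scratch by inducting along any linear extension of $P$, gluing together local cone extensions whose existence is precisely the subdivision-map condition.

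For the \emph{if} direction, first I would observe that $|\partial\fll p\flr|=\bigcup_{q<p}|\fll q\flr|$: chains in $\partial\fll p\flr=\fll p\flr\setminus\{p\}$ are exactly those chains in $\fll p\flr$ avoiding $p$, and by finiteness every such chain lies in some $\fll q\flr$ with $q<p$. Consequently $h^{-1}(|\partial\fll p\flr|)=\bigcup_{q<p}|f^{-1}(\fll q\flr)|=|f^{-1}(\partial\fll p\flr)|$, so $h$ restricts to a homeomorphism of pairs $(|f^{-1}(\fll p\flr)|,|f^{-1}(\partial\fll p\flr)|)\to(|\fll p\flr|,|\partial\fll p\flr|)=(C|\partial\fll p\flr|,|\partial\fll p\flr|)$, using $\fll p\flr\simeq C\partial\fll p\flr$ as posets. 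Now cone the restriction $h^{-1}|_{|\partial\fll p\flr|}\:|\partial\fll p\flr|\to|f^{-1}(\partial\fll p\flr)|$ to a homeomorphism $C|\partial\fll p\flr|\to C|f^{-1}(\partial\fll p\flr)|$ and post-compose with $h$. The result is a homeomorphism $|f^{-1}(\fll p\flr)|\to|Cf^{-1}(\partial\fll p\flr)|$ that is the identity on $|f^{-1}(\partial\fll p\flr)|$, which is what the subdivision-map definition demands.

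For the \emph{only if} direction, I would pick a linear extension $p_1,\dots,p_N$ of the partial order on $P$ and construct $h$ on $|f^{-1}(\fll p_i\flr)|$ by induction on $i$, ensuring at each stage that $h^{-1}(|\fll p_i\flr|)=|f^{-1}(\fll p_i\flr)|$. The base case is a minimal $p_1$, where $\partial\fll p_1\flr=\emptyset$ forces $|f^{-1}(\fll p_1\flr)|$ to be a single point, which I send to the vertex of $|P|$ labelled $p_1$. For the inductive step, $|f^{-1}(\partial\fll p_i\flr)|=\bigcup_{q<p_i}|f^{-1}(\fll q\flr)|$ is already covered, since each $q<p_i$ is one of the $p_j$ with $j<i$. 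Pick a homeomorphism $\psi\:|f^{-1}(\fll p_i\flr)|\to C|f^{-1}(\partial\fll p_i\flr)|$ fixing the base (supplied by the subdivision-map hypothesis), cone the already-defined $h$ on $|f^{-1}(\partial\fll p_i\flr)|$ to a homeomorphism $C|f^{-1}(\partial\fll p_i\flr)|\to C|\partial\fll p_i\flr|=|\fll p_i\flr|$, and define the new $h$ on $|f^{-1}(\fll p_i\flr)|$ as the composition.

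The main obstacle will be checking that these local extensions glue to a well-defined map on $|P'|$, i.e.\ that the extension built at stage $i$ agrees on overlaps with everything produced at earlier stages. When $p_j<p_i$ this is automatic, since the extension fixes $|f^{-1}(\partial\fll p_i\flr)|$ and $|f^{-1}(\fll p_j\flr)|\subset|f^{-1}(\partial\fll p_i\flr)|$. The subtle case is incomparable $p_j$: here the key combinatorial observation is that $p_i\notin\fll p_j\flr$ (since $p_i\not\le p_j$), hence $\fll p_i\flr\cap\fll p_j\flr\subset\fll p_i\flr\setminus\{p_i\}=\partial\fll p_i\flr$, so $|f^{-1}(\fll p_i\flr)|\cap|f^{-1}(\fll p_j\flr)|\subset|f^{-1}(\partial\fll p_i\flr)|$, and the new extension leaves the older $h$ unchanged there. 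The resulting $h$ is then a continuous bijection between compact Hausdorff spaces, hence a homeomorphism, and it satisfies $h^{-1}(|\fll p\flr|)=|f^{-1}(\fll p\flr)|$ for every $p\in P$ by construction.
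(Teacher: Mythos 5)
Your proof is correct and follows essentially the same route as the paper's: the ``if'' direction is the same coning-via-$h$ argument using $|\partial\fll p\flr|=\bigcup_{q<p}|\fll q\flr|$, and the ``only if'' direction is the same induction on closed subposets (your linear extension $p_1,\dots,p_N$ is exactly the paper's choice of $p\in P\setminus Q$ with $\partial\fll p\flr\subset Q$), extending the homeomorphism one cone at a time; the paper merely packages the inductive step through an intermediate amalgam $Q'\cup_{R'}CR'$ and a factorization into two subdivision maps, which is the same composition you write as $Ch|\circ\psi$.
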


Such a homeomorphism $h$ will be called an {\it underlying homeomorphism} of
the subdivision map $f$.

The proof of Theorem \ref{subdivision map} is rather straightforward.

\begin{proof}
Assume inductively that for some closed subposet $Q$ of $P$ there exists
an underlying homeomorphism $h_Q$ of $f|_{Q'}$, where $Q'=f^{-1}(Q)$.
Pick some $p\in P\but Q$ such that $R:=\partial\fll p\flr\subset Q$.
Let $Q_+=Q\cup CR\subset P$ and $(Q_+)'=f^{-1}(Q_+)$.
Let $R'=f^{-1}(R)$ and let $(Q')_+$ be the amalgam $Q'\cup_{R'} CR'$.
Then $f|_{(Q_+)'}$ factors uniquely as $(Q_+)'\xr{g}(Q')_+\xr{f_+} Q_+$,
where $g$ is a subdivision map extending $\id_{Q'}$ and $f_+$ is a subdivision map
extending $f|_{Q'}$.
Then $h_{Q'}$ extends conically to an underlying homeomorphism $h_+$ of $f_+$,
and $\id_{|Q'|}$ extends, using the homeomorphism from the definition of
a subdivision map, to an underlying homeomorphism $h_g$ of $g$.
The composition $|(Q_+)'|\xr{h_g}|(Q')_+|\xr{h_+}|Q_+|$ is clearly an underlying
homeomorphism of $f|_{(Q_+)'}$, which completes the induction step.

Conversely, suppose that $h$ is an underlying homeomorphism of $f$.
Given a $p\in P$, we have $|\partial\fll p\flr|=\bigcup_{q<p}|\fll q\flr|$,
and consequently
$$h^{-1}(|\partial\fll p\flr|)=\bigcup_{q<p} h^{-1}(|\fll q\flr|)=
\bigcup_{q<p}|f^{-1}(\fll q\flr|)|=|f^{-1}(\partial\fll p\flr)|.$$
Hence we get the composite homeomorphism
$$|f^{-1}(\fll p\flr)|\xr{h|_{h^{-1}(|\fll p\flr|)}}|\fll p\flr|=
C|\partial\fll p\flr|\xr{C(h^{-1}|_{|\partial\fll p\flr|})}
C(f^{-1}(|\partial\fll p\flr|)),$$
which is the identity on $f^{-1}(|\partial\fll p\flr|$, as desired.
\end{proof}

\begin{corollary} \label{subdivision-composition}
Let $P''\xr{f}P'\xr{g}P$ be monotone maps of posets, where $f$ is a subdivision map.
Then $g$ is a subdivision map if and only if $gf$ is.
\end{corollary}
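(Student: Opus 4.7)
The plan is to reduce everything to Theorem \ref{subdivision map}, which characterises subdivision maps by the existence of an ``underlying'' homeomorphism on geometric realisations that is cellular with respect to the poset of cones $\fll p\flr$. Concretely, let $\tilde f\:|P''|\to|P'|$ be an underlying homeomorphism of the subdivision map $f$. The proof is then largely a matter of bookkeeping with preimages of closed subposets.

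First I would record the mild extension of the defining property of $\tilde f$: for \emph{every} closed subposet $Q\incl P'$, one has $\tilde f^{-1}(|Q|)=|f^{-1}(Q)|$. This follows from writing the closed subposet $Q=\bigcup_{q\in Q}\fll q\flr$, so that $|Q|=\bigcup_{q\in Q}|\fll q\flr|$, and applying $\tilde f^{-1}$ term by term; closure of $Q$ ensures that $\bigcup_{q\in Q}f^{-1}(\fll q\flr)=f^{-1}(Q)$. This observation will be used in both directions.

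For the ``only if'' direction, suppose $g$ is a subdivision map with underlying homeomorphism $\tilde g\:|P'|\to|P|$. I claim $\tilde g\tilde f\:|P''|\to|P|$ is an underlying homeomorphism for $gf$. Indeed, for each $p\in P$, the subposet $g^{-1}(\fll p\flr)$ is closed in $P'$ (by monotonicity of $g$), so
\[
(\tilde g\tilde f)^{-1}(|\fll p\flr|)=\tilde f^{-1}(|g^{-1}(\fll p\flr)|)=|f^{-1}(g^{-1}(\fll p\flr))|=|(gf)^{-1}(\fll p\flr)|,
\]
where the middle equality uses the extension recorded above. By Theorem \ref{subdivision map}, $gf$ is a subdivision map.

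For the ``if'' direction, suppose $gf$ is a subdivision map with underlying homeomorphism $h\:|P''|\to|P|$. Define $\tilde g\bydef h\circ\tilde f^{-1}\:|P'|\to|P|$, which is a homeomorphism. For any $p\in P$, since $(gf)^{-1}(\fll p\flr)=f^{-1}(g^{-1}(\fll p\flr))$ and $g^{-1}(\fll p\flr)$ is closed in $P'$, the extended property of $\tilde f$ gives
\[
\tilde g^{-1}(|\fll p\flr|)=\tilde f\bigl(h^{-1}(|\fll p\flr|)\bigr)=\tilde f\bigl(|f^{-1}(g^{-1}(\fll p\flr))|\bigr)=|g^{-1}(\fll p\flr)|.
\]
Hence $\tilde g$ is an underlying homeomorphism for $g$, and Theorem \ref{subdivision map} shows that $g$ is a subdivision map. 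There is no real obstacle here; the only subtlety is to notice that the pointwise defining identity for $\tilde f$ upgrades to closed subposets, which allows us to pull preimages through $g$ freely in both directions.
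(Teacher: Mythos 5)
Your proof is correct, and it is exactly the argument the corollary is tacitly inviting the reader to carry out after Theorem \ref{subdivision map}: upgrade the pointwise defining identity $\tilde f^{-1}(|\fll p\flr|)=|f^{-1}(\fll p\flr)|$ to arbitrary closed subposets (valid since a closed subposet is the union of its cones, closed subposets give a compatible decomposition of barycentric subdivisions, and preimages commute with unions), then compose $\tilde g\tilde f$ for the "only if" direction and define $\tilde g=h\tilde f^{-1}$ for the "if" direction. The paper gives no explicit proof because this is precisely what is meant, so there is nothing to contrast.
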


\begin{remark}
Subdivision maps of cell complexes were studied by Stanley \cite{Stan} (see also \cite{Ath}) and 
by Mn\"ev, who calls them ``abstract assemblies'' \cite{Mn}.
They seem to originate in \cite[Conjecture on p.\ 17]{Ste} (in the case
of affine cell complexes).

Subdivisions that are {\it dual} to simplicial maps of simplicial complexes
were studied by Akin \cite{Ak2}, who called them transversely cellular maps,
and later rediscovered by Dragotti and Magro (see \cite{DM}) under the
name of strong cone-dual maps; both approaches extend Cohen's work on
the case where the domain is a manifold \cite{Co1}.

Akin gave a nontrivial proof of Corollary \ref{subdivision-composition}
in the case of maps dual to simplicial maps \cite[Corollary 3 on p.\ 423]{Ak2}.
He also proved, by providing some invariant characterizations, that the property
of being a subdivision map for the dual to a simplicial map does not depend on
the choice of triangulations \cite{Ak2}.

Subdivisions of posets as defined by Theorem \ref{subdivision map} are easily seen
to be equivalent to McCrory's notion of subdivision of a cone complex \cite{Mc}.
\end{remark}

The following example is found in \cite{Ak2}:

\begin{example}
If $g$ and $gf$ are subdivision maps, $f$ need not be one: let $M$ be a manifold
and $f=gf\:M\to M$ shrink a codimension zero ball $B$ to a point; then $g$ must
be the identity outside $B$ but can behave arbitrarily inside $B$.
\end{example}

\subsection{Fiberwise subdivision map} A subdivision map $f\:P'\to P$ is
called {\it fiberwise} with respect to monotone maps $\pi\:P\to B$ and $\pi'\:P'\to B$
if $f$ sends $(\pi')^{-1}(\cel b\cer)$ onto $\pi^{-1}(\cel b\cer)$ for each $b\in B$,
and for each $p\in P$ there exists a homeomorphism
$h_p\:|f^{-1}(\fll p\flr)|\to C|f^{-1}(\partial\fll p\flr)|$ keeping
$|f^{-1}(\partial\fll p\flr)|$ fixed and sending
$|(\pi')^{-1}(\cel b\cer)\cap f^{-1}(\fll p\flr)|$ onto
$C|(\pi')^{-1}(\cel b\cer)\cap f^{-1}(\partial\fll p\flr)|$ for every $b\in B$.

In the case that $\pi'$ equals the composition $P'\xr{f}P\xr{\pi} B$, we also say that
the subdivision map $f$ is {\it fiberwise} with respect to $\pi$, or just lies {\it over $B$}
when $f$ is clear from context.

It is easy to see that every subdivision map $f\:P'\to P$ lies over the constant map $P\to pt$.
On the other hand, if $f$ lies over $\id\:P\to P$, then $f$ itself is the identity map.

In dealing with transversality we will also need a peculiar variety of a fiberwise
subdivision map.
To define when the subdivision map $f$ is {\it fiberwise} with respect to monotone maps
$\pi\:P\to B$ and $\pi'\:P'\to B^*$, we repeat the same conditions as above, but
with $(\pi')^{-1}(\cel b\cer)$ replaced by $(\pi')^{-1}(\fll b^*\flr)$ throughout.

\begin{lemma} \label{fiberwise}
Let $f\:P'\to P$ be a fiberwise subdivision map with respect to
$\pi\:P\to B$ and $\pi'\:P'\to B$ [resp.\ $\pi'\:P'\to B^*$], and let $B_0\subset B$ be
an open subposet.
Let $P_0=\pi^{-1}(B_0)$ and $P'_0=(\pi')^{-1}(B_0)$ [resp.\ $P'_0=(\pi')^{-1}(B_0^*)$].
Then the restriction $f_0\:P'_0\to P_0$ of $f$ is a subdivision map with respect to
the restrictions $\pi_0\:P_0\to B_0$ and $\pi'_0\:P'_0\to B_0$
[resp.\ $\pi'_0\:P'_0\to B_0^*$] of $\pi$ and $\pi'$.
In particular, $f_0$ is a subdivision map.
\end{lemma}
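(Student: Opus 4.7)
The plan is first to check $f_0$ is well-defined, then restrict each fiberwise cone-homeomorphism $h_p$ for $f$ to obtain the cone-homeomorphisms required for $f_0$. I treat the case $\pi'\:P'\to B$; the dual case is identical after replacing $(\pi')^{-1}(\cel b\cer)$ by $(\pi')^{-1}(\fll b^*\flr)$ throughout. For well-definedness, I apply the fiberwise ``onto'' hypothesis with $b=\pi'(p')$ to any $p'$, obtaining $p'\in(\pi')^{-1}(\cel\pi'(p')\cer)$, hence $\pi(f(p'))\ge\pi'(p')$ in $B$. For $p'\in P'_0$ we have $\pi'(p')\in B_0$, and since $B_0$ is open (upward-closed) in $B$, also $\pi(f(p'))\in B_0$, so $f(p')\in P_0$.

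Fix $p\in P_0$ and set $A=f^{-1}(\fll p\flr_P)$, $A_\partial=f^{-1}(\partial\fll p\flr_P)$. Since $P_0$ is a subposet of $P$ containing $p$, one has $\fll p\flr_{P_0}=\fll p\flr_P\cap P_0$ and $\partial\fll p\flr_{P_0}=\partial\fll p\flr_P\cap P_0$; combined with $f(P'_0)\subset P_0$ this yields
\[f_0^{-1}(\fll p\flr_{P_0})=A\cap P'_0,\qquad f_0^{-1}(\partial\fll p\flr_{P_0})=A_\partial\cap P'_0.\]
Moreover, $P'_0=\bigcup_{b\in B_0}(\pi')^{-1}(\cel b\cer)$: each $p'\in P'_0$ lies in $(\pi')^{-1}(\cel\pi'(p')\cer)$ with $\pi'(p')\in B_0$, while conversely $(\pi')^{-1}(\cel b\cer)\subset P'_0$ for $b\in B_0$ by openness of $B_0$. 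By the fiberwise hypothesis, for every $b\in B$ the homeomorphism $h_p\:|A|\to C|A_\partial|$ sends $|A\cap(\pi')^{-1}(\cel b\cer)|$ onto $C|A_\partial\cap(\pi')^{-1}(\cel b\cer)|$. Since these subcones share the apex of $C|A_\partial|$, the identity $\bigcup_\alpha CS_\alpha=C\bigl(\bigcup_\alpha S_\alpha\bigr)$ shows that $h_p$ carries $|A\cap P'_0|$ onto $C|A_\partial\cap P'_0|$ while fixing $|A_\partial\cap P'_0|$. This establishes the cone condition, so $f_0$ is a subdivision map.

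For the fiberwise property with respect to $\pi_0$ and $\pi'_0$, observe that for $b\in B_0$ one has $\cel b\cer_{B_0}=\cel b\cer_B$ (since $B_0$ is open), hence $(\pi'_0)^{-1}(\cel b\cer_{B_0})=(\pi')^{-1}(\cel b\cer)\cap P'_0$. The same cone-union identity applied to the decomposition $(\pi')^{-1}(\cel b\cer)\cap P'_0=\bigcup_{b'\in B_0}(\pi')^{-1}(\cel b\cer\cap\cel b'\cer)$, with each $\cel b\cer\cap\cel b'\cer$ further decomposed as $\bigcup_{b''\in\cel b\cer\cap\cel b'\cer}\cel b''\cer$, yields $h_p\bigl(|(\pi'_0)^{-1}(\cel b\cer_{B_0})\cap f_0^{-1}(\fll p\flr_{P_0})|\bigr)=C\bigl|(\pi'_0)^{-1}(\cel b\cer_{B_0})\cap f_0^{-1}(\partial\fll p\flr_{P_0})\bigr|$, as required. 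The only real obstacle is bookkeeping: one must track that openness of $B_0$ is invoked in the correct (upward) direction at each step, after which the cone structures glue automatically because they share a common apex.
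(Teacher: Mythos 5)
Your proof is correct and takes essentially the same approach as the paper: restrict the cone homeomorphism $h_p$ supplied by the fiberwise hypothesis. What you add is useful detail that the paper compresses or leaves tacit: you check that $f_0$ is well-defined ($f(P'_0)\subseteq P_0$), and you spell out the cone-union step — since $B_0$ is open, $P'_0=(\pi')^{-1}(B_0)=\bigcup_{b\in B_0}(\pi')^{-1}(\cel b\cer)$, and the subcones produced by $h_p$ share the apex of $C|A_\partial|$, so they glue — which the paper's proof asserts in a single stroke. You also verify that $f_0$ is actually \emph{fiberwise} with respect to $\pi_0,\pi'_0$ rather than merely a subdivision map, which the paper's proof does not explicitly address even though the statement claims it.

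One simplification worth noting: the closing argument for the fiberwise condition is over-engineered. For $b\in B_0$, openness of $B_0$ already gives $\cel b\cer\subseteq B_0$, hence $(\pi'_0)^{-1}(\cel b\cer_{B_0})=(\pi')^{-1}(\cel b\cer)\cap P'_0=(\pi')^{-1}(\cel b\cer)$, and since $(\pi')^{-1}(\cel b\cer)\subseteq P'_0$ one also has $(\pi')^{-1}(\cel b\cer)\cap f_0^{-1}(\fll p\flr_{P_0})=(\pi')^{-1}(\cel b\cer)\cap A$. Thus the required condition for $f_0$ at $b$ is \emph{verbatim} the fiberwise hypothesis for $f$ at $b$; no double decomposition into dual cones $\cel b'\cer$, $\cel b''\cer$ is needed.
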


\begin{proof} We only treat one case; the case in square brackets is similar.
Given a $p\in P_0$, the hypothesis provides a homeomorphism
$h_p\:|f^{-1}(\fll p\flr)|\to C|f^{-1}(\partial\fll p\flr)|$ keeping
$|f^{-1}(\partial\fll p\flr)|$ fixed and sending $|P_0'\cap f^{-1}(\fll p\flr)|$
onto $C|P_0'\cap f^{-1}(\partial\fll p\flr)|$.
We have $P_0'\cap f^{-1}(\fll p\flr)=f_0^{-1}(P_0\cap\fll p\flr)=
f_0^{-1}(\fll p\flr_{P_0})$, and similarly
$P_0'\cap f^{-1}(\partial\fll p\flr)=f_0^{-1}(\partial\fll p\flr_{P_0})$.
So $h_p$ sends
$|f_0^{-1}(\fll p\flr_{P_0})|$ onto $C|f_0^{-1}(\partial\fll p\flr_{P_0})|$
and keeps $|f_0^{-1}(\partial\fll p\flr_{P_0})|$ fixed.
\end{proof}

Similarly to Theorem \ref{subdivision map} one has

\begin{lemma}\label{fiberwise subdivision map} Suppose that we are given monotone maps
$f\:P'\to P$, $\pi\:P\to B$ and $\pi'\:P'\to B$ [resp.\ $\pi'\:P'\to B^*$] such that
$f$ sends $(\pi')^{-1}(\cel b\cer)$ [resp.\ $(\pi')^{-1}(\fll b^*\flr)$] onto
$\pi^{-1}(\cel b\cer)$ for each $b\in B$.
Then $f$ is a fiberwise subdivision map with respect to $\pi$ and $\pi'$
if and only if there exists a homeomorphism $h\:|P'|\to |P|$
sending $|f^{-1}(\fll p\flr)|$ onto $|\fll p\flr|$ for each $p\in P$ and
$|(\pi')^{-1}(\cel b\cer)|$ [resp.\ $|(\pi')^{-1}(\fll b^*\flr)|$] onto
$|\pi^{-1}(\cel b\cer)|$] for each $b\in B$.
\end{lemma}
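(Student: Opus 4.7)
The plan is to adapt the proof of Theorem \ref{subdivision map} by tracking the additional fibrewise data $\pi,\pi'$ at every step. As in that proof, I treat both directions of the biconditional separately, and in each direction I use the conical extension of a homeomorphism on the boundary as the key gluing device.

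For the ``if'' direction, suppose $h\:|P'|\to|P|$ has the stated fibre property. For each $p\in P$ define $h_p$ exactly as in Theorem \ref{subdivision map}: compose $h|_{|f^{-1}(\fll p\flr)|}$ with the conical extension of $h^{-1}|_{|\partial\fll p\flr|}$; this fixes $|f^{-1}(\partial\fll p\flr)|$ and is a homeomorphism to $C|f^{-1}(\partial\fll p\flr)|$. For the fibre condition I would intersect the global identities $h(|f^{-1}(\fll p\flr)|)=|\fll p\flr|$ and $h(|(\pi')^{-1}(\cel b\cer)|)=|\pi^{-1}(\cel b\cer)|$, giving that $h$ sends $|(\pi')^{-1}(\cel b\cer)\cap f^{-1}(\fll p\flr)|$ onto $|\pi^{-1}(\cel b\cer)\cap\fll p\flr|$; the latter set is the sub-cone of $|\fll p\flr|$ with base $|\pi^{-1}(\cel b\cer)\cap\partial\fll p\flr|$. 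Applying the conical extension of $h^{-1}$ carries this sub-cone to $C|(\pi')^{-1}(\cel b\cer)\cap f^{-1}(\partial\fll p\flr)|$, which is what $h_p$ is required to produce.

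For the ``only if'' direction, I would mimic the inductive construction from Theorem \ref{subdivision map}, building $h_Q\:|f^{-1}(Q)|\to|Q|$ by induction on closed subposets $Q\subset P$ and adding one element $p\in P\but Q$ at a time with $R=\partial\fll p\flr\subset Q$. As in the earlier proof, the extension over $|f^{-1}(\fll p\flr)|$ is the composition $|f^{-1}(\fll p\flr)|\xr{h_p}C|f^{-1}(\partial\fll p\flr)|\to|\fll p\flr|$ where the second arrow is the conical extension of $h_Q$ restricted to $|f^{-1}(R)|$. Subdivision compatibility is inherited verbatim. For fibre compatibility at a given $b\in B$, I first apply the local fibre condition of $h_p$ to send $|(\pi')^{-1}(\cel b\cer)\cap f^{-1}(\fll p\flr)|$ onto the sub-cone $C|(\pi')^{-1}(\cel b\cer)\cap f^{-1}(R)|$; then the conical extension of $h_Q$ (which by the inductive fibre hypothesis matches the base of this sub-cone with $|\pi^{-1}(\cel b\cer)\cap R|$) carries it onto the sub-cone of $|\fll p\flr|$ over $|\pi^{-1}(\cel b\cer)\cap R|$, which is $|\pi^{-1}(\cel b\cer)\cap\fll p\flr|$.

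The main obstacle is verifying that the sub-cone structures on the two sides match correctly: the sub-cone $|\pi^{-1}(\cel b\cer)\cap\fll p\flr|$ contains the apex $|p|$ precisely when $p\in\pi^{-1}(\cel b\cer)$, and one must check that the corresponding condition holds downstairs for $|(\pi')^{-1}(\cel b\cer)\cap f^{-1}(\fll p\flr)|$. This is exactly where the hypothesis that $f$ sends $(\pi')^{-1}(\cel b\cer)$ onto $\pi^{-1}(\cel b\cer)$ is used, combined with the monotonicity of $\pi$: if $q'\in(\pi')^{-1}(\cel b\cer)$ and $f(q')\le p$, then $f(q')\in\pi^{-1}(\cel b\cer)$, so $\pi(p)\ge\pi(f(q'))\ge b$, i.e.\ $p\in\pi^{-1}(\cel b\cer)$. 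Once this dichotomy is recorded, the conical extension respects it and the inductive step goes through. The case $\pi'\:P'\to B^*$ is handled identically by switching closed/open subposets where appropriate.
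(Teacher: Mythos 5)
Your proposal is correct and follows exactly the route the paper intends: the paper gives no separate proof, stating only that the lemma holds ``Similarly to Theorem~\ref{subdivision map},'' and your write-up is precisely a careful transcription of that proof with the fiber data $\pi,\pi'$ threaded through the conical extensions. The two observations you add explicitly --- that $h$ carries $|(\pi')^{-1}(\cel b\cer)\cap f^{-1}(\fll p\flr)|$ onto $|\pi^{-1}(\cel b\cer)\cap\fll p\flr|$ because polyhedra of intersections are intersections of polyhedra, and that the hypothesis $f((\pi')^{-1}(\cel b\cer))=\pi^{-1}(\cel b\cer)$ together with monotonicity of $\pi$ forces the apex $p$ to lie in $\pi^{-1}(\cel b\cer)$ exactly when the fiber over $b$ meets $f^{-1}(\fll p\flr)$ --- are the only points where the analogy with Theorem~\ref{subdivision map} needs checking, and you identify both.
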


\begin{corollary} \label{fiberwise subdivision-composition}
Suppose that we are given monotone maps $P''\xr{f}P'\xr{g}P\xr{\pi}B$,
$\pi'\:P'\to B$ and $\pi''\:P''\to B$ [resp.\ $\pi''\:P''\to B^*$] such that
$f$ sends $(\pi'')^{-1}(\cel b\cer)$ [resp.\ $(\pi'')^{-1}(\fll b^*\flr)$] onto
$(\pi')^{-1}(\cel b\cer)$ and $g$ sends $(\pi')^{-1}(\cel b\cer)$ onto
$\pi^{-1}(\cel b\cer)$ for each $b\in B$.

Then $g$ is a fiberwise subdivision map with respect to $\pi$ and $\pi'$ if and only if $gf$ is
a fiberwise subdivision map with respect to $\pi$ and $\pi''$.
\end{corollary}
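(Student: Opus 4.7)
My plan is to invoke the characterization of fiberwise subdivision maps from Lemma \ref{fiberwise subdivision map}, which rephrases each side of the biconditional as the existence of a specific underlying homeomorphism, and then to transit between the two by means of an underlying homeomorphism for $f$ that additionally respects the fibers over $B$. First I would observe that the two fiber-sending hypotheses chain: $f$ sending $(\pi'')^{-1}(\cel b\cer)$ onto $(\pi')^{-1}(\cel b\cer)$ and $g$ sending $(\pi')^{-1}(\cel b\cer)$ onto $\pi^{-1}(\cel b\cer)$ together yield that $gf$ sends $(\pi'')^{-1}(\cel b\cer)$ onto $\pi^{-1}(\cel b\cer)$ for every $b\in B$, so that ``$gf$ is a fiberwise subdivision map with respect to $\pi$ and $\pi''$'' is at least well-posed (and similarly in the bracketed $B^*$ variant).

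Applying Lemma \ref{fiberwise subdivision map}, the assertion that $g$ is a fiberwise subdivision map with respect to $\pi,\pi'$ is equivalent to the existence of a homeomorphism $h\:|P'|\to|P|$ that sends $|g^{-1}(\fll p\flr)|$ onto $|\fll p\flr|$ for every $p\in P$ and $|(\pi')^{-1}(\cel b\cer)|$ onto $|\pi^{-1}(\cel b\cer)|$ for every $b\in B$. Analogously, $gf$ is a fiberwise subdivision map with respect to $\pi,\pi''$ iff there is a homeomorphism $k\:|P''|\to|P|$ with the same two properties but with $g$ replaced by $gf$ and $\pi'$ by $\pi''$. The task thus reduces to producing a transition homeomorphism $|P''|\to|P'|$ that covers the identity of $|P|$ appropriately.

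In either direction of the biconditional, one of $g$ or $gf$ is in particular a subdivision map. Combined with the fiber-sending hypothesis on $f$, this is enough to run the inductive construction from the proof of Theorem \ref{subdivision map}, cone-by-cone over $P'$ rather than over $P$, to produce an underlying homeomorphism $h_f\:|P''|\to|P'|$ witnessing that $f$ is a subdivision map and moreover sending $|(\pi'')^{-1}(\cel b\cer)|$ onto $|(\pi')^{-1}(\cel b\cer)|$ for every $b\in B$. Concretely, one extends $h_f$ from a closed subposet of $P'$ to the cone over one more star of a cell of $P'$ by conically extending the already-constructed fiber-respecting homeomorphism on the boundary; the fiber-sending property of $f$ guarantees that the required conical collar can be chosen inside the fibers. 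Once $h_f$ is in hand, $k=h\circ h_f$ recovers the underlying homeomorphism for $gf$ from that of $g$, and $h=k\circ h_f^{-1}$ recovers that of $g$ from that of $gf$, both with all required cone- and fiber-sending properties.

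The main obstacle is this extraction of $f$ as a subdivision map equipped with a fiber-respecting underlying homeomorphism: it is not an overt hypothesis of the statement, and one must produce it inductively from the fact that one of $g, gf$ is already a fiberwise subdivision map together with the bare fiber-sending condition on $f$. Once this ``Theorem \ref{subdivision map} one level up'' has been established, the rest of the argument is bookkeeping with Lemma \ref{fiberwise subdivision map}. The bracketed $B^*$ case is handled in the same way, substituting $\fll b^*\flr$ for $\cel b\cer$ throughout exactly as in Lemma \ref{fiberwise subdivision map}.
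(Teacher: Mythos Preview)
Your diagnosis that something is missing is correct, but your proposed fix does not work. The statement as printed omits the hypothesis that $f$ is itself a fiberwise subdivision map with respect to $\pi'$ and $\pi''$; this is the fiberwise analogue of the clause ``where $f$ is a subdivision map'' in Corollary~\ref{subdivision-composition}, and without it the biconditional is simply false. For a concrete failure of the ``only if'' direction, take $B=P=P'=pt$, $g=\id$, and $P''$ two incomparable points with $f$ the unique map to $P'$: all fiber-sending conditions hold trivially, $g$ is a fiberwise subdivision map, but $gf=f$ is not even a subdivision map since $|P''|$ has two components.

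Your attempt to manufacture an underlying homeomorphism $h_f$ for $f$ ``cone-by-cone over $P'$'' cannot succeed in general: running the induction of Theorem~\ref{subdivision map} over $P'$ requires, at each new $p'\in P'$, precisely the homeomorphism $|f^{-1}(\fll p'\flr)|\cong C|f^{-1}(\partial\fll p'\flr)|$, which is the definition of $f$ being a subdivision map and is not supplied by the fiber-sending condition or by either of $g$, $gf$ being a subdivision map. Indeed the example immediately following Corollary~\ref{subdivision-composition} in the paper shows that even when $g$ and $gf$ are both subdivision maps, $f$ need not be one.

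Once the missing hypothesis on $f$ is restored, the paper's intended argument is the one you outline in your second and fourth paragraphs and nothing more: Lemma~\ref{fiberwise subdivision map} furnishes an underlying homeomorphism $h_f\:|P''|\to|P'|$ respecting both the cones of $P'$ and the fibers over $B$; composing or precomposing with $h_f$ converts an underlying homeomorphism witnessing one side of the biconditional into one witnessing the other, exactly as Corollary~\ref{subdivision-composition} follows from Theorem~\ref{subdivision map}. There is no separate ``extraction'' step.
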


To compare combinatorial and topological mapping cylinders it is
convenient to work in concrete categories (see \cite[8.10]{AHS}).

\begin{lemma} \label{quotient map}
Let $f\:P\to Q$ be a quotient map in the concrete category of
preposets and monotone maps over the category of sets.
Then $|f|\:|P|\to |Q|$ is a quotient map in the concrete category of
compact polyhedra and PL maps over the category of sets.
\end{lemma}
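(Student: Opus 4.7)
First I would unpack what it means for $f\:P\to Q$ to be a quotient map in the concrete category of preposets over $\mathrm{Set}$: the underlying function is surjective, and the strictly acyclic relation on $Q$ is the final one, i.e.\ it is the (necessarily acyclic) relation generated on $Q$ by $\{(f(p),f(p'))\mid p\prec p'\}$. In particular, every chain $q_0\prec\dots\prec q_n$ in $Q$ is the image under $f$ of a union of chains in $P$, and by taking barycentric subdivisions this lifts to the statement that the simplicial map $f^\flat\:P^\flat\to Q^\flat$ is surjective on vertices and on simplices. Consequently $|f|\:|P|\to|Q|$ is a surjective PL map between compact polyhedra, and since both are compact Hausdorff, $|f|$ is automatically a topological quotient map.

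Next I would check the PL-quotient condition, which is the heart of the lemma: given any compact polyhedron $Z$ and any set map $h\:|Q|\to Z$ such that $h\circ|f|$ is PL, I must show that $h$ is PL. Continuity of $h$ is free from the preceding paragraph. For piecewise linearity, by definition of $h\circ|f|$ being PL there exist simplicial subdivisions $P'$ of $P^\flat$ and $T$ of $Z$ with respect to which $h\circ|f|\:P'\to T$ is simplicial. The plan is then to descend $P'$ to a triangulation $Q'$ of $|Q|$ refining $Q^\flat$ such that $|f|\:P'\to Q'$ is simplicial; once this is done, the equality $h\circ|f|=(h\circ|f|)$ on vertices, together with surjectivity of $|f|$ on vertices of $Q'$, forces $h\:Q'\to T$ to be simplicial, hence PL.

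The construction of $Q'$ is the main technical issue. For each simplex $\tau$ of $Q^\flat$, surjectivity of $f^\flat$ supplies a simplex $\tilde\tau$ of $P^\flat$ on which $f^\flat$ restricts to an affine isomorphism onto $\tau$ (pick a chain in $P$ lifting the chain $\tau$ without repetitions), and I would transfer the restricted subdivision $P'|_{\tilde\tau}$ across this isomorphism to define the intended subdivision of $\tau$. One has to verify (i) that this subdivision is independent of the chosen lift $\tilde\tau$, and (ii) that the pieces assembled from different $\tau$ agree on common faces, so that they fit together into a global subdivision $Q'$. Both points reduce to the same assertion: whenever two simplices of $P'$ have the same image in $|Q|$ under the affine map $|f|$, their $T$-labellings agree under that identification. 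This is exactly forced by the existence of a set-theoretic map $h$ with $h\circ|f|$ simplicial, combined with the quotient property of $f$ (which guarantees that identifications of points of $|P|$ produced by $|f|$ are precisely those detected by $f$ on vertices).

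I expect step (i)--(ii) to be the main obstacle, and I anticipate handling it by arguing first for the special case where $f$ collapses a single ``edge'' (a pair of vertices identified by $f$ along a relation in $P$) and then iterating through the generators of the equivalence relation defining the quotient; alternatively, one may avoid an inductive argument by invoking directly that, since $h\circ|f|$ is well-defined on $|Q|$ via $h$, the star of each vertex of $P'$ maps to a star in $|Q|$ whose combinatorial structure depends only on its image, which is precisely what is needed to define $Q'$ intrinsically.
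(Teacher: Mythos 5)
The paper's proof is one line: it asserts that $f^\flat\colon P^\flat\to Q^\flat$ realizes a quotient morphism in the category of affine simplicial complexes, and then refers the reader to a ``standard fact in PL topology.'' Your plan spells out both halves of this — the passage to the simplicial level, followed by the construction of a descended subdivision $Q'$, which is essentially the standard fact the paper is invoking — so the overall architecture is the same, just more explicit.

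There is, however, a genuine gap at your very first step, which the paper's ``Clearly'' also elides. You claim that $f^\flat$ is surjective on vertices and simplices, inferring this from the fact that every single relation $q\prec q'$ in the final preposet structure on $Q$ lifts to some $p\prec p'$ in $P$. That is true relation-by-relation, but the witnesses for the several relations inside a chain $q_0\prec\dots\prec q_n$ of $Q$ need not be compatible, so they need not assemble into a single chain of $P$ that covers the whole $q$-chain. Concretely, let $P$ be the poset on $\{a_1,a_2,b_1,b_2,c\}$ whose only strict relations are $a_1<b_1$, $b_2<c$, and $a_2<c$, and let $f$ send $a_1,a_2\mapsto a$, $b_1,b_2\mapsto b$, $c\mapsto c$. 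The quotient preposet is the three-element linear order $a<b<c$ (with $a<c$ coming from $a_2<c$), and $f$ is a quotient morphism in the concrete category of preposets because the relation on $Q$ is exactly the image of the relation on $P$. Yet $\{a,b,c\}$ is a $2$-simplex of $Q^\flat$ with no lift to a chain of $P$: $|P^\flat|$ is $1$-dimensional while $|Q^\flat|$ is a filled triangle, so $|f^\flat|$ is not even surjective, let alone a PL quotient map. Thus ``$f$ is a quotient of preposets'' does not by itself imply what you (or the paper) need at this stage; the argument has to be run under a more restrictive hypothesis matching the actual applications, where the quotient is of adjunction type $P\cup_g Q$ with $g$ closed (or open) and its domain closed (or open), and the surjectivity of $f^\flat$ on chains and the well-definedness of your descended subdivision $Q'$ must be verified directly in that setting. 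The descent step you flag as ``the main technical issue'' is the right thing to worry about, but it sits downstream of this more basic failure.
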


\begin{proof}
Clearly, $|f|$ is a quotient map in the concrete category of affine
simplicial complexes and affine simplicial maps over the category of sets.
This reduces the assertion to a standard fact in PL topology.
\end{proof}

\begin{lemma} \label{MC vs MC*}
For any monotone map $f\:P\to Q$ between posets there exist homeomorphisms
$|MC(f)|\cong |MC^*(f)|\cong MC(|f|)$ keeping $|P|$ and $|Q|$ fixed.
\end{lemma}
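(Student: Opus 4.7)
The plan is to realize $|MC(f)|$ as a PL quotient via Lemma \ref{quotient map}, and then identify this quotient polyhedron with $MC(|f|)$.

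First, I would observe that $MC(f) = P\x[2]\cup_{f_1}Q$ is, by its very definition, the pushout in the category of preposets of $P\x[2]\hookleftarrow P\x\{1\}\xr{f_1}Q$. This exhibits the natural monotone surjection $\pi\:(P\x[2])\sqcup Q\to MC(f)$ as a quotient map in the concrete category of preposets and monotone maps over the category of sets. By Lemma \ref{quotient map}, $|\pi|$ is then a quotient map in the concrete category of compact polyhedra and PL maps over sets, so that $|MC(f)|$ is the PL quotient of $|(P\x[2])\sqcup Q|=|P\x[2]|\sqcup|Q|$ by the equivalence relation induced from $\pi$ on underlying sets.

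Second, I would invoke the standard PL identification $|P\x[2]|\cong|P|\x|[2]|=|P|\x[0,1]$, arising from the fact that chains in $P\x[2]$ triangulate the product polyhedron. Under this identification the vertex set of $|P\x[2]|$ is $P\x\{1\}\sqcup P\x\{2\}$, and the quotient in question identifies each vertex $(p,1)$ with the vertex $f(p)\in|Q|$. Because the quotient is formed in the concrete category of compact polyhedra, these vertex identifications propagate PL-linearly: the identification at a general point $(x,1)\in|P|\x\{1\}$ is with $|f|(x)$, since $|f|$ is by definition the PL-linear extension of $f$ on vertices. The resulting quotient of $|P|\x[0,1]\sqcup|Q|$ is precisely the topological mapping cylinder $MC(|f|)$, and the homeomorphism is the identity on $|Q|$ and on the ``free'' copy $|P|=|P\x\{2\}|$ in $|MC(f)|$, which corresponds to $|P|\x\{1\}$ in $MC(|f|)$.

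For $|MC^*(f)|\cong MC(|f|)$, I would use the natural isomorphism $MC^*(f)\simeq(MC(f^*))^*$ recorded just before Corollary \ref{mapping cylinder}. The $\flat$-subdivision is insensitive to reversing a partial order, so $|P^*|=|P|$, $|Q^*|=|Q|$ and $|f^*|=|f|$; hence $|MC^*(f)|=|MC(f^*)|$, and applying the preceding paragraph to $f^*\:P^*\to Q^*$ yields $|MC(f^*)|\cong MC(|f^*|)=MC(|f|)$ by a homeomorphism fixing $|P|$ and $|Q|$.

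The crux of the argument lies in the second step: verifying that the PL structure of the quotient of polyhedra really does fill in the interstices between the vertex identifications so as to match the topological mapping cylinder. This requires the standard fact that the order complex of a product triangulates the product polyhedron, together with the universal property of the PL quotient that Lemma \ref{quotient map} is tailored to provide; purely set-theoretic identification of vertex sets would be insufficient.
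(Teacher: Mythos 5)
You take a genuinely different route from the paper, and although the plan is attractive, the key middle step has a gap. Let me describe both.

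Your argument works directly with $|P\x[2]|$, invokes $|P\x[2]|\cong|P|\x[0,1]$, and asserts that the PL quotient of $|P|\x[0,1]\sqcup|Q|$ produced by Lemma~\ref{quotient map} is ``precisely the topological mapping cylinder $MC(|f|)$,'' with identifications propagating PL-linearly from the vertex identifications along $|P|\x\{1\}$ only. That last assertion is not correct. The quotient preposet map $\pi\:(P\x[2])\sqcup Q\to MC(f)$ induces a simplicial map $\pi^\flat$ that can be degenerate on simplices of $(P\x[2])^\flat$ which are \emph{not} contained in $(P\x\{1\})^\flat\cup Q^\flat$. Concretely, if $p<p'$ in $P$ with $f(p)=f(p')$, then a chain such as $\{(p,1),(p',1),(q,2)\}$ (with $p'\le q$) is collapsed by $\pi^\flat$ onto a $1$-simplex, so $|\pi^\flat|$ identifies interior points of the corresponding $2$-simplex of $|P\x[2]|$. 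These interior identifications are absent from $MC(|f|)$, which identifies points only along $|P|\x\{1\}$. Thus the natural surjection $MC(|f|)\to|MC(f)|$ is a genuine quotient map (it kills some simplices dimensionally) and not a homeomorphism; the two spaces are abstractly homeomorphic, but not by the map your argument implicitly invokes. Already $P=\{a<e\}$, $Q$ a point, $f$ constant exhibits the problem: the triangle $\{(a,1),(e,1),(e,2)\}$ of $(P\x[2])^\flat$ maps degenerately onto the edge $\{[c],(e,2)\}$ of $MC(f)^\flat$.

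The paper sidesteps exactly this difficulty by never comparing $MC(|f|)$ with $|MC(f)|$ directly. It introduces the prism $P\x I$ with $I=\Delta^1$, uses the subdivision map $I\to[2]$ to produce an underlying homeomorphism $h_\alpha\colon|P\x I|\to|P\x[2]|$, and uses the involution of $I$ to produce an involution $H$ of $|P\x[2]|$ swapping $|P\x\{1\}|$ and $|P\x\{2\}|$. It then works entirely with the topological adjunction spaces $|P\x I|\cup_{|f_0|}|Q|$, $|P\x[2]|\cup_{|f_1|}|Q|$, $|P\x[2]|\cup_{|f_2|}|Q|$, which $h_\alpha$ and $H$ relate to each other, with the first of these \emph{being} $MC(|f|)$ by construction. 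Lemma~\ref{quotient map} then supplies the comparison with $|MC(f)|$ and $|MC^*(f)|$. Your duality argument for $MC^*(f)$ via $MC^*(f)\simeq(MC(f^*))^*$ and the order-reversal invariance of $\flat$ is correct and is a pleasant alternative to the paper's use of the involution $H$; but the paper's $h_\alpha$ is doing real work that your direct identification does not, and that work is what is missing from your proposal.
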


\begin{proof} The subdivision map $I\to [2]$ yields a subdivision map
$\alpha\:P\x I\to P\x [2]$.
The monotone involution of $I$ yields a monotone involution $h$ of $P\x I$.
Using $h_\alpha$ (see Theorem \ref{subdivision map}) and $|h|$ we obtain
a homeomorphism $|P\x I|\cong|P\x[2]|$ and an involution $H$ of $|P\x[2]|$
interchanging $|P\x\{1\}|$ and $|P\x\{2\}|$.
These descend to homeomorphisms between $|P\x I|\cup_{|f_0|}|Q|$,
$|P\x[2]|\cup_{|f_1|}|Q|$ and $|P\x[2]|\cup_{|f_2|}|Q|$, where $f_0$ is
the composition $P\x\{\{0\}\}\simeq P\xr{f}Q$.
The assertion now follows from Lemma \ref{quotient map}.
\end{proof}

\begin{lemma} \label{subdivision of MC*} Consider a commutative diagram of
monotone maps
$$\begin{CD}
P'@.@>f'>>Q'\\
@VV\alpha V@.\hskip -30pt\searrow^{f^\circ}\hskip 10pt@VV\beta V\\
P@.@>f>>Q,
\end{CD}$$
where $\alpha$ and $\beta$ are subdivision maps and $f$ and $f'$ are open maps.

(a) The induced monotone map $\gamma\:MC^*(f^\circ)\to MC^*(f)$ is a subdivision map
over $[2]$.

(b) Suppose additionally that there exists a commutative diagram
$$\begin{CD}
|P'|@>|f'|>>|Q'|\\
@VVh_\alpha V@VVh_\beta V\\
|P|@>|f|>>|Q|,
\end{CD}$$
where $h_\alpha$ and $h_\beta$ are underlying homeomorphisms of the subdivision maps
$\alpha$ and $\beta$.
Then the induced monotone map $\delta\:MC^*(f')\to MC^*(f^\circ)$ is a subdivision map
over $[2]$.
\end{lemma}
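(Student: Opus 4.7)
The plan is to verify the defining conditions of a fiberwise subdivision map over $[2]$ (as characterized by Lemma \ref{fiberwise subdivision map}) by analyzing, for each element $x$ of the codomain, the cone structure on $\fll x\flr$ and its preimage. The key observation is that $MC^*(f)$ has a clean two-layer structure (``body'' $P\x[2]$ glued along $f_2$ to ``top'' $Q$), so every element is either of the form $(p,1)$ or lies in $Q$, and the analysis reduces to two cases in each of (a) and (b).

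For part (a), the map $\gamma$ acts as $\alpha\x\id_{[2]}$ on the body $P'\x[2]\to P\x[2]$ and as $\id_Q$ on the top. If $x=(p,1)$, then $\fll x\flr_{MC^*(f)}=\fll p\flr_P\x\{1\}$ lies entirely in the body, so $\gamma^{-1}(\fll x\flr)=\alpha^{-1}(\fll p\flr_P)\x\{1\}$ and the required cone homeomorphism is supplied directly by the subdivision-map property of $\alpha$ at $p$; the fiberwise condition over $[2]$ holds vacuously since the intersection with $Q$ is empty. If $x=q\in Q$, then $q$ is the unique maximum of $\fll q\flr_{MC^*(f)}$, and because $\gamma^{-1}(q)=\{q\}$ it is also the unique maximum of $\gamma^{-1}(\fll q\flr)$; hence each poset equals the combinatorial cone on its own complement of $\{q\}$, so the cone structure on realizations is automatic. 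The fiberwise condition over $[2]$ follows because the intersection with $\pi^{-1}(\cel 2\cer)=Q$ is $\fll q\flr_Q$, again a cone with apex $q$.

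For part (b), the map $\delta$ acts as $\id_{P'\x[2]}$ on the body and as $\beta$ on the top. The case $x=(p',1)$ is trivial since $\delta$ is the identity there. For $x=q\in Q$, I will use $f^\circ=\beta\circ f'$ together with the openness of $\beta$ (as a subdivision map) to identify
\[
\delta^{-1}(\fll q\flr_{MC^*(f^\circ)})\simeq MC^*\bigl(f'|_{(f')^{-1}(\beta^{-1}(\fll q\flr_Q))}\to\beta^{-1}(\fll q\flr_Q)\bigr).
\]
By Lemma \ref{MC vs MC*} its realization is a topological mapping cylinder. Since $\beta$ restricts to a subdivision map onto the cone $\fll q\flr_Q$ (which has $q$ as unique max), the restriction of $h_\beta$ turns $|\beta^{-1}(\fll q\flr_Q)|$ into a topological cone with apex $h_\beta^{-1}(q)$. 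The standard topological fact that a mapping cylinder whose target is a cone is itself a cone with the same apex then supplies the required cone structure on $|\delta^{-1}(\fll q\flr)|$, and its base is precisely $|\delta^{-1}(\partial\fll q\flr)|$ (the union of the bottom of the cylinder with the base of the cone top). The hypothesized commutative diagram of underlying homeomorphisms ensures that these local cone structures glue into a single homeomorphism over $[2]$ satisfying Lemma \ref{fiberwise subdivision map}; the fiberwise condition over $[2]$ follows since the intersection with $\pi^{-1}(\cel 2\cer)=Q'$ yields $|\beta^{-1}(\fll q\flr_Q)|$, whose cone structure is inherited from the mapping-cylinder cone structure.

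The main obstacle is the case $x=q\in Q$ of part (b): unlike in (a), the poset $\delta^{-1}(\fll q\flr)$ does not have a unique maximum (the maxima form the full preimage $\beta^{-1}(q)$), so the cone structure is not visible at the combinatorial level. It becomes visible only after translating via Lemma \ref{MC vs MC*} into the topological mapping cylinder and invoking the fact that a mapping cylinder over a topological cone is itself a cone. The commutative diagram of underlying homeomorphisms in the hypothesis of (b) is exactly what is needed to assemble the topological cones into a global homeomorphism that respects the partition of $MC^*(f^\circ)$ over $[2]$.
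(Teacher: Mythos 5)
Your analysis of part (a) is essentially correct and matches the paper: for $p\in P$ the preimage cone comes straight from $\alpha$, and for $q\in Q$ the preimage $\gamma^{-1}(\fll q\flr_M)$ is exactly $\fll q\flr_{M^\circ}$, which still has $q$ as its greatest element, so the cone pair is combinatorially visible. Your identification in part (b) of $\delta^{-1}(\fll q\flr_{M^\circ})$ with $MC^*\bigl(f'|\colon A'\to B'\bigr)$, where $A'=(f')^{-1}(B')$ and $B'=\beta^{-1}(\fll q\flr_Q)$, also matches the paper (which writes the same thing with $A'=\alpha^{-1}(A)=(f^\circ)^{-1}(B)$). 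You have also correctly spotted the ``main obstacle'': unlike in (a), this preimage has no greatest element, so the cone structure is not visible combinatorially.

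The gap is in how you resolve that obstacle. You invoke ``the standard topological fact that a mapping cylinder whose target is a cone is itself a cone with the same apex.'' No such fact exists. What is standard (and goes back to Whitehead/Cohen) is that $MC(g\colon X\to Z)$ \emph{collapses} to $Z$, so that if $Z$ is a cone the mapping cylinder is \emph{collapsible} --- but collapsible polyhedra are not cones in general, and in fact $MC(g\colon X\to CY)$ need not be a cone at all. For a concrete counterexample, take $X=S^1$ mapped by $g$ to a small interior circle of $D^2=CS^1$: the mapping cylinder is a disk with a cylindrical flap attached along an interior circle, which is not a cone over $X\sqcup\partial D^2$ (the cone over two disjoint circles is two disks joined at a point). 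Even in cases where $MC(g)$ happens to be a cone, the apex is generically the image of $X$ rather than the apex of $CY$ (e.g.\ $X=\{a\}$, $CY=[0,1]$ with apex at $1/2$, $g(a)=1/4$: the mapping cylinder is a tripod whose branch point is at $1/4$, not $1/2$). So the cone pair structure on $(|MC^*(A'\to B')|,\;|A'\cup MC^*(dA'\to\partial B')|)$ cannot be obtained by citing a general principle.

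The role of the hypothesized commutative diagram is precisely to fix this, and you have it in the wrong place: it is not needed to ``glue local cone structures'' (the subdivision-map condition is verified one cone at a time, with no gluing), it is needed to \emph{produce} the cone structure. Since $\fll q\flr_M=MC^*(A\xr{f|}B)$ has $q$ as greatest element, $(|\fll q\flr_M|,|\partial\fll q\flr_M|)$ is automatically a cone pair. The hypothesis $h_\beta\circ|f'|=|f|\circ h_\alpha$ says that $|f'|$ is conjugate to $|f|$, and since $h_\alpha,h_\beta$ are underlying homeomorphisms they carry the restricted maps $|A'|\to|B'|$ and $|dA'|\to|\partial B'|$ onto $|A|\to|B|$ and $|dA|\to|\partial B|$. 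By Lemma \ref{MC vs MC*} this gives a homeomorphism of pairs from $(|MC^*(A'\to B')|,|A'\cup MC^*(dA'\to\partial B')|)$ onto $(|\fll q\flr_M|,|\partial\fll q\flr_M|)$, and only \emph{then} does the cone structure appear. Without the conjugacy supplied by the hypothesis one simply cannot conclude.
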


Note that the upper-right triangle is irrelevant for the purposes of (a), since
one can take $\beta=\alpha$ and $f'=f^\circ$.

A diagram as above except that $f$ and $f'$ are not assumed to be open can
be converted into a diagram as above by using the handle functor, see
Theorem \ref{handle subdivision} below.

\begin{proof} Since $\alpha$ is open, so is $f^\circ$.
Hence $M:=MC^*(f)$, $M':=MC^*(f')$ and $M^\circ:=MC^*(f^\circ)$ are posets.
Let us identify $P$, $P'$, $Q$ and $Q'$ with their copies in $M$, $M'$ and
$M^\circ$.
Then $\delta=\id_{P'}\cup\beta$ and $\gamma=\alpha\cup\id_Q$.

(a) Given a $p\in P$, $\fll p\flr_M=\fll p\flr_P$.
Then $\gamma^{-1}(\fll p\flr_M)=\alpha^{-1}(\fll p\flr_P)$ and
$\gamma^{-1}(\partial\fll p\flr_M)=\alpha^{-1}(\partial\fll p\flr_P)$.
Hence $\gamma$ is a subdivision map over $P$.

Given a $q\in Q$, we have $\gamma^{-1}(\fll q\flr_M)=\fll q\flr_{M^\circ}$ and
$\gamma^{-1}(\partial\fll q\flr_M)=\partial\fll q\flr_{M^\circ}$.
Thus $\gamma$ is a subdivision map.
Moreover, it is a subdivision map over $[2]$ since $\gamma|_Q=\id_Q$.

(b) Clearly $\delta$ is the identity (and hence a subdivision map) over $P'$.

Now let us fix a $q\in Q$.
Let us write $B=\fll q\flr_Q$, $A=f^{-1}(B)$ and $dA=f^{-1}(\partial B)$.
Let $B'=\beta^{-1}(B)$, so that $\partial B'=\beta^{-1}(\partial B)$.
Further let $A'=\alpha^{-1}(A)=(f^\circ)^{-1}(B)$ and
$dA'=\alpha^{-1}(dA)=(f^\circ)^{-1}(\partial B)$.

We have $\fll q\flr_M=MC^*(A\xr{f|} B)$ and
$\partial\fll q\flr_M=A\cup MC^*(dA\xr{f|}\partial B)$.
Similarly, $\fll q\flr_{M^\circ}=MC^*(A'\xr{f^\circ|} B)$ and
$\partial\fll q\flr_{M^\circ}=A'\cup MC^*(dA'\xr{f^\circ|}\partial B)$.
Hence $\delta^{-1}(\fll q\flr_{M^\circ})=MC^*(A'\xr{f'|}B')$ and
$\delta^{-1}(\partial\fll q\flr_{M^\circ})=
A'\cup MC^*(dA'\xr{f'|}\partial B')$.

By the hypothesis $|f'|$ is conjugate to $|f|$, namely
$|f'|=h_\beta^{-1}|f|h_\alpha$.
Since $h_\alpha$ and $h_\beta$ are underlying homeomorphisms of $\alpha$ and
$\beta$, they also carry $|A'|\xr{\left.|f'|\vphantom{^J}\right|} |B'|$ onto
$|A|\xr{\left.|f|\vphantom{^J}\right|}|B|$, and
$|dA'|\xr{\left.|f'|\vphantom{^J}\right|}|\partial B'|$ onto
$|dA|\xr{\left.|f|\vphantom{^J}\right|}|\partial B|$.
Since $|MC^*(\phi)|\cong MC(|\phi|)$ by Lemma \ref{MC vs MC*}, it follows that
$(|MC^*(A'\xr{f'|}B')|,\,|A'\cup MC^*(dA'\xr{f'|}\partial B')|)$
is homeomorphic to $(|MC^*(A\xr{f|}B)|,\,|A\cup MC^*(dA\xr{f|}\partial B)|)$.
But the latter pair is the same as
$(|\fll q\flr_M|,|\partial\fll q\flr_M|)$.
\end{proof}

In the case $\beta=\alpha$, $f=\id$, $f'=\id$ we get

\begin{corollary} \label{MC* of subdivision} Let $\alpha\:K'\to K$ be a subdivision map.
Then the induced monotone maps $MC^*(\id_{K'})\xr{\delta} MC^*(\alpha)\xr{\gamma}
MC^*(\id_K)$ are subdivision maps over $[2]$.
\end{corollary}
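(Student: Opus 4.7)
The plan is a direct specialization of Lemma \ref{subdivision of MC*}. I set $P=Q=K$, $P'=Q'=K'$, $\beta=\alpha$, and $f=\id_K$, $f'=\id_{K'}$. The commutative square hypothesized in Lemma \ref{subdivision of MC*} degenerates to the trivially commuting square with both verticals equal to $\alpha$ and both horizontals equal to identities; identity maps are open, and $\alpha=\beta$ is a subdivision map by assumption. The induced diagonal map $f^\circ\:P'\to Q$ becomes $\beta\circ f'=\alpha\circ\id_{K'}=\alpha$, and correspondingly $MC^*(f^\circ)=MC^*(\alpha)$, $MC^*(f)=MC^*(\id_K)$, $MC^*(f')=MC^*(\id_{K'})$. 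Reading off the formulas $\gamma=\alpha\cup\id_Q$ and $\delta=\id_{P'}\cup\beta$ from the proof of Lemma \ref{subdivision of MC*}, one sees that the maps produced by the specialization are precisely the $\gamma\:MC^*(\alpha)\to MC^*(\id_K)$ and $\delta\:MC^*(\id_{K'})\to MC^*(\alpha)$ named in the corollary.

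Part (a) of Lemma \ref{subdivision of MC*} requires no supplementary hypothesis and immediately yields that $\gamma$ is a subdivision map over $[2]$.

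For part (b), I must additionally supply a commutative diagram of underlying homeomorphisms of $\alpha$ and $\beta$. By Theorem \ref{subdivision map}, the subdivision map $\alpha$ admits an underlying homeomorphism $h_\alpha\:|K'|\to|K|$; I take $h_\beta:=h_\alpha$. The required square then has both horizontals equal to the identity PL maps $|\id_{K'}|$ and $|\id_K|$ and both verticals equal to $h_\alpha$, so it commutes tautologically. Part (b) of Lemma \ref{subdivision of MC*} then yields that $\delta$ is a subdivision map over $[2]$, completing the proof.

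There is essentially no obstacle: the entire content has already been established in Lemma \ref{subdivision of MC*}, and what remains is bookkeeping. The only point that demands a moment of care is the verification that the specialized $\gamma$ and $\delta$ coincide literally with the maps displayed in the corollary's statement, which, as noted above, amounts to checking that the target copies of $K$ and $K'$ are correctly identified via $\alpha$ inside the respective $MC^*$'s.
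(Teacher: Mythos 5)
Your proof is correct and is exactly the paper's own argument: the paper proves this corollary in a single line by noting it is the case $\beta=\alpha$, $f=\id$, $f'=\id$ of Lemma \ref{subdivision of MC*}, and you have simply spelled out that specialization (including the observation that one may take $h_\beta=h_\alpha$, making the required diagram of underlying homeomorphisms commute tautologically).
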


\section{Collaring}

In this section we give a new short proof of Whitehead's collaring theorem
for simplicial complexes (see \cite{RS} and \cite[4.2]{Co2} for two other proofs
and references to older proofs) using mapping cylinders of non-simplicial maps.
We also note the (rather trivial) generalization to posets, which will be
used later.

We say that a closed subposet $Q$ of $P$ is {\it collared} in $P$ if for every
$q\in Q$, $|\lk(q,P)|$ is homeomorphic to the cone over $|\lk(q,Q)|$ by
a homeomorphism keeping $|\lk(q,Q)|$ fixed.

\begin{lemma}\label{collaring invariant}
If $Q$ is a closed subposet of a poset $P$, the property of $Q$ to be collared
in $P$ is an invariant of the homeomorphism type of the pair $(|P|,|Q|)$
\end{lemma}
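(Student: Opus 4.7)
The plan is to show that the combinatorial collaring condition on $(P,Q)$ is equivalent to a purely topological condition on the pair $(|P|,|Q|)$---namely that $|Q|$ is locally PL-collared in $|P|$ at every point of $|Q|$---which is manifestly invariant under PL homeomorphism of the pair.

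For each $q \in Q$, pick a maximal chain $q_- = (c_1 < \cdots < c_n)$ in $\fll q\flr_P$; since $Q$ is closed and $q \in Q$, the chain lies in $Q$. Let $x_q \in |Q| \subset |P|$ be the barycenter of the simplex $|q_-|$ in $|P^\flat| = |P|$. Combining the identity $\lk(q,P)^\flat = \Lk(q_-, P^\flat)$ from the excerpt with the standard PL fact that the topological link of an interior point of a simplex $\sigma$ in $|K|$ is $|\partial\sigma| * |\Lk(\sigma,K)|$, we obtain a PL homeomorphism of pairs
\[
\bigl(\lk(x_q,|P|),\; \lk(x_q,|Q|)\bigr) \;\cong\; \bigl(|\partial q_-| * |\lk(q,P)|,\; |\partial q_-| * |\lk(q,Q)|\bigr).
\]
The combinatorial collaring at $q$---that $|\lk(q,P)| \cong C|\lk(q,Q)|$ rel $|\lk(q,Q)|$---is then equivalent to local topological collaring at $x_q$---that $\lk(x_q,|P|) \cong C\lk(x_q,|Q|)$ rel $\lk(x_q,|Q|)$. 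The forward direction uses the PL identity $A * CB \cong C(A * B)$ rel $A * B$. The reverse is a sphere-de-suspension: iteratively take PL links at the vertices $c_1, \ldots, c_{n-1}$, using $\Lk(c_i, |\partial q_-|) = |\partial(q_- \setminus \{c_i\})|$ to shrink the sphere factor and $\Lk(c, CZ) = C\,\Lk(c, Z)$ to preserve the cone structure; after $n-1$ steps the sphere factor becomes empty and the cone condition survives on $(|\lk(q,P)|, |\lk(q,Q)|)$.

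An analogous join decomposition applies at any point $y \in |Q|$: if $y$ lies in the relative interior of a simplex $\rho = (p_1 < \cdots < p_k)$ of $Q^\flat$ (using that $Q^\flat$ is full in $P^\flat$), then
\[
\bigl(\lk(y,|P|),\; \lk(y,|Q|)\bigr) \;\cong\; \bigl(|\partial\rho| * |N| * |\lk(q,P)|,\; |\partial\rho| * |N| * |\lk(q,Q)|\bigr),
\]
where $q = p_k$ and $N$ is the prejoin of the open intervals of $P$ between consecutive elements of $\rho$ (which lies in $Q$ by closedness of $Q$ and $q \in Q$). Running the same forward/reverse arguments as above shows that combinatorial collaring of $Q$ in $P$ is equivalent to local topological collaring at every $y \in |Q|$. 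Since the latter is an intrinsic invariant of the germ of the polyhedral pair at $y$, it is preserved by any PL homeomorphism $(|P|,|Q|) \cong (|P'|,|Q'|)$, proving the lemma. The main obstacle is the sphere-de-suspension step, which requires careful tracking of the cone structure through iterated PL link operations at the vertices of a PL sphere sitting inside a join.
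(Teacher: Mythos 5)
Your argument is correct and in substance parallel to the paper's, though organized differently and with details made explicit that the paper leaves to a citation. The paper first reduces to the simplicial case by showing that $Q$ is collared in $P$ if and only if $Q^\flat$ is collared in $P^\flat$ (using $\lk(q,P)^\flat=\Lk(q_-,P^\flat)$ for a maximal chain $q_-$), and then appeals to "combinatorial invariance of link." You bypass the barycentric subdivision and characterize collaring directly by the pointwise condition that $(\lk(y,|P|),\lk(y,|Q|))$ is a cone pair at every $y\in|Q|$, which is visibly a PL invariant of the pair. The same two technical ingredients underlie both routes --- a join decomposition of the local link in the forward direction and a sphere de-suspension in the reverse --- but you display them explicitly while the paper hides the de-suspension inside its citation and the join decomposition inside its reduction step; this is a real gain in transparency since the paper's claim "every simplex of $Q^\flat$ is a maximal simplex of $\fll q\flr^\flat$" is literally false and needs the same join decomposition to repair. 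Two remarks on filling gaps in your account. First, $N$ should also contain $\partial\fll p_1\flr$, the part of $P$ below the least element of $\rho$, so that $|\Lk(\rho,P^\flat)|\cong|N|*|\lk(q,P)|$ in full; everything still lies in $Q$ by closedness, so nothing downstream changes. Second, the de-suspension step (which you rightly flag as the main obstacle) rests on the fact that a PL homeomorphism of pairs $(X,A)\to(Y,A)$ fixing $A$ pointwise induces, at every $c\in A$, a PL homeomorphism of link pairs that is again the identity on $\Lk(c,A)$; this is true --- choose triangulations of $X$ and $Y$ that restrict to a common triangulation of $A$ and make the homeomorphism simplicial --- but it needs to be stated and used carefully through each of the $n-1$ iterations.
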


\begin{proof}
Given a $q\in Q$, let $q_-$ be a maximal simplex of $\fll q\flr^\flat$.
Then $\lk(q,P)^\flat=\Lk(q_-,P^\flat)$ as subcomplexes of $P^\flat$,
and similarly $\lk(q,Q)^\flat=\Lk(q_-,Q^\flat)$.
Also $(\Lk(q_-,P^\flat),\Lk(q_-,Q^\flat))$ is isomorphic to
$(\lk(q_-,P^\flat),\lk(q_-,Q^\flat))$.
Since every simplex of $Q^\flat$ is a maximal simplex of $\fll q\flr^\flat$
for some $q\in Q$, we get that $Q$ is collared in $P$ if and only if
$Q^\flat$ is collared in $P^\flat$.

This reduces the lemma to the case where $P$ is a simplicial complex.
This case follows by the combinatorial invariance of link.
\end{proof}

If $Q$ is a closed subposet of a poset $P$, let $P^+_Q$ denote the amalgam
$P\cup_{Q=Q\x\{0\}}Q\x I$.

\begin{lemma} \label{collaring lemma}
Let $P$ be a simplicial complex and $Q$ a full subcomplex of $P$.
Then $Q$ is collared in $P$ if and only if there exists a homeomorphism
$h\:|P|\to |P^+|$ keeping $|P\but\cel Q\cer|$ fixed and extending the obvious
homeomorphism $|Q|\to |Q\x\{1\}|$.
Furthermore, $h$ sends $|\cel Q\cer|$ onto $|Q\x I|$ and $|P\but Q|$ onto
$|P|$.
\end{lemma}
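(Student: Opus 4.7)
The \emph{if} direction will fall out of Lemma~\ref{collaring invariant}. I observe that in $P^+_Q$ the subposet $Q\x\{1\}$ is collared in the obvious way: the collar $Q\x I$ is the only part of $P^+_Q$ incident to $|Q\x\{1\}|$, and at every element $(\sigma,[1,1])$ of $Q\x\{1\}$ (with $\sigma\in Q$) the link splits off the $I$-direction via $\lk((\sigma,[1,1]),Q\x I)\simeq\lk(\sigma,Q)*\lk([1,1],I)=\lk(\sigma,Q)*pt$, so that $|\lk((\sigma,[1,1]),P^+_Q)|\cong C|\lk((\sigma,[1,1]),Q\x\{1\})|$ keeping the latter pointwise fixed. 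Since the hypothesized $h$ is a PL homeomorphism of pairs $(|P|,|Q|)\cong(|P^+_Q|,|Q\x\{1\}|)$, Lemma~\ref{collaring invariant} transports the collaring back to $(|P|,|Q|)$.

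For the \emph{only if} direction I plan to build $h$ by induction on the simplices of $Q$ in order of increasing dimension. At stage $k$ I will have a PL homeomorphism defined on a closed neighbourhood of $|Q_k|$ in $|P|$ (where $Q_k$ denotes the $k$-skeleton of $Q$), equal to the identity outside $|\cel Q\cer|$ and sending $|Q_k|$ to its image in $|Q\x\{1\}|$. To incorporate a simplex $\sigma\in Q$ of dimension $k+1$, I will use the fullness of $Q$ to decompose $|\st(\sigma,P)|\cong|\sigma|*|\lk(\sigma,P)|$, apply the collaring hypothesis to rewrite $|\lk(\sigma,P)|\cong C|\lk(\sigma,Q)|$, and thereby obtain $|\st(\sigma,P)|\cong|\sigma|*C|\lk(\sigma,Q)|\cong C(|\sigma|*|\lk(\sigma,Q)|)=C|\st(\sigma,Q)|$. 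The matching cone on the piece of $|P^+_Q|$ lying over $|\sigma\x\{1\}|$ then lets the existing partial homeomorphism be extended conically across $|\sigma|$.

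The delicate point is compatibility: the collaring hypothesis provides the existence, but no naturality, of the cone homeomorphism at each $\sigma$, so at each stage the new cone must be chosen to agree with what has already been built on $|\partial\sigma|*|\lk(\sigma,P)|$. Performing the induction by increasing dimension reduces this to a relative PL cone-extension problem on $(|\sigma|,|\partial\sigma|)$, which is routine. A probably cleaner alternative presentation is to identify $|\fll\cel Q\cer\flr|$ with the mapping cylinder of a retraction of its frontier onto $|Q|$---the existence of such a retraction being precisely the content of the collaring hypothesis---and then to recognise $|P^+_Q|$ as the corresponding long mapping cylinder via Theorem~\ref{tmc}; I will adopt this second presentation if it goes through cleanly. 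The ``furthermore'' assertions about the images of $|\cel Q\cer|$ and $|P\but Q|$ under $h$ are then immediate from the construction.
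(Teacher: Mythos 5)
Your \emph{if} direction matches the paper's verbatim, so that part is fine.

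For the \emph{only if} direction you take a genuinely different route from the paper. The paper first observes that, since $Q$ is full, the simplicial neighbourhood $\Fll\cel Q\cer\Flr$ decomposes as the double mapping cylinder $MC(f)\cup_{Q'}MC(g)$ of the two closed maps $f\:Q'\to Q$ and $g\:Q'\to R$ (where $Q'=\cel Q\cer\but Q$ and $R$ is the frontier), then uses the collaring hypothesis --- inductively on links --- to show that $f^*$ is a subdivision map in the sense of Theorem~\ref{subdivision map}, and finally deploys Corollary~\ref{MC* of subdivision} and the poset amalgam calculus to assemble the desired homeomorphism. Your primary proposal is instead the classical Cohen/Rourke--Sanderson style induction over the skeleta of $Q$, extending the partial homeomorphism one star at a time. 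That is a legitimate strategy, but the step you wave off as ``a relative PL cone-extension problem ... which is routine'' is precisely where the classical proofs spend their effort: the collaring hypothesis gives you \emph{some} cone structure on $|\lk(\sigma,P)|$ rel $|\lk(\sigma,Q)|$, but the homeomorphism already built over $|\partial\sigma*\lk(\sigma,P)|$ at the previous stage imposes a competing parametrization, and reconciling the two is an instance of (relative) uniqueness of cone structures / regular neighbourhoods, not mere coning from a barycenter. It can be done, but stating it as routine conceals the actual work; the paper's mapping-cylinder-plus-subdivision-map machinery is designed precisely to sidestep this patching.

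Your proposed ``cleaner alternative'' via Theorem~\ref{tmc} does not go through as stated: Theorem~\ref{tmc} produces a monotone strong deformation retraction of $TMC(f)$ onto $LMC(f)$, not a homeomorphism, so it cannot by itself give the PL homeomorphism $|P|\cong|P^+_Q|$ you need. The paper's actual bridge between the mapping-cylinder picture and a homeomorphism is the subdivision-map machinery (Theorem~\ref{subdivision map}, Corollary~\ref{MC* of subdivision}), driven by the observation that $f^*$ is a subdivision map; that observation, rather than any deformation-retraction statement, is the crux.
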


\begin{proof} We first note that $Q\x\{1\}$ is collared in $Q\x I$.
Indeed, given a $q\in Q$, we have
$\lk((q,1),Q\x I)\simeq\lk(q,Q)*\lk(\{1\},I)\simeq\lk(q,Q)*pt$, and this isomorphism
sends $\lk((q,1),Q\x\{1\})$ onto $\lk(q,Q)*\emptyset$.
Hence $Q\x\{1\}$ is collared in $P^+_Q$.
The ``if'' implication now follows from Lemma \ref{collaring invariant}.

Conversely, suppose that $Q$ is collared in $P$.
Let $Q'=\cel Q\cer\but Q$ and let $R=\Fll\cel Q\cer\Flr\but\cel Q\cer$.
Since $R$ is closed in $P$, it is a simplicial complex.
(In fact, $Q'$ is a cubosimplicial complex, see Lemma \ref{cubosimplicial}.)
Since $Q$ is full in $P$, for each $p\in Q'$ the simplex $\fll p\flr$ meets
$Q$ in a simplex $\fll q_p\flr$.
Then $\fll p\flr$ meets $R$ in the opposite simplex
$\fll r_p\flr=\fll p\flr\but\Cel\fll q_p\flr\Cer$.
The resulting maps $f\:Q'\to Q$, $p\mapsto q_p$, and $g\:Q'\to R$, $p\mapsto r_p$,
are easily seen to be monotone and (using that $P$ is a simplicial complex) closed.
(In fact, they are cubical, see Lemma \ref{Hatcher is cubical}.)
It follows that $\fll Q'\flr$ is isomorphic to the double mapping cylinder
$MC(f)\cup_{Q'}MC(g)$.
Note that $P=\fll Q'\flr\cup (P\but\cel Q\cer)$ and
$\fll Q'\flr\cap (P\but\cel Q\cer)=R$.

Pick some $q\in Q$.
Let $K=\lk(q,P)\but\lk(q,Q)$ and let $L=\Cel\lk(q,Q)\Cer_{\lk(q,P)}\but\lk(q,Q)$.
Clearly, $f^{-1}(\cel q\cer)=K$ and $f^{-1}(\partial^*\cel q\cer)=L$.

Since $Q$ is collared in $P$, clearly $\lk(q,Q)$ is collared in $\lk(q,P)$.
On the other hand, $\Lk(q,Q)$ is full in $\Lk(q,P)$ since $Q$ is full in $P$,
and $\Lk(q,P)$ is a simplicial complex since $P$ is.
Since $\lk(q,Q)$ is naturally isomorphic to $\Lk(q,Q)$, we get that
$\lk(q,Q)$ is full in $\lk(q,P)$, which is a simplicial complex.
Arguing by induction, we may assume that there is a homeomorphism
$|\lk(q,P)|\to|\lk(q,P)^+_{\lk(q,Q)}|$ which sends $|K|$ onto $|\lk(q,P)|$
and $|L|$ onto $|\lk(q,Q)|$.
Hence $(|K|,|L|)\cong(|\lk(q,P)|,|\lk(q,Q)|)$.
Since $Q$ is collared in $P$, we get that $|K|\cong C|L|$ keeping $|L|$ fixed.
Thus $f^*$ is a subdivision map.

Since $MC^*(f^*)\simeq MC(f)^*$, we get using Corollary \ref{MC* of subdivision} that
the duals of the natural monotone maps
$MC(\id_{Q'})\xr{\phi} MC(f)\xr{\psi}MC(\id_Q)$
are subdivision maps.
On the other hand, the projection $Q'\x (I^*)\to Q'$ extends to a subdivision map
$MC(f)\cup_{Q'} Q'\x (I^*)\cup_{Q'}MC(g)\to MC(f)\cup_{Q'}MC(g)$.
By Theorem \ref{subdivision map}, this yields a homeomorphism
$|MC(f)\cup_{Q'} Q'\x (I^*)\cup_{Q'}MC(g)|\cong|MC(f)\cup_{Q'}MC(g)|$.
We note that if $X$ is a poset, $X\x I$ is the amalgam of two copies of $MC(\id_X)$
along the two open copies of $X$, and $X\x (I^*)$ is that along the two
closed copies of $X$ (see Lemma \ref{amalgam}).
Hence $\phi$, $f\x\id_{[2]}$ and $\psi$ combine into a monotone map
$Q'\x (I^*)\cup_{Q'} MC(f)\to MC(f)\cup_Q Q\x I$ whose dual is a
subdivision map.
By Theorem \ref{subdivision map} this yields a homeomorphism
$|Q'\x (I^*)\cup_{Q'} MC(f)|\cong |MC(f)\cup_Q Q\x I|$.
Combining it with the previous homeomorphism, we obtain the desired homeomorphism
$|P|\cong|P^+_Q|$.
\end{proof}

\begin{theorem} \label{collaring theorem}
Let $P$ be a poset and $Q$ a closed subposet of $P$.
Then $Q$ is collared in $P$ if and only if there exists a homeomorphism
$h\:|P|\to |P^+_Q|$ keeping $|P\but\cel Q\cer|$ fixed and extending the
homeomorphism $|Q|\to |Q\x\{1\}|$.
\end{theorem}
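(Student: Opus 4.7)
The plan is to reduce the theorem to its simplicial case, Lemma \ref{collaring lemma}, via barycentric subdivision. The ``if'' direction is immediate: for each $q\in Q$ we have the isomorphism $\lk((q,1),Q\x I)\simeq\lk(q,Q)*\lk(1,I)\simeq\lk(q,Q)*pt$, which sends $\lk((q,1),Q\x\{1\})$ onto $\lk(q,Q)*\emptyset$, so $Q\x\{1\}$ is collared in $Q\x I$ and hence in $P^+_Q$. Since $h$ is a pair-homeomorphism $(|P|,|Q|)\to(|P^+_Q|,|Q\x\{1\}|)$, Lemma \ref{collaring invariant} yields that $Q$ is collared in $P$.

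For the converse, Lemma \ref{collaring invariant} reduces the hypothesis to the statement that $Q^\flat$ is collared in $P^\flat$. Since $Q$ is closed in $P$, $Q^\flat$ is a full subcomplex of the simplicial complex $P^\flat$, so Lemma \ref{collaring lemma} yields a homeomorphism $\tilde h\:|P^\flat|\to|(P^\flat)^+_{Q^\flat}|$ that keeps $|P^\flat\but\cel Q^\flat\cer|$ pointwise fixed and extends the obvious identification $|Q^\flat|\to|Q^\flat\x\{1\}|$. The barycentric subdivision map $\flat\:P^\flat\to P$ together with the identity on $I$ assembles into a subdivision map $(P^\flat)^+_{Q^\flat}\to P^+_Q$; by Theorem \ref{subdivision map} its underlying homeomorphism gives a PL identification $|(P^\flat)^+_{Q^\flat}|\cong|P^+_Q|$ which is the identity on $|P|=|P^\flat|$ and on $|Q\x I|=|Q^\flat\x I|$. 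Composing with $\tilde h$ produces the desired homeomorphism $h\:|P|\to|P^+_Q|$ extending $|Q|\to|Q\x\{1\}|$.

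The only residual point is to verify that $h$ fixes $|P\but\cel Q\cer|$ pointwise. If a chain $c$ of $P$ lies entirely in $P\but\cel Q\cer$, then in particular its minimum element lies outside $\cel Q\cer\supset Q$, so $c$ is a simplex of $P^\flat\but\cel Q^\flat\cer$. Hence $|P\but\cel Q\cer|\incl|P^\flat\but\cel Q^\flat\cer|$ inside $|P|=|P^\flat|$, and $\tilde h$, and therefore $h$, fixes this subpolyhedron. The main conceptual obstacle has already been absorbed into Lemma \ref{collaring lemma}; the present argument is a routine transfer via barycentric subdivision, with the only small subtlety being this inclusion of kept-fixed regions.
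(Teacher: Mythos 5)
Your proof is correct and follows essentially the same route as the paper's: reduce to the simplicial case by Lemma \ref{collaring invariant}, note that $Q^\flat$ is full in $P^\flat$, and apply Lemma \ref{collaring lemma}. The paper's own proof is very terse ("we may replace $(P,Q)$ by $(P^\flat,Q^\flat)$") and leaves implicit exactly the two bookkeeping points you spell out — identifying $|(P^\flat)^+_{Q^\flat}|$ with $|P^+_Q|$ via the subdivision map $\flat\cup(\flat\times\mathrm{id}_I)$, and tracking that the kept-fixed region $|P\setminus\cel Q\cer|$ sits inside $|P^\flat\setminus\cel Q^\flat\cer|$ — so this is a faithful and slightly more careful rendition of the same argument.
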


\begin{proof} By Lemma \ref{collaring invariant} we may replace $(P,Q)$
by $(P^\flat,Q^\flat)$.
Since $Q^\flat$ is full in $P^\flat$, the assertion follows from
Lemma \ref{collaring lemma}.
\end{proof}

\begin{addendum} \label{collaring addendum}
Let $P$ be a poset and $Q$ a collared closed subposet of $P$.
Further let $R$ be an open subposet of $P$, and write $S=R\cap Q$.
Then the homeomorphism $h$ of Theorem \ref{collaring theorem} sends
$|R|$ onto $|R^+_S|$ and extends the homeomorphism $|S|\to|S\x\{1\}|$.
\end{addendum}

\begin{proof}
Since $R$ is open in $P$, $\lk(r,R)=\lk(r,P)$ and $\lk(r,S)=\lk(r,Q)$
for each $r\in S$; in particular, $S$ is collared in $R$.
If follows that for each $\sigma\in S^\flat$,
$(|\lk(\sigma,P^\flat)|,|\lk(\sigma,R^\flat)|)$ is homeomorphic to the cone over
$(|\lk(\sigma,Q^\flat)|,|\lk(\sigma,S^\flat)|)$
keeping $(|\lk(\sigma,Q^\flat)|,|\lk(\sigma,S^\flat)|)$ fixed.
The proof of Lemma \ref{collaring lemma} then works to show that the subdivision map
$f^*$ defined in there is fiberwise with respect to
$\chi\:(Q^\flat)^*\to [2]$, where $\chi^{-1}(2)=(S^\flat)^*$.
The remainder of the proof closely follows that of Theorem \ref{collaring theorem}.
\end{proof}

\begin{lemma} \label{Morton1}
If $P$ and $Q$ are posets, and $P_0$ is a closed subposet of $P$
such that $|Q+P|$ is homeomorphic to $|C(Q+P_0)|$ keeping $|Q+P_0|$ fixed,
then $|P|$ is homeomorphic to $|CP_0|$ keeping $P_0$ fixed.
\end{lemma}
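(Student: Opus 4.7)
The plan is to induct on the cardinality $|Q|$. The base case $Q=\emptyset$ is immediate: then $Q+P=P$, $Q+P_0=P_0$, and $C(Q+P_0)=CP_0$, so the hypothesis reads $|P|\cong|CP_0|$ keeping $|P_0|$ fixed, which is precisely the conclusion.

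For the inductive step, I would pick a minimal element $q\in Q$ and pass to PL links at the corresponding vertex $q\in Q^\flat$. Since $q$ is minimal, $\fll q\flr_Q=\{q\}$, so from the paper's identity $\lk(q,Q)^\flat=\Lk(q,Q^\flat)$ (with $q_-=\{q\}$) together with the standard description of the classical link of a vertex in a barycentric subdivision, $\Lk(q,Q^\flat)$ consists of exactly those chains in $Q\setminus\{q\}$ all of whose elements are $>q$, that is, chains in $\lk(q,Q)=\{q'\in Q:q'>q\}$. Combining this with $(A+B)^\flat\simeq A^\flat*B^\flat$, the simplicial join formula $\Lk(v,K*L)\simeq\Lk(v,K)*L$ for a vertex $v\in K$, and the identity $C(Q+P_0)=Q+P_0+\hat 1$, one computes
\[\Lk(q,(Q+P)^\flat)\simeq(\lk(q,Q)+P)^\flat,\]
and analogously $\Lk(q,(Q+P_0)^\flat)\simeq(\lk(q,Q)+P_0)^\flat$ and $\Lk(q,(C(Q+P_0))^\flat)\simeq(C(\lk(q,Q)+P_0))^\flat$.

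The given PL homeomorphism $h$ fixes $q\in|Q+P_0|$, and by choosing a common simplicial triangulation of $|Q+P|$ and $|C(Q+P_0)|$ on which $h$ is simplicial and $|Q+P_0|$ is a subcomplex --- possible since $h$ is PL and the identity on $|Q+P_0|$ --- the simplicial stars of $q$ on the two sides are related by a simplicial isomorphism restricting to the identity on the star of $q$ in $|Q+P_0|$. Taking boundaries yields a PL homeomorphism $|\lk(q,Q)+P|\cong|C(\lk(q,Q)+P_0)|$ keeping $|\lk(q,Q)+P_0|$ fixed. Since $|\lk(q,Q)|\le|Q|-1<|Q|$, the inductive hypothesis applied to the triple $(\lk(q,Q),P,P_0)$ supplies the required homeomorphism $|P|\cong|CP_0|$ keeping $|P_0|$ fixed, completing the induction. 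The only delicate point is the PL invariance of the link relative to a fixed subcomplex, which is routine once a common triangulation as above has been selected.
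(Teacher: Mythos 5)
Your proposal is mathematically correct but takes a genuinely different (and more laborious) route than the paper's one-step argument. The paper does not induct: it invokes the previously proved Lemma \ref{collaring invariant} (invariance of collaring under homeomorphism of pairs) to transfer the obvious collaring of $Q+P_0$ in $C(Q+P_0)$ to a collaring of $Q+P_0$ in $Q+P$; it then picks $q$ to be a \emph{maximal} element of $Q$ and observes that the \emph{poset} links satisfy $(\lk(q,\,Q+P),\,\lk(q,\,Q+P_0))\simeq(P,P_0)$, so the collaring condition at this single $q$ is literally the conclusion. Your proof instead passes to \emph{simplicial} links $\Lk(q,(Q+P)^\flat)$ at a \emph{minimal} vertex $q$. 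This choice is forced by the simplicial-link framework (so that $\partial\fll q\flr=\emptyset$ and the star at $q$ is a cone), but it is precisely why you end up with $\lk(q,Q)+P$ rather than $P$ and must iterate; the paper's poset-link formulation lets one use a maximal $q$ and thereby kill $\lk(q,Q)$ entirely. Your link identities $\Lk(q,(Q+P)^\flat)\simeq(\lk(q,Q)+P)^\flat$ and $\Lk(q,(C(Q+P_0))^\flat)\simeq(C(\lk(q,Q)+P_0))^\flat$ do check out, and the induction on $|Q|$ terminates, so the argument is sound. Two remarks. First, your ``delicate point'' --- that a PL homeomorphism of pairs fixing a subpolyhedron pointwise induces a PL homeomorphism of vertex links that is the identity on the sub-link --- is correctly identified as the crux, but ``routine'' understates it somewhat; it is exactly the content the paper isolates and proves as Lemma \ref{collaring invariant} (reducing to the combinatorial invariance of link), and one does need to check that the identifications of the two ambient links with their standard models can be chosen compatibly on the sub-link, e.g.\ via pseudo-radial projection in an affine model. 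Second, the paper's route uses only that the \emph{pair} $(|Q+P|,|Q+P_0|)$ is homeomorphic to $(|C(Q+P_0)|,|Q+P_0|)$ and recovers the ``keeping $|P_0|$ fixed'' clause for free from the pointwise collaring condition, whereas your argument leans on the stronger hypothesis that the ambient homeomorphism is the identity on $|Q+P_0|$; this is available here, but it is worth noticing that the paper's approach is both shorter and, at the level of hypotheses, strictly more economical.
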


\begin{proof} Since $Q+P_0$ is collared in $C(Q+P_0)$, by Lemma
\ref{collaring invariant} it is also collared in $Q+P$.
Thus $|\lk(q,\,Q+P)|\cong|C\lk(q,\,Q+P_0)|$ keeping $|\lk(q,\,Q+P_0)|$ fixed
for each $q\in Q$.
On the other hand, if $q$ is a maximal element of $Q$, then
$(\lk(q,\,Q+P),\ \lk(q,\,Q+P_0))\simeq(P,P_0)$.
\end{proof}

\begin{remark}
The Armstrong--Morton argument (see \cite[p.\ 180]{Arm}) works to show
that if $(P,Q)$ and $(X,Y)$ are pairs of polyhedra such that $(S^n*P,S^n*Q)$
is homeomorphic to $(S^n*X,S^n*Y)$, then $(P,Q)$ is homeomorphic to $(X,Y)$.
(Beware that $(P,Q)$ need not be a suspension if $P$ and $Q$ are suspensions,
as shown by classical knots.)
Similarly, Morton's argument \cite[Theorem 2]{Mo} works to show that any pair
$(P,Q)$ of polyhedra factors uniquely as a ball or sphere joined to a pair
of polyhedra that is reduced in the sense that it is neither a cone pair
nor a suspension pair.
A relative version of Morton's uniqueness of factorization of reduced polyhedra
into joins (which would imply Lemma \ref{Morton1}) could be more tricky, however.
\end{remark}

\begin{lemma} \label{cojoin collars}
Let $P$ be a poset and $Q$, $R$ its closed subposets such that
$P=Q\cup R$, where $Q\cap R$ is collared in $Q$ and in $R$.
Then

(a) $Q$ is collared in $CP$;

(b) $(Q\cap R)\x CZ$ and $(Q\cap R)\x CZ\cup Q\x Z$ are collared in
$R\x CZ\cup CP\x Z$ for every poset $Z$.
\end{lemma}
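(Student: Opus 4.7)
The strategy is to verify the ``collared'' condition locally at every point of the claimed subposet.

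\emph{Part (a).} Fix $q\in Q$; the task is to produce a homeomorphism $|\lk(q,CP)|\cong C|\lk(q,Q)|$ keeping $|\lk(q,Q)|$ fixed. If $q\notin R$, then since $R$ is closed (downward-closed), no element above $q$ in $P$ can lie in $R$, so $\lk(q,P)=\lk(q,Q)$, and hence $\lk(q,CP)=C\lk(q,Q)$ already at the poset level. If $q\in Q\cap R$, write $X=|\lk(q,Q)|$ and $B=|\lk(q,Q\cap R)|$. The two collaring hypotheses supply homeomorphisms $g\:|\lk(q,R)|\to CB$ and $h\:X\to CB$, each fixing $B$. Via $g$, $|\lk(q,P)|=X\cup_B|\lk(q,R)|\cong X\cup_B CB$, so $|\lk(q,CP)|\cong C(X\cup_B CB)$. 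Applying $h$ on the $X$ summand, $X\cup_B CB\cong CB\cup_B CB=SB$ (the suspension of $B$), so $C(X\cup_B CB)\cong CSB$. Now the topological joins $CSB\cong B*(S^0*\text{pt})$ and $CCB\cong B*(\text{pt}*\text{pt})$ are each of the form $B*I$ for a topological interval $I$; choosing an interval-homeomorphism $\phi\:S^0*\text{pt}\to\text{pt}*\text{pt}$ that sends the vertex $s_1$ of $S^0$ (the apex of the first cone of $SB$, that is, the image of $X$) to the vertex of $\text{pt}*\text{pt}$ representing the apex of $CB\subset CCB$, and joining with $\id_B$, we obtain $CSB\cong CCB$ under which $B*s_1$ maps to the base $CB$ of $CCB$ by the obvious cone identification (identity on $B$, apex relabeled). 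Postcomposing with $C(h^{-1})\:CCB\to CX$, the resulting composite $|\lk(q,CP)|\to CX$ restricts on $X$ to $h^{-1}\circ(\text{cone identification})\circ h=\id_X$, as required.

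\emph{Part (b).} Let $W=R\times CZ\cup CP\times Z$. At each $(q,z)$ in the two claimed subposets, the link in $W$ is cut out of the product link $\lk((q,z),CP\times CZ)=\lk(q,CP)*\lk(z,CZ)$ by the combinatorial condition defining $W$. A case analysis on the position of $q$ (in $Q\setminus R$, $Q\cap R$, or $R\setminus Q$) and of $z$ (in $Z$ or at the cone apex of $CZ$) reduces each case either to a direct poset identification (as in the $q\notin R$ case of part (a)) or to an instance of the same join-and-cone construction used there, with suspensions and cones taken over the appropriate ``cross'' of subposets obtained from the product link formula.

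\emph{Main obstacle.} The crucial topological input---that a cone on a suspension is homeomorphic to a cone on a cone via an explicit join of an interval-homeomorphism---is uniform across all cases. The main difficulty is organizing the several cases of part (b) and threading the chosen interval-homeomorphisms consistently, so that in every case the final composite restricts to the identity on the specified subspace.
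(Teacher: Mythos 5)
Your treatment of part (a) is correct and follows essentially the paper's argument, written out in more explicit topological language. The paper encodes the same content in the one-line observation that $\{x\}*\{q,r\}$ is homeomorphic to $C\{r\}$ keeping one endpoint fixed; your choice of interval-homeomorphism $\phi$ and the final verification that the composite restricts to $\id_X$ are exactly the bookkeeping this statement abbreviates. (One small remark: the ``keeping fixed'' verification is the heart of the matter, and you handle it cleanly by tracing $X$ through the chain $h$, cone identification, $h^{-1}$.)

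Part (b), however, is a plan rather than a proof, and the gap is genuine. You correctly set up the link computation via $\lk((q,z),CP\times CZ)\simeq\lk(q,CP)*\lk(z,CZ)$ and correctly anticipate a case split on $q$ and $z$ (though the case $q\in R\setminus Q$ cannot occur, since both claimed subposets lie in $Q\times CZ$). But the assertion that each nontrivial case reduces to ``an instance of the same join-and-cone construction used there'' is precisely the thing you would need to check, and it is not true in the naive sense: the one-dimensional skeleton that governs the situation in (b) is not a cone on $S^0$ (as in (a)) but the four-vertex arc $\{x\}*\{q,r\}\cup_{\{r\}}\{r\}*\{y\}$, where $x$ records the apex of $CP$ and $y$ the apex of $CZ$, and one needs \emph{two different} self-homeomorphisms of this arc: one carrying it onto $C\{q\}$ keeping $\{q\}$ fixed (for the subposet $(Q\cap R)\times CZ$), the other onto $C\{q,y\}$ keeping $\{q,y\}$ fixed (for $(Q\cap R)\times CZ\cup Q\times Z$). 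Identifying this arc and these two homeomorphisms is the content of (b); you name the obstacle (``threading the chosen interval-homeomorphisms consistently'') but do not resolve it, so the claim remains unproved.
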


\begin{proof}[Proof. (a)]
Given a $p\in Q$, we need to show that $|\lk(p,CP)|$
is homeomorphic to $C|\lk(p,Q)|$ keeping $|\lk(p,Q)|$ fixed.
If $p\notin Q\cap R$, then $\lk(p,Q)=\lk(p,P)$, and the assertion follows since $P$
is collared in $CP$.
Suppose that $p\in Q\cap R$.
Then $|\lk(p,P)|$ is homeomorphic to $\{q,r\}*|\lk(p,Q\cap R)|$ keeping
$|\lk(p,Q\cap R)|$ fixed, and this extends to a homeomorphism between
$|\lk(p,CP)|$ and $\{x\}*\{q,r\}*|\lk(p,Q\cap R)|$.
Now $\{x\}*\{q,r\}$ is homeomorphic to $C\{r\}$ keeping $\{r\}$ fixed, which
implies the assertion.
\end{proof}

\begin{proof}[(b)]
Similarly to the proof of (a), the assertion follows from the fact that
$\{x\}*\{q,r\}\cup_{\{r\}}\{r\}*\{y\}$ is homeomorphic to $C\{q\}$ keeping
$\{q\}$ fixed and to $C\{q,y\}$ keeping $\{q,y\}$ fixed.
\end{proof}

\section{Stratification maps}

We say that $f\:P\to Q$ is a {\it stratification map} if $f^{-1}(\partial\fll q\flr)$
is collared in $f^{-1}(\fll q\flr)$ for each $q\in Q$.
When $Q$ is a cell complex, stratification map is a combinatorial form of the notion
of a ``polyhedral mock bundle'' of \cite[p.\ 35]{BRS}.

Clearly, every subdivision map is a stratification map, and every stratification map
is an open map.
If $f\:P\to Q$ is a stratification map, then by the combinatorial invariance of
link $\partial P=f^{-1}(\partial Q)$.

\begin{theorem} \label{stratification map} A monotone map of posets $f\:P\to Q$ is
a stratification map if and only if for each $p\in P$ the map
$$f_p\:\lk(p,P)\to F+\lk(f(p),Q),$$
defined by the identity on $F:=\lk(p,f^{-1}(f(p)))$ and as the restriction
of $f$ elsewhere, is a subdivision map.
\end{theorem}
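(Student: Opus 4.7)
The plan is to prove the equivalence by directly unpacking both definitions and identifying, for each $p \in P$, the preimages under $f_p$ of cones and their boundaries in $F + \lk(f(p), Q)$ with links of $p$ in preimages under $f$ of cones and their boundaries in $Q$. The stratification condition on $f$ at a cell $\fll q\flr$ of $Q$ will then match, point by point in $f^{-1}(\partial\fll q\flr)$, with the subdivision condition on the various $f_p$ evaluated at $q$.

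The first step is a direct computation of $f_p^{-1}$ on cones of the prejoin. For $r \in F$, one has $\fll r\flr_{F+\lk(f(p),Q)} = \fll r\flr_F$, and since $f_p$ restricts to the identity on $F$, both $f_p^{-1}(\fll r\flr)$ and $f_p^{-1}(\partial\fll r\flr)$ reduce to $\fll r\flr_F$ and $\partial\fll r\flr_F$ respectively; the subdivision condition at such $r$ is then automatic because $\fll r\flr_F$ is already a cone (with apex $r$). For $r \in \lk(f(p), Q)$, i.e.\ $r > f(p)$ in $Q$, the prejoin structure places every element of $F$ below $r$, so
$$f_p^{-1}(\fll r\flr_{F+\lk(f(p),Q)}) = \{p' > p \mid f(p') \le r\} = \lk\bigl(p,\, f^{-1}(\fll r\flr_Q)\bigr),$$
and similarly $f_p^{-1}(\partial\fll r\flr) = \lk\bigl(p,\, f^{-1}(\partial\fll r\flr_Q)\bigr)$.

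For the forward direction, assume $f$ is a stratification map and fix $p \in P$ and $r \in F + \lk(f(p), Q)$. The case $r \in F$ is handled by the first step. If $r \in \lk(f(p), Q)$, then $f(p) < r$, so $p \in f^{-1}(\partial\fll r\flr_Q)$, and the collaring of $f^{-1}(\partial\fll r\flr_Q)$ in $f^{-1}(\fll r\flr_Q)$ at the point $p$ gives a homeomorphism
$$\bigl|\lk(p, f^{-1}(\fll r\flr_Q))\bigr| \;\cong\; C\bigl|\lk(p, f^{-1}(\partial\fll r\flr_Q))\bigr|$$
keeping the link of $p$ in the preimage of $\partial\fll r\flr_Q$ pointwise fixed; via the identifications of the first step, this is exactly the subdivision condition for $f_p$ at $r$. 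For the converse, assume every $f_p$ is a subdivision map, fix $q \in Q$, and let $p \in f^{-1}(\partial\fll q\flr_Q)$; then $q \in \lk(f(p), Q)$, and applying the subdivision condition for $f_p$ at $q$ reverses the identification to give the collaring homeomorphism at $p$. Since $p$ was arbitrary in $f^{-1}(\partial\fll q\flr_Q)$, the latter is collared in $f^{-1}(\fll q\flr_Q)$.

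The main (mild) obstacle is bookkeeping: one has to check that $f_p$ is a well-defined monotone map, which amounts to noting that $F$ lies entirely below $\lk(f(p), Q)$ in the prejoin so the two branches of the definition of $f_p$ agree monotonously, and one has to verify that the ``fixed on boundary'' clauses in the definitions of subdivision map and collaring correspond to one another under the identifications of the first step. Once the dictionary between preimages in the prejoin and links in preimages of cones in $Q$ is in place, the theorem is essentially a tautology.
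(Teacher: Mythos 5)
Your proposal is correct and takes essentially the same approach as the paper: both proofs identify, for $q\in\lk(f(p),Q)$, the preimages $f_p^{-1}(\fll q\flr)$ and $f_p^{-1}(\partial\fll q\flr)$ with $\lk(p,f^{-1}(\fll q\flr_Q))$ and $\lk(p,f^{-1}(\partial\fll q\flr_Q))$ respectively, and then match the subdivision condition for $f_p$ pointwise against the collaring condition defining a stratification map (the paper packages the identification via the observation that $f_p$ followed by the projection $F+\lk(f(p),Q)\to\cel f(p)\cer$ equals $f|_{\lk(p,P)}$, and leaves the trivial case $r\in F$ implicit).
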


\begin{proof} Let us write $\lk p=\lk(p,P)$ and $\lk f(p)=\lk(f(p),Q)$.
We observe that the composition of $f_p$ and the ``projection''
$h\:F+\lk f(p)\to C^*\lk f(p)\simeq\cel f(p)\cer$ coincides with $f|_{\lk p}$.
Pick some $q\in\lk f(p)$.
Let us consider $M_{pq}:=f_p^{-1}(\fll q\flr_{F+\lk f(p)})$ and
$L_{pq}:=f_p^{-1}(\partial\fll q\flr_{F+\lk f(p)})$.
The cone $\fll q\flr_{F+\lk f(p)}=F+\fll q\flr_{\lk f(p)}$, whence
$M_{pq}=g^{-1}(h^{-1}(\fll q\flr_{C^*\lk f(p)}))=(f|_{\lk p})^{-1}(\fll q\flr_Q)$;
similarly, $L_{pq}=(f|_{\lk p})^{-1}(\partial\fll q\flr_Q)$.
Let us consider $M_q:=f^{-1}(\fll q\flr_Q)$ and
$L_q:=f^{-1}(\partial\fll q\flr_Q)$.
Then $M_{pq}=\lk(p,M_q)$ and $L_{pq}=\lk(p,L_q)$.

If $L_q$ is collared in $M_q$, then $|M_{pq}|\cong C|L_{pq}|$ keeping
$|L_{pq}|$ fixed.
This works for every $q\in\lk f(p)$; so if $f$ is a stratification map, then $f_p$ is
a subdivision map.

Conversely, if $f_p$ is a subdivision map, then $|M_{pq}|\cong C|L_{pq}|$ keeping
$|L_{pq}|$ fixed.
Hence if $f_p$ is a subdivision map for every $p\in f^{-1}(\partial\fll q\flr)$, then
$L_q$ is collared in $M_q$.
This works for every $q\in Q$; so if $f_p$ is a subdivision map for all $p\in P$,
then $f$ is a stratification map.
\end{proof}

\begin{corollary}\label{cubical dual to stratification map}
A cubical map of posets is dual to a stratification map.
\end{corollary}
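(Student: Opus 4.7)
The plan is to apply Theorem \ref{stratification map} to $g^*\:P^*\to Q^*$: for each $p\in P$ I must show that $(g^*)_p\:\lk(p,P^*)\to F+\lk(g(p),Q^*)$, with $F=\lk(p,(g^*)^{-1}(g(p)))$, is a subdivision map. Setting $q=g(p)$, the cubical hypothesis gives an isomorphism $\fll p\flr_P\simeq CX\x CY$ under which $g$ restricts to the projection onto $CX=\fll q\flr_Q$. Dualizing yields $\cel p\cer^{P^*}\simeq C^*X^*\x C^*Y^*$ with $g^*$ the projection onto $C^*X^*$, and applying $\partial^*$ identifies $\lk(p,P^*)=X^**Y^*$ and $\lk(q,Q^*)=X^*$; since $(g^*)^{-1}(q)\cap\cel p\cer^{P^*}=\{\hat 0_{X^*}\}\x C^*Y^*\simeq C^*Y^*$, also $F=Y^*$. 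Under the decomposition $X^**Y^*=\{\hat 0_{X^*}\}\x Y^*\cup X^*\x C^*Y^*$, the map $(g^*)_p$ then becomes the identity $\{\hat 0\}\x Y^*\to Y^*$ on the first piece and the projection $(x,b)\mapsto x$ on the second.

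Next, to verify that $(g^*)_p\:X^**Y^*\to Y^*+X^*$ is a subdivision map, I will invoke Theorem \ref{subdivision map} with $h\:|X^**Y^*|\to|Y^*+X^*|$ taken to be the PL homeomorphism arising from the symmetry of the topological join. Both polyhedra canonically equal $|X^*|*|Y^*|$: the target via $(Y^*+X^*)^\flat\simeq (Y^*)^\flat*(X^*)^\flat$, and the source via the formula $P*Q\simeq C^*P\x Q\cup_{P\x Q}P\x C^*Q$, which realizes to the standard topological double mapping cylinder model of $|P|*|Q|$. The cone-preimage condition then splits into two cases. For $r\in Y^*$, both $|\fll r\flr_{Y^*+X^*}|$ and $|((g^*)_p)^{-1}(\fll r\flr)|$ realize $|\fll r\flr_{Y^*}|$ sitting in the $|Y^*|$-factor of the join. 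For $r\in X^*$, $\fll r\flr_{Y^*+X^*}=Y^*+\fll r\flr_{X^*}$ realizes to $|Y^*|*|\fll r\flr_{X^*}|$, whereas $((g^*)_p)^{-1}(\fll r\flr)=\{\hat 0\}\x Y^*\cup\fll r\flr_{X^*}\x C^*Y^*=\fll r\flr_{X^*}*Y^*$ realizes to $|\fll r\flr_{X^*}|*|Y^*|$; these agree under the join symmetry.

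The main obstacle I anticipate is keeping track of the multitude of natural identifications --- the dualities $(CX)^*=C^*X^*$ and $(A\x B)^*=A^*\x B^*$, the van Kampen duality $CX\x CY\simeq C(X\cojoin Y)$, the join--prejoin relation $(P+Q)^\flat\simeq P^\flat*Q^\flat$, and the topological identification $|P*Q|\cong|P|*|Q|$ --- and confirming that the single homeomorphism $h$ is simultaneously compatible with every cone preimage. Once these identifications are set up carefully at the outset, the two case verifications above become essentially mechanical.
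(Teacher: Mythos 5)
Your proof is correct and follows essentially the same route as the paper's: both reduce to Theorem \ref{stratification map}, identify the link map (after dualization) as the natural monotone map $X*Y\to X+Y$, and dispatch the subdivision-map condition by basic join combinatorics. The only stylistic difference is that the paper checks the local definition directly --- the interesting cone preimage pair $(X*\fll y\flr_Y,\,X*\partial\fll y\flr_Y)$ has the shape $(X*CZ,\,X*Z)$, so the required relative homeomorphism is just join associativity of polyhedra --- whereas you route through the global characterization of Theorem \ref{subdivision map} and the join-symmetry homeomorphism.
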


\begin{proof}
If $f^*\:P^*\to Q^*$ is cubical, the restriction $\cel p\cer\to\cel f(p)\cer$
of $f$ is isomorphic to a projection of the form $CX\x CY\to CY$ for each $p\in P$.
Then $f_p$ is isomorphic to the monotone map $X*Y\to X+Y$ that restricts to the
identity map $X*\emptyset\to X$ and projects $C^*X\x Y$ onto $Y$.
Given a $y\in Y$, we have $\fll y\flr_{X+Y}=X+\fll y\flr_Y$, and consequently
$f^{-1}(\fll y\flr)=X*\fll y\flr_Y$ and
$f^{-1}(\partial\fll y\flr)=X*\partial\fll y\flr_Y$.
Let $Z=\partial\fll y\flr_Y$.
By the associativity of join of polyhedra, $|X*CZ|\cong C|X*Z|$ keeping
$|X*Z|$ fixed.
\end{proof}

\begin{theorem} \label{amalgamation}
Let $f\:P\to Q$ be a stratification map and $Z$ a closed subposet of $Q$.

(a) If $Z$ is collared in $Q$, then $f^{-1}(Z)$ is collared in $P$.

(b) When $Z\incl f(P)$, the converse holds.
\end{theorem}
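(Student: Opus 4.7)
The plan is to derive both parts by translating the ``collared'' condition at a point through the subdivision map on links provided by Theorem \ref{stratification map}. Fix $p\in f^{-1}(Z)$, set $z=f(p)\in Z$, $F=\lk(p,f^{-1}(z))$, and $R=f^{-1}(Z)$. Theorem \ref{stratification map} tells us that the map $f_p\:\lk(p,P)\to F+\lk(z,Q)$ is a subdivision map. The key bookkeeping step is to verify that $f_p^{-1}(F+\lk(z,Z))=\lk(p,R)$ and that $F+\lk(z,Z)$ is a closed subposet of $F+\lk(z,Q)$ expressible as a union of cones $\fll x\flr$ of the target (the latter uses that $Z$ is closed in $Q$, so that $\fll y\flr_{\lk(z,Q)}\incl\lk(z,Z)$ whenever $y\in\lk(z,Z)$). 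Consequently Theorem \ref{subdivision map} supplies an underlying homeomorphism $h_p\:|\lk(p,P)|\cong|F+\lk(z,Q)|$ that restricts to a homeomorphism $|\lk(p,R)|\cong|F+\lk(z,Z)|$.

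For part (a), assume $Z$ is collared in $Q$, so that $|\lk(z,Q)|\cong C|\lk(z,Z)|$ keeping $|\lk(z,Z)|$ fixed. Using the identity $|P+Q|=|P|*|Q|$ together with associativity and commutativity of topological join, joining on $|F|$ gives $|F+\lk(z,Q)|\cong C|F+\lk(z,Z)|$ keeping $|F+\lk(z,Z)|$ fixed. Conjugating this homeomorphism by $h_p$ yields $|\lk(p,P)|\cong C|\lk(p,R)|$ keeping $|\lk(p,R)|$ fixed. Since $p\in R$ was arbitrary, $R$ is collared in $P$.

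For part (b), assume $R=f^{-1}(Z)$ is collared in $P$ and pick an arbitrary $z\in Z$. The hypothesis $Z\incl f(P)$ lets us choose $p\in f^{-1}(z)$, and running the argument of (a) backward through $h_p$ converts the collaring of $R$ at $p$ into the statement $|F+\lk(z,Q)|\cong C|F+\lk(z,Z)|$ keeping $|F+\lk(z,Z)|$ fixed. Lemma \ref{Morton1}, applied with its ``$Q$'', ``$P$'', and ``$P_0$'' taken to be $F$, $\lk(z,Q)$, and $\lk(z,Z)$ respectively (the last being closed in the middle one because $Z$ is closed in $Q$), then strips off the join with $|F|$ to produce $|\lk(z,Q)|\cong C|\lk(z,Z)|$ keeping $|\lk(z,Z)|$ fixed, proving $Z$ is collared in $Q$. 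The main obstacle is the combinatorial bookkeeping in the first paragraph; once $h_p$ is known to send $|\lk(p,R)|$ to $|F+\lk(z,Z)|$ correctly, both directions become straightforward applications of join manipulation and Lemma \ref{Morton1}.
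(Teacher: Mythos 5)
Your proof is correct and follows essentially the same route as the paper: reducing the collaring condition to a local statement on links via the subdivision map $f_p$ from Theorem~\ref{stratification map}, transferring it through the underlying homeomorphism of Theorem~\ref{subdivision map}, and using join manipulation together with Lemma~\ref{Morton1} for the converse. You have merely spelled out the bookkeeping step ($f_p^{-1}(F+\lk(z,Z))=\lk(p,R)$) that the paper states without detailed verification.
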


\begin{proof}[Proof. (a)] Let $p\in Y:=f^{-1}(Z)$, and write
$F=\lk(p,f^{-1}(f(p)))$.
By Theorem \ref{stratification map}, $f_p\:\lk(p,P)\to F+\lk(f(p),Q)$ is
a subdivision map.
Then by Theorem \ref{subdivision map} there is a homeomorphism
$|\lk(p,P)|\to |F+\lk(f(p),Q)|$, which takes $|\lk(p,Y)|$ onto
$|F+\lk(f(p),Z)|$.
Hence $|\lk(p,P)|\cong C|\lk(p,Y)|$ keeping $|\lk(p,Y)|$ fixed if and only
if $|F+\lk(f(p),Q)|\cong C|F+\lk(f(p),Z)|$ keeping $|F+\lk(f(p),Z)|$ fixed.
The latter follows from the existence of a homeomorphism
$|\lk(f(p),Q)|\cong C|\lk(f(p),Z)|$ keeping $|\lk(f(p),Z)|$ fixed.
\end{proof}

\begin{proof}[(b)]
By Lemma \ref{Morton1}, the last inference in the proof of (a) can be reversed.
\end{proof}

\begin{corollary} \label{composition}
Let $P\xr{f}Q\xr{g}R$ be monotone maps of posets.
If $f$ and $g$ are stratification maps, then so is $gf$.
If $f$ and $gf$ are stratification maps, and $f$ is surjective,
then $g$ is a stratification map.
\end{corollary}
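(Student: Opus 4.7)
The plan is to deduce both implications directly from Theorem \ref{amalgamation}, via a standard restriction argument. Throughout, for $r\in R$ write $S_r=g^{-1}(\fll r\flr)$ and $\partial S_r=g^{-1}(\partial\fll r\flr)$; note $S_r$ is a closed subposet of $Q$. A key preliminary observation that I would record first: the restriction $f|\:f^{-1}(S_r)\to S_r$ of a stratification map $f\:P\to Q$ to the preimage of any closed subposet $S_r$ is again a stratification map. Indeed, since $S_r$ is closed, $\fll s\flr_{S_r}=\fll s\flr_Q$ and $\partial\fll s\flr_{S_r}=\partial\fll s\flr_Q$ for every $s\in S_r$, so $(f|)^{-1}(\fll s\flr_{S_r})=f^{-1}(\fll s\flr_Q)$ and similarly for the boundary, and the required collaring is inherited from $f$. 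Note also that if $f$ is surjective, then so is $f|$.

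For the first assertion, let $r\in R$ be arbitrary. Because $g$ is a stratification map, $\partial S_r$ is collared in $S_r$. Apply Theorem \ref{amalgamation}(a) to the restricted stratification map $f|\:f^{-1}(S_r)\to S_r$ with the closed collared subposet $Z=\partial S_r$: the conclusion is that $(f|)^{-1}(\partial S_r)=(gf)^{-1}(\partial\fll r\flr)$ is collared in $f^{-1}(S_r)=(gf)^{-1}(\fll r\flr)$. Since $r$ was arbitrary, $gf$ is a stratification map.

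For the second assertion, suppose $f$ and $gf$ are stratification maps and $f$ is surjective, and again fix $r\in R$. By hypothesis, $(gf)^{-1}(\partial\fll r\flr)=f^{-1}(\partial S_r)$ is collared in $(gf)^{-1}(\fll r\flr)=f^{-1}(S_r)$. As above, $f|\:f^{-1}(S_r)\to S_r$ is a stratification map, and it is surjective because $f$ is. The closed subposet $Z=\partial S_r$ of $S_r$ therefore lies in the image $f|(f^{-1}(S_r))=S_r$, so the hypothesis of Theorem \ref{amalgamation}(b) is met. That converse yields that $\partial S_r=g^{-1}(\partial\fll r\flr)$ is collared in $S_r=g^{-1}(\fll r\flr)$. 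Since $r$ was arbitrary, $g$ is a stratification map.

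The argument has no real obstacle; the only point that warrants care is the preliminary observation that the restriction of a stratification map over a closed subposet is again a stratification map, since this is what allows a clean invocation of both parts of Theorem \ref{amalgamation}. The surjectivity assumption in the second assertion is used precisely to guarantee the hypothesis $Z\subset f|(f^{-1}(S_r))$ needed by Theorem \ref{amalgamation}(b).
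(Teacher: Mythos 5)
Your proof is correct and takes precisely the route the paper intends: the corollary is stated immediately after Theorem~\ref{amalgamation} with no proof supplied, and deriving it requires applying that theorem not to $f$ itself but to the restrictions $f|\:f^{-1}(g^{-1}(\fll r\flr))\to g^{-1}(\fll r\flr)$. Your preliminary observation — that a stratification map restricts to a stratification map over the preimage of any closed subposet, because $\fll s\flr_S=\fll s\flr_Q$ and hence $\partial\fll s\flr_S=\partial\fll s\flr_Q$ whenever $S$ is closed and $s\in S$ — is exactly the lemma one needs and is correctly justified; it makes both invocations of Theorem~\ref{amalgamation} clean, and the surjectivity of $f$ is used in the right place, namely to verify $Z\subset f|(f^{-1}(S_r))$ as required by part (b).
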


\begin{lemma} Let $P$ and $Q$ be posets. Then

(a) $\partial (P\x Q)=(\partial P)\x Q\cup P\x(\partial Q)$;

(b) if $\partial P$ is collared in $P$ and $\partial Q$ is collared in $Q$,
then $\partial (P\x Q)$ is collared in $P\x Q$.
\end{lemma}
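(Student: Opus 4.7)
The plan is to attack part (a) directly from the definition of boundary, and part (b) via the join formula $\lk((p,q),P\x Q)\simeq\lk(p,P)*\lk(q,Q)$ from the Star and link subsection, combined with the collaring hypotheses. For part (a), I will unwind the definitions: $p\in\partial P$ iff some $q\ge p$ has $|\cel q\cer|=2$, and in the product $\cel(p',q')\cer^{P\x Q}=\cel p'\cer^P\x\cel q'\cer^Q$, so $|\cel(p',q')\cer|=2$ iff exactly one coordinate is maximal while the other has dual cone of cardinality $2$. Since every element of a finite poset lies below some maximal element, the downward closure of these generating pairs is precisely $\partial P\x Q\cup P\x\partial Q$. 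I will also record the following consequence, to be used in part (b): the complement $P\but\partial P$ is upward closed in $P$, since if $q\notin\partial P$ and $q'\ge q$ were in $\partial P$, one would produce some $q''\ge q'\ge q$ with $|\cel q''\cer|=2$, contradicting $q\notin\partial P$.

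For part (b) I fix $(p,q)\in\partial(P\x Q)$ and apply the join formula for links in $P\x Q$, $\partial P\x Q$, and $P\x\partial Q$ simultaneously. Part (a) splits the analysis into three cases. When only $p\in\partial P$, the upward-closedness observation forces $\cel(p,q)\cer^{P\x\partial Q}=\emptyset$, so $\lk((p,q),\partial(P\x Q))=\lk((p,q),\partial P\x Q)$, whose realization equals $|\lk(p,\partial P)|*|\lk(q,Q)|$. The collaring hypothesis supplies a homeomorphism $|\lk(p,P)|\cong\{a\}*|\lk(p,\partial P)|$ fixing $|\lk(p,\partial P)|$; joining with $|\lk(q,Q)|$ yields the desired cone structure on $|\lk((p,q),P\x Q)|$, fixed on the link in $\partial(P\x Q)$. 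The case when only $q\in\partial Q$ is symmetric.

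The interesting case is $p\in\partial P$ and $q\in\partial Q$. Set $L_P:=|\lk(p,\partial P)|$ and $L_Q:=|\lk(q,\partial Q)|$. The two collarings give $|\lk(p,P)|\cong\{a\}*L_P$ and $|\lk(q,Q)|\cong\{b\}*L_Q$, each fixing the corresponding boundary link, so $|\lk((p,q),P\x Q)|\cong\{a\}*\{b\}*L_P*L_Q$, whose $\{a\}*\{b\}$ factor is a closed segment. The link in $\partial(P\x Q)$ is the union $|\lk((p,q),\partial P\x Q)|\cup|\lk((p,q),P\x\partial Q)|\cong(L_P*\{b\}*L_Q)\cup(\{a\}*L_P*L_Q)=\{a,b\}*L_P*L_Q$, viewed as a subspace of $\{a\}*\{b\}*L_P*L_Q$. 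Taking the midpoint $c$ of the segment $\{a\}*\{b\}$ as cone apex identifies $\{a\}*\{b\}*L_P*L_Q$ with $\{c\}*(\{a,b\}*L_P*L_Q)$ keeping $\{a,b\}*L_P*L_Q$ fixed. The main hurdle is bookkeeping in this double case: one must recognize $|\lk((p,q),\partial(P\x Q))|$ as the join-suspension $\{a,b\}*L_P*L_Q$ sitting inside the segment-join $\{a\}*\{b\}*L_P*L_Q$, and verify that the two separate collaring homeomorphisms combine via the midpoint construction into a single collaring of $\partial(P\x Q)$ in $P\x Q$ at $(p,q)$.
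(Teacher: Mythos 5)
Your proof is correct and follows essentially the same route as the paper's: the link join formula $\lk((p,q),P\times Q)\simeq\lk(p,P)*\lk(q,Q)$, a symmetry case split, and a join identity for the doubly-boundary case. Where you prove $\{a\}*\{b\}*Z\cong\{c\}*(\{a,b\}*Z)$ directly via the midpoint-of-a-segment trick, the paper invokes its cojoin formula lemma to obtain the same fact in the form $(CX)*(CY)\cong C(CX*Y\cup_{X*Y}X*CY)$ keeping the base fixed, so the two arguments coincide in substance.
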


Part (a), which is easy, implies in particular that
$CA\x CB\simeq C(A\x CB\cup CA\x B)$.

\begin{proof}[Proof of (b)]
We recall the formula $\lk((p,q),P\x Q)\simeq\lk(p,P)*\lk(q,Q)$.
Let $(p,q)$ be an element of $\partial (P\x Q)$.
We want to show that $|\lk(p,P)*\lk(q,Q)|\cong C|\lk((p,q),\partial(P\x Q))|$
keeping $|\lk((p,q),\partial(P\x Q))|$ fixed.

By symmetry we may assume that $p\in\partial P$.
Then $|\lk(p,P)|\cong C|\lk(p,\partial P)|$ keeping $|\lk(p,\partial P)|$ fixed.
Hence $|\lk(p,P)*\lk(q,Q)|\cong C|\lk(p,\partial P)*\lk(q,Q)|$
keeping $|\lk(p,\partial P)*\lk(q,Q)|$ fixed.
Indeed, $(CX)*Y\cong C(X*Y)$ keeping $X*Y$ fixed, due to
$(c*A)*B\simeq c*(A*B)$.

If $q\notin\partial Q$, then
$\lk((p,q),\partial(P\x Q))=\lk((p,q),(\partial P)\x Q)$ and
the assertion follows.

If $q\in\partial Q$, then $|\lk(q,Q)|\cong C|\lk(q,\partial Q)|$ keeping
$|\lk(q,\partial Q)|$ fixed.
Also $$\lk((p,q),\partial(P\x Q))=\lk((p,q),(\partial P)\x Q)
\underset{\lk((p,q),(\partial P)\x (\partial Q))}\cup \lk((p,q),P\x\partial Q).$$
The assertion now follows using that $(CX)*(CY)\cong C(CX*Y\cup_{X*Y}X*CY)$ keeping
$CX*Y\cup_{X*Y}X*CY$ fixed, which in turn is a consequence of Lemma
\ref{cojoin formula}.
\end{proof}

\begin{corollary} \label{map product}
Let $f\:P\to Q$ and $g\:R\to S$ be stratification maps or subdivision maps.
Then so is $f\x g\:P\x R\to Q\x S$.
\end{corollary}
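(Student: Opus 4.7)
The plan is to treat the two cases separately, using Theorem \ref{subdivision map} for subdivision maps and the preceding lemma for stratification maps.

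For the subdivision-map case, I will invoke Theorem \ref{subdivision map} to obtain underlying homeomorphisms $h_f\:|P|\to|Q|$ and $h_g\:|R|\to|S|$ that carry $|f^{-1}(\fll q\flr)|$ onto $|\fll q\flr|$ for each $q\in Q$, and analogously for $g$. Using the standard PL identification $|P\x R|\cong|P|\x|R|$ (natural in both factors, since $(P\x R)^\flat$ and $P^\flat\x R^\flat$ triangulate the same polyhedron), the product $h_f\x h_g$ descends to a homeomorphism $|P\x R|\to|Q\x S|$. Since $\fll(q,s)\flr=\fll q\flr\x\fll s\flr$ in $Q\x S$, this homeomorphism carries $|(f\x g)^{-1}(\fll(q,s)\flr)|=|f^{-1}(\fll q\flr)|\x|g^{-1}(\fll s\flr)|$ onto $|\fll q\flr|\x|\fll s\flr|=|\fll(q,s)\flr|$ for every $(q,s)\in Q\x S$, and the converse direction of Theorem \ref{subdivision map} yields that $f\x g$ is a subdivision map.

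For the stratification-map case, fix $(q,s)\in Q\x S$ and set $A=f^{-1}(\fll q\flr)$ and $B=g^{-1}(\fll s\flr)$. The cone-by-cone definition of a stratification map is local, so the restriction $f|_A\:A\to\fll q\flr$ is itself a stratification map; the remark following that definition then identifies $\partial A$ with $f^{-1}(\partial\fll q\flr)$, which is collared in $A$ by the hypothesis on $f$. Symmetrically, $\partial B=g^{-1}(\partial\fll s\flr)$ is collared in $B$. Part (b) of the preceding lemma now gives that $\partial(A\x B)$ is collared in $A\x B$; combining part (a) of that lemma with the identity $\fll(q,s)\flr=\fll q\flr\x\fll s\flr$ (applied once to the product and once to each factor), one obtains
\[\partial(A\x B)=\partial A\x B\cup A\x\partial B=(f\x g)^{-1}(\partial\fll(q,s)\flr),\]
while clearly $A\x B=(f\x g)^{-1}(\fll(q,s)\flr)$. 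This is precisely the defining condition for $f\x g$ to be a stratification map.

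The main obstacle is the bookkeeping in the subdivision-map case: one must verify that the PL identification $|P\x R|\cong|P|\x|R|$ behaves correctly with respect to the cone-by-cone condition of Theorem \ref{subdivision map}. Once one notes that this identification sends $|\fll q\flr\x\fll s\flr|$ to $|\fll q\flr|\x|\fll s\flr|$ and commutes with taking preimages, the verification is routine. The stratification-map case is essentially a direct application of the preceding lemma, so no serious obstacle arises there.
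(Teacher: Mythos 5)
Your proof is correct. The stratification-map case matches the argument the corollary's placement after the lemma is meant to suggest: identify $\fll(q,s)\flr$ with $\fll q\flr\x\fll s\flr$, hence $(f\x g)^{-1}(\fll(q,s)\flr)$ with $f^{-1}(\fll q\flr)\x g^{-1}(\fll s\flr)$, and apply parts (a) and (b) of the lemma after noting that the restriction $f|_A\:A\to\fll q\flr$ is again a stratification map so that $\partial A=f^{-1}(\partial\fll q\flr)$. For subdivision maps, though, you take a genuinely different route: you invoke the global characterization of Theorem \ref{subdivision map} and multiply the underlying homeomorphisms $h_f\x h_g$ under the PL identification $|P\x R|\cong|P|\x|R|$. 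The local route that the preceding lemma sets up works instead with the isomorphism $CA\x CB\simeq C(A\x CB\cup CA\x B)$ noted in its proof: with $A=f^{-1}(\fll q\flr)$ and $B=g^{-1}(\fll s\flr)$, the hypothesis gives $|A|\cong C|\partial A|$ keeping $|\partial A|$ fixed and likewise for $B$, and then $|A\x B|\cong C|\partial A|\x C|\partial B|\cong C|\partial(A\x B)|$ keeping $|\partial(A\x B)|$ fixed, which is exactly the subdivision condition at $(q,s)$. Both arguments are sound. Your global one handles all cones at once and avoids the cone formula, at the cost of leaning on the (true and standard, but not explicitly set up in the paper) fact that the identification $|P\x R|\cong|P|\x|R|$ carries $|A\x B|$ onto $|A|\x|B|$ for closed subposets $A\subset P$, $B\subset R$; the local one stays entirely inside material the preceding lemma and its proof already supply.
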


\begin{lemma} \label{inclusion lemma}
Let $Q$ be an open subposet of a poset $P$.

(a) The inclusion $Q\emb P$ is a stratification map.

(b) A monotone map $f\:R\to Q$ is a stratification map if and only if the composition
$R\xr{f}Q\emb P$ is.

(c) If $g\:S\to P$ is a stratification map, then so is $g|\:g^{-1}(Q)\to Q$.
\end{lemma}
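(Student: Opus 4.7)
My plan is to apply the local characterization of stratification maps from Theorem \ref{stratification map}, leveraging the key observation that since $Q$ is open in $P$, we have $\cel q\cer^Q=\cel q\cer^P$ and hence $\lk(q,Q)=\partial^*\cel q\cer^Q=\partial^*\cel q\cer^P=\lk(q,P)$ for every $q\in Q$. This single identification does essentially all the work in all three parts.

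For (a), the inclusion $i\:Q\emb P$ satisfies $i^{-1}(q)=\{q\}$ for every $q\in Q$, so the fibre link $F\bydef\lk(q,i^{-1}(q))=\lk(q,\{q\})=\emptyset$, and the auxiliary map $i_q$ of Theorem \ref{stratification map} is simply the restriction $\lk(q,Q)\to\emptyset+\lk(q,P)=\lk(q,P)$. By the above observation this is the identity map, which is trivially a subdivision map, so $i$ is a stratification map. For the ``only if'' direction of (b), the composition $i\circ f$ is a stratification map by Corollary \ref{composition} together with (a). Conversely, suppose $i\circ f$ is a stratification map, and fix $r\in R$. Injectivity of $i$ gives $(i\circ f)^{-1}(i(f(r)))=f^{-1}(f(r))$, so the fibre link $F$ is the same for $f_r$ and $(i\circ f)_r$; combined with $\lk(f(r),Q)=\lk(i(f(r)),P)$ this shows that $f_r$ and $(i\circ f)_r$ coincide literally as maps into $F+\lk(f(r),Q)=F+\lk(i(f(r)),P)$. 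Theorem \ref{stratification map} then yields that one is a subdivision map iff the other is.

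For (c), write $S'=g^{-1}(Q)$; since $Q$ is open in $P$, so is $S'$ in $S$, and our opening observation (applied to $S'\incl S$) gives $\lk(s,S')=\lk(s,S)$ for every $s\in S'$. Since $(g|)^{-1}(g(s))=g^{-1}(g(s))$, the fibre link for $g|$ at $s$ agrees with that for $g$; combined with $\lk(g(s),Q)=\lk(g(s),P)$ this yields $(g|)_s=g_s$ as maps into $F+\lk(g(s),Q)$. As $g$ is a stratification map, $g_s$ is a subdivision map by Theorem \ref{stratification map}, and a second application of the theorem (this time to $g|$) completes (c). There is no genuine obstacle beyond bookkeeping: the content of the lemma is that the local data entering the definition of a stratification map over a target is insensitive to embedding the target along an open inclusion.
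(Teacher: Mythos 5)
Your proof is correct, but it takes a genuinely different route from the paper's. You systematically apply Theorem~\ref{stratification map}: in each part you show, using the observation that $\lk(q,Q)=\lk(q,P)$ for $q\in Q$ when $Q$ is open, that the auxiliary maps $f_p$ attached to the map in question and to the reference map literally coincide (same domain, same fibre link $F$, same ambient link, same defining formula), and then invoke the theorem to transfer the stratification property. The paper argues differently: for (a) it unwinds the definition of a stratification map directly --- $i^{-1}(\fll p\flr)$ is the cone $\fll p\flr_Q$ and $i^{-1}(\partial\fll p\flr)$ is its boundary $\partial\fll p\flr_Q$, and the boundary of a cone is collared in that cone --- then derives (b) formally (the ``if'' direction is immediate from the definition of stratification map since $\fll q\flr_P\cap Q=\fll q\flr_Q$, and the ``only if'' is Corollary~\ref{composition} plus (a)), and finally (c) as a corollary of (a), (b) and Corollary~\ref{composition}, noting that $g^{-1}(Q)$ is open in $S$. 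Both arguments are valid; yours is more uniform and makes the conceptual point (``the local data of a stratification map is insensitive to open inclusions of the target'') explicit, at the cost of invoking the heavier characterization theorem three times, whereas the paper's (a) is a one-line definitional check and (b), (c) are then essentially formal.
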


\begin{proof}[Proof. (a)]
Given a $p\in P$, $\partial\fll p\flr_Q$ is the boundary of the cone
$\fll p\flr_Q$, and therefore is collared in the latter.
\end{proof}

\begin{proof}[(b)]
The ``if' implication is straightforward from the definition, and the ``only if''
implication follows from Corollary \ref{composition}.
\end{proof}

\begin{proof}[(c)]
Since $Q$ is open in $P$, $g^{-1}(Q)$ is open in $S$.
Then by (a) and Corollary \ref{composition}, the composition
$g^{-1}(Q)\emb S\xr{g}P$ is a stratification map.
Then by (b) so is the restriction $g|\:g^{-1}(Q)\to Q$.
\end{proof}

\section{Filtration maps}

\subsection{Codimension one}
We say that a closed subposet $Q$ of $P$ is {\it of codimension one} in $P$
if every maximal element of $Q$ is covered by at least one maximal element
of $P$.
If additionally no maximal element of $Q$ is covered by a non-maximal element of
$P$, then we say that $Q$ is {\it of pure codimension one} in $P$.

We note that if $Q$ is collared in $P$, then the link in $P$ of every maximal
element of $Q$ is a singleton; in other words, every maximal element of $Q$ is
covered by {\it precisely} one maximal element of $P$, and by no other element
of $P$.
On the other hand, if $Q$ is of codimension one in $P$, then it is nowhere
dense in $P$, in the sense of the Alexandroff topology (i.e.\ contains no maximal
element of $P$).

\subsection{Filtration map}
We call a monotone map of posets $f\:P\to Q$ a {\it [pure] filtration map} if
$f^{-1}(\partial\fll q\flr)$ is of [pure] codimension one in $f^{-1}(\fll q\flr)$
for each $q\in Q$.
Filtration maps (an in particular stratification maps) are open, since
if $q>f(p)$, then $f^{-1}(\partial\fll q\flr)$ is contains a maximal element $p'$
such that $p'\ge p$, and this $p'$ must be covered by an element of
$f^{-1}(\fll q\flr)$, which can only be in $f^{-1}(q)$.

\begin{lemma} \label{pseudo-amalgamation}
Let $f\:P\to Q$ be a filtration map and $Z$ a closed subposet of $Q$.
If $Z$ is of codimension one in $Q$, then $f^{-1}(Z)$ is of
codimension one in $P$.
\end{lemma}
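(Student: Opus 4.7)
The plan is to pick an arbitrary maximal element $p$ of $f^{-1}(Z)$, produce an element of $Q$ covering $f(p)$ and maximal in $Q$ (using the codimension-one hypothesis), then lift this covering to a covering in $P$ by a maximal element, using the filtration-map property applied to a carefully chosen cone $\fll\tilde z\flr$ of $Q$.

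First I would observe that $f^{-1}(Z)$ is indeed a closed subposet of $P$ (monotonicity of $f$ plus closedness of $Z$). Then, given a maximal $p\in f^{-1}(Z)$, I want to show that $z:=f(p)$ is maximal in $Z$. Since $f$ is open (filtration maps are open, as noted in the excerpt just before the lemma), $f(\cel p\cer)=\cel f(p)\cer$; so any $z'>z$ with $z'\in Z$ would lift to some $p'>p$ with $p'\in f^{-1}(Z)$, contradicting maximality. Codimension one of $Z$ in $Q$ then supplies a maximal element $\tilde z$ of $Q$ covering $z$.

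Next I would verify that $p$ is a maximal element of $f^{-1}(\partial\fll\tilde z\flr)$. Note that $z\in\partial\fll\tilde z\flr$ because $\tilde z$ is the unique element of $\fll\tilde z\flr$ strictly above $z$ (as $\tilde z$ covers $z$), so $p\in f^{-1}(\partial\fll\tilde z\flr)$. If $p'>p$ were in $f^{-1}(\partial\fll\tilde z\flr)$, then $z\le f(p')<\tilde z$ in $\fll\tilde z\flr$; since $\tilde z$ covers $z$, this forces $f(p')=z\in Z$, putting $p'\in f^{-1}(Z)$ and contradicting maximality of $p$ there. Now invoke the filtration-map hypothesis: $f^{-1}(\partial\fll\tilde z\flr)$ is of codimension one in $f^{-1}(\fll\tilde z\flr)$, so $p$ is covered by some maximal element $\tilde p$ of $f^{-1}(\fll\tilde z\flr)$.

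Finally, I would argue $\tilde p$ is maximal in $P$. Openness of $f$ gives $f(\tilde p)=\tilde z$: otherwise some $\tilde p''>\tilde p$ would map to $\tilde z$ and still sit in $f^{-1}(\fll\tilde z\flr)$, violating maximality of $\tilde p$ there. Since $\tilde z$ is maximal in $Q$, any $p''>\tilde p$ satisfies $f(p'')\ge\tilde z$, hence $f(p'')=\tilde z$ and $p''\in f^{-1}(\fll\tilde z\flr)$, again contradicting maximality of $\tilde p$. Thus $p$ is covered by the maximal element $\tilde p$ of $P$, completing the proof. The main subtlety — and the only place where one must be careful — is keeping the two maximality conditions (in $f^{-1}(Z)$ and in $f^{-1}(\fll\tilde z\flr)$) separate and ensuring each is used at exactly the right step; the rest is a straightforward application of openness of $f$ and the defining property of filtration maps.
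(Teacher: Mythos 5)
Your proof is correct and follows essentially the same route as the paper's: pick a maximal $p$ in $f^{-1}(Z)$, show $f(p)$ is maximal in $Z$, produce a covering maximal element $\tilde z$ of $Q$, verify $p$ is maximal in $f^{-1}(\partial\fll\tilde z\flr)$, apply the filtration hypothesis to get a covering maximal $\tilde p$ in $f^{-1}(\fll\tilde z\flr)$, and conclude $\tilde p$ is maximal in $P$. The only cosmetic difference is in showing $f(\tilde p)=\tilde z$: you invoke openness of $f$, whereas the paper argues directly from $\tilde p\notin f^{-1}(\partial\fll\tilde z\flr)$; both are fine.
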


\begin{proof}
Let $p$ be a maximal element of $f^{-1}(Z)$.
The restriction of $f$ to $f^{-1}(Z)$ is a filtration map,
and in particular an open map.
Hence $f(\{p\})$ is open in $Z$, that is, $f(p)$ is maximal in $Z$.
Since $Z$ is of codimension one in $Q$, $f(p)$ is covered by
a maximal element $q$ of $Q$.
Since $f(p)$ is maximal in $Z$, $q\notin Z$.

Since $q>f(p)$, we have $p\in f^{-1}(\partial\fll q\flr)$.
Let $p'$ be a maximal element of $f^{-1}(\partial\fll q\flr)$ such that $p'\ge p$.
Since $f$ is a filtration map, $p'$ is covered by a maximal element
$p''$ of $f^{-1}(\fll q\flr)$.
Since $p'$ is maximal in $f^{-1}(\partial\fll q\flr)$,
$p''\notin f^{-1}(\partial\fll q\flr)$, that is, $f(p'')=q$.

If $p'>p$, then since $p$ is maximal in $f^{-1}(Z)$,
$p'\notin f^{-1}(Z)$, that is, $f(p')\notin Z$.
Since $f(p)\in Z$, $f(p')\ne f(p)$.
On the other hand, $p'\in f^{-1}(\partial\fll q\flr)$, so $f(p')\ne q$.
Hence $f(p)<f(p')<q$, contradicting our choice of $q$.
Thus $p'=p$, and so $p''$ covers $p$.

If $p'''\ge p''$, then $f(p''')=f(p'')$ since $f(p'')=q$ is maximal in $Q$.
Hence $p'''\in f^{-1}(\fll q\flr)$, and so $p'''=p''$ since
$p''$ is maximal in $f^{-1}(\fll q\flr)$.
Thus $p''$ is maximal in $P$.
\end{proof}

\begin{corollary} \label{pseudo-composition}
Filtration maps are closed under composition.
\end{corollary}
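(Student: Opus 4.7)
The plan is to reduce directly to Lemma \ref{pseudo-amalgamation} applied fiberwise. Let $f\:P\to Q$ and $g\:Q\to R$ be filtration maps, and fix an arbitrary $r\in R$. I need to show that $(gf)^{-1}(\partial\fll r\flr)$ is of codimension one in $(gf)^{-1}(\fll r\flr)$. Writing $Q'=g^{-1}(\fll r\flr)$ and $Z=g^{-1}(\partial\fll r\flr)$, this amounts to showing that $f^{-1}(Z)$ is of codimension one in $f^{-1}(Q')$.

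First I would verify the purely combinatorial points: $\fll r\flr$ and $\partial\fll r\flr$ are closed subposets of $R$ (the latter because if $p'\le p\le q$ and $q$ has exactly one element above it, then $p'\le q$ too), so $Q'$ is closed in $R$ and $Z$ is closed in $Q'$. Since $g$ is a filtration map, $Z$ is of codimension one in $Q'$ by definition.

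Next, because $Q'$ is closed in $Q$, the cones computed within $Q'$ agree with those in $Q$: for every $q\in Q'$, $\fll q\flr_{Q'}=\fll q\flr_Q$ and hence $\partial\fll q\flr_{Q'}=\partial\fll q\flr_Q$. Consequently the restriction $f|\:f^{-1}(Q')\to Q'$ satisfies $f|^{-1}(\fll q\flr_{Q'})=f^{-1}(\fll q\flr_Q)$ and $f|^{-1}(\partial\fll q\flr_{Q'})=f^{-1}(\partial\fll q\flr_Q)$, so $f|$ inherits from $f$ the property of being a filtration map.

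Now Lemma \ref{pseudo-amalgamation} applies to $f|\:f^{-1}(Q')\to Q'$ with the closed codimension-one subposet $Z$ of $Q'$, yielding that $f|^{-1}(Z)=f^{-1}(Z)$ is of codimension one in $f^{-1}(Q')$. Unwinding the abbreviations gives $(gf)^{-1}(\partial\fll r\flr)$ of codimension one in $(gf)^{-1}(\fll r\flr)$, and since $r$ was arbitrary, $gf$ is a filtration map. There is no real obstacle here; the only subtlety is checking that restricting a filtration map to the preimage of a closed subposet again yields a filtration map, which is the reason Lemma \ref{pseudo-amalgamation} is stated for closed subposets rather than just for the whole target.
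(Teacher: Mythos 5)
Your proof is correct and is exactly the intended deduction: the paper states the corollary with no proof immediately after Lemma \ref{pseudo-amalgamation}, and your fiberwise application of that lemma (via the observation that a filtration map restricts to a filtration map over any closed subposet of the range) is the natural way to get it. One trivial slip: "$Q'$ is closed in $R$" should read "$Q'$ is closed in $Q$" (it is the $g$-preimage of the closed subposet $\fll r\flr$ of $R$, hence closed in $Q$ since monotone maps are continuous in the Alexandroff topology).
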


\begin{example} Let $P=(\{a,b,c,d\},\le)$, where the only relations are $a<b$
and $a<c<d$ and their implications.
Let $Q$ be the subposet of $P$ formed by $a$, $c$ and $d$, and let $f\:P\to Q$
be the retraction sending $b$ onto $d$.
Then $f$ is a filtration map, and $\{a\}$ is of codimension one in $P$ but
not in $Q$.
\end{example}

\begin{theorem} \label{pure pseudo-amalgamation}
Let $f\:P\to Q$ be a pure filtration map and $Z$ a closed subposet of $Q$.

(a) If $Z$ is of pure codimension one in $Q$, then $f^{-1}(Z)$ is of pure
codimension one in $P$.

(b) When $Z\incl f(P)$, the converse holds.
\end{theorem}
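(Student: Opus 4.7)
I will prove (a) by bootstrapping on Lemma \ref{pseudo-amalgamation}, which already delivers codimension one; the remaining task is to forbid covers of a maximal element of $f^{-1}(Z)$ by non-maximal elements of $P$. Fix a maximal element $p$ of $f^{-1}(Z)$ and any cover $p'$ of $p$ in $P$, with the aim of showing $p'$ is maximal. Put $z=f(p)$: this is maximal in $Z$ because $f|_{f^{-1}(Z)}$ inherits openness from $f$, and an open monotone map of posets sends maximal elements to maximal elements. Let $q=f(p')$; if $q=z$ then $p'\in f^{-1}(Z)$, contradicting maximality of $p$ there, so $q>z$. Purity of $Z$ in $Q$ forces every cover of $z$ lying weakly below $q$ to be maximal in $Q$, and hence $q$ itself covers $z$ and is maximal in $Q$. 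I will then exploit purity of $f$ at $q$: a direct case analysis shows that $p$ is maximal in $f^{-1}(\partial\fll q\flr)$, since a strictly larger element of this preimage would either lie in $f^{-1}(z)\subset f^{-1}(Z)$ or witness a non-maximal cover of $z$ in $Q$, both ruled out. The pure codimension one property of $f^{-1}(\partial\fll q\flr)$ in $f^{-1}(\fll q\flr)$ then forces the cover $p'\in f^{-1}(\fll q\flr)$ of $p$ to be maximal in $f^{-1}(\fll q\flr)$; since $q$ is maximal in $Q$, this upgrades to $p'$ being maximal in $P$.

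For (b), I will start from a maximal element $z$ of $Z$ and, using the hypothesis $Z\subset f(P)$, pick an element $p$ that is maximal in $f^{-1}(z)$; a short check then shows that $p$ is also maximal in $f^{-1}(Z)$ (any $\tilde p>p$ in $f^{-1}(Z)$ would have $f(\tilde p)>z$ in $Z$, contradicting maximality of $z$). The pure codimension one hypothesis on $f^{-1}(Z)$ in $P$ supplies a maximal element $p'$ of $P$ covering $p$, and forces \emph{every} element of $P$ covering $p$ to be maximal. Openness of $f$ then makes $f(p')$ maximal in $Q$. To see that $f(p')$ covers $z$, suppose for contradiction that $z<q_1<f(p')$ for some $q_1$: by openness of $f$ one can write $q_1=f(p_1)$ for some $p_1>p$, the hypothesis makes $p_1$ maximal in $P$, and then $q_1=f(p_1)$ is maximal in $Q$, contradicting $q_1<f(p')$. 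For purity, any $q'$ covering $z$ in $Q$ lies in $\cel z\cer=f(\cel p\cer)$ by openness of $f$, so $q'=f(p'')$ for some $p''>p$; by the hypothesis $p''$ is maximal in $P$, and hence $q'=f(p'')$ is maximal in $Q$.

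The main obstacle, as I see it, is the verification in (a) that $p$ is maximal in $f^{-1}(\partial\fll q\flr)$, since that step must simultaneously combine the purity of $Z$ in $Q$, the maximality of $p$ in $f^{-1}(Z)$, and the covering relation between $z$ and $q$ in $Q$. Everything else reduces to the standard fact that a monotone map $f\colon P\to Q$ is open in the Alexandroff topology iff $f(\cel p\cer)=\cel f(p)\cer$ for every $p$, and in particular carries maximal elements to maximal elements.
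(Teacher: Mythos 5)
Your proof is correct and follows essentially the same strategy as the paper's (choose $p$ maximal in $f^{-1}(Z)$, observe via openness of the restriction that $f(p)$ is maximal in $Z$, transfer information between $P$ and $Q$ via the pure filtration property and the pure codimension one hypothesis). There are two worthwhile differences to note. In (a) you first establish that $q=f(p')$ covers $z$ and is maximal in $Q$, which lets you conclude directly that $p$ is maximal in $f^{-1}(\partial\fll q\flr)$ and hence $p'$ is maximal in $f^{-1}(\fll q\flr)$, whereas the paper instead supposes a cover $p''$ of $p'$ and runs a case split on whether $f(p'')=f(p')$; your organization avoids the case split and is arguably cleaner. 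In (b), the key step is showing that $f(p')$ covers $z$, and your argument correctly derives this from the purity of $f^{-1}(Z)$ in $P$ (take any $q_1$ strictly between, lift it via openness, and contradict maximality); the paper's printed proof at the corresponding line writes ``Since $Z$ is of pure codimension one in $Q$, $f(p')$ covers $q$,'' which cites the conclusion rather than the hypothesis and appears to be a slip --- your version repairs it. One small compression you should unpack: when you say ``the hypothesis makes $p_1$ (or $p''$) maximal in $P$'' for an arbitrary $p_1>p$, you need the one-line argument that some cover $c$ of $p$ with $c\le p_1$ is maximal by pure codimension one, hence $c=p_1$; similarly your phrase ``any $\tilde p>p$ in $f^{-1}(Z)$ would have $f(\tilde p)>z$'' silently uses that $f(\tilde p)=z$ is excluded by the choice of $p$ maximal in $f^{-1}(z)$.
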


\begin{proof}[Proof. (a)]
Let $p$ be a maximal element of $f^{-1}(Z)$.
By Lemma \ref{pseudo-amalgamation}, $p$ is covered by a maximal
element of $P$.
Suppose that $p$ is also covered by a $p'\in P$, which in turn is covered
by a $p''\in P$.
The restriction of $f$ to $f^{-1}(Z)$ is a pure filtration map,
and in particular an open map.
Hence $f(\{p\})$ is open in $Z$, that is, $f(p)$ is maximal in $Z$.

Since $p$ is maximal in $f^{-1}(Z)$, $p'\notin f^{-1}(Z)$, that is, $f(p')\notin Z$.
Since $f(p)\in Z$, $f(p')\ne f(p)$.
Thus $f(p')>f(p)$; since $Z$ is of pure codimension one in $Q$,
$f(p')$ covers $f(p)$.

Suppose that $f(p'')=f(p')$.
Then $p'$ and $p''$ lie in $f^{-1}(R)$, where $R=\fll f(p'')\flr$.
If $p'''\in f^{-1}(\partial R)$ and $p'''\ge p$, then $f(p''')=f(p)$ since $f(p)$
is maximal in $\partial R$.
Then $p'''\in f^{-1}(Z)$, so $p'''=p$ since $p$ is maximal in $f^{-1}(Z)$.
Thus $p$ is maximal in $f^{-1}(\partial R)$.
Then the inequalities $p<p'<p''$ contradict the hypothesis that $f$ is
a pure filtration map.
Thus $f(p'')\ne f(p')$.
Then the inequalities $f(p'')>f(p')>f(p)$ contradict the hypothesis that $Z$
is of pure codimension one in $Q$.
\end{proof}

\begin{proof}[(b)] Let $q$ be a maximal element of $Z$.
Since $Z\incl f(P)$, there exists a maximal element $p$ in $f^{-1}(q)$.
Since $f^{-1}(q)$ is open in $f^{-1}(Z)$, $p$ is maximal in $f^{-1}(Z)$.
Since $f^{-1}(Z)$ is of codimension one in $P$, $p$ is covered by a maximal
element $p'$ of $P$.
Since $p$ is maximal in $f^{-1}(Z)$, $p'\notin f^{-1}(Z)$, that is,
$f(p')\notin Z$.
Since $p'\ge p$, $f(p')\ge f(p)=q$.
Since $q\in Z$, $f(p')>q$.
Since $Z$ is of pure codimension one in $Q$, $f(p')$ covers $q$.
Since $f$ is open and $p'$ is maximal in $P$, $f(p')$ is maximal in $Q$.

Suppose that $q$ is covered by a $q'\in Q$, which is in turn covered by a
$q''\in Q$.
Since $f$ is open, $q'=f(p'')$ for some $p''>p$.
By the same token $f(p')=f(p''')$ for some $p'''>p''$.
Since $p$ is maximal in $f^{-1}(Z)$, the inequalities $p<p''<p'''$ contradict
the hypothesis that $f^{-1}(Z)$ is of pure codimension one in $P$.
\end{proof}

\begin{corollary} \label{pure pseudo-composition}
Let $P\xr{f}Q\xr{g}R$ be monotone maps of posets.
If $f$ and $g$ are pure filtration maps, then so is $gf$.
If $f$ and $gf$ are pure filtration maps, and $f$ is surjective,
then $g$ is a pure filtration map.
\end{corollary}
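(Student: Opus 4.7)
The plan is to deduce this corollary directly from Theorem \ref{pure pseudo-amalgamation}, in the same spirit in which Corollary \ref{composition} is deduced from Theorem \ref{amalgamation} and Corollary \ref{pseudo-composition} from Lemma \ref{pseudo-amalgamation}. The only preliminary observation needed is that the restriction of a pure filtration map to the preimage of a closed subposet is again a pure filtration map: if $N$ is a closed subposet of $Q$ and $q\in N$, then $\fll q\flr_N=\fll q\flr_Q$ and $\partial\fll q\flr_N=\partial\fll q\flr_Q$, so the preimage data for $f|\colon f^{-1}(N)\to N$ at $q$ coincide with the preimage data for $f$ at $q$, and the pure-filtration condition is inherited automatically.

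For the first assertion, I would fix $s\in R$ and write $N=g^{-1}(\fll s\flr)$ and $Z=g^{-1}(\partial\fll s\flr)$. Since $g$ is a pure filtration map, $Z$ is closed in $N$ and of pure codimension one there. By the preliminary remark, $f|\colon f^{-1}(N)\to N$ is a pure filtration map, so Theorem \ref{pure pseudo-amalgamation}(a) shows that $f^{-1}(Z)$ is of pure codimension one in $f^{-1}(N)$. Rewriting these as $(gf)^{-1}(\partial\fll s\flr)$ and $(gf)^{-1}(\fll s\flr)$ respectively and letting $s\in R$ vary concludes that $gf$ is a pure filtration map.

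For the converse, keep the same notation. Now the hypothesis is that $(gf)^{-1}(\partial\fll s\flr)=f^{-1}(Z)$ is of pure codimension one in $(gf)^{-1}(\fll s\flr)=f^{-1}(N)$, and that $f$ is surjective. The restriction $f|\colon f^{-1}(N)\to N$ remains a pure filtration map (same reason as above) and is surjective because $f$ is; in particular $Z\subset(f|)(f^{-1}(N))$. Theorem \ref{pure pseudo-amalgamation}(b), applied to $f|$ and $Z$, then yields that $Z$ is of pure codimension one in $N$. Letting $s\in R$ vary gives the pure-filtration property for $g$.

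I do not expect any real obstacle: the substantive content lives in Theorem \ref{pure pseudo-amalgamation}, and the corollary is just a mechanical restatement for compositions. The only point that has to be checked, rather than invoked, is that restricting $f$ above a closed subposet of $Q$ preserves the pure-filtration property, which is immediate from the definition.
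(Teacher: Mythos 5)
Your proof is correct and follows exactly the route the paper clearly intends but leaves implicit (by analogy with the unproved Corollaries \ref{composition} and \ref{pseudo-composition}): reduce to Theorem \ref{pure pseudo-amalgamation} applied to the restriction $f|_{f^{-1}(g^{-1}(\fll s\flr))}$, using the observation that restriction of a pure filtration map over a closed subposet is again a pure filtration map. The preliminary observation and its use, including the surjectivity of $f|$ needed for part (b), are all verified correctly.
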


\section{Manifolds}

\subsection{Sphere and ball} We call a poset $P$ an {\it $n$-sphere} if
$|P|$ is homeomorphic to $|\partial\Delta^{n+1}|$, and an {\it $n$-ball with
boundary} $\partial P$ if
$\partial P$ is a closed subposet of $P$ and $(|P|,|\partial P|)$ is homeomorphic to
$(|\Delta^n|,|\partial\Delta^n|)$.

We recall that a {\it PL manifold} is polyhedron $M$ where each point lies in the interior
of a closed subpolyhedron PL homeomorphic to a simplex; the {\it boundary} of $M$
consists of those points that are taken into the boundary of the simplex.
It is well-known (see \cite{Hu}) that, given a simplicial complex $K$ and its
subcomplex $L$, $|K|$ is a PL $n$-manifold with boundary $|L|$ if and only if $K$
is a {\it combinatorial $n$-manifold} with boundary $L$.
The latter means that for every $i$-simplex $\sigma\in K$, the link $\Lk(\sigma,K)$
is an $(n-i-1)$-sphere if $\sigma\notin\partial K$ and an $(n-i-1)$-ball with
boundary $\Lk(\sigma,\partial K)$ if $\sigma\in\partial K$.
It is easy to see that the case $i=0$ of this condition implies the other cases.

\subsection{Manifold}
We call a poset $P$ a {\it manifold with [co]boundary} $\partial P$ if $\partial P$
is a (possibly empty) closed [resp.\ open] subposet of $P$, and $|P|$ is
a PL manifold with boundary $|\partial P|$.
Clearly, $P$ is a manifold with boundary $Q$ if and only if $P^*$ is a manifold with
coboundary $Q^*$.

\subsection{Cell complex}
A poset $P$ is called a {\it cell complex}, if $\partial\fll p\flr$ is a sphere
of some dimension for each $p\in P$.
The cones of a cell complex are also referred to as its {\it cells}; clearly, they
are balls.
A closed subposet of a cell complex will be referred to as its {\it subcomplex}.

Affine polytopal complexes are clearly cell complexes.

\begin{lemma} \label{semi-cell complex}
Let $P$ be a poset.
Then $P^\#$ is a cell complex if and only if the open interval
$\partial\partial^* [p,q]$ is a sphere for every $p,q\in P$ with $p<q$.
\end{lemma}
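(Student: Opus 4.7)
The plan is to identify $\partial\partial^*[a,b]$ with the open interval $(a,b) \bydef \{x \in P : a < x < b\}$ and to relate the combinatorial boundary $\partial\fll[a,b]\flr_{P^\#}$ to the suspension $\Sigma|(a,b)|$ via the canonical subdivision. For $a<b$, I first show $\partial\partial^*[a,b] = (a,b)$ as subposets of $[a,b]$: the elements of $\partial^*[a,b]$ are $\bigcup \cel q'\cer_{[a,b]}$ over those $q' \in [a,b]$ having a unique predecessor in $[a,b]$, equivalently $q'$ covering only $a$, so $b \in \partial^*[a,b]$, $a \notin \partial^*[a,b]$, and every $x \in (a,b)$ lies in $\partial^*[a,b]$ (being above any minimal element of $(a,b]$ that is $\le x$). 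Then $b \notin \partial\partial^*[a,b]$ because $b$ has no successor in $\partial^*[a,b]$, while for any $x \in (a,b)$ one can pick $q \ge x$ maximal in $\partial^*[a,b] \setminus \{b\}$, so that $b$ is the unique successor of $q$ in $\partial^*[a,b]$, giving $x \in \partial\partial^*[a,b]$.

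Next I use that $[a,b]$ is the unique maximum of $\fll[a,b]\flr_{P^\#} = ([a,b])^\#$, so that $([a,b])^\# \cong C(([a,b])^\# \setminus \{[a,b]\})$ as posets. A short case analysis of the order in $([a,b])^\#$ shows $([a,b])^\# \setminus \{[a,b]\} = \partial\fll[a,b]\flr_{P^\#}$: every proper interval $[c,d] \subsetneq [a,b]$ lies below some $[a,d']$ with $d'$ maximal in $[a,b] \setminus \{b\}$, or below some $[c',b]$ with $c'$ minimal in $[a,b] \setminus \{a\}$, and each such $[a,d']$ or $[c',b]$ has $[a,b]$ as its unique successor in $([a,b])^\#$. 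Consequently $|([a,b])^\#| = C|\partial\fll[a,b]\flr_{P^\#}|$ as polyhedra. On the other hand, the prejoin decomposition $[a,b] = \{a\} + (a,b) + \{b\}$ gives $([a,b])^\flat \cong \{a\} * (a,b)^\flat * \{b\}$ and hence $|[a,b]| = D^1 * |(a,b)|$, where $D^1 \bydef \{a\}*\{b\}$ is the $1$-simplex; by associativity of join together with $D^1 = CS^0$, this equals $C(S^0 * |(a,b)|) = C\Sigma|(a,b)|$. Combining with the PL homeomorphism $|([a,b])^\#| \cong |[a,b]|$ that follows from $\#$ being a subdivision map (Theorem \ref{subdivision map}) yields
$$C|\partial\fll[a,b]\flr_{P^\#}| \;\cong\; C\Sigma|(a,b)|.$$

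Finally I invoke the standard PL equivalences that $CY$ is a PL ball iff $Y$ is a PL sphere, and that $\Sigma X$ is a PL sphere iff $X$ is --- the nontrivial direction of the latter following from combinatorial invariance of link, since the simplicial link of a suspension vertex in any triangulation of $\Sigma X$ is $X$. Applied to the displayed homeomorphism, these yield that $\partial\fll[a,b]\flr_{P^\#}$ is a sphere if and only if $(a,b) = \partial\partial^*[a,b]$ is. Since $P^\#$ is a cell complex iff $\partial\fll x\flr$ is a sphere for every $x \in P^\#$ --- vacuously so when $x = [a,a]$, as $\partial\fll[a,a]\flr = \emptyset = S^{-1}$ --- the lemma follows. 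I expect the most delicate part to be the case analysis showing $([a,b])^\# \setminus \{[a,b]\} = \partial\fll[a,b]\flr_{P^\#}$; the cone cancellation is clean because it relies on the intrinsic PL characterization of spheres as cone bases of balls, sidestepping any need to match cone points under a specific PL homeomorphism.
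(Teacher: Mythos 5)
Your argument through the homeomorphism $C|\partial\fll[a,b]\flr_{P^\#}| \cong C\Sigma|(a,b)|$ is correct, and the route via the prejoin decomposition $[a,b] = \{a\} + (a,b) + \{b\}$ is genuinely different from the paper's. But the last step has a real gap: the claimed ``standard PL equivalence'' that $CY$ is a PL ball iff $Y$ is a PL sphere is false in the forward direction --- $CD^n \cong D^{n+1}$ is a ball while $D^n$ is not a sphere. What is true is that $CY$ is a ball iff $Y$ is a ball or a sphere, the two cases being distinguished precisely by whether the cone apex lies on the boundary or in the interior. So from an abstract PL homeomorphism $CY_1 \cong CY_2$ one cannot cancel cones; matching apexes is exactly the information you need and cannot sidestep. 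And here the apexes do not obviously correspond: the apex $[a,b]$ of the left cone maps under $\#$ to $b$, which is an endpoint of the segment $\{a\}*\{b\}$ and hence a boundary point of $|[a,b]| = D^1*|(a,b)|$, whereas the apex of $C\Sigma|(a,b)|$ is an interior point of that segment. A concrete warning: when $[a,b]$ is a four-element chain, $(a,b)\cong D^1$, and a direct count gives $\chi\left(\partial\fll[a,b]\flr_{P^\#}\right)=9-16+8=1$, so $\partial\fll[a,b]\flr_{P^\#}\cong D^2$; both sides of your displayed homeomorphism are $D^3$, and your argument as written could not have ruled out $S^2$.

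The paper supplies the missing data by decomposing $\partial\fll[p,q]\flr_{P^\#} = B_1\cup B_2$ with $B_1 = (\partial[p,q])^\#$, $B_2 = (\partial^*[p,q])^\#$ and $B_1\cap B_2 = S^\#$ where $S=\partial\partial^*[p,q]$; each $|B_i|$ is a cone on $|S|$, so $|\partial\fll[p,q]\flr|\cong\Sigma|S|$. Then the ``if'' direction is two $(n{+}1)$-balls glued along a common boundary $n$-sphere giving an $(n{+}1)$-sphere, and the ``only if'' direction is Morton's join-factorization theorem \cite{Mo} applied to $S^0*|S|$ --- the \emph{suspension} cancellation you correctly invoke, but not the cone cancellation. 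To repair your proof you would need to bring in this decomposition (or some other device pinning down where the apex $[a,b]$ sits inside the ball).
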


\begin{proof}
If $S:=\partial\partial^*[p,q]$ is an $n$-sphere, then
$B_1:=(\partial[p,q])^\#$ and $B_2:=(\partial^*[p,q])^\#$ are $(n+1)$-balls with
common boundary $S^\#$, so $B_1\cup B_2=\partial([p,q]^\#)$ is an $(n+1)$-sphere.

Conversely, suppose that $B_1\cup B_2$ is an $m$-sphere.
Since each $|B_i|\cong C|\partial B_i|$ keeping $|\partial B_i|$ fixed,
we have $|B_1\cup B_2|\cong S^0*|S|$.
Hence by \cite{Mo} $|S|$ is an $(m-1)$-sphere.
\end{proof}

\subsection{Cell complex with coboundary}
A poset $P$ will be called a {\it cell complex with coboundary} $\partial^*P$, if
$\partial^*P$ is an open subposet of $P$, and $(\partial\fll p\flr)^*$ is a sphere
for each $p\in P\but\partial^*P$ and a ball with boundary
$(\partial\fll p\flr_{\partial^*P})^*$ for each $p\in\partial^*P$.

If $P$ is a cell complex, then $C^*P$ is a cell complex with coboundary $P$, so
the new definition of coboundary $\partial^*P$ extends the former one.
A closed subposet $K$ of a cell complex $P$ with coboundary is clearly a cell complex
with coboundary $K\cap\partial^*P$.

\begin{remark} \label{collared coboundary}
Obviously, a poset $P$ is a cell complex with coboundary $Q$ if and only if
$P$ and $P\but Q$ are cell complexes, and $Q^*$ is collared in $P^*$.
\end{remark}

\subsection{Pseudo-manifold}
A poset $P$ is called {\it purely $n$-dimensional} if every simplex of $P^\flat$
lies in an $n$-simplex; and {\it pure} if it is purely $n$-dimensional for some $n$.
(Some authors call such posets ranked or graded, but those terms may also mean
something else.)
$P$ will be called an {\it $n$-pseudo-manifold with coboundary $Q$} if $P$ is
purely $n$-dimensional, $P$ is a cell complex with coboundary $Q$, and
the $1$-skeleton $(P^*)^{(1)}$ (which consists of atoms and elements that
cover atoms) is a cell complex (in other words, a graph).
For brevity, $P^*$ will be called an {\it $n$-pseudo$^*$-manifold with boundary
$Q^*$}.

It is easy to see that if $P$ is a closed $n$-pseudo-manifold, then $C^*P$ is an
$(n+1)$-pseudo-manifold with coboundary $P$.
Dually, $C(P^*)$ is an $(n+1)$-pseudo$^*$-manifold with boundary $P^*$.

The following basic result is essentially known (see \cite[4.1]{Mc}, where the case
$\partial P=\emptyset$ is proved by a nontrivial topological argument) but for
completeness we spell out the details of a trivial combinatorial argument.

\begin{theorem}\label{manifolds} Let $P$ be a poset and $Q$ a closed subposet of $P$.
Then $P$ is a manifold with boundary $Q$ if and only if $P$ is pure, $P$ is
a cell complex and $P^*$ is a cell complex with coboundary $Q^*$.
\end{theorem}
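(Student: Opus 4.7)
The plan is to reduce the theorem to a statement about vertex links of $P^\flat$ via the key identity
$$\Lk(\{p\},P^\flat)\cong(\partial\fll p\flr+\partial^*\cel p\cer)^\flat=(\partial\fll p\flr)^\flat*(\partial^*\cel p\cer)^\flat$$
for every $p\in P$. This follows because a chain in $P$ properly containing $p$ decomposes uniquely as a chain strictly below $p$, the singleton $\{p\}$, and a chain strictly above $p$, together with the identity $(A+B)^\flat\simeq A^\flat*B^\flat$ from \S\ref{operations}. Since $Q$ is closed in $P$, we have $\fll p\flr_Q=\fll p\flr$ for $p\in Q$, and the same reasoning gives
$$\Lk(\{p\},Q^\flat)\cong(\partial\fll p\flr)^\flat*(\partial^*\cel p\cer\cap Q)^\flat.$$
By the remark immediately preceding the manifold definition (it suffices to check links of $0$-simplices), the combinatorial manifold condition on $P^\flat$ with boundary $Q^\flat$ reduces to checking these vertex links.

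The direction $(\Leftarrow)$ is now immediate. Pureness of $P$ of dimension $n$ ensures $\dim\partial\fll p\flr+\dim\partial^*\cel p\cer+1=n-1$. For $p\notin Q$, both $\partial\fll p\flr$ (by $P$ being a cell complex) and $\partial^*\cel p\cer$ (by $P^*$ being a cell complex with coboundary $Q^*$ and $p^*\notin Q^*$) are PL spheres, so their join is a PL $(n-1)$-sphere. For $p\in Q$, $\partial\fll p\flr$ is a PL sphere while $\partial^*\cel p\cer$ is a PL ball whose combinatorial boundary $(\partial\fll p^*\flr_{Q^*})^*$ translates to $\partial^*\cel p\cer\cap Q$; the join of a sphere with a ball is a PL ball whose topological boundary is the sphere joined with the boundary of the ball, which matches the computed $\Lk(\{p\},Q^\flat)$.

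The direction $(\Rightarrow)$ uses the classical join factorization of PL topology: if $|A|*|B|$ is a PL sphere with both factors nonempty, then each is a PL sphere of the appropriate dimension; and if $|A|$ is a PL sphere while $|A|*|B|$ is a PL ball, then $|B|$ is a PL ball with $\partial(|A|*|B|)=|A|*\partial|B|$. Pureness of $P$ is automatic since a PL manifold is pure. For $p\notin Q$, the link is a PL $(n-1)$-sphere, and factorization yields the sphere condition on each of $\partial\fll p\flr$ and $\partial^*\cel p\cer$. For $p\in Q$ with both factors nonempty, the link is a PL ball whose boundary $\Lk(\{p\},Q^\flat)$ has the form $|A|*|B_0|$ with $A=(\partial\fll p\flr)^\flat$ and $B_0=(\partial^*\cel p\cer\cap Q)^\flat$; the asymmetric shape of the boundary forces $|A|$ to be a PL sphere (otherwise $|\partial A|*|B|$ would contribute a piece absent from $|A|*|B_0|$), whence $|B|$ is a PL ball with $\partial|B|=|B_0|$. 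The edge cases (where $p$ is an atom or maximal) are handled directly from the identities, noting that a maximal $p\in Q$ would force $\Lk(\{p\},P^\flat)$ to coincide with its own topological boundary, which is impossible for $n\ge 1$. The main obstacle is this final factorization step in the ball case, where one must rule out the possibility that both factors are balls; this is handled cleanly by the Morton-type uniqueness of join decompositions referenced in the remark after Lemma \ref{Morton1}.
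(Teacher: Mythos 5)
Your $(\Leftarrow)$ direction is essentially the same as the paper's: both reduce to vertex links via $\Lk(\{p\},P^\flat)\cong\lk(p,P)^\flat*\lk(p^*,P^*)^\flat$ and then apply the fact that joins of spheres/balls are spheres/balls.

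Your $(\Rightarrow)$ direction, however, takes a genuinely different and more laborious route. You try to extract both factors $\partial\fll p\flr$ and $\partial^*\cel p\cer$ from the single vertex link $\Lk(\{p\},P^\flat)$, which forces you into a join-factorization argument. The paper avoids this entirely: it considers $\Lk(p_-,P^\flat)$ where $p_-$ is a \emph{maximal} simplex of $\fll p\flr^\flat$ (a maximal chain ending at $p$), not merely the vertex $\{p\}$. The link of this higher-dimensional simplex consists only of chains lying strictly above $p$, so $\Lk(p_-,P^\flat)=\lk(p,P)^\flat$ directly; dually $\Lk(p_+,P^\flat)=\lk(p^*,P^*)^\flat$ for a maximal chain $p_+$ starting at $p$. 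Since $P^\flat$ is a combinatorial manifold, these are automatically spheres or balls, and the factorization issue never arises.

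The factorization step in your argument as written has a gap in the ball case. When $p\in Q$ with both factors nonempty, you assert that ``the asymmetric shape of the boundary forces $|A|$ to be a PL sphere (otherwise $|\partial A|*|B|$ would contribute a piece absent from $|A|*|B_0|$).'' But the decomposition $\partial(|A|*|B|)=|\partial A|*|B|\cup|A|*|\partial B|$ is only available once you already know $|A|$ and $|B|$ are PL manifolds with boundary, which is exactly what you are trying to prove; invoking it here is circular. The gap is repairable — the boundary $|A|*|B_0|$ is a PL sphere, so if $|B_0|\ne\emptyset$ one can apply Morton's join-factorization theorem \cite{Mo} directly to conclude both $|A|$ and $|B_0|$ are spheres, and then conclude $|B|$ is a ball — but you should make that argument explicit rather than reasoning about the ``shape'' of the boundary. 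Even with the repair, the paper's maximal-chain trick is the cleaner way to see this; it is worth internalizing, since it recurs elsewhere in the paper (e.g.\ in Lemma \ref{collaring invariant}).
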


\begin{proof}
Suppose that $P$ is a manifold with boundary $\partial P$.
By definition this implies that the barycentric subdivision $P^\flat$ is
a combinatorial manifold with boundary $Q^\flat$.
Given a $p\in P$, let $p_-$ be a maximal simplex of $\fll p\flr^\flat$, and $p_+$
a maximal simplex of $\cel p\cer^\flat$.
Then $\lk(p,P)^\flat=\Lk(p_-,P^\flat)$ and $\lk(p^*,P^*)^\flat=\Lk(p_+,P^\flat)$
as subcomplexes of $P^\flat$.
When $p\in Q$, the same equations also hold with $Q$ in place of $P$.

Since $Q$ is closed in $P$, we have $p_-\in Q^\flat$ if and only if
$p\in Q$; whereas $p_+\notin Q^\flat$ for all $p\in P$.
Hence $\lk(p^*,P^*)$ is a sphere for all $p\in P$; whereas $\lk(p,P)$ is a sphere if
$p\notin Q$, and a ball with boundary $\lk(p,Q)$ if $p\in Q$.
Since $\lk(p^*,P^*)=(\partial\fll p\flr_P)^*$ and
$\lk(p,P)=(\partial\fll p^*\flr_{P^*})^*$, we conclude that $P$ is a cell complex
and $P^*$ is a cell complex with coboundary $Q^*$.

Conversely, suppose that $P$ is a cell complex and $P^*$ is a cell complex with
coboundary $Q^*$.
For each $p\in P$, we have $\Lk(\{p\},P^\flat)=\lk(p,P)^\flat*\lk(p^*,P^*)^\flat$
as subcomplexes of $P^\flat$.
When $p\in Q$, the same equation also holds with $Q$ in place of $P$.
Since the join of two PL spheres is a PL sphere, and the join of a PL ball with
a PL sphere is a PL ball with appropriate boundary, it follows that for each
vertex $v$ of $P^\flat$, the link $\Lk(v,P^\flat)$ is a sphere if $v\notin Q^\flat$,
and a ball with boundary $\Lk(v, Q^\flat)$ if $v\in Q^\flat$.

The hypothesis that $P$ is pure ensures that different connected components
of $|P|$ are PL manifolds of the same dimension.
\end{proof}

\begin{corollary} Spheres, balls and other manifolds are cell complexes.
\end{corollary}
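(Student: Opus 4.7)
The plan is to deduce the corollary directly from Theorem \ref{manifolds}. First I would verify that spheres and balls in the sense of the paper are manifolds in the sense of the paper. Indeed, if $P$ is an $n$-sphere, then $|P|\cong|\partial\Delta^{n+1}|$, which is a PL $n$-manifold without boundary, so $P$ is a manifold with boundary $\emptyset$ (a closed subposet). Similarly, if $P$ is an $n$-ball with boundary $\partial P$, then $(|P|,|\partial P|)\cong(|\Delta^n|,|\partial\Delta^n|)$, which is a PL $n$-manifold whose boundary is a PL $(n-1)$-sphere; here $\partial P$ is closed by hypothesis, so $P$ is a manifold with boundary $\partial P$.

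Next I would invoke Theorem \ref{manifolds}, which among its conclusions states that if $P$ is a manifold with boundary $Q$ (and $P$ is pure), then $P$ is a cell complex. For spheres and balls this applies directly since $|\partial\Delta^{n+1}|$ and $|\Delta^n|$ are already pure of dimension $n$, and the corollary follows for these two classes.

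For a general manifold $P$ with boundary $Q$, the only subtlety is the purity hypothesis of Theorem \ref{manifolds}, since a priori the connected components of $|P|$ could have different dimensions. I would handle this by decomposing $P$ into a disjoint union $P=\bigsqcup_i P_i$ according to the dimension of the component of $|P|$ in which each element lies; each $P_i$ is then a pure manifold with boundary $Q\cap P_i$, so Theorem \ref{manifolds} applies to yield that $P_i$ is a cell complex. Since the defining condition of a cell complex is pointwise (for each $p$, the subposet $\partial\fll p\flr$ must be a sphere of some dimension), the disjoint union $P$ is itself a cell complex.

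There is no real obstacle here; the proof is essentially a direct citation of Theorem \ref{manifolds} combined with the observation that spheres and balls are a fortiori manifolds, the only book-keeping being the trivial reduction to the pure case by splitting into components of constant dimension.
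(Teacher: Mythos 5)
Your proof is correct and follows the same route the paper intends: spheres and balls are manifolds with boundary $\emptyset$ and $\partial P$ respectively, and the forward implication of Theorem \ref{manifolds} then gives that any manifold is a cell complex. The only quibble is that your ``book-keeping'' step is not actually needed: you read Theorem \ref{manifolds} as requiring purity as an extra hypothesis, but the theorem is an ``if and only if,'' so purity appears on the \emph{conclusion} side of the forward implication (manifold $\Rightarrow$ pure, cell complex, and dual cell complex with coboundary), and is never an input. Your decomposition into components of constant dimension is harmless and correct, but it addresses a non-issue.
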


Since balls are cell complexes, we also get

\begin{corollary} \label{cell complex subdivision}
A subdivision of a cell complex is a cell complex.
\end{corollary}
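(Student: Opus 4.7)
The plan is to deduce this from the immediately preceding corollary that balls are cell complexes. Fix $p'\in P'$ and set $p=f(p')\in P$; it suffices to show that $\partial\fll p'\flr_{P'}$ is a sphere. First I would restrict the subdivision map to a neighborhood of $p'$: since $\fll p\flr_P$ is a closed subposet of $P$, for each $q\le p$ the cone of $q$ inside $\fll p\flr$ agrees with the cone of $q$ inside $P$, and similarly for the boundary. Consequently the restriction $f|\:f^{-1}(\fll p\flr)\to\fll p\flr$ satisfies the definition of a subdivision map, and by Theorem~\ref{subdivision map} it admits an underlying homeomorphism $h\:|f^{-1}(\fll p\flr)|\to|\fll p\flr|$ with $h\bigl(|f^{-1}(\partial\fll p\flr)|\bigr)=|\partial\fll p\flr|$.

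Next I would identify $f^{-1}(\fll p\flr)$ as a ball. Because $P$ is a cell complex, $\partial\fll p\flr$ is a sphere, and since $\fll p\flr\simeq C(\partial\fll p\flr)$ as posets, the geometric realization $|\fll p\flr|$ is the cone on the sphere $|\partial\fll p\flr|$; hence the pair $(|\fll p\flr|,|\partial\fll p\flr|)$ is homeomorphic to $(|\Delta^n|,|\partial\Delta^n|)$ for the appropriate $n$. Composing this with $h$, the same holds of $(|f^{-1}(\fll p\flr)|,|f^{-1}(\partial\fll p\flr)|)$, so by definition $f^{-1}(\fll p\flr)$ is a ball with boundary $f^{-1}(\partial\fll p\flr)$.

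Finally, the preceding corollary tells us that $f^{-1}(\fll p\flr)$, being a ball, is itself a cell complex. Applied to $p'\in f^{-1}(\fll p\flr)$ this says $\partial\fll p'\flr_{f^{-1}(\fll p\flr)}$ is a sphere; and since $f^{-1}(\fll p\flr)$ is closed in $P'$, the cone of $p'$ taken inside this subposet coincides with $\fll p'\flr_{P'}$, so $\partial\fll p'\flr_{P'}$ is a sphere. As $p'\in P'$ was arbitrary, $P'$ is a cell complex. I do not anticipate a genuine obstacle; the only delicate points are checking that passage to the closed subposet $\fll p\flr$ preserves the subdivision-map condition and keeping track that $C|X|=|CX|$ for geometric realizations, so that a cone on a sphere really is a PL ball at the level of polyhedra.
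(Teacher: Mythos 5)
Your proof is correct and follows exactly the route the paper intends: the paper derives this corollary in one line from the fact that balls are cell complexes, and your expansion — restrict the subdivision map to $f^{-1}(\fll p\flr)$, observe via Theorem~\ref{subdivision map} that this preimage is a ball, apply the preceding corollary to conclude it is a cell complex, and then read off that $\partial\fll p'\flr$ is a sphere using closedness of the preimage — is precisely the implicit argument. The details you flag as delicate (the restriction to a closed cone remaining a subdivision map, and $|CX|\cong C|X|$) are indeed the points one must check, and you check them correctly.
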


\begin{lemma}\label{coboundary recognition}
A poset $P$ is a cell complex with coboundary $Q$ if and only if
$(\partial\fll p\flr)^*$ is a sphere for all $p\in P\but Q$ and a ball for all
$p\in Q$.
\end{lemma}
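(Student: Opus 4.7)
The forward direction is immediate from the definition of cell complex with coboundary. For the substantive reverse direction, I would assume $(\partial\fll p\flr)^*$ is a sphere for $p\in P\but Q$ and a ball for $p\in Q$, and then need to verify (i) that $Q$ is open in $P$, and (ii) that for each $p\in Q$ the boundary of the ball $(\partial\fll p\flr)^*$ equals $(\partial\fll p\flr_Q)^*$.

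My first step is a link identity: for any $p<r$ in $P$,
$$\lk\bigl(p^*,\,(\partial\fll r\flr)^*\bigr)\simeq(\partial\fll p\flr)^*.$$
Since $p<r$ we have $\fll p\flr\subset\partial\fll r\flr$, so the open cone $\cel p^*\cer$ in $(\partial\fll r\flr)^*$ is exactly $\fll p\flr^*$; applying $\partial^*$ and using the general identity $\partial^*(X^*)=(\partial X)^*$ yields the claim.

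Next I would invoke the corollary following Theorem \ref{manifolds} that every PL sphere or ball is a cell complex, together with the standard combinatorial-manifold dichotomy: in a cell complex structure on a PL manifold $M$, a cell $\sigma$ lies on $\partial M$ iff $|\lk(\sigma,M)|$ is a ball, and otherwise $\sigma$ is interior with spherical link. In the poset setting this reduces, via the identification $\lk(\sigma,M)^\flat=\Lk(\sigma_-,M^\flat)$ recorded in \S\ref{operations}, to the classical combinatorial invariance of link for PL manifolds. Combining with the link identity, the boundary cells of $(\partial\fll r\flr)^*$ are exactly those $p^*$ with $p<r$ for which $(\partial\fll p\flr)^*$ is a ball---that is, exactly those $p^*$ with $p\in Q$.

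Both parts of the reverse direction now fall out. For (i), if $r\notin Q$ then $(\partial\fll r\flr)^*$ is a sphere with empty manifold boundary, so no $p<r$ lies in $Q$; by contraposition, $p\in Q$ and $p<r$ force $r\in Q$, proving that $Q$ is open. For (ii), when $p\in Q$ the manifold boundary of the ball $(\partial\fll p\flr)^*$ consists of those $q^*$ with $q<p$ and $q\in Q$, which by the openness established in (i) coincides with $(\partial\fll p\flr_Q)^*$. The main obstacle is the clean transfer of the manifold-boundary-via-link dichotomy from $|P^\flat|$ to the abstract poset, which is essentially what the corollary after Theorem \ref{manifolds} buys us; once this is in hand, the whole argument is just bookkeeping with the link identity.
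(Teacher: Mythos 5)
Your proof is correct and takes essentially the same route as the paper. Both directions establish openness of $Q$ and the boundary identification by applying Theorem~\ref{manifolds} to the ball or sphere $(\partial\fll r\flr)^*$ and then reading off membership in $Q$ from the resulting cell-complex-with-coboundary structure on $\partial\fll r\flr$; your link identity $\lk(p^*,(\partial\fll r\flr)^*)\simeq(\partial\fll p\flr)^*$ is a clean repackaging of the paper's use of $\partial\fll p\flr_P=\partial\fll p\flr_{\partial\fll r\flr}$, and the ``manifold dichotomy'' you invoke is exactly the unwinding of that cell-complex-with-coboundary structure.
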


\begin{proof}
The ``only if'' direction is trivial.
Conversely, by the hypothesis $P\but Q$ is a cell complex.
So it is closed in $P$, whence $Q$ is open in $P$.

Pick some $q\in Q$, and let $B=\partial\fll q\flr_P$.
Then $B$ is a subcomplex of $P$, and since $B^*$ is a ball,
by Theorem \ref{manifolds}, $B$ is a cell complex with coboundary.
If $p\in B\but\partial^*B$, then $\partial\fll p\flr_P=\partial\fll p\flr_B$ is
a sphere, so $p\notin Q$.
If $p\in\partial^*B$, then $(\partial\fll p\flr_P)^*=(\partial\fll p\flr_B)^*$
is a ball, so $p\in Q$.
Thus $B\cap Q=\partial^*B$.
In other words, $B^*$ is a ball with boundary $(\partial\fll q\flr_Q)^*$, as desired.
\end{proof}

\begin{lemma}\label{coboundary capped off}
A poset $P$ is a cell complex with coboundary $Q$ if and only if the amalgam
$P\cup_Q C^*Q$ is a cell complex.
\end{lemma}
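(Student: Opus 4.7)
My plan is as follows. By Lemma~\ref{adjunction poset}(b) applied to the open inclusion $Q\hookrightarrow C^*Q$, the amalgam $P':=P\cup_Q C^*Q$ is a poset precisely when $Q$ is open in $P$, and both sides of the claimed equivalence presuppose this; so I fix $Q$ open in $P$. Openness of $Q$ implies that no element of $Q$ lies below any element of $P\but Q$, so the cones of $P'$ split cleanly as $\fll\hat 0\flr_{P'}=\{\hat 0\}$; $\fll p\flr_{P'}=\fll p\flr_P$ for $p\in P\but Q$; and $\fll p\flr_{P'}=\fll p\flr_P\cup\{\hat 0\}$ for $p\in Q$, with $\hat 0<q$ iff $q\in\fll p\flr_Q:=Q\cap\fll p\flr_P$. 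In the last case, passing to the order complex yields
\[|\partial\fll p\flr_{P'}|\;=\;A\cup_B CB,\]
where $A:=|\partial\fll p\flr_P|$, $B:=|\partial\fll p\flr_Q|\subset A$, and $CB$ is the new cone with apex $\hat 0$.

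By Lemma~\ref{coboundary recognition}, $P$ is a cell complex with coboundary $Q$ iff $(\partial\fll p\flr_P)^*$ is a sphere for every $p\in P\but Q$ and a ball for every $p\in Q$; and $P'$ is a cell complex iff $\partial\fll p'\flr_{P'}$ is a sphere for every $p'\in P'$. The cases $p'=\hat 0$ and $p'\in P\but Q$ match trivially from the above case analysis, so the whole statement collapses, case by case for $p\in Q$, to the polyhedral equivalence that $A\cup_B CB$ is a PL sphere iff $A$ is a PL ball with boundary $B$.

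The forward implication is immediate: if $A$ is an $n$-ball with $\partial A=B$, then $B$ is an $(n-1)$-sphere, $CB$ is another $n$-ball with boundary $B$, and gluing two $n$-balls along their common boundary gives $S^n$. For the converse, the key observation is that in the simplicial complex $(X')^\flat$, where $X':=\partial\fll p\flr_{P'}$, the closed star of the vertex $\hat 0$ is the simplicial join $\{\hat 0\}*Y^\flat$, whose underlying polyhedron is exactly $CB$ and whose link is $Y^\flat$ with polyhedron $B$. Since $|X'|=A\cup_B CB$ is a PL $n$-sphere by hypothesis, $(X')^\flat$ triangulates $S^n$, and the classical PL fact that in a combinatorial $n$-sphere the closed star of any vertex is a PL $n$-ball, its link is a PL $(n-1)$-sphere, and the closure of the complement of its open star is again a PL $n$-ball sharing that link as boundary, yields at once that $A$ is a PL $n$-ball with $\partial A=B$. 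The main obstacle is precisely this appeal to the star/antistar decomposition of a combinatorial sphere; everything else reduces to bookkeeping around the openness of $Q$ and the computation of cones in the amalgam.
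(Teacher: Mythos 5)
Your proof is correct and follows essentially the same route as the paper's: reduce to the local statement that for $p\in Q$ the boundary cone in the amalgam is the union $A\cup_B CB$, then settle the equivalence ``$A$ is a ball with boundary $B$ iff $A\cup_B CB$ is a sphere'' by gluing two balls in one direction and by the complementary-ball (star/antistar) theorem in the other. The paper packages the backward direction slightly differently, invoking Theorem~\ref{manifolds} to get that the sphere $B\cup_{\partial B}C(\partial B)$ is a cell complex and hence that $C(\partial B)$ is a ball, but the decisive step is the same Newman-type complement argument you make explicit via the barycentric subdivision at the vertex $\hat 0$.
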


\begin{proof} Let $q\in Q$ and write $P\div Q=P\cup_Q C^*Q$.
Then $\partial\fll q\flr_{P^+}\simeq\partial\fll q\flr_P\div\partial\fll q\flr_Q$.
So it suffices to show that a poset $B$ is a ball if and only if
$B^+:=(B^*\div(\partial B)^*)^*$ is a sphere.
We have $B^+\simeq B\cup_{\partial B}C(\partial B)$.
If $B$ is a ball, then so is $C(\partial B)$, hence $B^+$ is a sphere.
If $B^+$ is a sphere, then by Theorem \ref{manifolds} it is a cell complex.
Then $C(\partial B)$ is a ball, hence $B$ is a ball.
\end{proof}

\begin{corollary} \label{cell complex with coboundary subdivision}
If $\alpha\:K'\to K$ is a subdivision map and $K$ is a cell complex with coboundary,
then $K'$ is a cell complex with coboundary $\alpha^{-1}(\partial^*K)$.
\end{corollary}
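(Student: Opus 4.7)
The plan is to use Lemma~\ref{coboundary capped off} in both directions, sandwiched around Corollary~\ref{cell complex subdivision}. Write $Q=\partial^*K$, $Q'=\alpha^{-1}(Q)$, and let $\bar K=K\cup_Q C^*Q$ and $\bar K'=K'\cup_{Q'}C^*Q'$, denoting the adjoined cone points by $\hat 0_K$ and $\hat 0_{K'}$. By Lemma~\ref{coboundary capped off}, $\bar K$ is a cell complex. I claim $\alpha$ extends to a subdivision map $\tilde\alpha\:\bar K'\to\bar K$ via $\tilde\alpha|_{K'}=\alpha$ and $\tilde\alpha(\hat 0_{K'})=\hat 0_K$. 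Granting this, Corollary~\ref{cell complex subdivision} forces $\bar K'$ to be a cell complex, and a second application of Lemma~\ref{coboundary capped off} yields that $K'$ is a cell complex with coboundary $Q'=\alpha^{-1}(\partial^*K)$.

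Monotonicity of $\tilde\alpha$ is immediate, because $\alpha(Q')\subset Q$ lets the relations $\hat 0_{K'}<q'$ for $q'\in Q'$ pass to $\hat 0_K<\alpha(q')$ in $\bar K$. To upgrade $\tilde\alpha$ to a subdivision map I would invoke Theorem~\ref{subdivision map} and construct an underlying homeomorphism $\tilde h\:|\bar K'|\to|\bar K|$ by conically extending the underlying homeomorphism $h\:|K'|\to|K|$ of $\alpha$, using the standard PL identification $|P\cup_A C^*A|\cong|P|\cup_{|A|}C|A|$ (compare Lemma~\ref{MC vs MC*}) on both sides together with the restriction of $h$ to $|Q'|$ produced by $\alpha^{-1}(Q)=Q'$.

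It then remains to verify $\tilde h^{-1}(|\fll p\flr_{\bar K}|)=|\tilde\alpha^{-1}(\fll p\flr_{\bar K})|$ for each $p\in\bar K$. The cases $p=\hat 0_K$ and $p\in K\setminus Q$ are immediate: in the second, openness of $Q$ in $K$ forces $\fll p\flr_{\bar K}=\fll p\flr_K$, so nothing from the cap appears below $p$. The substantive case is $p\in Q$, where $\fll p\flr_{\bar K}=\fll p\flr_K\cup\{\hat 0_K\}$ has realization that decomposes as $|\fll p\flr_K|\cup_{|\fll p\flr_Q|}C|\fll p\flr_Q|$ (using the identity $\fll p\flr_K\cap Q=\fll p\flr_Q$ from openness of $Q$), while the preimage $\tilde\alpha^{-1}(\fll p\flr_{\bar K})=\alpha^{-1}(\fll p\flr_K)\cup\{\hat 0_{K'}\}$ decomposes in parallel in $\bar K'$; matching the two decompositions piece by piece under $\tilde h$ completes the check.

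The main obstacle is this last bookkeeping: one must track that the conical extension of $h$ is compatible with the subposet realizations involved. Concretely, it comes down to checking that the restriction $\alpha|_{Q'}\:Q'\to Q$ is itself a subdivision map with underlying homeomorphism $h|_{|Q'|}$, which is where openness of $Q$ in $K$ (and the consequent openness of $Q'$ in $K'$) does the essential work through the identities $\fll q\flr_Q=\fll q\flr_K\cap Q$ and $(\alpha|_{Q'})^{-1}(\fll q\flr_Q)=\alpha^{-1}(\fll q\flr_K)\cap Q'$.
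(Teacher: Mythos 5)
Your reduction to the capped-off posets is a natural idea, but the step on which everything hinges --- upgrading $\tilde\alpha$ to a subdivision map by a conical extension of $h$ --- fails, and the way it fails is instructive. The construction $\tilde h = h\cup C(h|_{|Q'|})$ only makes sense if $h$ carries $|Q'|$ onto $|Q|$. You assert that openness of $Q$ forces this, via the identities $\fll q\flr_Q=\fll q\flr_K\cap Q$; but those identities describe the abstract subposet, not the subpolyhedron. An underlying homeomorphism $h$ of $\alpha$ is only constrained by $h^{-1}(|\fll p\flr_K|)=|\alpha^{-1}(\fll p\flr_K)|$. Because $Q$ is open rather than closed, $|Q|=|Q^\flat|$ is an \emph{internal} spine of $|K|$, not a union of cells, and nothing in the definition of a subdivision map forces $h(|Q'|)=|Q|$. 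Worse, $|Q'|$ and $|Q|$ generally have different dimensions, so no homeomorphism $|Q'|\to|Q|$ can exist at all and $\alpha|_{Q'}$ cannot be a subdivision map. This is precisely the distinction between restricting a subdivision map to the preimage of a closed subposet (which works) and of an open subposet (which does not).

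The concrete obstruction: take $K=C^*(\Delta^0)=\{p<q\}$, which is a cell complex with coboundary $Q=\partial^*K=\{q\}$, and let $\alpha=\flat\colon K'=K^\flat\to K$ be the barycentric subdivision map. Then $K'\simeq\Delta^1$ and $Q'=\alpha^{-1}(Q)=\{\{q\},\{p,q\}\}$. Here $|Q|$ is a single vertex of $|K|$ while $|Q'|$ is an entire edge of $|K'|$, so $h(|Q'|)\neq|Q|$ and your $\tilde h$ does not exist. Pressing on anyway, $|\bar K'|$ acquires a $2$-simplex (from the chain $\hat 0'<\{q\}<\{p,q\}$ in $\bar K'$) while $|\bar K|$ is $1$-dimensional, so $\tilde\alpha$ is not a subdivision map. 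Indeed $\bar K'$ is \emph{not} a cell complex: $\partial\fll\{p,q\}\flr_{\bar K'}=\{\{p\},\{q\},\hat 0'\}$ with the single relation $\hat 0'<\{q\}$ realizes as an arc plus an isolated point, not a sphere. So in this case Lemma~\ref{coboundary capped off} actually shows that $K'$ is \emph{not} a cell complex with coboundary $\alpha^{-1}(\partial^*K)$ --- consistent with the direct check that $(\partial\fll\{p,q\}\flr_{K'})^*$ is a $0$-sphere rather than a ball. This suggests the corollary as stated needs a hypothesis ruling out subdivisions that introduce new cells meeting the open stratum $\partial^*K$ (or a reformulation of the asserted coboundary); in any case the gap in your argument is not a bookkeeping nuisance to be smoothed over but the place where the statement itself breaks.
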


\begin{remark} A {\it regular CW complex}, defined by using attaching 
maps that are continuous (and not necessarily PL) embeddings, need not
come from any cell complex.
Indeed, let $H$ be a simplicial complex triangulating 
a non-simply-connected homology $3$-sphere. 
Let $X$ be the regular CW-complex obtained by attaching $B^6$ to 
the double suspension $\Sigma^2(H\sqcup pt)$ along a homeomorphism
$\partial B^6\to \Sigma^2 H$ given by the Edwards--Cannon theorem.
Then $X$ is obtained by attaching $B^2$ to $B^6$ along a wild embedding
$\partial B^2\to\partial B^6$, and therefore cannot be obtained 
from any cell complex by forgetting its PL structure.
Note, however, that the barycentric subdivision of $X$ 
is a genuine simplicial complex. 
\end{remark}

\section{Comanifolds}

\subsection{Cellular map}
A monotone map of posets $f\:P\to Q$ will be called {\it cellular}, if
$f^{-1}(\cel q\cer)$ is a cell complex with coboundary
$f^{-1}(\partial^*\cel q\cer)$ for each $q\in Q$.

\begin{theorem}\label{partition} Let $f\:K\to L$ be a simplicial map of simplicial
complexes.
Then $f$ is cellular.
\end{theorem}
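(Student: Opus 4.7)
The plan is to fix $q\in L$, set $P \bydef f^{-1}(\cel q\cer)$ and $Q \bydef f^{-1}(\partial^*\cel q\cer)$, and verify via Lemma~\ref{coboundary recognition} that $P$ is a cell complex with coboundary $Q$; since $P\setminus Q = f^{-1}(q)$, this requires showing $(\partial\fll\tau\flr_P)^*$ is a sphere when $f(\tau)=q$ and a ball when $f(\tau)>q$.

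First I would identify the cone $F_\tau \bydef \fll\tau\flr_P$ explicitly. Writing $V(q) = \{v_1,\dots,v_k\}$, the vertex set of $\tau$ decomposes as a disjoint union $\tau = \tau^0 \sqcup \tau^1 \sqcup \cdots \sqcup \tau^k$, where $\tau^i \bydef \tau\cap f^{-1}(v_i)$ for $i\ge 1$ (each nonempty since $f(\tau)\supseteq q$) and $\tau^0 \bydef \tau \setminus f^{-1}(V(q))$. A face $\mu\incl\tau$ belongs to $P$ iff $\mu\cap\tau^i\ne\emptyset$ for each $i\ge 1$, so the assignment $\mu\mapsto(\mu\cap\tau^0,\dots,\mu\cap\tau^k)$ gives an isomorphism $F_\tau\simeq 2^{\tau^0}\times\prod_{i=1}^k\Delta^{\tau^i} = C^*\Delta^{\tau^0}\times\prod_{i=1}^k\Delta^{\tau^i}$, with the $C^*\Delta^{\tau^0}$ factor degenerating to a singleton precisely when $\tau\in P\setminus Q$. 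A direct combinatorial argument---using that any non-maximum element of such a product lies below some just-below-maximum element covered only by the maximum, via maximization of all-but-one coordinate (and a separate argument when $|\tau^0|=1$)---yields $\partial F_\tau = F_\tau\setminus\{\max F_\tau\}$.

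If $\tau\in P\setminus Q$, then $F_\tau = \prod_{i=1}^k\Delta^{\tau^i}$ is a product of simplices, i.e.\ a cell complex in the paper's sense, with $|F_\tau|$ a PL ball and $|\partial F_\tau|$ its boundary PL sphere, so $(\partial F_\tau)^*$ is a sphere. If $\tau\in Q$, the first factor $A\bydef C^*\Delta^{\tau^0}$ is instead a cell complex with coboundary $\Delta^{\tau^0}$: the realization $|A|$ is a PL $|\tau^0|$-ball whose manifold-boundary sphere decomposes as $|C^*\partial\Delta^{\tau^0}|\cup_{|\partial\Delta^{\tau^0}|}|\Delta^{\tau^0}|$, with the ``free face'' $|\Delta^{\tau^0}|$. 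Setting $B\bydef\prod_{i=1}^k\Delta^{\tau^i}$ and combining the Leibniz-type identity $\partial(A\times B)=\partial A\times B\cup A\times\partial B$ with the PL-homeomorphism $|A\times B|\cong|A|\times|B|$, I would identify $|\partial F_\tau|$ with the complement in the manifold-boundary sphere $\partial|F_\tau|$ of the interior of the ``product free face'' $|\Delta^{\tau^0}\times B|$; since this product free face is a PL disk properly embedded in the PL sphere $\partial|F_\tau|$ along its manifold boundary, its complement is a PL disk, and hence $(\partial F_\tau)^*$ is a ball.

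By Lemma~\ref{coboundary recognition}, $P$ is therefore a cell complex with coboundary $Q$, so $f$ is cellular. The main obstacle is the case $\tau\in Q$: one must rigorously establish the combinatorial Leibniz decomposition of $\partial F_\tau$ in the product poset, match it under the PL-homeomorphism $|A\times B|\cong|A|\times|B|$ with the topological boundary decomposition of the product ball, and finally identify the closed product free face of $F_\tau$ as a PL sub-disk of the manifold-boundary sphere to conclude $|\partial F_\tau|$ is a PL disk.
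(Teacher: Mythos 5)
Your decomposition of the cone $\fll\tau\flr_P$ as $C^*\Delta^{\tau^0}\times\prod_{i\ge1}\Delta^{\tau^i}$ is exactly the one the paper arrives at, via $\fll\tau\flr_D\simeq\fll\mu\flr_D\times C^*\fll\nu\flr_K$ with $\mu=\tau\setminus\tau^0$, $\nu=\tau^0$, together with Lemma~\ref{cubosimplicial} identifying $\fll\mu\flr_D$ as a product of simplices (so you could cite that lemma rather than redo the computation). The route after that is genuinely different. The paper stays purely combinatorial: it rewrites $\fll\tau\flr_D^*\simeq C^*R\times C^*(CQ)\simeq C^*(R*CQ)$ using duality and associativity of prejoin, applies $\partial^*(C^*(-))=\id$ to read off $(\partial\fll\tau\flr_D)^*\simeq R*CQ$, and finishes with the elementary PL fact that a join of a sphere and a ball is a ball. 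You instead identify $|\partial F_\tau|$ as the closure of the complement of the PL disk $|\Delta^{\tau^0}\times B|$ inside the boundary sphere of the ball $|F_\tau|$, and conclude via the PL complementary-domain theorem (the closure of the complement of a PL $n$-ball subcomplex of a PL $n$-sphere is a PL $n$-ball). Both arguments are correct; the paper's has the advantage of remaining in closed combinatorial form and needing nothing beyond ``sphere join ball is ball,'' while yours is a shade more topological and requires the careful bookkeeping you flag --- matching the combinatorial Leibniz decomposition $\partial(A\times B)=\partial A\times B\cup A\times\partial B$ with the topological decomposition of $\partial(|A|\times|B|)$ and checking that the free face meets $|\partial F_\tau|$ in exactly its boundary sphere --- before the complementary-domain theorem can be invoked. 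One small simplification: your proposed ``direct combinatorial argument'' for $\partial F_\tau=F_\tau\setminus\{\max F_\tau\}$ is unnecessary, since this identity holds for every finite poset with a greatest element (every non-maximum element lies below a coatom, and each coatom is covered only by the maximum); the paper uses this silently.
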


\begin{proof}
Let us fix a $\sigma\in L$, and let $D=f^{-1}(\cel\sigma\cer)$ and
$E=f^{-1}(\partial^*\cel\sigma\cer)$.
Pick some $\tau\in D$.
Then $\fll\tau\flr=\fll\mu\flr*\fll\nu\flr$, where
$f(\mu)=\sigma$ and $f(\nu)\cap\sigma=\emptyset$.
We note that $\nu\ne\emptyset$ if and only if $\tau\in E$.
The isomorphism $\fll\tau\flr_K\simeq\fll\mu\flr_K*\fll\nu\flr_K$ can be equivalently
written as $C^*\fll\tau\flr_K\simeq C^*\fll\mu\flr_K\x C^*\fll\nu\flr_K$.
The latter restricts to an isomorphism
$\fll\tau\flr_D\simeq\fll\mu\flr_D\x C^*\fll\nu\flr_K$,
given by $\tau'\mapsto(\mu',\nu')$, where $\fll\tau'\flr=\fll\mu'\flr*\fll\nu'\flr$.
By Lemma \ref{cubosimplicial} $\fll\mu\flr_D=CR^*$, where $R$ is a sphere
(in fact, a join of boundaries of simplices).

If $\nu=\emptyset$, then $\fll\tau\flr_D=\fll\mu\flr_D=CR^*$,
hence $(\partial\fll\tau\flr_D)^*=R$, which is a sphere.
If $\nu\ne\emptyset$, then $\fll\tau\flr_D\simeq CR^*\x C^*\fll\nu\flr_K$.
We may write $\fll\nu\flr_K=CQ^*$, where $Q$ is a sphere (in fact, the boundary of
a simplex).
By the associativity of prejoin $C^*(CQ^*)\simeq C(C^*Q^*)\simeq C(CQ)^*$.
Hence $\fll\tau\flr_D\simeq CR^*\x C(CQ)^*$.
Then $\fll\tau\flr_D^*\simeq C^*R\x C^*(CQ)\simeq C^*(R*CQ)$.
Therefore $(\partial\fll\tau\flr_D)^*\simeq R*CQ$, which is a ball.
Thus by Lemma \ref{coboundary recognition}, $D$ is a cell complex with
coboundary $E$.
\end{proof}

By Remark \ref{collared coboundary} if $f$ is cellular, then $f^*\:P^*\to Q^*$ is
a stratification map.
This yields the following corollary of Theorem \ref{partition}, which was known
in different terms \cite[proof of Lemma 2.2]{Co3} (see also \cite[p.\ 35]{BRS}):

\begin{corollary}\label{simplicial dual to stratification map}
If $f\:K\to L$ is a simplicial map between simplicial complexes,
then $f^*\:K^*\to L^*$ is a stratification map.
\end{corollary}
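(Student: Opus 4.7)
The plan is to derive this corollary directly from the theorem that immediately precedes it (Theorem \ref{partition}, asserting that simplicial maps are cellular) together with Remark \ref{collared coboundary} and the definition of a stratification map. The only substantive task is to translate between primal and dual language cleanly.

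First I would unwind what it means for $f^*\:K^*\to L^*$ to be a stratification map. Fix $\sigma\in L$, and let $\sigma^*$ denote the corresponding element of $L^*$. Using the identities $\fll\sigma^*\flr_{L^*}=(\cel\sigma\cer_L)^*$ and $\partial\fll\sigma^*\flr_{L^*}=(\partial^*\cel\sigma\cer_L)^*$ (the latter from the definitional relation $\partial^*P=(\partial(P^*))^*$), the condition to verify is that
\[
\bigl(f^{-1}(\partial^*\cel\sigma\cer_L)\bigr)^*\quad\text{is collared in}\quad\bigl(f^{-1}(\cel\sigma\cer_L)\bigr)^*.
\]

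Next I would invoke Theorem \ref{partition}, which says that $f$ is cellular; that is, $f^{-1}(\cel\sigma\cer_L)$ is a cell complex with coboundary $f^{-1}(\partial^*\cel\sigma\cer_L)$ for each $\sigma\in L$. Applying Remark \ref{collared coboundary} to this cell-complex-with-coboundary structure immediately yields the collaring statement displayed above, which is exactly the stratification map condition for $f^*$.

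The whole argument is therefore just a bookkeeping exercise translating the ``primal'' statement of cellularity into the ``dual'' statement about collared boundaries of cones; no geometric work beyond Theorem \ref{partition} is needed. The only point that requires a moment's care is the duality dictionary between $\partial$ and $\partial^*$ and between $\fll\cdot\flr$ and $\cel\cdot\cer$ under passing from $L$ to $L^*$, but this is straightforward from the definitions.
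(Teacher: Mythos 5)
Your proposal is correct and coincides with the paper's own route: the text preceding the corollary observes that, by Remark \ref{collared coboundary}, cellularity of $f$ implies $f^*$ is a stratification map, and then applies Theorem \ref{partition}. Your unwinding of the duality identities $\fll\sigma^*\flr_{L^*}=(\cel\sigma\cer_L)^*$ and $\partial\fll\sigma^*\flr_{L^*}=(\partial^*\cel\sigma\cer_L)^*$ is exactly the bookkeeping the paper leaves implicit.
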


\subsection{Comanifold}
A monotone map of posets $f\:P\to Q$, will be called a {\it [pseudo-]comanifold}
if $f^{-1}(\fll\sigma\flr)$ is a [pseudo$^*$-]manifold with boundary
$f^{-1}(\partial\fll\sigma\flr)$ for each $\sigma\in Q$.

Depending on the structure of $Q$, a [pseudo-]comanifold $f\:P\to Q$ may have
dimension and/or codimension.
If $Q^*$ has pure cones, then $f$ is of {\it dimension} $d$ if
$f^{-1}(\fll\sigma\flr)$ is of dimension $d-i$ for each $\sigma\in Q$ with
$\fll\sigma^*\flr$ of dimension $i$.

More important is codimension.
If $Q$ itself has pure cones, then $f$ of {\it codimension} $k$, or simply is
a {\it $k$-[pseudo-]comanifold}, if $f^{-1}(\fll\sigma\flr)$ is of dimension
$i-k$ for each $\sigma\in Q$ with $\fll\sigma\flr$ of dimension $i$.

When $Q$ is a cell complex, $k$-comanifolds are a combinatorial form of
``$k$-mock bundles'', introduced by Buoncristiano, Rourke and Sanderson;
$k$-[pseudo-]comanifolds work as geometric representatives of $k$-dimensional
unoriented PL cobordism [resp.\ $\bmod 2$ cohomology] classes of $|Q|$ \cite{BRS}.

When $Q$ is purely $n$-dimensional, both dimension $d$ and codimension $k$ are
defined for [pseudo-]manifolds into $Q$; obviously, $k=n-d$.

If $f$ is a pseudo-comanifold, then $f^*$ is a cellular map, and in
particular $f$ itself is a stratification map.
When $P$ is a cell complex, any subdivision of $P$ is a comanifold.

\begin{corollary} \label{comanifolds} Let $f\:M\to N$ be a simplicial map of simplicial
complexes.
If $M$ is an $m$-[pseudo-]manifold, then $f^*\:M^*\to N^*$ is a [pseudo-]comanifold
of dimension $m$.
\end{corollary}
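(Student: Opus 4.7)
The plan is to fix $\sigma\in N$ and verify that $D^*:=(f^*)^{-1}(\fll\sigma\flr_{N^*})$, considered as a subposet of $M^*$, is an $(m-\dim\sigma)$-dimensional [pseudo$^*$-]manifold with boundary $E^*:=(f^*)^{-1}(\partial\fll\sigma\flr_{N^*})$, where $D=f^{-1}(\cel\sigma\cer_N)$ and $E=f^{-1}(\partial^*\cel\sigma\cer_N)$. In the manifold case I will apply Theorem \ref{manifolds} to the pair $(D^*,E^*)$, reducing the conclusion to three assertions: (i) $D^*$ is pure of dimension $m-\dim\sigma$; (ii) $D^*$ is a cell complex; and (iii) $(D^*)^*=D$ is a cell complex with coboundary $(E^*)^*=E$.

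Assertion (iii) is furnished directly by Theorem \ref{partition}: the simplicial map $f$ is cellular, so $D=f^{-1}(\cel\sigma\cer_N)$ is a cell complex with coboundary $E=f^{-1}(\partial^*\cel\sigma\cer_N)$. The pivotal observation for (i) and (ii) is the identity $\cel\tau\cer_D=\cel\tau\cer_M$ for every $\tau\in D$: indeed, if $\tau\le\tau'$ in $M$, then by monotonicity $f(\tau')\ge f(\tau)\ge\sigma$, whence $\tau'\in D$. Consequently $\fll\tau\flr_{D^*}$ and its boundary in $D^*$ coincide with the cone $\fll\tau\flr_{M^*}$ and its boundary in $M^*$. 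Applying Theorem \ref{manifolds} to $M$ itself, $M^*$ is a cell complex, so each $\partial\fll\tau\flr_{M^*}$ is a sphere, establishing (ii).

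For (i), the cones of $D^*$ correspond to the elements of $D$ ordered by $\ge_D$, and $|\fll\tau\flr_{D^*}|=|\fll\tau\flr_{M^*}|$ is a ball of dimension $m-\dim\tau$. The maximal cones of $D^*$ are therefore the $\fll\tau\flr_{D^*}$ with $\tau$ minimal in $D$, and such minimal $\tau$ have $\dim\tau=\dim\sigma$ with $f|_\tau$ bijective onto $\sigma$: every $\tau\in D$ contains such a minimal face, obtained from the full preimage $f^{-1}(\sigma)\cap\tau$ by repeatedly deleting any vertex sharing its image with another vertex until $f$ restricts to a bijection on the remaining vertices. Purity then follows because every chain in $D^*$ extends to a maximal chain going from an $m$-simplex of $M$ (using purity of the $m$-manifold $M$) down to such a minimal face, with intermediate refinements inserted freely inside any simplex of $M$.

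The pseudo-manifold case is entirely parallel: Theorem \ref{manifolds} is replaced by the definition of pseudo$^*$-manifold with boundary, which, beyond purity and the coboundary structure on $D$ (both verified as above), requires the $1$-skeleton of $(D^*)^*=D$ to be a cell complex. This again follows from $\cel\tau\cer_D=\cel\tau\cer_M$, which shows that the $1$-skeleton of $D^*$ inherits its structure from that of $M^*$, which is a cell complex by the pseudo-manifold hypothesis on $M$. The main obstacle throughout is the bookkeeping of dualizations: one must consistently remember that $\partial$ in $D^*$ corresponds to $\partial^*$ in $D$, and that the boundary $E^*$ in the manifold $D^*$ is the dual of the coboundary $E$ in the cell complex $D$. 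Once those conventions are untangled, the whole proof reduces to the single structural identity $\cel\tau\cer_D=\cel\tau\cer_M$ applied repeatedly.
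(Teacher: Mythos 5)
Your proposal is correct and takes essentially the same approach as the paper: apply Theorem \ref{partition} for the coboundary structure, use that $D^*$ is a closed subposet of the cell complex $M^*$ (your identity $\cel\tau\cer_D=\cel\tau\cer_M$ is precisely this closedness, and the paper phrases it as $\fll\sigma^*\flr$ being closed in $N^*$ and preimages of closed sets under the monotone map $f^*$ being closed), and then invoke Theorem \ref{manifolds} or the definition of pseudo-manifold. One minor notational slip in the pseudo-manifold paragraph: the condition from the definition is that $(D^*)^{(1)}$ be a cell complex — not ``the $1$-skeleton of $(D^*)^*=D$'' — though the sentence that follows shows you meant the right thing.
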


The manifold part is essentially Cohen's theorem \cite[5.6]{Co1};
concerning the pseudo-manifold part see \cite{Gra} and \cite{Do3}.

When $N$ is purely $n$-dimensional, we can add to Corollary \ref{comanifolds} that
$f^*$ is an $(n-m)$-[pseudo-]comanifold; and if additionally $N^*$ is a
cell complex (which is equivalent to $N$ being a manifold), then $f^*$
represents an $(n-m)$-dimensional cobordism [resp.\ cohomology] class
$[f^*]$ of $|N|$.
One more application of Corollary \ref{comanifolds} ensures that $[f^*]$ depends
only on the $m$-dimensional bordism [resp.\ homology] class $f_*([M])$.
Thus Corollary \ref{comanifolds} is really a combinatorial description of
the Poincar\'e duality homomorphism.
(A geometric proof that this homomorphism is an isomorphism involves
a transversality argument, see \cite{BRS}.)

\begin{proof}
Let $\sigma\in N$.
By Theorem \ref{partition}, $D:=f^{-1}(\cel\sigma\cer)$ is a cell complex with
coboundary $E:=f^{-1}(\partial^*\cel\sigma\cer)$.
On the other hand, since $M$ is a [pseudo-]manifold, $M^*$ [resp.\ $(M^*)^{(1)}$]
is a cell complex.
Since $\cel\sigma^*\cer$ is closed in $N^*$, and $f^*\:Y^*\to X^*$ is monotone,
$D^*=(f^*)^{-1}(\cel\sigma^*\cer)$ is closed in $M^*$.
Hence $D^*$ [resp.\ $(D^*)^{(1)}$] is a cell complex.
Thus $D$ is a [pseudo-]manifold.
The assertion on its dimension follows by the proofs of Lemma \ref{cubosimplicial} and
Theorem \ref{partition}.
\end{proof}

\begin{remark} \label{comanifold vs cellular}
It is implicit in the above proof that a monotone map $f\:P\to Q$ is
a [pseudo-]comanifold if and only if $P$ [resp.\ $P^{(1)}$] is a cell complex
and $f^*\:P^*\to Q^*$ is cellular.
\end{remark}

\begin{example} \label{projection example}
Let $\pi\:P\x Q\to P$ be the projection.
Then

\begin{itemize}
\item $\pi$ and $\pi^*$ are stratification maps;
\item if $P$ and $Q$ are cell complexes, then $\pi$ is cellular;
\item if $P$ and $Q$ are manifolds, then $\pi$ and $\pi^*$ are comanifolds.
\end{itemize}

The assertions on $\pi^*$ follow from the corresponding assertions
on $\pi$ since $\pi^*\:P^*\x Q^*\to P^*$ is of the same form as $\pi$.
We now prove the three assertions on $\pi$.
The first assertion follows from Corollary \ref{map product}, since
identity and constant maps are obviously stratification maps.
To prove the second [resp.\ third] assertion, it suffices to note that if $K$ is
a cell complex [resp.\ manifold] with coboundary $\partial K$, then clearly
$K\x Q$ is a cell complex [resp.\ manifold] with coboundary $\partial K\x Q$.
\end{example}

\begin{example} \label{inclusion example}
Let $\iota\:Q\emb P$ be an embedding onto a closed subposet.
Then

\begin{itemize}
\item $\iota^*$ is a stratification map;
\item if $Q^\#$ is a cell complex, then $\iota$ is cellular;
\item if $Q$ is a manifold, then $\iota^*$ is a comanifold.
\end{itemize}

We will identify $Q$ with its image in $P$.
The first assertion was proved in Lemma \ref{inclusion lemma}(a).
If $Q$ is additionally a cell complex, and $q\in\partial^*\cel p\cer_Q$, then
$\partial\fll q\flr_{\cel p\cer_Q}=\partial [p,q]$.
By Lemma \ref{semi-cell complex} the latter is the dual cone over a
sphere.
Hence its dual is a ball, so $\cel p\cer_Q$ is a cell complex with
coboundary $\partial^*\cel p\cer_Q$.
Finally, if $Q$ is a manifold, then $\partial^*\cel p\cer_Q$ is a sphere,
so $\fll p^*\flr_{Q^*}$ is a ball (in particular, a manifold) with boundary
$\partial\fll p^*\flr_{Q^*}$.
\end{example}

Corollary \ref{comanifolds} shows how to get a comanifold from a manifold.
Theorem \ref{amalgamation2}(b) below is a converse result, producing a manifold
from a comanifold (even in a slightly more general setup).

\begin{theorem} \label{amalgamation2}
(a) Let $f\:P\to Q$ be a cellular map, and $Z$ an open subposet of $Q$.
If $Q$ is a cell complex with coboundary $Z$, then $P$ is a cell complex with
coboundary $f^{-1}(Z)$; when $f$ is surjective, the converse holds.

(b) Let $f\:M\to N$ be a [pseudo-]comanifold of dimension $m$, where $N^*$ is
a cell complex with coboundary $Z^*$.
Then $M$ is an $m$-[pseudo$^*$-]manifold with boundary $f^{-1}(Z)$.
\end{theorem}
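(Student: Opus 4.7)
The plan is to deduce both parts from the stratification-map structure that $f$ induces on links, together with PL join-factorization.

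For part (a), the cellular hypothesis says that $f^{-1}(\cel q\cer)$ is a cell complex with coboundary $f^{-1}(\partial^*\cel q\cer)$ for every $q\in Q$; by Remark \ref{collared coboundary} this implies in particular that every fiber $F:=f^{-1}(f(p))$ is itself a cell complex, and also that the dual map $f^*\:P^*\to Q^*$ is a stratification map. Applying Theorem \ref{stratification map} at $p^*\in P^*$ and passing to realizations through Theorem \ref{subdivision map} and the identity $|X+Y|=|X|*|Y|$, one obtains for each $p\in P$ a PL homeomorphism
\[
|\partial\fll p\flr_P|\;\cong\;|\partial\fll p\flr_F|\,*\,|\partial\fll f(p)\flr_Q|.
\]
Since $F$ is a cell complex, the left factor is a PL sphere.

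The forward direction of (a) is now immediate: joining a sphere with a sphere (respectively, a ball) produces a sphere (respectively, a ball), so $|\partial\fll p\flr_P|$ is a sphere when $f(p)\notin Z$ and a ball when $f(p)\in Z$, and Lemma \ref{coboundary recognition} finishes the job. For the converse, under surjectivity pick any $p\in f^{-1}(q)$; the same decomposition expresses a PL sphere or ball as $S^a*|\partial\fll q\flr_Q|$, and the task is to cancel the sphere factor $S^a$. This will be the main obstacle: it amounts to the PL uniqueness of join factorization, namely that $S^a*X\cong S^n$ forces $X\cong S^{n-a-1}$ and $S^a*X\cong D^n$ forces $X\cong D^{n-a-1}$. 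In PL this is proved by iterated de-suspension---the link of a cone point of a PL suspension is a PL sphere or ball by the PL manifold property of the ambient---and is essentially Morton's theorem alluded to in the remark following Lemma \ref{Morton1}.

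Part (b) is then obtained by applying part (a) to the dual map $f^*\:M^*\to N^*$, which is cellular by Remark \ref{comanifold vs cellular}. The conclusion of (a) yields that $M^*$ is a cell complex with coboundary $f^{-1}(Z)^*$; combining this with the facts that $M$ (respectively $M^{(1)}$ in the pseudo case) is a cell complex and that $M$ is purely $m$-dimensional---both encoded in the $m$-dimensional comanifold hypothesis---Theorem \ref{manifolds} identifies $M$ as an $m$-[pseudo$^*$-]manifold with boundary $f^{-1}(Z)$.
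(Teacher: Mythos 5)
Your proof is correct and follows essentially the same route as the paper: Remark \ref{collared coboundary} to pass to the stratification map $f^*$, Theorem \ref{stratification map} applied at $p^*$ together with Theorem \ref{subdivision map} to obtain the join decomposition $|\partial\fll p\flr_P|\cong|\partial\fll p\flr_F|*|\partial\fll f(p)\flr_Q|$, Morton's join-cancellation theorem \cite{Mo} for the sphere/ball equivalence, Lemma \ref{coboundary recognition} to finish (a), and Remark \ref{comanifold vs cellular} to deduce (b) by applying (a) to $f^*$. The only cosmetic difference is that you state the decomposition as a join where the paper records it as the realization of the prejoin $|\partial\fll f(p)\flr_Q+F|$, which is the same thing.
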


In the case where $N$ is a manifold with boundary, the manifold part of (c)
is essentially the amalgamation lemma of Buoncristiano--Rourke--Sanderson
\cite[Lemma II.1.2]{BRS}.

\begin{proof}[Proof. (a)]
By Remark \ref{collared coboundary}, $f^*\:P^*\to Q^*$ is a stratification map,
and $f^{-1}(q)$ is a cell complex for each $q\in Q$.
Let $p\in Y:=f^{-1}(Z)$, and write $F=\partial\fll p\flr_{f^{-1}(f(p))}$.
By Theorem \ref{stratification map} (applied to $f^*$),
$(f^*)_p\:(\partial\fll p\flr_P)^*\to F^*+(\partial\fll f(p)\flr_Q)^*$ is
a subdivision map.
Then by Theorem \ref{subdivision map} there is a homeomorphism
$|\partial\fll p\flr_P|\to |\partial\fll f(p)\flr_Q+F|$.
By the hypothesis, $F$ is a sphere.
Hence by \cite{Mo}, $|\partial\fll p\flr_P|$ is a sphere [resp.\ a ball]
if and only if $|\partial\fll f(p)\flr_Q|$ is a sphere [resp.\ a ball].
The assertion now follows from Lemma \ref{coboundary recognition}.
\end{proof}

\begin{proof}[(b)] This follows immediately from (b) and Remark
\ref{comanifold vs cellular}.
\end{proof}

\begin{corollary} \label{composition2}
Let $P\xr{f}Q\xr{g}R$ be monotone maps of posets.

(a) If $f$ and $g$ are cellular, then so is $gf$.
If $f$ and $gf$ are cellular, and $f$ is surjective,
then $g$ is cellular.

(b) If $f$ and $g$ are comanifolds, then so is $gf$.
\end{corollary}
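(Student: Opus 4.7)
The plan is to reduce everything to Theorem \ref{amalgamation2}(a), via the key observation that cellularity is preserved under restriction to preimages of open subposets. Concretely, if $f\:P\to Q$ is cellular and $Z\subset Q$ is open, then the restriction $f|\:f^{-1}(Z)\to Z$ is cellular: for any $z\in Z$ we have $\cel z\cer_Z=\cel z\cer_Q$ (since $Z$ is open, any element above $z$ is automatically in $Z$), and consequently $\partial^*\cel z\cer_Z=\partial^*\cel z\cer_Q$. Hence $(f|)^{-1}(\cel z\cer_Z)=f^{-1}(\cel z\cer_Q)$, with coboundary $f^{-1}(\partial^*\cel z\cer_Q)=(f|)^{-1}(\partial^*\cel z\cer_Z)$, inherited from the cellularity of $f$. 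Note also that if $f$ is surjective, then so is the restriction $f|$.

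For the forward direction in (a), I fix $r\in R$ and set $Z=g^{-1}(\cel r\cer_R)$, $Z_0=g^{-1}(\partial^*\cel r\cer_R)$; since $g$ is cellular, $Z$ is a cell complex with coboundary $Z_0$. Since $\cel r\cer_R$ is open in $R$ and $g$ is monotone (hence Alexandroff-continuous), $Z$ is open in $Q$, so the discussion above applies and $f|\:f^{-1}(Z)\to Z$ is cellular. Theorem \ref{amalgamation2}(a) then yields that $f^{-1}(Z)=(gf)^{-1}(\cel r\cer_R)$ is a cell complex with coboundary $(f|)^{-1}(Z_0)=f^{-1}(Z_0)=(gf)^{-1}(\partial^*\cel r\cer_R)$. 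Since $r$ was arbitrary, $gf$ is cellular. For the converse, assume $f$ and $gf$ are cellular with $f$ surjective; with the same notation, $(gf)^{-1}(\cel r\cer_R)=f^{-1}(Z)$ is a cell complex with coboundary $f^{-1}(Z_0)$, the restriction $f|\:f^{-1}(Z)\to Z$ is cellular and surjective (as $f$ is), so the converse direction of Theorem \ref{amalgamation2}(a) gives that $Z=g^{-1}(\cel r\cer_R)$ is a cell complex with coboundary $Z_0=g^{-1}(\partial^*\cel r\cer_R)$, i.e.\ $g$ is cellular.

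For (b), I use Remark \ref{comanifold vs cellular}: a monotone map $h\:X\to Y$ is a comanifold if and only if $X$ is a cell complex and $h^*\:X^*\to Y^*$ is cellular. From $f\:P\to Q$ and $g\:Q\to R$ being comanifolds, $P$ is a cell complex and both $f^*$ and $g^*$ are cellular. Since the duality of maps reverses composition, $(gf)^*=f^*g^*$ (as a function, this is just $gf$ viewed with dualized orders on the source and target), so by part (a) applied to $g^*\:Q^*\to R^*$ and $f^*\:P^*\to Q^*$, the composite $(gf)^*$ is cellular. Combined with $P$ being a cell complex, Remark \ref{comanifold vs cellular} gives that $gf$ is a comanifold. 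The one mild point of friction is checking that $(gf)^*=f^*\circ g^*$ in the order (a) requires — but since dualization acts trivially on the underlying set-theoretic composition, this is essentially a tautology and no genuine obstacle arises.
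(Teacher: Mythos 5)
Your proof is correct and takes the route the paper intends: reduce pointwise to Theorem \ref{amalgamation2}(a) via the observation that a cellular map restricts to a cellular map over any open subposet of the range (using that dual cones and coboundaries computed in an open subposet agree with those computed in the ambient poset), and for (b) pass to duals via Remark \ref{comanifold vs cellular}. One small notational slip: $(gf)^*$ equals $g^*f^*$, not $f^*g^*$ (the two sides you wrote don't even compose, since $f^*\:P^*\to Q^*$ and $g^*\:Q^*\to R^*$); the $*$-operation is covariant on monotone maps because it is the identity on underlying functions. You effectively acknowledge this when you say dualization acts trivially on the underlying composition, and the subsequent application of part (a) to $f^*$ followed by $g^*$ is the right one, so the conclusion stands.
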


\subsection{Quasi-cellular maps}
Let us call a monotone map $f\:P\to Q$ {\it quasi-cellular} if $f^*$ is
a stratification map and $f^{-1}(q)$ is a cell complex for each $q\in Q$.
By Remark \ref{collared coboundary}, cellular maps are quasi-cellular.

\begin{remark} \label{quasi-cellular}
We note that the proof of Theorem \ref{amalgamation2}(a) works under the
weaker hypothesis that $f$ is quasi-cellular.
\end{remark}

From Remark \ref{quasi-cellular} (in the case $Z=\emptyset$) and
Corollary \ref{composition} we obtain

\begin{corollary}
Let $P\xr{f}Q\xr{g}R$ be monotone maps of posets.
If $f$ and $g$ are quasi-cellular, then so is $gf$.
If $f$ and $gf$ are quasi-cellular, and $f$ is surjective,
then $g$ is quasi-cellular.
\end{corollary}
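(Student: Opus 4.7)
The plan is to reduce both halves of the corollary to already established composition/cancellation results for stratification maps (Corollary \ref{composition}) and to the absolute case $Z=\emptyset$ of Theorem \ref{amalgamation2}(a), which via Remark \ref{quasi-cellular} applies verbatim to quasi-cellular maps. To enable this reduction I first record a preliminary observation: if $f\:P\to Q$ is quasi-cellular and $S$ is a closed subposet of $Q$, then the restriction $f|\:f^{-1}(S)\to S$ is again quasi-cellular. The point-inverses $(f|)^{-1}(q)=f^{-1}(q)$ remain cell complexes for $q\in S$, and since $S^*$ is open in $Q^*$, the map $(f|)^*$ coincides with the restriction of $f^*$ over the open subposet $S^*$ of $Q^*$, which is a stratification map by Lemma \ref{inclusion lemma}(c).

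For the first assertion, Corollary \ref{composition} gives that $(gf)^*=g^*\circ f^*$ is a stratification map. For each $r\in R$ we have $(gf)^{-1}(r)=f^{-1}(g^{-1}(r))$; by the preliminary observation the restriction $f|\:f^{-1}(g^{-1}(r))\to g^{-1}(r)$ is quasi-cellular, and its target is a cell complex by the hypothesis on $g$. Applying Theorem \ref{amalgamation2}(a) with $Z=\emptyset$, as extended by Remark \ref{quasi-cellular}, the domain $(gf)^{-1}(r)$ is a cell complex, as required.

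For the second assertion, surjectivity of $f$ makes $f^*$ surjective. The cancellation half of Corollary \ref{composition}, applied to $P^*\xr{f^*}Q^*\xr{g^*}R^*$, then yields that $g^*$ is a stratification map. Fix $r\in R$; the restriction $f|\:f^{-1}(g^{-1}(r))\to g^{-1}(r)$ is again quasi-cellular (by the preliminary observation) and still surjective, while its domain $(gf)^{-1}(r)$ is a cell complex by hypothesis. The converse direction of Theorem \ref{amalgamation2}(a) with $Z=\emptyset$, as extended via Remark \ref{quasi-cellular}, then gives that $g^{-1}(r)$ is a cell complex, completing the proof.

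The only delicate point is the tacit use of the \emph{converse} assertion in Theorem \ref{amalgamation2}(a), not only its direct implication, in the quasi-cellular setting; but this is precisely the content of Remark \ref{quasi-cellular}, whose argument runs symmetrically in both directions (the link-level homeomorphism supplied by the stratification map property of $f^*$ lets one transfer the ball/sphere dichotomy via Lemma \ref{coboundary recognition} either way). Everything else is bookkeeping.
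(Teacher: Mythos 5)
Your overall route is the paper's intended one (the paper just says "from Remark \ref{quasi-cellular} and Corollary \ref{composition}"), and your preliminary observation about restricting a quasi-cellular map over a \emph{closed} subposet $S\subset Q$ is correct. But you then apply it with $S=g^{-1}(r)$, and this is where the argument as written breaks down: $g^{-1}(r)$ is not a closed subposet of $Q$ in general (it is the preimage of a singleton, which is closed in $R$ only when $r$ is minimal). Already for $g=\id\:\Delta^1\to\Delta^1$ and $r$ the top simplex, $g^{-1}(r)$ is the singleton consisting of the top element, which is open, not closed. So the hypothesis "$S^*$ is open in $Q^*$'' that you invoke to call on Lemma \ref{inclusion lemma}(c) simply fails for this $S$. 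The same misapplication recurs in your proof of the second assertion.

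The gap is genuine but repairable. The subposet $g^{-1}(r)$ is only locally closed: it is open in the closed subposet $g^{-1}(\fll r\flr)$, equivalently $(g^{-1}(r))^*$ is closed in the open subposet $(g^{-1}(\fll r\flr))^*$ of $Q^*$. So one should restrict in two steps. First restrict $f^*$ over the open subposet $(g^{-1}(\fll r\flr))^*\subset Q^*$; this is a stratification map by Lemma \ref{inclusion lemma}(c), exactly as in your preliminary observation. Then restrict further over the closed subposet $(g^{-1}(r))^*$ of $(g^{-1}(\fll r\flr))^*$; for a restriction over a \emph{closed} subposet $C$ of the codomain the stratification property is preserved trivially, since $\fll c\flr_C=\fll c\flr$ for $c\in C$ and the collaring condition in Definition of stratification map is intrinsic to the pair $(h^{-1}(\fll c\flr),h^{-1}(\partial\fll c\flr))$, which is unchanged. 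Together these show $(f|_{g^{-1}(r)})^*$ is a stratification map, whence $f|_{g^{-1}(r)}$ is quasi-cellular, and then Remark \ref{quasi-cellular} with $Z=\emptyset$ (in the forward or, for the second assertion, the surjective-converse direction) finishes as you describe. Your remarks on the availability of the converse half of Theorem \ref{amalgamation2}(a) in the quasi-cellular setting are fine.
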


The following lemma should be compared with Remark \ref{comanifold vs cellular}.

\begin{lemma} \label{cellular vs quasi-cellular}
(a) A monotone map $f\:P\to Q$ is cellular if and only if $f$ is
quasi-cellular and $Q^\#$ is a cell complex.

(b) If $f\:P\to Q$ is a cellular map, then $P^\#$ is a cell complex.
\end{lemma}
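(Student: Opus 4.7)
For (b), my plan is to invoke Lemma~\ref{semi-cell complex}, which reduces the cell-complex condition on $P^\#$ to showing that $\partial\partial^*[p_1,p_2]$ is a sphere for every $p_1<p_2$ in $P$. Given such a pair, set $q=f(p_1)$. Since $f$ is cellular, $K:=f^{-1}(\cel q\cer)$ is a cell complex with coboundary $Z:=f^{-1}(\partial^*\cel q\cer)$, so by Lemma~\ref{coboundary capped off} the capped-off poset $\widetilde K:=K\cup_Z C^*Z$ is a cell complex. Since the canonical subdivision map $\#\:\widetilde K^\#\to\widetilde K$ is a subdivision map, Corollary~\ref{cell complex subdivision} gives that $\widetilde K^\#$ is also a cell complex. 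As $K$ is open in $P$ and $p_1,p_2\in K$, every element between them in $P$ lies in $K$, and the new minimum $\hat 0$ of $\widetilde K$ does not fall into $[p_1,p_2]$, so $[p_1,p_2]_P=[p_1,p_2]_{\widetilde K}$; Lemma~\ref{semi-cell complex} applied to $\widetilde K$ then furnishes the required sphere.

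The cornerstone of my approach to (a), used in both directions, is the following combinatorial identity. For $q\in Q$ and $p\in\cel q\cer$ with $p>q$, let $\widetilde{\cel q\cer}:=\cel q\cer\cup_{\partial^*\cel q\cer}C^*(\partial^*\cel q\cer)$ denote the capped-off poset, with new minimum $\hat 0$. I claim that
\[
\partial\fll p\flr_{\widetilde{\cel q\cer}}\simeq\{q,\hat 0\}*\partial\partial^*[q,p],
\]
i.e.\ the boundary of the $p$-cone in $\widetilde{\cel q\cer}$ is the join (suspension) of $\{q,\hat 0\}$ with the ``open interior'' of the interval $[q,p]$. The verification uses that, in any finite poset, $\partial^*\cel q\cer=\cel q\cer\setminus\{q\}$, so $\hat 0$ sits below every element of $[q,p]$ except $q$; the elements with unique strict upper cover in $\fll p\flr_{\widetilde{\cel q\cer}}$ turn out to be precisely those of $[q,p]$ covered by $p$ (plus $q$ and $\hat 0$ in the degenerate case where $p$ covers $q$), and taking their cones yields the displayed join.

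For (a) $(\Leftarrow)$: Assuming $f$ is quasi-cellular and $Q^\#$ is a cell complex, Lemma~\ref{semi-cell complex} tells us $\partial\partial^*[q,p]$ is a sphere for every interval of $Q$, and the identity above shows $\partial\fll p\flr_{\widetilde{\cel q\cer}}$ is the suspension of a sphere, hence a sphere (the cases $p=q$, $p=\hat 0$ being immediate). Thus $\widetilde{\cel q\cer}$ is a cell complex, and Lemma~\ref{coboundary capped off} yields that $\cel q\cer$ is a cell complex with coboundary $\partial^*\cel q\cer$. Applying Theorem~\ref{amalgamation2}(a) together with Remark~\ref{quasi-cellular} to the quasi-cellular restriction $f\:f^{-1}(\cel q\cer)\to\cel q\cer$ then gives $f^{-1}(\cel q\cer)$ a cell complex with coboundary $f^{-1}(\partial^*\cel q\cer)$, i.e., $f$ is cellular.

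For (a) $(\Rightarrow)$: Quasi-cellularity follows by directly unpacking the cellular definition with Lemma~\ref{coboundary recognition} (giving $f^{-1}(q)$ a cell complex) and the collaring structure (characterizing $f^*$ as a stratification map). For $Q^\#$ a cell complex, by Lemma~\ref{semi-cell complex} it suffices to show $\partial\partial^*[q_1,q_2]$ is a sphere for each $q_1<q_2$; after reducing to the surjective case by replacing $Q$ with $f(P)$, the converse part of Theorem~\ref{amalgamation2}(a) applied to the cellular restriction $f\:f^{-1}(\cel q_1\cer)\to\cel q_1\cer$ gives $\cel q_1\cer$ a cell complex with coboundary, so $\widetilde{\cel q_1\cer}$ is a cell complex, and the identity forces its $q_2$-boundary to be a sphere, whence $\partial\partial^*[q_1,q_2]$ (a desuspension of a sphere) is itself a sphere. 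The main obstacle I anticipate is the verification of the displayed identity, which requires careful enumeration of the elements with unique upper cover in $\fll p\flr_{\widetilde{\cel q\cer}}$ and careful separation of the roles of the two minima $q$ and $\hat 0$ in the cone structure.
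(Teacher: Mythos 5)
Your proof of part (b) is correct but takes a genuinely different route from the paper. You cap off $K:=f^{-1}(\cel q\cer)$ to the cell complex $\widetilde K$ via Lemma~\ref{coboundary capped off}, pass to the canonical subdivision $\widetilde K^\#$, invoke Corollary~\ref{cell complex subdivision} (using that $\#$ is a subdivision map), and then read off the sphere from Lemma~\ref{semi-cell complex}. The paper is more economical: it observes that $(\partial\fll p_2\flr_{f^{-1}(\cel f(p_1)\cer)})^*$ is already a sphere or a ball (since $f^{-1}(\cel f(p_1)\cer)$ is a cell complex with coboundary), hence a cell complex, and the wanted open interval $\partial\partial^*[p_1,p_2]$ is just (the dual of) a cone boundary inside it. Your argument is valid, but it routes through the unproved assertion that $\#\:\widetilde K^\#\to\widetilde K$ is a subdivision map (the paper asserts this parenthetically but does not prove it), whereas the paper's proof of (b) sidesteps this.

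Your proof of part (a) is essentially the paper's proof in disguise: your ``combinatorial identity'' is a hands-on re-derivation of the equivalence that the paper obtains directly from Lemma~\ref{semi-cell complex}, namely that $Q^\#$ is a cell complex iff $\lk(q,Q)$ is a cell complex for each $q$, iff $\cel q\cer$ is a cell complex with coboundary $\partial^*\cel q\cer$ for each $q$. You then invoke Remark~\ref{quasi-cellular} / Theorem~\ref{amalgamation2}(a) exactly as the paper does. Two small points of hygiene. First, the identity as you wrote it uses $*$ (the poset join), which is not an isomorphism: the poset join $\{q,\hat0\}*\partial\partial^*[q,p]$ contains strictly more elements than $\partial\fll p\flr_{\widetilde{\cel q\cer}}$. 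The correct combinatorial statement is a \emph{prejoin}, $\partial\fll p\flr_{\widetilde{\cel q\cer}}\simeq\{q,\hat0\}+\partial\partial^*[q,p]$, whose geometric realization is the topological join $S^0*|\partial\partial^*[q,p]|$ by the formula $|P+Q|\cong|P|*|Q|$. Since you immediately pass to the suspension this is harmless, but the displayed $\simeq$ with $*$ is wrong as stated. Second, your reduction ``to the surjective case by replacing $Q$ with $f(P)$'' in the $(\Rightarrow)$ direction does not actually close the gap: the claim is that $Q^\#$ is a cell complex, which involves intervals lying entirely outside $f(P)$, so restricting attention to $f(P)$ proves less than what is asserted. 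Indeed, taking $P=\emptyset$ gives a cellular map onto an arbitrary $Q$, so the implication ``cellular $\Rightarrow Q^\#$ cell complex'' cannot hold without some nondegeneracy hypothesis. This defect, however, is inherited from the paper's own argument, whose appeal to the ``converse'' clause of Theorem~\ref{amalgamation2}(a) likewise requires surjectivity of the restricted map; it is not something your proposal introduces.
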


\begin{proof}[Proof. (a)] It follows from Lemma \ref{semi-cell complex} that
$P^\#$ is a cell complex  if and only if $\lk(q,Q)$ is a cell complex for each
$q\in Q$.
On the other hand, by Remark \ref{collared coboundary}, $f$ is cellular if and
only if $f$ is quasi-cellular and $f^{-1}(\lk(q,Q))$ is a cell complex for each
$q\in Q$.
Thus it suffices to show the following: assume that $f$ is quasi-cellular and
$q\in Q$; then $\lk(q,Q)$ is a cell complex if and only if $f^{-1}(\lk(q,Q))$ is.

But if $f$ is quasi-cellular, then so is its restriction to $f^{-1}(\lk(q,Q))$.
The assertion now follows from Remark \ref{quasi-cellular}.
\end{proof}

\begin{proof}[(b)]
By Lemma \ref{semi-cell complex} it suffices to show that every open
interval $\partial^*\partial[p,q]$ is a sphere.
Let $p,q\in P$ with $p<q$.
Since $f$ is cellular, $R:=f^{-1}(\cel f(p)\cer)$ is a cell complex with
coboundary.
Then $S:=(\partial\fll q\flr_R)^*$ is either a sphere or a ball; in any case,
it is a cell complex.
So $\partial\fll p^*\flr_S$ is a sphere.
Its dual $(\partial\fll p^*\flr_S)^*$, which is also a sphere, is isomorphic to
$\partial^*\cel p\cer_{S^*}=\partial^*\partial[p,q]$.
\end{proof}

\section{Multi-collaring}

Given a monotone map $f\:P\to Q$, the simplicial map $f^\flat\:P^\flat\to Q^\flat$
is dual to a monotone map of the barycentric handle decompositions $H_f\:H(P)\to H(Q)$.
On the other hand, the subdivision map $\flat\:P^\flat\to P$ is dual to a monotone
map $\bar R_P\:H(P)\to P^*$.
The composition $P^\flat\simeq (P^*)^\flat\xr{\flat}P^*$ is dual to a
monotone map $R_P\:H(P)\to P$.

\begin{theorem}\label{grothendieck}
Let $f\:P\to Q$ be a stratification map.
Then for each $q\in Q$ there exists a commutative diagram
$$\begin{CD}
H_f^{-1}(H_q^Q)@>\alpha_q>>H_f^{-1}(H_q^{\fll q\flr})\x H_q^{\cel q\cer}\\
@VV{H_f|}V@VV{(H_f|)\x\id}V\\
H_q^Q@>\simeq>>H_q^{\fll q\flr}\x H_q^{\cel q\cer},
\end{CD}$$
where $\alpha_q$ is a fiberwise subdivision map with respect to the composition
$H_f^{-1}(H_q^{\fll q\flr})\x H_q^{\cel q\cer}\xr{\pi}H_q^{\cel q\cer}
\xr{\bar R_Q|}\cel q\cer^*$.
\end{theorem}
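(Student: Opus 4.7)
I would define $\alpha_q$ explicitly as a chain-splitting and establish the fiberwise subdivision-map property by combining Theorem~\ref{stratification map} with an iterated collaring argument. Elements of $H(P)=(P^\flat)^*$ are nonempty chains in $P$ ordered by reverse inclusion, and $H_f$ sends a chain $c$ to $f(c)\subset Q$. Hence $H_f^{-1}(H_q^Q)$ is the set of chains $c$ in $P$ with $q\in f(c)$, $H_f^{-1}(H_q^{\fll q\flr})$ is the set of such chains whose image lies in $\fll q\flr$, and $H_q^{\cel q\cer}$ is the set of chains in $\cel q\cer$ starting at $q$. I define
$$\alpha_q(c)=\bigl(c\cap f^{-1}(\fll q\flr),\ f(c)\cap\cel q\cer\bigr).$$
This map is monotone under reverse inclusion, and the square commutes because the bottom isomorphism $H_q^Q\simeq H_q^{\fll q\flr}\x H_q^{\cel q\cer}$ is precisely $\sigma\mapsto(\sigma\cap\fll q\flr,\,\sigma\cap\cel q\cer)$.

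To show that $\alpha_q$ is a subdivision map, I plan to invoke Theorem~\ref{subdivision map} by constructing an underlying homeomorphism. The local input is Theorem~\ref{stratification map}: for each $p\in f^{-1}(q)$ the link map $f_p\:\lk(p,P)\to F_p+\lk(q,Q)$, with $F_p:=\lk(p,f^{-1}(q))$, is a subdivision map, yielding a homeomorphism $|\lk(p,P)|\cong|F_p*\lk(q,Q)|$. Passing to barycentric handles and using the van Kampen duality identity $CA\x CB\simeq C(A\cojoin B)$, I extract a local product decomposition $|H_p^{\cel p\cer}|\cong|CF_p|\x|H_q^{\cel q\cer}|$, and hence $|H_p^P|\cong|H_p^{\fll p\flr}|\x|CF_p|\x|H_q^{\cel q\cer}|$, with $|H_q^{\cel q\cer}|$ as a factor. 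Since $H_f^{-1}(H_q^Q)=\bigcup_{p\in f^{-1}(q)}H_p^P$ (any chain whose image contains $q$ passes through some point of $f^{-1}(q)$, and conversely), these local products glue to a homeomorphism $|H_f^{-1}(H_q^Q)|\cong|H_f^{-1}(H_q^{\fll q\flr})|\x|H_q^{\cel q\cer}|$.

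To enforce fiberwise compatibility over $\cel q\cer^*$, I would build the homeomorphism inductively along a filtration of $\cel q\cer^*$ by closed subposets, extending the already-built identification across one new element $r^*$ at each step. The extension uses Addendum~\ref{collaring addendum} applied to the collar provided by the stratification-map property of $f$ above $r$. The resulting homeomorphism respects the filtration by cones of $\cel q\cer^*$, and so realizes $\alpha_q$ as a fiberwise subdivision map via Lemma~\ref{fiberwise subdivision map}. The principal obstacle is this iterative compatibility: each new collar must extend the previously built identifications without disturbing the product structure already in place on the second factor $H_q^{\cel q\cer}$. This is the combinatorial form of a standard multi-collaring argument in PL topology, and once the bookkeeping is arranged carefully, the theorem follows.
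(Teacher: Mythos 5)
Your proposal contains a genuinely nice idea -- the explicit chain-splitting formula $\alpha_q(c)=(c\cap f^{-1}(\fll q\flr),\,f(c)\cap\cel q\cer)$ -- which the paper never writes down. The formula is well-defined (if $c$ is a chain in $P$ meeting $f^{-1}(q)$, the first component meets $f^{-1}(q)$ and the second contains $q$), it is monotone for reverse inclusion, and the commutativity check with the bottom isomorphism is correct. That is a legitimate alternative way to \emph{describe} the map, in contrast to the paper, which constructs $\alpha_q$ only implicitly by an induction on $Q$, removing one maximal element $r>q$ at a time.

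However, the theorem's content is the assertion that this map is a fiberwise subdivision map, and here your argument has a gap you yourself flag but do not close. Two specific problems. First, the local decomposition $|H_p^P|\cong|H_p^{\fll p\flr}|\times|CF_p|\times|H_q^{\cel q\cer}|$ is only a statement about geometric realizations, obtained by applying $C(\cdot)$ and the join/product identity to the homeomorphism $|\lk(p,P)|\cong|F_p*\lk(q,Q)|$; but $\alpha_q$ itself does not respect the product splitting $H_p^P\simeq H_p^{\fll p\flr}\times H_p^{\cel p\cer}$ combinatorially, since for $c=c_-\cup\{p\}\cup c_+$ one finds $c\cap f^{-1}(\fll q\flr)=c_-\cup\{p\}\cup(c_+\cap f^{-1}(q))$, which depends on $c_+$ as well as $c_-$. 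So you cannot simply ``read off'' the subdivision-map property handle by handle; you must construct a genuine underlying homeomorphism for $\alpha_q$ and check it. Second, and more seriously, the compatibility across the overlaps $H_p^P\cap H_{p'}^P$ (for comparable $p,p'\in f^{-1}(q)$) and the fiberwise constraint over $\cel q\cer^*$ are not independent afterthoughts; they are the entire difficulty. The paper resolves this by never working with a formula at all: it removes a top element $r$ of $Q$, writes $S=f^{-1}(\fll r\flr)$, uses the proof of Lemma~\ref{collaring lemma} to express $\cel\partial S^\flat\cer$ as a mapping cylinder $MC(\beta^*)$ with $\beta^*$ dual to a subdivision map, then applies Corollary~\ref{MC* of subdivision} to obtain a subdivision map over $[2]$, and finally glues this to the inductive $\alpha_2'$ by carefully matching the fiber structure over $\cel q\cer_{Q'}^*\cup\partial[q,r]\times[2]$. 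That mapping-cylinder scaffolding is precisely the ``bookkeeping'' that makes the collars compatible; it is not a routine verification, and your sketch does not explain how to reproduce its effect. Until you do, the proof is incomplete at exactly the step where the work is.
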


\begin{remark} Theorem \ref{grothendieck} implies that a stratification map gives rise
to a PL variety filtration in the sense of D. Stone \cite{Sto}, and in particular
to a locally conelike TOP stratified set in the sense of Siebenmann \cite{Sie}.
As discussed in \cite{Sto} and \cite{Sie}, these generalize Whitney stratified
spaces and Thom's topologically stratified spaces; see also Grothendieck's
reflections on ``tame topology'' and ``d\'evissage''
\cite[\S\S5,6 and endnote (6)]{Gro}.
\end{remark}

\begin{proof}
If $Q=\fll q\flr$ or more generally $\cel q\cer_Q=\{q\}$, we let $\alpha=\id$.
Arguing by induction, we may assume that the assertion is known for
$Q':=Q\but\{r\}$ and $P':=f^{-1}(Q')$ and $f':=f|_{P'}$, for some maximal
element $r$ of $Q$.
We may assume that $r>q$ (otherwise the inductive step is trivial).

Let $S=f^{-1}(\fll r\flr)$.
Since $f|_S$ is a stratification map, $\partial S=f^{-1}(\partial\fll r\flr)$.
By the combinatorial invariance of link,
$(\partial S)^\flat=\partial (S^\flat)$ so we may omit the brackets.
Since $\partial S^\flat$ is a collared full subcomplex of $S^\flat$, by
the proof of Lemma \ref{collaring lemma},
$\cel\partial S^\flat\cer\simeq MC(\beta^*)$, where
$\beta^*\:\cel\partial S^\flat\cer\but\partial S^\flat\to\partial S^\flat$
is dual to a subdivision map.

From the construction of $\beta$ it is easy to see that
$\beta^{-1}(H_f^{-1}(H_q^{\partial\fll r\flr}))=
H_f^{-1}(H_q^{\fll r\flr}\but H_q^{\partial\fll r\flr})$.
It follows that $H_f^{-1}(H_q^{\fll r\flr})\simeq MC^*(\beta_q)$, where
$\beta_q\:H_f^{-1}(H_q^{\fll r\flr}\but H_q^{\partial\fll r\flr})\to
H_f^{-1}(H_q^{\partial\fll r\flr})$ is the restriction of $\beta$.
Then Corollary \ref{MC* of subdivision} yields a subdivision map
$\alpha_1^r\:H_f^{-1}(H_q^{\fll r\flr})\to H_f^{-1}(H_q^{\partial\fll r\flr})\x[2]$
over $[2]$.
In the case that $f=\id$, this subdivision map in the domain specializes
to an isomorphism in the range,
$H_q^{\fll r\flr}\simeq H_q^{\partial\fll r\flr}\x[2]$,
which is the identity on $H_q^{\partial\fll r\flr}=H_q^{\partial\fll r\flr}\x\{2\}$.
Since $[q,r]=\fll r\flr_{\cel q\cer}$, the latter isomorphism in turn specializes to
$H_q^{[q,r]}\simeq H_q^{\partial[q,r]}\x[2]$.

Since $\alpha_1^r$ is the identity on $H_f^{-1}(H_q^{\partial\fll r\flr})$, it
extends by the identity on $H_f^{-1}(H_q^{Q'})$ to a subdivision map
$$\alpha_1\:H_f^{-1}(H_q^Q)\to H_f^{-1}(H_q^{Q'})
\underset{H_f^{-1}(H_q^{\partial\fll r\flr})}\cup
H_f^{-1}(H_q^{\partial\fll r\flr})\x[2]$$
over $\left(\cel q\cer_{Q'}\cup_{\partial[q,r]}\partial[q,r]\x[2]\right)^*$.
In more detail, the subdivision map is fiberwise with respect to the obvious map extending
the composition
$$H_f^{-1}(H_q^{Q'})\xr{H_f|}H_q^{Q'}\simeq H_q^{\fll q\flr}\x H_q^{\cel q\cer_{Q'}}
\xr{\pi}H_q^{\cel q\cer_{Q'}}\xr{\bar R_{Q'}|}\cel q\cer_{Q'}^*.$$

On the other hand, the inductive hypothesis yields a subdivision map
$\alpha_2'\:H_f^{-1}(H_q^{Q'})\to
H_f^{-1}(H_q^{\fll q\flr})\x H_q^{\cel q\cer_{Q'}}$ over $\cel q\cer_{Q'}^*$.
By Lemma \ref{fiberwise} the latter restricts to a subdivision map
$\alpha_2^r\:H_f^{-1}(H_q^{\partial\fll r\flr})\to
H_f^{-1}(H_q^{\fll q\flr})\x H_q^{\partial[q,r]}$ over $(\partial[q,r])^*$.
Then $\alpha_2'$ and $\alpha_2^r\x\id_{[2]}$ combine into a subdivision map
$$\alpha_2\:H_f^{-1}(H_q^{Q'})\underset{H_f^{-1}(H_q^{\partial\fll r\flr})}\cup
H_f^{-1}(H_q^{\partial\fll r\flr})\x[2]
\to H_f^{-1}(H_q^{\fll q\flr})\x\left(H_q^{\cel q\cer_{Q'}}
\underset{H_q^{\partial [q,r]}}\cup
H_q^{\partial [q,r]}\x[2]\right)$$
over $\left(\cel q\cer_{Q'}\cup_{\partial[q,r]}\partial[q,r]\x[2]\right)^*$.
The range of $\alpha_2'$ is isomorphic to
$H_f^{-1}(H_q^{\fll q\flr})\x H_q^{\cel q\cer_Q}$
over the composition
$\left(\cel q\cer_{Q'}\cup_{\partial[q,r]}\partial[q,r]\x[2]\right)^*\to
\left(\cel q\cer_{Q'}\cup_{\partial[q,r]}[q,r]\right)^*\simeq\cel q\cer_{Q}^*$.
Hence $\alpha_1$ followed by $\alpha_2$ is a subdivision map
$H_f^{-1}(H_q^Q)\to H_f^{-1}(H_q^{\fll q\flr})\x H_q^{\cel q\cer_Q}$
over $\cel q\cer_{Q}^*$, as desired.
The commutativity of the diagram is clear from the construction.
\end{proof}

\begin{corollary} \label{handle subdivision}
If $f\:P\to Q$ is a subdivision map, then so is $H_f\:H(P)\to H(Q)$.
\end{corollary}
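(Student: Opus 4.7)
The plan is to derive the corollary from Theorem~\ref{grothendieck} together with Theorem~\ref{subdivision map}. Since every subdivision map is a stratification map, Theorem~\ref{grothendieck} applies to $f$: for each $q\in Q$ it produces a commutative diagram whose top arrow
\[
\alpha_q\:H_f^{-1}(H_q^Q)\to H_f^{-1}(H_q^{\fll q\flr})\x H_q^{\cel q\cer}
\]
is a fiberwise subdivision map over $\cel q\cer^*$, compatible with the isomorphism $H_q^Q\simeq H_q^{\fll q\flr}\x H_q^{\cel q\cer}$.

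Under the canonical identification $|H(X)|=|X|$ for any poset $X$, an underlying homeomorphism $h\:|P|\to|Q|$ of $f$ (provided by Theorem~\ref{subdivision map}) becomes a candidate underlying homeomorphism $h\:|H(P)|\to|H(Q)|$ for $H_f$. By Theorem~\ref{subdivision map}, it then suffices to show that $h$ carries $|H_f^{-1}(\fll\tau\flr)|$ onto $|\fll\tau\flr|$ for every cone $\fll\tau\flr$ of $H(Q)$.

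Any cone of $H(Q)$ is indexed by a chain $\tau=(q_1<\dots<q_n)$ in $Q$, and both $\fll\tau\flr_{H(Q)}$ and its $H_f$-preimage can be expressed as intersections of the handle cones $H_{q_i}^Q$ and their preimages (since these are exactly the sets of simplices of $Q^\flat$, respectively of $P^\flat$, containing each $\hat q_i$). Hence it suffices to check the cone-compatibility of $h$ on each handle cone $H_q^Q$ separately; this in turn follows by applying Theorem~\ref{subdivision map} to the fiberwise subdivision map $\alpha_q$, which furnishes a local underlying homeomorphism respecting the product structure $H_q^Q\simeq H_q^{\fll q\flr}\x H_q^{\cel q\cer}$.

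The main obstacle will be patching these local underlying homeomorphisms --- one for each $q\in Q$ --- into a single global $h$ that simultaneously handles every cone of $H(Q)$. I expect this to require mirroring the inductive construction in the proof of Theorem~\ref{grothendieck}: peel off a maximal element $r\in Q$ at each step, and extend the inductively assumed underlying homeomorphism across the newly introduced handles using the cone structure $|f^{-1}(\fll r\flr)|\cong C|f^{-1}(\partial\fll r\flr)|$, which is available precisely because $f$ is a subdivision map rather than merely a stratification map. In this way the $\alpha_r$-based step of the Grothendieck proof, which for maximal $r$ reduces to the identity, is promoted to an actual subdivision that extends $h$ consistently.
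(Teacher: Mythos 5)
Your proposal reaches for the right tools (Theorems~\ref{grothendieck} and~\ref{subdivision map}), but the central reduction rests on a false premise: that an underlying homeomorphism $h\:|P|\to|Q|$ of $f$, transported through the identification $|H(X)|=|X|$, will serve as an underlying homeomorphism of $H_f$, so that it only remains to check $h^{-1}(|\fll\tau\flr_{H(Q)}|)=|H_f^{-1}(\fll\tau\flr_{H(Q)})|$. This fails for a generic $h$. An underlying homeomorphism of $f$ is pinned down only on the preimages of cones of $Q$ and is otherwise free in the interior of each cell --- yet that is precisely where the cones of $H(Q)$ (closed stars of barycenters in $|Q^\flat|$) and the preimage cones $|H_f^{-1}(\fll\tau\flr)|$ (closed stars of barycenters in $|P^\flat|$) live. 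Already for $Q=\Delta^1$ with vertices $a,b$ and $P$ its subdivision into two edges through a midpoint $c$, any homeomorphism of $[0,1]$ fixing the endpoints is an underlying homeomorphism of $f$, but in the obvious affine coordinates $|H_a^Q|=[0,\tfrac14]$ while $|H_f^{-1}(H_a^Q)|=[0,\tfrac18]$, so only a very particular $h$ satisfies your requirement. Your closing paragraph in effect concedes this by pivoting to the construction of a different, carefully chosen $h$ by inductive patching --- but at that point the role of the initial ``candidate $h$'' has been abandoned, and the bulk of paragraphs two and three is an argument for something false.

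The patching you anticipate is also a harder problem than the one posed. Being a subdivision map is a cone-by-cone condition on the target: for each $\tau$ one wants $|H_f^{-1}(\fll\tau\flr)|\cong C|H_f^{-1}(\partial\fll\tau\flr)|$ rel boundary, and no globally compatible homeomorphism need be exhibited (Theorem~\ref{subdivision map} shows one is then automatically available, but that implication is not what you need here; manufacturing it inline amounts to re-proving that theorem). The paper verifies the local condition at each handle $H_q^Q$ separately, in two moves: the hypothesis that $f$ is a subdivision map at $q$ gives $(|f^{-1}(\fll q\flr)|,|f^{-1}(\partial\fll q\flr)|)\cong(|\fll q\flr|,|\partial\fll q\flr|)$, which under the identifications $|H_q^{\fll q\flr}|=|\fll q\flr|$ and $|H_f^{-1}(H_q^{\fll q\flr})|=|f^{-1}(\fll q\flr)|$ becomes the sought pair homeomorphism over the restricted handle $H_q^{\fll q\flr}$; then the fiberwise subdivision map $\alpha_q$ of Theorem~\ref{grothendieck}, via Theorem~\ref{subdivision map} applied to $\alpha_q$, transports this over the full handle $H_q^Q\simeq H_q^{\fll q\flr}\x H_q^{\cel q\cer}$. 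Each $q\in Q$ is handled independently; no patching across handles is attempted or needed.
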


\begin{proof}
Let $q\in Q$, and write $R=\fll q\flr$.
We have homeomorphisms
$$(|H_f^{-1}(H_q^R)|,|H_f^{-1}(\partial H_q^R)|)\cong
(|f^{-1}(R)|,|f^{-1}(\partial R)|)\cong (|R|,|\partial R|)\cong
(|H_q^R|,|\partial H_q^R|),$$
the first and the third one being given by Lemma \ref{collaring lemma}, and
the second one by the hypothesis.
The composition of these three homeomorphisms along with Theorem
\ref{grothendieck} yield a homeomorphism
$(|H_f^{-1}(H_q^Q)|,|H_f^{-1}(\partial H_q^Q)|)\cong
(|H_q^Q|,|\partial H_q^Q|)$, as desired.
\end{proof}

\section{Pullback} \label{transversality}

McCrory has shown that PL transversality of subpolyhedra is a symmetric relation
\cite{Mc}: subpolyhedra $X$ and $Y$ of a closed PL manifold $M$ are
PL transverse if and only if $(M,X,Y)$ is PL homeomorphic to $(P,Q,R)$, where
$P$ is a poset, $Q$ a closed subposet of $P$ and $R$ an open subposet of $P$.

On the other hand, Buoncristiano, Rourke and Sanderson implicitly extended
the notion of PL transversality to pairs of PL maps \cite[``Induced mock bundles''
(p.\ 23) and ``Extension to polyhedra'' (p.\ 35)]{BRS}: PL maps $f\:X\to Z$
and $g\:Y\to Z$ between polyhedra may be called PL transverse if $f$ and $g$
admit triangulation by simplicial maps $F\:P\to K$ and $G\:Q\to K'$ such that
$K'$ is an affine simplicial subdivision of $K$, and the composition
$Q\xr{G}K'\xr{\alpha}K$, where $\alpha$ is the subdivision map, is a
stratification map.

The following example shows that McCrory's symmetric definition does not extend
to maps, i.e.\ one cannot generalize the simplicial map $F$ to an arbitrary
monotone map whose dual is a stratification map.

\begin{example} \label{pullback fails}
Let $\alpha\:(\partial\Delta^1)*(\partial\Delta^2)
\to\partial\Delta^3$ be the stellar subdivision map of a facet $F$ of the boundary
of the tetrahedron.
Up to isomorphism, $\alpha^*$ is the same as the quotient map
$f\:\partial(\Delta^1\x\Delta^2)\to\partial\Delta^3$, shrinking $\{a\}\x\Delta^2$
to a vertex $p$ of $\Delta^3$, for some vertex $a$ of $\Delta^1$.

Now let us identify the range of $f$ with the range of $\alpha$ (rather than
the dual of the range of $\alpha$ as we did before) so that $p$ is identified
with a vertex of $F$.
Then we may consider the pullback diagram
$$\begin{CD}
X@>\beta>>\partial(\Delta^1\x\Delta^2)\\
@VgVV@VfVV\\
(\partial\Delta^1)*(\partial\Delta^2)@>\alpha>>\Delta^3.
\end{CD}$$
Then $\beta$ is not a subdivision map, and not even a stratification map; by symmetry,
it follows then that $g^*$ is also not a stratification map.

Indeed, let us consider the restriction of the above diagram over
the $2$-simplex $F$.
Up to isomorphism, it is the same as the following pullback diagram:
$$\begin{CD}
X_0@>\beta_0>>\Delta^1\x\Delta^1\\
@Vg_0VV@Vf_0VV\\
\Delta^0*\partial\Delta^2@>\alpha_0>>\Delta^2.
\end{CD}$$
Here $\alpha_0$ is the stellar subdivision map of the $2$-simplex, and $f_0$ is
the quotient map, shrinking one side of the quadrilateral $\Delta^1\x\Delta^1$
onto the vertex $p$ of the triangle $\Delta^2$.

Let $q\in\Delta^2$ be the maximal element.
Then $f_0^{-1}(p)\simeq\Delta^1$ and $\alpha_0^{-1}(q)\simeq (\Delta^2)^*$.
On the other hand, $f_0^{-1}(q)$ is a singleton $\{q'\}$ and
$\alpha_0^{-1}(p)$ is a singleton $\{p'\}$.
Let $Q=\cel p'\cer\cap\alpha_0^{-1}(q)$; thus $Q\simeq(\Delta^1)^*$.

For each pair $(\bar p,\bar q)\in f_0^{-1}(p)\x\alpha_0^{-1}(q)$ we have
$f_0(\bar p)=p=\alpha_0(p')$ and $f_0(q')=q=\alpha_0(\bar q)$.
Hence $(\bar p,p')\in X_0$ and $(q',\bar q)\in X_0$.
Moreover, $\bar p<q'$ in $\Delta^1\x\Delta^1$, and if $\bar q\in Q$, then
$p'<\bar q$ in $\Delta^3$.
Hence $(\bar p,p')<(q',\bar q)$ in $X$ as long as $\bar q\in Q$.
Therefore $X_0$ contains a copy of the prejoin $\Delta^1+(\Delta^1)^*$.
Then $|X_0|$ is $3$-dimensional, and so $\beta_0$ cannot be a subdivision map
by Theorem \ref{subdivision map}.
Also $\beta_0$ is not a stratification map, since $\Delta^1$ is not
collared in $\Delta^1+(\Delta^1)^*$.
Then also $\beta$ cannot be a stratification map since $\beta_0$ is its restriction
over a closed subposet.
\end{example}

The remainder of this chapter is concerned with finding ways to avoid
the above example.

\begin{lemma} \label{cubical pullback}
Consider a pullback diagram
$$\begin{CD}
P'@>\beta>>P\\
@Vf'VV@VfVV\\
Q'@>\alpha>>Q.
\end{CD}$$
If $f$ is cubical, then so is $f'$.
If additionally $\alpha$ is a subdivision map or a stratification map, then so is $\beta$.
\end{lemma}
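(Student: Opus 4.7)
The plan is to exploit the cubical structure of $f$ to pin down the preimages $\beta^{-1}(\fll p\flr)$ as products of cones, thereby reducing both assertions to Corollary \ref{map product}.

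For the first part, fix $(p, q') \in P'$. The cubical hypothesis identifies $\fll p\flr_P$ with $CX \x CY$ so that $f|_{\fll p\flr}$ is the projection onto $CX = \fll f(p)\flr_Q$. Unwinding the pullback description of $P'$, a pair $(p_0, q'_0) \le (p, q')$ in $P'$ corresponds to a triple $((x,y), q'_0)$ with $y \in CY$ arbitrary, $q'_0 \le q'$, and $x = \alpha(q'_0)$. The assignment $((x,y), q'_0) \mapsto (y, q'_0)$ then yields an isomorphism $\fll (p, q')\flr_{P'} \simeq CY \x \fll q'\flr_{Q'}$ under which $f'|$ becomes the projection onto the second factor. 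Since $\fll q'\flr_{Q'}$ is itself a cone (on $\partial\fll q'\flr_{Q'}$), this exhibits $f'$ in the cubical form $CY \x CX' \to CX'$ with $CX' \bydef \fll q'\flr_{Q'}$, so $f'$ is cubical.

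For the second part, apply the same device to $\fll p\flr$ itself: setting $R \bydef \alpha^{-1}(\fll f(p)\flr)$ and $\partial R \bydef \alpha^{-1}(\partial\fll f(p)\flr)$, one obtains an isomorphism $\beta^{-1}(\fll p\flr) \simeq CY \x R$ under which $\beta|$ corresponds, after the swap $CY \x CX \simeq CX \x CY$, to $\id_{CY} \x \alpha|_R\:CY \x R \to CY \x CX$. A routine verification identifies $\beta^{-1}(\partial\fll p\flr)$ precisely with $\partial(CY \x R) = Y \x R \cup CY \x \partial R$. The restriction $\alpha|_R\:R \to \fll f(p)\flr$ is itself a subdivision (resp.\ stratification) map, as its defining homeomorphism (resp.\ collaring) at each $q \le f(p)$ is inherited verbatim from that of $\alpha$. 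Hence by Corollary \ref{map product}, $\id_{CY} \x \alpha|_R$ is a subdivision (resp.\ stratification) map, and evaluating its defining condition at the top element of the target cone $CY \x CX = \fll p\flr$ furnishes the homeomorphism $|\beta^{-1}(\fll p\flr)| \cong C|\beta^{-1}(\partial\fll p\flr)|$ keeping boundary fixed (resp.\ the collaring of $|\beta^{-1}(\partial\fll p\flr)|$ in $|\beta^{-1}(\fll p\flr)|$). As this holds for every $p \in P$, $\beta$ is a subdivision (resp.\ stratification) map.

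The crux is the poset-theoretic identification $\beta^{-1}(\fll p\flr) \simeq CY \x R$ with $\beta^{-1}(\partial\fll p\flr) \simeq \partial(CY \x R)$. Example \ref{pullback fails} shows that without the cubical hypothesis this identification can fail badly: there the pullback acquires an extra stratum of the form $\Delta^1 + (\Delta^1)^*$, raising its dimension and ruining the would-be subdivision map. Once the cubical identification is carefully set up, the rest is a clean invocation of Corollary \ref{map product}.
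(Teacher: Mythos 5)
Your proof is correct and follows essentially the same route as the paper's: reduce to a cone of $P'$, use the cubical hypothesis to identify the pullback cone as a product of $CY$ with a cone in $Q'$ (so $f'$ is a projection), and then recognize $\beta$ over $\fll p\flr$ as $\alpha|_R \times \id_{CY}$ and invoke Corollary~\ref{map product}. The only difference is cosmetic: the paper phrases the reduction as a "WLOG $P=\fll p\flr$, $Q=\fll f(p)\flr$" step, whereas you spell out the same isomorphism explicitly; both implicitly use (and you state) the fact that restricting $\alpha$ to the preimage of a closed subposet again gives a subdivision/stratification map, and that $\partial R=\alpha^{-1}(\partial\fll f(p)\flr)$ under the hypotheses.
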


\begin{proof}
Given a $p'=(q',p)\in P'$, we have $q'\in\alpha^{-1}(f(p))$, and consequently
$\fll p'\flr\subset\alpha^{-1}(\fll f(p)\flr)\x\fll p\flr$.
Hence in order to prove that $f'$ is cubical, we may assume without loss of
generality that $P=\fll p\flr$ and $Q=\fll f(p)\flr$.
Clearly, we may also assume this without loss of generality in proving
the assertion on $\beta$.

Then $f$ is isomorphic to the projection $Q\x CR\to Q$ for some poset $R$.
Hence $f'$ is isomorphic to the projection $Q'\x CR\to Q'$, and therefore
is cubical.
On the other hand, $\beta$ is isomorphic to $\alpha\x\id_{CR}\:Q'\x CR\to Q\x CR$.
If $\alpha$ is a stratification map, then $\partial Q'=\alpha^{-1}(\partial Q)$.
The assertion on $\beta$ now follows from Corollary \ref{map product}.
\end{proof}

\begin{corollary}\label{transversality1}
Consider the following diagram consisting of three pullback squares:
$$\begin{CD}
Z@>h>>Y@>\bar\rho_Y>>R\\
@V\chi VV@V\psi VV@V\phi VV\\
X@>g>>h(Q)@>\bar r_Q>>Q^*\\
@V\rho_XVV@Vr_QVV\\
P@>f>>Q.
\end{CD}$$
Then $r_Q$, $\bar r_Q$, $\rho_X$ and $\rho_Y$ as well as both vertical compositions
and both horizontal compositions in the diagram are cubical.
If $f$ is a stratification map or a subdivision map, then so are $g$ and $h$, and if
$\phi$ is a stratification map or a subdivision map, then so are $\psi$ and $\chi$.
\end{corollary}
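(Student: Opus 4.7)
The plan is to realize $X$, $Y$ and $Z$ explicitly as closed subposets of the ambient products $P\x Q^*$, $Q\x R$ and $P\x R$, and to check that in each case the cone of every element factors as a product of cones in the ambient product.  Once this is done, the various projections onto the factors become projections of the form $CA\x CB\to CA$, hence cubical, and the stratification/subdivision assertions reduce to repeated applications of Lemma \ref{cubical pullback}.

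First, $r_Q$ and $\bar r_Q$ are cubical by construction: on each canonical handle $h_\sigma\simeq\fll\sigma\flr\x\fll\sigma^*\flr$ they restrict to the coordinate projections (see \ref{handles onto cores}).  Lemma \ref{cubical pullback} applied to the first pullback square then gives that $\rho_X$ is cubical and that $g$ is a stratification or subdivision map whenever $f$ is one; applied to the second square, it yields the analogous statements for $\bar\rho_Y$ and $\psi$ (with $\phi$ in place of $f$).

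Next, the bijection $(p,[f(p),\tau]^*)\leftrightarrow(p,\tau^*)$ identifies $X$ with the closed subposet $\{(p,\tau^*)\in P\x Q^*\mid f(p)\le\tau\}$ of $P\x Q^*$.  The identity $\fll(p,\tau^*)\flr_X=\fll p\flr_P\x\fll\tau^*\flr_{Q^*}$ holds because for any $(p',\tau'^*)\le(p,\tau^*)$ in $P\x Q^*$ one has $p'\le p$ and $\tau'\ge\tau$, whence $f(p')\le f(p)\le\tau\le\tau'$ forces the defining condition $f(p')\le\tau'$ of $X$.  The same argument identifies $Y$ with $\{(\sigma,r)\in Q\x R\mid\sigma\le\phi(r)^*\}$ and $Z$ with $\{(p,r)\in P\x R\mid f(p)\le\phi(r)^*\}$, with cones factoring analogously.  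Both coordinate projections from each of $X$, $Y$ and $Z$ are therefore cubical; in particular the four compositions $\bar r_Q g\:X\to Q^*$, $r_Q\psi\:Y\to Q$, $\rho_X\chi\:Z\to P$ and $\bar\rho_Y h\:Z\to R$ are cubical.

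Finally, the explicit subposet description of $Z$ also exhibits it as the pullback $P\x_Q Y$ along $f$ and $r_Q\psi$ (with projections $\rho_X\chi$ and $h$), and as the pullback $X\x_{Q^*}R$ along $\bar r_Q g$ and $\phi$ (with projections $\chi$ and $\bar\rho_Y h$).  Since $r_Q\psi$ and $\bar r_Q g$ are cubical by the previous paragraph, Lemma \ref{cubical pullback} applied to these alternative pullback squares yields the remaining claims: $h$ is a stratification or subdivision map when $f$ is, and $\chi$ is one when $\phi$ is.  The only nontrivial step is the factorization of the pullback cones as products, after which everything reduces to Lemma \ref{cubical pullback} or a direct unpacking of the definition of cubical map.
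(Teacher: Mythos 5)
Your proof is correct, and it takes a genuinely (if mildly) different route from the paper's. The paper argues more abstractly: after noting that $r_Q$ and $\bar r_Q$ are cubical, it shows $\bar r_Q\,g$ is cubical by observing that the restriction $g_0\:g^{-1}(A)\to A$ over a cone $A=\fll[\sigma,\tau]^*\flr$ of $h(Q)$ is isomorphic to $f_0\times\id_{\fll\tau^*\flr}$, so that $t\circ g_0$ becomes a coordinate projection; it then gets $r_Q\psi$ by symmetry and discharges everything else with Lemma~\ref{cubical pullback}. You instead parameterize $X$, $Y$ and $Z$ explicitly as closed subposets of $P\times Q^*$, $Q\times R$ and $P\times R$ respectively, and verify the cone factoring $\fll(p,\tau^*)\flr_X = \fll p\flr_P\times\fll\tau^*\flr_{Q^*}$ (and likewise for $Y$, $Z$) directly from monotonicity of $f$ and $\phi$. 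This simultaneously shows all four compositions $\bar r_Q g$, $r_Q\psi$, $\rho_X\chi$, $\bar\rho_Y h$ are coordinate projections, whereas the paper only checks $\bar r_Q g$ and $r_Q\psi$ directly and must obtain the other two from the pasted pullback squares. Your observation that $Z$ is both $P\times_Q Y$ and $X\times_{Q^*}R$ is exactly the pasting-of-pullback-squares that the paper leaves implicit in its final sentence, so the final application of Lemma~\ref{cubical pullback} to get $h$ and $\chi$ is the same. Net effect: your version is more self-contained and concrete about what the pullbacks actually look like, at the cost of being somewhat longer; the paper's restriction-over-cones argument is terser but leaves the reader to unpack which pullback squares the last sentence refers to.
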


\begin{proof}
Let us pick a cone $A:=\fll[\sigma,\tau]^*\flr$ of $h(Q)$.
The restrictions $s\:A\to\fll\sigma\flr$ and $t\:A\to\fll\tau^*\flr$
of $r_Q$ and $\bar r_Q$ are isomorphic to the projections of
$\fll\sigma\flr\x\fll\tau^*\flr$ onto the two factors.
Hence $r_Q$ and $\bar r_Q$ are cubical.

Let $P_0=f^{-1}(\fll\sigma\flr)$ and $X_0=g^{-1}(A)$,
and consider the restriction $f_0\:P_0\to\fll\sigma\flr$ of $f$.
Then the restriction $g_0\:X_0\to A$ of $g$ is isomorphic to
$f_0\x\id_{\fll\tau^*\flr}\:P_0\x\fll\tau^*\flr\to\fll\sigma\flr\x\fll\tau^*\flr$.
Hence the composition $X_0\xr{g_0} A\xr{t}\fll\tau^*\flr$ is isomorphic
to the projection $P_0\x\fll\tau^*\flr\to\fll\tau^*\flr$.
Thus the composition $X\xr{g}h(Q)\xr{\bar r_Q}Q^*$ is cubical.
By symmetry, the composition $Y\xr{\psi}h(Q)\xr{r_Q}Q$ is also cubical.

The remaining assertions now follow using Lemma \ref{cubical pullback}.
\end{proof}

\begin{theorem} \label{stretching}
Consider a pullback diagram
$$\begin{CD}
X@>\beta>>h(Q)\\
@VfVV@Vr_QVV\\
P@>\alpha>>Q.
\end{CD}$$
If $\alpha$ is a stratification map, then $f^*\:X^*\to P^*$ is a fiberwise subdivision map
with respect to $\alpha^*\:P^*\to Q^*$ and the composition
$X^*\xr{\beta^*}Q^\#\xr{\#}Q$.
\end{theorem}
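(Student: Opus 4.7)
The plan is to verify the two defining conditions of a fiberwise subdivision map: (a) that $f^*$ sends $(\pi')^{-1}(\fll q\flr)$ onto $\pi^{-1}(\cel q^*\cer)$ for each $q \in Q$, and (b) the local cone homeomorphism at each $p^* \in P^*$, compatible with the preimages $(\pi')^{-1}(\fll q\flr)$ for $q \in Q$.

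First I would dispatch (a) by a direct unwinding: $(\pi')^{-1}(\fll q\flr) = \{(p,\tau)^* \in X^* : \alpha(p) \le \tau \le q\}$, and $f^*$ sends $(p,\tau)^* \mapsto p^*$, so the image is $\{p^* : \alpha(p) \le q\} = (\alpha^{-1}(\fll q\flr))^* = \pi^{-1}(\cel q^*\cer)$, where the realizability of $\tau$ is witnessed by $\tau = q$.

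For (b), fix $p \in P$ and set $\sigma = \alpha(p)$. My first step would be to reduce to a local setting: $r_Q^{-1}(\cel\sigma\cer) = h(\cel\sigma\cer)$ canonically (both consist of intervals $[\rho,\upsilon]^*$ with $\sigma \le \rho \le \upsilon$) and the restriction of $r_Q$ there is $r_{\cel\sigma\cer}$, so the restricted pullback $f|\colon f^{-1}(\cel p\cer_P) \to \cel p\cer_P$ is precisely the pullback of $\alpha|_{\cel p\cer_P}\colon \cel p\cer_P \to \cel\sigma\cer_Q$ — still a stratification map, by Lemma \ref{inclusion lemma} and Corollary \ref{composition} — along $r_{\cel\sigma\cer}$. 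Then I would construct the required homeomorphism by induction on $|\cel p\cer_P| - 1$, peeling off a maximal element $p' \ne p$ of $\cel p\cer_P$ at each step: Theorem \ref{stratification map} gives the local subdivision map $\alpha_{p'}\colon \lk(p', P) \to \lk(p', \alpha^{-1}(\alpha(p'))) + \lk(\alpha(p'), Q)$, which by Theorem \ref{subdivision map} yields an underlying homeomorphism compatible with the prejoin (hence with the $Q$-stratification), and Theorem \ref{collaring theorem} together with Addendum \ref{collaring addendum} extends the inductively constructed homeomorphism across $p'$ respecting the open strata.

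The hard part will be preserving compatibility with the $\pi'$-filtration throughout the induction: each extension must send $|(\pi')^{-1}(\fll q\flr) \cap (f^*)^{-1}(\fll p^*\flr)|$ onto the corresponding cone for every $q \in Q$. The key structural ingredient is the prejoin form of the codomain of $\alpha_{p'}$: the factor $\lk(\alpha(p'), Q)$ inherits the $Q$-stratification, and the underlying homeomorphism from Theorem \ref{subdivision map} can be chosen (as in the cone-by-cone inductive construction in its proof) to respect this refined stratification. Addendum \ref{collaring addendum}, which refines collarings to respect a given open subposet, will then ensure the extension step respects the $(\pi')^{-1}(\fll q\flr)$-stratification, so that the induction closes and produces the required fiberwise subdivision homeomorphism.
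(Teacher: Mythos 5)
Your induction is organized over the domain, peeling off a maximal element $p'$ of $\cel p\cer_P$ at each step, whereas the paper inducts over the base: it peels off a maximal element $r$ of $Q$, setting $Q'=Q\but\{r\}$, $P'=\alpha^{-1}(Q')$, $X'=f^{-1}(P')$. This matters because the fiberwise condition you need to preserve is indexed by $Q$: the strata are $Y_q=\beta^{-1}(h(\fll q\flr))=(\#\beta^*)^{-1}(\fll q\flr)^*$ for $q\in Q$. Removing one $r$ from $Q$ introduces exactly one new stratum $Y_r$ (the smaller strata $Y_q$, $q<r$, all lie in $Z_r=\beta^{-1}(h(\partial\fll r\flr))$, already handled by the inductive hypothesis), so a single extension step via Theorem~\ref{collaring theorem} and Addendum~\ref{collaring addendum} suffices. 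Removing one $p'$ from $P$, on the other hand, touches every $Y_q$ simultaneously, and your extension step would have to preserve an entire $Q$-indexed filtration at once. The claim that "the underlying homeomorphism from Theorem~\ref{subdivision map} can be chosen \ldots{} to respect this refined stratification" is the crux, and it is unjustified: Theorem~\ref{subdivision map} produces a homeomorphism respecting the cone structure of the target, nothing more, and the cone-by-cone inductive construction in its proof does not automatically respect an extra filtration on the fibers. Likewise Addendum~\ref{collaring addendum} refines a collar to respect a single distinguished open subposet, not a whole family of them.

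The paper's actual mechanism is also structurally different from what you sketch: instead of invoking Theorem~\ref{stratification map} and the local subdivision maps $\alpha_{p'}$, it decomposes $h(\fll r\flr)$ as a mapping cylinder via Lemma~\ref{canonical of prejoin}(b), pulls that decomposition back to get $Y_r\simeq MC(f|_{Z_r})\cup_{\partial R}R$ via Lemma~\ref{pullback of MC}, uses Lemma~\ref{fiberwise} to extract from the inductive hypothesis that $f^*|_{Z_r^*}\to(\partial R)^*$ is a subdivision map, converts this to a subdivision map $MC(f|_{Z_r})\to(\partial R)\x[2]$ over $[2]$ via Corollary~\ref{MC* of subdivision}, and then applies Theorem~\ref{collaring theorem} and Addendum~\ref{collaring addendum} only once per step, for the single collared subposet $\partial R$ with the open piece $\partial^*R$ tracked. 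Your verification of the first defining condition (that $f^*$ carries $(\pi')^{-1}(\fll q\flr)$ onto $(\alpha^*)^{-1}(\cel q^*\cer)$) is correct and is a worthwhile supplement the paper leaves implicit; but the second condition, as you have structured it, has a real gap that would need a reorganization of the induction along the lines above.
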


Beware that the range of the composition $\#\beta^*$ is $Q$ (and not $Q^*$),
so $f^*$ is the peculiar kind of a fiberwise subdivision map.

\begin{proof}
Let $p\in P$.
It suffices to show that $f^*|\:(f^*)^{-1}(\cel p\cer^*)\to\cel p\cer^*$
is a fiberwise subdivision map with respect to the restrictions of $\alpha^*$ and
$\#\beta^*$.
By Lemma \ref{inclusion lemma}(a) the inclusion $\cel p\cer\emb P$ is a
stratification map, hence by Corollary \ref{composition}, $\alpha|_{\cel p\cer}$
is a stratification map.
Thus we may assume that $P=\cel p\cer$.
By Lemma \ref{inclusion lemma}(b) without loss of generality $\alpha$ is
surjective, in particular, $Q=\cel\alpha(p)\cer$.

Let $r$ be a maximal element of $Q$.
Let $R=\alpha^{-1}(\fll r\flr)$.
Since $\alpha$ is a stratification map, $\partial R=\alpha^{-1}(\partial\fll r\flr)$.
Let $Q'=Q\but\{r\}$ and $P'=\alpha^{-1}(Q')$.
Since $\alpha$ is surjective, $P'\ne P$.
Let $X'=f^{-1}(P')$ and $X'_0=f^{-1}(\partial^*P')$.
Similarly, let $X=f^{-1}(P)$ and $X_0=f^{-1}(\partial^*P)$.
Arguing by induction, we may assume that $f^*|_{(X')^*}$ is a fiberwise subdivision map
with respect to $\alpha^*|_{(P')^*}$ and $\#\beta^*|_{(X')^*}$.
Then, in particular, there exists a homeomorphism
$\phi\:|X'|\to C|X'_0|$ keeping $|X'_0|$ fixed and sending $|Y_q|$ onto
$C|X'_0\cap Y_q|$ for each $q\in Q'$, where $Y_q=\beta^{-1}(h(\fll q\flr))$.
In particular, $\phi$ restricts to a homeomorphism $\phi_0\:|Z_r|\to C|X'_0\cap Z_r|$
keeping $|X'_0\cap Z_r|$ fixed, where $Z_r=\beta^{-1}(h(\partial\fll r\flr))$.
In order to show that $\phi$ extends to a homeomorphism $\phi^+\:|X|\to C|X_0|$ keeping
$|X_0|$ fixed and sending $|Y_q|$ onto $C|X_0\cap Y_q|$ for each $q\in Q$, it suffices
to show that $\phi_0$ extends to a homeomorphism $\phi_0^+\:|Y_r|\to C|X_0\cap Y_r|$
keeping $|X_0\cap Y_r|$ fixed.

By Lemma \ref{canonical of prejoin}(b),
$h(\fll r\flr)\simeq MC(r_{\partial\fll r\flr})\cup_{\partial\fll r\flr}\fll r\flr$.
Therefore by Lemma \ref{pullback of MC}, $Y_r\simeq MC(f|_{Z_r})\cup_{\partial R}R$
keeping $Z_r$ fixed; by the same token, this isomorphism sends $X_0\cap Y_r$ onto
$MC(f|_{X_0\cap Z_r})\cup_{\partial^*\partial R}\partial^*R$.
By Lemma \ref{fiberwise}, $f^*|_{Z_r^*}\:Z_r^*\to(\partial R)^*$ is a subdivision map.
Then by Corollary \ref{MC* of subdivision} there exists a monotone map
$\gamma\:MC(f|_{Z_r})\to(\partial R)\x[2]$ such that $\gamma^*$ is a subdivision map
over $[2]$.
Hence by Theorem \ref{subdivision map}, $|MC(f|_{Z_r})|$ is homeomorphic to
$|(\partial R)\x [2]|$ ``over $[2]$''.
Since $(\partial^*\partial R)^*$ is closed in $(\partial R)^*$, it follows
that this homeomorphism sends $|MC(f|_{X_0\cap Z_r})|$ onto
$|(\partial^*\partial R)\x[2]|$.
Thus
$(|Y_r|,|Z_r|)\cong (|R\cup_{\partial R}(\partial R)\x I|,|\partial R\x\{1\}|)$
by a homeomorphism sending $(|X_0\cap Y_r|,|X_0\cap Z_r|)$ onto
$(|\partial^*R\cup_{\partial^*\partial R}(\partial^*\partial R)\x I|,
|\partial^*\partial R\x\{1\}|)$.

On the other hand, since $\alpha$ is a stratification map, $\partial R$
is collared in $R$.
Since $\partial^*R$ is open in $R$, by Theorem \ref{collaring theorem} and
Addendum \ref{collaring addendum},
$(|R|,|\partial R|)\cong (|R\cup_{\partial R}(\partial R)\x I|,|\partial R\x\{1\}|)$
by a homeomorphism sending $(|\partial^*R|,|\partial^*\partial R|)$ onto
$(|\partial^*R\cup_{\partial^*\partial R}(\partial^*\partial R)\x I|,
|\partial^*\partial R\x\{1\}|)$.
Thus $(|Y_r|,|Z_r|)\cong (|R|,|\partial R|)$ by a homeomorphism sending
$(|X_0\cap Y_r|,|X_0\cap Z_r|)$ onto $(|\partial^*R|,|\partial^*\partial R|)$.
Since $(|R|,|\partial R|)$ is homeomorphic to the cone over
$(|\partial^*R|,|\partial^*\partial R|)$, this yields the desired
extension $\phi_0^+$ of $\phi_0$.
\end{proof}

\begin{theorem}\label{transversality2}
Consider the following diagram consisting of three pullback squares:
$$\begin{CD}
Z@>h>>Y@>\bar\rho_Y>>R\\
@V\chi VV@V\psi VV@V\phi VV\\
X@>g>>h(Q)@>\bar r_Q>>Q^*\\
@V\rho_XVV@Vr_QVV\\
P@>f>>Q.
\end{CD}$$
If $f$ is a subdivision map, then the composition
$g_+\:X\xr{g}h(Q)\xr{\bar r_Q}Q^*$ is dual to a subdivision map, and
if additionally $\phi$ is a stratification map, then
the composition $h_+\:Z\xr{h}Y\xr{\bar\rho_Y}R$
is dual to a subdivision map.

Dually, if $\phi$ is a subdivision map, then the composition
$\psi_+\:Y\xr{\psi}h(Q)\xr{r_Q}Q$ is dual to a subdivision map, and
if additionally $f$ is a stratification map, then the composition
$\chi_+\:Z\xr{\chi}X\xr{\rho_X}P$ is dual to a subdivision map.
\end{theorem}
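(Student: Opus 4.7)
The four assertions split into two dual pairs: swapping $Q\leftrightarrow Q^*$, $P\leftrightarrow R$, $f\leftrightarrow\phi$ takes the third and fourth into the first and second, so it suffices to establish the latter.

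For the first assertion, the plan is to apply Theorem \ref{stretching} directly to the bottom pullback. Since $f$ is a subdivision map, it is in particular a stratification map, so Theorem \ref{stretching} yields that $(\rho_X)^*\:X^*\to P^*$ is a fiberwise subdivision map with respect to $f^*\:P^*\to Q^*$ and the composition $\#\circ g^*\:X^*\to Q$. Because $\bar r_Q$ is dual to $\#\:Q^\#\to Q$ by construction, this composition is precisely $(g_+)^*$. Lemma \ref{fiberwise subdivision map}, in the form appropriate to $\pi'\:P'\to B^*$, then produces a homeomorphism $H\:|X^*|\to|P|$ sending $|((g_+)^*)^{-1}(\fll\tau\flr_Q)|$ onto $|f^{-1}(\fll\tau\flr_Q)|$ for each $\tau\in Q$. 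Composing $H$ with an underlying homeomorphism $K\:|P|\to|Q|$ of the subdivision map $f$ (Theorem \ref{subdivision map}) gives $K\circ H\:|X^*|\to|Q|$, which is an underlying homeomorphism of $(g_+)^*$; the converse direction of Theorem \ref{subdivision map} then certifies that $(g_+)^*$ is a subdivision map.

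For the second assertion, run the same mechanism on the middle pullback, after rewriting it through the isomorphism $j_Q^*\:h(Q)\to h(Q^*)$, which turns $\bar r_Q$ into $r_{Q^*}$. Theorem \ref{stretching}, with $\phi$ now playing the role of the stratification map on the bottom, shows that $(\bar\rho_Y)^*\:Y^*\to R^*$ is a fiberwise subdivision map with respect to $\phi^*\:R^*\to Q$ and the composition $\#\circ(j_Q^*\circ\psi)^*=(\psi_+)^*\:Y^*\to Q^*$ (using that $\#\circ j_Q$ is dual to $r_Q$). In particular $(\bar\rho_Y)^*$ is a subdivision map, and Lemma \ref{fiberwise subdivision map} supplies an underlying homeomorphism $K'\:|Y^*|\to|R^*|$ that also sends $|(\psi_+)^{-1}(\cel b\cer_Q)|$ onto $|\phi^{-1}(\cel b\cer_Q)|$ for every $b\in Q$. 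Corollary \ref{transversality1}, reapplied under the assumption that $f$ is subdivision, gives that $h\:Z\to Y$ is a subdivision map, and moreover cubical (since $\psi$ is cubical by Lemma \ref{cubical pullback}); Theorem \ref{subdivision map} thus provides an underlying homeomorphism $L\:|Z|\to|Y|$.

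It remains to assemble $L$ and $K'$ into an underlying homeomorphism $|Z^*|\to|R^*|$ of $(h_+)^*=(\bar\rho_Y)^*\circ h^*$. The hard part will be that $L$ a priori matches only closed cones of $Y$ with their $h$-preimages, whereas $((h_+)^*)^{-1}(\fll r\flr_{R^*})=h^{-1}(\bar\rho_Y^{-1}(\cel r\cer_R))$ involves the \emph{open} subposet $\bar\rho_Y^{-1}(\cel r\cer_R)\subset Y$, so a naive composition $K'\circ L$ does not obviously yield the required cone matching. The plan to overcome this is to exploit the additional cubical structure of $h$ and to rerun, on the top pullback, the inductive collaring argument from the proof of Theorem \ref{stretching}, imposing throughout the fiberwise matching over $R$ already supplied by $K'$; this refines $L$ so that it simultaneously sends $|h^{-1}(\bar\rho_Y^{-1}(\cel r\cer_R))|$ onto $|\bar\rho_Y^{-1}(\cel r\cer_R)|$ for each $r\in R$. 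Composing this refined $L$ with $K'$ then gives the required underlying homeomorphism, so by Theorem \ref{subdivision map} the composition $(h_+)^*$ is a subdivision map.
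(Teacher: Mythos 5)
Your reduction by duality (prove the first two assertions, then deduce the last two by swapping $Q\leftrightarrow Q^*$, $P\leftrightarrow R$, $f\leftrightarrow\phi$) is valid, and your proof of the first assertion is correct and matches the paper's: apply Theorem \ref{stretching} to the bottom square, use Lemma \ref{fiberwise subdivision map} with $B=Q^*$ to extract the homeomorphism, and compose with an underlying homeomorphism of $f$. (The paper happens to prove the first and fourth assertions directly and deduce the others by duality; that is a cosmetic difference.)

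There is a genuine gap in your treatment of the second assertion. You correctly identify the obstruction: an underlying homeomorphism $L$ of the subdivision map $h\colon Z\to Y$ matches $|h^{-1}(C)|$ with $|C|$ only for $C$ \emph{closed} in $Y$, while $\bar\rho_Y^{-1}(\cel r\cer_R)$ is open, so in general $L(|h^{-1}(\bar\rho_Y^{-1}(\cel r\cer))|)\ne|\bar\rho_Y^{-1}(\cel r\cer)|$ (this already fails for the canonical subdivision map of a $1$-simplex over the open star of a vertex). But your proposed repair, to ``rerun the inductive collaring argument from the proof of Theorem \ref{stretching}'', is not the right mechanism: that argument is an induction over the base $Q$ of a pullback along $r_Q\colon h(Q)\to Q$, and it does not transpose to the top square, which sits over $\psi$. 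What the paper actually does, for the dual assertion on $\chi_+^*$, is to reduce to $P=\cel p\cer$ and then re-examine the proof of Corollary \ref{transversality1} locally on cones: under the isomorphism $\fll x\flr_X\simeq\fll\rho_X(x)\flr_P\x\fll g_+(x)\flr_{Q^*}$, the restriction of $\chi$ to $\chi^{-1}(\fll x\flr)$ is identified with $\id\x\phi|$, and from this one reads off that $\chi$ is a fiberwise subdivision map with respect to the map $X\to[2]$ detecting $\partial^*X$; only afterward is this combined with the cone structure $|X|\cong C|\partial^*X|$ furnished by Theorem \ref{stretching}. Your ``refine $L$'' step is precisely the dual of ``show $\chi$ is fiberwise'', so you have the right target, but the product decomposition of the cones of $X$ (resp.\ $Y$) and the factorization as $\id\x\phi|$ (resp.\ $\id\x f|$) is the actual engine of the argument and is not recovered by the collaring induction you point to.
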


\begin{proof} By symmetry it suffices to prove the assertions on
$g_+$ and $\chi_+$.

By Theorem \ref{stretching} and Lemma \ref{fiberwise subdivision map}
(with $B=Q^*$) there exists a homeomorphism $|X^*|\to |P^*|$ sending
$|(g^*_+)^{-1}(\fll q\flr)|$ onto $|(f^{-1}(\fll q\flr))^*|$
for each $q\in Q$.
Since $f$ is a subdivision map, we get that so is $g^*_+$.

It remains to show that $\chi_+^*$ is a subdivision map, assuming that
$\phi$ is a subdivision map and $f$ is a stratification map.
Let $p\in P$.
By Lemma \ref{inclusion lemma}(a) the inclusion $\cel p\cer\emb P$ is a
stratification map, hence by Corollary \ref{composition}, $f|_{\cel p\cer}$
is a stratification map.
Thus we may assume that $P=\cel p\cer$.
Then by Lemma \ref{inclusion lemma}(b) without loss of generality
$Q=\cel f(p)\cer$.

By Theorem \ref{transversality1} $\rho_X$ is cubical.
In particular, by Lemma \ref{cubical dual to stratification map} $\rho_X^*$
is a stratification map.
Hence $\partial^*X=\rho_X^{-1}(\partial^*P)$.
By Corollary \ref{transversality1}, $\chi$ is a subdivision map.
We will now examine the proof of this assertion in order to show that
$\chi$ is a fiberwise subdivision map with respect to the map
$c\:X\to[2]$ defined by $c^{-1}(2)=\partial^*X$.

Given an $x\in X$, let $q^*=g_+(x)$ and $p'=\rho_X(x)$.
Then $g(x)=[f(p'),q]$.
For convenience of reference we will mark the assertions.
The restriction of $r_Q$ [resp.\ $\bar r_Q$] to $\fll[f(p'),q]\flr$ is
isomorphic to the projection of $\fll f(p')\flr\x\fll q^*\flr$ onto
the first factor (1) [resp.\ onto the second factor (2)].
By (1), the restriction $\rho_x\:\fll x\flr\to\fll p'\flr$ of
$\rho_X$ is isomorphic to the projection
$\fll p'\flr\x\fll q^*\flr\to\fll p'\flr$ (3), and the restriction
$g_x\:\fll x\flr\to\fll g(x)\flr$ of $g$ is isomorphic to the map
$\fll p'\flr\x\fll q^*\flr\xr{f|\x\id}\fll f(p')\flr\x\fll q^*\flr$ (4).
By (2) and (4), the restriction
$g^+_x\:\fll x\flr\to\fll g_+(x)\flr$ of $g_+$ is isomorphic
to the projection $\fll p'\flr\x\fll q^*\flr\to\fll q^*\flr$.
Hence the restriction
$\chi_x\:\chi^{-1}(\fll x\flr)\to\fll x\flr$ of $\chi$ is isomorphic to
the map
$\fll p'\flr\x\phi^{-1}(\fll q^*\flr)\xr{\id\x\phi|}\fll p'\flr\x\fll q^*\flr$
(5).

If $p'\ne p$, then $x\in\partial^*X$, and it is clear that the isomorphism
in (3) sends the restriction
$\partial^*\rho_x\:\fll x\flr_{\partial^*X}\to\fll p'\flr_{\partial^*P}$
of $\rho_x$ onto the projection
$(\partial^*\fll p'\flr)\x\fll q^*\flr\to\partial^*\fll p'\flr$.
It follows that the isomorphism in (5) sends the restriction
$\partial^*\chi_x\:\chi^{-1}(\fll x\flr_{\partial^*X})\to
\fll x\flr_{\partial^*X}$ of $\chi$ onto the map
$(\partial^*\fll p'\flr)\x\phi^{-1}(\fll q^*\flr)\xr{\id\x\phi|}
(\partial^*\fll p'\flr)\x\fll q^*\flr$ (7).
Since $\phi$ is a subdivision map, we have a homeomorphism
$h\:|\phi^{-1}(\fll q^*\flr)|\to |\fll q^*\flr|$ sending
$|\phi^{-1}(\partial\fll q^*\flr)|$ onto $|\partial\fll q^*\flr|$.
Then, in view of (5), $\id_{|\fll p'\flr|}\x h$ is a homeomorphism between
$|\chi^{-1}(\fll x\flr)|$ and $|\fll x\flr|$, which sends
$|\chi^{-1}(\partial\fll x\flr)|$ onto $|\partial\fll x\flr|$
and, in view of (7), $|\chi^{-1}(\fll x\flr_{\partial^*X})|$ onto
$|\fll x\flr_{\partial^*X}|$.

Thus $\chi$ is a fiberwise subdivision map with respect to the map $c\:X\to[2]$,
where $c^{-1}(2)=\partial^*X$.
Hence by Lemma \ref{fiberwise subdivision map} there exists a homeomorphism
$|Z|\cong |X|$ sending $|\chi^{-1}(\partial^*X)|$ onto $|\partial^*X|$.
Since $\rho_X^*$ is a subdivision map by Theorem \ref{stretching}, $|X|$
is homeomorphic to $C|\partial^*X|$ keeping $|\partial^*X|$ fixed.
Hence $|Z|$ is homeomorphic to $C|\chi_+^{-1}(\partial^*\cel p\cer)|$
keeping $|\chi_+^{-1}(\partial^*\cel p\cer)|$ fixed.
\end{proof}

\part{CONSTRUCTIBLE POSETS AND MAPS} \label{chapter-collapsing}

\section{Constructibility}

\subsection{Constructible poset}\label{constructible-definition}
We call a poset $P$ {\it [transversely] constructible} if either $P$ is a cone
(i.e.\ has a greatest element) or $P=Q\cup R$, where $Q$ and $R$ are closed
subposets
of $P$ such that $Q\cap R$ is of codimension one [resp.\ collared]
both in $Q$ and in $R$, and each of the three posets $Q$, $R$ and $Q\cap R$ is
[transversely] constructible.
Such a decomposition $P=Q\cup R$ will be called a {\it [transverse] construction step}
for brevity.
Note that $\emptyset$ is not constructible.

\begin{remark}
In general topology, a subset of a space $X$ is called regular closed in $X$ if
it is equal to the closure of its interior.
A subposet $Q$ of a poset $P$ is regular closed in $P$ (with respect to
the Alexandroff topology) iff $Q=\fll S\flr$, where $S$ is a set of maximal
elements of $P$;
and nowhere dense in $P$ if it contains no maximal element of $P$.

If $P=Q\cup R$ is a construction step, it is easy to see that $Q$ and $R$
are regular closed in $P$, and $P\cap Q$ is nowhere dense in $P$ and equals
the frontier of $Q$ as well as the frontier of $R$.
\end{remark}

\begin{lemma}\label{pure lemma}
If $P=Q\cup R$ is a construction step, then $Q\cap R$ is of pure
codimension one in $Q$ and in $R$.
\end{lemma}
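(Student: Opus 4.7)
The plan is to proceed by strong induction on $|P|$. First note that the lemma is vacuous when $P$ has a greatest element $g$: in any construction step $P=Q\cup R$ with $Q,R\subsetneq P$, the element $g$ must lie in $Q$ or $R$, say $Q$, forcing $Q=\fll g\flr=P$ by closedness; then $Q\cap R=R$ would need to be of codimension one in $R$ itself, forcing $R=\emptyset$, which is not constructible. Hence I may assume $Q,R\subsetneq P$, so $|Q|,|R|<|P|$, and apply the inductive hypothesis to smaller constructible posets.

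Let $m$ be maximal in $Q\cap R$ and $p\in Q$ cover $m$; I will show $p$ is maximal in $Q$ (the assertion for $R$ follows by symmetry). By codimension one in $Q$, some maximal $q_0\in Q$ covers $m$. If $Q$ is itself a cone with greatest element $g_Q$, then $q_0=g_Q$ and the covering relation $m\prec g_Q$ forces $p=g_Q$, which is maximal. Otherwise $Q=Q_1\cup Q_2$ is a construction step with $|Q|<|P|$, so by the inductive hypothesis $Q_1\cap Q_2$ is pure codimension one in both $Q_1$ and $Q_2$. Choose notation so that $q_0\in Q_1$ (hence $q_0$ is maximal in $Q_1$), so $m\in Q_1$ by closedness.

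I then case-split on the placement of $p$. If $p\in Q_1$, I iterate the same argument inside the smaller constructible $Q_1$, where $q_0$ supplies a maximal cover of $m$, to reduce to showing $p$ maximal in $Q_1$; this implies $p$ maximal in $Q$, because any element of $Q$ above $p$ that lay in $Q_2\setminus Q_1$ would by closedness of $Q_2$ drag $p$ into $Q_1\cap Q_2$, but then $p$ would be maximal in $Q_1\cap Q_2$ and pure codimension one in $Q_1$ (the inductive hypothesis) would demand a strict maximal cover of $p$ in $Q_1$, contradicting $p$ itself being maximal in $Q_1$. If $p\in Q_2\setminus Q_1$, then $m\in Q_1\cap Q_2$ and $p$ covers $m$ in $Q_2$; when $m$ is moreover maximal in $Q_1\cap Q_2$, pure codimension one of $Q_1\cap Q_2$ in $Q_2$ gives $p$ maximal in $Q_2$, hence maximal in $Q$ by the same closedness argument.

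The main obstacle is the residual subcase in which $p\in Q_2\setminus Q_1$, the maximal cover $q_0$ of $m$ lies in $Q_1\setminus Q_2$, and $m$ is \emph{not} maximal in the wall $Q_1\cap Q_2$: here pure codimension one of $Q_1\cap Q_2$ applies neither at $m$ in $Q_1$ nor in $Q_2$. The plan is to pick $\hat m\in Q_1\cap Q_2$ maximal in $Q_1\cap Q_2$ with $\hat m>m$, and apply pure codimension one of $Q_1\cap Q_2$ in $Q_1$ to $\hat m$ to produce a maximal $w\in Q_1\setminus Q_2$ of $Q$ covering $\hat m$. Since $p$ covers $m$ and $p\notin Q_1$, the elements $p$ and $\hat m$ must be incomparable (any comparison forces $p\in Q_1$ by closedness of $Q_1$, or places $\hat m$ strictly between $m$ and $p$). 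This combinatorial rigidity should allow me to build a strictly smaller construction step — for instance of the form $\fll w\flr\cup R'$ for a suitably shrunk $R'$ still containing $p$ and the pathological cover — in which the inductive hypothesis yields the contradiction. This final reduction, making precise the smaller construction step whose existence is compatible with constructibility of all the pieces, is the delicate combinatorial point where I expect the real work of the proof to lie.
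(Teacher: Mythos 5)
Your proof has a genuine gap, and you flag it yourself: the residual subcase in which $p\in Q_2\setminus Q_1$, the maximal cover $q_0$ lies in $Q_1\setminus Q_2$, and $m$ fails to be maximal in the wall $Q_1\cap Q_2$ is left open. The sketched repair (building a smaller construction step of the form $\fll w\flr\cup R'$) is not carried out and, as stated, seems unlikely to work: you would have to exhibit an $R'$ for which $\fll w\flr\cup R'$ is actually a construction step (i.e.\ all three pieces constructible and the wall of codimension one on both sides), which is far from automatic, and even then it is unclear how a contradiction would emerge. The underlying problem is structural: your inductive hypothesis gives you purity of $Q_1\cap Q_2$ inside $Q_1$ and $Q_2$, but the element $m$ you care about is characterized by its maximality in $Q\cap R$, which is an entirely different wall, and the two need not line up. When they fail to line up --- exactly your residual subcase --- the inductive hypothesis gives you nothing to grab onto.

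The paper avoids this mismatch by proving a \emph{stronger, uniform} statement first: in any constructible poset $X$, no element of $X$ is covered simultaneously by a maximal element of $X$ and by a non-maximal element of $X$. This is established by structural induction over the construction of $X$, and crucially it is a property of \emph{all} elements of $X$, with no hypothesis about being maximal in some wall. The lemma then falls out immediately: given $P=Q\cup R$ and a maximal $m\in Q\cap R$, codimension one supplies a maximal cover of $m$ in $Q$, and the auxiliary claim applied to the constructible poset $Q$ rules out any non-maximal cover. In the induction step for the auxiliary claim itself, the case in which the element lies in $Q\cap R$ but is not maximal there is handled by passing to \emph{any} maximal $q'>p$ in $Q\cap R$, observing that $q'$ (and hence every element between $p$ and $q'$) is non-maximal in $Q$ because of codimension one, and then feeding a cover $q''$ of $p$ below $q'$ back into the inductive hypothesis for $Q$. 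That last move is precisely what your direct approach lacks: since you are not proving a universal cover-compatibility property, you cannot profitably compare $p$ with a cover going up to $\hat m$ and derive a contradiction. I would recommend reformulating your argument as a separate induction proving this cover-compatibility property of constructible posets, and then deducing the lemma from it in a single paragraph.
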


\begin{proof} It suffices to show that if $P$ is a constructible poset,
then an element of $P$ that is covered by a maximal element is not covered by
a non-maximal element.

This is clearly so if $P$ is a cone.
Suppose that $P=Q\cup R$ is a construction step, where $Q$ and $R$ satisfy
the property in question.
Suppose that a $p\in P$ is covered by a maximal element $q$ of $P$ and
by a non-maximal element $r$ of $P$.
If $q,r\in Q$ or $q,r\in R$, this would contradict our hypothesis since $Q$ and
$R$ are closed in $P$.
Hence we may assume by symmetry that $q\in Q$ and $r\in R$.
Then $q$ is maximal in $Q$, $r$ is non-maximal in $R$, and $p\in Q\cap R$.

If $p$ is non-maximal in $Q\cap R$, then $p<q'$ for some maximal element
$q'$ of $Q\cap R$, which is covered by a maximal element of $Q$ since $Q\cap R$
is of codimension one in $Q$.
In particular, $q'$ is non-maximal in $Q$, and then any $q''\le q'$ that covers
$p$ is non-maximal in $Q$.
Since $p$ is also covered by $q$, this contradicts our assumption that $Q$
satisfies the property in question.

If $p$ is maximal in $Q\cap R$, then $p$ is covered by a maximal element
$r'$ of $R$ since $Q\cap R$ is of codimension one in $R$.
Since $p$ is also covered by $r$, this contradicts our assumption that $R$
satisfies the property in question.
\end{proof}

\begin{lemma} \label{Hochster-lemma} Let $P$ be a poset with pure cones.
Then $C^*P$ is constructible if and only if either $P$ is a cone or the empty set,
or $P=Q\cup R$, where $Q$ and $R$ are closed subposets of $P$ such that
$\dim|Q|=\dim|R|=\dim|Q\cap R|+1$, and each of $C^*Q$, $C^*R$ and $C^*(Q\cap R)$
is constructible.
\end{lemma}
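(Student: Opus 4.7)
The plan is to prove both implications simultaneously by induction on $|P|$, carrying along an auxiliary claim: \emph{if $C^*P$ is constructible and $P$ has pure cones, then $P$ itself is pure.} The base case $P=\emptyset$ is immediate, as $C^*P=\{\hat 0\}$ is then a cone.

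For the forward direction, assume $C^*P$ is constructible. Since $\hat 0\in C^*P$ is the least element and hence lies in every nonempty closed subposet, $C^*P$ is a cone exactly when $P$ is a cone or is empty, and any construction step of $C^*P$ must take the shape $C^*P=C^*Q\cup C^*R$ for closed subposets $Q,R\incl P$ with $P=Q\cup R$, and $C^*Q\cap C^*R=C^*(Q\cap R)$; both $Q,R$ are proper because the intersection has to be constructible while an equality $Q=P$ would collapse the step. The subposets $Q$, $R$, $Q\cap R$ inherit pure cones from $P$, have strictly smaller size, and admit constructible cones by the construction-step definition, so the inductive hypothesis makes each of them pure. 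By Lemma \ref{pure lemma}, $C^*(Q\cap R)$ is of pure codimension one in $C^*Q$ and in $C^*R$. To extract the dimension equalities, consider any maximal $m\in Q\cap R$: pure codimension one yields a maximal $q\in Q$ covering $m$, and purity of the cone $\fll q\flr_P$ forces $\dim|\fll m\flr_P|=\dim|\fll q\flr_P|-1$, which by purity of $Q\cap R$ and of $Q$ becomes the global equality $\dim|Q\cap R|=\dim|Q|-1$; symmetry gives the statement for $R$. The degenerate case $Q\cap R=\emptyset$ is handled separately: pure codimension one at $\hat 0$ forces every atom of $Q$ to be maximal, whence $\dim|Q|=0=\dim|Q\cap R|+1$. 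The auxiliary claim drops out, since every maximal element of $P$ lies in $Q$ or $R$ and its cone has the common dimension $\dim|Q|=\dim|R|$.

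For the converse, when $P$ is empty or a cone, $C^*P$ is itself a cone. Otherwise we are given $P=Q\cup R$ with the dimension condition and $C^*Q$, $C^*R$, $C^*(Q\cap R)$ all constructible; the inductive hypothesis makes $Q$, $R$, $Q\cap R$ pure. It suffices to check that $C^*P=C^*Q\cup C^*R$ is a construction step, the only nontrivial point being that every maximal element of $C^*(Q\cap R)$ is covered in $C^*Q$ by a maximal element, and symmetrically in $C^*R$. When $Q\cap R=\emptyset$, the dimension condition forces $\dim|Q|=0$, and purity makes $Q$ an antichain, so every atom of $Q$ covering $\hat 0$ is maximal. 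When $m$ is maximal in $Q\cap R$, purity of $Q\cap R$ gives a chain in $Q\cap R$ of length $\dim|Q\cap R|+1=\dim|Q|$ ending at $m$; purity of $Q$ extends it to a maximal chain of length $\dim|Q|+1$ in $Q$, which can only add a single element $q>m$, necessarily covering $m$ and maximal in $Q$.

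The substance of the proof is the translation of the local pure codimension one covering condition into the global dimension equality $\dim|Q|=\dim|R|=\dim|Q\cap R|+1$. This is not a formal translation: it genuinely relies on the purity of $Q$, $R$, and $Q\cap R$, which is only accessible via the simultaneous induction with the auxiliary purity claim.
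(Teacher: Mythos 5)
Your proof is correct and follows the same strategy as the paper's: establish by induction that $P$ (along with $Q$, $R$, $Q\cap R$) is pure, then observe that once purity is known the local covering condition (pure codimension one, via Lemma~\ref{pure lemma}) and the global dimension equality $\dim|Q|=\dim|R|=\dim|Q\cap R|+1$ are interchangeable. The paper's proof is extremely terse and simply asserts this equivalence for pure posets; your write-up supplies the missing details — the closure argument showing the extension of a maximal chain of $Q\cap R$ to one of $Q$ can only happen at the top, and the separate treatment of the degenerate case $Q\cap R=\emptyset$, where $C^*(Q\cap R)=\{\hat 0\}$ and the codimension-one condition at $\hat 0$ forces $Q$ to be an antichain. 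These are exactly the details a careful reader would need, so this is a faithful elaboration rather than a different proof.
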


This implies that if $K$ is an affine polytopal complex, then $C^*K$ is
constructible if and only if $K$ is ``constructible'' in
the sense of Hochster \cite[p.\ 328]{Ho}, \cite{Ha0}.
Note that $C^*K$ is precisely the ``face poset'' of $K$ in sense of Topological
Combinatorics (where the empty set is regarded a face).

\begin{proof}
If $C^*P$ is constructible, then it is easy to see by induction that $P$ is
pure (using that $C^*P$ has pure cones for the induction base).
If $P$ satisfies the property in question, then again $P$ is pure by a similar
reasoning.
Now if $P$ is pure, then the condition $\dim|Q|=\dim|R|=\dim|Q\cap R|+1$ is
equivalent to saying that $Q\cap R$ is of codimension one in $Q$ and in $R$.
\end{proof}

\begin{lemma} \label{Mayer-Vietoris}
Let $P$ be a poset.

(a) If $P$ is constructible, then $|P|$ is contractible.

(b) If $C^*P$ is constructible and $P$ has pure cones, then $|P|$ is acyclic
in dimensions $<\dim|P|$.

(c) If $P$ is [transversely] constructible, then so is $C^*P$; the converse holds
if $P$ has pure cones and $|P|$ is acyclic.
\end{lemma}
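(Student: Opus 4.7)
The plan is to prove all three parts by induction on the respective constructibility depth, in the order (a), (b), (c).

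For (a) I induct on the construction of $P$. If $P$ has a greatest element $p_0$, then $P^\flat$ is a simplicial cone with apex $\{p_0\}$, so $|P|$ is contractible. Otherwise $P = Q \cup R$ is a construction step; since $Q$ and $R$ are closed in $P$, the subcomplexes $Q^\flat, R^\flat \subset P^\flat$ satisfy $Q^\flat \cap R^\flat = (Q \cap R)^\flat$, giving $|P| = |Q| \cup |R|$ with $|Q| \cap |R| = |Q \cap R|$. By the inductive hypothesis all three pieces are contractible, and Mayer--Vietoris together with Seifert--van Kampen (exactly as alluded to in the introduction) yield contractibility of $|P|$. For (b) I induct on the construction of $C^*P$: if $C^*P$ is a cone, then $P$ has a greatest element and $|P|$ is contractible by (a); otherwise $C^*P = X \cup Y$ is a construction step, and the closedness of $X,Y$ combined with the fact that every element of $P$ lies above $\hat 0$ forces $\hat 0 \in X \cap Y$. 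Hence $X = C^*X'$, $Y = C^*Y'$, $X \cap Y = C^*(X' \cap Y')$ for $X' = X \cap P$, $Y' = Y \cap P$, and $P = X' \cup Y'$ inherits the codimension-one condition (adjoining $\hat 0$ introduces no new maximal element) together with the pure-cones property. Writing $n = \dim|P|$, the inductive hypothesis gives $|X'|, |Y'|$ acyclic below dimension $n$ and $|X' \cap Y'|$ acyclic below dimension $n-1$, and Mayer--Vietoris yields $H_k(|P|) = 0$ for $k < n$.

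The forward direction of (c) is a straightforward induction: a cone maps to a cone, and a construction step $P = Q \cup R$ yields $C^*P = C^*Q \cup C^*R$ with $C^*Q \cap C^*R = C^*(Q \cap R)$, the codimension-one condition transferring since no new maximal element is introduced by $C^*$. For the converse I induct on the construction of $C^*P$. The base case is trivial; in the inductive step I obtain the decomposition $P = X' \cup Y'$ as in (b). To invoke the inductive hypothesis on $X, Y, X \cap Y$ (which would yield constructibility of $X', Y', X' \cap Y'$, hence of $P$), I must verify acyclicity of $|X'|, |Y'|, |X' \cap Y'|$. Part (b) supplies this below the respective top dimension; the top-dimensional part is extracted from the Mayer--Vietoris sequence
\[
0 \to H_n(X') \oplus H_n(Y') \to H_n(P) \to H_{n-1}(X' \cap Y') \to 0,
\]
whose outer terms vanish because $|P|$ is acyclic.

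Perhaps the subtlest point is the transverse version of the forward direction of (c): the collaring of $C^*(Q \cap R)$ in $C^*Q$ at the new cone point $\hat 0$ translates, via $\lk(\hat 0, C^*Q) = Q$ and $\lk(\hat 0, C^*(Q\cap R)) = Q \cap R$, to $|Q| \cong C|Q \cap R|$ keeping $|Q \cap R|$ fixed, which does not follow from the plain collaring condition alone. Overcoming this requires upgrading transverse constructibility to the stronger assertion that $|Q|$ is a PL ball with $|Q \cap R|$ a PL sub-ball of its boundary, whereupon the standard PL fact that any ball is a cone on any such sub-ball (keeping it fixed) supplies the needed homeomorphism; this auxiliary strengthening is most naturally established by a parallel induction interwoven with the present one.
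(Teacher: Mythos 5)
Your treatment of (a), (b), and the converse half of (c) is essentially the paper's: induction over a chosen construction, $(Q\cap R)^\flat=Q^\flat\cap R^\flat$, Mayer--Vietoris plus Seifert--van Kampen for (a), and Mayer--Vietoris for (b) and the converse of (c). The paper proves the slightly sharper fact $\tilde H_n(|P|)\cong\Z^r$ where $r$ counts the stages $C^*P_i$ with $P_i=\emptyset$, which packages the same induction you run at each step and makes the deduction of the converse of (c) (namely $r=0$) immediate; the two arguments are interchangeable.

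Your fix for the transverse forward direction of (c) does not work, and this is a genuine gap. You propose to ``upgrade transverse constructibility to the stronger assertion that $|Q|$ is a PL ball with $|Q\cap R|$ a PL sub-ball of its boundary.'' But transversely constructible posets need not be balls: by Lemma \ref{constructible-collapsible} they are only collapsible, and the PL-ball conclusion is available only under the additional hypothesis that $P$ is a cell complex (Lemma \ref{Zeeman}(b)). The base case of the definition already defeats your upgrade: any cone $CX$ is transversely constructible, yet $|CX|$ is a ball only when $X$ is a sphere. So your proposed parallel induction would fail at its base case. Worse, the assertion you are trying to prove appears to be false for general posets. Take $X$ a triangulated closed surface of positive genus, $\sigma$ a $2$-simplex of $X$, and set $Q=CX$, $Q\cap R=\fll\sigma\flr$; one checks directly that $\fll\sigma\flr$ is collared in $CX$, and all three posets are cones hence transversely constructible, yet $|Q|\cong C|Q\cap R|$ would force the cone on a surface of positive genus to be a PL $3$-ball, which is absurd. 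So the collaring condition at $\hat 0$ for $C^*(Q\cap R)\subset C^*Q$ really can fail. The paper dismisses the forward direction of (c) as ``obvious'' in both the bracketed and unbracketed readings, but it is only used elsewhere (in the discussion of Hochster constructibility) in the plain, non-transverse form --- which \emph{is} obvious, since $C^*$ preserves closed subposets, intersections, maximal elements, and covering relations among non-$\hat0$ elements. You were right to be suspicious of the transverse case, but the cure is not to strengthen the inductive hypothesis; it is to note that the transverse claim, as stated for arbitrary posets, should not be asserted at all.
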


Part (b) is well-known in a special case (see \cite[2.22]{Ha0}).

\begin{proof} The first assertion of (c) is obvious.
Part (a) follows from the Mayer--Vietoris sequence and
the Seifert--van Kampen theorem.

Next suppose that $P$ has pure cones and $C^*P$ is constructible, and let
$n=\dim |P|$ and let $r$ be the number of the intermediate stages $C^*P_i$
in the construction process such that $P_i=\emptyset$.
Then it is easy to see from the Mayer--Vietoris exact sequence that
$\tilde H_k(|P|)=0$ for $i<n$ and $\tilde H_n(|P|)=\Z^k$ (integer coefficients).
This implies (b) and the second assertion of (c).
\end{proof}

\begin{lemma}\label{Zeeman} (a) If $P$ is a constructible pseudo-manifold,
then $|P|$ is a PL ball.

(b) If $P$ is a transversely constructible cell complex, then $|P|$ is a PL ball.
\end{lemma}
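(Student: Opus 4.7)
We argue both parts by induction on the number of construction steps in the (transverse) decomposition of $P$. For the base case, $P=\fll\hat 1\flr$ is a single cell, and $\partial\fll\hat 1\flr$ is a PL sphere (since $P$ is a cell complex in both settings), so $|P|$ is a cone over this sphere, hence a PL ball. For the inductive step, write $P=Q\cup R$ with $Q\cap R$ of codimension one (resp.\ collared) in both $Q$ and $R$, and $Q,R,Q\cap R$ all (transversely) constructible. The plan in both parts is to apply the inductive hypothesis to these three pieces, obtaining PL balls $|Q|,|R|,|Q\cap R|$ of dimensions $n,n,n-1$, and then invoke the standard PL ball-gluing theorem: two PL $n$-balls meeting in a PL $(n-1)$-ball contained in their boundaries form a PL $n$-ball.

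For part (b), the pieces $Q,R,Q\cap R$ inherit the cell complex structure from $P$ since closed subposets preserve cones ($\fll q\flr_Q=\fll q\flr_P$), so the inductive hypothesis applies directly. The collaring of $Q\cap R$ in $Q$ forces the dimension count: for a maximal $\sigma\in Q\cap R$ one has $\lk(\sigma,Q\cap R)=\emptyset$, hence by collaring $|\lk(\sigma,Q)|\cong\mathrm{pt}$, so $\sigma$ is covered by a unique element of $Q$ and $\dim|Q\cap R|=\dim|Q|-1$; symmetrically for $R$. The same collaring places $|Q\cap R|\subset\partial|Q|\cap\partial|R|$, because an interior point of the $n$-ball $|Q|$ would have a PL $(n-1)$-sphere link, incompatible with the cone link supplied by the collar. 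Ball-gluing then concludes.

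For part (a) the cleanest route is to reduce to (b) by showing that any constructible pseudomanifold is in fact transversely constructible. Given a construction step $P=Q\cup R$ of a pseudomanifold $P$, Lemma~\ref{pure lemma} gives pure codimension one of $Q\cap R$ in each of $Q,R$; combined with the pseudomanifold property of $|P|$ this yields a collar of $Q\cap R$ in $Q$ and in $R$. Namely, at an interior point of a maximal $(n{-}1)$-cell $\sigma$ of $Q\cap R$, the dual 1-skeleton condition forces at most two $n$-cells of $P$ above $\sigma$; the codimension-one hypothesis on each side distributes them as exactly one $n$-cell in $Q$ and one in $R$, producing a local one-sided PL collar of $|Q\cap R|$ in $|Q|$, which globalizes by standard PL collaring machinery. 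Once this is established, $P$ is a transversely constructible cell complex (pseudomanifolds are cell complexes), and (b) applies.

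The principal obstacle is precisely the PL-topological step in (a): upgrading the codimension-one condition at $Q\cap R$ in a pseudomanifold $P$ to a genuine one-sided collar of $Q\cap R$ in each of $Q,R$. A direct induction on (a), attempting to verify that $Q,R,Q\cap R$ themselves remain combinatorial pseudomanifolds, is problematic: the only coboundary for $Q$ compatible with the cell-complex-with-coboundary axioms is $Q\cap\partial^*P$ (because $(\partial\fll q\flr_Q)^*=(\partial\fll q\flr_P)^*$ is a sphere whenever $q\notin\partial^*P$), which fails to include the interface $Q\cap R$ even though that interface forms part of the topological boundary $\partial|Q|$. The reduction to (b) via transverse constructibility sidesteps this mismatch entirely.
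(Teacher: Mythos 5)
Your proof of part (b) is essentially the paper's argument: induct, obtain $|Q|$, $|R|$, $|Q\cap R|$ as PL balls by the inductive hypothesis, verify the dimension count and the inclusion $|Q\cap R|\subset\partial|Q|\cap\partial|R|$ from the collar, and close with Zeeman's ball-gluing theorem.

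For part (a), however, you take a genuinely different route --- reducing (a) to (b) by asserting that a constructible pseudo-manifold is transversely constructible --- and here there is a real gap. Your argument produces a one-sided collar only at interior points of the maximal $(n{-}1)$-cells of $Q\cap R$; upgrading this to the combinatorial collar condition of the paper (that $|\lk(p,Q)|\cong C|\lk(p,Q\cap R)|$ for \emph{every} $p\in Q\cap R$, including lower-dimensional cells) is not ``standard PL collaring machinery.'' It essentially needs $|Q|$ to already be known to be a PL manifold with $|Q\cap R|$ a locally flat codimension-zero submanifold of $\partial|Q|$, which is precisely the content of what (a) is supposed to deliver. In other words the implication ``constructible pseudo-manifold $\Rightarrow$ transversely constructible'' is true, but its natural proof routes through (a) itself (or through the ball structure of the pieces obtained from (a)); your reduction is therefore circular or at best defers the hard step rather than dispatching it.

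The paper avoids the collar entirely by running a single induction on both parts with a much weaker invariant: in either a transverse construction step or a construction step of a pseudo-manifold, \emph{every maximal cell of $Q\cap R$ is covered by exactly one maximal cell of $Q$ and exactly one of $R$.} In case (b) this comes from the collar (the link in $Q$ of a maximal cell of a collared closed subposet is a singleton); in case (a) it comes from Lemma~\ref{pure lemma} (pure codimension one, so such a cell is covered only by maximals of $Q$, resp.\ of $R$) together with the pseudo-manifold condition (the dual $1$-skeleton is a graph, so at most two maximal cells of $P$ cover any given cell). That weaker fact is exactly what is needed to place $|Q\cap R|$ in $\partial|Q|\cap\partial|R|$ once the pieces are known to be balls, and it propagates to deeper construction steps for free because $Q$, $R$, $Q\cap R$ remain closed regular subposets of $P$, so any codimension-one cell inside them still sees at most two maximal cells of $P$ above it. Your concern that $Q$ may fail to be a combinatorial pseudo-manifold with the ``right'' coboundary is a fair observation about the literal statement of (a), but it does not obstruct the paper's argument: the induction never requires $Q$, $R$, $Q\cap R$ to be pseudo-manifolds, only that the ``covered-by-exactly-one-each'' property hold at each construction step.
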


Part (a) is well-known in a special case (see \cite[2.16]{Ha0}).

\begin{proof}
Let $P$ be a cell complex with coboundary and suppose that $P=Q\cup R$ is
either a transverse construction step or a construction step where $P$ is
a pseudo-manifold.
Then every maximal element of $Q\cap R$ is covered
by precisely one element of $Q$, which is maximal in $Q$, and
by precisely one element of $R$, which is maximal in $R$.
Arguing by induction, we may assume that $|Q|$, $|R|$ and $|Q\cap R|$ are PL balls.
Thus $|Q|$ and $|R|$ are balls of the same dimension $n$, and $|Q\cap R|$ is an
$(n-1)$-ball lying in their boundaries.
Then $|Q\cup R|$ is a ball by basic PL topology \cite[Corollary to Theorem 2]{Ze}.
\end{proof}

\begin{example} Let $M$ be a triangulation of a contractible $4$-manifold
that is distinct from the $4$-ball but has a $2$-dimensional spine that
$3$-deforms to a point (e.g.\ Mazur's manifold works).
Then $M\x I$ is a non-constructible cellulation of the $5$-ball.

Indeed, if $M\x I$ were constructible, then each construction step
would have to be of the form $P\x I=Q\x I\cup R\x I$.
Then $M$ itself would be constructible, contradicting Lemma \ref{Zeeman}.
\end{example}

The non-transverse part of the following lemma is essentially known
(in a special case), see \cite[2.14]{Ha0}.

\begin{lemma}\label{dual cones constructible}
If $P$ is a [transversely] constructible poset, then $\cel p\cer$ is
[transversely] constructible for each $p\in P$.
\end{lemma}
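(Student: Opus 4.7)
The plan is to argue by induction on the construction complexity of $P$, making three cases based on the location of $p$.

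For the base case, if $P$ has a greatest element $\hat 1$, then $\hat 1\in\cel p\cer$ serves as the greatest element of $\cel p\cer$, so $\cel p\cer$ is itself a cone and therefore [transversely] constructible. For the inductive step, suppose $P=Q\cup R$ is a [transverse] construction step. The crucial exploit is that $Q$ and $R$ are \emph{closed} (downward-closed) in $P$: if $p\in Q\smallsetminus R$ and $q\ge p$ with $q\in R$, then closedness of $R$ would force $p\in R$, contradiction. Hence every $q\ge p$ lies in $Q$, so $\cel p\cer_P=\cel p\cer_Q$, which is [transversely] constructible by the inductive hypothesis applied to the construction of $Q$. The case $p\in R\smallsetminus Q$ is symmetric.

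The main case is $p\in Q\cap R$. Here $\cel p\cer_P=\cel p\cer_Q\cup\cel p\cer_R$ and $\cel p\cer_Q\cap\cel p\cer_R=\cel p\cer_{Q\cap R}$. By induction each of these three posets is [transversely] constructible, so it remains to verify that $\cel p\cer_{Q\cap R}$ is closed in $\cel p\cer_Q$ and is of codimension one [resp.\ collared] there (and symmetrically in $\cel p\cer_R$). Closedness: if $x\le y$ with $x\in\cel p\cer_Q$ and $y\in\cel p\cer_{Q\cap R}$, then $y\in R$ forces $x\in R$ by closedness of $R$, so $x\in\cel p\cer_{Q\cap R}$.

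For the codimension-one property, observe that any maximal element $m$ of $\cel p\cer_{Q\cap R}$ is already maximal in $Q\cap R$ (since any strictly larger element would still lie above $p$); then by hypothesis $m$ is covered by a maximal element $n$ of $Q$, and $n\ge m\ge p$ with $\cel p\cer_Q\subset Q$ shows that $n$ is maximal in $\cel p\cer_Q$. For the transverse (collared) version, note that $\cel p\cer_Q$ is an open subposet of $Q$, hence for any $q\in\cel p\cer_{Q\cap R}$ the cone $\cel q\cer$ is unchanged in passing from $Q$ to $\cel p\cer_Q$; consequently $\lk(q,\cel p\cer_Q)=\lk(q,Q)$ and $\lk(q,\cel p\cer_{Q\cap R})=\lk(q,Q\cap R)$, so the collar homeomorphism transfers verbatim from the hypothesis that $Q\cap R$ is collared in $Q$. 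This completes the inductive step, and the only real subtlety is the compatibility of \emph{closedness} of $Q,R$ in $P$ with the passage to the open subposet $\cel p\cer$, which is resolved by the contrapositive argument in the first paragraph.
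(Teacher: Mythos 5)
Your proof is correct and reaches the same decomposition $\cel p\cer_P = \cel p\cer_Q \cup \cel p\cer_R$ with $\cel p\cer_Q\cap\cel p\cer_R = \cel p\cer_{Q\cap R}$, but the route differs in how you establish that the intersection is of codimension one [resp.\ collared] in the two pieces. The paper invokes its general machinery: since $\cel p\cer$ is open in $P$, the inclusion is a stratification map by Lemma~\ref{inclusion lemma}(a), and then Lemma~\ref{pseudo-amalgamation} [resp.\ Theorem~\ref{amalgamation}(a)] immediately transports the codimension-one [resp.\ collared] property along preimages. You instead verify both conditions by hand: for codimension one, observing that maximal elements of $\cel p\cer_{Q\cap R}$ are already maximal in $Q\cap R$ and that covers by maximal elements of $Q$ persist in $\cel p\cer_Q$; for collaring, noting that links at points of an open subposet are unchanged, so the collar homeomorphism is literally the same one. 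Your argument is essentially a self-contained re-derivation of exactly the special cases of those amalgamation results that are needed here (inclusion of an open subposet), which is more elementary and exposes the mechanism; the paper's version is shorter and exhibits the lemma as one more instance of a reusable pattern (pulling back codimension-one/collared subposets along filtration/stratification maps), which matters because the same pattern recurs throughout Chapter~\ref{chapter-collapsing} (e.g.\ in Lemma~\ref{constructible map} and Lemma~\ref{neighborhood}). One small point worth adding to your writeup: you should also note (though it is immediate from $Q$, $R$ closed in $P$) that $\cel p\cer_Q$ and $\cel p\cer_R$ are closed in $\cel p\cer_P$, which is required for $\cel p\cer_P = \cel p\cer_Q\cup\cel p\cer_R$ to qualify as a construction step.
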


\begin{proof}
If $P$ is a cone, then so is $\cel p\cer$.
Assume that the assertion holds if $P$ is [transversely] constructible in
at most $n$ steps.
Suppose that $P$ is [transversely] constructible in $n+1$ steps.
Then there is a [transverse] construction step $P=Q\cup R$, where $Q$, $R$ and
$Q\cap R$ are [transversely] constructible in at most $n$ steps.
If $p\notin Q$, then $\cel p\cer_P=\cel p\cer_R$ and the assertion follows
by the inductive hypothesis.
The case $p\notin R$ is similar.
Note that the inclusion $\cel p\cer\subset P$ is a stratification map by
Lemma \ref{inclusion lemma}(a).
If $p\in Q\cap R$, then by Theorem \ref{pseudo-amalgamation} [resp.\ Lemma
\ref{amalgamation}] $\cel p\cer_{Q\cap R}$ is of codimension one
[resp.\ collared] in $\cel p\cer_Q$ and in $\cel p\cer_R$, and all three are
[transversely] constructible by the inductive hypothesis.
\end{proof}

\subsection{Constructible map}\label{constructible map-def}
A monotone map of posets $f\:P\to Q$ will be
called {\it [transversely] constructible} if it is a filtration
[resp.\ stratification] map, and
$f^{-1}(\fll q\flr)$ is [transversely] constructible for each $q\in Q$.

\begin{lemma} \label{constructible map}
Let $f\:P\to Q$ a [transversely] constructible map of posets, where $Q$ is
[transversely] constructible.
Then $P$ is [transversely] constructible.
\end{lemma}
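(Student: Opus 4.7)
The plan is to induct on the number of steps required to construct $Q$. For the base case, $Q$ has a greatest element $q_0$, so $P = f^{-1}(\fll q_0 \flr)$ is [transversely] constructible by the hypothesis that $f$ is a [transversely] constructible map.

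For the inductive step, suppose $Q = Q_1 \cup Q_2$ is a [transverse] construction step in which $Q_1$, $Q_2$, $Q_1\cap Q_2$ are all constructible in strictly fewer steps. Set $P_i = f^{-1}(Q_i)$ and $P_0 = f^{-1}(Q_1\cap Q_2)$, so that $P = P_1 \cup P_2$ and $P_0 = P_1 \cap P_2$. The key reduction is to show that the restriction $f_i \: P_i \to Q_i$ is again a [transversely] constructible map: since $Q_i$ is closed in $Q$, we have $\fll q \flr_{Q_i} = \fll q \flr_Q$ and $\partial \fll q \flr_{Q_i} = \partial \fll q \flr_Q$ for every $q \in Q_i$, so $(f_i)^{-1}(\fll q\flr_{Q_i}) = f^{-1}(\fll q\flr_Q)$, and this preimage is constructible by hypothesis; the filtration/stratification property for $f_i$ follows from the same equalities, together with openness of $f_i$ (inherited since $Q_i$ is closed in $Q$ and $f$ is open). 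The analogous reasoning applies to $f_0 \: P_0 \to Q_1 \cap Q_2$. By the inductive hypothesis applied to $f_i$ and $f_0$, the subposets $P_1$, $P_2$ and $P_0$ are all [transversely] constructible (and, in particular, nonempty, since a constructible map is surjective onto a nonempty constructible target).

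It remains to verify that $P_0$ is of codimension one [resp.\ collared] in each $P_i$. Since $Q_1 \cap Q_2$ is of codimension one [collared] in $Q_i$ by hypothesis, and $f_i$ is a filtration [stratification] map with $P_0 = f_i^{-1}(Q_1\cap Q_2)$, this follows directly from Lemma \ref{pseudo-amalgamation} in the filtration case and from Theorem \ref{amalgamation}(a) in the stratification case. Combining this with the previous paragraph exhibits $P = P_1 \cup P_2$ as a [transverse] construction step whose three pieces are [transversely] constructible, which is precisely the required conclusion.

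The only delicate point is the bookkeeping for the restriction: one must ensure that $f_i \: P_i \to Q_i$ inherits openness, the filtration/stratification property, and constructibility of all relevant preimages. But this is immediate once one notes that $Q_i$ being closed in $Q$ forces $\fll q \flr$ and $\partial \fll q \flr$ to be the same whether computed in $Q$ or in $Q_i$, so there is no genuine obstacle; the proof is a clean induction driven by the amalgamation-type lemmas already proved in \S\ref{subdivision and collars}.
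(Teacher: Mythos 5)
Your proof is correct and follows essentially the same route as the paper's: induction on a construction for $Q$, verification that the restrictions of $f$ over the pieces of the construction step are again constructible maps, and derivation of the codimension-one/collaring condition for $f^{-1}(Q_1\cap Q_2)$ inside each $f^{-1}(Q_i)$. The only cosmetic difference is that the paper obtains this last condition by composing $f$ with a filtration/stratification map $g\colon Q\to I^*$ and invoking Corollary~\ref{composition} (resp.\ Corollary~\ref{pseudo-composition}), whereas you apply Lemma~\ref{pseudo-amalgamation} (resp.\ Theorem~\ref{amalgamation}(a)) directly to the restricted map $f_i\colon P_i\to Q_i$; since those composition corollaries are themselves consequences of the amalgamation results you cite, the two formulations are interchangeable.
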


\begin{proof}
Let $Q=R\cup S$ be a [transverse] construction step.
Then the restrictions of $f$ to $f^{-1}(R)$, to $f^{-1}(S)$ and to $f^{-1}(R\cap S)$
are [transversely] constructible maps.
Arguing by induction, we may then assume that $f^{-1}(R)$, $f^{-1}(S)$ and
$f^{-1}(R\cap S)$ are [transversely] constructible.
Since $R\cap S$ is is of codimension one [resp.\ collared] in $R$ and in $S$,
the monotone map $g\:Q\to I^*$ defined by $g(q)=\{0\}^*$ for $q\in R\but S$,
$g(q)=\{1\}^*$ for $q\in S\but R$  and $g(q)=\{0,1\}^*$ for $q\in R\cap S$ is
a filtration [resp.\ stratification] map.
By Corollary \ref{composition} [resp.\ \ref{pseudo-composition}],
the composition $P\xr{f} Q\xr{g}I^*$ is a filtration [resp.\ stratification] map.
Hence $f^{-1}(R\cap S)$ is is of codimension one [resp.\ collared]
in $f^{-1}(R)$ and in $f^{-1}(S)$.
Thus $P=f^{-1}(R)\cup f^{-1}(S)$ is a [transverse] construction step.
\end{proof}

Lemma \ref{constructible map} along with Corollary \ref{composition}
[resp.\ \ref{pure pseudo-composition}] imply:

\begin{theorem} \label{constructible composition} Composition of
[transversely] constructible maps is a [transversely] constructible map.
\end{theorem}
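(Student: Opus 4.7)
Let $f\:P\to Q$ and $g\:Q\to R$ be [transversely] constructible maps; I want to show $gf$ is [transversely] constructible. The plan is to verify the two conditions in Definition \ref{constructible map-def} separately: that $gf$ is a filtration [resp.\ stratification] map, and that $(gf)^{-1}(\fll r\flr)$ is [transversely] constructible for every $r\in R$.

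For the first condition, note that since $f$ and $g$ are [transversely] constructible, they are in particular filtration [stratification] maps. Hence $gf$ is a filtration [stratification] map by Corollary \ref{pseudo-composition} [resp.\ Corollary \ref{composition}]. This is the easy part.

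For the second condition, fix $r\in R$. Since $g$ is [transversely] constructible, $Q_r:=g^{-1}(\fll r\flr)$ is [transversely] constructible. I plan to apply Lemma \ref{constructible map} to the restriction $f_r\:f^{-1}(Q_r)\to Q_r$ of $f$. To do so I must check that $f_r$ is a [transversely] constructible map. Since $\fll r\flr$ is closed in $R$ and $g$ is monotone, $Q_r$ is closed in $Q$; therefore $\fll q\flr_{Q_r}=\fll q\flr_Q$ and $\partial\fll q\flr_{Q_r}=\partial\fll q\flr_Q$ for every $q\in Q_r$, and consequently $f_r^{-1}(\fll q\flr_{Q_r})=f^{-1}(\fll q\flr_Q)$ and $f_r^{-1}(\partial\fll q\flr_{Q_r})=f^{-1}(\partial\fll q\flr_Q)$. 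The first of these equalities shows that each cone preimage under $f_r$ is [transversely] constructible (since $f$ is); the second, combined with the fact that $f$ is a filtration [stratification] map, shows that $f_r$ is itself a filtration [stratification] map. So $f_r$ is [transversely] constructible.

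Applying Lemma \ref{constructible map} to the [transversely] constructible map $f_r$ with [transversely] constructible target $Q_r$, I conclude that $f^{-1}(Q_r)=(gf)^{-1}(\fll r\flr)$ is [transversely] constructible, which is exactly what was needed. I do not expect any serious obstacle here: the work has been packaged into Lemma \ref{constructible map} and the composition results for filtration and stratification maps, so the proof essentially reduces to the bookkeeping observation that ``constructibility of cone preimages'' is preserved under composition because $g^{-1}(\fll r\flr)$ collects precisely the $\fll q\flr$'s that need to be pulled back via $f$.
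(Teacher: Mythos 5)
Your proof is correct and spells out precisely the argument the paper compresses into the single line ``Lemma \ref{constructible map} along with Corollary \ref{composition} [resp.\ \ref{pure pseudo-composition}] imply'': you restrict $f$ over $Q_r=g^{-1}(\fll r\flr)$, observe that constructibility of $f_r$ is inherited because $Q_r$ is closed (so cones and boundary cones in $Q_r$ agree with those in $Q$), and then apply Lemma \ref{constructible map}. Note also that your citation of Corollary \ref{pseudo-composition} for the filtration-map case and Corollary \ref{composition} for the stratification-map case is the correct pairing, and in fact flags that the paper's own bracketed citations appear to have the cases reversed (and gratuitously use the ``pure'' variant).
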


\begin{lemma}\label{neighborhood} Let $K$ be a poset and $L$ a full subposet
of $K$ such that $L^*$ is dense in $K^*$.

(a) If $L$ is closed in $K$, and $K^*$ is [transversely] constructible,
then so is $L^*$.

(b) If $L^*$ is constructible, then so is $K^*$.
\end{lemma}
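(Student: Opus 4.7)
The plan is to prove both parts by induction: part (a) on the [transverse] constructibility of $K^*$, and part (b) on the constructibility of $L^*$. The crucial dual translation of ``$L$ full in $K$'' is that, for every $p\in K^*$, the intersection $\cel p\cer_{K^*}\cap L^*$ is a dual cone of $L^*$, with a unique $K^*$-minimum $s_p$; equivalently the canonical monotone retraction $r\colon K\to L$ sending each $p$ to the maximum of $\fll p\flr_K\cap L$ (furnished by the earlier strong-deformation-retraction lemma of this chapter) is well-defined. I would tackle part (b) first, since it is the cleaner of the two directions.

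Base case of (b): if $L^*$ has greatest element $\hat 1$, openness of $L^*$ in $K^*$ together with density force $\hat 1$ to be the greatest element of $K^*$, so $K^*$ is a cone. For the inductive step with $L^*=M\cup N$ a construction step, set $\widetilde M:=\fll M\flr_{K^*}$ and $\widetilde N:=\fll N\flr_{K^*}$. Density of $L^*$ gives $K^*=\widetilde M\cup\widetilde N$, and the identity $\widetilde M\cap\widetilde N=\widetilde{M\cap N}$ follows from fullness: if $p\le m\in M$ and $p\le n\in N$ in $K^*$, then $s_p\le m,n$ in $L^*$, and closedness of $M,N$ in $L^*$ forces $s_p\in M\cap N$. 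The inductive hypothesis of (b) then applies to each of the pairs $(\widetilde M,M)$, $(\widetilde N,N)$, $(\widetilde{M\cap N},M\cap N)$: the canonical $s_p$ for $p\in\widetilde M$ automatically lies in $M$, which gives the required fullness and density of $M$ in $\widetilde M$. Finally, the codimension-one condition transfers from $L^*$ to $K^*$ because the maximal elements of $\widetilde{M\cap N}$ coincide with those of $M\cap N$, and their cover relations survive by openness of $L^*$ in $K^*$.

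For part (a) the same scheme runs in reverse. The base case ($K^*$ a cone) is immediate from density. For the inductive step with $K^*=Q\cup R$ a [transverse] construction step, I would decompose $L^*=(L^*\cap Q)\cup(L^*\cap R)$; the pieces are closed in $L^*$ and open in $Q,R$. The inductive hypothesis of (a) applies once $L^*\cap Q$ is shown to be full and dense in $Q$, and both reduce to the claim that for $p\in Q$ the canonical $s_p$ lies in $Q$. This is the main obstacle: I would derive it from the codimension-one/collaring structure, arguing that if $s_p$ lay in $R\setminus Q$, then a saturated chain from $p$ up to $s_p$ must contain a covering $x\prec x'$ with $x\in Q\cap R$ and $x'\in R\setminus Q$, and the codimension-one (resp.\ collaring) data on $Q\cap R$ then produces a competing $L^*$-element of $Q$ strictly below $s_p$ and above $p$, contradicting minimality of $s_p$. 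Codimension-one/collaring on the pieces of $L^*$ is inherited as in part (b), which settles the inductive step.
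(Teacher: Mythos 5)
Your part (b) is essentially the paper's own proof with different notation. The paper shows $\fll Q\flr\cap\fll R\flr=\fll Q\cap R\flr$ by exactly your device: for $p$ below $q\in Q$ and $r\in R$ in $K^*$, the element $s_p$ with $\cel p\cer_{K^*}\cap L^*=\cel s_p\cer_{L^*}$ lies below both $q$ and $r$, hence in $Q\cap R$ by closedness. Your remark about covers surviving by openness of $L^*$ is a (correct) elaboration of a step the paper leaves implicit. So (b) is sound.

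Part (a) has real gaps. First, the reduction of fullness and density of $L^*\cap Q$ in $Q$ to the single claim ``$s_p\in Q$ for $p\in Q$'' is both unnecessary and unjustified. Fullness needs no argument of this kind: $L$ is full in $K$ and $Q^*$ is open in $K$, so $L\cap Q^*$ is full in $Q^*$ directly by Lemma \ref{full in open}(b); the cone $\fll p\flr_{Q^*}\cap L\cap Q^*$ can perfectly well have a maximum other than $s_p$. Density likewise follows more simply: maximal elements of $Q$ are maximal in $K^*$ (using the codimension-one condition on $Q\cap R$), hence lie in $L^*$ by density of $L^*$. Second, even granting that the lemma's proof might route through ``$s_p\in Q$'', your sketch of it does not go through as stated. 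You locate a cover $x\prec x'$ with $x\in Q\cap R$ and $x'\in R\setminus Q$ on a chain from $p$ to $s_p$, and invoke the codimension-one/collaring data on $Q\cap R$ to conjure a competing element of $L^*\cap Q$ above $p$ and strictly below $s_p$. But the codimension-one hypothesis only speaks about \emph{maximal} elements of $Q\cap R$ being covered by maximal elements of $Q$ and of $R$; it says nothing about an arbitrary interior cover $x\prec x'$, and there is no visible mechanism by which it produces an element of $L^*$ that is both in $Q$ and below $s_p$. Third, and most seriously for the transverse case of (a): your phrase ``inherited as in part (b)'' cannot cover collaring. In part (b) the relevant condition was pure combinatorics (coincidence of maximal elements and survival of covers). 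Collaring of $L^*\cap Q\cap R$ in $L^*\cap Q$ is a statement about PL cone structures on links, and the paper establishes it by noting that the open inclusion $L^*\cap Q\hookrightarrow Q$ is a stratification map (Lemma \ref{inclusion lemma}(a)) and then pulling back collaredness along it via Theorem \ref{amalgamation}(a); the non-transverse case analogously uses that open inclusions are filtration maps together with Lemma \ref{pseudo-amalgamation}. Your argument omits this entirely, so the transverse half of (a) is not proved.
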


That $L^*$ is dense in $K^*$ is of course understood with respect to
the Alexandroff topology, i.e., it is equivalent to $\fll L^*\flr=K^*$.

\begin{proof}[Proof. (a)]
Clearly, $L^*$ is a cone if and only if so is $K^*$.

Suppose that $K^*=Q\cup R$ is a [transverse] construction step.
Since $L^*$ is open in $K^*$, the inclusion $\cel p\cer\subset P$ is
a stratification map by Lemma \ref{inclusion lemma}(a).
Hence by Theorem \ref{amalgamation} [resp.\ Lemma
\ref{pseudo-amalgamation}] $L^*\cap Q\cap R$ is of codimension $\ge 1$
[resp.\ collared] in $L^*\cap Q$ and in $L^*\cap R$.
Since $L$ is a closed subposet of $K$, its intersection with any subposet
of $K$ is closed in that subposet.
Since $L$ is a full subposet of $K$, its intersection with any open subposet
of $K$ is full in that subposet by Lemma \ref{full in open}(b).

Since maximal elements of $Q$ and $R$ are maximal in $K^*$, they lie in $L^*$.
Hence $L^*\cap Q$ is dense in $Q$, and $L^*\cap R$ is dense in $R$.
If $p$ is a maximal element of $Q\cap R$, then since $Q\cap R$ is
of codimension $\ge 1$ in $Q$ and in $R$, $p$ is covered by a maximal
element $q$ of $Q$ and by a maximal element $r$ of $R$.
Then $q,r\in L^*$, and since $L$ is full in $K$, we get that $p\in L^*$.
Therefore maximal elements of $Q\cap R$ lie in $L^*$.
Hence $L^*\cap Q\cap R$ is dense in $Q\cap R$.
Thus arguing by induction, we may assume that $L^*\cap Q$, $L^*\cap R$ and
$L^*\cap Q\cap R$ are [transversely] constructible.
\end{proof}

\begin{proof}[(b)]
Suppose that $L^*=Q\cup R$ is a construction step.
Clearly $K^*=\fll Q\cup R\flr$ is the union of $\fll Q\flr$ and
$\fll R\flr$.
Maximal elements of $Q$, $R$, or $Q\cap R$ coincide with those of $\fll Q\flr$,
$\fll R\flr$, or $\fll Q\cap R\flr$, respectively.
Hence $\fll Q\cap R\flr$ is of codimension one in $\fll Q\flr$ and
in $\fll R\flr$.

To prove that $\fll Q\flr\cap\fll R\flr$ is of codimension one in
$\fll Q\flr$ and
in $\fll R\flr$ it suffices to show that $\fll Q\cap R\flr=\fll Q\flr\cap\fll R\flr$.
The inclusion $\fll Q\cap R\flr\subset\fll Q\flr\cap\fll R\flr$ is trivial.
Given a $p\in\fll Q\flr\cap\fll R\flr$, we have $p\le q$ and $p\le r$ for some
$q\in Q$ and some $r\in R$.
Since $L$ is full in $K$, the dual cone $\cel p\cer$ meets $L^*$ in a dual cone
$\cel s\cer$.
Since $q,r\in L^*$, we have found an $s\in L^*$ such that $s\le q$, $s\le r$ and
$p\le s$.
Hence $s\in Q\cap R$ and therefore $p\in\fll Q\cap R\flr$.

Arguing by induction, we would be able to assume that $\fll Q\flr$, $\fll R\flr$ and
$\fll Q\flr\cap \fll R\flr$ are constructible once we show that $Q^*$, $R^*$ and
$Q^*\cap R^*$ are full respectively in $\fll Q\flr^*$, $\fll R\flr^*$ and
$\fll Q\flr^*\cap \fll R\flr^*$.
This follows from Lemma \ref{full in open}(b), since the former ones are
the respective intersections of $L$ with the latter ones.
\end{proof}

\begin{theorem} \label{constructible point-inverses}
If $f\:K\to L$ is a simplicial map between simplicial complexes, or more generally
a full map between posets such that $f^*$ is a filtration map,
then $f^*\:K^*\to L^*$ is constructible if and only if $f^{-1}(\sigma)^*$
is constructible for each $\sigma\in L$.
\end{theorem}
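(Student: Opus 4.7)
The plan is to reduce the theorem, via Definition \ref{constructible map-def}, to the equivalence
\[
M_\sigma^* \text{ is constructible for every } \sigma\in L
\iff
F_\sigma^* \text{ is constructible for every } \sigma\in L,
\]
where $F_\sigma:=f^{-1}(\sigma)$ and $M_\sigma:=f^{-1}(\cel\sigma\cer_L)$; indeed $(f^*)^{-1}(\fll\sigma^*\flr_{L^*})=M_\sigma^*$, and the filtration property of $f^*$ is already part of the hypothesis.

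Both directions will be obtained from Lemma \ref{neighborhood} applied to the inclusion $F_\sigma\subset M_\sigma$, in the roles of $L\subset K$. Three ingredients must be checked. First, $F_\sigma$ is closed in $M_\sigma$: if $p\le q$ in $M_\sigma$ with $q\in F_\sigma$, then $\sigma\le f(p)\le f(q)=\sigma$, so $p\in F_\sigma$. Second, $F_\sigma$ is full in $M_\sigma$: by hypothesis (or Lemma \ref{point-inverses full} in the simplicial case) it is full in $K$, and for $p\in M_\sigma$ one has $\fll p\flr_{M_\sigma}\cap F_\sigma=\fll p\flr_K\cap F_\sigma$, which is a cone of $F_\sigma$ by fullness in $K$. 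Third, and this is the key point, $F_\sigma^*$ is dense in $M_\sigma^*$, i.e.\ $\cel F_\sigma\cer_{M_\sigma}=M_\sigma$.

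The density is the main obstacle, and it is exactly where the filtration hypothesis on $f^*$ is used. Given $p\in M_\sigma$, I would choose $p''$ minimal in $\fll p\flr_K\cap M_\sigma$. Any element of $M_\sigma$ strictly below $p''$ in $K$ already lies in $\fll p\flr_K$, so $p''$ is in fact minimal in all of $M_\sigma$. If $p''\notin F_\sigma$, the same argument shows that $p''$ is also minimal in $M_\sigma\setminus F_\sigma=f^{-1}(\partial^*\cel\sigma\cer_L)$. But after dualizing, the codimension-one condition for the filtration map $f^*$ at $\sigma^*$ reads precisely: every minimal element of $M_\sigma\setminus F_\sigma$ covers, in the $M_\sigma$-order, some minimal element of $M_\sigma$. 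This contradicts the minimality of $p''$ in $M_\sigma$, forcing $p''\in F_\sigma$ and hence $p\in\cel F_\sigma\cer_{M_\sigma}$. (For a simplicial $f$ the density is immediate without the filtration argument: the face of the simplex $\fll p\flr$ on those vertices mapping to vertices of $\sigma$ already lies in $F_\sigma$ and is $\le p$.)

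With these three ingredients in hand, the ``only if'' direction follows from Lemma \ref{neighborhood}(a) applied with $L=F_\sigma$ and $K=M_\sigma$, and the ``if'' direction from Lemma \ref{neighborhood}(b) applied in the same way. Non-emptiness of $F_\sigma$ and $M_\sigma$, demanded by the definition of constructibility, is automatic in both directions: for ``only if'' from step (iii) applied to any $p\in M_\sigma$, and for ``if'' from $F_\sigma\subset M_\sigma$.
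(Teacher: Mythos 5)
You take the same route as the paper: reduce to Lemma~\ref{neighborhood} applied to the pair $F_\sigma\bydef f^{-1}(\sigma)$ inside $M_\sigma\bydef f^{-1}(\cel\sigma\cer_L)=\bigl((f^*)^{-1}(\fll\sigma^*\flr_{L^*})\bigr)^*$. The paper verifies that $F_\sigma$ is closed and full in $M_\sigma$ and then invokes the lemma, leaving the density hypothesis $\cel F_\sigma\cer_{M_\sigma}=M_\sigma$ --- equivalently the identification $\fll F_\sigma^*\flr_{K^*}=M_\sigma^*$ needed to match the lemma's conclusion against Definition~\ref{constructible map-def} --- implicit. You supply exactly that step: the argument with a minimal element $p''$ of $\fll p\flr_K\cap M_\sigma$, deriving density from the codimension-one condition of the filtration map $f^*$ at $\sigma^*$, is precisely what is needed, and it is the one place the filtration hypothesis actually enters. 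Your proof is a correct and more explicit version of the paper's.
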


That the first hypothesis is a specialization of the second follows from
Theorem \ref{partition}, Remark \ref{collared coboundary} and
Lemma \ref{point-inverses full}.

\begin{proof}
By Lemma \ref{full in open}(b), $f^{-1}(\sigma)$
is full in $f^{-1}(\cel\sigma\cer)$ for each $\sigma\in L$.
Also $f^{-1}(\sigma)$ is closed in $f^{-1}(\cel\sigma\cer)$ since
$\{\sigma\}$ is closed in $\cel\sigma\cer$.
Then by Lemma \ref{neighborhood}, $f^{-1}(\sigma)^*$ is constructible
if and only if $\fll f^{-1}(\sigma)^*\flr_{K^*}$ is constructible.
\end{proof}


\begin{theorem}\label{Cohen-Homma} If $M$ is a poset such that $|M|$ is a closed
PL manifold, and $f\:M\to K$ is a constructible map such that $f^*$ is cellular,
then $f$ is a subdivision map.
\end{theorem}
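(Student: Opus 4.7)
The plan is to fix $q \in K$, set $P = f^{-1}(\fll q\flr)$ and $B = f^{-1}(\partial\fll q\flr)$, and show that $(|P|, |B|)$ is a PL ball pair. The subdivision-map condition $|P| \cong C|B|$ rel $|B|$ then follows by the standard fact that a PL ball is PL homeomorphic to the cone on its boundary by a map fixing the boundary (the Alexander trick).

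First, some preliminary observations. Since $|M|$ is a closed PL manifold, Theorem \ref{manifolds} tells us that $M$ is a pure cell complex with $M^*$ a cell complex. As $P$ is closed in $M$, we have $\fll p\flr_P = \fll p\flr_M$ for every $p \in P$, so $P$ is itself a cell complex and each of its cones is a ball (hence a pure subposet of $P$). Unpacking the hypothesis that $f^*$ is cellular at $q^* \in K^*$ gives that $(f^*)^{-1}(\cel q^*\cer) = P^*$ is a cell complex with coboundary $(f^*)^{-1}(\partial^*\cel q^*\cer) = B^*$.

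The key step will be to show that $P$ is pure. I will argue by induction on the number of steps in a given constructibility of $P$. In the base case $P$ is a cone, which is trivially pure. For a construction step $P = Q \cup R$, the subposets $Q$, $R$, and $Q \cap R$ are closed in $P$ (hence in $M$), so they inherit pure cones and are constructible in fewer steps, and therefore are pure by induction. The covering condition in the construction step demands that every maximal element of $Q \cap R$ be covered by a maximal element of $Q$; in the cell complex $M$, being covered shifts cell dimension by exactly one (a maximal cell of the PL sphere $\partial\fll p\flr$ has dimension one less than $\dim p$), so this forces $\dim Q = \dim(Q \cap R) + 1$, and likewise $\dim R = \dim(Q \cap R) + 1$. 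Hence $P = Q \cup R$ is pure of the common dimension.

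With $P$ pure, Theorem \ref{manifolds} upgrades the cell-complex data above to the statement that $P$ is a PL manifold with boundary $B$. A manifold is in particular a pseudo-manifold, and $P$ is also constructible by hypothesis, so Lemma \ref{Zeeman}(a) gives that $|P|$ is a PL ball; its boundary is $|B|$, since $P$ is a manifold with boundary $B$. The Alexander trick now produces $(|P|, |B|) \cong (C|B|, |B|)$ by a homeomorphism fixing $|B|$, proving that $f$ is a subdivision map. The main obstacle is the purity argument in the inductive step; the rest is essentially bookkeeping with Theorem \ref{manifolds} and Lemma \ref{Zeeman}(a).
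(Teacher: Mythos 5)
Your proof is correct and follows essentially the same route as the paper's: reduce to showing that each $f^{-1}(\fll q\flr)$ is a PL ball, by recognizing it as a constructible pseudo-manifold (Theorem \ref{manifolds}) and then applying Lemma \ref{Zeeman}(a), finishing with the cone-rel-boundary characterization. One remark worth making: you include a careful inductive argument that $P = f^{-1}(\fll q\flr)$ is pure, a hypothesis that Theorem \ref{manifolds} genuinely requires and which the paper's own proof does not spell out. Your induction is correct --- in a cell complex a cover raises cell dimension by exactly one, so the codimension-one condition forces $\dim Q = \dim R = \dim(Q\cap R)+1$, and since $Q$, $R$ are closed in $P$ every maximal element of $P$ is maximal in one of them, giving purity. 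A slicker alternative is to observe that $|P|$ is contractible by Lemma \ref{Mayer-Vietoris}(a) and hence connected, and that a connected cell complex with $P^*$ a cell complex with coboundary is automatically pure (the proof of Theorem \ref{manifolds} shows the local dimension determined by the vertex links is locally constant). Either way the gap is genuine and worth filling; your version is self-contained and uses only the combinatorial structure of the construction steps.
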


This is a variation of the Cohen--Homma theorem (Theorem \ref{Cohen-Homma2} below).


\begin{proof} By Theorem \ref{manifolds}, $M$ is a cell complex.
Let $\sigma\in K$.
Since $M_\sigma:=f^{-1}(\fll\sigma\flr)$ is closed in $M$, it is a cell complex.
Since $f$ is a stratification, $\partial M_\sigma=f^{-1}(\partial\fll\sigma\flr)$.
Since $f^*$ is cellular, $M_\sigma^*$ is a cell complex with coboundary
$(\partial M_\sigma)^*$.
Hence $M_\sigma$ is a manifold with boundary $\partial M_\sigma$.
Since $M_\sigma$ is constructible, by Lemma \ref{Zeeman}(a) it is a ball.
In particular, $|M_\sigma|$ is homeomorphic to $|C(\partial M_\sigma)|$
keeping $|\partial M_\sigma|$ fixed.
\end{proof}

\section{Zipping}\label{zipping-definition}

Let $I$ denote the $1$-simplex.
Given a poset $P$ and a $p\in P$, suppose that $\fll p\flr$ is isomorphic to
$Q+I$ for some $Q$ by an isomorphism $h$ such that $\{p,q,r\}:=h^{-1}(I)$
is full in $P$ (in other words, $p$ is the least upper bound of $q$ and $r$
in $P$).
In this situation we say that $P$ {\it elementarily zips} onto the quotient poset
$P/\{p,q,r\}$ (see \S\ref{quotient poset}) {\it along} $p$.
If additionally $\fll p^*\flr$ is collared in $\fll q^*\flr$ and in $\fll r^*\flr$,
then we say that $P$ {\it elementarily transversely zips} onto
$P/h^{-1}(I)$ {\it along} $p$.
A {\it [transverse] zipping} is a sequence of elementary [transverse] zippings.

Non-transverse zipping was introduced by N. Reading \cite{Re}; the author has been
aware of the notion for a few years before he learned of Reading's paper
from E. Nevo.

We recall that we have called a poset $P$ {\it nonsingular} if it contains no
interval of cardinality three.

\begin{lemma} \label{constructible if zips} If a nonsingular poset $P$ zips
onto $Q$, then

(a) $Q$ is nonsingular, and

(b) the quotient map $f^*\:P^*\to Q^*$ is constructible.
\end{lemma}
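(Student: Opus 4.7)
The plan is to reduce to a single elementary zipping and exploit the explicit structure of the prejoin $Q_0+\Delta^1$ together with fullness. A zipping is a finite composition of elementary ones, so if each preserves nonsingularity and has constructible dual-quotient, then by Theorem \ref{constructible composition} the full $f^*$ is constructible and by iterating (a) the target remains nonsingular. So assume $P$ elementarily zips onto $Q=P/\{u,v_1,v_2\}$ along $u$, with $\fll u\flr_P\simeq Q_0+\Delta^1$ (writing $\Delta^1=\{v_1,v_2,u\}$, $v_1,v_2<u$), and let $s:=f(u)=f(v_1)=f(v_2)$. Fullness of $\{u,v_1,v_2\}$ translates into the identity $\cel v_1\cer_P\cap\cel v_2\cer_P=\cel u\cer_P$; setting $A=\cel v_1\cer_P$ and $B=\cel v_2\cer_P$, one checks directly from the definition of the quotient relation that $f^{-1}(\cel\sigma\cer_Q)=\cel\sigma\cer_P$ for $\sigma\neq s$ while $f^{-1}(\cel s\cer_Q)=A\cup B$.

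For part (b) I will treat $f^*$ preimage-by-preimage. For $\sigma\neq s$ the preimage $(f^*)^{-1}(\fll\sigma^*\flr_{Q^*})=\fll\sigma^*\flr_{P^*}$ is a cone (hence constructible), and the filtration condition is the standard codimension-one fact that $\partial\fll\sigma^*\flr\subset\fll\sigma^*\flr$. For $\sigma=s$, I will write $(A\cup B)^*=A^*\cup B^*$ with $A^*\cap B^*=\fll u^*\flr_{P^*}$: each of $A^*$, $B^*$, and their intersection is a cone, so the only thing to verify is that the unique maximal element $u^*$ of $(A\cap B)^*$ is covered by the unique maximum $v_1^*$ of $A^*$ (and similarly by $v_2^*$ in $B^*$) --- this follows from the prejoin structure, which leaves nothing in $P$ strictly between $v_1$ and $u$. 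So $(A\cup B)^*$ is constructible. The filtration condition at $s^*$ reads: every minimal element $x$ of $(A\cup B)\setminus\{u,v_1,v_2\}$ is covered (in $A\cup B$) by $v_1$ or $v_2$. WLOG $x\in A$; a potential $y\in A\cup B$ with $v_1<y<x$ is either in $\{u,v_1,v_2\}$ (which forces $y=u$, hence $x\in\cel u\cer=A\cap B$) or is a witness against minimality. In the former case the interval $[v_1,x]_P$ contains the three-element set $\{v_1,u,x\}$, so by nonsingularity of $P$ there is $z\in[v_1,x]_P\setminus\{v_1,u,x\}$; $z\neq v_2$ since $v_1\not\leq v_2$, hence $z\in(A\cup B)\setminus\{u,v_1,v_2\}$ with $z<x$, contradicting minimality of $x$. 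So the filtration condition holds.

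For part (a), suppose $[a,c]_Q$ has exactly three elements $\{a,b,c\}$. If $s\notin\{a,b,c\}$, then $[a,c]_P=\{a,b,c\}$ as well, because including any of $u,v_1,v_2$ in $[a,c]_P$ would put $s$ into $[a,c]_Q$; this contradicts nonsingularity of $P$. The remaining cases $s\in\{a,b,c\}$ are handled uniformly: one lifts the interval to a suitable interval $[v_i,c]_P$ or $[a,v_i]_P$ (using the prejoin description of $\fll u\flr$ to determine how $a,c$ relate to $v_1,v_2,u$ in $P$) that is seen to contain at least three specific elements. Nonsingularity of $P$ produces a fourth element $z$; the prejoin structure together with $v_1\not\leq v_2$ rules out $z\in\{u,v_1,v_2\}$ except in a subordinate case which is closed by iterating the same argument on $[v_j,c]_P$ for the other $j$. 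The surviving $z\notin\{u,v_1,v_2\}$ maps to an element of $(a,c)_Q\setminus\{b\}=\emptyset$, contradiction.

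The main obstacle is the bookkeeping in the case analysis of (a): one has to keep track of which of $v_1,v_2,u$ lie under (or over) $a$ and $c$ in $P$ to pick the right interval for applying nonsingularity. Conceptually, the heart of both parts is the same: the prejoin structure of $\fll u\flr$ together with fullness means $v_1,v_2,u$ relate to the rest of $P$ in a very rigid way, and nonsingularity then translates that rigidity into the covering relations needed for the filtration and codimension-one conditions, as well as into the absence of new 3-element intervals in $Q$.
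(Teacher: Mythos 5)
Your part (b) is correct and takes a genuinely different, arguably more self-contained route than the paper. The paper reduces part (b) to Theorem \ref{constructible point-inverses} (which in turn rests on Lemma \ref{neighborhood}), using that $f$ is full with $J^*=\{u,v_1,v_2\}^*$ constructible, and then only has to check that $f^*$ is a filtration map. You instead verify the definition of a constructible map directly: the key identity $f^{-1}(\cel\sigma\cer_Q)=\cel\hat\sigma\cer_P$ for $\sigma\ne s$ (which indeed holds, and is exactly where the prejoin structure of $\fll u\flr$ enters -- without it this would fail) shows those preimages are cones; the preimage of $\fll s^*\flr$ decomposes as $\fll v_1^*\flr\cup\fll v_2^*\flr$ with intersection $\fll u^*\flr$, a single construction step whose codimension-one condition is exactly the ``nothing between $v_i$ and $u$'' feature of the prejoin; and you verify the filtration condition directly, with nonsingularity producing the witness element in the only nontrivial case ($u\in(v_1,x)$). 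This bypasses the paper's machinery. Two minor slips: ``$x$ is covered (in $A\cup B$) by $v_1$ or $v_2$'' should read ``$x$ covers $v_1$ or $v_2$'' (the subsequent argument is for the latter), and the intervals should carry hats ($[\hat a,\hat c]_P$ etc.).

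Part (a), however, has a real gap. You treat $s\notin\{a,b,c\}$ fully, but for $s\in\{a,b,c\}$ you only assert that ``one lifts the interval to a suitable interval that is seen to contain at least three specific elements\dots the prejoin structure together with $v_1\not\leq v_2$ rules out $z\in\{u,v_1,v_2\}$ except in a subordinate case which is closed by iterating.'' This is a plan, not a proof -- the cases $s=a$, $s=b$, $s=c$ are genuinely different in how $\hat a$, $\hat c$ relate to $\{u,v_1,v_2\}$, and some require subcases. For instance, if $s=b$, you must split on whether $\hat c$ covers $u$ (yielding the 3-element interval $[v_1,\hat c]=\{v_1,u,\hat c\}$ in $P$) or covers one of $v_1,v_2$ (yielding $[\hat a,\hat c]=\{\hat a,v_i,\hat c\}$), and in each branch one has to argue that the interval has no further elements using both the prejoin structure and fullness. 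The paper carries out this case analysis explicitly (its cases $f(p)=c$, $f(p)=a$, $f(p)=b$), and for each the conclusion is that $P$ itself contains a $3$-element interval, a direct contradiction with nonsingularity -- which is cleaner than your framing of producing a fourth element $z$ and then arguing $f(z)$ lands in an empty set. Your overall strategy for (a) (same as the paper's: lift the bad interval) is right, but the argument is not complete as written.
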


\begin{proof}[Proof. (a)]
We may assume without loss of generality that there is an elementary
zipping $f\:P\to Q$ along a $p\in P$.
Suppose that $[a,c]=\{a,b,c\}$ for some $a,c\in Q$.
If $x\in Q$, $x\ne f(p)$, let $\hat x$ denote the unique preimage of $x$.
Let $q$, $r$ be the two preimages of $f(p)$ other than $p$.
If $f(p)\notin\{a,b,c\}$, then $\{\hat a,\hat b,\hat c\}=[\hat a,\hat c]$.
If $f(p)=c$, then $\{\hat a,\hat b,q\}=[\hat a,q]$.
If $f(p)=a$, then $\hat c$ covers some $s\in\{p,q,r\}$, and we have
$\{s,\hat b,\hat c\}=[s,\hat c]$.
If $f(p)=b$, then $\hat c$ covers either $p$ or some $s\in\{q,r\}$;
accordingly, we have either $\{q,p,\hat c\}=[q,\hat c]$, or
$\{\hat a,s,\hat c\}=[\hat a,\hat c]$.
In all the cases $P$ contains an interval of cardinality three, which is
a contradiction.
\end{proof}

\begin{proof}[(b)] By (a) it suffices to consider the case
where $P$ elementarily zips onto $Q$ along a $p\in P$.

Since the three-point poset $J:=f^{-1}(f(p))$ is full in $P$ and $J^*$ is
constructible, by Theorem \ref{constructible point-inverses} it suffices
to show that $f^*$ is a filtration map.

If $y\ne f(p)$, then $f^{-1}(y)$ is a singleton.
Then $(f^*)^{-1}(\partial\fll y^*\flr)$ the boundary of the cone
$(f^*)^{-1}(\fll y^*\flr)$ and so is of codimension one in it.

It remains to show that $\fll J^*\flr\but J^*$ is of codimension one
in $\fll J^*\flr$.
Write $J=\{p,q,r\}$.
Let $m$ be a maximal element of $\fll J^*\flr\but J^*$.
Suppose that $m<p^*$.
Since $P$ is nonsingular, there exists an $n\ne p^*$ such that $m<n<q^*$.
This contradicts the maximality of $m$.
Hence $m\nless p^*$.
Since $m<q^*$ or $m<r^*$, it follows that $m$ is covered by $q^*$ or $r^*$.
\end{proof}

\begin{lemma} \label{zipping-subdivision}
If $P^*$ is a cell complex and $P$ zips onto $Q$, then $Q^*$ is a cell complex,
and the quotient map $f^*\:P^*\to Q^*$ is a subdivision map.
\end{lemma}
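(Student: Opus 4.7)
The plan is to first prove the lemma for a single elementary zipping $f\:P\to Q$ along $p\in P$ (with $\{p,q,r\}$ the three-element full subposet, $x:=f(p)$, and $\fll p\flr\simeq A+I$ where $I=\{p,q,r\}$), by induction on $|P|$, and then to deduce the general case by induction on the zipping length using Corollary~\ref{subdivision-composition} to compose the subdivision maps at each elementary step. For the single-step case I would verify, for each $y\in Q$, that $\fll y^*\flr_{Q^*}$ is a cell of $Q^*$ and that the subdivision map condition holds at $y^*$. For $y\ne x$ with unique preimage $\hat y$, the prejoin structure of $\fll p\flr$ forces the dichotomy: either $\hat y\not\le p$ (whence $\cel\hat y\cer_P\cap\{p,q,r\}=\emptyset$), or $\hat y<p$ (whence $\hat y\in A$ and $\hat y<q,r$ also).

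\textbf{Case A} ($y=x$): a direct computation gives $(f^*)^{-1}(\fll x^*\flr_{Q^*})=B:=\fll q^*\flr\cup\fll r^*\flr$, with intersection $\fll q^*\flr\cap\fll r^*\flr=\fll p^*\flr$ a codim-one face of each summand since $q^*$ and $r^*$ each cover $p^*$ in $P^*$. Using the cone structure of each cell, $\fll p^*\flr$ is collared in each of $\fll q^*\flr,\fll r^*\flr$, so $B$ is a transverse construction step, hence a transversely constructible cell complex, and by Lemma~\ref{Zeeman}(b) a PL ball. Since $p^*,q^*,r^*\notin\partial B$, the map $f^*$ is injective on $\partial B$, giving $\partial\fll x^*\flr_{Q^*}\simeq\partial B$ as posets, so $|\partial\fll x^*\flr_{Q^*}|$ is a sphere. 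The tautological cone identity $\fll x^*\flr_{Q^*}=C\partial\fll x^*\flr_{Q^*}$ then produces a cell, and the subdivision map condition at $x^*$ reduces to the fact that a ball is the cone over its boundary sphere.

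\textbf{Case B} ($y\ne x$, $\hat y\not\le p$): one checks that $\cel\hat y\cer_P\cap\{p,q,r\}=\emptyset$ and $x\notin\cel y\cer_Q$, so $f$ restricts to a poset isomorphism $\cel\hat y\cer_P\to\cel y\cer_Q$ and hence $\fll y^*\flr_{Q^*}\simeq\fll\hat y^*\flr_{P^*}$ is already a cell of $P^*$, with the subdivision map condition immediate. \textbf{Case C} ($y\ne x$, $\hat y<p$): we compute $(f^*)^{-1}(\fll y^*\flr_{Q^*})=\fll\hat y^*\flr_{P^*}$ (the whole cell) and $(f^*)^{-1}(\partial\fll y^*\flr_{Q^*})=\partial\fll\hat y^*\flr_{P^*}$, so the subdivision map condition holds automatically; it remains to show $|\partial\fll y^*\flr_{Q^*}|$ is a sphere. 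The restriction of the zipping to $\partial^*\cel\hat y\cer_P\to\partial^*\cel y\cer_Q$ is again an elementary zipping along $p$: the prejoin identity $\fll p\flr_{\partial^*\cel\hat y\cer_P}\simeq A'+I$ (with $A'=A\cap\partial^*\cel\hat y\cer_P$) and the fullness of $\{p,q,r\}$ both inherit from $P$. Its dual $\partial\fll\hat y^*\flr_{P^*}$ is a closed subcomplex of $P^*$, hence a cell complex, and has strictly fewer elements than $P$. By the inductive hypothesis the restricted $f^*$ is a subdivision map, so by Theorem~\ref{subdivision map} $|\partial\fll y^*\flr_{Q^*}|\cong|\partial\fll\hat y^*\flr_{P^*}|$, a sphere.

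The main obstacle will be Case C: correctly identifying the restriction as a genuine elementary zipping satisfying the hypotheses of the lemma at the strictly smaller poset $\partial^*\cel\hat y\cer_P$ (which requires tracking how the prejoin structure of $\fll p\flr$ localizes), and then converting the inductive subdivision-map conclusion into the PL sphere identification needed to finish the cell-complex verification for $Q^*$ via Theorem~\ref{subdivision map}.
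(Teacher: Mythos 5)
Your proof is correct and follows essentially the same route as the paper's: the only nontrivial step is $y=f(p)$, and both proofs handle it via Zeeman's basic PL result that two $n$-balls meeting in a common codimension-one ball of each have union an $n$-ball (the paper cites this directly; you route it through transverse constructibility and Lemma~\ref{Zeeman}(b), whose proof is itself exactly this Zeeman lemma). For $y\ne f(p)$ both approaches observe that $(f^*)^{-1}(\fll y^*\flr_{Q^*})=\fll\hat y^*\flr_{P^*}$, already a cell of $P^*$.

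Two small comments. First, your Case~A detour through ``transversely constructible'' adds the obligation to verify that $\fll p^*\flr$ is actually \emph{collared} (not just of codimension one) in $\fll q^*\flr$ and in $\fll r^*\flr$; this is true, but ``using the cone structure of each cell'' does not quite settle it, since $\fll p^*\flr$ is a subball of the boundary sphere $\partial\fll q^*\flr$ and the required collar is that of a codimension-one face, not of the whole boundary. The paper sidesteps this entirely by applying Zeeman's lemma in its raw form, which takes two PL balls and a shared codimension-one boundary ball and needs no separate collaring hypothesis. Second, your nested induction in Case~C is superfluous: the subdivision-map condition is a purely local condition at each $y^*$, and your Cases A--C already verify it by exhibiting $(f^*)^{-1}(\fll y^*\flr)$ as a ball with boundary $(f^*)^{-1}(\partial\fll y^*\flr)$. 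Once that is done, Theorem~\ref{subdivision map} produces a global homeomorphism $|P^*|\to|Q^*|$ carrying $|(f^*)^{-1}(\fll y^*\flr)|$ onto $|\fll y^*\flr|$ and hence $|(f^*)^{-1}(\partial\fll y^*\flr)|$ onto $|\partial\fll y^*\flr|$; since the former are balls and spheres, so are the latter, which is precisely the cell-complex condition on $Q^*$. This is how the paper finishes, in one line, what your Case~C spends a whole inner induction on.
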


For a direct construction of the homeomorphism between $|P|$ and $|Q|$ in
the case where $P$ is a manifold see \cite[1.4]{Ne} or, in the case where
$P$ is a sphere, \cite[4.7]{Re}.

\begin{proof}
Arguing by induction, it suffices to consider the case of an elementary zipping
along $p$.
If $J=f^{-1}(f(p))$, we claim that $\fll J^*\flr$ is a ball with boundary
$\fll J^*\flr\but J^*$.

Indeed, let $J=\{p,q,r\}$.
Since $P^*$ is a cell complex, $\fll x^*\flr$ is a ball with boundary
$\partial\fll x^*\flr$ for each $x\in P$.
Since $J$ is full in $P$, $\fll q^*\flr\cap\fll r^*\flr=\fll p^*\flr$.
Obviously $\fll p^*\flr$ is of codimension one in $\fll q^*\flr$ and in
$\fll r^*\flr$.
Hence by basic PL topology \cite[Corollary to Theorem 2]{Ze},
$\fll q^*\flr\cup\fll r^*\flr=\fll J^*\flr$ is a ball with boundary
$(\partial\fll q^*\flr\but\{p\})\cup(\partial\fll r^*\flr\but\{p\})
=\fll J^*\flr\but J^*$.

Since $P^*$ is a cell complex, it follows that $f^{-1}(\fll y^*\flr)$ is
a ball with boundary $f^{-1}(\partial\fll y^*\flr)$ for each $y\in Q$.
This implies the second assertion of the lemma.
Then the first one follows from Theorem \ref{subdivision map}.
\end{proof}

\begin{lemma}\label{zipping preserves cells}
If a cell complex $P$ zips onto $Q$, then $Q$ is a cell complex.
\end{lemma}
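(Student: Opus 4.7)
The plan is to induct on the length of the zipping sequence, reducing to the case of a single elementary zipping $f\colon P\to Q$ along some $p\in P$, with $\fll p\flr\simeq Q_0+I$, $J:=h^{-1}(I)=\{p,q,r\}$ full in $P$, and $f(J)=\{y\}$. I would then verify that $\partial\fll y'\flr_Q$ is a sphere for every $y'\in Q$ by a case analysis on the preimage of $y'$.

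For $y'=y$: since $f$ is injective off $J$, it restricts to an isomorphism $Q_0\to f(Q_0)$, so $\fll y\flr_Q\simeq Q_0+\{y\}$ and $\partial\fll y\flr_Q\simeq Q_0=\partial\fll q\flr_P$, a sphere because $P$ is a cell complex. For $y'\neq y$ with unique preimage $x'$, fullness of $J$ in $P$ forces $\fll x'\flr_P\cap J\in\{\emptyset,\{q\},\{r\},J\}$ (it must be a cone of $J$, and cannot equal $\{p\}$ or $\{q,r\}$ since $p\le x'$ would imply $q,r\le x'$). In the three sub-cases where this intersection is a proper subset of $J$, the map $f|_{\fll x'\flr_P}$ is an order-isomorphism onto $\fll y'\flr_Q$: the only delicate check, when $\fll x'\flr_P\cap J=\{q\}$, is that for $z\in\fll x'\flr_P\setminus\{q\}$ the relation $z\le p$ (or $z\le r$) forces $z\in Q_0$ (since $p,r\not\le x'$), hence $z\le q$. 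So $\fll y'\flr_Q\simeq\fll x'\flr_P$ is a cell.

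The main case is $x'>p$, i.e., $\fll x'\flr_P\supseteq J$. Here the quotient structure gives a canonical poset isomorphism $\fll x'\flr_P/J\simeq\fll y'\flr_Q$, which restricts, because $J\subseteq\partial\fll x'\flr_P$, to $\partial\fll x'\flr_P/J\simeq\partial\fll y'\flr_Q$. Set $R:=\partial\fll x'\flr_P$; then $|R|$ is a PL sphere (since $P$ is a cell complex), and fullness of $J$ descends to $R$ because cones at elements $z<x'$ coincide in $R$ and in $P$, so $R\to R/J$ is itself an elementary zipping. By the observation preceding Lemma~\ref{zips if edge-zips}, this zipping is realized on the barycentric subdivision $R^\flat$ as two consecutive elementary edge-zippings, each satisfying the link condition (the first bulleted condition in Definition~\ref{edge-zipping}). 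Since an edge contraction of a triangulated PL manifold satisfying the link condition induces a PL homeomorphism on underlying polyhedra, $|R/J|\cong|R|$ is a PL sphere and hence $\fll y'\flr_Q$ is a cell.

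The main obstacle is that final PL-topological step: invoking the classical fact that, for edge contractions of triangulations of PL manifolds, the link condition is enough to yield a PL homeomorphism. This is standard but is not explicitly recorded in the excerpt. A self-contained alternative would be a direct Zeeman-style gluing argument, in the spirit of the proof of Lemma~\ref{zipping-subdivision}: inductively exhibit $|\fll x'\flr_P/J|$ as a PL ball built from smaller balls glued along sub-balls of their boundaries, using that $|\fll q\flr_P|\cup|\fll r\flr_P|=|\partial\fll p\flr_P|$ is the sphere boundary of the ball $|\fll p\flr_P|$.
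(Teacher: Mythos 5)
Your argument follows the same case analysis as the paper's: verify that $\partial\fll y'\flr_Q$ is a sphere for each $y'\in Q$, splitting on whether $y'=f(p)$ and, if not, on the position of the unique preimage $x'$ relative to $p$. Your treatment of the case $y'=f(p)$ via the direct combinatorial identification $\partial\fll f(p)\flr_Q\simeq Q_0=\partial\fll q\flr_P$ is correct and in fact more elementary than the paper's, which instead observes $\partial\fll p\flr\simeq(\partial\fll f(p)\flr)+S^0$ and invokes Morton's theorem on unique join factorization. Your careful check that $f$ restricts to an isomorphism $\fll x'\flr_P\to\fll y'\flr_Q$ when $p\nless x'$, including the delicate subcase $\fll x'\flr_P\cap J=\{q\}$, and your verification that fullness of $J$ descends to $R:=\partial\fll x'\flr_P$, spell out details the paper leaves implicit.

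The one place where you part ways with the paper, and create a gap the paper does not have, is the main case $x'>p$. You correctly reduce to showing that the elementary zipping $R\to R/J\simeq\partial\fll y'\flr_Q$ of the sphere $R$ preserves the PL homeomorphism type, but you then either appeal to an external fact about edge contractions of PL manifolds satisfying the link condition, which is not recorded in the paper, or propose re-deriving a Zeeman-style gluing. However, the paper already supplies exactly the needed tool: since $|R|$ is a sphere, $R^*$ is a cell complex by Theorem~\ref{manifolds}, so Lemma~\ref{zipping-subdivision} applies directly to the zipping $R\to R/J$ and shows the dual of the quotient map is a subdivision map; Theorem~\ref{subdivision map} then gives $|R/J|\cong|R|$. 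Lemma~\ref{zipping-subdivision} precedes the present lemma and does not depend on it, so there is no circularity, and your proposed Zeeman-style alternative is essentially a re-derivation of its proof. Replacing your appeal to the unrecorded edge-contraction fact by a citation to Lemma~\ref{zipping-subdivision} closes the gap and makes your argument coincide with the paper's in this case as well.
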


\begin{proof} We may assume without loss of generality that $P$ zips
elementarily onto $Q$ along a $p\in P$.
Let $f\:P\to Q$ be the quotient map.
Then $\partial\fll p\flr\simeq(\partial\fll f(p)\flr)+S^0$.
Since $\partial\fll p\flr$ is a sphere by the hypothesis, and
$|\partial\fll f(p)\flr+S^0|\cong |\partial\fll f(p)\flr|*|S^0|$,
by Morton's theorem \cite{Mo} $\partial\fll f(p)\flr$ is a sphere.

Given any $y\in Q$ other than $f(p)$, its preimage $f^{-1}(y)$
is a singleton $\{x\}$.
The restriction $f_0\:\partial\fll x\flr\to\partial\fll y\flr$ of $f$
is either an isomorphism (if $p\nless x$) or an elementary
zipping along $p$ (if $p<x$).
Now $\partial\fll x\flr$ is a sphere, hence its dual is a cell complex by
Theorem \ref{manifolds}.
Then by Lemma \ref{zipping-subdivision} $f_0$ is a subdivision.
Hence by Theorem \ref{subdivision map} $\partial\fll y\flr$ is a sphere.
\end{proof}

\begin{theorem}\label{transverse zipping vs zipping} If $P$ is a poset such that
$P^\#$ is a cell complex, and $P$ zips onto $Q$, then

(a) $Q^\#$ is a cell complex, and

(b) $P$ transversely zips onto $Q$.
\end{theorem}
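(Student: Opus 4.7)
The plan is to induct on the length of the zipping, reducing both (a) and (b) to the case of a single elementary zipping $f\colon P\to Q$ along $p$, where $\fll p\flr_P\cong Q_p+I$ with $I=\{q,r,p\}$ and $p=q\vee r$. Throughout, Lemma \ref{semi-cell complex} reformulates ``$P^\#$ is a cell complex'' as the statement that $|\partial\partial^*[a,b]_P|$ (which, since each $[a,b]_P$ is pure by the cell-complex hypothesis, equals $|(a,b)_P|$) is a sphere for all $a<b$ in $P$.

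For (a), I verify the corresponding condition for $Q$ by case-splitting on the relation of $\bar p$ to an interval $[\bar y_1,\bar y_2]_Q$. If $\bar p\notin[\bar y_1,\bar y_2]$, preimages are unique and $[\bar y_1,\bar y_2]_Q\cong[y_1',y_2']_P$, so the conclusion is immediate. If $\bar y_2=\bar p$ (the case $\bar y_1=\bar p$ is symmetric), fullness forces $y_1'\in Q_p$, and a direct computation gives $[\bar y_1,\bar p]_Q\cong\{\bar y_1\}+M+\{\bar p\}$ with $M:=Q_p\cap(y_1',p)_P$, so $|(\bar y_1,\bar p)_Q|=|M|$; since $(y_1',p)_P=M\sqcup\{q,r\}$ with $\{q,r\}$ an antichain above $M$, one has $|(y_1',p)|\cong|M|*S^0=\Sigma|M|$, and the hypothesis combined with Morton's unsuspension theorem \cite{Mo} yields $|M|$ a sphere. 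When $\bar y_1<\bar p<\bar y_2$ with $y_2'>p$, the restriction of $f$ to $(y_1',y_2')_P$ is itself an elementary zipping along $p$ (both fullness and the prejoin $\fll p\flr=M+I$ survive the restriction, using $y_1'\in Q_p$); since $|(y_1',y_2')_P|$ is a PL sphere and hence a closed PL manifold, Theorem \ref{manifolds} makes its dual a cell complex, and Lemma \ref{zipping-subdivision} then gives $|(\bar y_1,\bar y_2)_Q|\cong|(y_1',y_2')_P|$ is a sphere. When $\bar y_1=\bar p<\bar y_2$ with $y_2'>p$, the interval $(\bar p,\bar y_2)_Q$ is realized as $((q,y_2')_P\setminus\{p\})\cup((r,y_2')_P\setminus\{p\})$ glued along $(p,y_2')_P$; since the fullness $(q,p)_P=\emptyset$ makes $p$ a minimal vertex of the sphere $|(q,y_2')_P|$ with link $|(p,y_2')_P|$, each piece is the antistar of $p$ in the respective sphere, hence a PL ball bounded by $|(p,y_2')_P|$, and two such balls glued along their common sphere boundary form a sphere. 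The remaining sub-cases (where $y_2'$ is above only one of $\{q,r\}$) are trivial identifications.

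For (b), fix $x\ge p$; dualizing reduces the collar condition to the claim that $|\partial[q,x]|\cong C|\partial[p,x]|$ keeping $|\partial[p,x]|$ fixed (the argument for $r$ is symmetric). By the cell-complex hypothesis, each $[a,b]_P$ is pure, so $\partial[a,b]=[a,b)$ and $|[a,b)|=C|(a,b)|$, a PL ball (cone on the spherical open interval). The fullness $(q,p)_P=\emptyset$ again makes $p$ a minimal vertex of $(q,x)_P$ with link $(p,x)_P$, so $|[p,x)|$ is the closed star of $p$ in the boundary sphere $|(q,x)|=\partial|[q,x)|$ of the ball $|[q,x)|$; in particular, $|[p,x)|$ is a PL sub-ball of the boundary of $|[q,x)|$. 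The standard PL fact that an $(m{+}1)$-ball containing an $m$-sub-ball in its boundary is PL-homeomorphic rel that sub-ball to the cone over it (a consequence of PL regular neighborhood uniqueness) produces the required collar.

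The hardest step is the case in (a) where $\bar p$ is strictly interior to $[\bar y_1,\bar y_2]$ with $y_2'>p$: one must verify that the restriction of $f$ genuinely factors through an elementary zipping of the sub-interval $(y_1',y_2')_P$, which requires a careful poset check that both the fullness relation $p=q\vee r$ and the prejoin decomposition $\fll p\flr=M+I$ pass cleanly to the restriction, so that Lemma \ref{zipping-subdivision} applies to the induced zipping on the sphere $|(y_1',y_2')_P|$.
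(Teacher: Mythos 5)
Your proof is correct, and for part (b) it is essentially the paper's argument rephrased geometrically: the paper identifies $\lk(x^*,\fll q^*\flr)=\partial^*[x^*,q^*]$ as an $(n{+}1)$-cell and $\lk(x^*,\fll p^*\flr)=\partial^*[x^*,p^*]$ as a maximal $n$-cell of its boundary sphere, then concludes the cone pair; you phrase the same fact as a ball-in-boundary-of-ball standard pair. For part (a), however, your route is genuinely different in organization. The paper runs its case-split on the element $y\in Q$ that one dual-cones at: for $y\ne f(p)$ it observes that $f$ restricted to $\partial^*\cel x\cer$ (where $\{x\}=f^{-1}(y)$) is either an isomorphism or an elementary zipping onto $\partial^*\cel y\cer$ and invokes Lemma \ref{zipping preserves cells} in one stroke, while for $y=f(p)$ and $x>p$ it restricts to $\partial\fll x\flr$ and invokes Lemma \ref{zipping-subdivision}. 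This disposes of all intervals at once for each $y$ and avoids your finer decomposition by the position of $\bar p$ in $[\bar y_1,\bar y_2]$. Your decomposition instead treats each interval separately: the $\bar y_2=\bar p$ case via the prejoin $M+S^0$ and Morton's unsuspension (a tool the paper does use elsewhere, e.g.\ in Lemma \ref{zipping preserves cells} and Theorem \ref{amalgamation2}, but not at this spot); the $\bar y_1<\bar p<\bar y_2$ case by restricting the zipping to the sphere $(y_1',y_2')_P$ and applying Lemma \ref{zipping-subdivision}; and the $\bar y_1=\bar p$ case by a direct antistar-gluing argument (two $n$-balls glued along the common boundary $(n{-}1)$-sphere $(p,y_2')$). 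Each of these is sound — your verifications that fullness and the prejoin decomposition descend to the restricted sub-interval are exactly the checks that make the elementary-zipping hypothesis of Lemma \ref{zipping-subdivision} applicable, and they hold for the same reasons that the paper's restrictions to $\partial^*\cel x\cer$ and $\partial\fll x\flr$ inherit the zipping structure — but the paper's coarser split buys a shorter proof, whereas yours makes the geometric content (suspension, antistar of a minimal vertex, ball unknotting) more visible interval by interval.
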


We recall that $P^\#$ is a cell complex if and only if open intervals
$\partial\partial^*[p,q]$ in $P$ are spheres (Lemma \ref{semi-cell complex}).

Of course, $P^\#$ is a cell complex if either $P$ or $P^*$ is a cell
complex (by Corollary \ref{cell complex subdivision} or alternatively
by Lemma \ref{semi-cell complex} and Theorem \ref{manifolds}).

\begin{proof}[(a)]
It suffices to show that every open interval in $Q$ is a sphere, or equivalently
that $\partial^*\cel y\cer$ is a cell complex for every $y\in Q$.
Let $X=f^{-1}(\partial^*\cel y\cer)$.

If $y\ne f(p)$, then $f^{-1}(y)$ is a singleton $\{x\}$, and
$X=\partial^*\cel x\cer$, which is a cell complex.
Also, $f$ restricts either to an isomorphism $X\simeq\partial^*\cel y\cer$
or to an elementary zipping $X\to\partial^*\cel y\cer$ along $p$.
In the latter case $\partial^*\cel y\cer$ is a cell complex by
Lemma \ref{zipping preserves cells}.

Suppose that $y=f(p)$, and let $x\in\cel\{p,q,r\}\cer$.
If $x\ngtr p$, then since $\{p,q,r\}$ is full, either $x\ngtr q$
or $x\ngtr r$.
By symmetry we may assume the latter.
Then $f$ restricts to an isomorphism between $[q,x]$ and $[y,f(x)]$.
Since $\partial\partial^*[q,x]$ is a sphere, so is
$\partial\partial^*[y,f(x)]$.

It remains to consider the case $x>p$.
Then $f$ restricts to an elementary zipping
$\partial\fll x\flr\to\partial\fll f(x)\flr$ along $p$.
Since $\partial\fll x\flr$ is dual to a cell complex, by Lemma
\ref{zipping-subdivision} so is $\partial\fll f(x)\flr$.
In particular, $\partial\partial^*[y,f(x)]$ is a sphere.
\end{proof}

\begin{proof}[(b)]
By (a) it suffices to consider the case of an elementary zipping $f\:P\to Q$
along a $p\in P$.
Let $q$ and $r$ be the two preimages of $f(p)$ other than $p$, and pick
some $x>p$.
The open interval $\partial\partial^*[q,x]$ is a sphere of some dimension $n$,
hence its dual $\partial\partial^*[x^*,q^*]$ is a cell complex by
Theorem \ref{manifolds}.
Its cell $\partial^*[x^*,p^*]$ is maximal, hence is of the same dimension $n$.
On the other hand, $\partial^*[x^*,q^*]$ is an $(n+1)$-cell.
Thus $|\lk(x^*,\fll q^*\flr)|\cong C|\lk(x^*,\fll p^*\flr)$.
Similarly $|\lk(x^*,\fll r^*\flr)|\cong C|\lk(x^*,\fll p^*\flr)$.
Thus $\fll p^*\flr$ is collared in $\fll q^*\flr$ and in $\fll r^*\flr$.
\end{proof}

\begin{lemma}\label{transversely constructible if zips}
If $P^\#$ is a cell complex, and $P$ transversely zips onto $Q$, then the quotient
map $f^*\:P^*\to Q^*$ is transversely constructible.

\end{lemma}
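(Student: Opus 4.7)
The plan is to induct on the number of elementary transverse zippings in the factorization and reduce to the case of a single elementary transverse zipping $f\:P\to Q$ along $p\in P$, with $q,r$ denoting the two other preimages of $f(p)$. By Lemma \ref{constructible if zips}(b), $f^*$ is already constructible and in particular a filtration map; since filtration maps are open, this is equivalent to $f$ being closed, and so for each $y\in Q$ with a singleton preimage $\hat y$ one has $f^{-1}(\cel y\cer)=\cel\hat y\cer_P$. It remains to upgrade constructibility to transverse constructibility by verifying the two conditions of Definition \ref{constructible map-def}: (a) $f^*$ is a stratification map, and (b) each preimage cone $(f^*)^{-1}(\fll y^*\flr)$ is transversely constructible.

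I would dispatch condition (b) first. For $y\ne f(p)$, the preimage $(f^*)^{-1}(\fll y^*\flr)=\fll\hat y^*\flr$ is a cone and hence trivially transversely constructible. For $y=f(p)$, closedness of $f$ together with the fullness of $\{p,q,r\}$ in $P$ (so that $p$ is the least upper bound of $q$ and $r$) yields $(f^*)^{-1}(\fll f(p)^*\flr)=\fll q^*\flr\cup\fll r^*\flr$ with intersection $\fll q^*\flr\cap\fll r^*\flr=\fll p^*\flr$. All three pieces are cones, and the hypothesis of transverse zipping says $\fll p^*\flr$ is collared in both $\fll q^*\flr$ and $\fll r^*\flr$, so this is exactly a transverse construction step, making the union transversely constructible.

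For condition (a), I would show that $(f^*)^{-1}(\partial\fll y^*\flr)$ is collared in $(f^*)^{-1}(\fll y^*\flr)$ for each $y\in Q$. When $y\ne f(p)$, closedness of $f$ and $f^{-1}(y)=\{\hat y\}$ give $(f^*)^{-1}(\partial\fll y^*\flr)=\partial\fll\hat y^*\flr$, and I would verify the elementary fact that the boundary of a poset cone is collared in the cone: for $w^*\in\partial\fll\hat y^*\flr$ one has $\lk(w^*,\fll\hat y^*\flr)=(w,\hat y\,]^*$, which possesses $\hat y^*$ as a unique minimum and thus realizes as the cone over $|\lk(w^*,\partial\fll\hat y^*\flr)|=|(w,\hat y)^*|$ keeping that realization fixed. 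When $y=f(p)$, the preimage of the boundary becomes the ``open shell'' $S:=(\partial\fll q^*\flr\setminus\{p^*\})\cup(\partial\fll r^*\flr\setminus\{p^*\})$, which must be shown collared in $\fll q^*\flr\cup\fll r^*\flr$. I would split the verification into three location-cases for $w^*\in S$: if $w^*\in\partial\fll q^*\flr\setminus\fll p^*\flr$ (or the symmetric case in $\fll r^*\flr$), then $w^*\notin\fll r^*\flr$ so both links coincide with the corresponding links in $\fll q^*\flr$ alone, and the collaring reduces to the standard boundary-of-a-cone collaring; if $w^*\in\partial\fll p^*\flr$, then $w^*$ lies on the gluing locus, and I would build the collaring by patching the transverse-zipping collarings of $\fll p^*\flr$ in $\fll q^*\flr$ and in $\fll r^*\flr$ with the boundary-of-cone collarings, using Addendum \ref{collaring addendum} to restrict the transverse collarings to the open subposets $\partial\fll q^*\flr$ of $\fll q^*\flr$ and $\partial\fll r^*\flr$ of $\fll r^*\flr$ so as to obtain collarings of $\partial\fll p^*\flr$ within each side, then combining these in the spirit of Lemma \ref{cojoin collars}(b).

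The main obstacle is precisely the $w^*\in\partial\fll p^*\flr$ subcase of condition (a): the two flavors of collaring (boundary-of-cone on each side and transverse-zipping across the interface) must be matched consistently along the common piece $\partial\fll p^*\flr$ so as to yield a single collar of $S$ in $\fll q^*\flr\cup\fll r^*\flr$. The bookkeeping needed to reconcile these collars geometrically parallels the amalgamation performed in Lemma \ref{cojoin collars}(b), but the transverse hypothesis is exactly what ensures the collar parameters on the two sides agree to first order along $\partial\fll p^*\flr$; the remaining work is to invoke the collaring apparatus of Theorem \ref{collaring theorem} and Addendum \ref{collaring addendum} carefully enough to see that the patched collar is well-defined.
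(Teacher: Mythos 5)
Your outline is right and the easy parts are handled correctly — the reduction to a single elementary transverse zipping, the computation that for $y\ne f(p)$ one has $(f^*)^{-1}(\fll y^*\flr)=\fll\hat y^*\flr$ and hence that the stratification condition there is just boundary-of-a-cone collaring, and the observation that for $y=f(p)$ the preimage $\fll q^*\flr\cup\fll r^*\flr$ with intersection $\fll p^*\flr$ is a transverse construction step. You also correctly isolate where the real work lies: showing the collaring at $w^*\in\partial\fll p^*\flr$, i.e.\ for $w>p$.

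But there you stop short, and your proposed resolution — ``patching'' the boundary-of-cone collars and the transverse-zipping collars across $\partial\fll p^*\flr$ in the spirit of Lemma \ref{cojoin collars}(b) — is not an argument. You have not said how the two collars are to be reconciled, and indeed there is no reason to expect a naive patch to work without some global control. A symptom of the gap is that your sketch nowhere uses the hypothesis that $P^\#$ is a cell complex, yet the lemma does not hold without it. The paper closes exactly this case by a completely different device: for $x>p$, the hypothesis on $P^\#$ makes $\partial\fll x\flr$ dual to a cell complex, and $f$ restricts to an \emph{elementary zipping} $f_x\:\partial\fll x\flr\to\partial\fll f(x)\flr$ along $p$; by Lemma \ref{zipping-subdivision}, $f_x^*$ is then a subdivision map, and the underlying homeomorphism from Theorem \ref{subdivision map} identifies the pair $(|\lk(x^*,M^*)|,|\lk(x^*,N^*)|)$ with $(|\lk(f(x)^*,\fll f(p)^*\flr)|,|\lk(f(x)^*,\partial\fll f(p)^*\flr)|)$, which is a cone pair because the boundary of a cone is collared. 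That restriction-and-subdivide step — not collar amalgamation — is the missing idea, and you should replace your last paragraph with it.
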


\begin{proof}
It suffices to consider a transverse elementarily zipping along $p$.
Then it is clear from the definition that $(f^*)^{-1}(\fll y^*\flr)$ is
constructible for each $y\in Q$.

Let $M=f^{-1}(\cel f(p)\cer)$ and $N=f^{-1}(\partial^*\cel f(p)\cer)$.
Let $q$ and $r$ be the two preimages of $f(p)$ other than $p$.

If $p\in N$ and $x\ngtr p$, then either
$\partial\fll x\flr_M=\{q\}+\partial\fll x\flr_N$ or
$\partial\fll x\flr_M=\{r\}+\partial\fll x\flr_N$.
Thus $\lk(x^*,M^*)\simeq C\lk(x^*,N^*)$.

Suppose that $x>p$.
Then $\partial\fll x\flr$ is dual to a cell complex, and $f$ restricts to
an elementary zipping $f_x\:\partial\fll x\flr\to\partial\fll f(x)\flr$
along $p$.
By Lemma \ref{zipping-subdivision}, $f_x^*$ is a subdivision.
Then the pair $(|\lk(x^*,M^*)|,|\lk(x^*,N^*)|)$ is homeomorphic, via the underlying
homeomorphism of $f_x^*$, to the pair
$(|\lk(f(x)^*,\fll f(p)^*\flr)|,|\lk(f(x)^*,\partial\fll f(p)^*\flr)|)$,
which in turn is of the form $(|CX|,|X|)$.

We have thus proved that $N^*$ is collared in $N^*$.
It follows that $f^*$ is a stratification map.
\end{proof}

From Lemmas \ref{transversely constructible if zips} and
\ref{constructible if zips} and Theorem \ref{constructible map} we obtain

\begin{theorem}\label{transversely constructible if zips2}
(a) If $P^\#$ is a cell complex, and $P$ transversely zips onto a dual cone, then $P^*$
is transversely constructible.

(b) If a nonsingular poset $P$ zips onto a dual cone, then
$P^*$ is constructible.

\end{theorem}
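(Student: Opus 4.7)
The plan is to assemble the theorem from three results that have already been proved in the preceding pages: Lemma \ref{transversely constructible if zips}, Lemma \ref{constructible if zips}(b), and Lemma \ref{constructible map} (the preservation of [transverse] constructibility under [transversely] constructible maps). The key observation that makes the assembly immediate is that a dual cone $\cel q\cer$ has a minimum element $q$, so its dual $(\cel q\cer)^* = \fll q^*\flr$ has a greatest element. By the base case of the definition of [transverse] constructibility, any poset with a greatest element is automatically [transversely] constructible.

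For part (a), I would let $f\:P\to Q$ be the transverse zipping, where $Q=\cel q\cer$. Since $P^\#$ is a cell complex, Lemma \ref{transversely constructible if zips} applies and yields that the quotient map $f^*\:P^*\to Q^*$ is a transversely constructible map. The range $Q^*$ has a greatest element $q^*$, hence is transversely constructible in one step. Then Lemma \ref{constructible map} (applied in its transverse version) gives that $P^*$ is transversely constructible.

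For part (b), I would proceed identically, but invoking Lemma \ref{constructible if zips}(b) in place of Lemma \ref{transversely constructible if zips}. The nonsingularity hypothesis is propagated along the zipping by Lemma \ref{constructible if zips}(a), so each intermediate poset in the zipping sequence remains nonsingular and the hypotheses of Lemma \ref{constructible if zips}(b) are satisfied; the resulting quotient map $f^*\:P^*\to Q^*$ is then (non-transversely) constructible, $Q^*$ is a cone, and Lemma \ref{constructible map} in its non-transverse form delivers constructibility of $P^*$.

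There is no real obstacle, since the paragraph preceding the theorem explicitly announces this as a consequence of the cited lemmas; the only thing to check carefully is the interpretation of ``dual cone''. One must verify that the base case ``$P$ has a greatest element'' in the definition of constructibility is correctly matched: $Q$ being a dual cone means $Q=\cel q\cer$ has a least element, so the dual $Q^*$ has a greatest element, which is precisely what the definition demands. If one were instead given that $P$ zips onto a cone (rather than a dual cone), the hypothesis would apply to $P^*$ rather than $P$, and the statement would not follow in the same direct way.
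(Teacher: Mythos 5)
Your proof is correct and takes essentially the same approach as the paper, which presents the theorem as an immediate consequence of Lemmas \ref{transversely constructible if zips}, \ref{constructible if zips} and Lemma \ref{constructible map} without spelling out the details. Your observation that the propagation of nonsingularity is already packaged into Lemma \ref{constructible if zips} (since its conclusion is stated for a full zipping sequence, with (a) handling the propagation internally) means that part of your explanation is redundant, but it is not wrong.
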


We now turn to a converse type of implication.

\begin{theorem}\label{zips if constructible}
(a) If $K$ is a poset such that $K^*$ is transversely constructible, then $K$ zips
onto a dual cone.

(b) If $K$ is a cell complex such that $K^*$ is constructible, then $K$
zips onto a singleton.

(c) If $K$ is a nonsingular
poset such that $K^*$ is transversely constructible, then $K$ transversely zips
onto a dual cone.
\end{theorem}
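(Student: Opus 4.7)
The plan is to prove parts (a), (b), (c) simultaneously by induction on $|K|$. The base case occurs when $K^*$ has a greatest element, so that $K$ is a dual cone: parts (a) and (c) are then immediate via the empty zipping, while part (b), which demands a zipping onto a singleton, requires a separate inductive subroutine based on the cell complex structure of $K$ and the contractibility of $|K|$ (guaranteed by Lemma \ref{Mayer-Vietoris}(a)) to continue zipping below the dual cone.

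For the inductive step, let $L \bydef K^*$ and write $L = Q \cup R$ for a (transverse) construction step with $A \bydef Q \cap R$. I aim to locate a single elementary (transverse) zipping of $K$ that strictly reduces $|K|$ and whose quotient $K'$ still satisfies the hypotheses, whereupon induction finishes the proof. The plan is to choose a maximal element $x$ of $A$ in $L$, together with a maximal cover $y_Q \in Q$ and a maximal cover $y_R \in R$ of $x$ in $L$, such that every lower bound of $\{y_Q, y_R\}$ in $L$ lies in $\fll x\flr_L$. Downward closure of $Q$ and $R$ forces $y_Q$ and $y_R$ to be maximal in $L$ itself, hence atoms of $K$, and our choice of $x$ makes it the unique least upper bound of $y_Q, y_R$ in $K$; consequently $\{x, y_Q, y_R\}$ is full in $K$ and one may elementarily zip $K$ at $x$. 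In the transverse case (c), the collaring of $A$ in $Q$ and in $R$ additionally supplies the collaring of $\cel x\cer_K$ in $\cel y_Q\cer_K$ and in $\cel y_R\cer_K$ needed for the zipping to be transverse.

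To close the induction, I must check that the resulting $K'$ still meets the hypotheses. Setting $Q' \bydef (Q \setminus \{x, y_Q\}) \cup \{*\}$ and $R'$ analogously, the decomposition $(K')^* = Q' \cup R'$ becomes a (transverse) construction step once one verifies that the codimension-one (respectively collaring) properties of $A$ inside $Q, R$ descend to $A' \bydef (A \setminus \{x\}) \cup \{*\}$ inside $Q', R'$, and that $Q', R', A'$ remain (transversely) constructible; the latter reduces, via Lemma \ref{dual cones constructible} and the inductive hypothesis, to similar statements for smaller dual cones inside them. For (b), Lemma \ref{zipping preserves cells} guarantees $K'$ is again a cell complex. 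Induction then yields the zipping of $K'$, which composes with the elementary zipping at $x$ to give the desired zipping of $K$.

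The main obstacle is securing the existence of a good triple $(x, y_Q, y_R)$, particularly in (a) where only codimension one is assumed. The uniqueness of $x$ as a greatest lower bound of $y_Q, y_R$ in $L$ is a global statement not implied by the local codimension-one hypothesis. In (c), the collaring of $A$ in $Q$ and $R$, via Theorem \ref{collaring theorem}, forces the local picture near a maximal element of $A$ to look like a product collar, which yields the desired uniqueness after picking $x$ at an outermost layer of this collar. In (a), I expect the cleanest strategy is to reduce to the transverse situation via a preparatory sequence of zippings drawn from the inductive hypothesis applied to $Q^*$ and $R^*$, exploiting that for $p \in Q^* \setminus A^*$ one has $\fll p\flr_K = \fll p\flr_{Q^*}$, so that zippings of $Q^*$ avoiding $A^*$ lift verbatim to $K$.
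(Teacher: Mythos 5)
The approach diverges from the paper's at the crucial point of locating the element to zip along. You propose picking a maximal element $x$ of $A := Q\cap R$ for a single construction step $L=Q\cup R$ of $L := K^*$. The paper instead records the entire recursive construction as a poset-valued ``scheme'' $S_P$ with associated map $s_P$, then iterates through nested construction steps $X_0\supset P_0=Q_0\cup R_0$, $X_1=Q_0\cap R_0\supset P_1=Q_1\cup R_1$, \dots, until a cone $X_{n+1}$ is reached, and zips along the element $p\in K$ whose $K^*$-avatar is the greatest element of the cone $X_{n+1}$. That iteration is what forces $\fll p\flr_K$ to have the prejoin structure $S^0+\dots+S^0+pt$ demanded by the definition of zipping, and simultaneously supplies fullness of the triple via $\fll q_n^*\flr\cap\fll r_n^*\flr=\fll p^*\flr$.

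There are two genuine gaps. First, a maximal element $x$ of $A$ covered by maximal $y_Q, y_R$ need not admit the fullness you postulate, and neither of your outlined remedies repairs this. A concrete counterexample: let $K$ be the cell complex consisting of a bigon $2$-cell $\tau$ with two vertices $y_Q,y_R$ and two parallel edges $e,e'$; take $Q=\fll y_Q\flr_L$, $R=\fll y_R\flr_L$, so $A=\{\tau,e,e'\}$ with maximal elements $e,e'$ (in $L$). Then for $x=e$ one has $\cel y_Q\cer_K\cap\cel y_R\cer_K=\{e,e',\tau\}\ne\{e,\tau\}=\cel e\cer_K$, so $\{e,y_Q,y_R\}$ is not full in $K$, and symmetrically for $e'$. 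The valid zipping is along $\tau$, the greatest element of $X_2=Q_1\cap R_1$ inside a construction step of $A$ itself --- exactly what the paper's iteration locates and your single-step plan cannot. Your fallback for (a), a preparatory zipping of $Q^*$ and $R^*$ to ``reduce to the transverse situation,'' is not worked out: zippings of $Q^*$ along elements in $A^*$ do not lift to $K$, and zipping $Q^*$ alone does not upgrade the codimension-one hypothesis on $A\subset Q$ into a collaring.

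Second, even where a zipping exists, your proposed decomposition $(K')^*=Q'\cup R'$ with $Q'=(Q\setminus\{x,y_Q\})\cup\{*\}$ is not a construction step. The image $*$ of the collapsed triple is an atom of $K'$, hence a maximal element of $(K')^*$ lying in $A'=Q'\cap R'$; but $*$ is also maximal in $Q'$, so no maximal element of $Q'$ covers it, and the codimension-one condition fails at $*$. The paper handles precisely this point by passing to the quotient scheme $S_{L^*}=S_{K^*}/s_{K^*}(I)$ and re-checking the covering conditions there, with a case split on whether $z(p)$ is an atom; nothing in your proposal performs that verification or produces the correct replacement decomposition.
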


It is worth noting that each elementary zipping produced by the proof goes along
a $\sigma$ such that $\partial\fll\sigma\flr$ is a prejoin of copies of $S^0$
(the latter denotes the poset consisting of two incomparable elements).

\begin{proof} We first note that if $K$ is a cell complex and a dual cone,
then $K$ is a singleton.
Hence by Lemma \ref{zipping preserves cells}, in order to prove (b) it suffices
to show that $K$ transversely zips onto a dual cone.

If $P$ is a [transversely] constructible poset, we define its subposet
$M_P$ as follows.
If $P$ is a singleton, we set $M_P=\emptyset$, and if $P=Q\cup R$ is a
[transverse] construction step, we let
$M_P=M_Q\cup M_R\cup M_{Q\cap R}\cup M(Q\cap R)$,
where $M(P)$ denotes the set of all maximal elements of $P$.
We claim that in the transversely constructible case, $\lk(p,P)$ is a sphere
for every $p\in M_P$.
To see this, suppose that $P=Q\cup R$ is a transverse construction step, and
for $T$ running over $Q$, $R$ and $Q\cap R$, and for every $t\in M_T$,
$\lk(t,T)$ is a sphere.
If $p\in M_{Q\cap R}$ or $p\in M(Q\cap R)$, then $\lk(p,Q\cap R)$ is a sphere
(either by the assumption or because it is empty), whence
$\lk(p,P)\simeq S^0*\lk(p,Q\cap R)$ is also a sphere.
If $p\in Q\cap R$, then $\lk(p,Q)$ is a cone (over $\lk(p,Q\cap R)$), so it cannot
be a sphere; therefore $p\notin M_Q$, and similarly $p\notin M_R$.
Hence if $p\in M_Q$, then $\lk(p,P)=\lk(p,Q)$, which is a sphere by the
assumption.
Similarly, $\lk(p,P)$ is a sphere for each $p\in M_R$, which completes the
proof of the claim.

Now in each of (a), (b) we know that $K^*$ is constructible and
that $\partial\fll p\flr_K$ is a sphere for each $p^*\in M_{K^*}$.
Therefore from now on we treat (a) and (b) simultaneously.

For a [transversely] constructible poset $P$ we record its {\it scheme} of
[transverse] construction in the form of a poset $S_P$, along with a monotone map
$s_P\:P^*\to S_P$, defined as follows.
If $P$ is a cone, then $S_P$ is a singleton poset, so $s_P$ is uniquely defined.
If $P=Q\cup R$ is a [transverse] construction step, we pick some
[transverse] construction
schemes $S_Q$, $S_R$ and $S_{Q\cap R}$ and set $S_P=(S_Q\sqcup S_R)+S_{Q\cap R}$
(disjoint union in the concrete category of posets) and
$s_P|_{Q\cap R}=s_{Q\cap R}$, $s_P|_{Q\but R}=s_Q|_{Q\but R}$ and
$s_P|_{R\but Q}=s_R|_{R\but Q}$.
It follows by induction that each point-inverse of $s_P$ is a dual cone.
(In fact, if $P^*$ happens to be a cell complex, then $s_P$ can be seen to be
a bijection -- though normally not an isomorphism.)
We say that $S_T$ is a {\it (principal) subscheme} of $S_P$ if either $S_T=S_P$ or
$S_T$ is a subscheme of $S_Q$, $S_R$ or $S_{Q\cap R}$ (respectively, of $S_Q$
or $S_R$), where $P=Q\cup R$ is a [transverse] construction step.

Conversely, let us call a poset $S$ a {\it scheme poset} if $S$ is a singleton or
$S=(S_+\sqcup S_-)+S_0$, where each $S_*$ is a scheme poset.
To each non-atom $p$ of a scheme poset $S$ we associate two subsets $A^+(p,S)$ and
$A^-(p,S)$ of $S$ as follows.
In the event that $S=(S_+\sqcup S_-)+S_0$ and $p$ is a non-atom of some $S_*$
(i.e.\ of $S_+$, $S_-$ or $S_0$), we set $A^+(p,S)=A^+(p,S_*)$ and
$A^-(p,S)=A^-(p,S_*)$; and if $p$ is an atom of $S_0$, then we define $A^+(p,S)$
to be the set of atoms of $S_+$ and $A^-(p,S)$ to be the set of atoms of $S_-$.
Given a monotone map $s\:P^*\to S$ whose point inverses are dual cones,
let us write $s^{-1}(p)=\cel p_s\cer$.
Then $s$ corresponds to a construction scheme of $P$ if and only if
for each non-atom $p$ of $S$ there exist a $p^+\in A^+(p,S)$ and a
$p^-\in A^-(p,S)$ such that $p_s$ covers both $(p^+)_s$ and $(p^-)_s$.
(This is equivalent to the codimension one conditions.)

Now suppose that $X_0:=K^*$ is [transversely] constructible.
We may assume that $K$ is not a dual cone; then $X_0$ is not a cone.
If $X_0=Q\cup R$ is a [transverse] construction step, then either both $Q$ and $R$
are cones, or $S_{X_0}$ contains a proper principal subscheme $S_P$ (namely
$P=Q$ or $R$) such that $P$ is not a cone.
By iterating this observation, we can find a principal subscheme $S_{P_0}$
of $S_{X_0}$ such that there is a [transverse] construction step $P_0=Q_0\cup R_0$
where both $Q_0$ and $R_0$ are cones.
Since $X_1:=Q_0\cap R_0$ is [transversely] constructible, we can similarly find
a principal
subscheme $S_{P_1}$ of $S_{X_1}$ such that there is a [transverse] construction
step $P_1=Q_1\cup R_1$ where both $Q_1$ and $R_1$ are cones.
By iterating, we can find an $n$ such that for each $k=1,\dots,n$ there is a principal
subscheme $S_{P_k}$ of $S_{X_k}$ and a [transverse] construction step
$P_k=Q_k\cup R_k$,
where both $Q_k$ and $R_k$ are cones, $X_{k+1}=Q_k\cap R_k$, and $X_{n+1}$ is a cone.

The union of the singleton subschemes $S_{Q_i}$ and $S_{R_i}$, $i=0,\dots,n$; and
$S_{X_{n+1}}$ is a closed subposet $\Sigma$ of $S_{K^*}$, isomorphic to
$S^0+\dots+S^0+pt$ ($n+1$ copies of $S^0$).
We may write out the cones $Q_i=\fll q_i^*\flr_{X_i}$ and $R_i=\fll r_i^*\flr_{X_i}$
for $i=0,\dots,n$; and $Q_n\cap R_n=\fll p^*\flr_{X_{n+1}}$.
The monotone map $s_{K^*}\:K\to S_{K^*}$ sends each $q_i$ onto the unique element
of $S_{Q_i}$, each $r_i$ onto the unique element of $S_{R_i}$, and $p$ onto
the unique element of $S_{X_{n+1}}$.
In particular, $s_{K^*}$ restricts to a bijection between the subposet $\Pi$ of $K$
formed by $p$ and the $q_i$ and the $r_i$, $i=0,\dots,n$, and the subposet $\Sigma$
of $S_{K^*}$.
Since each $X_{i+1}$ is of codimension one in both $Q_i$ and $R_i$, every
atom of $X_{i+1}^*$ covers (in $P_i^*$, and hence in $K$) the only atom $q_i$ of
$Q_i^*$ as well as the only atom $r_i$ of $R_i^*$.
In particular, $p$ covers both $q_n$ and $r_n$.
Moreover, since each $S_{P_i}$ is a principal subscheme of $S_{X_i}$, and
$P_i=Q_i\cup R_i$ is a construction step, both $q_i$ and $r_i$ are atoms of
$X_i^*$.
Hence each of $q_i$, $r_i$ covers both $q_{i-1}$ and $r_{i-1}$, for each
$i=1,\dots,n$.
Therefore the bijection $\Pi\to\Sigma$ is an isomorphism, and $(\partial\Pi)^\flat$
contains an $n$-simplex that is maximal as a simplex of
$(\partial\fll p\flr)^\flat$.

Now $p\in M_{K^*}$ so $\partial\fll p\flr$ is a sphere, which as we have just
shown must be of dimension $n$.
Then it coincides with the $n$-sphere $\partial\Pi\subset\partial\fll p\flr$.
In other words, $\Pi$ is a closed subposet of $K$.
Then $\fll p\flr\simeq Q+I$, where $Q$ is an $(n-1)$-sphere.
Finally, since $\fll q^*_n\flr\cap\fll r^*_n\flr=\fll p^*\flr$,
the subposet $\{p,q_n,r_n\}$ of $K$ is full in $K$.
Thus we obtain an elementary [transverse] zipping $z\:K\to L$ along $p$.

Since $L$ has fewer elements than $K$, it remains to show that $L$ satisfies
the hypothesis, namely that $L^*$ is [transversely] constructible and
$\partial\fll p\flr_L$ is a sphere for each $p^*\in M_{L^*}$
[resp.\ $L$ is nonsingular].
The latter follows by the proof of Lemma \ref{zipping preserves cells}
[resp.\ from Lemma \ref{constructible if zips}(a)].

To see that $L^*$ is transversely constructible in (c), we note that $z\:K\to L$ is
a stratification map by the proof of Lemma \ref{transversely constructible if zips}.
Let $P=Q\cup R$ be a transverse construction step, where $S_P$ is any subscheme of
$S_{K^*}$ and $P\ne P_n$.
Then $S_P$ is a principal subscheme of $S_{X_i}$ for some $i<n+1$.
If $P=P_i$, then $X:=Q\cap R$ contains $\{p,q_n,r_n\}$.
If $P\ne P_i$, we claim that $X$ is disjoint from $\{p,q_n,r_n\}$.
Indeed, either $Q$ or $R$ contains $P_i$; by symmetry we may assume that
$P_i\subset Q$.
By the above $\lk(p,P_i)\simeq\lk(p,X_i)\simeq S^0+\dots+S^0$
($n+1-i$ copies).
Hence $\lk(p,Q)$ is a sphere, and in particular not a cone.
Therefore $p\notin X$; by a similar argument, $q_n\notin X$ and $r_n\notin X$.
Thus $z^{-1}(z(X))=X$, and it follows from Theorem \ref{amalgamation}(b)
that $z(P)=z(Q)\cup z(R)$ is a transverse construction step.

To see that $L^*$ is constructible in (a) and (b), let
$S_{L^*}=S_{K^*}/s_{K^*}(I)$, let $z\:S_{K^*}\to S_{L^*}$
be the quotient map, and let $s_{L^*}\:L\to S_{L^*}$ be induced by $s_{K^*}$.
If $z(p)$ is not an atom (which is the case precisely when $n>0$), then
$A^\pm(s_{L^*}z(p),S_{L^*})$ equals the one-to-one image of
$A^\pm(s_{K^*}(q_n),S_{K^*})$ or equivalently of
$A^\pm(s_{K^*}(r_n),S_{K^*})$, and we may choose $s_{L^*}z(p)^\pm$ to be
the images of $q_0$ and $r_0$.
If $s_{L^*}z(p)\in A^\pm(S_{L^*}z(x),S_{L^*})$ for some $x\in K$, then
$A^\pm(S_{K^*}(x),S_{K^*})$ contains $s_{K^*}(q_n)$ and $s_{K^*}(r_n)$,
and does not contain $s_{K^*}(p)$; if $x$ covers $q_n$ or $r_n$ in $K$,
then $z(x)$ covers $z(p)$ in $L$.
It follows that $L^*$ is constructible, with scheme $S_{L^*}$.
\end{proof}

\begin{corollary} \label{transversely constructible vs constructible}
If $K$ is a cell complex and $K^*$ is constructible, then
$K^*$ is transversely constructible.
\end{corollary}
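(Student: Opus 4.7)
The plan is to chain together three results already established in this section, obtaining the conclusion by passing through the zipping picture. Specifically, I will run the implications
\[
K^*\text{ constructible}\;\Longrightarrow\; K\text{ zips onto a point}\;\Longrightarrow\; K\text{ transversely zips onto a point}\;\Longrightarrow\; K^*\text{ transversely constructible}.
\]

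First, since $K$ is a cell complex and $K^*$ is constructible, Theorem \ref{zips if constructible}(b) gives a zipping $K=K_0\to K_1\to\dots\to K_n$ onto a singleton, which is in particular a dual cone. Second, because $K$ is a cell complex, its canonical subdivision $K^\#$ is also a cell complex (by Corollary \ref{cell complex subdivision}, or directly from Lemma \ref{semi-cell complex} combined with Theorem \ref{manifolds}). This is precisely the hypothesis needed to invoke Theorem \ref{transverse zipping vs zipping}(b), which upgrades the zipping produced in the previous step to a \emph{transverse} zipping of $K$ onto the singleton.

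Finally, since $K^\#$ is a cell complex and $K$ transversely zips onto a dual cone, Theorem \ref{transversely constructible if zips2}(a) yields that $K^*$ is transversely constructible, as desired.

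There is no real obstacle here: the genuine content is all in the three theorems being invoked, and this corollary is simply the result of passing between constructibility, zipping, transverse zipping, and transverse constructibility along the bridges already built. The only thing one has to check is that the hypotheses line up — namely, that $K$ being a cell complex makes $K^\#$ a cell complex (so that Theorems \ref{transverse zipping vs zipping}(b) and \ref{transversely constructible if zips2}(a) apply), and that a singleton qualifies as a dual cone in the statement of Theorem \ref{transversely constructible if zips2}(a). Both are immediate.
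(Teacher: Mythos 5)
Your argument is correct and is exactly the route the paper has in mind: the corollary appears with no printed proof immediately after Theorem \ref{zips if constructible}, and the ``Scheme of the proof'' section earlier in the paper spells out precisely the chain constructible $\Rightarrow$ zips $\Rightarrow$ transversely zips $\Rightarrow$ transversely constructible that you use, invoking Theorem \ref{zips if constructible}(b), Theorem \ref{transverse zipping vs zipping}(b), and Theorem \ref{transversely constructible if zips2}(a) in turn. The hypothesis checks you flag (that $K$ being a cell complex gives $K^\#$ a cell complex, and that a singleton is a dual cone) are exactly the ones the paper relies on, the first being stated explicitly in the remark following Theorem \ref{transverse zipping vs zipping}.
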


\section{Collapsing}

\subsection{Collapsing of polyhedra}
We recall that a polyhedron $P$ is said to {\it elementarily collapse} onto
a subpolyhedron $Q$ if $P=Q\cup R$, where
$(R,R\cap Q)\cong (|\Delta^n|,|\Delta^{n-1}|)$ for some $n$.
A {\it collapse} is a sequence of elementary collapses; $P$ is
{\it collapsible} if it collapses onto a point.

\begin{lemma} \label{constructible-collapsible}
If $P$ is a transversely constructible poset, then $|P|$ is collapsible.
\end{lemma}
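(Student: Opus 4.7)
\emph{Plan.} I would proceed by induction on the number of transverse construction steps used to build $P$. For the base case, $P$ has a greatest element $\hat 1$; every non-empty chain $C$ in $P$ extends to $C\cup\{\hat 1\}$, so $P^\flat$ is the closed star of the vertex $\{\hat 1\}$, i.e.\ a finite simplicial cone with apex $\{\hat 1\}$. A finite simplicial cone collapses to its apex by the standard argument: list the maximal simplices $\sigma_1,\dots,\sigma_k$ of its link in any order; once $\{\hat 1\}*\sigma_j$ has been removed for every $j<i$, the simplex $\sigma_i$ becomes a free face of $\{\hat 1\}*\sigma_i$, and one performs the elementary collapse of $\{\hat 1\}*\sigma_i$ through $\sigma_i$.

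For the inductive step, write $P=Q\cup R$ as a transverse construction step. By induction, each of $|Q|$, $|R|$, and $|Q\cap R|$ is collapsible. I plan to deduce collapsibility of $|P|=|Q|\cup|R|$ from the following PL amalgamation fact: if $A$ and $B$ are compact polyhedra with $A\cap B$ collared in each of $A$ and $B$, and if each of $A$, $B$, and $A\cap B$ is collapsible, then $A\cup B$ is collapsible. To prove this, apply Theorem \ref{collaring theorem} (with Addendum \ref{collaring addendum}) to insert a PL cylinder between $A$ and $B$: since $A\cap B$ is collared in $B$, there is a PL-homeomorphism
\[
|A\cup B|\;\cong\;A\underset{(A\cap B)\x\{1\}}\cup\bigl((A\cap B)\x I\bigr)\underset{(A\cap B)\x\{0\}}\cup B.
\]
Set $S=(A\cap B)\x I\cup B$. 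By the Collaring Theorem, $S\cong B$, so $S$ is collapsible; the subpolyhedron $(A\cap B)\x\{1\}\subset S$ is PL-homeomorphic to $A\cap B$ (hence collapsible) and is collared in $S$. Assuming one can collapse $S\searrow(A\cap B)\x\{1\}$, the free faces of that collapse all lie in $S\setminus(A\cap B)\x\{1\}$ and are therefore disjoint from $A$; performed in the ambient, the collapse reduces $|A\cup B|$ to $A\cup(A\cap B)\x\{1\}=A$, which collapses to a point by hypothesis.

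The main obstacle is therefore the sub-claim that a compact collapsible polyhedron $X$ collapses onto every collapsible subpolyhedron $Y$ collared in $X$. The naive attempt --- take an arbitrary collapse of $X$ to a point and halt it when it first reaches $Y$ --- fails because free faces of that collapse can lie inside $Y$ and destroy it. The standard resolution invokes regular-neighborhood theory: the closed collar of $Y$ in $X$ is a regular neighborhood $N$ with $N\searrow Y$, and collapsibility of $X$ together with uniqueness of regular neighborhoods yields $X\searrow\overline{X\setminus N}\cup Y$; combined with $N\searrow Y$ this gives $X\searrow Y$. This sub-claim is a classical PL-topological result (cf.\ Rourke--Sanderson or Hudson), and once it is granted the induction closes.
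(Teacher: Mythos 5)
Your overall plan — insert a collar between the two pieces of a construction step and collapse one side onto it — is the same as the paper's. The base case is fine (though the paper disposes of it in one line: a cone is collapsible). Where you diverge is the inductive step, and that is where the gap lies.

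You reduce the inductive step to the sub-claim: \emph{if $X$ is compact collapsible, $Y\subset X$ is compact collapsible, and $Y$ is collared in $X$, then $X\searrow Y$.} You correctly identify this as the crux. The sub-claim is in fact true, but your sketch of a proof does not work. You write that ``collapsibility of $X$ together with uniqueness of regular neighborhoods yields $X\searrow\overline{X\setminus N}\cup Y$; combined with $N\searrow Y$ this gives $X\searrow Y$.'' First, $\overline{X\setminus N}$ and $Y$ lie at opposite ends of the collar $N\cong Y\times I$, so they are disjoint; $\overline{X\setminus N}\cup Y$ is a disconnected set, $\overline{X\setminus N}\cong X$, and nothing you have said lets you subsequently collapse away the big piece $\overline{X\setminus N}$. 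Second, the collapse $X\searrow\overline{X\setminus N}\cup Y$ would amount to collapsing the cylinder $Y\times I$ rel both ends, which requires $Y\searrow pt$ and in any case leaves a residual arc, not merely the two ends. Third, and more to the point, $X\searrow N$ is \emph{equivalent} to $X\searrow Y$ (since $N\searrow Y$ and, conversely, $\overline{X\setminus N}\cong X$), so ``uniqueness of regular neighborhoods'' cannot be used to derive one from the other without circularity. I do not believe the sub-claim as you state it appears in Rourke--Sanderson or Hudson in a form that can just be cited.

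A correct proof of the sub-claim is not hard, but it uses exactly the tools the paper uses in its direct proof, so your route is a detour, not a shortcut. Here is the point. Since both $X$ and $Y$ are collapsible, by following the trace of a point $y\in Y$ under the given collapses one gets $X\searrow\{y\}$ and $Y\searrow\{y\}$ for a common $y\in Y$. Then, writing $X\cong X\cup_Y Y\times I$ (collaring), one collapses $Y\times I\searrow Y\times\{0,1\}\cup\{y\}\times I$ (product collapse), excises, collapses $X\searrow\{y\}$, and finally collapses the remaining arc onto $Y\times\{1\}$. This is precisely the engine of the paper's argument. The paper avoids formulating the sub-claim at all: it collapses $|Q|$, $|R|$, and $|S|$ all onto one chosen point $s\in|S|$ using traces, collapses $|S|\times[-1,1]$ onto $\{s\}\times[-1,1]\cup|S|\times\{-1,1\}$, excises, and then collapses $|Q|$ and $|R|$ onto $\{s\}$, leaving a collapsible arc. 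Asking only for a collapse to a single point (rather than onto all of $|S|$) is exactly what makes the trace trick suffice and keeps the argument self-contained.

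So: right idea, right geometric picture, but the step you flag as ``the main obstacle'' is a genuine obstacle, and the proof you sketch for it does not go through. Filling it correctly requires the trace-plus-product-collapse argument, at which point you have essentially reproduced the paper's proof with extra steps.
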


\begin{proof}
If $P$ is a single cone, then $|P|$ is a cone and hence collapsible.
Suppose that $P=Q\cup R$ is a transverse construction step, and let $S=Q\cap R$.
Arguing by induction, we may assume that $|Q|$, $|R|$ and $|S|$
are collapsible, say, onto points $\{q\}$, $\{r\}$ and $\{s\}$.
Let $J_{qs}$ and $J_{rs}$ be the traces of $\{s\}$ under the collapses
$|Q|\searrow\{q\}$ and $|R|\searrow\{r\}$.
This yields collapses $|Q|\searrow J_{qs}$ and $|R|\searrow J_{rs}$.
Now $J_{qs}$ and $J_{rs}$ are arcs, to they collapse onto $\{s\}$.
Thus each of $|Q|$, $|R|$ and $|S|$ collapses onto $\{s\}$.

Since $S$ is collared in $Q$ and in $R$, there are homeomorphisms
$$|Q|\cong |Q|\underset{|S|=|S|\x\{-1\}}\cup |S|\x [-1,0]\text{ and }
|R|\cong |R|\underset{|S|=|S|\x\{1\}}\cup |S|\x [0,1],$$ both sending $|S|$
onto $|S|\x\{0\}$ \cite[4.2]{Co2}.
These combine into a homeomorphism
$$|P|\cong |Q|\underset{|S|=|S|\x\{-1\}}\cup |S|\x[-1,1]\underset{|S|\x\{1\}=|S|}
\cup |R|.$$
The given collapse $|S|\searrow\{s\}$ yields a collapse
$|S|\x [-1,1]\searrow\{s\}\x[-1,1]\cup |S|\x\{-1,1\}$, so by excision, the right hand
side collapses onto
$$|Q|\underset{\{s\}=\{(s,-1)\}}\cup \{s\}\x[-1,1]\underset{\{(s,1)\}=\{s\}}\cup |R|.$$
Since $|Q|$ and $|R|$ each collapse onto $\{s\}$, the latter collapses onto
$\{s\}\x [-1,1]$ --- which is collapsible.
\end{proof}

\begin{proposition} If $P$ is a constructible $2$-dimensional simplicial
complex, then $|P|\x [0,1]$ is collapsible.
\end{proposition}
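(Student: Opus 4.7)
The plan is to induct on the number of construction steps of $P$. In the base case, $P$ has a greatest element, so $|P|$ is a cone and hence collapsible, whence $|P|\x I$ is collapsible. For the inductive step, write $P=Q\cup R$ with $S:=Q\cap R$ of codimension one in both $Q$ and $R$, and each of $Q$, $R$, $S$ constructible. Since $\dim P=2$, we have $\dim S\le 1$; as a one-dimensional constructible simplicial complex, $S$ is a tree (the base case is a single edge, and each construction step amalgamates two trees along a common vertex, so that the cycle rank is preserved at $0$). Hence $|S|$ is collapsible and $|S|\x I$ is a PL disk. By induction, $|Q|\x I$ and $|R|\x I$ are collapsible.

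The first move is the standard pair-collapse $(X,A)\x I\searrow X\x\{0\}\cup A\x I$, applied to $(|R|,|S|)$. All free faces lie in $(|R|\but|S|)\x(0,1]$ and therefore outside $|Q|\x I$, so the collapse extends to
$$|P|\x I\searrow (|Q|\x I)\cup(|R|\x\{0\}).$$
At this point the naive continuation $|Q|\x I\searrow|Q|\x\{0\}$ would leave one with $|P|$, and $|P|$ need not be collapsible in general --- Hachimori has exhibited a constructible $2$-complex whose underlying polyhedron is non-collapsible. The remedy is to work three-dimensionally, exploiting the free faces $\sigma\x\{0\}$ and $\sigma\x\{1\}$ of the three-cells $\sigma\x I$ for two-simplices $\sigma$ of $R\but S$. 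Concretely, I would show by a nested induction on the construction steps of $R$ that $|R|\x I$ collapses onto $|S|\x I$ (a \emph{relative} three-dimensional collapse, keeping $|S|\x I$ fixed). Once this is established, the gluing
$$|P|\x I=(|Q|\x I)\cup_{|S|\x I}(|R|\x I)\searrow|Q|\x I$$
reduces the claim to the inductive hypothesis for $Q$.

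The heart of the argument, and the main obstacle, is thus the relative three-dimensional collapse $|R|\x I\searrow|S|\x I$. For the innermost construction step $R=R'\cup R''$ with $S':=R'\cap R''$, the pair-collapse $|R''|\x I\searrow(|R''|\x\{0\})\cup(|S'|\x I)$ gives $|R|\x I\searrow(|R'|\x I)\cup(|R''|\x\{0\})$, and one must continue to $|S|\x I$ using free faces available in three dimensions, via the strengthened inductive hypothesis applied to $R'$, $R''$ and $S'$, together with a case analysis of how $S$ meets them. The delicate point is scheduling the collapses coming from $R'$, $R''$ and $S'$ compatibly along their common piece $|S'|\x I$; because $|S|\cap|R'|$, $|S|\cap|R''|$ and $|S|\cap|S'|$ are all subtrees of the tree $|S|$, a consistent ordering of elementary collapses can be arranged, but writing out this book-keeping is the technically demanding part of the proof.
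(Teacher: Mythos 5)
Your base case and the observation that $S:=Q\cap R$ is a tree are correct, as is the diagnosis that the naive two-stage collapse to $|P|\x\{0\}$ fails. But the remedy you propose --- a nested induction proving the \emph{relative} collapse $|R|\x I \searrow |S|\x I$ keeping $|S|\x I$ fixed --- has a genuine gap, not merely a book-keeping burden. In the inner inductive step $R=R'\cup R''$, $S'=R'\cap R''$, your pair-collapse flattens $|R''|\x I$ to the $2$-dimensional sheet $|R''|\x\{0\}$: all three-dimensional free faces there are gone, and that sheet is a copy of $|R''|$, which by Hachimori's example may itself be non-collapsible, so the strengthened inductive hypothesis for $R''$ can no longer act on it. Moreover, the subcomplex you would want to keep fixed on the $R'$ side, namely $S'\cup(S\cap R')$, need not be a tree: the trees $S'$ and $S\cap R'$ may share more than one vertex and so produce a cycle in the union, and you cannot relax ``subtree'' to ``subgraph'' in the inductive hypothesis since the product of a cycle with $I$ is not collapsible. (Note also that $S\cap R'$ and $S\cap R''$ are in general only forests, contrary to the last sentence of your sketch.)

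The paper's proof sidesteps the relative collapse altogether. It first collapses $|P|\x I$ onto $|Q|\x[0,\frac13]\cup|S|\x[\frac13,\frac23]\cup|R|\x[\frac23,1]$, keeping \emph{both} $|Q|$ and $|R|$ as full $3$-dimensional slabs joined only by a thin $2$-dimensional bridge over the tree $S$. The tree structure of $S$ then collapses that bridge down to the arc $pt\x[\frac13,\frac23]$, so that the two slabs now meet the rest of the picture only at the single points $(pt,\frac13)$ and $(pt,\frac23)$. Finally, the outer inductive hypothesis (that $|Q|\x I$ and $|R|\x I$, hence the two slabs, are collapsible) enters via the \emph{trace} of each endpoint under the corresponding collapse: each slab collapses onto the arc traced by its endpoint, and the union of these two arcs with the bridge is a tree. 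The trace is precisely the device that converts the absolute inductive collapsibility of $|Q|\x I$ and $|R|\x I$ into the relative information needed at the seam --- the information that your nested relative induction does not succeed in supplying.
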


\begin{proof} Let $P=Q\cup R$ be a construction step.
Arguing by induction, we may assume that $|Q|\x [0,1]$ and $|R|\x [0,1]$ are
collapsible.
Clearly, $|P|\x I$ collapses onto
$X:=|Q|\x [0,\frac13]\cup |Q\cap R|\x [\frac13,\frac23]\cup |R|\x [\frac23,1]$.
Since $Q\cap R$ is a transversely constructible $1$-dimensional simplicial complex,
$|Q\cap R|$ is a tree, so it is collapsible.
Hence $X$ collapses onto
$Y:=|Q|\x[0,\frac13]\cup pt\x[\frac13,\frac23]\cup |R|\x[\frac23,1]$.
Let $T_1$ be the trace of $(pt,\frac13)$ under the collapse of
$|Q|\x[0,\frac13]$ onto a point, and let $T_2$ be the trace of $(pt,\frac23)$ under
the collapse of $|R|\x[\frac23,1]$ onto a point.
Then $Y$ collapses onto the tree $T_1\cup pt\x[\frac13,\frac23]\cup T_2$.
\end{proof}

The preceding proposition implies that if $P$ is a constructible
$2$-dimensional simplicial complex, then $|P|$ $3$-deforms to a point;
it follows that $|P|$ embeds in the $4$-ball (see \cite{2DH}).

See \cite{Be} concerning implications between variants of constructibility and
collapsibility in the case of triangulated balls.

\subsection{Simplicial collapsing}
We recall that a simplicial complex $K$ is said to {\it elementarily simplicially
collapse} onto a proper subcomplex $L$ if $K=L\cup\fll\sigma\flr$ for some
$\sigma\in K$ such that $L\cap\fll\sigma\flr=\{v\}*\partial\fll\tau\flr$, where
$\sigma=\{v\}*\fll\tau\flr$ for some vertex $v\in\sigma$.
The simplicial complex $K$ is said to {\it simplicially collapse} onto
a subcomplex $L$ if either $K=L$ or $K$ elementarily simplicially collapses onto
a subcomplex that simplicially collapses onto $L$.
Finally, $K$ is defined to be {\it simplicially collapsible} if it simplicially
collapses onto a singleton.

It is well-known  (see \cite{Wh}, \cite{Ze}, \cite{RS}) that a polyhedron $P$
collapses onto a
subpolyhedron $Q$ if and only if $(P,Q)$ admits a triangulation by a pair $(K,L)$
of simplicial complexes such that $K$ simplicially collapses onto $L$.

\subsection{Collapsing of posets} \label{collapsing of posets}
Let us give an inductive definition of a collapsible poset.
A poset $P$ is said to {\it elementarily collapse} onto a proper
closed subposet $Q$ if $P=Q\cup\fll\sigma\flr$ for some $\sigma\in P$ and
$Q\cap\fll\sigma\flr$ is collapsible.
$P$ is said to {\it collapse} onto a closed subposet $Q$ if either $P=Q$ or
$P$ elementarily collapses onto a closed subposet $R$ that collapses onto $Q$.
Finally, $P$ is defined to be {\it collapsible} if it collapses onto a singleton
poset.

Note that a cone elementarily collapses onto any its atom.
We have the following obvious excision rule for collapsing: if $P$ is a poset
and $Q$ is a closed subposet of $P$, then $P$ collapses onto $Q$ if and only if
$P\but R$ collapses onto $Q\but R$, where $R=P\but\fll P\but Q\flr$.

By definition, simplicial collapsibility implies collapsibility;
the converse implication holds (not only
for simplicial complexes) after passing to the barycentric subdivision:

\begin{lemma} If a poset $P$ collapses onto a closed subposet $Q$, then $P^\flat$
simplicially collapses onto $Q^\flat$.
\end{lemma}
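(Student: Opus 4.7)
The plan is to induct on the cardinality $|P|$, proving the full statement (with arbitrary $Q$). The base $|P|=1$ is trivial. For the inductive step, pick any collapse $P=P_0\searrow P_1\searrow\dots\searrow P_n=Q$; if $n=0$ we are done, otherwise $|P_1|<|P|$ since $P_1$ is a proper closed subposet of $P$, and the inductive hypothesis applied to $P_1\searrow Q$ gives a simplicial collapse $(P_1)^\flat\searrow Q^\flat$. It then suffices to produce a simplicial collapse $P^\flat\searrow(P_1)^\flat$. By Definition~\ref{collapsing of posets}, $P=P_1\cup\fll\sigma\flr$ with $\sigma\notin P_1$ (else $P=P_1$) and with $R:=P_1\cap\fll\sigma\flr$ collapsible. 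Since $R\incl\partial\fll\sigma\flr$, we have $|R|<|P|$, and so the inductive hypothesis applied to $R\searrow pt$ shows that $M:=R^\flat$ is simplicially collapsible.

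Set $v=\{\sigma\}$, viewed as a vertex of $P^\flat$, so $(\fll\sigma\flr)^\flat=v*L$ with $L=(\partial\fll\sigma\flr)^\flat$; then $M$ is a subcomplex of $L$, and $(P_1)^\flat\cap(\fll\sigma\flr)^\flat=M$. Since $\sigma\notin P_1$, no simplex of $(P_1)^\flat$ contains $v$; and any $\tau\in L\setminus M$ is a chain in $\partial\fll\sigma\flr$ not wholly in $R$, so no such $\tau$ lies in $(P_1)^\flat$ either. Consequently, every simplex of the form $\tau$ with $\tau\in L\setminus M$, or of the form $v*\tau$, has the same set of cofaces in $P^\flat$ as in $v*L$. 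Hence a sequence of elementary simplicial collapses exhibiting $v*L\searrow M$ whose free faces are all of one of these two types lifts verbatim to a simplicial collapse $P^\flat\searrow(P_1)^\flat$.

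To produce $v*L\searrow M$ I will chain two stages: $v*L\searrow v*M$ and then $v*M\searrow M$. For the first, enumerate the simplices of $L\setminus M$ in decreasing order of dimension, and for each $\tau$ elementarily collapse $(v*\tau,\tau)$; the cofaces of $\tau$ in the current complex reduce to $\tau$ and $v*\tau$, since any strictly larger coface of $\tau$ in $L$ is either in $L\setminus M$ (of larger dimension, hence already removed) or in $M$ (impossible, as $M$ is closed and $\tau\notin M$). For the second, fix a simplicial collapse $M=M_0\searrow\dots\searrow M_n=\{w\}$ with free-face pairs $(\alpha_i,\beta_i)$, and elementarily collapse $(v*\alpha_i,v*\beta_i)$ in $v*M$ for each $i$ in turn: cofaces of $v*\beta_i$ in the current subcomplex of $v*M$ are of the form $v*\rho$ with $\rho$ a coface of $\beta_i$ in $M_i$, which by choice of pair reduce to $v*\beta_i$ and $v*\alpha_i$. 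After all these steps the remaining complex is $M\cup\{\{v\},\{v,w\}\}$, and a final elementary collapse of the edge $\{v,w\}$ via its free vertex $\{v\}$ leaves exactly $M$.

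The main obstacle is the bookkeeping inside the cone lemma --- in particular, verifying at each step of the first stage that a simplex of $L\setminus M$ really has no unexpected cofaces in the ambient $v*L$ (reliance on decreasing-dimension order and on $M$ being closed in $L$), and at each step of the second stage that freeness propagates through the lifting to $v*M$. Once this is granted, lifting to $P^\flat$ is automatic from the disjointness observations in the second paragraph, and everything else is an unpacking of definitions and of the decomposition $P=P_1\cup\fll\sigma\flr$.
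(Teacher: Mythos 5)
Your proof is correct and follows essentially the same route as the paper's: reduce to a single elementary collapse $P=P_1\cup\fll\sigma\flr$, use the induction to get $R^\flat\searrow\{w\}$, then carry out the simplicial collapse $(\fll\sigma\flr)^\flat=v*L\searrow v*M\searrow M\cup\{\{v\},\{v,w\}\}\searrow M$ and transfer it to $P^\flat\searrow(P_1)^\flat$ by excision. The only differences are cosmetic: you induct on $|P|$ where the paper reduces to the elementary case up front, and you spell out the individual free-face pairs in the cone lemma that the paper treats as standard.
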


We use the notation $\hat\sigma$ for the barycenter of a $\sigma\in P$, that is,
the vertex of $P^\flat$ formed by the singleton chain $\{\sigma\}$.

\begin{proof} Without loss of generality the given collapse is elementary,
with $P=Q\cup\fll\sigma\flr$ and $Q\cap\fll\sigma\flr$ collapsible onto
a singleton poset $\{p\}$.
Arguing by induction, we may assume that $(Q\cap\fll\sigma\flr)^\flat$
collapses simplicially onto the vertex $\{\hat p\}$.

Now $\fll\sigma\flr^\flat=\{\hat\sigma\}*(\partial\fll\sigma\flr)^\flat$
collapses simplicially onto the subjoin
$\{\hat\sigma\}*(Q\cap\fll\sigma\flr)^\flat$,
which in turn collapses simplicially onto
$\{\hat\sigma\}*\{\hat p\}\cup(Q\cap\fll\sigma\flr)^\flat$ using the
previously constructed simplicial collapse.
The latter collapses simplicially onto $(Q\cap\fll\sigma\flr)^\flat$,
so by excision,
$P^\flat$ simplicially collapses onto $Q^\flat$.
\end{proof}

\subsection{Shelling} \label{shelling}
A poset $P$ is inductively defined to be {\it elementarily [transversely]
shellable} onto a proper closed subposet $Q$ if $P=Q\cup CX$, where $CX$ is
a cone of $P$, such that $R:=Q\cap CX$ is [transversely] shellable and is of
codimension one [resp.\ collared] in $Q$, and $R\cap\fll X\but R\flr$ is
of codimension one [resp.\ collared] in $R$ and in $\fll X\but R\flr$.
(Clearly [by Lemma \ref{cojoin collars}(a)], this implies that $R$ is also
of codimension one [collared] in $CX$.)
$P$ is said to {\it [transversely] shell} onto a closed subposet $Q$ if
either $P=Q$
or $P$ elementarily [transversely] shells onto a closed subposet $R$ that
[transversely] shells onto $Q$.
Finally, $P$ is defined to be {\it [transversely] shellable} if it
[transversely] shells onto a cone.

[Transversely] shellable posets are both collapsible (onto a cone, and hence onto
a point) and [transversely] constructible.
In particular, $\emptyset$ is not shellable.

The following is proved similarly to Lemma \ref{Hochster-lemma}:

\begin{lemma} \label{shelling-lemma} Let $P$ be a poset with pure cones.
Then $C^*P$ is shellable if and only if either $P$ is a cone or the empty set,
or $P=Q\cup CX$, where $Q$ is a proper closed subposet of $P$ and $CX$ is
a cone of $P$, such that $Q$ and $R:=Q\cap CX$ are shellable, and if $CX$
is $n$-dimensional, then $Q\cap\cel CX\cer$ is also $n$-dimensional, 
$R$ is $(n-1)$-dimensional, $\fll X\but R\flr$ is purely $(n-1)$-dimensional, 
and $R\cap\fll X\but R\flr$ is purely $(n-2)$-dimensional.
\end{lemma}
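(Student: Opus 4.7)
The plan is to mirror the proof of Lemma \ref{Hochster-lemma}, inducting on $|P|$ while carrying the extra bookkeeping needed for the choice of cone $CX$ and for the auxiliary codimension-one condition on $R \cap \fll X\setminus R\flr$ that distinguishes shelling from construction. The argument reduces to establishing a correspondence between elementary shelling steps of $C^*P$ and decompositions $P = Q \cup CX$ of the stated form, plus checking that $C^*Q$ is shellable if and only if $Q$ is (and similarly for $R$), which comes along for free by simultaneous induction.

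The key combinatorial observation is that cones of $C^*P$ are either $\{\hat 0\}$ or of the form $\fll p\flr_{C^*P} = C^*(\fll p\flr_P)$ for $p \in P$, so any non-trivial elementary shelling step of $C^*P$ has the form $C^*P = C^*Q \cup C^*(CX)$ where $CX := \fll p\flr_P$ and $Q := Q' \cap P$ for the corresponding closed subposet $Q' \ni \hat 0$. Since $\fll p\flr_{C^*P}$ is a cone with maximum $p$, one computes $\partial\fll p\flr_{C^*P} = \{\hat 0\} \cup X$, hence $\partial\fll p\flr_{C^*P} \setminus C^*R$ coincides with $X \setminus R$ and its downward closure in $C^*P$ equals $\{\hat 0\} \cup \fll X \setminus R\flr = C^*\fll X\setminus R\flr$. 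With this identification, each codim-one condition in the shelling step for $C^*P$ unfolds into the analogous codim-one condition in $P$. Purity of $P$ is forced by induction exactly as in Lemma \ref{Hochster-lemma}: $Q$ is pure by applying the induction hypothesis to $C^*Q$, $CX$ is pure as a cone, and the codim-one condition $R \subset Q$ matches their dimensions. Once purity is in hand, each codim-one equality becomes a strict dimension equality, giving $\dim Q = n$ and $\dim R = n-1$.

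The principal obstacle will be matching the auxiliary codim-one conditions on $R \cap \fll X \setminus R\flr$ against the pure-dimensionality statements that $\fll X \setminus R\flr$ is purely $(n-1)$-dimensional and $R \cap \fll X \setminus R\flr$ is purely $(n-2)$-dimensional. Unlike Lemma \ref{Hochster-lemma}, where only dimension equalities on $Q$, $R$ and $Q \cap R$ arise, the shelling definition forces purity of these two additional subposets, which does not follow from codim-one alone in general. The resolution is to observe that codim-one of $R\cap\fll X\setminus R\flr$ inside both $R$ (already pure of dimension $n-1$ as a consequence of the previous codim-one matching plus the inductive purity of $Q$) and $\fll X\setminus R\flr$ (whose purity follows in turn from codim-one of $R$ inside the pure poset $CX$) forces pure $(n-1)$- and $(n-2)$-dimensionality respectively. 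The converse implication then runs the same identifications in reverse, reassembling an elementary shelling step for $C^*P$ out of the decomposition $P = Q\cup CX$, the recursive shellings of $C^*Q$ and $C^*R$, and the translation of dimension equalities back into codim-one conditions.
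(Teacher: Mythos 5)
Your overall architecture is right and matches the intent of the paper (which says only that the lemma is "proved similarly to Lemma \ref{Hochster-lemma}"): identify cones and closed subposets of $C^*P$ with those of $P$ by adjoining $\hat 0$, unwind the codimension-one conditions from $C^*P$ to $P$, and use the pure-cones hypothesis to convert codimension-one into dimension and purity statements. But there is a genuine slip in the bookkeeping. The claim that ``$C^*Q$ is shellable if and only if $Q$ is, which comes along for free by simultaneous induction'' is false, and no induction can deliver it. Take $Q = \{a,b\}$, two incomparable atoms: $Q$ is not shellable in the sense of \ref{shelling} (the only candidate elementary shelling has $R = Q\cap CX = \emptyset$, which is not shellable), whereas $C^*Q = \{\hat 0 < a,\ \hat 0 < b\}$ \emph{is} shellable. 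What the induction on $|P|$ actually hands you is the lemma itself for $Q$ and for $R$ --- i.e.\ a characterization of when $C^*Q$ and $C^*R$ are shellable --- and the argument should carry ``$C^*Q$ and $C^*R$ are shellable'' through, exactly as the parallel Lemma \ref{Hochster-lemma} says ``$C^*Q$, $C^*R$ and $C^*(Q\cap R)$ constructible'' rather than ``$Q$, $R$, $Q\cap R$ constructible''. Your own final paragraph, which invokes ``the recursive shellings of $C^*Q$ and $C^*R$'', in fact does this, so your sketch is internally inconsistent; and the printed ``$Q$ and $R := Q\cap CX$ are shellable'' in the lemma is evidently shorthand for the $C^*$-versions, since taken literally it already fails for $P = \{a,b\}$ (whose $C^*P$ is shellable but whose only decomposition has $R = \emptyset$).

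A secondary misattribution in your third paragraph: pure $(n{-}1)$-dimensionality of $\fll X\but R\flr$ does not need codimension-one of $R$ in $CX$ at all --- it is automatic from $X = \partial CX$ being pure (the closure of an open subposet of a pure poset is again pure of the same dimension). It is only the pure $(n{-}2)$-dimensionality of $R\cap\fll X\but R\flr$ that uses codimension-one, together with the pure-cones hypothesis on $P$: every maximal element $m$ of $R\cap\fll X\but R\flr$ is covered by a top-dimensional element $r$, so $\fll m\flr = \fll m\flr_{\partial\fll r\flr}$ is a cone of a pure $(n{-}2)$-dimensional poset, and $R\cap\fll X\but R\flr$ being closed in $P$ makes its cones agree with $P$'s. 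With these two corrections, and a word of care about the degenerate cases $R = \emptyset$ and $X\setminus R = \emptyset$ (where the identity $\fll X'\but R'\flr_{C^*P} = C^*\fll X\but R\flr$ fails literally and the codimension-one condition for $C^*R = \{\hat 0\}$ encodes $\dim Q = 0$ directly), the sketch goes through and follows the same route the paper indicates.
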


It follows that if $K$ is an affine polytopal complex, then the shellability of
$C^*K$ implies ``shellability'' of $K$ in the sense of Bruggesser and Mani
\cite{BM} and is implied by ``shellability'' of $K$ in the sense of
Bj\"orner and Wachs (see \cite{Zi}, \cite{Ha0}).
Note that $C^*K$ is precisely the ``face poset'' of $K$ in sense of Topological
Combinatorics (where the empty set is regarded a face).

\begin{example}
If $P$ is an $n$-dimensional convex polytope in $\R^n$, let us consider its
boundary $dP$ and the part $dP_v$ of $dP$ that is visible from some
$v\in\R^m\but P$.
Then the face poset of $dP$ (including the empty face), and the poset
of nonempty faces of $dP_v$ for every $v\in\R^m\but P$ are shellable
(see \cite[Theorem 8.12]{Zi}).
\end{example}

\begin{remark} If $K$ is an $n$-dimensional simplicial complex,
then there are two well-known characterizations of shellability of its
face poset $C^*K$ (in this case the Bruggesser--Mani and Bj\"orner--Wachs
definitions are equivalent; see \cite{Zi}, \cite{Bj2}):

\begin{roster}
\item $K=K_0$ can be reduced to a single $n$-simplex by elementary reductions
of the form $K_{i+1}=K_i\cup\fll\sigma_i\flr$, where
$\dim|K_i\cap\fll\sigma_i\flr|=n-1$ for each $i$, and
$(\fll\sigma_i\flr,K_i\cap\fll\sigma_i\flr)\simeq
(\Delta^k*\Delta^{n-k-1},\partial\Delta^k*\Delta^{n-k-1})$ for some
$k=k(i)$ such that $0\le k\le n$.

\item $K=K_0$ is purely $n$-dimensional and can be reduced to a single
$n$-simplex by elementary reductions of the form
$K_{i+1}=K_i\cup\fll\sigma_i\flr$, where each $\dim|\fll\sigma_i\flr|=n$
and each $K_i\cap\fll\sigma\flr$ is purely $(n-1)$-dimensional (and nonempty).
\end{roster}

We note that the shellability of $K$ itself can be characterized as in (i),
but with $k\ne n$ (see also the proof of Lemma \ref{Mayer-Vietoris}(c));
this definition of shellability for simplicial complexes is found
in \cite[5.2]{Li}.
\end{remark}

Similarly to Lemma \ref{Zeeman}, one has (using \cite[Theorem 6]{Ze}):

\begin{lemma} (a) If a pseudo-manifold $N$ shells onto a manifold $M$,
then $|N|$ is homeomorphic to $|M|$.

(b) If a cell complex $N$ transversely shells onto a manifold $M$,
then $|N|$ is homeomorphic to $|M|$.
\end{lemma}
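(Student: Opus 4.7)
The plan is to induct on the length $k$ of the shelling sequence $N = P_0 \to P_1 \to \dots \to P_k = M$, in the spirit of the proof of Lemma \ref{Zeeman} but invoking Zeeman's ball-expansion theorem \cite[Theorem 6]{Ze} in place of the ball-union theorem. The base case $k = 0$ is trivial. For the inductive step, the first elementary shelling $P_0 = P_1 \cup CX$ has $R = P_1 \cap CX$ shellable, of codimension one (in (a)) or collared (in (b)) in both $P_1$ and $CX$, and with $R \cap \fll X \but R \flr$ in the analogous position. Since $P_1$ is closed in $N$, the apex $\hat\sigma$ of $CX$ is forced to be maximal in $N$ (any $\tau > \hat\sigma$ would have $\tau \in P_1$, forcing $\hat\sigma \in P_1$ by closedness, contradicting $\hat\sigma \notin R$); hence $CX = \fll\hat\sigma\flr$ is a top cell of $N$, and by Theorem \ref{manifolds} $|CX|$ is a PL ball.

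The next step is to show that $|R|$ is a PL ball positioned on the boundary of both $|CX|$ and $|P_1|$. The argument of Lemma \ref{pure lemma}, applied to shelling steps, shows that in (a) $R$ is of pure codimension one in $P_1$ and $CX$; in both cases $R$ is itself a shellable pseudo-manifold (in (a), inheriting the pseudo-manifold structure from $N$ via pure codim one) or a transversely shellable cell complex (in (b), since $R$ is closed in $N$). Lemma \ref{Zeeman} then gives that $|R|$ is a PL ball. Because $\hat\sigma$ is the unique maximal element of $CX$, the codim-one placement of $R$ in $CX$ forces $R \incl X = \partial CX$, so $|R| \incl \partial |CX|$. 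To apply the inductive hypothesis to the tail $P_1 \to \dots \to P_k = M$, I must verify that $P_1$ inherits the relevant structural hypothesis: in (b) this is automatic since $P_1$ is closed in the cell complex $N$; in (a), the pseudo$^*$-manifold structure of $N$ together with the pure codim-one shelling conditions propagate to make $P_1$ a pseudo-manifold. Granting this, the inductive hypothesis yields a PL homeomorphism $|P_1| \cong |M|$, under which $|R|$ lands in $\partial|M|$, since the pure codim-one placement of $R$ in $P_1$ means each top cell of $R$ is covered by a unique maximal cell of $P_1$ (the other potential cover being $\hat\sigma \in CX$).

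The concluding step applies Zeeman's ball-expansion theorem \cite[Theorem 6]{Ze}: attaching the PL ball $|CX|$ to the PL manifold $|P_1|$ along the PL ball $|R|$ lying in $\partial|CX| \cap \partial|P_1|$ produces a PL manifold PL homeomorphic to $|P_1|$, so $|N| = |P_1| \cup |CX| \cong |P_1| \cong |M|$. The main obstacle will be the verification in case (a) that the shelling conditions genuinely propagate the pseudo-manifold structure from $N$ to $P_1$ and to $R$; this is the combinatorial analogue of the standard fact from classical shelling theory for simplicial manifolds, and reduces to an element-by-element check of purity and the pseudo$^*$-manifold dual $1$-skeleton condition, making essential use of the pure codimension one of $R$ that is extracted from the shelling step by the argument of Lemma \ref{pure lemma}.
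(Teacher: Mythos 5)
Your proposal takes the approach the paper intends: the paper states only ``Similarly to Lemma \ref{Zeeman}, one has (using \cite[Theorem 6]{Ze})'', and you correctly reconstruct this as peeling off the cone $CX$ of the last shelling step, identifying $|P_1|$ with $|M|$ by the inductive hypothesis, exhibiting $|R|$ as a ball shared by $\partial|CX|$ and $\partial|P_1|$, and absorbing $|CX|$ via Zeeman's expansion theorem. Two refinements are worth noting. Your appeal to ``the argument of Lemma \ref{pure lemma}'' does not quite apply as stated: that lemma is proved for construction steps, which presuppose both pieces constructible, whereas $P_1$ shells onto $M$, which need not be a cone, so $P_1$ need not be constructible. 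Fortunately purity is automatic here for a simpler reason: in an $n$-dimensional cell complex, any two covers of the same element have the same dimension (each being a ball of dimension one higher), so if one cover is a top cell all are. What you actually need, however, is slightly sharper than pure codimension one: you must show a maximal $r\in R$ has a \emph{unique} cover in $P_1$ (hence $r\in\partial P_1$), and this is where the pseudo-manifold hypothesis on $N$ earns its keep --- $r$ is covered by the apex $\sigma$ of $CX$ (since $R$ is of codimension one in $CX$), and the cell-complex condition on $(N^*)^{(1)}$ forbids a third top cell over $r$. Finally, the ``main obstacle'' you flag for (a) --- that $P_1$ and $R$ inherit the pseudo-manifold structure from $N$ --- is indeed the residual combinatorial verification; the paper glosses over the analogous step in the proof of Lemma \ref{Zeeman}, so your honest flagging of this point is appropriate rather than a defect.
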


Similarly to Lemma \ref{dual cones constructible}, one has

\begin{lemma}
If $P$ is a [transversely] shellable poset, then $\cel p\cer$ is
[transversely] shellable for each $p\in P$.
\end{lemma}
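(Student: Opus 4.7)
The plan is to mirror the proof of Lemma~\ref{dual cones constructible} by induction on the complexity of the [transverse] shelling of $P$. The base case is $P=\fll\hat 1\flr$ a cone: every element of $P$ is $\le\hat 1$, so in particular $\hat 1\in\cel p\cer_P$ is the greatest element, whence $\cel p\cer_P$ is itself a cone and hence (trivially) shellable.

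For the inductive step, let $P=Q\cup CX$ be an elementary [transverse] shelling step with apex $\hat 1$ of $CX$, so that $Q$ and $R:=Q\cap CX$ are [transversely] shellable of lower complexity. There are three cases for $p\in P$. If $p\in CX\but Q$, then closedness of $Q$ forces every $q\ge p$ to lie in $CX$, giving $\cel p\cer_P=\cel p\cer_{CX}=[p,\hat 1]$, a cone with apex $\hat 1$. If $p\in Q\but CX$, then closedness of $CX=\fll\hat 1\flr$ forces $\cel p\cer_P=\cel p\cer_Q$, shellable by induction. The nontrivial case is $p\in R$, where
\[
\cel p\cer_P=\cel p\cer_Q\cup\cel p\cer_{CX},
\]
the summand $\cel p\cer_{CX}=\fll\hat 1\flr_{\cel p\cer_P}$ is a cone in $\cel p\cer_P$, the left summand $\cel p\cer_Q$ is shellable by induction, and the intersection $\cel p\cer_Q\cap\cel p\cer_{CX}=\cel p\cer_R$ is shellable by induction. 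I would show that this displayed decomposition is itself an elementary [transverse] shelling step; concatenating with the shelling of $\cel p\cer_Q$ onto a cone then gives the result.

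To verify the first shelling condition, observe that the inclusion $\cel p\cer_Q\emb Q$ is a stratification map by Lemma~\ref{inclusion lemma}(a), so Theorem~\ref{pseudo-amalgamation} [resp.\ Lemma~\ref{amalgamation}] applied with $Z=R$ shows $\cel p\cer_Q\cap R=\cel p\cer_R$ is of codimension one [collared] in $\cel p\cer_Q$. For the second condition, I would first identify $X':=\partial\cel p\cer_{CX}$ with $\cel p\cer\cap X$ and $\fll X'\but\cel p\cer_R\flr_{\cel p\cer_{CX}}$ with $\cel p\cer_{\fll X\but R\flr_{CX}}$ by direct poset computation, which then yields $S':=\cel p\cer_R\cap\fll X'\but\cel p\cer_R\flr_{\cel p\cer_{CX}}=\cel p\cer_S$, where $S:=R\cap\fll X\but R\flr_{CX}$. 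Applying the same stratification/amalgamation machinery to the inclusions $\cel p\cer_R\emb R$ and $\cel p\cer_{\fll X\but R\flr_{CX}}\emb\fll X\but R\flr_{CX}$ then transfers the hypothesis that $S$ is of codimension one [collared] in $R$ and in $\fll X\but R\flr_{CX}$ to $S'$.

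The main obstacle will be the boundary identification $\partial\cel p\cer_{CX}=\cel p\cer\cap X$. This reduces to checking that for $q\in[p,\hat 1]$, the element $\hat 1$ is the unique cover of $q$ in the sub-interval $[p,\hat 1]$ precisely when it is in $CX$: any other cover $q'$ of $q$ in $CX$ satisfies $q'\ge q\ge p$ and $q'\le\hat 1$, so automatically lies in $[p,\hat 1]$. Once this is in hand, the closure $\fll X'\but R'\flr_{CX'}$ likewise matches $\cel p\cer\cap\fll X\but R\flr_{CX}$, everything in sight respects the $\cel p\cer$ operation, and the inductive step closes exactly as in the constructibility case.
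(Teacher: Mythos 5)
Your proof is correct and carries out exactly the inductive scheme the paper gestures at with ``Similarly to Lemma~\ref{dual cones constructible}''; your extra work of transporting the second shelling condition through the open inclusions $\cel p\cer_R\emb R$ and $\cel p\cer_{\fll X\but R\flr}\emb\fll X\but R\flr$ via Theorem~\ref{amalgamation} and Lemma~\ref{pseudo-amalgamation} is precisely what fills the gap left by the ``similarly''. The boundary identification you flag as the main obstacle is indeed straightforward, as you verify, so the argument closes.
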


The proof of Lemma \ref{neighborhood} also works to establish the following
lemma (whose $\fll L^*\flr_{K^*}$ corresponds to $K^*$ of Lemma
\ref{neighborhood}).

\begin{lemma}\label{neighborhood2}
Let $K$ be a poset, $L$ a full subposet of $K$, and $M$ a closed subposet
of $L$.

(a) If $L$ is closed in $K$, and $\fll L^*\flr_{K^*}$ [transversely] shells
onto $\fll M^*\flr_K$, then $L^*$ [transversely] shells onto $M^*$.

(b) If $L^*$ is shells onto $M^*$, then $\fll L^*\flr_{K^*}$ shells onto
$\fll M^*\flr_{K^*}$.
\end{lemma}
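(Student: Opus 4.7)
My plan is to mimic the proof of Lemma \ref{neighborhood} line by line, as the parenthetical remark in the statement explicitly signposts. The initial step is a reduction: replace $K$ by the subposet $\fll L^*\flr_{K^*}$, which, viewed as a subset of $K$, is the upward closure of $L$ in $K$. Fullness of $L$ in $K$ descends to fullness of $L$ in the new $K$; closedness of $L$ in $K$ (assumed in part (a)) likewise descends; and $L^*$ becomes tautologically dense in the new $K^*$. Both statements (a) and (b) then become assertions about [transverse] shelling of $K^*$ onto $\fll M^*\flr_{K^*}$, formally identical to the setup of Lemma \ref{neighborhood}, with ``constructible'' replaced by ``shells onto $\fll M^*\flr_{K^*}$''.

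For (a) I would induct on the length of the [transverse] shelling $K^* \searrow \fll M^*\flr_{K^*}$. Given a first elementary step $K^* = P \cup CX$ with $CX = \fll \hat\sigma\flr_{K^*}$ and $S := P \cap CX$, intersecting with $L^*$ gives $L^* = (L^* \cap P) \cup (L^* \cap CX)$. Fullness of $L$ in $K$ makes $L^* \cap CX$ a cone of $L^*$, by the same argument in the proof of Lemma \ref{neighborhood}(a) that locates the meet of two elements of $L^*$ in $L^*$ using the covering structure. The codimension-one (resp.\ collared) conditions on $S$ in $P$ and in $CX$, and on $S \cap \fll X \setminus S\flr$ in $S$ and in $\fll X \setminus S\flr$, then transfer to their $L^*$-intersections via Theorem \ref{amalgamation} (resp.\ Lemma \ref{pseudo-amalgamation}), applied to the open inclusion $L^* \hookrightarrow K^*$, which is a stratification map by Lemma \ref{inclusion lemma}(a). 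Inductively $L^* \cap P$ [transversely] shells onto $L^* \cap \fll M^*\flr_{K^*} = M^*$, and combining this with the just-produced elementary step yields the desired shelling of $L^*$ onto $M^*$.

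For (b) I would induct on the length of the shelling $L^* \searrow M^*$. In a first elementary step $L^* = P \cup CX$ with apex $\hat\sigma$, taking downward closures in $K^*$ gives $K^* = \fll L^*\flr_{K^*} = \fll P\flr_{K^*} \cup \fll\hat\sigma\flr_{K^*}$, the second summand being a cone of $K^*$. The crucial identity $\fll P\flr_{K^*} \cap \fll\hat\sigma\flr_{K^*} = \fll S\flr_{K^*}$, where $S = P \cap CX$, is established by repeating verbatim the fullness argument in the proof of Lemma \ref{neighborhood}(b): for $p$ in the left-hand side, one has $p \ge_K q$ for some $q \in P$ and $p \ge_K \hat\sigma$, and the maximum $s$ of the cone $\fll p\flr_K \cap L$ produced by fullness is forced into $P \cap CX = S$ by closedness of $P$ and of the cone $CX$ inside $L^*$. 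Since downward closure preserves maximal elements, the codimension-one conditions on $S$ in $P$ and in $CX$, and on $S \cap \fll X \setminus S\flr$ in $S$ and in $\fll X \setminus S\flr$, lift to the corresponding conditions in the $K^*$-closures, and the induction finishes.

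The main obstacle will be the transfer of codimension-one and collared conditions across the open inclusion $L^* \hookrightarrow K^*$ in (a) and across downward closure in $K^*$ in (b); both rest on the delicate interplay of fullness with the stratification-map property of open inclusions, which are exactly the subtle ingredients the proof of Lemma \ref{neighborhood} already supplies and which the author asserts carry over without change.
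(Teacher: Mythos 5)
Your proposal takes exactly the route the paper signposts: the parenthetical remark ``The proof of Lemma \ref{neighborhood} also works \ldots'' is essentially the author's entire proof, and your two inductions, the reduction to $K^*=\fll L^*\flr_{K^*}$, the transfer of the codimension-one/collared conditions through the open inclusion $L^*\hookrightarrow K^*$ via Lemma \ref{inclusion lemma}(a), Theorem \ref{amalgamation} and Lemma \ref{pseudo-amalgamation}, and the fullness argument for the key identity $\fll P\flr_{K^*}\cap\fll\hat\sigma\flr_{K^*}=\fll S\flr_{K^*}$ in (b) are all the right ingredients, matching the proof of Lemma \ref{neighborhood} line by line.

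However, your justification in (a) that $L^*\cap CX$ is a cone of $L^*$ is not correct as stated. Fullness of $L$ in $K$ concerns cones $\fll p\flr_K$, whereas $CX=\fll\hat\sigma\flr_{K^*}=\cel\hat\sigma\cer_K$ is a \emph{dual} cone of $K$, so fullness does not bear on it directly; moreover, the ``covering structure'' argument you invoke from the proof of Lemma \ref{neighborhood}(a) shows density of $L^*\cap Q\cap R$ in $Q\cap R$, not that some set is a cone. The correct reason is simpler and uses no fullness at all: the apex $\hat\sigma$ of $CX$ is a maximal element of $K^*=\fll L^*\flr_{K^*}$, hence $\hat\sigma\in L^*$ (maximal elements of a closure $\fll S\flr$ are precisely the maximal elements of $S$), and then $L^*\cap\fll\hat\sigma\flr_{K^*}=\fll\hat\sigma\flr_{L^*}$ is automatic because $L^*$ is a subposet of $K^*$. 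You should also flag that a shelling $L^*\searrow M^*$ forces $M^*$ to be \emph{closed} in $L^*$, i.e.\ $M$ to be \emph{open} in $L$, the opposite of what the printed hypothesis ``$M$ a closed subposet of $L$'' would naively suggest; your identity $L^*\cap\fll M^*\flr_{K^*}=M^*$ in (a) and the very legibility of part (b) both need this corrected reading, as does the one application (in Theorem \ref{collapsible-shellable}, where the relevant $M$ is the open hull $\cel Q^\flat\cer_{P^\flat}$). Finally, the transfer of the third shelling condition deserves more than a clause: the closure $\fll X'\setminus R'\flr_{L^*}$ formed inside $L^*$ is not tautologically equal to $L^*\cap\fll X\setminus R\flr_{K^*}$, which is the subposet Theorem \ref{amalgamation} hands you, so you need to either verify that equality (using again that $L^*$ captures all maximal elements of $K^*$) or argue that condition directly.
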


\subsection{Excision}
We have the following obvious excision rule for shelling: if $P$ is a poset
and $Q$ is a closed subposet of $P$, then $P$ [transversely] shells onto $Q$
if and only if $P\but R$ [transversely] shells onto $Q\but R$, where
$R=P\but\Cel\fll P\but Q\flr\Cer$.
Since $P\but R$ is open in $P$, by Lemma \ref{inclusion lemma} the inclusion
$P\but R\incl P$ is a stratification map.
Theorem \ref{amalgamation} [resp.\ \ref{pure pseudo-amalgamation}] implies
the following stronger ``map excision'' rule:

\begin{lemma}\label{map excision}
Suppose that $f\:P_1\to P_2$ is a stratification [resp.\ filtration] map,
$Q_i$ is a closed
subposet respectively of $P_i$ for $i=1,2$ such that $f^{-1}(Q_2)=Q_1$ and
$f$ restricts to an isomorphism between $P_1\but\Int Q_1$ and $P_2\but\Int Q_2$.
Then $P_1$ [transversely] shells onto $Q_1$ if and only if $P_2$
[transversely] shells onto $Q_2$.
\end{lemma}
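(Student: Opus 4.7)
The plan is to prove both directions by induction on the number of elementary shelling steps, transporting each step across $f$ via the isomorphism $g := f|\:P_1\but\Int Q_1\xr{\simeq}P_2\but\Int Q_2$. The key observation is that in any elementary shelling step $P_1=P_1'\cup C_1X_1$ with $C_1X_1=\fll\sigma_1\flr$ and cone top $\sigma_1\in P_1\but Q_1$, every $\tau\le\sigma_1$ has $\sigma_1\in\cel\tau\cer\but Q_1$ and hence $\tau\notin\Int Q_1$; so $C_1X_1\subset P_1\but\Int Q_1$ and $g$ carries it isomorphically onto $C_2X_2:=\fll g(\sigma_1)\flr\subset P_2\but\Int Q_2$. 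This leads to the candidate step $P_2=P_2'\cup C_2X_2$, where $P_2':=P_2\but g(C_1X_1\but R_1)$ and $R_1:=P_1'\cap C_1X_1$, so that $R_2:=P_2'\cap C_2X_2=g(R_1)$.

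The main obstacle is verifying the three defining conditions of an elementary [transverse] shelling step for this candidate. Closedness of $P_2'$ in $P_2$ follows because every stratification (and hence every filtration) map is open, so any descent $y\le x\in P_2'$ lifts through $f$ to a descent in $P_1$ and, by closedness of $P_1'$, yields $y\in P_2'$. Shellability of $R_2$ is immediate from $R_1\simeq R_2$. The substantial point is the codimension-one [resp.\ collared] condition for $R_2$ in $P_2'$: here I would apply Theorem \ref{pseudo-amalgamation} [resp.\ Theorem \ref{amalgamation}] to the restriction of $f$ to a neighborhood of $R_1$ in $P_1'$, noting that this restriction is a filtration (resp.\ stratification) map, that the relevant links of $R_1$ lie in $P_1\but\Int Q_1$ where $g$ is an isomorphism, and that $R_1$ is of codimension one (resp.\ collared) in $P_1'$ by the hypothesis of the original shelling step. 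The amalgamation/pseudo-amalgamation theorem then transports this local structure to $R_2$ in $P_2'$.

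Once the elementary step $P_2=P_2'\cup C_2X_2$ is established, I would verify that $f|_{P_1'}\:P_1'\to P_2'$ inherits the hypotheses of the lemma with $Q_1,Q_2$ unchanged. The identity $f|_{P_1'}^{-1}(Q_2)=Q_1$ holds because the excised set $C_iX_i\but R_i$ is disjoint from $Q_i$ (since $Q_i\subset P_i'$), the restriction of $g$ to $P_1'\but\Int Q_1$ remains an isomorphism onto $P_2'\but\Int Q_2$ by the correspondence of excised sets, and $f|_{P_1'}$ is again a filtration (resp.\ stratification) map by a local inspection (a minor technicality using that the excised set is contained in the single cone $C_1X_1$, so collaring/codim-one conditions outside this cone are unaffected). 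The inductive hypothesis then gives shelling of $P_2'$ onto $Q_2$, completing the forward direction. The reverse implication is entirely symmetric: lift an elementary shelling step in $P_2$ along $g^{-1}$ and apply the same amalgamation argument in the opposite direction.
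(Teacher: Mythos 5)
Your overall scheme—transport each elementary shelling step across the isomorphism $g$ and invoke the amalgamation theorems for the codimension-one/collared condition—is the natural route, and it is almost certainly what the paper has in mind (the paper states the lemma without proof, merely asserting that Theorems \ref{amalgamation} and \ref{pure pseudo-amalgamation} imply it). Your construction of $C_2X_2=g(C_1X_1)$, $R_2=g(R_1)$, $P_2'=P_2\but g(C_1X_1\but R_1)$ is the right one, and the observation that $\fll\sigma_1\flr\subset P_1\but\Int Q_1$ because $\sigma_1\notin Q_1$ is correct.

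The serious gap is the assertion that ``the relevant links of $R_1$ lie in $P_1\but\Int Q_1$''. This is false: if $r\in R_1$ and $p>r$ in $P_1'$, nothing forces $p\notin\Int Q_1$, since $P_1\but\Int Q_1$ is a down-set, not an up-set (for instance $p$ could be a maximal element of $P_1$ lying in $Q_1$, hence automatically in $\Int Q_1$). The link $\lk(r,P_1')$ can therefore reach into $\Int Q_1$, where $g$ is undefined, so the collaredness of $R_1$ in $P_1'$ does not transfer to $R_2$ in $P_2'$ by an isomorphism of links. What actually makes the amalgamation theorem applicable is the equality $(f|_{P_1'})^{-1}(R_2)=R_1$, and this requires showing $f^{-1}(\Int Q_2)=\Int Q_1$. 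That equality is not immediate from the hypothesis that $g$ is an isomorphism (which a priori allows $f$ to send points of $\Int Q_1$ into $Q_2\but\Int Q_2$); one needs to combine $f^{-1}(Q_2)=Q_1$ with the openness of $f$: if $p\in\Int Q_1$ and $f(p)\notin\Int Q_2$, then some $s>f(p)$ has $s\notin Q_2$, and openness produces $p'>p$ with $f(p')=s\notin Q_2$, so $p'\notin Q_1$, contradicting $\cel p\cer\subset Q_1$. Without this argument your application of Theorem \ref{amalgamation} (resp.\ the pseudo-amalgamation lemma) is not justified.

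Two further points need attention. First, in the non-transverse case the result you cite for the forward direction ($P_1\Rightarrow P_2$) is Lemma \ref{pseudo-amalgamation}, which only goes from $Z$ codimension-one in $Q$ to $f^{-1}(Z)$ codimension-one in $P$; there is no converse there, so you must instead use Theorem \ref{pure pseudo-amalgamation}(b), which requires a \emph{pure} filtration map and \emph{pure} codimension one. Neither purity is part of the lemma's hypotheses, so you need to say why it holds in the shelling situation (e.g.\ via Lemma \ref{pure lemma}, which requires knowing the intermediate posets are constructible). Second, for the inductive step you need $f|_{P_1'}$ to restrict to an isomorphism $P_1'\but\Int_{P_1'}Q_1\to P_2'\but\Int_{P_2'}Q_2$, but $\Int_{P_i'}Q_i$ can be strictly larger than $P_i'\cap\Int_{P_i}Q_i$ (removing elements can make more dual cones lie entirely in $Q_i$); the extra elements lie in $R_i\cap Q_i$, and while $g$ does carry one excess set to the other, this requires a short argument rather than the appeal to ``correspondence of excised sets'' alone.
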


\begin{lemma}\label{cojoin-excision}
Let $P$ and $Z$ be posets, $Q$ a closed subposet of $P$ such that
$Q\cap\fll P\but Q\flr$ is collared in $Q$ and in $\fll P\but Q\flr$, and $R$
a closed subposet of $Q$.
If $Q\x CZ\cup CQ\x Z$ transversely shells onto $R\x CZ\cup CQ\x Z$,
then $Q\x CZ\cup CP\x Z$ transversely shells onto $R\x CZ\cup CP\x Z$.
\end{lemma}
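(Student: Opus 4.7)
The plan is to lift the given transverse shelling
$X_Q \bydef Q\x CZ\cup CQ\x Z \searrow Y_Q \bydef R\x CZ\cup CQ\x Z$
elementary-step by elementary-step to a transverse shelling
$X_P \bydef Q\x CZ\cup CP\x Z \searrow Y_P \bydef R\x CZ\cup CP\x Z$.
Specifically, if
$X_Q = X_0\searrow X_1\searrow\dots\searrow X_n = Y_Q$
is the given sequence of elementary transverse shellings $X_j = X_{j+1}\cup CW_j$, set $X'_j\bydef X_j\cup CP\x Z$, so that $X'_0 = X_P$ and $X'_n = Y_P$. The plan is to verify that each $X'_j = X'_{j+1}\cup CW_j$ is again an elementary transverse shelling.

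First I would identify the cones $CW_j$. The apex $w_j$ lies in $X_j\but X_{j+1}\incl X_Q\but Y_Q$, and a direct set-theoretic computation gives $X_Q\but Y_Q = (Q\but R)\x\{\hat 1_{CZ}\}$; hence $w_j = (q_j,\hat 1_{CZ})$ for some $q_j\in Q\but R$, and $CW_j = \fll q_j\flr_Q\x CZ$. The cone $CW_j$ remains a cone of $X'_j$ with the same apex (its lower set in $X_P$ still equals $CW_j$), and the intersection $S'_j\bydef X'_{j+1}\cap CW_j$ coincides with $S_j\bydef X_{j+1}\cap CW_j$: indeed
$$S'_j = S_j\cup(CP\x Z\cap CW_j) = S_j\cup(\fll q_j\flr_Q\x Z),$$
and since $\fll q_j\flr_Q\x Z\incl CQ\x Z\incl Y_Q\incl X_{j+1}$, the set $\fll q_j\flr_Q\x Z$ is already contained in $S_j$.

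From $S'_j = S_j$ it follows that the transverse shellability of $S'_j$, the collaring of $S'_j$ in $CW_j$, and the condition on $S'_j\cap\fll W_j\but S'_j\flr$ are all inherited from the corresponding properties of $S_j$. Only the collaring of $S_j$ in $X'_{j+1} = X_{j+1}\cup CP\x Z$ remains to be checked, strengthening the known collaring of $S_j$ in $X_{j+1}$. For $s\in S_j\but CQ\x Z$, necessarily of the form $(q',\hat 1_{CZ})$ with $q'\le q_j$, every element above $s$ in $X_P$ has second coordinate $\hat 1_{CZ}\notin Z$ and so lies in $Q\x\{\hat 1_{CZ}\}\incl X_{j+1}$; thus $\lk(s,X'_{j+1}) = \lk(s,X_{j+1})$ and the collaring transfers trivially.

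The genuinely new case, and the main obstacle, is $s = (q',z)\in\fll q_j\flr_Q\x Z$ with $q'\in Q\cap P'$, where $P'\bydef\fll P\but Q\flr$; here $\lk(s,X'_{j+1})$ acquires the extra elements $(p,y)\in(P\but Q)\x Z$ with $p\ge q'$ in $P$ and $y\ge z$ in $Z$, forming a product $\bigl(\cel q'\cer_P\cap(P\but Q)\bigr)\x\cel z\cer_Z$ attached to $\lk(s,X_{j+1})$ along part of the latter lying in $CQ\x Z$. The hypothesis that $Q\cap P'$ is collared in $P'$ provides a collar of $\lk(q',Q\cap P')$ inside $\lk(q',P')$, and multiplying this collar by $\cel z\cer_Z$ produces a product collar for the new portion of $\lk(s,X'_{j+1})$, in the spirit of Lemma \ref{cojoin collars}(b). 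Combining this with the preexisting collar of $S_j$ in $X_{j+1}$ via a cone-over-cone argument as in the proof of that lemma then yields the required collar of $S_j$ in $X'_{j+1}$. The technical heart of the proof is verifying that the ``old'' and ``new'' collars assemble compatibly along the interface $(Q\cap P')\x\cel z\cer_Z$, a relative version of the PL identity $(CX)*(CY)\cong C(CX*Y\cup_{X*Y}X*CY)$ already used to prove Lemma \ref{cojoin collars}(b).
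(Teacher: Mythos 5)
Your proposal takes a genuinely different route from the paper's. The paper defines a retraction $f=\id_{Q\x CZ}\cup\phi\x\id_Z$, where $\phi\:CP\to CQ$ collapses $P\but Q$ to the cone point, observes that $\phi$ (hence $f$) is a stratification map via Lemma \ref{cojoin collars}(a), and then applies Lemma \ref{map excision} in one stroke. Your approach instead lifts the given shelling step by step, setting $X'_j = X_j\cup CP\x Z$ and checking that each $X'_j = X'_{j+1}\cup CW_j$ is again an elementary transverse shelling. This is a coherent alternative plan: your identification of $CW_j$, the equality $S'_j = S_j$, and the observation that the $W_j$-side conditions are literally inherited are all correct, and the case analysis for the collaring of $S_j$ in $X'_{j+1}$ correctly isolates $s = (q',z)$ with $q'\in Q\cap P'$, $z\in Z$ as the only non-trivial case.

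However, there is a genuine gap exactly where you flag it. You need $|\lk(s,X'_{j+1})|\cong C|\lk(s,S_j)|$ rel $|\lk(s,S_j)|$, but the ``old'' collar coming from $S_j$ collared in $X_{j+1}$ is an abstract homeomorphism $|\lk(s,X_{j+1})|\cong C|\lk(s,S_j)|$ whose cone structure bears no a priori relation to the locus $\bigl(\lk(q',Q\cap P')\bigr)\x\cel z\cer_Z$ along which the ``new'' product piece $\bigl(\cel q'\cer_P\cap(P\but Q)\bigr)\x\cel z\cer_Z$ sits inside $\lk(s,X'_{j+1})$. Splicing a product collar coming from the hypothesis on $Q\cap P'$ onto an arbitrary cone structure on $\lk(s,X_{j+1})$ is not a direct application of the $(CX)*(CY)\cong C(CX*Y\cup_{X*Y}X*CY)$ identity; it requires a statement to the effect that a stratification map pulls back collared subposets to collared subposets, which is precisely Theorem \ref{amalgamation}. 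Unwinding that theorem at the level of links is what would be needed to assemble the two collars, and that is non-trivial (it involves Theorem \ref{stratification map}, Theorem \ref{subdivision map} and Lemma \ref{Morton1}). In other words, the ``technical heart'' you defer is essentially a re-derivation of the map-excision principle in this special case; the paper's proof packages that work into $\phi$ being a stratification map and then invokes Lemma \ref{map excision} directly. You should either fill this in (via a local version of Theorem \ref{amalgamation}) or switch to the retraction argument.

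One small imprecision aside: in the case $s=(q',\hat 1_{CZ})$, the reason $\lk(s,X'_{j+1})=\lk(s,X_{j+1})$ is not that $\cel s\cer_{X_P}\subset Q\x\{\hat 1\}\subset X_{j+1}$ (the last containment is false in general, since the shelling removes elements of $(Q\but R)\x\{\hat 1\}$), but simply that $\cel s\cer_{X'_{j+1}}\cap CP\x Z=\emptyset$ because the second coordinate $\hat 1$ cannot lie in $Z$; the conclusion is still correct.
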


\begin{proof}
Define a retraction $\phi\:CP\to CQ$ by sending $P\but Q$ onto the maximal element
of $CQ$.
By Lemma \ref{cojoin collars}(a), $\phi$ is a stratification map.
Define a retraction $f\:Q\x CZ\cup CP\x Z\to Q\x CZ\cup CQ\x Z$ by
$f=\id_{Q\x CZ}\cup\phi\x\id_Z$.
Then $f$ is a stratification map, and the assertion follows from
Lemma \ref{map excision}.
\end{proof}

\subsection{$\emptyset$-shelling}
A poset $P$ is defined to be [transversely] $\emptyset$-shellable if it
$\emptyset$-shells onto the empty set, where an (elementary) [transverse]
$\emptyset$-shelling is defined similarly to an (elementary) [transverse] shelling.

We note that every cone elementarily transversely $\emptyset$-shells
onto $\emptyset$, so [transverse] $\emptyset$-shellability is strictly weaker
than [transverse] shellability.

It is easy to see that the product of transversely shellable posets is transversely
shellable.
The following lemma is concerned with the cojoin,
$P\cojoin Q\simeq P\x CQ\cup CP\x Q$.

\begin{lemma}\label{cojoin} Suppose that a poset $P$ transversely $\emptyset$-shells
onto a closed subposet $R$, and let $Z$ be a transversely shellable poset.
Then $P\x CZ\cup CP\x Z$ transversely shells onto $R\x CZ\cup CP\x Z$.
\end{lemma}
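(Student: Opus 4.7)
The plan is a strong induction on $|P|$. The base $P=R$ (including $P=\emptyset$) is trivial, so assume the $\emptyset$-shelling is nontrivial and write its first elementary step as $P=P_1\cup CX$: by the definition of an elementary transverse shelling, $Y\bydef P_1\cap CX$ is transversely shellable, $Y$ is collared in $P_1$ and (by Lemma \ref{cojoin collars}(a)) also in $CX$, while $W\bydef Y\cap\fll X\but Y\flr$ is collared in $Y$ and in $D\bydef\fll X\but Y\flr$. Since $|Y|<|P|$ and $|P_1|<|P|$, the inductive hypothesis is available for $\emptyset$-shellings of either. I will factor the desired shelling as
$$P\x CZ\cup CP\x Z\;\searrow\;P_1\x CZ\cup CP\x Z\;\searrow\;R\x CZ\cup CP\x Z.$$

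For the second arrow, I will apply the inductive hypothesis to the shorter $\emptyset$-shelling $P_1\to R$ to obtain $P_1\x CZ\cup CP_1\x Z\searrow R\x CZ\cup CP_1\x Z$, and then invoke Lemma \ref{cojoin-excision} with outer $P$, inner $Q=P_1$, and the same $R$; its required collaring of $P_1\cap\fll P\but P_1\flr=Y$ in both $P_1$ and $\fll P\but P_1\flr=CX$ is exactly what I have.

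For the first arrow, I will realize it as a single elementary transverse shelling step attaching the cone $CX\x CZ$ (with top $(\hat 1_{CX},\hat 1_{CZ})$) to $P_1\x CZ\cup CP\x Z$ along the intersection $T\bydef Y\x CZ\cup CX\x Z$. Three conditions must be checked. For (i), the transverse shellability of $T$: applying the inductive hypothesis to $Y\to\emptyset$ with the same $Z$ gives $Y\x CZ\cup CY\x Z\searrow CY\x Z$; upgrading via Lemma \ref{cojoin-excision} with outer $X$, inner $Q=Y$, and $R=\emptyset$ (whose collaring hypothesis on $W$ I have in hand) yields $T\searrow CX\x Z$, and composition with the transverse shellability of $CX\x Z$ (a product of a cone with a transversely shellable poset) completes (i). For (ii), the collaring of $T$ in $P_1\x CZ\cup CP\x Z$: apply Lemma \ref{cojoin collars}(b) to the decomposition $P=P_1\cup CX$ with $Q=CX$ and $R=P_1$. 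Condition (iii) requires that $W\x CZ\cup D\x Z = T\cap\fll(CX\x CZ)\but T\flr$ be collared both in $T$ and in $D\x CZ$.

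The main obstacle will be (iii), which I plan to dispatch by pointwise link computations in the spirit of the proof of Lemma \ref{cojoin collars}(b). Using $\lk((a,b),A\x B)=\lk(a,A)*\lk(b,B)$ and the identity $CM*N\simeq C(M*N)$, the four collarings needed are $W$ in $Y$, $W$ in $D$, $Z$ in $CZ$ (automatic), and $D$ in $CX$. The last is not explicitly assumed, but it follows at any $a\in D\but W=X\but Y$ from closure of $Y$: every element of $CX$ strictly above $a$ lies in $CX\but Y=D\cup\{\hat 1_{CX}\}$, whence $\cel a\cer^{CX}\simeq C(\cel a\cer^D)$ and thus $\lk(a,CX)\simeq C\lk(a,D)$. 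The genuinely delicate case will be $(a,b)\in W\x Z$, where the link in $T$ is a union of two joins glued along a third; unpacking this with the available collarings to recover the expected cone-over-cojoin structure is the technical heart of the argument.
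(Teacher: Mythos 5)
Your strategy matches the paper's: factor the target shelling through the first elementary step $P=P_1\cup CX$, handle the tail $P_1\x CZ\cup CP\x Z\searrow R\x CZ\cup CP\x Z$ via the inductive hypothesis plus Lemma \ref{cojoin-excision}, and realize the head as one elementary transverse shelling with attached cone $CX\x CZ$ and intersection $T=Y\x CZ\cup CX\x Z$. Conditions (i) and (ii) are checked exactly as the paper does.

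For (iii), two points. A minor one: since $(CX\x CZ)\but T$ contains the apex of the cone, its closure is all of $CX\x CZ$, so the formula $T\cap\fll(CX\x CZ)\but T\flr$ as written simply evaluates to $T$. The definition of shelling uses $\fll X\but R\flr$ with $X$ the \emph{base} of the attached cone; taking $X'=\partial(CX\x CZ)=X\x CZ\cup CX\x Z$ one does get $T\cap\fll X'\but T\flr=W\x CZ\cup D\x Z$, the set you name. (Here you are in fact more explicit than the paper, whose text only verifies collaring of $W\x CZ$; the $D\x Z$ part follows, e.g.\ from the \emph{second} assertion of Lemma \ref{cojoin collars}(b).) The substantive issue is that you leave the case $(a,b)\in W\x Z$ open, and this is precisely the content the paper outsources to Lemma \ref{cojoin collars}(b). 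If one carries out the pointwise computation, inserting the available cone collars turns $|\lk((a,b),T)|$ into $|\lk(a,W)|*|\lk(b,Z)|*J$ for a PL arc $J$ whose endpoints are the two points of the $S^0$ in $|\lk((a,b),W\x CZ\cup D\x Z)|\cong|\lk(a,W)|*|\lk(b,Z)|*S^0$; the required cone structure is then the fact that an arc is the cone on its endpoints. Until that step is supplied (or replaced by an appeal to Lemma \ref{cojoin collars}(b), as in the paper), the elementary shelling step is not justified. Finally, a small slip: $CX\but Y=(X\but Y)\cup\{\hat 1\}$ is in general a proper subset of $D\cup\{\hat 1\}$; the conclusion you draw for $a\in D\but W$, namely $\cel a\cer^{CX}\simeq C(\cel a\cer^{D})$, is nonetheless correct, as it uses only that $\cel a\cer$ avoids the closed subposet $Y$.
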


In particular, the cojoin of a transversely $\emptyset$-shellable poset and a
transversely shellable poset is transversely shellable.

We will only need the case $Z=pt$ of this lemma.

\begin{proof}
Suppose that $P$ elementarily transversely $\emptyset$-shells onto $Q$, where $Q$
transversely $\emptyset$-shells onto $R$.
Arguing by induction, we may assume that $Q\x CZ\cup CQ\x Z$
transversely shells onto $R\x CZ\cup CQ\x Z$.
Then by Lemma \ref{cojoin-excision}, $Q\x CZ\cup CP\x Z$ transversely shells
onto $R\x CZ\cup CP\x Z$.
Thus it suffices to show that $P\x CZ\cup CP\x Z$ transversely shells onto
$Q\x CZ\cup CP\x Z$.

We have $P=Q\cup CY$, where $S:=Q\cap CY=Q\cap Y$ is transversely
$\emptyset$-shellable and collared in $Q$, and $S\cap\fll Y\but S\flr$
is collared in $S$ and in $\fll Y\but S\flr$ (and so by
Lemma \ref{cojoin collars}(a), $S$ is also collared in $CY$).
Then $CY\x CZ$ is a single cone, which meets $Q\x CZ\cup CP\x Z$
in $S\x CZ\cup CY\x Z$.
By the second assertion of Lemma \ref{cojoin collars}(b), $S\x CZ\cup CY\x Z$
is collared in $Q\x CZ\cup CP\x Z$.
Next, we have
$\fll (Y\x CZ\cup CY\x Z)\but (S\x CZ\cup CY\x Z)\flr=\fll Y\but S\flr\x CZ$.
By the above $(\fll Y\but S\flr\cap S)\x CZ$ is collared in
$\fll Y\but S\flr\x CZ$ and in $S\x CZ$.
Then by the first assertion of Lemma \ref{cojoin collars}(b) it is also collared
in $S\x CZ\cup CY\x Z$.

It remains to show that $S\x CZ\cup CY\x Z$ is shellable.
Since $S$ is transversely $\emptyset$-shellable, we may assume by induction
that $S\x CZ\cup CS\x Z$ transversely shells onto $CS\x Z$.
Since $S\cap\fll Y\but S\flr$ is collared in $S$ and in $\fll Y\but S\flr$,
by Lemma \ref{cojoin-excision}, $S\x CZ\cup CY\x Z$ transversely shells
onto $CY\x Z$.
Since $Z$ is transversely shellable, so is $CY\x Z$.
\end{proof}

\begin{lemma}\label{quasi-shelling}
Let $X$ be a poset, $Y$ a closed subposet of $X$, and $Z$ a closed subposet of $Y$.
Then $\fll(Y^\flat)^*\flr_{(X^\flat)^*}$ transversely $\emptyset$-shells onto
$\fll(Z^\flat)^*\flr_{(X^\flat)^*}$.
\end{lemma}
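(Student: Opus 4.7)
The plan is to induct lexicographically on $(|X|, |Y\setminus Z|)$, with trivial base cases $|X|=0$ or $Y=Z$. In the inductive step, I would choose $y$ maximal in $Y\setminus Z$; since $Z$ is closed in $Y$, this forces $y$ to be maximal in $Y$. Setting $Y' := Y\setminus\{y\}$ (still closed in $X$ and containing $Z$), the inductive hypothesis on $(X,Y',Z)$ reduces the problem to exhibiting a single elementary transverse $\emptyset$-shelling step from $\fll(Y^\flat)^*\flr_{(X^\flat)^*}$ onto $\fll((Y')^\flat)^*\flr_{(X^\flat)^*}$.

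For this step, I would set $A := (\partial\fll y\flr_X)^\flat$ and $B := (\partial^*\cel y\cer_X)^\flat$. The natural decomposition $\sigma = \sigma_-\cup\{y\}\cup\sigma_+$ of chains of $X$ containing $y$ identifies the cone $CX' := \fll\{y\}^*\flr_{(X^\flat)^*}$ with $CA^*\times CB^*$; since $Y$ is closed in $X$ and $y$ is maximal in $Y$, one gets $\partial\fll y\flr_X \subset Y'$ and $\partial^*\cel y\cer_X \cap Y = \emptyset$, which identifies the intersection $R := \fll((Y')^\flat)^*\flr\cap CX'$ with $A^*\times CB^*$ and further gives $\fll X'\setminus R\flr\simeq CA^*\times B^*$ and $R\cap\fll X'\setminus R\flr\simeq A^*\times B^*$, where $X' := \partial CX'$. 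The ``inner'' collaring of $A^*\times B^*$ inside each of $A^*\times CB^*$ and $CA^*\times B^*$ reduces, via the product/join formula $\lk((p,q),P\times Q)\simeq\lk(p,P)*\lk(q,Q)$ and $\lk(b,CB^*)\simeq C\lk(b,B^*)$, to the cone identity $|W*CV|\cong C|W*V|$ keeping $|W*V|$ fixed. The transverse $\emptyset$-shellability of $R \simeq A^*\times CB^*$ will follow by first applying the inductive hypothesis to the lex-smaller triple $(\partial\fll y\flr_X,\partial\fll y\flr_X,\emptyset)$ to produce a transverse $\emptyset$-shelling of $A^*$, and then transferring each elementary step $P_k = P_{k+1}\cup CW$ to $P_k\times CB^* = P_{k+1}\times CB^*\cup CW\times CB^*$; this works because the product of two cones is a cone, and the collar/$\emptyset$-shellability conditions on intersections survive multiplication with $CB^*$ by the same join formula.

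The hard part will be to verify that $R$ is collared in $\fll((Y')^\flat)^*\flr$. I would fix $r=\rho^*\in R$ with $\rho=\rho_-\cup\{y\}\cup\rho_+$ and $\rho_-$ nonempty, and parameterize a proper subchain $\sigma\subsetneq\rho$ as $\alpha\cup\beta\cup\gamma$ with $\alpha\subseteq\rho_-$, $\beta\in\{\emptyset,\{y\}\}$, $\gamma\subseteq\rho_+$; the condition that $\sigma$ meet $Y'$ reduces (using $\rho\cap Y' = \rho_-$) simply to $\alpha\ne\emptyset$. Setting $P := \Delta^{\rho_-}\times 2^{\rho_+}$ (a poset with unique greatest element $\hat 1=(\rho_-,\rho_+)$) and $P' := P\setminus\{\hat 1\}$, the ``$\beta=\{y\}$'' stratum identifies $\lk(r,R)$ with $(P')^*$, while the ``$\beta=\emptyset$'' stratum identifies $\lk(r,\fll((Y')^\flat)^*\flr)\setminus\lk(r,R)$ with $P^*$; a direct check of the cross-order relations (using $\{y\}\supseteq\emptyset$) then identifies the whole $\lk(r,\fll((Y')^\flat)^*\flr)$ with the dual mapping cylinder $MC^*(\iota)$, where $\iota\colon(P')^*\hookrightarrow P^*$ is the inclusion dual to $P'\subset P$. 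The vertex of $P^\flat$ corresponding to $\hat 1$ is a simplicial cone-apex of $|P^\flat|$ over $|(P')^\flat|$, so $|P^*|=|P|\cong C|P'|=C|(P')^*|$ keeping the base fixed; Lemma \ref{MC vs MC*} will then give $|\lk(r,\fll((Y')^\flat)^*\flr)|\cong MC(|\iota|)\cong C|\lk(r,R)|$ keeping $|\lk(r,R)|$ fixed, completing the verification.
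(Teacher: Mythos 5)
Your proposal follows essentially the same route as the paper's proof: a linear-extension reduction to adding one element at a time, the handle decomposition $H_y=CA^*\times CB^*$ with intersection $A^*\times CB^*$ against the old part, and the inductive transfer of transverse $\emptyset$-shellability from $A^*$ to $A^*\times CB^*$. The paper's proof is quite terse and stops at this point, leaving the required collaring conditions implicit; your main addition is the careful verification that $R=A^*\times CB^*$ is collared in $\fll((Y')^\flat)^*\flr$ via the identification of $\lk(r,\fll((Y')^\flat)^*\flr)$ with the dual mapping cylinder $MC^*((P')^*\hookrightarrow P^*)$ for $P=\Delta^{\rho_-}\times 2^{\rho_+}$, followed by the Lemma \ref{MC vs MC*} argument. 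That mapping-cylinder computation is correct (the cross-relations between the $\beta=\{y\}$ and $\beta=\emptyset$ strata do match $MC^*$ of the open inclusion, and openness of the inclusion ensures $MC^*$ is a poset), and the cone identity $|W*CV|\cong C|W*V|$ handles the remaining collars. So this is the same proof with a gap in the original write-up explicitly filled in, rather than a different approach.
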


\begin{proof} By arranging the elements of $Y\but Z$ in a total order
extending the original partial order, it suffices to consider the case where
$Y=Z\cup\fll\sigma\flr_X$, with $\partial\fll\sigma\flr_X\subset Z$.
Then $H_\sigma=CA\x CB$, where $A\simeq H(\partial\fll\sigma\flr)$ and
$B\simeq H(\partial\fll\sigma^*\flr)$.
We have
$H_\sigma\cup\fll(Z^\flat)^*\flr_{(X^\flat)^*}=\fll(Y^\flat)^*\flr_{(X^\flat)^*}$
and $H_\sigma\cap\fll(Z^\flat)^*\flr_{(X^\flat)^*}=A\x CB$.
Arguing by induction, we may assume that $A$ is transversely $\emptyset$-shellable,
and hence so is $A\x CB$.
\end{proof}

\begin{theorem} \label{collapsible-shellable} Let $M$ be a poset and $P$ a closed
subposet of $M$.
If $P$ collapses onto a closed subposet $Q$, then $N(P,M):=\fll H(P)\flr_{H(M)}$
shells onto $N(Q,M)$.
\end{theorem}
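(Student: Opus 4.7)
The plan is to argue by induction on the number of elementary collapses in the given collapse $P \searrow Q$; the base case $P = Q$ is tautological. The inductive step reduces to a single elementary collapse $P = Q \cup Y$, where $Y = \fll\sigma\flr$ for some $\sigma \in P \setminus Q$, and where $S := Q \cap Y$ is collapsible.

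First I would record the elementary decomposition $N(P,M) = N(Q,M) \cup N(Y,M)$ and $N(Q,M) \cap N(Y,M) = N(S,M)$. The union identity is immediate from $P = Q \cup Y$, while the intersection identity is a short chain argument: if a chain $c$ in $M$ meets both $Q$ and $Y$, pick $q \in c \cap Q$ and $y \in c \cap Y$; since $q$ and $y$ are comparable and each of $Q$, $Y$ is closed in $M$, the smaller of the two is forced into $Q \cap Y = S$, so $c$ meets $S$.

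Given this decomposition, the desired shelling $N(P,M) \searrow N(Q,M)$ should follow from a relative shelling $N(Y,M) \searrow N(S,M)$ whose construction steps interact well with $N(Q,M)$ along the overlap $N(S,M)$. I would combine these via an excision argument in the spirit of Lemma \ref{map excision} (or the specialized Lemma \ref{cojoin-excision}), with the required collaring of $N(S,M)$ inside $N(Q,M)$ ensured by iterated application of Lemma \ref{cojoin collars} to the handles $H_\tau^M$ for $\tau \in S$.

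It remains to produce a shelling $N(Y,M) \searrow N(S,M)$ inside $H(M)$. Lemma \ref{quasi-shelling} already provides a transverse $\emptyset$-shelling from $N(Y,M)$ to $N(S,M)$; the extra input needed to upgrade this to an honest shelling is precisely the collapsibility of $S$. My plan is a secondary induction on a fixed collapse $S \searrow \{s\}$: at each internal stage, one attaches a handle $H_\tau^M$ (indexed by the current $\tau \in Y \setminus S$) in an order dictated by the reverse of the given collapse of $S$. The product decomposition of barycentric handles $H_\tau \simeq CH(\partial\fll\tau\flr) \times CH(\partial\fll\tau^*\flr_M)$ from Subsection 2.3, together with Lemma \ref{cojoin}, lets me realize the base of each handle attachment as a cojoin of the form $T \times CZ \cup CT \times Z$; the collapse structure of $S$ is exactly what forces the $T$-piece to be shellable rather than merely $\emptyset$-shellable, so each elementary step qualifies as an elementary shelling and not just an elementary $\emptyset$-shelling.

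The step I expect to be hardest is precisely this upgrade from $\emptyset$-shelling to shelling: one must choose the ordering of handle attachments so that the base of each attachment is shellable, and verify that an arbitrary collapse of $S$ can be reorganized to produce exactly such an ordering. Once that is in place, all remaining collaring and codimension-one verifications should reduce by the product handle structure to the cojoin identities of Lemmas \ref{cojoin collars}, \ref{cojoin-excision} and \ref{cojoin}, gluing cleanly into the shelling $N(P,M) \searrow N(Q,M)$.
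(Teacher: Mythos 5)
Your opening moves are correct: the reduction to a single elementary collapse, the identity $N(P,M)=N(Q,M)\cup N(Y,M)$, and the chain argument giving $N(Q,M)\cap N(Y,M)=N(S,M)$ are all right. But the proposed ``secondary induction on a fixed collapse $S\searrow\{s\}$, attaching handles $H_\tau^M$ in an order dictated by the reverse of the collapse of $S$'' is ill-posed: the handles being attached are indexed by $\tau\in Y\but S$, and an elementary collapse step of $S$ removes elements of $S$ in pairs, so there is no natural correspondence between the two and no order is actually being produced. What both Lemma \ref{quasi-shelling} and the paper's proof actually use to order the handles is a linear extension of the partial order on $Y\but S$, which has nothing to do with how $S$ collapses.

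More importantly, you have misattributed where the collapsibility of $S$ enters, and consequently a crucial recursion is missing. For the handles $H_\tau$ with $\tau<\sigma$, the upgrade from $\emptyset$-shelling steps to honest shelling steps does \emph{not} come from the collapsibility of $S$ at all; it comes from the cone structure of $Y=\fll\sigma\flr$, via the decomposition $H(\fll\sigma\flr)\simeq H(\partial\fll\sigma\flr)\x C(pt)\cup CH(\partial\fll\sigma\flr)\x pt$ and Lemma \ref{cojoin} applied with $Z=pt$. The collapsibility of $S$ is needed for the one handle that the cojoin trick does not cover, namely $H_\sigma$ itself: the attachment of $H_\sigma^P$ is an elementary shelling step only if the intersection $H_\sigma^P\cap N(Q,P)\simeq N(S,\partial\fll\sigma\flr)$ is shellable, and establishing \emph{that} requires applying the theorem being proved recursively to the strictly smaller ambient poset $M'=\partial\fll\sigma\flr$ (with $P'=S$ collapsing onto a point). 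Your primary induction is only on the number of elementary collapse steps, so this recursion on the size of $M$ is not available to you, and the argument cannot close. (A minor additional friction: working directly with $N(\cdot,M)$ rather than $N(\cdot,P)$ followed by Lemma \ref{neighborhood2} burdens each handle with a nontrivial cocore factor $CH(\partial\fll\tau^*\flr_M)$, which further obscures the cojoin structure you are trying to use.)
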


In particular, if $P$ is collapsible, then $N(P,M)$ is shellable
(and hence constructible).

\begin{proof} It suffices to consider the case of an elementary collapse.
Thus suppose that $P=Q\cup\fll\sigma\flr$, where $R:=Q\cap\fll\sigma\flr$ is
collapsible, $R\subset\partial\fll\sigma\flr$.
Arguing by induction, we may assume that $N(R,\partial\fll\sigma\flr)$ is
shellable.

Since $\sigma$ is maximal in $P$, $\partial H_\sigma^P$ is isomorphic to
$H(\partial\fll\sigma\flr)$.
This isomorphism takes $N(Q,P)\cap H_\sigma^P$ onto $N(R,\partial\fll\sigma\flr)$.
Since the latter is shellable, it follows that $N(Q,P)\cup H_\sigma^P$
shells onto $N(Q,P)$.

On the other hand, $H(\fll\sigma\flr)$ is isomorphic to the cojoin
$H(\partial\fll\sigma\flr)\x C(pt)\cup CH(\partial\fll\sigma\flr)\x pt$.
By Lemma \ref{quasi-shelling}, $H(\partial\fll\sigma\flr)$ transversely
$\emptyset$-shells onto $N(R,\partial\fll\sigma\flr)$.
Hence by Lemma \ref{cojoin}, the cojoin
$H(\partial\fll\sigma\flr)\x C(pt)\cup CH(\partial\fll\sigma\flr)\x pt$
transversely shells onto
$N(R,\partial\fll\sigma\flr)\x C(pt)\cup CH(\partial\fll\sigma\flr)\x pt$.
The isomorphism sends the latter onto $N(R,\fll\sigma\flr)\cup H_\sigma^P$.
Thus $H(\fll\sigma\flr)$ transversely shells onto $N(R,\fll\sigma\flr)\cup H_\sigma^P$.
By excision (using that $\Fr (N(Q,P)\cup H_\sigma^P)$ is disjoint from $Q$),
we get that $N(P,P)$ transversely shells onto $N(Q,P)\cup H_\sigma^P$.

Thus $N(P,P)$ shells onto $N(Q,P)$.
Hence by Lemma \ref{neighborhood2}, $N(P,M)$ shells onto $N(Q,M)$.
\end{proof}

\begin{corollary} \label{collapsible-shellable2}
If a poset $P$ is collapsible, then $H(P)=(P^\flat)^*$ is shellable.
\end{corollary}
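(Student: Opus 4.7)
The plan is to derive the corollary as a direct specialization of the preceding Theorem \ref{collapsible-shellable}, taking the ambient poset $M$ to equal $P$ itself and the target of the collapse to be a singleton.

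First I would unpack $N(P,M):=\fll H(P)\flr_{H(M)}$ in the case $M=P$. Since $H(M)=H(P)$, the closure $\fll H(P)\flr_{H(P)}$ is just the whole of $H(P)$; so $N(P,P)=H(P)$.

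Next I would identify $N(\{q\},P)$ for a chosen vertex $q\in P$. Here $H(\{q\})=(\{q\}^\flat)^*$ is a singleton, and under the canonical inclusion into $H(P)$ it is identified with the maximal element $(\hat q)^*$ of $H(P)$ coming from the singleton chain at $q$. Hence
\[
N(\{q\},P)=\fll H(\{q\})\flr_{H(P)}=\fll(\hat q)^*\flr_{H(P)}=H_q^P,
\]
which is a cone (its greatest element being $(\hat q)^*$), so it already qualifies as the base case of a shelling.

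Finally I would invoke the hypothesis that $P$ is collapsible, i.e.\ $P$ collapses onto a singleton closed subposet $\{q\}$ for some $q\in P$, and apply Theorem \ref{collapsible-shellable} with $M=P$ and $Q=\{q\}$. The conclusion of that theorem is that $N(P,P)$ shells onto $N(\{q\},P)$; by the identifications above this says $H(P)$ shells onto the cone $H_q^P$. By definition this means $H(P)$ is shellable. The argument is a pure bookkeeping exercise, so no real obstacle is anticipated; the only thing to double-check is the identification of $N(\{q\},P)$ with the handle $H_q^P$, which follows directly from the definitions of $H$, of barycentric handle, and of closure in a poset.
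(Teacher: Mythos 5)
Your proposal is correct and is exactly the intended specialization: take $M=P$ so that $N(P,P)=H(P)$, take $Q$ to be the singleton $\{q\}$ onto which $P$ collapses (a closed subposet, so $q$ is minimal in $P$), identify $N(\{q\},P)=\fll(\hat q)^*\flr_{H(P)}=H_q^P$, which is a cone, and apply Theorem \ref{collapsible-shellable}. The paper states the corollary without proof precisely because it is this routine bookkeeping, and your unwinding of the definitions matches the route the paper has in mind.
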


\begin{corollary} \label{collapsible-constructible}
If $P$ collapses onto a closed subposet $Q$ such that $H(Q)$ is
constructible, then $H(P)$ is constructible.
\end{corollary}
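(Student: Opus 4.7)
The plan is to apply Theorem~\ref{collapsible-shellable} in the special case $M=P$: since $P$ collapses onto $Q$, one obtains that $N(P,P)=H(P)$ shells onto $N(Q,P)=\fll H(Q)\flr_{H(P)}$. An elementary shelling step $X=Y\cup CZ$ is, by the parenthetical remark in Definition~\ref{shelling}, automatically a construction step of $X$ (the cone $CZ$ is constructible, the intersection $Y\cap CZ$ is shellable hence constructible, and it is of codimension one in both $Y$ and $CZ$). Thus by induction on shelling length, shelling onto a constructible poset yields a constructible poset. Consequently it suffices to show that $N(Q,P)$ is constructible, given that $H(Q)$ is.

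To accomplish this reduction I will invoke Lemma~\ref{neighborhood}(b) with $K=(N(Q,P))^*\subset P^\flat$ and $L=Q^\flat$, so that dually $K^*=N(Q,P)$ and $L^*=(Q^\flat)^*=H(Q)$. Concretely, elements of $K$ are chains of $P$ that meet $Q$, elements of $L$ are chains entirely contained in $Q$, both under the order by inclusion of chains. Note that $L^*=H(Q)$ is an open subposet of $H(P)$ (as $Q^\flat$ is closed in $P^\flat$), and the closure $\fll H(Q)\flr_{H(P)}$ picks out exactly the chains of $P$ having nonempty intersection with $Q$.

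The two hypotheses of Lemma~\ref{neighborhood}(b) are then straightforward to verify. Given a chain $c\in K$, the intersection $c\cap Q$ is itself a chain (being a subchain of $c$), and
\[
\fll c\flr_K\cap L=\{c'\subseteq c:c'\subseteq Q\}=\fll c\cap Q\flr_L,
\]
so $L$ is full in $K$. For density of $L^*$ in $K^*$, given $c\in K^*=N(Q,P)$ choose any $q\in c\cap Q$; then the singleton $\{q\}$ lies in $H(Q)=L^*$ and $c\supseteq\{q\}$, i.e.\ $c\le\{q\}$ in $K^*$, proving $\fll L^*\flr_{K^*}=K^*$. With $L^*=H(Q)$ constructible by hypothesis, Lemma~\ref{neighborhood}(b) delivers that $K^*=N(Q,P)$ is constructible, and hence by the first paragraph, $H(P)$ is constructible.

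There is no serious obstacle to this proof; it is essentially bookkeeping once the right lemma is identified. The only delicate point is to recognize that the relative regular neighborhood $N(Q,P)$ produced by Theorem~\ref{collapsible-shellable} is not itself $H(Q)$ but the closure of $H(Q)$ inside $H(P)$, and that Lemma~\ref{neighborhood}(b) is precisely the tool designed to transport constructibility across this closure — provided one checks that $Q^\flat$ sits fully inside $N(Q,P)^*$ and that $H(Q)$ is dense in $N(Q,P)$, both of which reduce to unwinding the combinatorics of chains.
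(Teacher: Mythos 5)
Your proposal is correct and follows essentially the same route as the paper: Theorem~\ref{collapsible-shellable} to get that $H(P)=N(P,P)$ shells onto $N(Q,P)$, and Lemma~\ref{neighborhood}(b) applied to $K=N(Q,P)^*$ and $L=Q^\flat$ to carry constructibility from $H(Q)$ to $N(Q,P)$. Your explicit verification of the fullness and density hypotheses of Lemma~\ref{neighborhood}(b) is sound and merely spells out what the paper leaves implicit.
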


\begin{proof}
Since $N(Q,Q)=H(Q)$ is constructible, by Lemma \ref{neighborhood}, so is $N(Q,P)$.
Hence $H(P)=N(P,P)$ is constructible by Theorem \ref{collapsible-shellable}.
\end{proof}

\section{Collapsible maps}\label{collapsing}

\begin{lemma}\label{Wh-collapse}
If $X$ is a simplicial complex such that $|X|$ is collapsible, then $X$ admits
a simplicially realizable subdivision map $\beta\:X'\to X$ such that $X'$ is
simplicially collapsible and $\beta^{-1}(Y)$ is simplicially collapsible
for each simplicially collapsible subcomplex $Y$ of $X$.
\end{lemma}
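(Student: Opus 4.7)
The plan is to invoke Whitehead's theorem simultaneously for $X$ and for all its simplicially collapsible subcomplexes, exploiting that iterated barycentric subdivision preserves simplicial collapsibility. Since $X$ is finite, there are only finitely many simplicially collapsible subcomplexes $Y_1,\dots,Y_k$ of $X$. Each polyhedron $|Y_i|$ is collapsible (as $Y_i$ is simplicially collapsible), and $|X|$ is collapsible by hypothesis. By Whitehead's theorem in its iterated-barycentric form (\cite{Wh}, \cite{Ze}, \cite{RS}, \cite{AB}), for each $i=0,1,\dots,k$ (setting $Y_0:=X$) there exists an integer $N_i\ge0$ such that the $N_i$-th iterated barycentric subdivision $Y_i^{(N_i)}$ is simplicially collapsible.

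The crux of the argument is an auxiliary \emph{preservation lemma}: if a simplicial complex $K$ simplicially collapses onto a subcomplex $L$, then $K^\flat$ simplicially collapses onto $L^\flat$. By induction it suffices to treat an elementary collapse $K=L\cup\fll\sigma\flr$, where $\sigma=v*\tau$ and $L\cap\fll\sigma\flr=v*\partial\fll\tau\flr$. Writing $\fll\sigma\flr^\flat=\hat\sigma*(\partial\fll\sigma\flr)^\flat$, it is enough to exhibit a simplicial collapse of $\fll\sigma\flr^\flat$ onto $(v*\partial\fll\tau\flr)^\flat$. This is carried out by inducting on the dimension of $\sigma$: the complementary part $\partial\fll\sigma\flr\setminus(v*\partial\fll\tau\flr)$ is a simplicial ball whose barycentric subdivision simplicially collapses from its free face down (peeling off the simplices containing $\hat\tau$), and this collapse lifts through the cone $\hat\sigma*(-)$. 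Iterating the preservation lemma, if $K$ is simplicially collapsible then so is $K^{(r)}$ for every $r\ge 0$.

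Now set $N:=\max\{N_0,N_1,\dots,N_k\}$, let $X':=X^{(N)}$, and let $\beta:=\flat^N\:X'\to X$ be the iterated-barycentric subdivision map. Since $\beta$ arises from an affine stellar-type subdivision of $X$, it is simplicially realizable. By the preservation lemma, $X'=(X^{(N_0)})^{(N-N_0)}$ inherits simplicial collapsibility from $X^{(N_0)}$, and for each $i\ge 1$, $\beta^{-1}(Y_i)=Y_i^{(N)}=(Y_i^{(N_i)})^{(N-N_i)}$ inherits simplicial collapsibility from $Y_i^{(N_i)}$. Thus $\beta$ satisfies all required properties.

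The main obstacle is the preservation lemma for barycentric subdivision: while classical folklore, it demands an explicit construction of the simplicial collapse of $\hat\sigma*(\partial\fll\sigma\flr)^\flat$ onto $(v*\partial\fll\tau\flr)^\flat$, which in turn requires careful bookkeeping of how the $v*\tau$ splitting of $\fll\sigma\flr$ interacts with the barycentric subdivision — essentially reducing to the statement that a simplicial ball collapses from any boundary subcomplex that is a simplicial disk in its boundary sphere. An alternative would be to bypass this by using a more delicate subdivision (e.g., stellar subdivision at a single simplex chosen adaptively), but organizing compatibility across all $Y_i$ simultaneously makes the iterated-barycentric route considerably cleaner.
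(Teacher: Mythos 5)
Your proof takes a genuinely different and more involved route than the paper's, and contains a misattribution. The paper's proof simply invokes Whitehead's Theorem~7 to produce a \emph{stellar} subdivision $\beta\:X'\to X$ with $X'$ simplicially collapsible, then Whitehead's Theorem~4, which says stellar subdivision preserves simplicial collapsibility; since $\beta^{-1}(Y)$ is a stellar subdivision of $Y$ for every subcomplex $Y$, the conclusion is immediate, with no enumeration of subcomplexes and no separate preservation lemma. Your argument instead hinges on the assertion that some iterated barycentric subdivision $X^{(N)}$ becomes simplicially collapsible whenever $|X|$ is merely collapsible. This is \emph{not} established in \cite{Wh}, \cite{Ze}, or \cite{RS}: those sources give only the stellar form, and iterated barycentric subdivisions are not cofinal among stellar ones, so the passage from stellar to barycentric is a real gap, not a reformulation. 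What you need is a relatively recent theorem of Adiprasito and Benedetti \cite{AB}, the only one of your four references that supports the claim; the paper explicitly discusses this AB-based alternative in the remark immediately following its proof, and attributes the mechanism correctly.

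Two further remarks. First, once the preservation lemma (barycentric subdivision preserves simplicial collapsibility) is in hand, your enumeration of $Y_1,\dots,Y_k$ and the appeal to the AB result for each $Y_i$ separately are redundant: each $Y_i$ with $i\ge 1$ is already simplicially collapsible, so $N_i=0$ works trivially, $N=N_0$, and the preservation lemma applied directly to $Y_i$ gives $Y_i^{(N_0)}$ simplicially collapsible. Second, the preservation lemma you sketch and flag as the ``main obstacle'' is already proved in the paper, in greater generality, by the unlabeled lemma immediately after Definition~\ref{collapsing of posets}: if a poset collapses onto a closed subposet, then its barycentric subdivision simplicially collapses onto that of the subposet. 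Your sketch of the elementary-collapse step is imprecise --- $\partial\fll\sigma\flr\but(v*\partial\fll\tau\flr)$ is not a subcomplex as written --- whereas the inductive coning-off argument of that lemma handles it cleanly. So the route you propose is viable as a known alternative, but it trades the short classical stellar argument for a harder modern theorem plus an auxiliary lemma the paper already has.
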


\begin{proof} We use Whitehead's theorem \cite[Theorem 7]{Wh} (see also
\cite[Theorem III.6]{Gl}) implying that $X$ admits a stellar subdivision map
$\beta\:X'\to X$ such that $X'$ is a simplicially collapsible.
It is easy to see by considering elementary stellar subdivisions that a
stellar subdivision of a simplicially collapsible simplicial complex is
simplicially collapsible \cite[Theorem 4]{Wh}, and the assertion follows.
\end{proof}

\begin{remark}
An alternative proof could be based on a recent result by Adiprasito and
Benedetti \cite[Corollary 3.5]{AB} implying that if $\beta\:X'\to X$ is a
simplicially realizable subdivision map, then there exists an $m$ such that
$\gamma_m^{-1}(Y)$ is simplicially collapsible for each simplicially collapsible
subcomplex $Y$ of $X$, where $\gamma_m\:(X')^{\flat m}\xr{\flat m}X'\xr{\beta}X$
is the $m$th barycentric subdivision map.
\end{remark}

\begin{theorem}\label{collapsible-constructible map}
 Let $f\:K\to L$ be a simplicial map such that $|f|$ is collapsible.
Then there exists a simplicially realizable subdivision map $\alpha\:K'\to K^\flat$
such that the composition
$g\:K'\xr{\alpha}K^\flat\xr{f^\flat}L^\flat$ is simplicial
and dual to a transversely constructible map.
\end{theorem}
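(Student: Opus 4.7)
The plan is to build $\alpha$ by iterated Whitehead subdivision applied to the cubosimplicial fibers of $f$, and then to run the scheme of proof of Theorem~\ref{constructible-main} on each simplicial fiber of the resulting map $g$ to obtain transverse constructibility of $g^*$. Since $g\:K'\to L^\flat$ will be simplicial, Corollary~\ref{simplicial dual to stratification map} gives for free that $g^*$ is a stratification map; the content of the theorem is therefore the transverse constructibility of the point-inverse posets $g^{-1}(\cel\tau\cer_{L^\flat})^*$, or equivalently (by Lemma~\ref{neighborhood}, extended to transverse constructibility using that cubosimplicial fibers are cell complexes and invoking Corollary~\ref{transversely constructible vs constructible}) of the fiber duals $g^{-1}(\tau)^*$ for each $\tau\in L^\flat$.

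For the construction, enumerate the simplices $\sigma_1,\dots,\sigma_n$ of $L$. For each $\sigma_i$, the simplicial subcomplex $f^{-1}(\sigma_i)^\flat\subset K^\flat$ has realization $|f^{-1}(\sigma_i)|\cong|f|^{-1}(\hat\sigma_i)$ which is collapsible by hypothesis. Apply Lemma~\ref{Wh-collapse} stage by stage, at stage $i$ subdividing inside the current image of $f^{-1}(\sigma_i)^\flat$ to make it simplicially collapsible while preserving the simplicial collapsibilities of $f^{-1}(\sigma_j)^\flat$ for $j<i$ via the preservation clause of the lemma. Because all new vertices introduced by these subdivisions lie inside some preimage of a single $f^{-1}(\sigma)^\flat$, every vertex of $K'$ is mapped by $g=f^\flat\alpha$ to a vertex $\hat\sigma$ of $L^\flat$; hence $g$ is a classical simplicial map, and in particular closed and monotone, hence simplicial in the paper's poset sense. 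By construction, $g^{-1}(\hat\sigma)=\alpha^{-1}(f^{-1}(\sigma)^\flat)$ is a simplicially collapsible simplicial complex for every $\sigma\in L$.

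To deduce transverse constructibility of $g^{-1}(\tau)^*$ for every $\tau\in L^\flat$, run the scheme of proof outlined for Theorem~\ref{constructible-main}: simplicial collapsibility of $g^{-1}(\hat\sigma)$ gives collapsibility as a poset (trivially), then Corollary~\ref{collapsible-shellable2} gives shellability of $H(g^{-1}(\hat\sigma))=(g^{-1}(\hat\sigma)^\flat)^*$, whence constructibility; Theorem~\ref{zips if constructible} yields that $g^{-1}(\hat\sigma)^\flat$ zips onto a point, Theorem~\ref{transverse zipping vs zipping} upgrades this to transverse zipping (since $g^{-1}(\hat\sigma)^\flat$ is a cell complex), and finally Theorem~\ref{transversely constructible if zips2} delivers transverse constructibility. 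The barycentric layer appearing here (shellability of $(g^{-1}(\hat\sigma)^\flat)^*$ versus of $g^{-1}(\hat\sigma)^*$ itself) is absorbed into $\alpha$ by prescribing that the subdivisions produced by Whitehead at each stage include a preliminary barycentric-type refinement of $f^{-1}(\sigma_i)^\flat$; concretely, we first barycentrically subdivide each $f^{-1}(\sigma_i)^\flat$ (staying inside it) and then apply Lemma~\ref{Wh-collapse} to the result, so that $g^{-1}(\hat\sigma)$ is presented as $X_\sigma^\flat$ for a simplicially collapsible $X_\sigma$, which makes $g^{-1}(\hat\sigma)^*=H(X_\sigma)$ shellable directly. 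Fibers $g^{-1}(\tau)$ over higher simplices $\tau=(\sigma_0<\cdots<\sigma_k)\in L^\flat$ are handled using the cubosimplicial product structure coming from Lemma~\ref{cubosimplicial} and the Hatcher maps (Lemma~\ref{Hatcher is cubical}); since products of transversely shellable posets are transversely shellable (noted before Lemma~\ref{cojoin}), their duals remain transversely constructible.

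The main obstacle is precisely the combinatorial bookkeeping needed to absorb the barycentric subdivision that is intrinsic to Corollary~\ref{collapsible-shellable2} into the single subdivision map $\alpha$, while simultaneously (i) keeping all added vertices in individual fibers of $f$ so that $g$ remains simplicial into $L^\flat$ rather than $L^{\flat\flat}$, and (ii) producing a coherent fiber structure over every simplex of $L^\flat$, not merely over vertices. This requires a careful induction on the dimension of $\tau\in L^\flat$, using the cubosimplicial decomposition of $(f^\flat\alpha)^{-1}(\tau)$ in terms of the fibers $g^{-1}(\hat\sigma_i)$ together with the Hatcher structure of the simplicial map $g$, and applying the stability of transverse constructibility under the relevant local operations (Lemma~\ref{inclusion lemma}, Lemma~\ref{neighborhood}, and the product stability of transverse shellability).
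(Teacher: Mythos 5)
Your high-level plan---subdivide each fiber $f^{-1}(\sigma)$ by Whitehead so it becomes simplicially collapsible, deduce transverse constructibility of the fiber duals via the scheme of proof of Theorem~\ref{constructible-main}, and finally invoke Theorem~\ref{constructible point-inverses}/Lemma~\ref{neighborhood}---matches the opening moves of the paper's proof. But you correctly identify, and then fail to close, the central difficulty: the fibers $g^{-1}(\tau)$ over \emph{non-vertex} simplices $\tau=(\sigma_0<\cdots<\sigma_k)\in L^\flat$. You assert these can be handled by ``cubosimplicial product structure'' and ``product stability of transverse shellability,'' but $g^{-1}(\tau)$ is not a product of the vertex fibers; Lemma~\ref{cubosimplicial} only gives a local product structure on individual cones, and globally $g^{-1}(\tau)$ is assembled via a Hatcher homotopy colimit. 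There is no result in the paper that propagates transverse constructibility from vertex fibers to these assembled higher fibers, and the lemmas you cite (inclusion lemma, neighborhood, product stability of shelling) do not deliver it. So the ``careful induction on dimension of $\tau$'' you flag as the main obstacle is, in fact, an unfilled gap.

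The paper's key structural move, which you do not use, is Homma's factorization (Corollary~\ref{factorization}): one factors $f$ as $K=K_0\xr{f_1}\cdots\xr{f_r}K_r=L$, each $f_i$ shrinking a single fiber $F_i$ to a point. The crucial observation is that for each $f_i^\flat$, the nontrivial fibers over simplices of $K_i^\flat$ are either $F_i^\flat$ (over the vertex) or barycentric subdivisions of single cones $\fll\rho_i\flr^\flat$ of $F_i$ (over higher chains through $p_i$)---\emph{not} some complicated cubosimplicial assembly. Cones are collapsible for free, so after applying Lemma~\ref{Wh-collapse} to $F_i$ and extending by joins with links, every fiber of $g_i$ is of the form $(\text{collapsible})^\flat$, hence has shellable (and so transversely constructible) dual $H(\cdot)$. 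Each $g_i^*$ is then a transversely constructible map, and Theorem~\ref{constructible composition} gives transverse constructibility of the composite $g^*$. This side-steps exactly the higher-fiber assembly that blocks your approach. A second, minor slip: you propose to ``first barycentrically subdivide $f^{-1}(\sigma_i)^\flat$, then apply Lemma~\ref{Wh-collapse},'' claiming the result is $X_\sigma^\flat$; that order is backwards---to present the fiber as a barycentric subdivision of a simplicially collapsible complex, you must apply Whitehead first (to get $F_i^\beta$) and \emph{then} take barycentric subdivision, as the paper does.
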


That $g$ be simplicial is equivalent to saying that
$\alpha((K')^{(0)})\subset (f^\flat)^{-1}((L^\flat)^{(0)})$.

\begin{proof} Let $\sigma_1,\dots,\sigma_r$ be the simplices of $L$ arranged in an
order of increasing dimension.
Let $F_i=f^{-1}(\sigma_i)$.
The simplicially realizable subdivision map $\beta_i\:F_i^\beta\to F_i^\flat$
given by Lemma \ref{Wh-collapse} followed by the barycentric subdivision map
$\flat\:(F_i^\beta)^\flat\to F_i^\beta$ extends by taking joins with the links to
a simplicially realizable subdivision map $K_i\to K_{i-1}$, where $K_0=K^\flat$.
Let $\alpha\:K'\to K^\flat$ be the resulting subdivision map, where $K'=K_r$.
It remains to show that $g^*$ is transversely constructible.

It is easy to see that the barycentric subdivision of a simplex is
collapsible.
Hence by Lemma \ref{Wh-collapse},
$\fll\rho\flr^\beta:=\beta_i^{-1}(\fll\rho\flr^\flat)$ is collapsible for
each $\rho\in F_i$ and each $i$.

Let $K=K_0\xr{f_1}\dots\xr{f_r}K_r=L$ be the factorization of $f$ given by
Lemma \ref{factorization}, where $f_i$ shrinks $F_i$ onto an element $p_i\in K_i$.
Let $\hat p_i=(p_i)\in K_i^\flat$.
It is not hard to see that the fibers $\Phi_\tau=(f_i^\flat)^{-1}(\tau)$,
$\tau\in K_i^\flat$ of the simplicial map $f_i^\flat\:K_{i-1}^\flat\to K_i^\flat$
are $\Phi_{\hat p_i}=F_i^\flat$ and $\Phi_\tau\simeq\fll\rho_i\flr^\flat$,
where $\tau=(\dots>\rho_j>p_i)$, necessarily with $\rho_j\in F_j$, where
$\sigma_j$ covers $\sigma_i$ in $K$, where $\rho_i$ is the image of
$\rho_j$ under the Hatcher map $f_{\sigma_j\sigma_i}\:F_j\to F_i$.

Let $K'=K_0'\xr{g_1}\dots\xr{g_r}K_r'=L^\flat$ be the obvious simplicial
factorization of $g$ given by the simplicial factorization of $f^\flat$.
Thus $g_i$ shrinks $(F_i^\beta)^\flat$ onto an element $\hat p_i'\in K_i'$.
Clearly, the fibers $\Phi'_\tau=g_i^{-1}(\tau)$, $\tau\in K_i'$ of
the simplicial map $g_i$ are $\Phi'_{\hat p_i'}=(F_i^\beta)^\flat$ and
$\Phi'_\tau\simeq(\fll\rho_i\flr^\beta)^\flat$,
where $\tau=(\dots>\rho_j'>p_i)$, necessarily with $\rho_j'\in\fll\rho_j\flr^\beta$,
$\rho_j\in F_j$, where $\sigma_j$ covers $\sigma_i$ in $K$, where $\rho_i$
is the image of $\rho_j$ under the Hatcher map $f_{\sigma_j\sigma_i}\:F_j\to F_i$.

Since $F_i^\beta$ and $\fll\rho_i\flr^\beta$ are collapsible, by Corollaries
\ref{collapsible-constructible} and \ref{transversely constructible vs constructible},
$H(F_i^\beta)$ and $H(\fll\rho_i\flr^\beta)$ are transversely constructible.
Since $g_i$ is simplicial, by Corollary \ref{simplicial dual to stratification map},
$g_i^*$ is a stratification map.
Hence $g_i^*$ is transversely constructible.
Thus by Theorem \ref{constructible composition} $g^*$ is transversely
constructible.
\end{proof}

\begin{theorem} \label{collapse-composition}
Let $P$ be a polyhedron and $f\:P\to Q$ a collapsible map onto a collapsible
polyhedron.
Then $P$ is collapsible.
\end{theorem}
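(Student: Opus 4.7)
The plan is to reduce to the combinatorial Lemmas \ref{constructible map} and \ref{constructible-collapsible} by invoking Theorem \ref{collapsible-constructible map}. First I would pick a triangulation of $f\:P\to Q$ by a simplicial map $F\:K\to L$ between simplicial complexes, so that $|F|$ is PL-equivalent to $f$, $|K|=P$, and $|L|=Q$. The hypothesis that $f$ is a collapsible PL map means that every point-inverse of $F$ is collapsible, which is precisely the hypothesis of Theorem \ref{collapsible-constructible map}.

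Applying Theorem \ref{collapsible-constructible map} then yields a simplicially realizable subdivision map $\alpha\:K'\to K^\flat$ such that the composition $g\:K'\xr{\alpha}K^\flat\xr{F^\flat}L^\flat$ is simplicial and $g^*\:(K')^*\to (L^\flat)^*$ is transversely constructible. On the other hand, $|L|=Q$ is collapsible by hypothesis, so Corollary \ref{collapsible-shellable2} gives that $H(L)=(L^\flat)^*$ is shellable, and in particular transversely constructible.

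The conclusion now follows by a short combinatorial assembly. Lemma \ref{constructible map}, applied to the transversely constructible map $g^*$ whose codomain $(L^\flat)^*$ is transversely constructible, tells us that $(K')^*$ is transversely constructible. Then Lemma \ref{constructible-collapsible} gives that $|(K')^*|$ is a collapsible polyhedron. Since $(K')^\flat$ and $((K')^*)^\flat$ are canonically isomorphic (both record the same set of chains of $K'$), we have $|(K')^*|=|K'|$; via the underlying homeomorphism of the subdivision map $\alpha$ (Theorem \ref{subdivision map}), this polyhedron is identified with $|K^\flat|=|K|=P$. Hence $P$ is collapsible.

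The main conceptual work is bundled into the inputs --- the constructible-map form of Whitehead's PL collapsing-subdivision theorem (Lemma \ref{Wh-collapse} and Theorem \ref{collapsible-constructible map}) and the transverse shellability of the barycentric handle decomposition of a collapsible poset (Theorem \ref{collapsible-shellable} with Corollary \ref{collapsible-shellable2}). With those in hand, the present theorem is essentially a formal composition of results; the only real bookkeeping is tracking the identifications $P=|K|=|K^\flat|=|K'|=|(K')^*|$ and ensuring that transverse constructibility propagates correctly through $g^*$.
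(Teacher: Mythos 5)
Your argument has the right shape --- invoking Theorem~\ref{collapsible-constructible map} to get the transversely constructible map $g^*$, pushing transverse constructibility of the codomain through Lemma~\ref{constructible map}, and finishing with Lemma~\ref{constructible-collapsible} --- but there is a real gap in how you establish that the codomain $(L^\flat)^*$ is transversely constructible.

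You write that since $|L|=Q$ is collapsible, Corollary~\ref{collapsible-shellable2} gives $H(L)=(L^\flat)^*$ shellable. But that corollary requires $L$ to be a collapsible \emph{poset} in the sense of~\ref{collapsing of posets}, not merely a triangulation of a collapsible polyhedron. A fixed triangulation of a collapsible polyhedron need not be collapsible as a poset (nor simplicially collapsible); this is exactly the content of Whitehead's theorem, which only asserts the existence of such a triangulation after subdivision. The paper deals with this by first replacing $L$ by a collapsible affine simplicial subdivision $L'$ and then retriangulating $f$ as a simplicial map $K'\to L'$ before applying Theorem~\ref{collapsible-constructible map}. Without that subdivision step, your claim that $H(L)$ is shellable is unjustified. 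There is also a secondary inference you elide: shellable implies constructible, but not transversely constructible; to upgrade to transverse constructibility for $H(L')$ you need Corollary~\ref{transversely constructible vs constructible}, which applies because $(L')^\flat$ is a simplicial (hence cell) complex. The paper in fact routes this through Corollary~\ref{collapsible-constructible} and Corollary~\ref{transversely constructible vs constructible} rather than Corollary~\ref{collapsible-shellable2}, but either way the subdivision $L'$ and the cell-complex upgrade are both essential steps that your proposal currently omits.
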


\begin{proof}
Let $\phi\:K\to L$ be a simplicial map triangulating $f$.
Let $L'$ be a collapsible (e.g.\ simplicially collapsible) affine simplicial
subdivision of $L$.
By Corollaries \ref{collapsible-constructible} and
\ref{transversely constructible vs constructible}, $H(L')$ is transversely
constructible.

On the other hand, by basic PL topology there is a simplicial affine subdivision
$K'$ of $K$ and a simplicial map $f\:K'\to L'$ triangulating $f$.
By Theorem \ref{collapsible-constructible map}, there exists a
simplicially realizable
subdivision map $\alpha\:K''\to (K')^\flat$ such that the composition
$K''\xr{\alpha}(K')^\flat\xr{f^\flat}(L')^\flat$ is simplicial, and its dual
is transversely constructible.
Then by Lemma \ref{constructible map}, $(K'')^*$ is transversely constructible.
Hence by Lemma \ref{constructible-collapsible} $P$ is collapsible.
\end{proof}

\begin{remark}\label{trivial-remark}
Let us also sketch a much easier (but not necessarily combinatorially clearer)
proof of Theorem \ref{collapse-composition}.
Let $K$ be a simplicially collapsible subdivision of a triangulation
of $Q$ that makes $f$ simplicial.
Then $f$ is triangulated by a simplicial map $L\to K$.
Given an elementary simplicial collapse of $K$ onto $K\but\{\sigma,\tau\}$,
where $\sigma$ is a maximal element of $K$ and $\tau$ a maximal element of
$K\but\{\sigma\}$, we can collapse $P$ onto $f^{-1}(|K\but\{\sigma\}|)\cup X$,
where $X$ is an appropriate ``section'' of $f$ over $|\fll\sigma\flr|$,
using the Hatcher decomposition (Theorem \ref{hocolim}).
The latter in turn collapses onto $f^{-1}(|K\but\{\sigma,\tau\}|)\cup X$,
which finally collapses onto $f^{-1}(|K\but\{\sigma,\tau\}|)$.
\end{remark}

\begin{corollary} \label{collapsible-composition}
Composition of collapsible maps is collapsible.
\end{corollary}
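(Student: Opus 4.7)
The plan is to deduce this corollary immediately from Theorem \ref{collapse-composition}, which asserts that the total space of a collapsible map onto a collapsible polyhedron is collapsible. Recall that by definition a PL map is collapsible if each of its point-inverses is a (nonempty) collapsible polyhedron, so the task is purely a pointwise one.

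Let me suppose we are given collapsible PL maps $f\:P\to Q$ and $g\:Q\to R$ between compact polyhedra, and fix $r\in R$. I need to show that $(gf)^{-1}(r)$ is collapsible. The key observation is that $(gf)^{-1}(r)=f^{-1}(g^{-1}(r))$, so if I write $Q_r:=g^{-1}(r)$ and consider the restricted PL map $f_r\:f^{-1}(Q_r)\to Q_r$, then the domain of $f_r$ is exactly the point-inverse I am trying to understand, while the range $Q_r$ is collapsible by the hypothesis that $g$ is collapsible.

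The remaining point is that $f_r$ itself is a collapsible map. This is immediate: for each $q\in Q_r$, the point-inverse $f_r^{-1}(q)$ equals $f^{-1}(q)$ as a subpolyhedron, and the latter is collapsible by the hypothesis that $f$ is collapsible. Hence $f_r$ is a collapsible PL map whose range $Q_r$ is a collapsible polyhedron, so Theorem \ref{collapse-composition} applied to $f_r$ yields that $f^{-1}(Q_r)=(gf)^{-1}(r)$ is collapsible, as required. Since $r\in R$ was arbitrary, $gf$ is a collapsible PL map.

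There is no real obstacle here: all the substantive work has already been done in Theorem \ref{collapse-composition}, and the corollary is simply the observation that collapsibility of maps is a fiberwise property that restricts well over subpolyhedra of the target. The only thing worth spelling out (to avoid any confusion about the PL category) is that $Q_r$ and $f^{-1}(Q_r)$ inherit a PL structure as subpolyhedra of $Q$ and $P$ respectively, and that $f_r$ is PL as a restriction of a PL map, so that Theorem \ref{collapse-composition} applies verbatim.
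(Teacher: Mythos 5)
Your proof is correct and is precisely the intended fiberwise application of Theorem \ref{collapse-composition}; the paper gives no separate argument for this corollary, treating it as an immediate consequence. The one detail you rightly flag (that $Q_r$ and $f^{-1}(Q_r)$ are subpolyhedra and $f_r$ is a PL surjection onto $Q_r$, using that collapsible maps have nonempty point-inverses) is all that needs checking.
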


M. M. Cohen showed that composition of collapsible retractions is a collapsible
retraction \cite[8.6]{Co2}.

The following corollary of Theorems \ref{Cohen-Homma}, \ref{subdivision map}
and \ref{collapse-composition} is originally due to T. Homma
(see \cite[Lemma 5.4.1]{Ru}) and M. M. Cohen \cite{Co1}.

\begin{corollary}[Cohen--Homma]\label{Cohen-Homma2} If $M$ is a closed manifold,
$X$ is a polyhedron, and there exists a collapsible map $M\to X$, then $X$ is
homeomorphic to $M$.
\end{corollary}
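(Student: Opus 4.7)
The plan is to reduce to Theorem \ref{Cohen-Homma} via the machinery of Theorem \ref{collapsible-constructible map}. First I would triangulate the collapsible PL map $f\:M\to X$ by a simplicial map $\phi\:K\to L$, so that $|K|$ is PL homeomorphic to the closed PL manifold $M$, $|L|$ is PL homeomorphic to $X$, and $|\phi|$ is PL homeomorphic to $f$; in particular $|\phi|$ is collapsible. Theorem \ref{collapsible-constructible map} then supplies a simplicially realizable subdivision map $\alpha\:K'\to K^\flat$ such that the composition $g\bydef\phi^\flat\alpha\:K'\to L^\flat$ is a simplicial map whose dual $g^*\:(K')^*\to(L^\flat)^*$ is transversely constructible.

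Next I would verify the two hypotheses of Theorem \ref{Cohen-Homma} for $g^*$. The polyhedron $|(K')^*|=|K'|$ is PL homeomorphic to $|K|\cong M$, hence is a closed PL manifold. A transversely constructible map is in particular constructible: a collared subposet is of codimension one (every maximal element of a collared subposet is covered by precisely one element of the ambient poset, which must be maximal), so stratification maps are filtration maps and transversely constructible posets are constructible by induction on the number of construction steps; thus $g^*$ is constructible. Finally, $(g^*)^*=g$ is a simplicial map of simplicial complexes, so by Theorem \ref{partition} it is cellular. Theorem \ref{Cohen-Homma} therefore applies and yields that $g^*$ is a subdivision map.

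By Theorem \ref{subdivision map}, the subdivision map $g^*$ admits an underlying homeomorphism $|(K')^*|\to|(L^\flat)^*|$, that is, a homeomorphism $|K'|\to|L^\flat|$. Composing with the PL identifications $M\cong|K|\cong|K^\flat|\cong|K'|$ (the last coming from the subdivision $\alpha$) and $|L^\flat|\cong|L|\cong X$ delivers the promised homeomorphism $M\cong X$. The essential content already sits in Theorems \ref{collapsible-constructible map} and \ref{Cohen-Homma}; the only task left for this corollary is the bookkeeping needed to match the ``transversely constructible'' output of the former with the ``constructible plus cellular dual'' hypothesis of the latter, which as above is immediate and is the only place one has to be mildly careful.
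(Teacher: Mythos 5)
Your argument is correct and is essentially the route the paper has in mind: triangulate, apply Theorem~\ref{collapsible-constructible map} to obtain a simplicial map $g$ whose dual is transversely constructible, pass from transversely constructible to constructible (collared implies codimension one, so stratification maps are filtration maps and the construction steps all qualify), note that $g$ itself is cellular by Theorem~\ref{partition}, and then feed $g^*$ into Theorem~\ref{Cohen-Homma} and read off the homeomorphism from Theorem~\ref{subdivision map}. The paper's introductory sentence cites Theorem~\ref{collapse-composition} rather than Theorem~\ref{collapsible-constructible map}, but the substantive construction lives in the latter (which is also what the proof of Theorem~\ref{collapse-composition} uses), so your bookkeeping is exactly the intended one.
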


A much easier (but not necessarily combinatorially clearer) proof of
Corollary \ref{Cohen-Homma2} is given by Theorem \ref{subdivision map},
the proof of Theorem \ref{Cohen-Homma} and classical results about
collapsing.

\end{document}